\theoremstyle{plain}
\newtheorem{theor10}{Theorem}
\newenvironment{theor1}
  {\pushQED{\qed}\begin{theor10}}
  {\popQED\end{theor10}}
\newtheorem{cor10}{Corollary}
\newtheorem{prop10}{Proposition}
\newtheorem{lem0}{Lemma}[section]
\newenvironment{lem}
  {\pushQED{\qed}\begin{lem0}}
  {\popQED\end{lem0}}
\theoremstyle{plain}
\newtheorem{theor0}[lem0]{Theorem}
\newtheorem{prop0}[lem0]{Proposition}
\newenvironment{prop}
  {\pushQED{\qed}\begin{prop0}}
  {\popQED\end{prop0}}
\newtheorem{cor0}[lem0]{Corollary}
\newenvironment{cor}
  {\pushQED{\qed}\begin{cor0}}
  {\popQED\end{cor0}}
\theoremstyle{definition}
\newtheorem{defin0}[lem0]{Definition}
\newtheorem{rems0}[lem0]{Remarks}
\newtheorem{rem0}[lem0]{Remark}
\newenvironment{rem}
  {\pushQED{\qed}\begin{rem0}}
  {\popQED\end{rem0}}
\newtheorem{ex0}[lem0]{Example}
\newtheorem{exs0}[lem0]{Examples}
\newtheorem{notation0}[lem0]{Notation}
\newtheorem{heur0}[lem0]{Heuristics}
\newtheorem{ass}[lem0]{Assumption}
\newenvironment{as}
  {\pushQED{\qed}\begin{ass}}
  {\popQED\end{ass}}
\numberwithin{equation}{section}
\mathchardef\emptyset="001F
\newcommand{\e}{\varepsilon}
\newcommand{\Log}{|\!\log\e|}
\newcommand{\vi}{\operatorname{v}}
\newcommand{\Vi}{\operatorname{V}}
\newcommand{\pr}{\operatorname{p}}
\newcommand{\pre}{\operatorname{p}}
\newcommand{\md}{\operatorname{m}}
\newcommand{\dd}{\operatorname{d}}
\newcommand{\Hd}{\operatorname{H}}
\newcommand{\Ed}{\operatorname{E}}
\newcommand{\GLu}{$(\operatorname{GL}_1)$}
\newcommand{\GLd}{$(\operatorname{GL}_2)$}
\newcommand{\GLt}{$(\operatorname{GL}_3)$}
\newcommand{\GP}{$(\operatorname{GP})$}
\newcommand{\GLup}{$(\operatorname{GL}_1')$}
\newcommand{\GLdp}{$(\operatorname{GL}_2')$}
\newcommand{\GLtp}{$(\operatorname{GL}_3')$}
\newcommand{\GLqp}{$(\operatorname{GL}_4')$}
\newcommand{\R}{\mathbb R}
\newcommand{\Z}{\mathbb Z}
\newcommand{\C}{\mathbb C}
\newcommand{\T}{\mathbb T}
\newcommand{\Jb}{\mathbb J}
\newcommand{\B}{\mathcal B}
\newcommand{\Pc}{\mathcal P}
\newcommand{\Ec}{\mathcal E}
\newcommand{\Tc}{\mathcal T}
\newcommand{\D}{\mathcal D}
\newcommand{\M}{\mathcal M}
\newcommand{\Tr}{\operatorname{Tr}}
\newcommand{\Sb}{\mathbb S}
\newcommand{\Id}{\operatorname{Id}}
\newcommand{\osc}{\operatorname{osc}}
\newcommand{\uloc}{{\operatorname{uloc}}}
\newcommand{\Div}{\operatorname{div}\,}
\newcommand{\loc}{{\operatorname{loc}}}
\newcommand{\curl}{{\operatorname{curl}\,}}
\newcommand{\per}{{\operatorname{per}}}
\newcommand{\Ld}{\operatorname{L}}
\newcommand{\Ex}{\operatorname{ex}}
\newcommand{\cvf}[1]{\mathrel{\mathop{\xrightharpoonup{#1}}}}
\newcommand{\step}[1]{\noindent \textit{Step} #1.}
\newcommand{\substep}[1]{\noindent \textit{Substep} #1.}
\def\nab{\nabla}
\def\({\left(}
\def\){\right)}
\numberwithin{equation}{section}
\title[Mean-field dynamics for Ginzburg-Landau vortices]{Mean-field dynamics for Ginzburg-Landau vortices\\with pinning and forcing}
\author{Mitia Duerinckx and Sylvia Serfaty}
\date{}
\begin{document}
\selectlanguage{english}
\maketitle

\begin{abstract}
We consider the time-dependent 2D Ginzburg-Landau equation in the whole plane with terms modeling impurities and applied currents. The Ginzburg-Landau vortices are then subjected to three forces: their mutual repulsive Coulomb-like interaction, the applied current pushing them in a fixed direction, and the pinning force attracting them towards the impurities. The competition between the three is expected to lead to
complicated glassy effects.
 We rigorously study the limit in which the number of vortices $N_\e$ blows up as the inverse Ginzburg-Landau parameter $\e$ goes to $0$, and we derive via a modulated energy method fluid-like mean-field evolution equations. These results hold for parabolic, conservative, and mixed-flow dynamics in appropriate regimes of $N_\e\uparrow\infty$.
Finally, we briefly discuss some natural homogenization questions raised by this study.
\end{abstract}

\setcounter{tocdepth}{1}
\tableofcontents

\section{Introduction}

\subsection{General overview}

Superconductors are materials that lose their resistivity at sufficiently low temperature (or low pressure), which allows them to carry electric currents without energy dissipation.
Another important property of these materials is the so-called Meissner effect: (moderate) external magnetic fields are completely expelled from the sample. If the external field is much too strong, the superconducting material returns to a normal state. In the case of a type-II superconductor, an intermediate regime is possible between two critical values of the external field: the material is then in a mixed state, allowing a partial penetration of the external field through ``vortex filaments''.
This mixed state has however a major drawback: when an electric current is applied, it flows through the sample, inducing a Lorentz-like force that sets the vortices in motion, and hence, since vortices are flux filaments, their movement generates an electric field in the direction of the electric current, which dissipates energy and destroys the superconductivity property.

While ordinary superconductors need extreme cooling to achieve superconductivity, the discovery of high-temperature superconductors from the 1980s onwards has given an major boost to technological applications, as the critical temperature of such materials is now reached with only liquid nitrogen.
These high-temperature superconductors happen to be in practice strongly of type~II and, as such, they show vortices for a very wide range of values of the applied magnetic field.
Most technological applications of superconductors therefore occur in this mixed state, and it is crucial to design ways to prevent vortices from moving in order to recover the desired property of dissipation-free current flow. For that purpose a common attempt consists in introducing normal impurities in the material, which are meant to destroy superconductivity locally and therefore ``pin down'' the vortices to their locations if the applied current is not too strong.

With these applications in mind, there is a strong interest in the physics community in understanding the precise effect of such impurities (which are typically randomly scattered around the sample) on the statics and dynamics of vortices.
Of particular interest is the critical applied current needed to depin the vortices from their pinning sites, as well as the slow motion of vortices --- named {\it creep} --- in the disordered sample when the applied current has a small intensity and thermal or quantum effects are taken into consideration.
The competition between vortex interactions and disorder actually leads to complicated glassy effects that are still largely not understood and have attracted much attention in the theoretical physics community these last decades~\cite{BFGLV-94,Giamarchi-Bhatta-02,Giamarchi-09}.
The richness of the dynamic phase diagram in terms of the different tunable parameters is particularly striking~\cite{LLWXL-03,Reichhardt-17}.
In the sequel, we study the collective dynamics of many vortices in a (2D section of a) type-II superconductor with applied current and impurities, and we wish to establish in various regimes the correct mean-field equations describing the vortex matter. We may view this work as a first step to identify proper questions towards a mathematical understanding of the glassy properties of such systems (cf.~Section~\ref{sec:homog} for further comments and questions).

\medskip
The phenomenology of superconductivity is accurately described by the (mesoscopic) Ginzburg-Landau theory. Restricting ourselves to a 2D section of a superconducting material, we rather consider the simpler 2D Ginzburg-Landau model, and vortex filaments are replaced by ``point vortices''. We refer e.g.\@ to~\cite{Tinkham,Tilley} for further reference on these models, and to~\cite{SS-book} for a mathematical introduction. The (mesoscopic) impurities in the material are usually modeled by introducing a pinning weight $a : \R^2 \to [0, 1]$, which locally lowers the energy penalty associated with the vortices~\cite{Larkin-70,CDG-95} (see also~\cite{Chapman-00}): regions with $a=1$ correspond to the pure superconducting material, while regions with $a\approx0$ define the normal impurities. In the time-dependent 2D Ginzburg-Landau equation (which is a gradient flow for the corresponding energy), the pinning weight and the applied electric current appear as follows,
\begin{equation}\label{eq:Gorkov-Eliashberg0}
\begin{cases}
\partial_tw_\e=\triangle w_\e+\frac{w_\e}{\e^2}(a-|w_\e|^2),&\text{in $\R^+\times\Omega$},\\
n\cdot\nabla w_\e=iw_\e \Log n\cdot J_{\Ex},&\text{on $\R^+\times\partial\Omega$},\\
w_\e|_{t=0}=w_\e^\circ,
\end{cases}
\end{equation}
where $\Omega$ is a domain of $\R^2$ and $n$ is the outer unit normal on $\partial\Omega$, where $w_\e:\R^+\times\Omega\to\C$ is the complex-valued order parameter describing superconductivity, where $\Log J_{\text{ex}}:\partial\Omega\to\R^2$ is the (critically-scaled) applied electric current, and where $\e>0$ is the inverse Ginzburg-Landau parameter (a characteristic of the material, which is typically very small for real-life superconductors).
More precisely, as first derived by Schmid~\cite{Schmid-66} and by Gor'kov and Eliashberg~\cite{GE-68}, the true Ginzburg-Landau model should be further coupled to electromagnetism, replacing the above equation by a suitable version with magnetic gauge, and in particular the imposed electric current $J_{\Ex}$ should rather appear as a boundary condition for the electric and magnetic fields.\footnote{Note that in this simplified model~\eqref{eq:Gorkov-Eliashberg0} the number of vortices has to be imposed artificially through the boundary condition, while in the model with gauge it is implicitly determined by the value of the external magnetic field.} Since the gauge does not introduce any significant mathematical difficulty, we focus on the above simplified form of the model, and only briefly comment on the case with gauge in Section~\ref{sec:gauge}.
The order parameter $w_\e$ has the following meaning: values $|w_\e|=1$ and $|w_\e|=0$ correspond to superconducting and normal phases, respectively, and the vortices are the zeroes of $w_\e$ with non-zero topological degree. Vortices typically have a core of size of order $\e$, hence they become point-like in the asymptotic limit $\e\downarrow0$. Moreover, a vortex of degree $d$ at a point $x$ carries a (self-interaction) energy $\pi |d|a(x)\Log$, which varies with its location due to the pinning weight~$a$, and implies that vortices are indeed attracted to the minima of the weight, that is, to the normal impurities.

An important variant of this model~\eqref{eq:Gorkov-Eliashberg0} is the corresponding (conservative) Schrödinger flow, with $\partial_tw_\e$ replaced by $i\partial_tw_\e$. This coincides with the so-called Gross-Pitaevskii equation, which is an example of a nonlinear Schrödinger equation and serves as a model for Bose-Einstein condensates and superfluidity~\cite{Aftalion-06,Rougerie-these}, as well as for nonlinear optics~\cite{Arecchi-91}.
As argued e.g.\@ in~\cite{Aranson-Kramer}, there is also physical interest in the ``mixed-flow'' (or ``complex'') Ginzburg-Landau equation, which is a mix between the Ginzburg-Landau and Gross-Pitaevskii equations. Instead of~\eqref{eq:Gorkov-Eliashberg0} we thus turn to the following more general equation, for any $\alpha\ge0$, $\beta\in\R$, $\alpha^2+\beta^2=1$,
\begin{equation}\label{eq:Gorkov-Eliashberg-mixed-flow}
\begin{cases}
(\alpha+i\Log\beta)\partial_tw_\e=\triangle w_\e+\frac{w_\e}{\e^2}(a-|w_\e|^2),&\text{in $\R^+\times\Omega$},\\
n\cdot\nabla w_\e=iw_\e\Log n\cdot J_{\Ex},&\text{on $\R^+\times\partial\Omega$},\\
w_\e|_{t=0}=w_\e^\circ,
\end{cases}
\end{equation}
which allows to consider by the same token both the parabolic or Ginzburg-Landau case ($\alpha=1$, $\beta=0$) and the conservative or Gross-Pitaevskii case ($\alpha=0$, $\beta=1$).

In this context, including both pinning and applied current, the problems that naturally arise are
\begin{itemize}
\smallskip\item to derive from equation~\eqref{eq:Gorkov-Eliashberg-mixed-flow} a simpler discrete problem for the evolution of a fixed number $N$ of point vortices in the asymptotic limit $\e\downarrow0$;
\smallskip\item to derive a mean-field equation describing the evolution of a large number of vortices, either by taking the limit $N \uparrow \infty$ in the discrete problem, or preferably by taking the limit directly in~\eqref{eq:Gorkov-Eliashberg-mixed-flow} when the number of vortices $N_\e$ blows up as $\e\downarrow0$, thus investigating the commutation of the limits $\e\downarrow0$ and $N\uparrow\infty$;
\smallskip\item to derive effective equations in the regime when the impurities are scattered at a small scale, that is, when the pinning weight $a$ oscillates rapidly, by starting either from the mean-field equation, from the discrete problem, or preferably from~\eqref{eq:Gorkov-Eliashberg-mixed-flow}.
\end{itemize}
\smallskip

\noindent
As recalled below, the first question has already been fully answered. In this work, we focus on the second question, which is to derive a mean-field equation for the vortex liquid directly from the mesoscopic model~\eqref{eq:Gorkov-Eliashberg-mixed-flow}.
This naturally leads us to the third question, which however remains largely open: in Section~\ref{sec:homog} we state various conjectures and give a few preliminary results.

\medskip
Let us start by recalling the behavior of a fixed number $N$ of vortices in the asymptotic regime $\e\downarrow0$. A good understanding was achieved in the physics community since the 1990s~\cite{Neu,Dorsey,Peres-Rubinstein,Chapman-Richardson-97}, and various rigorous studies became available shortly after in the parabolic case~\cite{Lin-96a,Lin-96b,Jerrard-Soner-98,Jian-Song-01,SS-04}, in the conservative case~\cite{Colliander-Jerrard-98,Lin-Xin-99,Jerrard-Smets-15, Kurzke-Marzuola-Spirn-15}, as well as in the mixed-flow case~\cite{Tice-10,S-Tice-11}.
As seen there, vortices are subjected to three forces:
\begin{itemize}
\smallskip\item their mutual repulsive Coulomb (logarithmic) interaction;
\smallskip\item the Lorentz-like force $F$ due to the applied current of intensity $J_{\Ex}$;
\smallskip\item the pinning force, equal to $-\nabla h$ in terms of the so-called pinning potential $h:=\log a$ defined by the pinning weight~$a$.
\end{itemize}
\smallskip

\noindent
Neglecting boundary effects, and assuming that all vortices have the same degree $+1$, the effective vortex dynamics is then given by a system of ODEs of the form
\begingroup\allowdisplaybreaks
\begin{gather}\label{ode1}
(\alpha+\Jb\beta)\partial_t x_i = -N^{-1}\nabla_{x_i} W_N(x_1,\ldots,x_N)-\nabla h(x_i) +F(x_i),\qquad 1\le i\le N,\\
W_N(x_1,\ldots,x_N):=-\sum_{i\ne j}^N\log|x_i-x_j|,\nonumber
\end{gather}
\endgroup
where the $x_i$'s are the macroscopic vortex trajectories, and where $\Jb$ denotes the rotation of vectors by angle $\frac\pi2$ in the plane.
The pinning and applied current intensities are parameters which can be tuned,  leading to regimes in which one or two forces dominate over the others, or all are of the same order.
In~\cite{Tice-10}, in the parabolic case, no pinning force is considered and the  regimes  treated lead to the applied force being of the same order as the interaction. In~\cite{S-Tice-11} the pinning and applied forces are chosen to be of the same order, and both dominate the interaction.
In~\cite{Kurzke-Marzuola-Spirn-15}, in the conservative case, the critical scaling is considered, that is, with all forces being of the same order.

In this work, we rather focus on the situation when the number $N_\e$ of vortices in~\eqref{eq:Gorkov-Eliashberg-mixed-flow} is not fixed but depends on $\e$ and blows up as $\e \downarrow 0$, which is a physically more realistic situation in many regimes of applied fields and currents.
We then wish to describe the evolution of the density of the corresponding vortex liquid. In dilute regimes (that is, when $N_\e$ does not blow up too quickly with respect to $\e$), the correct limiting equation is naturally expected to coincide with the mean-field limit of the discrete vortex dynamics~\eqref{ode1} (cf.~\cite{D-15,Serfaty-18}), that is, the following nonlocal nonlinear continuity equation for the mean-field vorticity $\md$,
\begin{align}\label{eq:expect-mfl-ode1}
\partial_t\!\md=\Div\big((\alpha-\Jb\beta)(\nabla h-F-\nabla\triangle^{-1}\!\md)\md\big),
\end{align}
or alternatively, in terms of the mean-field supercurrent density $\vi$ (related to $\md$ via $\md=\curl\!\vi$),
\begin{align}\label{eq:expect-mfl-ode1-vi}
\partial_t\!\vi=\nabla\!\pre+(\alpha-\beta\Jb)(\nabla^\bot h-F^\bot-\vi)\curl\!\vi,\qquad \Div\!\vi=0.
\end{align}
Note that in the conservative case ($\alpha=0$, $\beta=1$) this equation becomes
\begin{align}\label{eq:expect-mfl-ode1-vi+}
\partial_t\!\vi=\nabla\!\pre+(\nabla h-F+\vi^\bot)\curl\!\vi,\qquad \Div\!\vi=0,
\end{align}
which is equivalent to the incompressible 2D Euler equation due to the identity $\vi^\bot\curl\!\vi=(\vi\cdot\nabla)\vi-\frac12\nabla|\!\vi\!|^2$, while the force $\nabla h-F$ plays the role of a background flow.
In the dissipative case $\alpha>0$, as first discovered  in~\cite{Serfaty-15}, the mean-field behavior in nondilute regimes changes drastically and rather leads to {\it compressible} equations. In other words, the limits $\e\downarrow0$ and $N\uparrow\infty$ do not always commute. A heuristic explanation of such behaviors is included in Section~\ref{sec:intro-MFL-GL-heur}.

In the case without pinning and applied current ($h\equiv0$, $F\equiv0$), such mean-field results have already been rigorously established in a number of settings:
\begin{itemize}
\smallskip\item In the conservative case ($\alpha=0$, $\beta=1$), Jerrard and Spirn~\cite{Jerrard-Spirn-15} have shown in the strongly dilute regime $1\ll N_\e \lesssim (\log \Log)^{1/2}$ that the vorticity converges to the solution of~\eqref{eq:expect-mfl-ode1} (which in that case coincides with the 2D Euler equation in vorticity form), while the second author has shown in~\cite{Serfaty-15} in the nondilute regime  $\Log \ll N_\e\ll  \e^{-1}$ that the supercurrent itself converges to the solution of the 2D Euler equation~\eqref{eq:expect-mfl-ode1-vi+}.
\smallskip\item In the parabolic case ($\alpha=1$, $\beta=0$), the convergence of the vorticity to the solution of~\eqref{eq:expect-mfl-ode1}, first formally derived by Chapman, Rubinstein, Schatzman, and E~\cite{CRS-96,WE-94}, has been rigorously established by Kurzke and Spirn~\cite{Kurzke-Spirn-14} in the strongly dilute regime $1\ll N_\e\le (\log \log \Log)^{1/4}$. Next, the second author has shown in~\cite{Serfaty-15} that in the whole moderately dilute regime $1\ll N_\e \ll \Log$ the supercurrent itself converges to the solution of~\eqref{eq:expect-mfl-ode1-vi}, but that in the critical regime $N_\e\simeq\Log$ it converges to a different {compressible} equation.
\end{itemize}

\smallskip
\noindent
In all the other regimes (that are, the moderately dilute regime $1\ll N_\e\lesssim\Log$ in the conservative case and the nondilute regime $\Log\ll N_\e\ll\e^{-1}$ in the parabolic case), justifying the mean-field limit remains an open question --- to the exception of the weakly nondilute regime $\Log\ll N_\e\ll\Log\log\Log$ in the parabolic case, which is further treated in the present work and leads to yet another compressible mean-field equation, thus answering a question raised in~\cite{Serfaty-15}.
All these results assume that the initial data are suitably ``well-prepared''.
Note that the delicate boundary issues are neglected in~\cite{Jerrard-Spirn-15} and~\cite{Serfaty-15}, where the Gross-Pitaevskii or Ginzburg-Landau equation is set for simplicity on the whole plane, while in~\cite{Kurzke-Spirn-14} Dirichlet boundary data on a bounded domain $\Omega$ are further considered.
The results in~\cite{Kurzke-Spirn-14} and~\cite{Jerrard-Spirn-15} rely on a direct method and a careful study of the vortex trajectories, while those in~\cite{Serfaty-15} are based on a ``modulated energy approach'' and rely on the regularity and stability properties of the mean-field equations.

\medskip
The main goal of the present work is to adapt the modulated energy approach of~\cite{Serfaty-15} to the setting with pinning and applied current, thus extending the results of~\cite{Tice-10,S-Tice-11,Kurzke-Marzuola-Spirn-15} to the case with $N_\e \gg 1$ vortices --- in the whole plane for simplicity. The derivation bears several complications compared to the situation in~\cite{Serfaty-15}, in particular due to the lack of sufficient decay at infinity of the various quantities, and also to the fact that the self-interaction energy of each vortex now varies with its location due to the pinning weight.
Next to the parabolic and conservative cases, we also consider the mixed-flow case. We establish the convergence to suitable fluid-like mean-field evolution equations, which in the simplest case take the form~\eqref{eq:expect-mfl-ode1}--\eqref{eq:expect-mfl-ode1-vi} but differ in some regimes, and for which global well-posedness is discussed in the companion article~\cite{D-16}.
Some of these equations are new in the literature, while some others already appeared in the context of 2D fluid dynamics: in the conservative case, for instance, the obtained mean-field equation coincides with the so-called lake equation~\cite{CHL-1,CHL-2} for shallow water flows.
As emphasized above, different regimes for the intensity of the pinning and applied current lead to different limiting equations, and we include a discussion of all of them.

\subsection*{Acknowledgements}
The work of MD is supported by F.R.S.-FNRS (Belgian National Fund for Scientific Research) through a Research Fellowship.
The authors thank Anne-Laure Dalibard, Jean-Pierre Eckmann, and Thierry Giamarchi  for stimulating discussions.

\subsection*{Notation}
Throughout, $C$ denotes various positive constants which depend on controlled quantities and may change from line to line, but do not depend on the small parameter $\e$. We write $\lesssim$ and $\gtrsim$ for $\le$ and $\ge$ up to such a multiplicative constant $C$. We write $a\simeq b$ if both $a\lesssim b$ and $a\gtrsim b$ hold. Given sequences $(a_\e)_\e,(b_\e)_\e\subset\R$, we write $a_\e\ll b_\e$ (or $b_\e\gg a_\e$) if $a_\e/b_\e$ converges to $0$ as the parameter $\e$ goes to $0$. We also write $a_\e\le O(b_\e)$ if $a_\e\lesssim b_\e$, and $a_\e\le o(b_\e)$ if $a_\e\ll b_\e$. We add a subscript $t$ to indicate the further dependence of constants on an upper bound on time $t$, while additional subscripts indicate the dependence on other parameters. A superscript $t$ to a function indicates that this function is evaluated at time~$t$.
For a vector field $G=(G_1,G_2)$ on $\R^2$, we set $G^\bot=(-G_2,G_1)$, $\curl G=\partial_1G_2-\partial_2G_1$, and $\Div G=\partial_1G_1+\partial_2G_2$. We write $\Jb:\R^2\to\R^2$ for the rotation of vectors by angle $\frac\pi2$ in the plane, hence $\Jb G=G^\bot$. We denote by $B(x,r)$ the ball of radius $r$ centered at $x$ in $\R^2$, and we set $B_r:=B(0,r)$ and $B(x):=B(x,1)$.
We let $Q:=[-\frac12,\frac12)^2$ denote the unit square, frequently identified with the $2$-torus $\T^2$.
We write $a\wedge b:=\min\{a,b\}$ and $a\vee b:=\max\{a,b\}$ for $a,b\in\R$. We denote by $\Ld^p_\uloc(\R^2)$ the Banach space of functions that are uniformly locally $\Ld^p$-integrable on $\R^2$, with norm
\[\|f\|_{\Ld^p_\uloc}:=\sup_x\|f\|_{\Ld^p(B(x))},\]
and we similarly define the Sobolev spaces $W^{k,p}_\uloc(\R^2)$. Given a Banach space $X$ and $t>0$, we use the notation $\|\cdot\|_{\Ld^p_tX}$ for the usual norm in $\Ld^p([0,t];X)$.

\subsection{Main results}\label{chap:main}
We first give a precise formulation of the problem under consideration, present our modulated energy approach and underline the main new difficulties, state precise assumptions, discuss the various regimes that our approach allows to consider, and then state our main mean-field results.

\subsubsection{Precise setting}
Since the presence of the boundary creates mathematical difficulties which we do not know how to overcome (due to the possible entrance and exit of vortices), we modify the mesoscopic model~\eqref{eq:Gorkov-Eliashberg-mixed-flow} and consider a suitable version on the whole plane with boundary conditions ``at infinity''. As in~\cite{Tice-10,S-Tice-11}, the boundary conditions can be changed into a bulk force term by a suitable change of phase in the order parameter $w_\e$.  
Also dividing $w_\e$ by the expected density $\sqrt{a}$,
we arrive at the following equation for the modified order parameter $u_\e$,
 \begin{align}\label{eq:GL-1}
\begin{cases}\lambda_\e(\alpha+i\Log \beta)\partial_tu_\e=\triangle u_\e+\frac{a}{\e^2}u_\e(1-|u_\e|^2)\\
\hspace{6cm}+\nabla h\cdot\nabla u_\e+i\Log F^\bot\cdot\nabla u_\e+fu_\e,\\
u_\e|_{t=0}=u_\e^\circ,\end{cases}
\end{align}
with $h:= \log a$, $f:\R^2 \to \R$, and $F:\R^2\to\R^2$, where $F$ is an effective applied force corresponding to the Lorentz-like force generated by the applied current. The parameter $\lambda_\e$ is an appropriate time rescaling needed to obtain a nontrivial limiting dynamics. Within the derivation of~\eqref{eq:GL-1} from~\eqref{eq:Gorkov-Eliashberg-mixed-flow}, the zeroth-order term $f$ takes on the following explicit form (although this is largely unimportant, and the scaling in the corresponding bounds~\eqref{eq:scalingshFf}--\eqref{eq:scalingshFfdec} below may also be substantially relaxed),
\begin{align}\label{eq:formf}
f:=\frac{\triangle\sqrt a}{\sqrt a}-\frac14\Log ^2|F|^2.
\end{align}
The derivation of the modified model~\eqref{eq:GL-1} from equation~\eqref{eq:Gorkov-Eliashberg-mixed-flow} is postponed to Section~\ref{disc-model}, while the global well-posedness of~\eqref{eq:GL-1} is discussed in Section~\ref{chap:wellposed}.
For simplicity we assume that the pinning weight satisfies
\begin{equation}\label{eq:nondegencond}
\frac1C\le a(x)\le1,\qquad\text{for all $x$,}\end{equation}
which avoids degenerate situations: physically one would like to consider a pinning weight $a$ that may vanish, representing true normal inclusions~\cite{CDG-95}, but this is much more delicate mathematically (cf.\@ e.g.~\cite{Andre-Bauman-Phillips-03}).
Setting $F\equiv 0$, $a\equiv 1$, $h\equiv 0$, and $f\equiv 0$, we naturally retrieve the model without pinning and applied current as studied e.g.\@ in~\cite{Kurzke-Spirn-14,Jerrard-Spirn-15,Serfaty-15}, and our results are thus indeed generalizations of those in~\cite{Kurzke-Spirn-14,Serfaty-15}.

Given solutions of the mesoscopic model~\eqref{eq:GL-1}, we wish to establish the convergence of their {\it supercurrent}, defined by 
\begin{equation*}
j_\e:= \langle \nabla u_\e, iu_\e\rangle,
\end{equation*}
where $\langle \cdot, \cdot\rangle$ stands for the scalar product in $\C$ as identified with $\R^2$, that is, $\langle x,y\rangle=\Re(x\bar y)$ for $x,y\in\C$. 
The {\it vorticity} $\mu_\e$ is derived from the supercurrent via $\mu_\e:= \curl j_\e$.
Note that this indeed corresponds to the density of vortices, defined as zeros of $u_\e$ weighted by their degrees, in the sense that
\begin{equation}
\label{muejac}
\mu_\e ~\approx~ 2\pi \sum_i d_i \delta_{x_i},\qquad\text{as $\e \downarrow 0$},
\end{equation}
with $\{x_i\}_i$ the vortex locations and $\{d_i\}_i$ their degrees (this is made rigorous by the so-called Jacobian estimates, e.g.~\cite[Chapter~6]{SS-book}).
In this setting, we wish to show that the rescaled supercurrent $\frac1{N_\e}j_\e$ converges as $\e\downarrow 0$ to a vector field $\vi$ solving a limiting PDE, which as in~\cite{Serfaty-15} is assumed to be regular enough.
The limiting equations are fluid-like equations of the form~\eqref{eq:expect-mfl-ode1-vi}, where the incompressibility condition can however be lost when the density of vortices becomes too large.
Such equations are studied in detail in the companion article~\cite{D-16}, where solutions are shown in most cases to be global and indeed regular enough if the initial data is.
A formal derivation of these mean-field equations is included in Section~\ref{sec:intro-MFL-GL-heur}.

\subsubsection{Modulated energy approach}
In order to establish the convergence of the rescaled supercurrent,
we adapt the modulated energy approach used by the second author in~\cite{Serfaty-15}.
Modulated energy techniques originate in the relative entropy method first designed by DiPerna~\cite{DiPerna-79} and Dafermos~\cite{Dafermos-79,Dafermos-79b} to establish weak-strong stability principles for some hyperbolic systems. This method was later rediscovered by Yau~\cite{Yau-91} for the hydrodynamic limit of the Ginzburg-Landau lattice model, was introduced in kinetic theory by Golse~\cite{Golse-00} for the convergence of suitably scaled solutions of the Boltzmann equation towards solutions of the incompressible Euler equations (cf.\@ e.g.~\cite{StRaymond-09} for the many recent developments on the topic), and first took the form of a modulated {\it energy} method in the work by Brenier~\cite{Brenier-00} on the quasi-neutral limit of the Vlasov-Poisson system.
In the present situation, the method consists in defining a modulated energy, which in the case without pinning takes the form
\begin{equation}\label{mode}
\tilde\Ec_\e:=\int_{\R^2}\frac12\Big(|\nabla u_\e- i u_\e N_\e\!\vi\!|^2 + \frac{1}{2 \e^2} (1-|u_\e|^2)^2\Big),
\end{equation}
where $\vi$ denotes the (postulated) mean-field supercurrent density.
Note that, while the Ginzburg-Landau energy (that is, \eqref{mode} with $\vi= 0$) diverges for configurations $u_\e$ with nonzero degree at infinity,
$$0\ne\deg (u_\e):= \lim_{R\uparrow \infty} \int_{\partial B_R} \langle \nabla u_\e, iu_\e\rangle \cdot n^\bot,$$
the modulated energy may indeed converge (and does if $\vi$ has the correct circulation at infinity).
This modulated energy $\tilde\Ec_\e$ measures the squared distance between the supercurrent $j_\e=\langle\nabla u_\e,iu_\e\rangle$ and the postulated limit $N_\e\!\vi$, in a way that is well adapted to the energy structure.
In order to prove the desired convergence $\frac1{N_\e}j_\e\to\vi$, showing $\tilde\Ec_\e=o(N_\e^2)$ is then sufficient.
Under some regularity assumption on $\vi$, it was proved  in~\cite{Serfaty-15} that, thanks to the suitable limiting equation satisfied by $\vi$, the modulated energy $\tilde\Ec_\e$ satisfies a Grönwall relation, so that if it is initially of order $o(N_\e^2)$, it remains so, yielding the desired convergence $\frac1{N_\e}j_\e\to\vi$.
However, in regimes with $N_\e \lesssim\Log$, the modulated energy $\tilde\Ec_\e$ cannot be of order $o(N_\e^2)$, since each vortex of degree $d$ carries a self-interaction energy $\pi |d| \Log$. For that reason,
we need to renormalize the modulated energy $\tilde\Ec_\e$ by subtracting the (fixed) total self-interaction energy $\pi \sum_i|d_i| \Log$. More precisely, as we will work in a setting where the initial vortices have positive degrees, $\sum_i |d_i|=N_\e$, and as we expect that this remains the case at later times, we consider the modulated energy excess
\begin{align}\label{eq:excess-SS15}
\tilde\D_\e:=\tilde\Ec_\e-\pi N_\e \Log,
\end{align}
and establish a Gr\"onwall relation on this quantity.
The proof requires to use many  tools of vortex analysis developed over the years, cf.~\cite{SS-book}: lower bounds via the Jerrard-Sandier ball construction, Jacobian estimates, and product estimates.

In the case with pinning weight $a$, the modulated energy~\eqref{mode} should naturally be changed into a weighted one,
\begin{equation}\label{modew}
\int_{\R^2}\frac a2\( |\nabla u_\e-iu_\e N_\e\!\vi |^2 + \frac{a}{2\e^2} (1-|u_\e|^2)^2\).\end{equation}
This leads to several notable modifications:
\begin{itemize}
\smallskip\item
A vortex of degree $d$ at a point $x$ now carries a self-interaction energy $\pi |d|a(x)\Log$, which non-trivially depends on the vortex location~$x$. The total self-interaction energy that needs to be subtracted from the modulated energy~\eqref{modew} is thus no longer $\pi N_\e \Log$ but rather, in view of~\eqref{muejac},
$$\pi \sum_i d_i a(x_i) \Log \,\approx\, \frac\Log2 \int_{\R^2}a  \mu_\e.$$
\item
In some regimes of pinning and applied current, the solution $\vi$ of the limiting equation needs to be replaced in the modulated energy~\eqref{modew} by a suitable $\e$-dependent map~$\vi_\e$, which is separately shown to converge to~$\vi$. This amounts to including lower-order terms in the modulated energy.

\smallskip\item
If $\nabla h$, $F$, and $f$ in~\eqref{eq:GL-1} are bounded but not decaying at infinity (which is a natural setting in view of the typical example of a uniform applied current circulating through the sample),
then the modulated energy~\eqref{modew} does usually not remain finite along the flow, which forces us to truncate it at some scale.
In the conservative case, the decay of $\nabla h$, $F$, and $f$ is anyway needed to guarantee the well-posedness of the mesoscopic model~\eqref{eq:GL-1} (cf.~Section~\ref{chap:wellposed}), so that a truncation of~\eqref{modew} is no longer needed, but in that case, due to pinning, the pressure $\pre$ in the mean-field equation~\eqref{eq:expect-mfl-ode1-vi} for~$\vi$ is no longer square-integrable and another truncation argument is required.
\end{itemize}

\smallskip
\noindent
For these reasons, we are lead to considering the following truncated version of the modulated energy~\eqref{modew},
\begin{equation}\label{der2}\Ec_{\e,R}\,:=\,\int_{\R^2}\frac {a\chi_R}2\Big(|\nabla u_\e-iu_\e N_\e\!\vi_\e\!|^2+\frac{a}{2\e^2}(1-|u_\e|^2)^2\Big),\end{equation}
as well as the corresponding excess,
\begin{eqnarray}
\D_{\e,R}&:=&\Ec_{\e,R}-\frac{\Log }2\int_{\R^2} {a\chi_R}\mu_\e\nonumber\\
&=&\int_{\R^2}\frac {a\chi_R}2\Big(|\nabla u_\e-iu_\e N_\e\!\vi_\e\!|^2+\frac{a}{2\e^2}(1-|u_\e|^2)^2-\Log \mu_\e\Big),\label{der}
\end{eqnarray}
where for all $r>0$ we set  $\chi_r:=\chi(\cdot/r)$ for some fixed cut-off function $\chi\in C^\infty_c(\R^2;[0,1])$ with $\chi|_{B_1}=1$ and $\chi|_{\R^2\setminus B_2}=0$, and with $|\nabla\chi|\lesssim\chi^{1/2}(1-\chi)^{1/2}$.\footnote{Such a function $\chi$ is easily constructed by smoothly gluing the choices $\chi(x)= 1-\exp(-\frac1{(|x|-1)_+})$ for $|x|\sim1$ and $\chi(x)= \exp(-\frac1{(2-|x|)_+})$ for $|x|\sim2$. Since $\big|\nabla\big(1- \exp(-\frac1{(|x|-1)_+})\big)\big|\lesssim \sqrt{\exp(-\frac1{(|x|-1)_+})}$ and $\big|\nabla \exp(-\frac1{(2-|x|)_+})\big|\lesssim \sqrt{\exp(-\frac1{(2-|x|)_+})}$, this choice indeed satisfies the bound $|\nabla\chi|\lesssim\chi^{1/2}(1-\chi)^{1/2}$.}
In the sequel, all energy integrals are truncated as above with the cut-off function $\chi_R$, for some scale $R\gg1$ to be later suitably chosen as a function of $\e$.
We write $\Ec_\e:=\Ec_{\e,\infty}$ for the corresponding quantity without the cut-off $\chi_R$ in the definition (formally $R=\infty$), and also $\D_\e:=\sup_{R\ge1}\D_{\e,R}$.
Rather than the $\Ld^2$-norm restricted to the ball $B_R$ centered at the origin, our methods further allow to consider the uniform $\Ld^2_\loc$-norm at the scale $R$: setting $\chi_R^z:=\chi_R(\cdot-z)$, we define
\begin{gather}\Ec_{\e,R}^*:=\sup_z\Ec_{\e,R}^z~,\quad \Ec_{\e,R}^z:=\int_{\R^2}\frac {a\chi_R^z}2\Big(|\nabla u_\e-iu_\e N_\e\!\vi_\e\!|^2+\frac{a}{2\e^2}(1-|u_\e|^2)^2\Big),\label{der3}\\
\D_{\e,R}^*:=\sup_z\D_{\e,R}^z~,\quad\D_{\e,R}^z:=\Ec_{\e,R}^z-\frac\Log2\int_{\R^2} a\chi_R^z\mu_\e,\label{der4}
\end{gather}
where the suprema run over all lattice points $z\in R\Z^2$.

In this setting, the proof is split into two parts: first we show that $\frac1{N_\e}j_\e$ is close to a suitable $\vi_\e$ by means of a Grönwall argument on the modulated energy excess $\D_{\e,R}^*$, which requires some careful vortex analysis, and second we check that $\vi_\e$ converges to $\vi$, which is a soft consequence of the stability of the limiting equation.
In order to establish a Grönwall relation for $\D_{\e,R}^*$, in addition to the problems at infinity created by the non-decay of $\nabla h$ and $F$ that we wish to allow, the presence of the pinning weight introduces important new technical difficulties, as always in the analysis of Ginzburg-Landau.
We mention two of them (cf.~Section~\ref{sec:vortex} for detail):
\begin{itemize}
\smallskip\item
In this weighted setting, the fact that the self-interaction energy of a vortex depends on its location makes it more difficult to a priori control the total number of vortices, and requires {\it localized} estimates, in particular a localized version of the Jerrard-Sandier ball-construction lower bound~\cite{Sandier-98,Jerrard-99} with a very precise error estimate $o(N_\e^2)$.
The usual error in the lower bound is $O(N_\e|\!\log r|)$, where $r$ is the total radius of the balls, so that we need to take $r$ large enough (almost $O(1)$ when $N_\e$ diverges slowly), but here the pinning weight $a$ adds an important difficulty since it may vary significantly over the size of the balls of this construction, thus perturbing the lower bound itself. A particularly careful vortex analysis is therefore needed.
\smallskip\item
Due to truncations, the vortex analysis must further be refined to the setting of the infinite plane with no global energy control, hence no a priori finiteness assumption on the total number of vortices, which yields additional complications.
\end{itemize}

\subsubsection{Assumptions}
For the essential part of the proof, in the dissipative case ($\alpha>0$), it suffices to assume $h\in W^{2,\infty}(\R^2)$ and $F\in W^{1,\infty}(\R^2)^2$ (hence $f\in\Ld^\infty(\R^2)$ in view of~\eqref{eq:formf}), that is, no decay at infinity is needed.
In the conservative case, in contrast, we need to restrict to a decaying setting to ensure the well-posedness of the mesoscopic model~\eqref{eq:GL-1}: more precisely, we assume $\nabla h,F\in W^{1,p}(\R^2)^2$ for some $p<\infty$, $f\in\Ld^2(\R^2)$, and $\Div F=0$. In both cases, in order to ensure strong enough regularity properties of the solution $\vi$ of the mean-field equation, even stronger assumptions on the data are needed and are listed below. Note that we do not try to optimize these regularity assumptions.

\begin{as}\label{as:main}
Let $\alpha\ge0$, $\beta\in\R$, $\alpha^2+\beta^2=1$, $h:\R^2\to\R$, $a:=e^h$, $F:\R^2\to\R^2$, $f:\R^2\to\R$, $u_\e^\circ:\R^2\to\C$, and $\vi_\e^\circ,\vi^\circ:\R^2\to\R^2$ for all $\e>0$.
Assume that~\eqref{eq:formf} and~\eqref{eq:nondegencond} hold, and that the initial data $(u_\e^\circ,\vi_\e^\circ,\vi^\circ)$ are well-prepared as $\e\downarrow0$, in the sense
\begin{multline}\label{eq:convincond}
\D_{\e}^{*,\circ}:=\sup_{R\ge1}\sup_{z\in\R^2}\int_{\R^2} \frac{a\chi_R^z}2\Big(|\nabla u_\e^\circ-iu_\e^\circ N_\e\!\vi_\e^\circ\!|^2+\frac a{2\e^2}(1-|u_\e^\circ|^2)^2\\
-\Log\curl\langle\nabla u_\e^\circ,iu_\e^\circ\rangle\Big)
~\ll~ N_\e^2,
\end{multline}
with $\vi_\e^\circ\to\vi^\circ$ in $\Ld^2_\uloc(\R^2)^2$, and with $\curl\!\vi_\e^\circ$, $\curl\!\vi^\circ\in\Pc(\R^2)$. Assume that $\vi_\e^\circ$ and $\vi^\circ$ are bounded in $W^{1,q}(\R^2)^2$ for all $q>2$. In addition,
\begin{enumerate}[(a)]
\item \emph{Dissipative case ($\alpha>0$, $\beta\in\R$), non-decaying setting:}\\
For some $s>0$, assume that $u_\e^\circ\in H^1_\uloc(\R^2;\C)$, that $h\in W^{s+3,\infty}(\R^2)$, $F\in W^{s+2,\infty}(\R^2)^2$ (hence $f\in W^{1,\infty}(\R^2)$ in view of~\eqref{eq:formf}), that $\vi_\e^\circ$, $\vi^\circ$ are bounded in $W^{s+2,\infty}(\R^2)^2$, and that $\curl\!\vi_\e^\circ$, $\curl\!\vi^\circ$, $\Div(a\!\vi_\e^\circ)$ are bounded in $H^{s+1}\cap W^{s+1,\infty}(\R^2)$.
\item \emph{Conservative case ($\alpha=0$, $\beta=1$), decaying setting:}\\
Assume that $u_\e^\circ\in U+H^2(\R^2;\C)$ for some reference map $U\in \Ld^\infty(\R^2;\C)$ with $\nabla^2U\in H^1(\R^2;\C)$, $\nabla|U|\in\Ld^2(\R^2)$, $1-|U|^2\in\Ld^2(\R^2)$, and $\nabla U\in \Ld^p(\R^2;\C)$ for all $p>2$ (typically we may choose $U$ smooth and equal to $e^{iN_\e\theta}$ in polar coordinates outside a ball at the origin). Assume that $h\in W^{3,\infty}(\R^2)$, $\nabla h\in H^2(\R^2)^2$, $F\in H^3\cap W^{3,\infty}(\R^2)^2$, $f\in H^2\cap W^{2,\infty}(\R^2)$, and that we have $\Div F=0$ pointwise, and $a(x)\to1$ uniformly as $|x|\uparrow\infty$. Assume that $\vi_\e^\circ$, $\vi^\circ$ are bounded in $W^{2,\infty}(\R^2)^2$, and that $\curl\!\vi_\e^\circ$, $\curl\!\vi^\circ$ are bounded in $H^1(\R^2)$.
\qedhere
\end{enumerate}
\end{as}

One may observe that  if $N_\e \le O(\Log)$ the well-preparedness assumption \eqref{eq:convincond} implies that most vortices are initially positive.

\subsubsection{Regimes}
We first comment on the different regimes for the number $N_\e$ of vortices.
A first critical threshold is $N_\e=O(\Log)$, as is clear from energy considerations since in this regime the (concentrated) vortex energy $O(N_\e\Log)$ becomes of the same order as the (diffuse) phase energy $O(N_\e^2)$. Another critical threshold is expected to occur for $N_\e=O(\e^{-1})$ due to the overlap of the vortex cores.
We therefore separately consider the dilute regime $N_\e\ll\Log$, the critical regime $N_\e\simeq\Log$, and the nondilute regime $\Log\ll N_\e\ll\e^{-1}$.
In the dissipative case, these regimes lead to drastically different mean-field behaviors (cf.\@ heuristics in Section~\ref{sec:intro-MFL-GL-heur}).
We do not consider here the superdense regime $N_\e\gtrsim\e^{-1}$, which is of totally different nature since the modulus $|u_\e|$ of the order parameter is then expected to enter the limiting equation, thus leading to different compressible fluid-like equations~\cite{BGSS-09,BGSS-10,BDGSS-10,Carles-Danchin-Saut-12}.

As we can play with the relative strengths of interactions, pinning, and applied current,
we now describe the different possible scalings.
From energy considerations, we expect interactions, pinning, and applied current to be of order $O(N_\e^2)$, $O(N_\e\Log)|\nabla h|$, and $O(N_\e\Log)|F|$, respectively. The critical scaling (such that all effects have the same order)
thus amounts to choosing both $\nabla h$ and $F$ of order $O(\frac{N_\e}\Log)$.
In order for the different effects to give a nontrivial $O(1)$ contribution in the mean-field limit, the time rescaling in~\eqref{eq:GL-1} then needs to be chosen as $\lambda_\e=O(\frac{N_\e}\Log)$.
This leads us to the following critical regimes:
\begin{enumerate}[~~\qquad]
\item[\GLu] Dissipative case --- dilute vortex regime:\\
$\alpha>0$, $1\ll N_\e\ll\Log $, $\lambda_\e=\frac{N_\e}{\Log }$, $F=\lambda_\e\hat F$, $h=\lambda_\e\hat h$ (i.e.~$a=\hat a^{\lambda_\e}$);
\item[\GLd] Dissipative case --- critical vortex regime:\\
$\alpha>0$, $N_\e\simeq\Log $, $\lambda_\e=1$, $F=\hat F$, $h=\hat h$ (i.e.~$a=\hat a$);
\item[\GLt] Dissipative case --- nondilute vortex regime:\\
$\alpha>0$, $\Log\ll N_\e\ll\e^{-1} $, $\lambda_\e=\frac{N_\e}\Log$, $F=\lambda_\e\hat F$, $h=\hat h$ (i.e.~$a=\hat a$);
\item[\GP] Conservative case --- nondilute vortex regime:\\
$\alpha=0$, $\beta=1$, $\Log\ll N_\e\ll\e^{-1}$, $\lambda_\e=\frac{N_\e}{\Log }$, $F=\lambda_\e\hat F$, $h=\hat h$ (i.e.~$a=\hat a$);
\end{enumerate}
where $\hat h$ and $\hat F$ are independent of $\e$, and $\hat h\le0$ is bounded from below.
Just as in~\cite{Serfaty-15} the modulated energy approach does not allow us to treat the conservative case with fewer vortices $N_\e\lesssim\Log$, although in that case the same mean-field behavior is formally expected as in the nondilute regime $\Log\ll N_\e\ll\e^{-1}$ (cf.~Section~\ref{sec:intro-MFL-GL-heur}).
Note that the non-degeneracy condition~\eqref{eq:nondegencond} for the pinning weight $a=e^h$ imposes that the pinning potential $h$ remains uniformly bounded, so that $h$ cannot be chosen of critical order $O(\frac{N_\e}\Log)$ when $N_\e\gg\Log$, which explains the non-critical scaling of $h$ in~\GLt{} and~\GP.

Modifying the time rescaling $\lambda_\e$ and the scaling of $h$,
we may also consider various non-critical scalings, for which the pinning either dominates or is dominated by the interactions. In such cases, the limiting equations are substantially simplified. We consider for instance:
\begin{enumerate}[~~\qquad]
\item[\GLup] Dissipative case --- dilute vortex regime --- very weak interactions:\\
$\alpha>0$, $N_\e\ll\Log $, $\lambda_\e=1$, $F=\hat F$, $h=\hat h$;
\item[\GLdp] Dissipative case --- dilute vortex regime --- weak interactions:\\
$\alpha>0$, $N_\e\ll\Log $, $\frac{N_\e}\Log\ll\lambda_\e\ll1$, $F=\lambda_\e\hat F$, $h=\lambda_\e\hat h$;
\item[\GLtp] Dissipative case --- dilute vortex regime --- strong interactions:\\
$\alpha>0$, $N_\e\ll\Log $, $\lambda_\e=\frac{N_\e}\Log$, $F=\lambda_\e\hat F$, $h=\lambda_\e'\hat h$, $\lambda_\e'\ll\lambda_\e$;
\item[\GLqp] Dissipative case --- critical vortex regime --- strong interactions:\\
$\alpha>0$, $N_\e\simeq\Log $, $\lambda_\e=1$, $F=\hat F$, $h=\lambda_\e'\hat h$, $\lambda_\e'\ll1$;
\end{enumerate}
where again $\hat h$ and $\hat F$ are independent of $\e$, and $\hat h\le0$ is bounded from below.
Since in the present work we are mostly interested in pinning effects, we focus on the regimes~\GLup{} and~\GLdp{}, while for~\GLtp{} and~\GLqp{} the pinning effects vanish in the limit and the situation is thus much easier and closer to~\cite{Serfaty-15}.
For simplicity, subscripts ``$\e$'' are systematically dropped from the data $a,h,F,f$.

\subsubsection{Statement of main results}
We are now in position to state our main results.
We start with the dissipative mixed-flow case, and first consider the dilute and the critical vortex regimes with critical scalings~\GLu{} and~\GLd, or with non-critical scalings~\GLup{} and~\GLdp.
The following result generalizes those in~\cite{Kurzke-Spirn-14,Serfaty-15} to the case with pinning and applied current.
Note that the statements are slightly finer in the parabolic case.
The mean-field equations are fluid-like of the form~\eqref{eq:expect-mfl-ode1-vi}, but the incompressibility condition is lost in the critical vortex regime, as first evidenced in~\cite{Serfaty-15} (cf.\@ heuristics in Section~\ref{sec:intro-MFL-GL-heur}).
In the regimes~\GLu{} and~\GLdp, the weight $a$ naturally disappears from the incompressibility condition $\Div\!\vi=0$ due to the assumption $a=\hat a^{\lambda_\e}\to1$ as $\e\downarrow0$.
Although all the proofs are quantitative, we only include qualitative statements to simplify the exposition.

\begin{theor1}[Dissipative case]\label{th:mainGL}
Let Assumption~\ref{as:main}(a) hold, where in particular the initial data $(u_\e^\circ,\vi_\e^\circ,\vi^\circ)$ satisfy the well-preparedness condition~\eqref{eq:convincond}.
For all $\e>0$, let $u_\e\in\Ld^\infty_\loc(\R^+;H^1_\uloc(\R^2;\C))$ denote the unique global solution of~\eqref{eq:GL-1} in $\R^+\times\R^2$.
Then, the following hold for the supercurrent density $j_\e:=\langle\nabla u_\e,iu_\e\rangle$.
\begin{enumerate}[(i)]
\item \emph{Regime~\GLu{} with $\log\Log\ll N_\e\ll\Log$, and $\Div(a\!\vi_\e^\circ)=\Div\!\vi^\circ=0$:}\\
We have $\frac1{N_\e}j_\e\to\vi$ in $\Ld^\infty_\loc(\R^+;\Ld^1_\uloc(\R^2)^2)$ as $\e\downarrow0$, where $\vi$ is the unique global (smooth) solution of
\begin{align}\label{eq:GL1lim}
\qquad\begin{cases}
\partial_t\!\vi=\nabla\!\pre+(\alpha-\Jb\beta)(\nabla^\bot\hat h-\hat F^\bot-2\!\vi)\curl\!\vi,\\
\Div\!\vi=0, \quad\vi\!|_{t=0}=\vi^\circ.
\end{cases}
\end{align}
In the parabolic case $\beta=0$, the same conclusion holds for $1\ll N_\e\lesssim\log\Log$.
\item \emph{Regime~\GLd{} with $\frac{N_\e}\Log\to\lambda\in(0,\infty)$ and $\vi_\e^\circ=\vi^\circ$:}\\
For some $T>0$, we have $\frac1{N_\e}j_\e\to\vi$ in $\Ld^\infty_\loc([0,T);\Ld^1_\uloc(\R^2)^2)$ as $\e\downarrow0$, where $\vi$ is the unique local (smooth) solution of
\begin{align}\label{eq:GL2lim}
\qquad\begin{cases}
\partial_t\!\vi=\alpha^{-1}\nabla(\hat a^{-1}\Div(\hat a\!\vi))+(\alpha-\Jb\beta)(\nabla^\bot\hat h-\hat F^\bot-2\lambda\!\vi)\curl\!\vi,\\
\vi\!|_{t=0}=\vi^\circ,
\end{cases}
\end{align}
in $[0,T)\times\R^2$. In the parabolic case $\beta=0$, this solution $\vi$ can be extended globally, and the above holds with $T=\infty$.
\item \emph{Regime~\GLup{} with $\log\Log\ll N_\e\ll\Log$ and $\vi_\e^\circ=\vi^\circ$:}\\
We have $\frac1{N_\e}j_\e\to\vi$ in $\Ld^\infty_\loc(\R^+;\Ld^1_\uloc(\R^2)^2)$ as $\e\downarrow0$, where $\vi$ is the unique global (smooth) solution of
\begin{align}\label{eq:GL1'lim}
\qquad\begin{cases}
\partial_t\!\vi=\alpha^{-1}\nabla(\hat a^{-1}\Div(\hat a\!\vi))+(\alpha-\Jb\beta)(\nabla^\bot\hat h-\hat F^\bot)\curl\!\vi,\\
\vi\!|_{t=0}=\vi^\circ.
\end{cases}
\end{align}
\item \emph{Regime~\GLdp{} with $\log\Log\ll N_\e\ll\Log$ and $\Div(a\!\vi_\e^\circ)=\Div\!\vi^\circ=0$:}\\
We have $\frac1{N_\e}j_\e\to\vi$ in $\Ld^\infty_\loc(\R^+;\Ld^1_\uloc(\R^2)^2)$ as $\e\downarrow0$, where $\vi$ is the unique global (smooth) solution of
\begin{align}\label{eq:GL2'lim}
\qquad\begin{cases}
\partial_t\!\vi=\nabla\!\pre+(\alpha-\Jb\beta)(\nabla^\bot\hat h-\hat F^\bot)\curl\!\vi,\\
\Div\!\vi=0,\quad \vi\!|_{t=0}=\vi^\circ.
\end{cases}
\end{align}
In the parabolic case $\beta=0$ with $\frac{N_\e}\Log\ll\lambda_\e\lesssim \frac{e^{o(N_\e)}}\Log$, the same conclusion also holds for $1\ll N_\e\lesssim\log\Log$.
\qedhere
\end{enumerate}
\end{theor1}

\begin{rem}
In the regimes~\GLu{} and~\GLdp, the modified data $\vi_\e^\circ$ can for instance be chosen as
\[\vi_\e^\circ:=a^{-1}\nabla^\bot(\Div a^{-1}\nabla)^{-1}\curl\vi^\circ,\]
which indeed satisfies $\Div(a\!\vi_\e^\circ)=0$ and $\curl\!\vi_\e^\circ=\curl\!\vi^\circ$, while the assumption $a\to1$ in $\Ld^\infty(\R^2)$ easily implies $\vi_\e^\circ\to\vi^\circ$ in $\Ld^q(\R^2)^2$ for all $q>2$, hence $\vi_\e^\circ\to\vi^\circ$ in $\Ld^2_\uloc(\R^2)^2$.
\end{rem}

We turn to the nondilute vortex regime~\GLt. The following result is only proven to hold in the parabolic case in the weakly nondilute regime $\Log\ll N_\e\ll\Log\log\Log$, and gives rise to a new degenerate mean-field equation that is studied in detail in the companion article~\cite{D-16}. This result is new even in the case without pinning and applied current, as it indeed treats a regime left open in~\cite{Serfaty-15}.
Note that a slightly stronger well-posedness condition is needed here; this condition is however still reasonable since for any smooth $\vi^\circ$ and any $0<\delta <1$ one may construct a configuration $u_\e^\circ$ that satisfies it, cf.~\cite{SS-book}.

\begin{theor1}[Nondilute parabolic case]\label{th:main-hd}
Let Assumption~\ref{as:main}(a) hold, and assume that the initial data $(u_\e^\circ,\vi_\e^\circ,\vi^\circ)$ satisfy $\vi_\e^\circ=\vi^\circ$ and satisfy the following slightly stronger well-preparedness condition, for some $\delta>0$,
\begin{multline*}
\D_{\e}^{*,\circ}:=\sup_{R\ge1}\sup_{z\in\R^2}\int \frac{a\chi_R^z}2\Big(|\nabla u_\e^\circ-iu_\e^\circ N_\e\!\vi^\circ\!|^2+\frac a{2\e^2}(1-|u_\e^\circ|^2)^2\\
-\Log\curl\langle\nabla u_\e^\circ,iu_\e^\circ\rangle\Big)~\lesssim~ N_\e^{2-\delta}.
\end{multline*}
For some $s>3$, assume in addition that $h\in W^{s+2,\infty}(\R^2)$, $F\in W^{s+1,\infty}(\R^2)^2$, and that $\vi^\circ\in W^{s+1,\infty}(\R^2)^2$, $\md^\circ:=\curl\!\vi^\circ\in \Pc\cap H^s(\R^2)$, and $\dd^\circ:=\Div(a\!\vi^\circ)\in H^{s-1}(\R^2)$.
For all $\e>0$, let $u_\e\in\Ld^\infty_\loc(\R^+;H^1_\uloc(\R^2;\C))$ denote the unique global solution of~\eqref{eq:GL-1} in $\R^+\times\R^2$.
Then, in the regime~\GLt{} with $\Log\ll N_\e\ll\Log\log\Log$, in the parabolic case $\beta=0$, the supercurrent density $j_\e:=\langle\nabla u_\e,iu_\e\rangle$ satisfies $\frac1{N_\e}j_\e\to\vi$ in $\Ld^\infty_\loc(\R^+;\Ld^1_\uloc(\R^2)^2)$ as $\e\downarrow0$, where $\vi$ is the unique global (smooth) solution of
\begin{equation}\label{eq:GL3lim}
\begin{cases}
\partial_t\!\vi=-(\hat F^\bot+2\!\vi)\,\curl\!\vi,\\
\vi\!|_{t=0}=\vi^\circ.
\end{cases}
\qedhere
\end{equation}
\end{theor1}

\begin{rem}
As explained in Section~\ref{sec:intro-MFL-GL-heur}, the same mean-field result is expected to hold in the whole nondilute regime $\Log\ll N_\e\ll\e^{-1}$ (up to a suitable well-preparedness condition), but this remains an open question. A corresponding result is also expected in the dissipative mixed-flow case, but then the correct limiting equation is actually unclear since the local well-posedness of the mixed-flow version of the degenerate equation~\eqref{eq:GL3lim}, that is,
\begin{align*}
\partial_t\!\vi=-(\alpha-\Jb\beta)(\hat F^\bot+2\!\vi)\,\curl\!\vi,
\end{align*}
remains unresolved~\cite{D-16}.
\end{rem}

\medskip
We finally turn to the conservative case in the regime~\GP.
For $N_\e\gg\Log$, the well-preparedness condition~\eqref{eq:convincond} is naturally simplified, as the vortex self-interaction energy is no longer dominant.
Note that the pinning force $-\nabla\hat h$ is absent from the limiting equation since in the regime~\GP{} the interaction and the applied current dominate. The pinning weight $a=\hat a$ nevertheless remains in the incompressibility condition $\Div(\hat a\!\vi)=0$.
The mean-field equation is then a variant of the 2D Euler equation~\eqref{eq:expect-mfl-ode1-vi+} and is known as the lake equation in the context of 2D shallow water fluid dynamics (cf.~e.g.~\cite{CHL-1,CHL-2}).
The following result generalizes that in~\cite{Serfaty-15} to the case with pinning and applied current.

\begin{theor1}[Conservative case]\label{th:mainGP}
Let Assumption~\ref{as:main}(b) hold, and assume that the initial data satisfy $\vi_\e^\circ=\vi^\circ$ and satisfy the following simplified well-preparedness condition,
\[\Ec_{\e}^\circ:=\int_{\R^2} \frac a2\Big(|\nabla u_\e^\circ-iu_\e^\circ N_\e\!\vi^\circ\!|^2+\frac a{2\e^2}(1-|u_\e^\circ|^2)^2\Big)~\ll~ N_\e^2.\]
For all $\e>0$, let $u_\e\in\Ld^\infty_\loc(\R^+; U+H^2(\R^2;\C))$ denote the unique global solution of~\eqref{eq:GL-1} in $\R^+\times\R^2$.
Then, in the regime~\GP{} with $\Log\ll N_\e\ll\e^{-1}$, we have $\frac1{N_\e}j_\e\to \vi$ in $\Ld^\infty_\loc(\R^+;(\Ld^1+\Ld^2)(\R^2)^2)$ as $\e\downarrow0$, where $\vi$ is the unique global (smooth) solution of
\begin{equation}\label{eq:GPlim}
\begin{cases}\partial_t\!\vi=\nabla\!\pre-(\hat F-2\!\vi^\bot)\,\curl\!\vi,\\
\Div( \hat a\!\vi)=0,\quad \vi^t\!|_{t=0}=\vi^\circ.
\end{cases}
\qedhere
\end{equation}
\end{theor1}

\begin{rem}
As explained in Section~\ref{sec:intro-MFL-GL-heur}, the same mean-field limit result is actually expected to hold for all $1\ll N_\e\ll\e^{-1}$ (cf.\@ indeed~\cite{Jerrard-Spirn-15} for the other extreme regime $1\ll N_\e\lesssim (\log\Log)^{1/2}$), but this remains an open question.
As in~\cite{Serfaty-15}, we need to restrict here to the nondilute regime $N_\e\gg\Log$ due to the difficulty of controlling the velocity of individual vortices, which is related to the lack of control on $\int_{\R^2}|\partial_t u_\e|^2$.
Note however that in the dilute regime the conservative vortex dynamics formally behaves like the conservative flow for Coulomb particles and that the mean-field limit of the latter system can be rigorously established by a modulated energy approach~\cite{Serfaty-18}.
\end{rem}

\medskip

The structure of the mean-field equations~\eqref{eq:GL1lim}--\eqref{eq:GPlim} is more transparent when expressed in terms of the mean-field vorticity $\md:=\curl\!\vi$. In the case of~\eqref{eq:GL1lim} (and correspondingly for~\eqref{eq:GPlim}), the vorticity $\md$ satisfies a nonlocal nonlinear continuity equation,
\begin{align}\label{eq:GL1lim-bis}
\begin{cases}\partial_t\!\md=\Div\!\big((\alpha-\Jb\beta)(\nabla\hat h-\hat F+2\!\vi^\bot)\md\big),\\
\curl\!\vi=\md,\quad\Div\!\vi=0.\end{cases}
\end{align}
In the case of~\eqref{eq:GL2lim}, the vorticity $\md$ satisfies a similar equation coupled with a convection-diffusion equation for the divergence $\dd:=\Div(\hat a\!\vi)$,
\begin{gather}\label{eq:GL2lim-bis}
\begin{cases}\partial_t\!\md=\Div\!\big((\alpha-\Jb\beta)(\nabla\hat h-\hat F+2\lambda\!\vi^\bot)\md\big),\\
\partial_t\!\dd-\alpha^{-1}\triangle\dd+\alpha^{-1}\Div(\dd\!\nabla\hat h)=\Div\big((\alpha-\Jb\beta)(\nabla^\bot \hat h-\hat F^\bot-2\lambda\!\vi)\hat a\!\md\big),\\
\curl\!\vi=\md,\quad\Div(\hat a\!\vi)=\dd,
\end{cases}
\end{gather}
while the convection-diffusion equation becomes degenerate in the case of~\eqref{eq:GL3lim} and then takes on the following guise, in terms of $\theta:=\Div\!\vi$,
\begin{gather}\label{eq:GL3lim-bis}
\begin{cases}\partial_t\!\md=\Div\!\big((-\hat F+2\lambda\!\vi^\bot)\md\big),\\
\partial_t\theta=\Div\big((-\hat F^\bot-2\lambda\!\vi)\md\big),\\
\curl\!\vi=\md,\quad\Div\!\vi=\theta.
\end{cases}
\end{gather}
A detailed study of these families of equations is provided in the companion article~\cite{D-16}, including global existence results for rough initial data.
In the cases~\eqref{eq:GL1'lim} and~\eqref{eq:GL2'lim}, which correspond to scalings with negligible interactions, the limiting vorticity $\md$ rather satisfies a simple linear continuity equation,
\begin{align}\label{eq:GL1'lim-bis}
\partial_t\!\md=\Div\!\big((\alpha-\Jb\beta)(\nabla\hat h-\hat F)\md\big).
\end{align}
Let us emphasize the nonlocal character of~\eqref{eq:GL1lim-bis}--\eqref{eq:GL3lim-bis}: in~\eqref{eq:GL1lim-bis} and~\eqref{eq:GL3lim-bis} the equations $\curl\!\vi=\md$ and $\Div\!\vi=\theta$ are (formally) solved as
\[\vi=\nabla^\bot\triangle^{-1}\md+\nabla\triangle^{-1}\theta,\]
while in~\eqref{eq:GL2lim-bis} the equations $\curl\!\vi=\md$ and $\Div(a\!\vi)=\dd$ lead to
\[\vi=a^{-1}\nabla^\bot(\Div\! a^{-1}\nabla)^{-1}\md+\nabla(\Div\! a\nabla)^{-1}\dd.\]

\subsection{Heuristic derivation of the mean-field equations}\label{sec:intro-MFL-GL-heur}
In order to illustrate the structure of the 2D mesoscopic model~\eqref{eq:GL-1} and the importance of a careful vortex analysis, we now give a short heuristic derivation of the mean-field equations~\eqref{eq:GL1lim}--\eqref{eq:GPlim}. This derivation brings a more intuitive explanation of the compressibility of the mean-field equations in the nondilute dissipative case, and it further predicts the expected behavior in the different regimes for which our analysis fails.
For simplicity of the discussion, we focus here on the simpler case without pinning and applied current, thus considering the following version of~\eqref{eq:GL-1},
\begin{align}\label{eq:GL-1-intro}
\lambda_\e(\alpha+i\Log \beta)\partial_tu_\e=\triangle u_\e+\frac{u_\e}{\e^2}(1-|u_\e|^2).
\end{align}
Next to the supercurrent density $j_\e$ and the vorticity $\mu_\e$, we define the vortex velocity
\[V_\e:=2\langle\nabla u_\e,i\partial_t u_\e\rangle,\]
the Ginzburg-Landau energy density
\[e_\e:=\frac12\Big(|\nabla u_\e|^2+\frac{1}{2\e^2}(1-|u_\e|^2)^2\Big),\]
and the stress-energy tensor
\begin{align*}
(S_\e)_{kl}:=\langle\partial_ku_\e,\partial_lu_\e\rangle - \frac{\delta_{kl}}2\Big(|\nabla u_\e|^2+\frac{1}{2\e^2}(1-|u_\e|^2)^2\Big).
\end{align*}
The definition of $V_\e$ easily leads to the following algebraic identities (cf.~\cite{SS-prod}),
\begin{align}\label{eq:identity-1-intro}
\partial_tj_\e=V_\e+\nabla\langle \partial_tu_\e,iu_\e\rangle,\qquad\partial_t\mu_\e=\curl V_\e.
\end{align}
By~\eqref{eq:GL-1-intro}, we further find the following identities for the divergence of the supercurrent density
\begin{align}\label{eq:identity-2-intro0}
\Div\! j_\e&=\langle \triangle u_\e,iu_\e\rangle=\lambda_\e\alpha\langle\partial_tu_\e,iu_\e\rangle-\frac{\lambda_\e\beta\Log}2\partial_t(1-|u_\e|^2),
\end{align}
for the divergence of the stress-energy tensor
\begin{align}\label{pdivs-intro}
\Div\! S_\e&=\left\langle \nabla u_\e,\triangle u_\e+\frac{u_\e}{\e^2}(1-|u_\e|^2)\right\rangle=\lambda_\e\alpha \langle \nabla u_\e,\partial_t u_\e\rangle+\frac{\lambda_\e \Log\beta}2V_\e,
\end{align}
and for the time derivative of the energy density
\begin{align*}
\partial_te_\e=\Div\langle\nabla u_\e,\partial_tu_\e\rangle-\lambda_\e\alpha |\partial_tu_\e|^2.
\end{align*}
Using~\eqref{pdivs-intro} to replace $\langle\nabla u_\e,\partial_tu_\e\rangle$, this last identity rather takes on the following guise,
\begin{align}\label{eq:deriv-time-endens}
\lambda_\e\alpha \partial_te_\e=\Div\!\Div \!S_\e-\frac{\lambda_\e \Log\beta}2\Div\! V_\e-\lambda_\e^2\alpha^2|\partial_tu_\e|^2.
\end{align}
If there is no excess energy, the Ginzburg-Landau energy is expected to split into a (concentrated) vortex energy of order $O(N_\e\Log)$ and a (diffuse) phase energy of order $O(N_\e^2)$. Since the quantity $|1-|u_\e|^2|$ is bounded by $\e(e_\e)^{1/2}$, it is therefore formally of order $O(\e (N_\e\Log+N_\e^2)^{1/2})$, which is negligible as soon as $N_\e$ is much smaller than~$O(\e^{-1})$. Choosing the critical scaling $\lambda_\e:=\frac{N_\e}\Log$, the above identities~\eqref{eq:identity-2-intro0}, \eqref{pdivs-intro}, and~\eqref{eq:deriv-time-endens} then become
\begin{gather}\label{eq:identity-2-intro}
\Div\!\frac{j_\e}{N_\e}\approx\alpha\frac{\langle\partial_tu_\e,iu_\e\rangle}{\Log},\\
2\Div\! \frac{S_\e}{N_\e^2}=2\alpha \frac{\langle \nabla u_\e,\partial_t u_\e\rangle}{N_\e\Log}+\beta\frac{V_\e}{N_\e},\label{eq:identity-3-intro}\\
\alpha \partial_t\frac{2e_\e}{N_\e\Log}=2\Div\!\Div\!\frac{S_\e}{N_\e^2}-\beta\Div\!\frac{V_\e}{N_\e}-2\alpha^2 \frac{|\partial_tu_\e|^2}{\Log^2}.\label{eq:identity-4-intro}
\end{gather}
In order to take weak limits in these equations and to characterize the limiting evolution, we need to establish a priori bounds on all the terms and to find relations between the weak limits of the various quantities.
In the limit $\e\downarrow0$, vortices become point-like and the vorticity $\mu_\e$ looks like a sum of $N_\e$ Dirac masses, cf.~\eqref{muejac}.
We may thus formally assume that the rescaled vorticity $\frac1{N_\e}{\mu_\e}$ converges weakly-* to some probability measure $\md\in\Ld^\infty(\R^+;\Pc(\R^2))$. Similarly, the vortex velocity $V_\e$ concentrates at the vortex locations, and we may assume that its rescaled version $\frac1{N_\e}V_\e$ converges weakly-* to some measure $\Vi\in\Ld^\infty_\loc(\R^+;\M(\R^2)^2)$.
For $p<2$ the rescaled supercurrent density $\frac1{N_\e}j_\e$ may be assumed to be bounded in $\Ld^{p}_\loc(\R^2)$ and thus to converge weakly to some limit $\vi\in\Ld^\infty_\loc(\R^+;\Ld^p_\loc(\R^2)^2)$, but it cannot converge in $\Ld^2_\loc(\R^2)$ due to energy concentration. In short,
\begin{align}\label{eq:convergences-intro-MFLGL}
\frac1{N_\e}\mu_\e\cvf*\md,\qquad \frac1{N_\e}V_\e\cvf*\Vi,\qquad \frac1{N_\e}j_\e\cvf{}\vi.
\end{align}
Quadratic quantities such as $e_\e\approx\frac12|j_\e|^2$ and $|\partial_tu_\e|^2$ have a part that concentrates at vortex locations in the limit $\e\downarrow0$, and their concentrated and diffuse parts must be analyzed separately.
If there is no excess energy, the concentrated part of the energy density $e_\e\approx\frac12|j_\e|^2$ should coincide with the vortex self-interaction energy $\frac12\Log\mu_\e\approx\frac12N_\e\Log\!\md$ (this is made precise by the Jerrard-Sandier ball construction lower bound~\cite{Sandier-98,Jerrard-99}), while the diffuse part should be given by $\frac12N_\e^2|\!\vi\!|^2$ in terms of the weak limit $\vi$ of $\frac1{N_\e}j_\e$, cf.~\eqref{eq:convergences-intro-MFLGL}. Such properties could be phrased in terms of defect measures for the convergence of $\frac1{N_\e}j_\e$ in $\Ld^2_\loc(\R^2)$, cf.~\cite{SS-prod}. Similarly, if there is no excess energy, the concentrated part of $|\partial_tu_\e|^2$ should coincide with $$\frac12\Log \mu_\e^{-1}|V_\e|^2\approx \frac12 N_\e\Log\!\md^{-1}\!|\!\Vi\!|^2$$ in terms of the vortex velocity and the vorticity (this is made precise by the so-called product estimate~\cite{SS-prod}), while identity~\eqref{eq:identity-2-intro} in the form $$\alpha^{2}|\partial_tu_\e|^2\approx\alpha^{2}|\langle\partial_tu_\e,iu_\e\rangle|^2\approx\lambda_\e^{-2}|\Div j_\e|^2$$ suggests that the diffuse part of $\alpha^{2}|\partial_tu_\e|^2$ should simply be given by $\Log^2|\Div\!\vi\!|^2$. In short,
\begin{gather}\label{eq:approx-e-eps}
2e_\e\approx |j_\e|^2\approx N_\e\Log\!\md+N_\e^2|\!\vi\!|^2,\\
2\alpha^2|\partial_tu_\e|^2\approx2\Log^2|\!\Div\!\vi\!|^2+\alpha^2N_\e\Log\!\md^{-1}\!|\!\Vi\!|^2.\label{eq:approx-V-eps}
\end{gather}
Let us now turn to the limit of the stress-energy tensor $S_\e\approx j_\e\otimes j_\e-\frac{\Id}2|j_\e|^2$.
Due to the isotropy of the vortex core energy, in link with equipartition properties of the Ginzburg-Landau energy~\cite{Kurzke-Spirn-10}, the stress-energy tensor $S_\e$ should not be sensitive to the concentrated part of $j_\e$ in $\Ld^2_\loc(\R^2)$,
and we simply expect $\frac1{N_\e^2}S_\e\approx \vi\otimes\vi-\frac\Id2|\!\vi\!|^2$ in terms of the weak limit $\vi$ of $\frac1{N_\e}j_\e$ (see also~\cite[Chapter~13]{SS-book}). In particular,
\begin{align}\label{eq:approx-S-eps}
\Div\frac{S_\e}{N_\e^2}\approx \Div\Big(\vi\otimes\vi-\frac\Id2|\!\vi\!|^2\Big)=\vi^\bot\!\md+\vi\Div \!\vi.
\end{align}
Inserting the convergences~\eqref{eq:convergences-intro-MFLGL} and the identifications~\eqref{eq:approx-e-eps}, \eqref{eq:approx-V-eps}, and~\eqref{eq:approx-S-eps} into identities~\eqref{eq:identity-2-intro}, \eqref{eq:identity-3-intro}, and \eqref{eq:identity-4-intro}, we obtain after straightforward simplifications,
\begin{gather}
\Div \!\vi \approx\alpha\frac{\langle\partial_tu_\e,iu_\e\rangle}{\Log},\label{eq:identity-5-intro}\\
2\!\vi^\bot\!\md+2\!\vi\Div\!\vi\approx2\alpha\frac{\langle \nabla u_\e,\partial_t u_\e\rangle}{N_\e\Log}+\beta\!\Vi,\label{eq:identity-6-intro}\\
\alpha\partial_t\!\md+2\alpha \lambda_\e\!\vi\cdot\partial_t\!\vi\approx 2\Div(\vi^\bot\!\md)+2\!\vi\cdot\nabla\Div\!\vi-\beta\Div\!\Vi-\lambda_\e\alpha^2\!\md^{-1}\!|\!\Vi\!|^2.\label{eq:identity-7-intro}
\end{gather}
Further inserting~\eqref{eq:identity-5-intro} into~\eqref{eq:identity-1-intro}, we obtain
\begin{gather}
\alpha\partial_t\!\vi\approx\alpha\!\Vi+\lambda_\e^{-1}\nabla\Div\!\vi,\qquad\partial_t\!\md=\curl\!\Vi.\label{eq:identity-5-0-intro}
\end{gather}
We now separately consider the conservative and the dissipative cases.

\smallskip
\begin{itemize}
\item[$\bullet$] \emph{Conservative case ($\alpha=0$, $\beta=1$).}\\
Identity~\eqref{eq:identity-5-intro} yields $\Div\!\vi=0$, while identity~\eqref{eq:identity-6-intro} takes the form $\Vi=2\!\vi^\bot\!\md$. Injecting this into~\eqref{eq:identity-5-0-intro} then leads to
\[\partial_t\!\md=2\Div(\vi\!\md),\qquad\curl\!\vi=\md,\qquad\Div\!\vi=0,\]
or alternatively,
\[\partial_t\!\vi=\nabla\!\pre+2\!\vi^\bot\!\curl\!\vi,\qquad\Div\!\vi=0.\]
In the regime $1\ll N_\e\ll\e^{-1}$ with the critical choice $\lambda_\e=\frac{N_\e}\Log$, the rescaled supercurrent density $\frac1{N_\e}j_\e$ is thus expected to converge to the solution $\vi$ of this incompressible 2D Euler equation.

\smallskip
\item[$\bullet$] \emph{Dissipative case ($\alpha>0$, $\alpha^2+\beta^2=1$).}\\
Injecting~\eqref{eq:identity-5-0-intro} into~\eqref{eq:identity-7-intro} yields
\begin{align}\label{eq:intro-correction-eqnMFLGL-md0}
\partial_t\!\md\approx \frac2\alpha\Div(\vi^\bot\!\md)-\frac\beta\alpha\Div\!\Vi-\lambda_\e\!\Vi\cdot(2\!\vi+\alpha\!\md^{-1}\!\Vi).
\end{align}
Comparing with~\eqref{eq:identity-5-0-intro} in the form $\partial_t\!\md=\curl\!\Vi$, we deduce in the parabolic case ($\alpha=1$, $\beta=0$) that $\Vi=-2\!\vi\!\md$, while a more careful computation in the general mixed-flow case leads to $\Vi=-2\alpha\!\vi\!\md+2\beta\!\vi^\bot\!\md$. Injecting this into~\eqref{eq:identity-5-0-intro}, we obtain
\begin{align}\label{eq:intro-correction-eqnMFLGL}
\partial_t\!\vi\approx(\lambda_\e\alpha)^{-1}\nabla\Div\!\vi+2(-\alpha\!\vi+\beta\!\vi^\bot)\,\curl\!\vi.
\end{align}
We need to distinguish between three regimes:
\begin{itemize}
\item \emph{Dilute regime $1\ll N_\e\ll\Log$:}\\
As $\lambda_\e\ll1$, equation~\eqref{eq:intro-correction-eqnMFLGL-md0} and the identification of $\Vi$ then yield
\begin{align*}\label{eq:intro-correction-eqnMFLGL-md0}
\partial_t\!\md=\Div(2(\alpha\!\vi^\bot+\beta\!\vi)\md),
\end{align*}
while equation~\eqref{eq:intro-correction-eqnMFLGL} together with~\eqref{eq:identity-5-intro} leads to $\Div\vi=0$, so that we deduce, using the relation $\Div\vi=0$ in the form $\vi=\nabla^\bot\triangle^{-1}\md$, and setting $\pre:=-2\triangle^{-1}\Div((-\alpha\!\vi+\beta\!\vi^\bot)\md)$,
\[\partial_t\!\vi=\nabla\!\pre+2(-\alpha\!\vi+\beta\!\vi^\bot)\,\curl\!\vi,\qquad \Div\!\vi=0.\]
\item \emph{Critical regime $N_\e\simeq\Log$ with $\lambda_\e\to\lambda\in(0,\infty)$:}\\
Equation~\eqref{eq:intro-correction-eqnMFLGL} then becomes
\[\lambda\partial_t\!\vi=\alpha^{-1}\nabla\Div\!\vi+2\lambda(-\alpha\!\vi+\beta\!\vi^\bot)\,\curl\!\vi.\]
\item \emph{Nondilute regime $\Log\ll N_\e\ll\e^{-1}$:}\\
As $\lambda_\e\gg1$, equation~\eqref{eq:intro-correction-eqnMFLGL} then becomes
\[\partial_t\!\vi=2(-\alpha\!\vi+\beta\!\vi^\bot)\,\curl\!\vi.\]
\end{itemize}
In these different regimes, with the critical choice $\lambda_\e=\frac{N_\e}\Log$, the rescaled supercurrent density $\frac1{N_\e}j_\e$ is thus expected to converge to the solution $\vi$ of one of the above equations.
\end{itemize}

\smallskip\noindent
This careful heuristic argument therefore allows to predict the whole family of announced mean-field evolutions~\eqref{eq:GL1lim}--\eqref{eq:GPlim}, and formally explains the (a priori unexpected) higher variety of possible behavior in the dissipative case depending on the vortex density regime. Note however that this formal argument relies on important unproven assumptions such as the absence of energy excess and the equipartition of energy, which are bypassed by the modulated energy approach.

\subsection{Case with gauge}\label{sec:gauge}
In the dissipative case, it is interesting to make the computations also in the case with magnetic gauge, which is the relevant physical model for superconductors. The evolution equation~\eqref{eq:Gorkov-Eliashberg-mixed-flow} is then replaced by the following, as first derived by Schmid~\cite{Schmid-66} and by Gor'kov and Eliashberg~\cite{GE-68}, here written in the mixed-flow case, with strong (critically scaled) applied electric current $\Log J_{\text{ex}}:\partial\Omega\to\R^2$ and applied magnetic field $\Log H_{\text{ex}}:\partial\Omega\to\R$ at the boundary, and with a non-uniform pinning weight $a$,
\begin{align*}
\begin{cases}
(\alpha+i\Log \beta)(\partial_tw_\e-iw_\e\Psi_\e)=\nabla_{B_\e}^2 w_\e+\frac{w_\e}{\e^2}(a-|w_\e|^2),&\text{in $\R^+\times\Omega$,}\\
\sigma(\partial_tB_\e-\nabla\Psi_\e)=\nabla^\bot\curl B_\e+\langle iw_\e,\nabla_{B_\e}w_\e\rangle,&\text{in $\R^+\times\Omega$,}\\
\curl B_\e=\Log H_{\text{ex}},&\text{on $\R^+\times\partial\Omega$,}\\
n\cdot\nabla_{B_\e} w_\e=iw_\e \Log n\cdot J_{\text{ex}},&\text{on $\R^+\times\partial\Omega$,}\\
w_\e|_{t=0}=w_\e^\circ,\end{cases}
\end{align*}
where $B_\e:\R^+\times\R^2\to\R^2$ is the gauge of the magnetic field $\curl B_\e$, where $\Psi_\e:\R^+\times\R^2\to\R$ is the gauge of the electric field $-\partial_tB_\e+\nabla\Psi_\e$, where $\nabla_{B_\e}:=\nabla-iB_\e$ denotes the usual covariant derivative, and where the real parameter $\sigma\ge0$ characterizes the relaxation time of the magnetic field.
As the presence of the boundary creates important mathematical difficulties, we again modify the above mesoscopic model and consider a suitable version on the whole plane with boundary conditions ``at infinity''. As in~\cite{Tice-10,S-Tice-11}, the boundary conditions can be changed into a bulk force term by a suitable change of phase in the unknown functions. Also dividing $w_\e$ by the expected density $\sqrt{a}$ and making a suitable choice of the gauge $\Psi_\e$, we arrive at the following equation for the couple $(u_\e,A_\e)$ replacing the triplet $(w_\e,B_\e,\Psi_\e)$,
\begin{align*}
\begin{cases}
\lambda_\e(\alpha+i\Log \beta)\partial_tu_\e=\nabla_{A_\e}^2 u_\e+\frac{au_\e}{\e^2}(1-|u_\e|^2)\\
\hspace{4cm}+\nabla h\cdot\nabla_{A_\e}u_\e+i\Log F^\bot\cdot\nabla_{A_\e}u_\e+fu_\e,&\text{in $\R^+\times\Omega$,}\\
\sigma\partial_tA_\e=\nabla^\bot \curl A_\e +a\langle iu_\e,\nabla_{A_\e}u_\e\rangle-\frac12\Log aF^\bot(1-|u_\e|^2),&\text{in $\R^+\times\Omega$,}\\
u_\e|_{t=0}=u_\e^\circ,
\end{cases}
\end{align*}
with $h:=\log a$, $f:\R^2\to\R$, and $F:\R^2\to\R^2$, where $F$ and $f$ are given explicitly in terms of $a$, $J_{\text{ex}}$, and $H_{\text{ex}}$. We refer to~\cite[Section~2]{S-Tice-11} for the detail of the derivation of this equation from the above model.
Natural quantities associated with this transformed model are the gauge-invariant supercurrent and vorticity,
\[j_\e:=\langle\nabla_{A_\e}u_\e,iu_\e\rangle,\qquad \mu_\e:=\curl(j_\e+A_\e),\]
and the electric field
\[E_\e:=-\partial_tA_\e.\]
We believe that the derivation of mean-field limit results from this gauged version of the model~\eqref{eq:GL-1} does  not cause any major difficulty, and can be achieved following the kind of computations performed in~\cite[Appendix~C]{Serfaty-15}.
Formally, the corresponding results to Theorem~\ref{th:mainGL} are the convergences
\[\frac{j_\e}{N_\e}\to\vi,\qquad\frac{\mu_\e}{N_\e}\to\md:=\curl\vi+\Hd,\qquad\frac{\curl A_\e}{N_\e}\to \Hd,\qquad\frac{E_\e}{N_\e}\to \Ed,\]
where the limiting triplet $(\vi,H,E)$ satisfies, in the dilute regime~\GLu{},
\begin{align}\label{eq:GL1gauge}
\begin{cases}
\partial_t\!\vi-\Ed=\nabla\!\pr+(\alpha-\Jb\beta)(\nabla^\bot\hat h-\hat F^\bot-2\!\vi)\md,\\
\partial_t\!\Hd=-\curl\Ed,\\
-\sigma\!\Ed=\vi+\nabla^\bot\Hd,\quad \Div\vi=0,
\end{cases}\end{align}
and in the critical regime~\GLd{},
\begin{align}\label{eq:GL2gauge}
\begin{cases}
\partial_t\!\vi-\Ed=\alpha^{-1}\nabla(\hat a^{-1}\Div(\hat a\!\vi))+(\alpha-\Jb\beta)(\nabla^\bot\hat h-\hat F^\bot-2\lambda\!\vi)\md,\\
\partial_t\!\Hd=-\curl \Ed,\\
-\sigma\!\Ed=\vi+\nabla^\bot\Hd,
\end{cases}\end{align}
while in the non-critical scalings~\GLup--\GLdp{} the equations are obtained from the above by removing the nonlinear interaction terms $\vi\!\md$. The structure of these equations is maybe more transparent at the level of the vorticity $\md:=\curl\vi+\Hd$: the system~\eqref{eq:GL1gauge} takes the form
\begin{align*}
\begin{cases}
\partial_t\!\md=\Div\big((\alpha-\Jb\beta)(\nabla\hat h-\hat F+2\!\vi^\bot)\md\big),\\
\sigma\partial_t\!\Hd-\triangle\Hd+\Hd=\md,\\
\Div\vi=0,\quad\curl\vi=\md-H
\end{cases}\end{align*}
while~\eqref{eq:GL2gauge} becomes for $\sigma>0$,
\begin{align*}
\begin{cases}
\partial_t\!\md=\Div\big((\alpha-\Jb\beta)(\nabla\hat h-\hat F+2\!\vi^\bot)\md\big),\\
\partial_t\!\dd-\alpha^{-1}\triangle\dd+\alpha^{-1}\Div(\dd\nabla\hat h)+\frac1\sigma\dd\\
\hspace{3cm}=-\frac1\sigma\hat a\nabla\hat h\cdot\nabla^\bot H+\Div\big((\alpha-\Jb\beta)(\nabla^\bot\hat h-\hat F^\bot-2\lambda\!\vi)\hat a\!\md\big),\\
\sigma\partial_t\!\Hd-\triangle\Hd+\Hd=\md,\\
\Div(\hat a\!\vi)=\dd,\quad\curl\vi=\md-H,
\end{cases}\end{align*}
that is, a continuity equation for $\md$ coupled with a linear heat equation for $\Hd$, and in the case~\eqref{eq:GL2gauge} further coupled with a convection-diffusion equation for the divergence $\dd:=\Div(\hat a\!\vi)$. For simplicity, we only focus in this work on the model without gauge~\eqref{eq:GL-1}.

\subsection{Further questions: homogenization regimes}\label{sec:homog}

So far, we have considered the mean-field regimes for the vortices with a pinning force $\nabla h$ which varies at the macroscopic scale.
However, the most interesting situation from the modeling viewpoint is to let the pinning weight $a$ oscillate quickly at some mesoscopic scale $\eta_\e\ll1$.
In real-life materials, the way in which the impurities are inserted typically leads them to be uniformly and randomly scattered in the sample.
This is naturally modeled as
\begin{align}\label{eq:def-pin-a}
\hat a(x):=\hat a^0\big(x,\tfrac1{\eta_\e}x\big)^{\eta_\e},
\end{align}
where for all $x$ the function $\hat a^0(x,\cdot)$ is a typical realization of some ($\e$-independent) nonnegative stationary random field, and the pinning force then takes the form
\begin{align}\label{eq:def-pin-h}
\nabla\hat h(x)=\nabla_2\hat h^0\big(x,\tfrac1{\eta_\e}x\big)+\eta_\e\nabla_1\hat h^0\big(x,\tfrac1{\eta_\e}x\big),
\end{align}
in terms of $\hat h:=\log\hat a$ and $\hat h^0:=\log\hat a^0$.
We refer to $\eta_\e$ as the ``pin separation'', and for simplicity we assume that $\hat a^0$ is periodic in its second variable.

This leads to the question of combining the mean-field limit for the Ginzburg-Landau vortex dynamics with a homogenization limit. In other words, can one perform the derivation of a limiting equation as $\e\downarrow 0$, $N_\e\uparrow\infty$, and $\eta_\e\downarrow 0$, and in which regimes does it hold?
While the homogenization of the (static) Ginzburg-Landau energy functional with pinning has been studied in some settings~\cite{ASS-01,Andre-Bauman-Phillips-03,DosSantos-13}, we believe that these homogenization questions in the dynamical case are particularly challenging.
They are in fact already very hard for just a finite number of vortices: studying the limit as $\eta \downarrow 0$ of the discrete dynamics~\eqref{ode1} with pinning force of the form~\eqref{eq:def-pin-h}
is a homogenization question for a system of nonlinear coupled ODEs
and is notoriously difficult.
This difficulty is related to the complexity of the collective effects of the interacting vortices and to the possible ``glassy'' properties predicted by physicists for such systems~\cite{Giamarchi-Bhatta-02} due to the subtle competition between vortex interactions and disorder.
Justifying suitable homogenized mean-field equations is thus a crucial question since such equations should enclose all the key dynamical properties of vortex matter; we briefly comment on it below.

\subsubsection{Diagonal and non-diagonal regimes}

As explained in Section~\ref{chap:modarghomog}, our modulated energy methods are not adapted to include homogenization effects: they only allow to treat a diagonal regime, that is, when the pin separation $\eta_\e$ tends very slowly to $0$, in which case the homogenization limit can simply be performed {\it after} the mean-field limit.
The limiting behavior of the rescaled supercurrent $\frac1{N_\e}j_\e$ is then reduced to that of the mean-field equations~\eqref{eq:GL1lim}--\eqref{eq:GL2'lim} with wiggly pinning force~\eqref{eq:def-pin-h}, that is, a (periodic) homogenization problem for the mean-field equations.

\begin{cor}\label{cor:homogdiag}
Let the same assumptions hold as in Theorem~\ref{th:mainGL}, with a wiggly pinning weight~\eqref{eq:def-pin-a}. In the regime~\GLd, we restrict to the parabolic case.
Then there exists a sequence $\eta_{\e,0}\downarrow0$ (depending on all the data of the problem) such that for $\eta_{\e,0}\ll\eta_\e\ll1$
the same conclusions hold as in Theorem~\ref{th:mainGL} in the form $\frac1{N_\e}j_\e-\tilde\vi_\e\to0$, where $\tilde\vi_\e$ denotes the unique global (smooth) solution of the corresponding mean-field equation~\eqref{eq:GL1lim}--\eqref{eq:GL2'lim} with $\nabla\hat h(x)$ replaced by the wiggly pinning force $\nabla_2 \hat h^0(x,\frac1{\eta_\e}x)$.
\end{cor}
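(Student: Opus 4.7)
The plan is to view the wiggly pinning field as \emph{frozen} data at each $\e$ and simply invoke Theorem~\ref{th:mainGL}, choosing the pin separation $\eta_\e\downarrow 0$ slowly enough that the $\eta_\e$-dependent constants appearing in the modulated energy estimate still close. In this diagonal regime there is no genuine homogenization: the mean-field limit is performed first and the fast oscillations of the pinning weight are simply inherited by the limiting fluid-like equation. Only the leading oscillating term $\nabla_2\hat h^0(x,x/\eta_\e)$ from~\eqref{eq:def-pin-h} survives in the limit, since the correction $\eta_\e\nabla_1\hat h^0(x,x/\eta_\e)$ vanishes uniformly as $\eta_\e\downarrow 0$.

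For each fixed $\eta\in(0,1]$, set $\hat a_\eta(x):=\hat a^0(x,x/\eta)^\eta$ and $\hat h_\eta:=\log\hat a_\eta$. Under the assumed smoothness and periodicity of $\hat a^0$ together with $1/C\le \hat a^0\le 1$, the weight $\hat a_\eta$ satisfies~\eqref{eq:nondegencond} uniformly in $\eta$, and its Sobolev norm blows up at worst polynomially, e.g.\@ $\|\hat h_\eta\|_{W^{s+3,\infty}}\le C_s\eta^{-(s+2)}$. Invoking the well-posedness theory for the mean-field equations~\eqref{eq:GL1lim}--\eqref{eq:GL2'lim} developed in~\cite{D-16}, one obtains a unique global smooth solution $\tilde\vi_\eta$ with $W^{s+2,\infty}$-norms controlled by some (possibly worse) negative power of $\eta$. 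The restriction to the parabolic case in regime~\GLd{} is dictated exactly as in Theorem~\ref{th:mainGL}(ii), since the mixed-flow version of~\eqref{eq:GL2lim} is only known to be locally well-posed. Hence Assumption~\ref{as:main}(a) holds for the frozen data $(\hat h_\eta,\hat F,\tilde\vi_\eta^\circ)$ with all constants depending polynomially on $\eta^{-1}$, and Theorem~\ref{th:mainGL} applies for every fixed $\eta$.

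The mean-field estimate is in fact quantitative: the Gr\"onwall argument on the modulated energy excess at the heart of the proof of Theorem~\ref{th:mainGL} produces a bound of the form
\[
\D_{\e,R}^{*,t}~\le~e^{C(\eta)t}\big(\D_{\e}^{*,\circ}+\gamma_\e(\eta)\big),\qquad \gamma_\e(\eta)\xrightarrow[\e\downarrow0]{}0,
\]
where $C(\eta)$ depends polynomially on $\eta^{-1}$ through the Sobolev norms of $\hat h_\eta$ and of $\tilde\vi_\eta$. A standard diagonal extraction then yields a sequence $\eta_{\e,0}\downarrow 0$, slow enough that $e^{C(\eta_{\e,0})t}(\D_{\e}^{*,\circ}+\gamma_\e(\eta_{\e,0}))=o(N_\e^2)$ for each fixed $t$, and the analogous bound persists for any $\eta_\e$ with $\eta_{\e,0}\ll\eta_\e\ll 1$. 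The product and Jacobian estimates used in the proof of Theorem~\ref{th:mainGL} then convert this modulated energy control into the announced convergence $\frac1{N_\e}j_\e-\tilde\vi_{\eta_\e}\to 0$ in $\Ld^\infty_\loc(\R^+;\Ld^1_\uloc(\R^2)^2)$.

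The main obstacle is making the polynomial dependence of all constants on $\eta_\e^{-1}$ sufficiently explicit: this quantitative tracking is required both in the well-posedness of~\eqref{eq:GL1lim}--\eqref{eq:GL2'lim} with $\eta$-dependent pinning (to control $\|\tilde\vi_\eta\|_{W^{s+2,\infty}}$) and in the modulated energy argument of Theorem~\ref{th:mainGL} (to identify $C(\eta)$ and $\gamma_\e(\eta)$). Once these polynomial losses are in hand, the diagonal extraction is automatic; but it is precisely this blow-up that prevents the present argument from reaching the genuinely homogenized regime where $\eta_\e$ decreases at a prescribed rate, as discussed in Section~\ref{sec:homog}.
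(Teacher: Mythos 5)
Your proposal takes the same diagonal strategy as the paper — freeze the pinning at each $\e$, invoke Theorem~\ref{th:mainGL} with the $\eta_\e$-dependent data, and let $\eta_\e\downarrow 0$ slowly enough to outpace the blow-up of the constants — and this is the right approach. Two points need sharpening. First, the claim that $C(\eta)$ depends \emph{polynomially} on $\eta^{-1}$ is not what the proof actually produces. The paper's scaling argument rescales time and space by $\eta_\e$, which reduces the $\eta$-dependent mean-field equation to an $\e$-independent one; this shows the constants take the form $\eta_\e^{-\sigma}\theta(t/\eta_\e)$ for some $\sigma\ge0$ and some increasing bijection $\theta:\R^+\to\R^+$. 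The presence of the rescaled time $t/\eta_\e$ in the argument of $\theta$ means that at fixed $t$ the growth in $\eta_\e^{-1}$ is typically exponential or worse, not polynomial — the Gr\"onwall constants for the mean-field equation are propagated over the fast clock. The diagonal extraction is robust to this, but it is the time-rescaling structure, not a polynomial bound, that dictates the correct choice $\eta_{\e,0}$.

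Second, the claim of a global smooth solution for the $\eta$-dependent mean-field equation is too quick: the same scaling argument gives a priori existence of $\vi_\e$ only up to time $\eta_\e T$, where $T$ is the existence time of the $\e$-independent macroscopic problem. In the parabolic case $T=\infty$; but in the mixed-flow regimes~\GLu, \GLup, \GLdp, $T$ is only known to diverge with $\e$, so $\eta_\e T\to\infty$ requires $\eta_\e$ to decay slowly relative to $T(\e)^{-1}$, another constraint folded into $\eta_{\e,0}$ (and the reason the parabolic restriction in~\GLd{} is unavoidable). Finally, the replacement of the full wiggly pinning force $\eta_\e\nabla_1\hat h^0+\nabla_2\hat h^0$ by $\nabla_2\hat h^0$ alone (i.e.\@ comparing $\vi_\e$ with $\tilde\vi_\e$) requires its own Gr\"onwall stability argument in the spirit of Lemma~\ref{lem:lastlimGL}, with the $\eta$-dependence of those constants also tracked; the uniform smallness $\|\eta_\e\nabla_1\hat h^0\|_{\Ld^\infty}\to 0$ is the seed of the argument but does not by itself yield the conclusion.
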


In non-diagonal regimes, as our modulated energy approach fails, we only manage to justify the following minor rigorous result:
In the case with negligible interactions and negligible applied current, that is,
\[\alpha>0,~N_\e\ll\Log,~\tfrac{N_\e}\Log\ll\lambda_\e\lesssim1,~h=\lambda_\e\hat h,~F=\lambda_\e'\hat F,~\lambda_\e'\ll\lambda_\e,\]
the vorticity is shown to remain ``stuck'' in the limit, that is, to converge at all times to its initial data (cf.\@ Proposition~\ref{prop:smallforcing}). This is a particular case of the \emph{stick-slip phenomenon} discussed below.
The rigorous treatment of all other regimes, including the commutation of the limits $\e\downarrow 0$, $N\uparrow\infty$, and $\eta\downarrow 0$, is left as an open question. For particle systems with smooth interactions, this commutation problem is easier to settle and is discussed in the forthcoming work~\cite{DS-18}.

\subsubsection{Homogenization of mean-field equations}
In view of Corollary~\ref{cor:homogdiag}, it is natural to consider the homogenization limit of the mean-field equations~\eqref{eq:GL1lim}--\eqref{eq:GL2'lim} with wiggly pinning force $\nabla\hat h(x)=\nabla_2 \hat h^0(x,\frac1{\eta_\e}x)$.
This topic is very delicate on its own, with the same kind of difficulties as for the homogenization of the discrete system~\eqref{ode1} of coupled ODEs.
We first consider the scaling with negligible vortex interactions, which leads to a well-defined linear limiting equation, and we discuss its stick-slip properties, before turning to the general nonlinear case.

\begin{enumerate}[(i)]
\item\emph{Negligible interactions: linear stick-slip law.}\\
In the regime of negligible vortex interactions (cf.~\GLup--\GLdp), particles are independent and the mean-field equations are reduced to a linear continuity equation~\eqref{eq:GL1'lim-bis} for the vorticity (with a compressible vector field), which is much easier to handle.
The homogenization of such an equation is easily understood in 1D~\cite{ACJ-96}, but it becomes surprisingly more subtle in higher dimensions: the 2D periodic case was first investigated by Menon~\cite{Menon-02} and is still partially open.
The situation becomes much simpler if the applied current $\hat F$ is a constant and if the wiggly pinning weight is independent of the macroscopic variable, that is,
\begin{align}\label{eq:def-a-pin+}
\hat a(x):=\hat a^0\big(\tfrac1{\eta_\e}x\big)^{\eta_\e},\qquad \nabla\hat h(x)=\nabla\hat h_0(\tfrac1{\eta_\e}x).
\end{align}
The wiggly linear continuity equation for the mean-field vorticity $\tilde\md_\e$ then takes the form
\[\partial_t\tilde\md_\e=\Div\!\Big((\alpha-\Jb\beta)\big(\nabla\hat h_0(\tfrac\cdot{\eta_\e})-\hat F\big)\,\tilde\md_\e\Big),\]
which is known as a \emph{washboard system} in the physics literature.
The homogenization of this equation is a particular case of the nonlinear results in~\cite{Dalibard-09} (see also~\cite{E-92,Jabin-Tzavaras-09} in the incompressible case and~\cite{Frenod-Hamdache-96,Dalibard-08} in the linear Hamiltonian case), but a more accurate asymptotic description without well-preparedness assumption is postponed to a forthcoming work~\cite{DS-18}.

\noindent
The behavior of the vorticity $\tilde\md_\e$ is intuitively easily understood:
If $\hat F=0$, the vorticity is attracted towards the local wells of the pinning potential $\eta_\e\hat h_0(\frac\cdot{\eta_\e})$. Otherwise, a constant applied force $\hat F\neq 0$ can be absorbed into the term $\nabla\hat h_0(\frac\cdot{\eta_\e})$ by adding an affine function to the pinning potential, which effectively tilts the potential landscape into a washboard-shaped graph. Beyond some positive value of the intensity $|\hat F|$, the tilted potential has no local minimum, leading the particle to fall in the direction of $\hat F$, while below this critical value the vorticity remains pinned.
Such a behavior is known as a \emph{stick-slip law}, and the critical value of the applied force corresponds to the so-called \emph{depinning current}.
More precisely, the dynamics of the homogenized vorticity $\tilde\md$ is characterized by a linear transport equation
\[\partial_t\tilde\md=-\Div(V(\hat F)\,\tilde\md),\]
with homogenized velocity field given by
\begin{align}\label{eq:def-eff-vel}
V(\hat F):=-\int_Q\Gamma^{\hat F}(y)\,d\mu^{\hat F}(y),
\end{align}
where $\mu^{\hat F}$ is an invariant measure for the dynamics associated with the periodic vector field $\Gamma^{\hat F}:=(\alpha-\Jb\beta)(\nabla\hat h_0-\hat F)$ on the torus $Q$.
The {stick-slip} behavior is easily recovered from this formula (cf.\@ Figure~\ref{fig:response-lin}): for small $|\hat F|$ any invariant measure $\mu^{\hat F}$ is concentrated at fixed points, hence $V(\hat F)=0$, meaning that the vorticity gets stuck, while for large $|\hat F|$ the measure $\mu^{\hat F}$ becomes non-trivial, hence $V(\hat F)\ne0$, meaning that the vorticity is transported.
Note that the response $\hat F\mapsto V(\hat F)$ is not smooth at the depinning threshold, but typically has a square-root behavior,
\begin{align}\label{eq:1/2-law}
V(\hat F)\propto(|\hat F|-|\hat F_{c}|)^{1/2},
\end{align}
for $|\hat F|$ close to the critical intensity $|\hat F_{c}|$, cf.~\cite{DS-18}.
Such a frictional stick-slip dynamics is well-known in various 1D systems~\cite{Bhattacharya-99,Grunewald-05,Dirr-Yip-06}.

\item\emph{Non-negligible interactions: nonlinear stick-slip law.}\\
In the regimes~\GLu{} and~\GLd, vortex interactions can no longer be neglected in the mean-field equations~\eqref{eq:GL1lim} and~\eqref{eq:GL2lim}. Considering these equations with wiggly pinning force~\eqref{eq:def-a-pin+} and taking the homogenization limit, a formal $2$-scale expansion leads to nonlocal nonlinear homogenized continuity equations for the homogenized vorticity~$\tilde\md$: setting $W(\tilde\vi;\hat F)(x):=V\big(\hat F-2\tilde\vi^\bot(x)\big)$ with $V$ defined as in~\eqref{eq:def-eff-vel}, we find in the case~\eqref{eq:GL1lim},
\begin{align}\label{eq:GL1lim-ter}
\begin{cases}
\partial_t\tilde\md=-\Div\!\big(W(\tilde\vi;\hat F)\,\tilde\md\big),\\
\curl\tilde\vi=\tilde\md,
\end{cases}\end{align}
and in the case~\eqref{eq:GL2lim} with $\alpha=1$, $\beta=0$,
\begin{align*}
\begin{cases}
\partial_t\tilde\md=-\Div\!\big(W(\tilde\vi;\hat F)\,\tilde\md\big),\\
\partial_t\tilde\dd=\triangle\tilde\dd-\Div\!\big(W(\tilde\vi;\hat F)^\bot\tilde\md\big),\\
\curl\tilde\vi=\tilde\md,\quad\Div\!\tilde\vi=\tilde\dd.
\end{cases}\end{align*}
A rigorous justification of this homogenization limit is particularly challenging due to the nonlocal nonlinear character of the mean-field equations~\eqref{eq:GL1lim}--\eqref{eq:GL2lim} and to their strong instability as $\eta_\e\downarrow0$.
As shown in a forthcoming work~\cite{DS-18}, these questions can be partially solved if Coulomb interactions in~\eqref{eq:GL1lim} are replaced by smooth interactions, that is, if we rather consider a mean-field equation of the form
\[\partial_t\tilde\md_\e=\Div\big((\nabla\hat h(\tfrac\cdot{\eta_\e})-\hat F-2\nabla g\ast\tilde\md_\e)\,\tilde\md_\e\big),\]
for some smooth interaction potential $g$. The relation $\curl\tilde\vi=\tilde\md$ in the formal homogenized equation~\eqref{eq:GL1lim-ter} is then replaced by $\tilde\vi=\nabla^\bot g\ast\tilde\md$.
Note however that the well-posedness of the homogenized equation remains unclear since the vector field $W(\tilde\vi;\hat F)$ is in general not Lipschitz continuous even for smooth $\tilde\vi$ due to~\eqref{eq:1/2-law}.

\noindent
Heuristically, the stick-slip picture remains the same as in the case of negligible interactions:
For small $\hat F$ the vorticity $\tilde\md$ first spreads due to the vortex repulsive interaction until the interaction force $\tilde\vi$ becomes small enough such that $W(\tilde\vi;\hat F)=0$ and the vorticity then remains stuck.
The mean velocity of the system
\[V_m(\hat F):=\lim_{t\uparrow\infty}\frac1t\int_0^t\int_{\R^2}W(\tilde\vi^s,\hat F)\,d\tilde\md^s\,ds\]
is thus expected to satisfy a similar stick-slip law.
Nevertheless, the precise picture should be very different at the depinning threshold:
the mean velocity is expected to be non-smooth, but, compared to the case without interaction~\eqref{eq:1/2-law}, the value $|\hat F_c|$ of the threshold and the value $\frac12$ of the depinning exponent are expected to be radically different, in link with the glassy properties of the system, as predicted in the physics literature~\cite{Narayan-Fisher-92,NSTL-92,Chauve-Giamarchi-LeDoussal-00} (see also~\cite[Section~5]{Giamarchi-Bhatta-02}).
Indeed, due to the competition between the pinning potential and the vortex interaction, the vortices are expected to move as a coherent elastic object in a heterogeneous medium, yielding very particular glassy properties, but a rigorous justification is still missing.

\noindent
Since vortices are elastically coupled by the interaction, the problem is formally analogous to the motion of elastic systems in disordered media, which is indeed the framework considered in the above-cited physics papers.
In this spirit, a considerable attention has been devoted in the physics community to the simpler Quenched Edwards-Wilkinson model for elastic interface motion in disordered media~\cite{Kardar-97,BN-04}.
These questions are also related (although again for different models) to the recent rigorous homogenization results for the forced mean curvature equation and for more general geometric Hamilton-Jacobi equations~\cite{Armstrong-Cardaliaguet-15}.
\end{enumerate}

\begin{rem}
Although deriving a nonlinear stick-slip law based on the mesoscopic model seems out of reach, a rigorous analysis is possible on a very short timescale:
For $t=O(\eta_\e)$, in each (mesoscopic) periodicity cell, the vorticity is shown to concentrate on the support of the invariant measure associated with the initial vector field (cf.\@ Proposition~\ref{prop:locrelax}). This mesoscopic initial-boundary layer result is in agreement with the above description of the dynamics on larger timescales as transport takes place ``along'' invariant measures.
\end{rem}

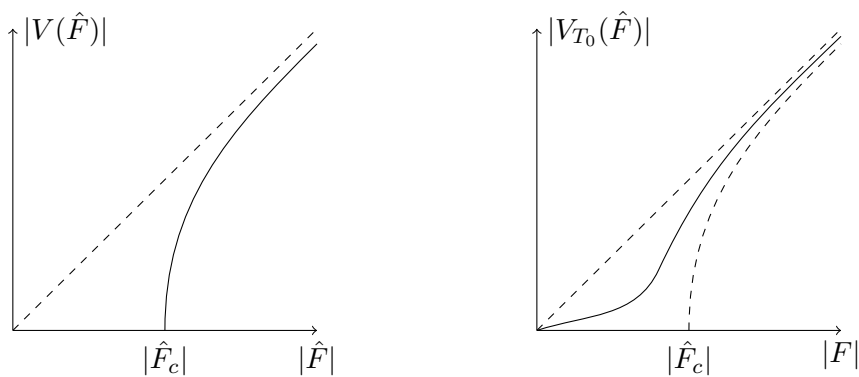
\begin{figure}[!htp]
\begin{center}
\begin{subfigure}[b]{0.4\textwidth}
\centering\begin{tikzpicture}[scale=0.4]
\draw[->](0,0)--(5,0)node[below]{$|\hat F_{c}|$}--(10,0)node[below]{$|\hat F|$};
\draw[->](0,0) -- (0,10)node[right]{$|V(\hat F)|$};
\draw[dashed] (0,0) to (10,10);
\draw (5,0) to[out=90, in=180+45] (10,9.5);
\end{tikzpicture}
\caption{\label{fig:response-lin} No thermal noise: stick-slip law.}
\end{subfigure}
\qquad
\begin{subfigure}[b]{0.4\textwidth}
\centering\begin{tikzpicture}[scale=0.4]
\draw[->](0,0)--(5,0)node[below]{$|\hat F_c|$}--(10,0)node[below]{$|F|$};
\draw[->](0,0) -- (0,10)node[right]{$|V_{T_0}(\hat F)|$};
\draw[dashed] (0,0) to (10,10);
\draw[dashed] (5,0) to[out=90, in=180+45] (10,9.5);
\draw (0,0) to[out=18,in=180+65] (4,2) to[out=65,in=180+44](10,9.75);
\end{tikzpicture}
\caption{\label{fig:response-temp}With thermal noise: Arrhenius law.}
\end{subfigure}
\caption{Typical current-velocity characteristics in the case of negligible vortex interactions.}
\end{center}
\end{figure}

\subsubsection{System with thermal noise}
Different stochastic variants of the Ginzburg-Landau equation have been introduced in the physics literature in order to model the effect of thermal noise in type-II superconductors~\cite{Schmid-69,Hohenberg-Halperin-77,Deang-Du-Gunzburger-01} (see also~\cite{Stoof-99,Gardiner-Anglin-Fudge-02,Gardiner-Davis-03,Swislocki-Deuar-16} for corresponding stochastic versions of the mixed-flow Gross-Pitaevskii equation to model thermal and quantum noise in Bose-Einstein condensates).
Although we do not study here the mean-field limit problem for such models, we expect that for a finite number $N$ of vortices in the limit $\e\downarrow0$ the thermal noise acts on the vortices as~$N$ independent Brownian motions: more precisely, in the regime~\GLu, the limiting trajectories $(x_i)_{i=1}^N$ of the~$N$ vortices are expected to satisfy the following system of coupled SDEs instead of~\eqref{ode1} (cf.\@ e.g.~\cite[Section~III.B]{WE-94}),
\begin{gather}\label{eq:partsyst-temp}
(\alpha+\Jb\beta)dx_i=\Big(-N^{-1}\nabla_{x_i}W_N(x_1,\ldots,x_N)-\nabla\hat h(x_i)+\hat F(x_i)\Big)dt+\sqrt{2T}dB^t_i,\\
W_N(x_1,\ldots,x_N):=-\pi\sum_{i\ne j}^N\log |x_i-x_j|,\nonumber
\end{gather}
where $B_1,\ldots,B_N$ are $N$ independent 2D Brownian motions.
Such macroscopic phenomenological models, where the thermal noise acts via random Langevin kicks, are abundantly used by physicists~\cite{BFGLV-94,Giamarchi-Bhatta-02,Reichhardt-17}.
In the case of a diverging number of vortices $N_\e\gg1$, in the regime~\GLu{}, it is then natural to postulate that a good phenomenological model for the (formal) mean-field supercurrent density $\vi$ is given as the mean-field limit of the particle system~\eqref{eq:partsyst-temp}, that is, the following viscous version of~\eqref{eq:GL1lim},
\begin{align}\label{eq:GL1lim-temp}
\begin{cases}
\partial_t\!\vi=\nabla\!\pre+(\alpha-\Jb\beta)(\nabla^\bot\hat h-\hat F^\bot-2\!\vi)\curl\!\vi+T\triangle\!\vi,\\
\Div\!\vi=0, \quad\vi\!|_{t=0}=\vi^\circ.
\end{cases}
\end{align}
In the regimes~\GLd{} and~\GLt{} we rather consider corresponding viscous versions of~\eqref{eq:GL2lim} and~\eqref{eq:GL3lim},
while in the regimes~\GLup{} and~\GLdp{} these viscous equations should be replaced by their versions without interaction term.
In this viscous context, we may now consider the homogenization problem for the mean-field model~\eqref{eq:GL1lim-temp} with wiggly pinning force $\nabla\hat h(x)=\nabla\hat h_0(\frac1{\eta_\e}x)$.
We naturally restrict attention to the critical scaling for the temperature, that is, $T:=\eta_\e T_0$ for some fixed $T_0>0$.
We first consider the scaling with negligible vortex interactions before turning to the general nonlinear case.

\begin{enumerate}[(i)]
\item\emph{Negligible interactions: Arrhenius law.}\\
If interactions are neglected, we are reduced to the following wiggly linear continuity equation for the mean-field vorticity $\tilde\md_\e$,
\begin{align}\label{eq:subcrit-temp}
\partial_t\tilde\md_\e=\Div\big((\alpha-\Jb\beta)(\nabla_2\hat h_0(\tfrac\cdot{\eta_\e})-\hat F)\,\tilde\md_\e\big)+\eta_\e T_0\triangle\tilde\md_\e.
\end{align}
The homogenization of this equation is a particular case of the nonlinear results in~\cite{Dalibard-06}, although the argument can be considerably simplified here, cf.~\cite{DS-18}.
The dynamics of the homogenized vorticity $\tilde\md$ is characterized by a linear transport equation
\[\partial_t\tilde\md=-\Div(V_{T_0}(\hat F)\,\tilde\md),\]
with homogenized velocity field given by the following viscous analogue of~\eqref{eq:def-eff-vel},
\begin{align}\label{eq:def-VT0}
V_{T_0}(\hat F):=-\int_Q\Gamma^{\hat F}d\mu_{T_0}^{\hat F},
\end{align}
where $\mu_{T_0}^{\hat F}$ is the $T_0$-viscous invariant measure for the dynamics associated with the periodic vector field $\Gamma^{\hat F}:=(\alpha-\Jb\beta)(\nabla\hat h^0-\hat F)$ on the torus $Q$, that is, the unique probability measure on $Q$ satisfying
\begin{align*}
T_0\triangle\mu^{\hat F}_{T_0}+\Div\!\!\big(\Gamma^{\hat F}\mu^{\hat F}_{T_0}\big)=0.
\end{align*}
For $T_0>0$, since the viscous invariant measure $\mu^{\hat F}_{T_0}$ vanishes nowhere on $Q$, we find $V_{T_0}(\hat F)\ne0$ for all $\hat F\ne0$: the vorticity can never get stuck in local wells of the pinning potential. The precise behavior of $V_{T_0}(\hat F)$ for $\hat F$ close to $0$ is of particular interest. Heuristically, the current $\hat F\ne0$ tilts the energy landscape, and the energy barriers of size $\osc\hat h_0:=\max\hat h_0-\min\hat h_0$ are overcome by thermal activation even for small $F_0\ne0$. The velocity law for this so-called thermally assisted flux flow is expected to satisfy the classical Arrhenius law from statistical thermodynamics (cf.\@ e.g.~\cite[Section~5.1]{Giamarchi-Bhatta-02}),
\begin{align}\label{eq:Arrhenius}
V_{T_0}(\hat F)\propto T_0^{-1}\exp\big(\!-T_0^{-1}\osc\hat h_0\big)\,\hat F,
\end{align}
for $|\hat F|\ll T_0\ll1$, that is, the response is linear but exponentially small with respect to the inverse temperature. This is easily checked in 1D~\cite{DS-18} and is related to the Eyring-Kramers formula~\cite{BEGK-04,Helffer-Klein-Nier-04}.
The typical velocity law is plotted in Figure~\ref{fig:response-temp}.

\item\emph{Non-negligible interactions: creep law.}\\
We turn to the homogenization limit of equation~\eqref{eq:GL1lim-temp} with wiggly pinning force $\nabla\hat h(x)=\nabla\hat h_0(\frac1{\eta_\e}x)$ and with $T=\eta_\e T_0$. A  formal $2$-scale expansion leads to the nonlocal nonlinear homogenized continuity equation~\eqref{eq:GL1lim-ter} for the homogenized vorticity $\tilde\md$ with $W(\tilde\vi;\hat F)$ replaced by its viscous analogue $W_{T_0}(\tilde\vi;\hat F)(x):=V_{T_0}(\hat F-2\tilde\vi^\bot(x))$ with $V_{T_0}$ defined as in~\eqref{eq:def-VT0}.
A rigorous justification of this homogenization limit is particularly challenging but we show in a forthcoming work~\cite{DS-18} that it can be entirely solved if Coulomb interactions in~\eqref{eq:GL1lim-temp} are replaced by smooth interactions, and the homogenized equation is then well-posed.

\noindent
As in the case without temperature, due to the competition between pinning and vortex interactions, the precise dynamical properties of the homogenized vorticity are expected to change dramatically
with respect to the case of negligible interactions,
in link with the expected glassy properties of the system~\cite{Giamarchi-Bhatta-02}.
The main manifestation is visible in the low-current low-temperature limit ($|\hat F|\ll T_0\ll1$), where the linear Arrhenius law~\eqref{eq:Arrhenius} is now expected to break down, being replaced by a so-called \emph{creep law}: the mean velocity is expected to depend nonlinearly on the current and to have all vanishing derivatives with respect to $\hat F$ at $0$.
This was first predicted by physicists for related elastic interface motion models~\cite{Nat-87,Ioffe-Vinokur-87} and then adapted to vortex systems~\cite{FGLV-89,Nattermann-90,Giamarchi-LeDoussal-95,Chauve-Giamarchi-LeDoussal-98,Chauve-Giamarchi-LeDoussal-00} (see also~\cite[Section~5]{Giamarchi-Bhatta-02} and references therein), but a rigorous justification is still missing.
Note that the key influence of vortex interactions on the dynamics is exemplified in a simplified 1D model in~\cite[Section~IV]{WE-94}.
\end{enumerate}

\newpage
\section{Discussion of the mesoscopic model}
For future reference, note that in each of the considered regimes~\GLu, \GLd, \GLt, \GLup, \GLdp, and \GP, due to the explicit choice~\eqref{eq:formf} of the zeroth-order term $f$, the following scalings hold,
\begin{enumerate}[(a)]
\item \emph{Dissipative case, non-decaying setting:}
\begin{gather}\label{eq:scalingshFf}
\|\nabla h\|_{W^{1,\infty}}\lesssim 1\wedge\lambda_\e,\qquad\|F\|_{W^{1,\infty}}\lesssim \lambda_\e,\\
\|f\|_{W^{1,\infty}}\lesssim 1\wedge\lambda_\e+\lambda_\e^2\Log^2\lesssim \lambda_\e^2\Log^2;\nonumber
\end{gather}
\item \emph{Conservative case, decaying setting:}
\begin{gather}\label{eq:scalingshFfdec}
\|\nabla h\|_{H^1\cap W^{1,\infty}}\lesssim 1,\qquad\|F\|_{H^1\cap W^{1,\infty}}\lesssim \lambda_\e,\\
\|f\|_{H^1\cap W^{1,\infty}}\lesssim 1+\lambda_\e^2\Log^2\lesssim N_\e^2.\nonumber
\end{gather}
\end{enumerate}

\subsection{Derivation of the modified mesoscopic model}\label{disc-model}
In this section we justify the modified model~\eqref{eq:GL-1} based on the 2D mixed-flow Ginzburg-Landau model~\eqref{eq:Gorkov-Eliashberg-mixed-flow} without gauge.
For that purpose, as in~\cite{Tice-10,S-Tice-11}, we transform the rescaled order parameter $\frac1{\sqrt a}w_\e$ in order to turn the Neumann boundary condition into a homogeneous one, which makes the applied electric current $J_{\text{ex}}$ appear as a bulk term in the equation. For that purpose, we assume that $a=1$ holds on the boundary $\partial\Omega$, and that the total incoming current equals the total outgoing current, that is, $\int_{\partial\Omega} n\cdot J_{\text{ex}}=0$. We then have $\int_{\partial\Omega} an\cdot J_{\text{ex}}=0$, so that there exists a unique solution $\psi\in H^1(\Omega)$ of
\[\begin{cases}
\Div( a\nabla\psi)=0,&\text{in $\Omega$},\\
n\cdot\nabla\psi=n\cdot J_{\text{ex}},&\text{on $\partial \Omega$}.
\end{cases}\]
Defining the modified order parameter $u_\e:= e^{-i\Log \psi}\frac1{\sqrt a}w_\e$, a straightforward computation leads to
\begin{align}\label{eq:GL-1-0}
\begin{cases}\lambda_\e(\alpha+i\Log \beta)\partial_tu_\e=\triangle u_\e+\frac{au_\e}{\e^2}(1-|u_\e|^2)\\
\hspace{4cm}+\nabla h\cdot\nabla u_\e+i\Log F^\bot\cdot\nabla u_\e+ fu_\e,&\text{in $\R^+\times\Omega$,}\\n\cdot\nabla (u_\e\sqrt a)=0,&\text{on $\R^+\times\partial\Omega$,}\\u_\e|_{t=0}=u_\e^\circ,\end{cases}
\end{align}
where we have set
\begin{align}\label{eq:definhFf}
h:=\log a,\qquad F:=-2\nabla^\bot\psi,\qquad \text{and}\qquad f:=\frac{\triangle\sqrt a}{\sqrt a}-\frac14\Log ^2|F|^2.
\end{align}
Note that the vector field $F$ satisfies $\Div F=\curl(aF)=0$.
In order to avoid delicate boundary issues\footnote{Another way to avoid boundary issues is to rather consider the equation on the torus. The total degree of the order parameter $u_\e$ on a period would then however vanish: in order to describe a non-trivial vorticity with distinguished sign, we should rather work with the Ginzburg-Landau model with gauge. As explained in Section~\ref{sec:gauge}, working with the gauge does not cause any major difficulty, but it makes all computations heavier, which we wanted to avoid.},
a natural approach consists in sending the boundary $\partial\Omega$ to infinity and studying the corresponding problem on the whole plane $\R^2$.
The assumption $a|_{\partial\Omega}=1$ is then replaced by
\[a(x)\to1 \quad\text{(that is, $h(x)\to0$)}\qquad \text{and}\qquad \nabla h(x)\to0, \qquad\text{as $|x|\uparrow\infty$},\]
while $F,f$ are simply assumed to be bounded.
Noting that this condition implies $2\nabla\sqrt a=\sqrt a\nabla h\to0$ at infinity, the Neumann boundary condition in~\eqref{eq:GL-1-0} formally translates into $\frac x{|x|}\cdot\nabla u_\e\to0$ at infinity.
Further imposing the natural condition $|u_\e|\to1$ at infinity, we look for a global solution $u_\e:\R^+\times\R^2\to\C$ of~\eqref{eq:GL-1} with fixed total degree $\deg u_\e=N_\e\in\Z$, and with
\begin{align*}
|u_\e|\to1\qquad\text{and}\qquad \frac x{|x|}\cdot \nabla u_\e\to0,\qquad\text{as $|x|\uparrow\infty$}.
\end{align*}
If the fields $F$ and $f$ do not decay at infinity,
the solution $u_\e$ may display a possibly complicated advection structure at infinity, as explained in Section~\ref{chap:wellposed} below: it is then unclear whether the above properties at infinity are satisfied and even whether the total degree of $u_\e$ is well-defined.
As a more precise description of $u_\e$ at infinity is anyway not relevant for our purposes, it is not pursued here.

For simplicity, we may rather truncate $F$ and $f$ at infinity, thus focusing on the local behavior of the solution $u_\e$ in a bounded set. In the conservative case, our results are limited to this decaying setting. Note that one of the conditions $\Div\! F=\curl(aF)=0$ must then be relaxed: we may for instance truncate $\psi$ and define $F$ via formula~\eqref{eq:definhFf}, so that only the condition $\Div\! F=0$ is preserved. Since there is no advection at infinity in this setting, the solution $u_\e$ will be shown to satisfy the desired properties at infinity.

\begin{rem}
Rather than normalizing $w_\e$ by the expected density $\sqrt a$, another natural choice is to normalize by a minimizer $\gamma_\e$ of the weighted Ginzburg-Landau energy~\cite{Lassoued-Mironescu}, that is, a nonvanishing  solution of
\[\begin{cases}-\triangle \gamma_\e=\frac{\gamma_\e}{\e^2}(a-|\gamma_\e|^2), &\text{in $\Omega$,}\\n\cdot \nabla\gamma_\e=0,&\text{on $\partial\Omega$}.\end{cases}\]
Setting $\tilde u_\e:=e^{-i\Log\psi}\frac1{\gamma_\e}w_\e$ with $\psi$ as before, we find
\begin{align*}
\lambda_\e(\alpha+i\Log\beta)\partial_t\tilde u_\e=\triangle \tilde u_\e+\frac{\gamma_\e^2\tilde u_\e}{\e^2}(1-|\tilde u_\e|^2)+\nabla \tilde h\cdot\nabla \tilde u_\e+i\Log \tilde F^\bot\cdot \nabla \tilde u_\e+\tilde f\tilde u_\e,
\end{align*}
in terms of $\tilde h:=\log\gamma_\e^2$, $\tilde F:=-2\nabla^\bot\psi$, and $\tilde f:=-\frac14|F|^2$, and we are thus reduced to a similar equation as before.
\end{rem}

\subsection{Well-posedness of the modified mesoscopic model}\label{chap:wellposed}
In this section, we address the global well-posedness of the modified mesoscopic model~\eqref{eq:GL-1}, both in the dissipative and in the conservative cases. In the dissipative case, global well-posedness is established in the space $\Ld^\infty_\loc(\R^+;H^1_{\uloc}(\R^2;\C))$ for general non-decaying data $h,F,f$, but no precise description of the solution at infinity is obtained, due to a possibly subtle advection structure at infinity: it is not even clear whether the total degree of the solution is well-defined.
This difficulty originates in the possibility of instantaneous creation of many vortex dipoles at infinity for fixed $\e>0$ due to pinning and applied current, although these dipoles are shown to necessarily disappear at infinity in the limit $\e\downarrow0$ e.g.\@ as a consequence of our mean-field results.
In contrast, in the conservative case, we must restrict to decaying data $h,F,f$, in which case no advection can occur at infinity. As is classical since the work of Bethuel and Smets~\cite{Bethuel-Smets-07} (see also~\cite{Miot-09}), we then consider global well-posedness in an affine space $\Ld^\infty_\loc(\R^+;U_\e+H^1(\R^2;\C))$ for some ``reference map'' $U_\e$, which is typically chosen smooth and equal (in polar coordinates) to $e^{iN_\e\theta}$ outside a ball at the origin, for some given $N_\e\in\Z$, thus imposing for $u_\e$ a fixed total degree $N_\e$ at infinity. More generally, we may consider the following space of admissible reference maps,
\begin{multline*}
E_1(\R^2):=\{U\in \Ld^{\infty}(\R^2;\C): \nabla^2 U\in H^1(\R^2;\C),\nabla|U|\in\Ld^2(\R^2),1-|U|^2\in\Ld^2(\R^2),\\
\nabla U\in\Ld^p(\R^2;\C)~\forall p>2\}.
\end{multline*}
Our global well-posedness results are summarized in the following; finer results and detailed proofs are given in Appendix~\ref{app:wellposed}, including additional regularity statements.

\begin{prop}[Well-posedness of the mesoscopic model]\label{prop:globGL}$  $
\begin{enumerate}[(i)]
\item \emph{Dissipative case ($\alpha>0$, $\beta\in\R$), non-decaying setting:}\\
Let $h\in W^{1,\infty}(\R^2)$, $a:=e^h$, $F\in\Ld^\infty(\R^2)^2$, $f\in\Ld^\infty(\R^2)$, and $u_\e^\circ\in H^1_\uloc(\R^2;\C)$. Then there exists a unique global solution $u_\e\in\Ld^\infty_\loc(\R^+; H^1_\uloc(\R^2;\C))$ of~\eqref{eq:GL-1} in $\R^+\times\R^2$ with initial data $u^\circ_\e$, and this solution satisfies $\partial_tu_\e\in \Ld^\infty_\loc(\R^+;\Ld^2_\uloc(\R^2;\C))$.
\item \emph{Conservative case ($\alpha=0$, $\beta=1$), decaying setting:}\\
Let $h\in W^{3,\infty}(\R^2)$, $\nabla h\in H^2(\R^2)^2$, $a:=e^h$, $F\in H^3\cap W^{3,\infty}(\R^2)^2$ with $\Div F=0$, $f\in H^2\cap W^{2,\infty}(\R^2)$, and $u_\e^\circ\in U+H^2(\R^2;\C)$ for some $U\in E_1(\R^2)$. Then there exists a unique global solution $u_\e\in\Ld^\infty_\loc(\R^+; U+H^2(\R^2;\C))$ of~\eqref{eq:GL-1} in $\R^+\times\R^2$ with initial data $u^\circ_\e$, and this solution satisfies $\partial_t u_\e\in\Ld^\infty_\loc(\R^+;\Ld^2(\R^2;\C))$.
\qedhere
\end{enumerate}
\end{prop}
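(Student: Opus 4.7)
The overall strategy is the standard two-step scheme: first establish local-in-time existence by a contraction-mapping argument in the appropriate function space, then extend globally by propagating a priori estimates derived from a (suitably weighted and possibly truncated) Ginzburg-Landau energy. The two parts differ sharply in technical flavor because the dissipative case enjoys parabolic regularization and the setting is only uniformly local, while the conservative case is Schrödinger-type with no smoothing and requires the affine $U+H^2$ framework to cope with the fixed nontrivial degree at infinity.

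For part (i), I would rewrite the equation as
\[
\partial_t u_\e=\big(\lambda_\e(\alpha+i\Log\beta)\big)^{-1}\Big(\triangle u_\e+\tfrac{a}{\e^2}u_\e(1-|u_\e|^2)+\nabla h\cdot\nabla u_\e+i\Log F^\bot\cdot\nabla u_\e+fu_\e\Big),
\]
whose principal part has strictly positive real part (since $\alpha>0$) and thus generates an analytic semigroup on $H^1_\uloc(\R^2;\C)$, with the usual uniformly local Gaussian estimates (in the spirit of Amann and Arrieta--Rodríguez-Bernal). Local existence then follows from a fixed-point argument in $C([0,T];H^1_\uloc)$: the cubic nonlinearity $\tfrac{a}{\e^2}u(1-|u|^2)$ is locally Lipschitz by the 2D Sobolev embedding $H^1_\uloc\hookrightarrow\Ld^p_\uloc$, and the remaining linear terms are bounded using $h\in W^{1,\infty}$, $F\in\Ld^\infty$, $f\in\Ld^\infty$. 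To globalize, consider the weighted truncated energy
\[
\Ec_{\e,R}^z(u_\e):=\int_{\R^2}\chi_R^z\Big(\tfrac a2|\nabla u_\e|^2+\tfrac{a^2}{4\e^2}(1-|u_\e|^2)^2\Big),
\]
differentiate in time, and use the dissipative structure to produce a term $\lambda_\e\alpha\int\chi_R^z|\partial_tu_\e|^2$ that absorbs the quadratic contributions from $\nabla h$, $F$, $f$; the boundary terms from $\nabla\chi_R^z$ are controlled using the square-root bound $|\nabla\chi_R|\lesssim\chi_R^{1/2}(1-\chi_R)^{1/2}$. A Grönwall argument, uniform in $z\in\R^2$ and $R\geq1$, yields the claimed bounds in $H^1_\uloc$ and $\Ld^2_\uloc$ on any finite time interval.

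For part (ii), the conservative equation has no smoothing, and solutions carry the nontrivial topology of the reference map $U\in E_1(\R^2)$ at infinity. Following Bethuel--Smets and Gérard, I would write $u_\e=U+v_\e$ with $v_\e\in H^2(\R^2;\C)$ and derive the Schrödinger-type equation satisfied by $v_\e$. It is again cubic in $v_\e$ but with an additional source term
\[
S:=\triangle U+\tfrac{a}{\e^2}U(1-|U|^2)+\nabla h\cdot\nabla U+i\Log F^\bot\cdot\nabla U+fU,
\]
which belongs to $H^1\cap\Ld^\infty$ thanks to the definition of $E_1$, the decay assumptions on $h,F,f$, and the identity $1-|U|^2\in\Ld^2$. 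Local $H^2$ well-posedness for $v_\e$ then follows by a fixed-point argument using Strichartz estimates for the 2D Schrödinger semigroup together with 2D Sobolev algebra properties, and local $\Ld^2$ bounds on $\partial_tv_\e$ come directly from the equation. Globalization proceeds via the weighted Ginzburg-Landau energy
\[
\int_{\R^2}\tfrac a2\Big(|\nabla u_\e|^2+\tfrac{a}{2\e^2}(1-|u_\e|^2)^2\Big),
\]
which in the conservative case is conserved up to a Grönwall correction coming from the lower-order forcings $\nabla h,F,f$, all of which are in $H^1\cap\Ld^\infty$. Propagation of $H^2$ is then obtained by applying $\nabla$ to the equation and estimating the resulting linear Schrödinger equation with coefficients controlled by the already-established $H^1$ bounds.

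The main technical obstacle in (i) is to prevent the $H^1_\uloc$ estimate from degrading across localization windows: the forcings $\nabla h$, $F$, $f$ do not decay, so naive application of Grönwall produces constants depending on unit-scale energy, and the boundary terms from $\nabla\chi_R^z$ must be reabsorbed using the square-root structure of $\nabla\chi_R$. In (ii), the delicate point is propagating $H^2$ regularity globally despite the absence of smoothing; this is handled by classical Brezis--Gallouet-type estimates combined with the conserved energy, exploiting crucially that the forcings are in decaying Sobolev spaces so that all commutators remain integrable. I expect both obstacles to be manageable because the weighted-energy structure is well adapted to the equation, but the quantitative details (optimal regularity of $h,F,f$, quantitative bounds on $\|v_\e\|_{H^2}$, etc.) are the substance of the full argument in Appendix~\ref{app:wellposed}.
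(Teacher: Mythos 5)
Your proposal for part (i) takes a genuinely different route from the paper and is plausibly workable. The paper does not attempt a direct contraction-mapping argument in $H^1_\uloc$; instead it truncates the data $h,F,f,u_\e^\circ$ at scale $n$, solves the resulting sequence of problems in the decaying $E_0+H^1$ framework (where the classical parabolic fixed-point applies cleanly), establishes uniform bounds via a localized Gr\"onwall estimate using the exponential weight $\xi^z(x)=e^{-|x-z|}$, and passes to the limit by Aubin--Simon compactness. Your semigroup-in-$H^1_\uloc$ approach would circumvent the approximation step at the cost of having to set up uniformly local Gaussian estimates and a contraction in $C([0,T];H^1_\uloc)$ directly, which is more technical but self-contained. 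Two concrete cautions: your energy $\Ec_{\e,R}^z$ only controls $\nabla u_\e$ and $1-|u_\e|^2$, so you need a separate localized estimate for $\int\xi^z|u_\e|^2$ (as the paper does), and the uniqueness in $H^1_\uloc$ is itself nontrivial --- the paper's Step~3 uses an $H^{3/4}$ interpolation to close the Gr\"onwall argument for the difference of two solutions, and nothing in your sketch addresses this.

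Part (ii), however, has a genuine gap. You write the equation for $v_\e=u_\e-U$, note that the source term $S\in H^1\cap\Ld^\infty$, and then claim local $H^2$ well-posedness by a fixed point using Strichartz estimates and Sobolev algebra properties. But the equation for $v_\e$ still contains the first-order forcing terms $\nabla h\cdot\nabla v_\e$ and $i\Log F^\bot\cdot\nabla v_\e$. Strichartz estimates do not recover spatial derivatives, so the map $v\mapsto\nabla h\cdot\nabla v$ sends $H^2$ only to $H^1$, and the iteration does not close at the $H^2$ level --- precisely the obstruction the paper points out at the start of its conservative-case argument. The key missing ingredient is the change of variables $w_\e:=\sqrt{a}\,u_\e e^{i\Log\psi}$ with $F=-2\nabla^\bot\psi$ (using $\Div F=0$), which converts the first-order terms $\nabla h\cdot\nabla u_\e$ and $iF^\bot\cdot\nabla u_\e$ into zeroth-order multiplication operators $f_0+ig_0$; only after this transformation does the Strichartz-plus-algebra fixed point close and the conserved-energy globalization proceed as you describe. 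Without this (or an alternative energy-method local theory that exploits the antisymmetric structure of the first-order part together with $\Div F=0$), the local existence step fails. Your globalization by differentiating the equation and invoking Brezis--Gallou\"et estimates is in the right spirit, but it must be carried out for the transformed variable $w_\e$; the paper's Steps~6--7 do exactly this and obtain the $H^{k+1}$ propagation by iterating a Strichartz/Gr\"onwall estimate, not via Brezis--Gallou\"et.
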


\begin{proof}
Item~(i) follows from Proposition~\ref{prop:globGLappnondec}.
We turn to item~(ii). By Proposition~\ref{prop:globGLapp}(ii), the assumptions in the above statement ensure the existence of a unique global solution $u_\e\in\Ld^\infty_\loc(\R^+; U+H^2(\R^2;\C))$. This directly implies that $\triangle u_\e$, $\nabla h\cdot\nabla u_\e$, $F^\bot\cdot\nabla u_\e$, and $fu_\e$ belong to $\Ld^\infty_\loc(\R^+;\Ld^2(\R^2;\C))$. Using the Sobolev embedding of $H^1(\R^2)$ into $\Ld^2\cap\Ld^6(\R^2)$, and decomposing $u_\e(1-|u_\e|^2)$ in terms of $u_\e=U+\hat u_\e$ with $\hat u_\e\in \Ld^\infty_\loc(\R^+;H^2(\R^2;\C))$, we further deduce that $u_\e(1-|u_\e|^2)$ belongs to $\Ld^\infty_\loc(\R^+;\Ld^2(\R^2;\C))$. Inserting this into equation~\eqref{eq:GL-1} yields the claimed integrability of $\partial_tu_\e$.
\end{proof}

Although a detailed proof is given in Appendix~\ref{app:wellposed}, we include here a brief description of the strategy. In the dissipative case with decaying data $h,F,f$, the arguments in~\cite{Bethuel-Smets-07,Miot-09} are easily adapted to the present context with both pinning and applied current. The conservative regime is more delicate and we then use the structure of the equation to make a change of variables that usefully transforms the first-order terms into zeroth-order ones. The additional regularity assumptions in item~(ii) above are precisely needed for this transformation to be well-behaved. Finally, the general result stated in item~(i) for the dissipative case with non-decaying data is deduced from the corresponding result with decaying data by a careful approximation argument in the space $H^1_\uloc(\R^2;\C)$.

\section{Preliminaries on the mean-field equations}\label{chap:prel-limit-eqn}
As explained, it is convenient to first compare the rescaled supercurrent density $\frac1{N_\e}j_\e$ with an intermediate $\e$-dependent approximation $\vi_\e:\R^+\times\R^2\to\R^2$, which is better adapted to the $\e$-dependence of the pinning potential and which is shown in a second step to converge to the correct limit $\vi$.
In all considered regimes, we derive equations for $\vi_\e$ of the form
\begin{align}\label{eq:genveps}
\partial_t \!\vi_\e=\nabla\!\pre_\e+\Gamma_\e\,\curl\!\vi_\e,\qquad \vi_\e\!|_{t=0}=\vi_\e^\circ,
\end{align}
for some smooth pressure $\pre_\e:\R^2\to\R$ and some smooth vector field $\Gamma_\e:\R^2\to\R^2$. The pressure will either be taken proportional to $a^{-1}\Div(a\!\vi_\e)$, or be  the Lagrange multiplier associated with the constraint $\Div(a\!\vi_\e)=0$.
Until Section~\ref{chap:MFL-GL}, we only manipulate these quantities $\vi_\e,\pre_\e,\Gamma_\e$ formally, while the suitable choice of the equation will be exploited later. In order to ensure that all our computations are licit, the following integrability and smoothness assumptions are needed.

\begin{as}\label{as:apveps}$  $
\begin{enumerate}[(a)]
\item \emph{Dissipative case ($\alpha>0$, $\beta\in\R$):}\\
There exists some $T>0$ such that for all $\e>0$, $t\in[0,T)$, and $q>2$,
\begin{gather*}
\|(\vi_\e^t,\nabla\!\vi_\e^t)\|_{(\Ld^2+\Ld^q)\cap\Ld^\infty}\lesssim_{t,q}1,\quad\|\curl\!\vi_\e^t\!\|_{\Ld^1\cap\Ld^\infty}\lesssim_t1,\quad \|\!\Div(a\!\vi_\e^t)\|_{\Ld^2\cap\Ld^\infty}\lesssim_t1,\\
\|\!\pre_\e^t\!\|_{\Ld^2\cap\Ld^\infty}\lesssim_t\lambda_\e^{-1/2}\wedge\lambda_\e^{-1},\quad\|\nabla\!\pre_\e\!\|_{\Ld^2_t\Ld^2}\lesssim_t1\wedge\lambda_\e^{-1},\\
\|\partial_t\!\vi_\e^t\!\|_{\Ld^2\cap\Ld^\infty}\lesssim_t1+\lambda_\e^{-1/2},\quad\|\partial_t\!\vi_\e\!\|_{\Ld^2_t\Ld^2}\lesssim_t1,\quad\|\partial_t\!\pre_\e^t\!\|_{\Ld^2_t\Ld^2}\lesssim_t\lambda_\e^{-1},\\
\|\Gamma_\e^t\|_{W^{1,\infty}}\lesssim_t1,\quad\|\partial_t\Gamma_\e\|_{\Ld^2_t\Ld^2}\lesssim_t1.
\end{gather*}
\item \emph{Conservative case ($\alpha=0$, $\beta=1$):}\\
There exists some $T>0$ such that for all $\e>0$, $t\in[0,T)$, $q>2$, and $2<p<\infty$,
\begin{gather*}
\|(\vi_\e^t,\nabla\!\vi_\e^t)\|_{(\Ld^2+\Ld^q)\cap\Ld^\infty}\lesssim_{t,q}1,\quad\|\curl\!\vi_\e^t\!\|_{\Ld^1\cap\Ld^\infty}\lesssim_t1\\
\|\!\pre_\e^t\!\|_{\Ld^q\cap\Ld^\infty}\lesssim_{t,q}1,\quad\|\nabla\!\pre_\e^t\!\|_{\Ld^2\cap\Ld^\infty}\lesssim_t1,\quad\|\partial_t\!\vi_\e^t\!\|_{\Ld^2}\lesssim_t1,\quad\|\partial_t\!\pre_\e^t\!\|_{\Ld^p}\lesssim_{t,p}1,\\
\|\Gamma_\e^t\|_{W^{1,\infty}}\lesssim_t1,\quad\|\partial_t\Gamma_\e^t\|_{\Ld^2}\lesssim_t1.\qedhere
\end{gather*}
\end{enumerate}
\end{as}

In the present section, we introduce the relevant choices for equation~\eqref{eq:genveps} and we show that the corresponding solutions $\vi_\e$ exist and satisfy all the properties of Assumption~\ref{as:apveps}. Three different choices are considered,
\begin{itemize}
\item \emph{Dissipative case (cf.~Theorem~\ref{th:mainGL}):}\\
In Section~\ref{chap:MFL-GL}, the rescaled supercurrent $\frac1{N_\e}j_\e$ is shown to remain close to the solution $\vi_\e$ of the following equation,
\begin{gather}\label{eq:GLv1}
\qquad\partial_t\!\vi_\e=\nabla\!\pre_\e+\Gamma_\e\curl\!\vi_\e,\qquad \vi_\e\!|_{t=0}=\vi_\e^\circ,\\
\qquad\Gamma_\e:=\lambda_\e^{-1}(\alpha-\Jb\beta)\Big(\nabla^\bot h- F^\bot-\frac{2N_\e}\Log\vi_\e\Big),
\qquad \pre_\e:=(\lambda_\e\alpha a)^{-1}\Div(a\!\vi_\e);\nonumber
\end{gather}
\item\emph{Nondilute parabolic case (cf.~Theorem~\ref{th:main-hd}):}\\
In Section~\ref{chap:mean-field-hd}, the rescaled supercurrent $\frac1{N_\e}j_\e$ is shown to remain close to the solution $\vi_\e$ of the following equation,
\begin{gather}\label{eq:GLv1-hd}
\qquad\partial_t\!\vi_\e=\nabla\!\pre_\e+\Gamma_\e\curl\!\vi_\e,\qquad \vi_\e\!|_{t=0}=\vi^\circ,\\
\qquad\Gamma_\e:=\lambda_\e^{-1}\Big(\nabla^\bot h- F^\bot-\frac{2N_\e}\Log\vi_\e\Big),
\qquad \pre_\e:=(\lambda_\e a)^{-1}\Div(a\!\vi_\e);\nonumber
\end{gather}
\item\emph{Conservative case (cf.~Theorem~\ref{th:mainGP}):}\\
In Section~\ref{chap:MFL-GP}, the rescaled supercurrent $\frac1{N_\e}j_\e$ is shown to remain close to the solution $\vi_\e$ of the following equation,
\begin{gather}\label{eq:limGP}
\qquad\partial_t\!\vi_\e=\nabla\!\pre_\e+\Gamma_\e\curl\!\vi_\e,\qquad\Div(a\!\vi_\e)=0,\qquad\vi_\e\!|_{t=0}=\vi_\e^\circ,\\
\qquad\Gamma_\e:=-\lambda_\e^{-1}\Big(\nabla^\bot h-F^\bot-\frac{2N_\e}\Log\vi_\e\Big)^\bot.\nonumber
\end{gather}
\end{itemize}
In addition, using the choice of the scalings for $\lambda_\e,h,F$ in each regime, we show how to pass to the limit $\e\downarrow0$ in these equations, which is indeed needed to conclude the proofs of Theorems~\ref{th:mainGL}, \ref{th:main-hd}, and~\ref{th:mainGP}.

\subsection{Dissipative case}\label{chap:dissip-lim}

Let us examine the vorticity formulation of equation~\eqref{eq:GLv1} for~$\vi_\e$. In terms of $\md_\e:=\curl\!\vi_\e$ and $\dd_\e:=\Div(a\!\vi_\e)$, it takes the form of a nonlocal nonlinear continuity equation for the vorticity~$\md_\e$, coupled with a convection-diffusion equation for the divergence $\dd_\e$,
\begin{gather}\label{eq:vortform}
\begin{cases}\partial_t\!\md_\e=-\Div(\Gamma_\e^\bot\!\md_\e),\\
\partial_t\!\dd_\e-(\alpha\lambda_\e)^{-1}\triangle\!\dd_\e+(\alpha\lambda_\e)^{-1}\Div(\dd_\e\!\nabla h)=\Div(a\Gamma_\e\!\md_\e),\\
\curl\!\vi_\e=\md_\e,\quad\Div(a\!\vi_\e)=\dd_\e,\\
\md_\e\!|_{t=0}=\curl\!\vi_\e^\circ,\quad \dd_\e\!|_{t=0}=\Div(a\!\vi_\e^\circ).\end{cases}
\end{gather}
A detailed study of this kind of equations is performed in the companion article~\cite{D-16}, including global existence results for vortex-sheet initial data. The following proposition in particular states that a local solution $\vi_\e$ always exists and satisfies the various properties of Assumption~\ref{as:apveps}(a) under suitable regularity assumptions on the initial data $\vi_\e^\circ$.
Note that in the regimes~\GLu{} and~\GLdp{}, due to the choice $\lambda_\e\downarrow0$, the solution $\vi_\e$ is expected to converge to the solution $\vi$ of some incompressible equation with the constraint $\Div\!\vi=0$, so that we refer to \GLu{} and~\GLdp{} as the {\it incompressible regimes}, and to \GLd{} and~\GLup{} as the {\it compressible regimes}.
Some additional work is required in the incompressible regimes since we then need to make clear the link with the limiting incompressible equations, in particular in order to establish global existence in the mixed-flow case.

\begin{prop}\label{prop:GLvprop}$  $
Let $\alpha>0$, $\beta\in\R$, $h:\R^2\to\R$, $a:=e^{h}$, $F:\R^2\to\R^2$, and let $\vi_\e^\circ:\R^2\to\R^2$ be bounded in $W^{1,q}(\R^2)^2$ for all $q>2$ and satisfy $\curl\!\vi_\e^\circ\in\Pc(\R^2)$.
For some $s>0$, assume that $h\in W^{s+3,\infty}(\R^2)$, $F\in W^{s+2,\infty}(\R^2)^2$, that $\vi_\e^\circ$ is bounded in $W^{s+2,\infty}(\R^2)^2$, and that $\curl\!\vi_\e^\circ$ and $\Div (a\!\vi_\e^\circ)$ are bounded in $H^{s+1}(\R^2)$.
\begin{enumerate}[(i)]
\item \emph{Compressible regimes $\lambda_\e\simeq1$ (that is,~\GLd--\GLup):}\\
There exist $T>0$ (independent of $\e$) and a unique (local) solution $\vi_\e$ of~\eqref{eq:GLv1} in $[0,T)\times\R^2$, in the space $\Ld^\infty_\loc([0,T);\vi_\e^\circ+H^{2}\cap W^{2,\infty}(\R^2)^2)$. Moreover, all the properties of Assumption~\ref{as:apveps}(a) are satisfied, that is, for all $\e>0$, $t\in[0,T)$, and $q>2$,
\begin{gather*}
\qquad\|(\vi_\e^t,\nabla\!\vi_\e^t)\|_{(\Ld^2+\Ld^q)\cap\Ld^\infty}\lesssim_{t,q}1,\quad\|\curl\!\vi_\e^t\!\|_{\Ld^1\cap\Ld^\infty}\lesssim_t1,\quad\|\!\Div(a\!\vi_\e^t)\|_{\Ld^2\cap\Ld^\infty}\lesssim_t1,\\
\qquad\|\!\pre_\e^t\!\|_{\Ld^2\cap\Ld^\infty}\lesssim_t1,\quad\|\nabla\!\pre_\e^t\!\|_{\Ld^2}\lesssim_t1,\quad\|\partial_t\!\vi_\e^t\!\|_{\Ld^2\cap\Ld^\infty}\lesssim_t1,\quad\|\partial_t\!\pre_\e^t\!\|_{\Ld^2_t\Ld^2}\lesssim_t1.
\end{gather*}
In the parabolic case ($\beta=0$), the solution $\vi_\e$ can be extended globally, that is, $T=\infty$. In the scaling with negligible interactions~\GLup, in the dissipative mixed-flow case, the existence time $T$ tends to infinity as $\e\downarrow0$.
\item \emph{Incompressible regimes $\lambda_\e\ll1$ (that is,~\GLu--\GLdp):}\\
Further assume $\Div (a\!\vi_\e^\circ)=0$. There exist $T>0$ (independent of $\e$) and a unique (local) solution $\vi_\e$ of~\eqref{eq:GLv1} in $\R^+\times\R^2$, in the space $\Ld^\infty_\loc([0,T);\vi_\e^\circ+H^{2}\cap W^{2,\infty}(\R^2)^2)$. Moreover, all the properties of Assumption~\ref{as:apveps}(a) are satisfied, that is, for all $t\in[0,T)$ and $q>2$,
\begingroup\allowdisplaybreaks
\begin{gather*}
\qquad\|(\vi_\e^t,\nabla\!\vi_\e^t)\|_{(\Ld^2+\Ld^q)\cap\Ld^\infty}\lesssim_{t,q}1,\quad\|\curl\!\vi_\e^t\!\|_{\Ld^1\cap\Ld^\infty}\lesssim_t1,\quad\|\Div(a\!\vi_\e^t)\|_{\Ld^2\cap\Ld^\infty}\lesssim_t1,\\
\qquad\|\!\pre_\e^t\!\|_{\Ld^2\cap\Ld^\infty}\lesssim_t\lambda_\e^{-1/2},\quad\|\nabla\!\pre_\e\!\|_{\Ld^2_t\Ld^2}\lesssim_t1,\quad\|\partial_t\!\pre_\e^t\!\|_{\Ld^2_t\Ld^2}\lesssim_t\lambda_\e^{-1},\\
\qquad\|\partial_t\!\vi_\e^t\!\|_{\Ld^2\cap\Ld^\infty}\lesssim_t\lambda_\e^{-1/2},\quad\|\partial_t\!\vi_\e\!\|_{\Ld^2_t\Ld^2}\lesssim_t1.
\end{gather*}
\endgroup
In the parabolic case ($\beta=0$), the solution $\vi_\e$ can be extended globally, that is, $T=\infty$. In the dissipative mixed-flow case, the existence time $T$ tends to infinity as $\e\downarrow0$.
\qedhere
\end{enumerate}
\end{prop}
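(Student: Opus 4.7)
The plan is to pass to the equivalent vorticity-divergence formulation~\eqref{eq:vortform}: a nonlinear continuity equation for $\md_\e=\curl\vi_\e$ coupled with a convection-diffusion equation for $\dd_\e=\Div(a\vi_\e)$, with $\vi_\e$ recovered from $(\md_\e,\dd_\e)$ by the elliptic relation $\vi_\e=a^{-1}\nabla^\bot(\Div a^{-1}\nabla)^{-1}\md_\e+\nabla(\Div a\nabla)^{-1}\dd_\e$. Local existence and uniqueness in $\Ld^\infty_\loc([0,T);\vi_\e^\circ+H^{s+2}\cap W^{s+2,\infty}(\R^2)^2)$ is obtained by a standard Banach fixed-point argument: given $\tilde\vi_\e$ in a closed ball, solve the linear transport equation for $\md_\e$ driven by the prescribed velocity field $-\tilde\Gamma_\e^\bot$ and the linear convection-diffusion equation for $\dd_\e$. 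The key observation is that in each of the four regimes~\GLu--\GLdp, the scalings $\|\nabla h\|_{W^{1,\infty}}\lesssim1\wedge\lambda_\e$, $\|F\|_{W^{1,\infty}}\lesssim\lambda_\e$ together with the dominant contribution $\tfrac{2N_\e}{\Log}\lesssim\lambda_\e$ in~\GLu{} or $\lesssim 1$ in~\GLd{}--\GLup{}--\GLdp{} conspire so that $\|\Gamma_\e\|_{W^{1,\infty}}\lesssim 1$ uniformly in~$\e$. This gives a uniform contraction time $T$ depending only on the norms of the initial data.

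In the compressible regime $\lambda_\e\simeq 1$ (cases~\GLd{} and~\GLup), the parabolic problem for $\dd_\e$ is standard: classical $\Ld^2$--$\Ld^\infty$ parabolic estimates yield $\|\dd_\e^t\|_{\Ld^2\cap\Ld^\infty}\lesssim_t 1$, whence $\pre_\e=(\lambda_\e\alpha a)^{-1}\dd_\e$ inherits the bound. The other announced estimates then follow: mass conservation and the maximum principle for the continuity equation give $\|\md_\e\|_{\Ld^1\cap\Ld^\infty}\lesssim_t 1$; the Calder\'on-Zygmund/elliptic regularity estimates for the system $\curl\vi_\e=\md_\e$, $\Div(a\vi_\e)=\dd_\e$ upgrade this to the announced norms of $(\vi_\e,\nabla\vi_\e)$; finally, substituting into equation~\eqref{eq:GLv1} and differentiating it in time recover the bounds on $\partial_t\vi_\e$ and $\partial_t\pre_\e$.

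In the incompressible regime $\lambda_\e\ll 1$ (cases~\GLu{} and~\GLdp), the initial datum satisfies $\dd_\e^\circ=0$, and the crucial observation is that the diffusion coefficient $(\alpha\lambda_\e)^{-1}$ blows up, forcing $\dd_\e$ to remain small. A direct energy estimate (multiplying the $\dd_\e$ equation by $\dd_\e$, integrating by parts, and using $\|\Gamma_\e\|_{\Ld^\infty}\lesssim 1$, $\|\nabla h\|_{\Ld^\infty}\lesssim\lambda_\e$) yields
\begin{align*}
\tfrac12\partial_t\|\dd_\e\|_{\Ld^2}^2+(2\alpha\lambda_\e)^{-1}\|\nabla\dd_\e\|_{\Ld^2}^2\lesssim\lambda_\e\|\dd_\e\|_{\Ld^2}^2+\lambda_\e\|\md_\e\|_{\Ld^2}^2,
\end{align*}
and Gr\"onwall's lemma together with $\dd_\e^\circ=0$ gives $\|\dd_\e^t\|_{\Ld^2}^2\lesssim_t\lambda_\e$ and $\|\nabla\dd_\e\|_{\Ld^2_t\Ld^2}^2\lesssim_t\lambda_\e^2$. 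A parabolic $\Ld^\infty$ bootstrap (or rescaling to the fast time $\tau=t/\lambda_\e$ and using standard heat-kernel estimates) promotes this to $\|\dd_\e^t\|_{\Ld^2\cap\Ld^\infty}\lesssim_t 1$ while keeping the sharper scaling $\|\dd_\e^t\|_{\Ld^2}\lesssim_t\lambda_\e^{1/2}$, yielding $\|\pre_\e^t\|_{\Ld^2\cap\Ld^\infty}\lesssim_t\lambda_\e^{-1/2}$ and $\|\nabla\pre_\e\|_{\Ld^2_t\Ld^2}\lesssim_t 1$. Substituting into equation~\eqref{eq:GLv1} and differentiating once more in time, with parabolic regularity controlling $\|\triangle\dd_\e\|_{\Ld^2_t\Ld^2}\lesssim_t\lambda_\e$, gives the announced bounds on $\partial_t\vi_\e$ and $\partial_t\pre_\e$.

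Global existence in the parabolic case ($\beta=0$) is obtained because the $\md_\e$ equation then reduces to a pure continuity equation with divergence-free-in-time velocity field, preserving nonnegativity, mass, and $\Ld^\infty$-norm via the maximum principle; this precludes any blow-up and the $\dd_\e$ equation extends globally as well. The extension of $T\to\infty$ in the mixed-flow incompressible regime~\GLu{} and in~\GLup{} is more delicate: it follows by comparing $\vi_\e$ to the unique global solution $\vi$ of the limit equation~\eqref{eq:GL1lim} or~\eqref{eq:GL1'lim} studied in~\cite{D-16} and propagating stability on arbitrarily long intervals. The main obstacle throughout is obtaining the sharp $\lambda_\e$-dependence in the incompressible regime simultaneously in $\Ld^2$ and $\Ld^\infty$ (and especially for $\partial_t\pre_\e$ in $\Ld^2_t\Ld^2$), which requires a careful commutation between the $\lambda_\e^{-1}$-scaled parabolic regularity and the higher-order Sobolev control of $(\md_\e,\dd_\e)$; this is what forces the extra Sobolev regularity $s>0$ in the assumptions on the initial data.
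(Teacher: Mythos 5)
Your overall plan — rewrite in vorticity-divergence form, run a fixed-point for local existence with uniform contraction time thanks to $\|\Gamma_\e\|_{W^{1,\infty}}\lesssim 1$, and exploit the large diffusivity $(\alpha\lambda_\e)^{-1}\gg1$ in the $\dd_\e$-equation to get the $\lambda_\e$-sharp scalings — captures the key structural idea and matches the paper's Steps~1--3. The $\Ld^2$ energy estimate you write for $\dd_\e$ and the Gr\"onwall conclusion $\|\dd_\e^t\|_{\Ld^2}^2\lesssim_t\lambda_\e$, $\|\nabla\dd_\e\|_{\Ld^2_t\Ld^2}^2\lesssim_t\lambda_\e^2$ are correct, and the same is true for the $\partial_t\pre_\e$ bound (best obtained by testing the $\dd_\e$-equation directly with $\partial_t\dd_\e$ rather than detouring through $\triangle\dd_\e$, but either leads to $\|\partial_t\dd_\e\|_{\Ld^2_t\Ld^2}\lesssim_t1$).

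There are, however, two problems. First, a local inconsistency: you claim the $\Ld^\infty$ bootstrap gives only $\|\dd_\e^t\|_{\Ld^2\cap\Ld^\infty}\lesssim_t1$, yet then immediately conclude $\|\pre_\e^t\|_{\Ld^2\cap\Ld^\infty}\lesssim_t\lambda_\e^{-1/2}$; since $\pre_\e=(\lambda_\e\alpha a)^{-1}\dd_\e$, an $O(1)$ bound on $\|\dd_\e\|_{\Ld^\infty}$ yields only $\|\pre_\e\|_{\Ld^\infty}\lesssim\lambda_\e^{-1}$. What you actually need is $\|\dd_\e^t\|_{\Ld^\infty}\lesssim_t\lambda_\e^{1/2}$, which the fast-time rescaling you gesture at does indeed deliver (via Duhamel on the rescaled heat kernel: the solution operator picks up a factor $(t\lambda_\e)^{1/2}$), but you should carry that factor through both norms. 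This is precisely what the paper packages as a separate lemma (Step~2 of the proof: items~(a)--(d), uniform in $\nu\gtrsim1$, with the precise $(t/\nu)^{1/2}$ dependence), which is then applied systematically; your version is more elementary but, as stated, the $\Ld^\infty$ conclusion is broken.

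Second, and more seriously, the global-existence argument in the mixed-flow non-parabolic cases is a genuine gap. You propose to compare $\vi_\e$ with the global solution $\vi$ of the limit equation and ``propagate stability.'' This inverts the logic: the stability estimate (Lemma~\ref{lem:lastlimGL}) uses the uniform bounds of this very proposition as input, and moreover it only controls $\vi_\e-\vi$ in $\Ld^2_\uloc$, which is not enough to rule out $\Ld^\infty$-blowup of $\md_\e$. The paper instead argues directly on $\vi_\e$: starting from the energy identity $\|\vi_\e^t-\vi_\e^\circ\|_{\Ld^2}\lesssim_t1$, it derives a Brezis--Gallou\"et-type logarithmic bound for $\|\vi_\e^t\|_{\Ld^\infty}$ in terms of $\|\md_\e\|_{\Ld^\infty}$ and $\|\Div\vi_\e\|_{\Ld^2\cap\Ld^\infty}$, then uses the large-diffusivity estimates to show $\|\Div\vi_\e\|$ carries an extra $\lambda_\e^{1/2}$ factor, and finally closes a Gr\"onwall loop of the form $\|\md_\e^t\|_{\Ld^\infty}\lesssim_t\exp\big(Ct\lambda_\e^{1/2}(1+\|\md_\e\|_{\Ld^\infty_t\Ld^\infty}^3)\big)$. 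Because the exponent is prefaced by $\lambda_\e^{1/2}\ll1$ (respectively $N_\e/\Log\ll1$ in~\GLup), this self-improves on any fixed $[0,T]$ for $\e$ small, giving $T\to\infty$. You need a mechanism of this sort; the proposed comparison to $\vi$ does not supply one. Also, note that~\GLup{} has $\lambda_\e=1$, so it is a compressible regime, not incompressible as you write — the smallness there comes from the interaction coefficient $N_\e/\Log$, not from the diffusivity, and the paper treats it in a separate step.
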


\begin{proof}
We split the proof into five steps.
Item~(i) is proved in Step~1, except the global existence in the regime~\GLup, which is postponed to the last step. The proof of item~(ii) is given in Steps~2--4.

\medskip
\noindent\step1 Compressible regimes~\GLd--\GLup.

\nopagebreak
Let $s>0$ be non-integer. The assumption $\|\hat h\|_{W^{s+3,\infty}}$, $\|\hat F\|_{W^{s+2,\infty}}\lesssim1$ leads to $\|\lambda_\e^{-1}(\nabla^\bot h-F^\bot)\|_{W^{s+2,\infty}}\lesssim1$ in the considered regimes, and also $\lambda_\e^{-1}{N_\e}/\Log\lesssim1$ and $\lambda_\e\simeq1$.
Further using the assumptions on the initial data $\vi_\e^\circ$, it follows from~\cite[Theorems~2--3]{D-16}
that there exists a unique (local) solution $\vi_\e\in\Ld^\infty_\loc([0,T);\vi_\e^\circ+H^{2}\cap W^{2,\infty}(\R^2)^2)$ of~\eqref{eq:GLv1} in $[0,T)\times\R^2$ with initial data $\vi_\e^\circ$, for some $T\gtrsim1$. Moreover, it is shown in~\cite{D-16} that this solution satisfies for all $t\in[0,T)$,
\begin{align}\label{eq:resD16}
\|\!\vi_\e^t-\vi_\e^\circ\!\|_{H^2\cap W^{2,\infty}}\lesssim_t1,\quad\|(\md_\e^t,\dd_\e^t)\|_{H^1\cap W^{1,\infty}}\lesssim_t1,\quad \int_{\R^2}\md_\e^t=1,\quad\md_\e^t\ge0.
\end{align}
In the parabolic case, it actually follows from~\cite[Theorem~1]{D-16} that the solution is global, that is, $T=\infty$. We now quickly argue that all the claimed properties of $\vi_\e$ follow from~\eqref{eq:resD16}.
Combining~\eqref{eq:resD16} with the assumption that $\vi_\e^\circ$ is bounded in $W^{1,q}(\R^2)^2$ for all $q>2$, we find
\[\|(\vi_\e^t,\nabla\!\vi_\e^t)\|_{(\Ld^2+\Ld^q)\cap\Ld^\infty}\lesssim_{t,q}1.\]
The choice $\pre_\e=(\lambda_\e\alpha a)^{-1}\dd_\e$ with $\lambda_\e\simeq1$ leads to
\[\|\!\pre_\e^t\!\|_{H^1\cap W^{1,\infty}}\lesssim\|\!\dd_\e^t\!\|_{H^1\cap W^{1,\infty}}\lesssim_t1.\]
Inserting this information into equation~\eqref{eq:GLv1}, we deduce
\[\|\partial_t\!\vi_\e^t\!\|_{\Ld^2\cap \Ld^{\infty}}\lesssim\|\nabla\!\pre_\e^t\!\|_{\Ld^2\cap \Ld^{\infty}}+\|\Gamma_\e^t\!\md_\e^t\!\|_{\Ld^2\cap \Ld^{\infty}}\lesssim_t1.\]
Testing the convection-diffusion equation $\partial_t\!\dd_\e-(\lambda_\e\alpha)^{-1}(\triangle\!\dd_\e-\Div(\dd_\e\!\nabla h))=\Div(a\Gamma_\e\!\md_\e)$ with $\partial_t\!\dd_\e$ yields
\begin{align*}
\int_{\R^2}|\partial_t\!\dd_\e\!|^2+\frac12(\lambda_\e\alpha)^{-1}\partial_t\int_{\R^2}|\nabla\!\dd_\e\!|^2=-\int_{\R^2}\partial_t\!\dd_\e\Div\!\big((\lambda_\e\alpha)^{-1}\!\dd_\e\!\nabla h-a\Gamma_\e\!\md_\e\big),
\end{align*}
and hence, integrating in time, with $\lambda_\e\simeq1$,
\begin{eqnarray*}
\lefteqn{\|\partial_t\!\dd_\e\!\|_{\Ld^2_t\Ld^2}^2+\frac12(\lambda_\e\alpha)^{-1}\|\nabla\!\dd_\e\!\|_{\Ld^2}^2}\\
&\lesssim& \|\nabla\!\dd_\e^\circ\!\|_{\Ld^2}^2+\|\partial_t\!\dd_\e\!\|_{\Ld^2_t\Ld^2}\big(\|\!\dd_\e\!\|_{\Ld^2_tH^1}+\|a\Gamma_\e\|_{\Ld^\infty_tW^{1,\infty}}\|\!\md_\e\!\|_{\Ld^2_tH^1}\big)\\
&\lesssim_t&1+\|\partial_t\!\dd_\e\!\|_{\Ld^2_t\Ld^2}.
\end{eqnarray*}
Absorbing the last right-hand side term, we conclude
\begin{align}\label{eq:bounddtpreps}
\|\partial_t\!\pre_\e\!\|_{\Ld^2_t\Ld^2}\lesssim\|\partial_t\!\dd_\e\!\|_{\Ld^2_t\Ld^2}\lesssim_t1.
\end{align}
All the claimed properties of $\vi_\e$ follow.

\medskip
\noindent\step2 Estimates for convection-diffusion equations with large diffusivity.

In the incompressible regimes \GLu--\GLdp, the conclusion does not follow as in Step~1 since the corresponding choice $\pre_\e=(\lambda_\e\alpha a)^{-1}\Div(a\!\vi_\e)$ now contains the large prefactor $(\lambda_\e\alpha)^{-1}\gg1$. In particular, equation~\eqref{eq:vortform} for the divergence $\dd_\e:=\Div(a\!\vi_\e)$ takes the form
\begin{gather}\label{eq:vortformresc}
\partial_t\!\dd_\e-(\lambda_\e\alpha)^{-1}\triangle\!\dd_\e+\alpha^{-1}\Div(\dd_\e\!\nabla \hat h)
=\Div(a\Gamma_\e\!\md_\e),
\end{gather}
with a large prefactor $(\lambda_\e\alpha)^{-1}\gg1$ in front of the Laplacian and with initial data $\dd_\e^\circ:=\Div(a\!\vi_\e^\circ)=0$.
In this step, we consider the model convection-diffusion equation
\[\partial_tw-\nu\triangle w+\Div(w\nabla\hat h)=\Div g, \qquad w|_{t=0}=0,\]
with large diffusivity $\nu\gg1$. As the initial condition vanishes, a direct adaptation of~\cite[Lemma~2.3]{D-16} yields the following bounds: for all $\nu\gtrsim1$,
\begin{enumerate}[(a)]
\item for all $s\ge0$, there is a constant $C$ only depending on an upper bound on $s$ and $\|\nabla\hat h\|_{W^{s,\infty}}$ such that
\[\|w^t\|_{H^s}+\nu^{1/2}\|\nabla w\|_{\Ld^2_tH^s}\le C\big(\tfrac t\nu\big)^{1/2}e^{C\frac t\nu}\|g\|_{\Ld^\infty_tH^s}\le Ct^{1/2}e^{Ct}\|g\|_{\Ld^\infty_tH^s};\]
\item there is a constant $C$ only depending on an upper bound on $\|\nabla\hat h\|_{\Ld^\infty}$ such that
\[\|w^t\|_{\dot H^{-1}}\le Ce^{Ct}\|g\|_{\Ld^2_t\Ld^2};\]
\item for all $1\le p,q\le \infty$, there is a constant $C$ only depending on an upper bound on $\|\nabla\hat h\|_{\Ld^\infty}$ such that
\[\|w\|_{\Ld^p_t\Ld^q}\le C\big(\tfrac t\nu\big)^{1/2}e^{C(\frac t\nu)^2}\|g\|_{\Ld^p_t\Ld^q}\le Ct^{1/2}e^{Ct^2}\|g\|_{\Ld^p_t\Ld^q}.\]
\end{enumerate}
In particular, the same bounds as in~\cite[Lemma~2.3]{D-16} hold uniformly with respect to the large diffusivity $\nu\gg1$. Further adapting the proof of~\eqref{eq:bounddtpreps} in Step~1 above, we easily find
\begin{enumerate}[(d)]
\item there is a constant $C$ only depending on an upper bound on $\|\nabla\hat h\|_{W^{1,\infty}}$ such that
\[\|\partial_tw\|_{\Ld^2_t\Ld^2}\le\|\nabla g\|_{\Ld^2_t\Ld^2}+C\big(\tfrac t\nu\big)^{1/2}e^{C\frac t\nu}\|g\|_{\Ld^\infty_t\Ld^2}\le Ct^{1/2}e^{Ct}\|g\|_{\Ld^\infty_tH^1}.\]
\end{enumerate}

\medskip
\noindent\step3 Incompressible regimes~\GLu--\GLdp.

In the vorticity formulation~\eqref{eq:vortform}, the large prefactor $(\lambda_\e\alpha)^{-1}\gg1$ does not affect the equation for the vorticity $\md_\e$, but only the equation for the divergence $\dd_\e$, which now takes the form~\eqref{eq:vortformresc}. However, for the choice $\dd_\e^\circ=0$, the result of Step~2 ensures that the estimates for $\dd_\e$ used in~\cite{D-16} hold uniformly with respect to the large prefactor. Hence, as in Step~1, using the assumptions on the initial data, the proof of~\cite[Theorems~2--3]{D-16} shows that in the incompressible regimes there exists a unique (local) solution $\vi_\e\in\Ld^\infty_\loc([0,T);\vi^\circ+H^{2}\cap W^{2,\infty}(\R^2)^2)$ of~\eqref{eq:GLv1} in $[0,T)\times\R^2$ with initial data $\vi^\circ$, for some $T\gtrsim1$. Moreover, it is shown in~\cite{D-16} that this solution satisfies for all $t\in[0,T)$,
\begin{align}\label{eq:resD16bis}
\|\!\vi_\e^t-\vi_\e^\circ\!\|_{H^{2}\cap W^{2,\infty}}\lesssim_t1,\quad\|(\md_\e^t,\dd_\e^t)\|_{H^1\cap W^{1,\infty}}\lesssim_t1,\quad \int_{\R^2}\md_\e^t=1,\quad\md_\e^t\ge0.
\end{align}
In the parabolic case, it actually follows from~\cite[Theorem~1]{D-16} that the solution is global, that is, $T=\infty$. We now quickly argue that all the claimed properties of $\vi_\e$ follow from~\eqref{eq:resD16bis}.
By definition~\eqref{eq:GLv1}, we find $\|\Gamma_\e^t\|_{W^{1,\infty}}\lesssim_t1$.
Combining~\eqref{eq:resD16bis} with the assumption that $\vi_\e^\circ$ is bounded in $W^{1,q}(\R^2)^2$ for all $q>2$, we obtain
\[\|(\vi_\e^t,\nabla\!\vi_\e^t)\|_{(\Ld^2+\Ld^q)\cap\Ld^\infty}\lesssim_{t,q}1.\]
Using~\eqref{eq:GLv1} in the form $\pre_\e=(\lambda_\e\alpha a)^{-1}\dd_\e$, and applying items~(a)--(c) of Step~2, we find
\[\|\!\pre_\e^t\!\|_{H^1\cap W^{1,\infty}}\lesssim \lambda_\e^{-1}\|\!\dd_\e^t\!\|_{H^1\cap W^{1,\infty}}\lesssim_t\lambda_\e^{-1/2}\|a\Gamma_\e\!\md_\e\!\|_{\Ld^\infty_t(H^1\cap W^{1,\infty})}\lesssim_t\lambda_\e^{-1/2},\]
where the last inequality follows from~\eqref{eq:resD16bis}. Similarly, using the choice $h=\lambda_\e\hat h$ in the form
\[\nabla\!\pre_\e=(\lambda_\e\alpha)^{-1}\nabla (a^{-1}\!\dd_\e)=(\alpha a)^{-1}(\lambda_\e^{-1}\nabla \!\dd_\e-\dd_\e\!\nabla\hat h),\]
item~(a) of Step~2 yields
\[\|\nabla\!\pre_\e\!\|_{\Ld^2_t\Ld^2}\lesssim_t\lambda_\e^{-1}\|\nabla\!\dd_\e\!\|_{\Ld^2_t\Ld^2}+\|\!\dd_\e\!\|_{\Ld^\infty_t\Ld^2}\lesssim_t\|a\Gamma_\e\!\md_\e\!\|_{\Ld^\infty_t\Ld^2}\lesssim_t1.\]
Inserting this information into equation~\eqref{eq:GLv1}, we deduce
\[\|\partial_t\!\vi_\e^t\!\|_{\Ld^2\cap\Ld^\infty}\lesssim\|\nabla\!\pre_\e^t\!\|_{\Ld^2\cap\Ld^\infty}+\|\Gamma_\e^t\|_{\Ld^{\infty}}\|\!\md_\e^t\!\|_{\Ld^2\cap\Ld^\infty}\lesssim_t\lambda_\e^{-1/2},\]
and similarly
\[\|\partial_t\!\vi_\e\!\|_{\Ld^2_t\Ld^2}\lesssim\|\nabla\!\pre_\e\!\|_{\Ld^2_t\Ld^2}+\|\Gamma_\e\|_{\Ld^\infty_t\Ld^\infty}\|\!\md_\e\!\|_{\Ld^2_t\Ld^2}\lesssim_t1.\]
Finally, item~(d) of Step~2 yields
\[\|\partial_t\!\pre_\e\!\|_{\Ld^2_t\Ld^2}\lesssim\lambda_\e^{-1}\|\partial_t\!\dd_\e\!\|_{\Ld^2_t\Ld^2}\lesssim_t\lambda_\e^{-1}\|a\Gamma_\e\!\md_\e\!\|_{\Ld^\infty_tH^1}\lesssim_t\lambda_\e^{-1}.\]
All the claimed properties of $\vi_\e$ follow.

\medskip
\noindent\step4 Global existence in the mixed-flow incompressible regimes.

\nopagebreak
The energy estimates of~\cite[Lemma~4.1(iii)]{D-16} yield
\[\|\!\vi_\e^t-\vi_\e^\circ\!\|_{\Ld^2}\lesssim_t1.\]
Using this estimate and $\int_{\R^2}|\!\md_\e^t\!|=1$ for all~$t$, and arguing as in~\cite[Step~1 of the proof of Lemma~4.5]{D-16}, we find
\begin{multline}\label{eq:boundvepsLinfty}
\|\!\vi_\e^t\!\|_{\Ld^\infty}\lesssim_t1+\|\!\md_\e^t\!\|_{\Ld^\infty}^{1/2}\log^{1/2}\big(2+\|\!\md_\e^t\!\|_{\Ld^\infty}\big)\\
+\|\!\Div(\vi_\e^t-\vi_\e^\circ)\|_{\Ld^2}\log^{1/2}\big(2+\|\!\Div(\vi_\e^t-\vi_\e^\circ)\|_{\Ld^2\cap\Ld^\infty}\big).
\end{multline}
Item~(a) of Step~2 yields
\begin{align*}
\|\!\dd_\e^t\!\|_{\Ld^2}\lesssim_t \lambda_\e^{1/2}\|a\Gamma_\e\!\md_\e\!\|_{\Ld^\infty_t\Ld^2}&\lesssim_t \lambda_\e^{1/2}\|\!\vi_\e-\vi_\e^\circ\!\|_{\Ld^\infty_t\Ld^2}\|\!\md_\e\!\|_{\Ld^\infty_t\Ld^\infty}+\lambda_\e^{1/2}\|\!\md_\e\!\|_{\Ld^\infty_t\Ld^2}\\
&\lesssim_t \lambda_\e^{1/2}\|\!\md_\e\!\|_{\Ld^\infty_t\Ld^\infty}+\lambda_\e^{1/2}\|\!\md_\e\!\|_{\Ld^\infty_t\Ld^\infty}^{1/2},
\end{align*}
and hence, in terms of $\Div(\vi_\e-\vi_\e^\circ)=a^{-1}\!\dd_\e-\lambda_\e\nabla\hat h\cdot(\vi_\e-\vi_\e^\circ)$,
\begin{align*}
\|\!\Div(\vi_\e^t-\vi_\e^\circ)\|_{\Ld^2}&\lesssim_t \lambda_\e^{1/2}(1+\|\!\md_\e\!\|_{\Ld^\infty_t\Ld^\infty}).
\end{align*}
Inserting this into~\eqref{eq:boundvepsLinfty}, we find
\begin{align}\label{eq:boundvepsLinfty2}
\|\!\vi_\e^t\!\|_{\Ld^\infty}\lesssim_t(1+\|\!\md_\e\!\|_{\Ld^\infty_t\Ld^\infty})\log^{1/2}\big(2+\|\!\md_\e\!\|_{\Ld^\infty_t\Ld^\infty}+\|\!\Div\!\vi_\e^t\!\|_{\Ld^\infty}\big).
\end{align}
Item~(c) of Step~2 yields
\begin{align*}
\|\!\dd_\e^t\!\|_{\Ld^\infty}\lesssim_t\lambda_\e^{1/2}\|a\Gamma_\e\!\md_\e\!\|_{\Ld^\infty_t\Ld^\infty}\lesssim \lambda_\e^{1/2}(1+\|\!\vi_\e\!\|_{\Ld^\infty_t\Ld^\infty})\|\!\md_\e\!\|_{\Ld^\infty_t\Ld^\infty},
\end{align*}
or alternatively, in terms of $\Div\!\vi_\e=a^{-1}\!\dd_\e-\lambda_\e\nabla\hat h\cdot\vi_\e$,
\begin{align*}
\|\!\Div\!\vi_\e^t\!\|_{\Ld^\infty}\lesssim_t \lambda_\e^{1/2}(1+\|\!\vi_\e\!\|_{\Ld^\infty_t\Ld^\infty})(1+\|\!\md_\e\!\|_{\Ld^\infty_t\Ld^\infty}).
\end{align*}
Combining this with~\eqref{eq:boundvepsLinfty2} leads to
\begin{align*}
\|\!\Div\!\vi_\e^t\!\|_{\Ld^\infty}\lesssim_t \lambda_\e^{1/2}(1+\|\!\md_\e\!\|_{\Ld^\infty_t\Ld^\infty}^2)\log^{1/2}\big(2+\|\!\md_\e\!\|_{\Ld^\infty_t\Ld^\infty}+\|\!\Div\!\vi_\e^t\!\|_{\Ld^\infty}\big).
\end{align*}
Estimating $\log^{1/2}$ by $\log$, applying the inequality $a\log b\le b+a\log a$ to the choices $a=1+\|\!\md_\e\!\|_{\Ld^\infty_t\Ld^\infty}^2$ and $b=2+\|\!\md_\e\!\|_{\Ld^\infty_t\Ld^\infty}+\|\!\Div\!\vi_\e^t\!\|_{\Ld^\infty}$, and using $\lambda_\e\ll1$ to absorb the term $\|\!\Div\!\vi_\e^t\!\|_{\Ld^\infty}$ appearing in the right-hand side,
we find
\begin{align*}
\|\!\Div\!\vi_\e^t\!\|_{\Ld^\infty}\lesssim_t \lambda_\e^{1/2}(1+\|\!\md_\e\!\|_{\Ld^\infty_t\Ld^\infty}^2)\log\big(2+\|\!\md_\e\!\|_{\Ld^\infty_t\Ld^\infty}\big),
\end{align*}
so that~\eqref{eq:boundvepsLinfty2} finally takes the form
\begin{align*}
\|\!\vi_\e^t\!\|_{\Ld^\infty}\lesssim_t(1+\|\!\md_\e\!\|_{\Ld^\infty_t\Ld^\infty})\log^{1/2}\big(2+\|\!\md_\e\!\|_{\Ld^\infty_t\Ld^\infty}\big).
\end{align*}
In particular, we deduce the following estimates,
\begin{align*}
\|\!\vi_\e^t\!\|_{\Ld^\infty}\lesssim_t1+\|\!\md_\e\!\|_{\Ld^\infty_t\Ld^\infty}^2\qquad\text{and}\qquad\|\!\dd_\e^t\!\|_{\Ld^\infty}\lesssim_t\lambda_\e^{1/2}(1+\|\!\md_\e\!\|_{\Ld^\infty_t\Ld^\infty}^3).
\end{align*}
The result in~\cite[Lemma~4.3(i)]{D-16} then yields the following bound on the vorticity $\md_\e$,
\begin{align*}
\|\!\md_\e^t\!\|_{\Ld^\infty}\,&\lesssim\,\exp\Big(Ct\big(1+\|\!\dd_\e\!\|_{\Ld^\infty_t\Ld^\infty}+\lambda_\e\|\!\vi_\e\!\|_{\Ld^\infty_t\Ld^\infty}\big)\Big)\\
&\lesssim_t\,\exp\big(Ct\lambda_\e^{1/2}(1+\|\!\md_\e\!\|_{\Ld^\infty_t\Ld^\infty}^3)\big).
\end{align*}
As $\lambda_\e\ll1$, this bound easily implies that for all $T>0$ there exists $\e_0(T)>0$ such that for all $0<\e<\e_0(T)$ the vorticity~$\md_\e^t$ (if it exists) remains bounded in $\Ld^\infty(\R^2)$ for all $t\in[0,T]$. Then repeating the arguments in~\cite[Sections~4.2--4.3]{D-16}, this a priori bound on the vorticity allows to deduce existence and uniqueness of a solution on the whole time interval $[0,T]$. This proves that the existence time blows up as $\e\downarrow0$.

\medskip
\noindent\step5 Global existence in the mixed-flow compressible regime~\GLup.

\nopagebreak
Just as in~\eqref{eq:boundvepsLinfty} above, we obtain the bounds $\|\!\vi_\e^t-\vi_\e^\circ\!\|_{\Ld^2}\lesssim_t1$ and
\begin{multline}\label{eq:boundvepsLinfty3}
\|\!\vi_\e^t\!\|_{\Ld^\infty}\lesssim_t1+\|\!\md_\e^t\!\|_{\Ld^\infty}^{1/2}\log^{1/2}\big(2+\|\!\md_\e^t\!\|_{\Ld^\infty}\big)\\
+\|\!\Div(\vi_\e^t-\vi_\e^\circ)\|_{\Ld^2}\log^{1/2}\big(2+\|\!\Div(\vi_\e^t-\vi_\e^\circ)\|_{\Ld^2\cap\Ld^\infty}\big).
\end{multline}
Considering the equation~\eqref{eq:vortform} for $\dd_\e$, the a priori estimates in~\cite[Lemma~2.3]{D-16} yield
\begin{multline*}
\|\!\dd_\e^t\!\|_{\Ld^2}\lesssim_t1+\|a\Gamma_\e\!\md_\e\!\|_{\Ld^\infty_t\Ld^2}\lesssim_t 1+\|\!\md_\e\!\|_{\Ld^\infty_t\Ld^2}+\|\!\md_\e\!\|_{\Ld^\infty_t\Ld^\infty}\|\!\vi_\e-\vi_\e^\circ\!\|_{\Ld^\infty_t\Ld^2}\\
\lesssim_t 1+\|\!\md_\e\!\|_{\Ld^\infty_t\Ld^\infty},
\end{multline*}
and also
\begin{align*}
\|\!\dd_\e^t\!\|_{\Ld^\infty}&\lesssim_t1+\|a\Gamma_\e\!\md_\e\!\|_{\Ld^\infty_t\Ld^\infty}\lesssim_t 1+\|\!\md_\e\!\|_{\Ld^\infty_t\Ld^\infty}(1+\|\!\vi_\e\!\|_{\Ld^\infty_t\Ld^\infty}).
\end{align*}
As by definition $\Div(\vi_\e^t-\vi_\e^\circ)=a^{-1}(\dd_\e^t-\dd_\e^\circ)-\nabla h\cdot (\vi_\e^t-\vi_\e^\circ)$, these estimates take the form
\begin{gather}\label{eq:boundvepsLinfty4}
\|\!\Div(\vi_\e^t-\vi_\e^\circ)\|_{\Ld^2}\lesssim_t1+\|\!\md_\e\!\|_{\Ld^\infty_t\Ld^\infty},\\
\|\!\Div\!\vi_\e^t\!\|_{\Ld^\infty}\lesssim_t (1+\|\!\md_\e\!\|_{\Ld^\infty_t\Ld^\infty})(1+\|\!\vi_\e\!\|_{\Ld^\infty_t\Ld^\infty}).\nonumber
\end{gather}
Injecting these estimates into~\eqref{eq:boundvepsLinfty3} yields
\begin{align*}
\|\!\vi_\e^t\!\|_{\Ld^\infty}&\lesssim_t1+\|\!\md_\e^t\!\|_{\Ld^\infty}^{1/2}\log^{1/2}\big(2+\|\!\md_\e^t\!\|_{\Ld^\infty}\big)\\
&\qquad\qquad+(1+\|\!\md_\e\!\|_{\Ld^\infty_t\Ld^\infty})\log^{1/2}\big((1+\|\!\md_\e\!\|_{\Ld^\infty_t\Ld^\infty})(1+\|\!\vi_\e\!\|_{\Ld^\infty_t\Ld^\infty})\big)\\
&\lesssim_t(1+\|\!\md_\e\!\|_{\Ld^\infty_t\Ld^\infty})\log^{1/2}\big(2+\|\!\md_\e\!\|_{\Ld^\infty_t\Ld^\infty}+\|\!\vi_\e\!\|_{\Ld^\infty_t\Ld^\infty}\big).
\end{align*}
Estimating $\log^{1/2}$ by $\log$, applying the inequality $a\log b\le b+a\log a$ to the choices $a:=1+\|\!\md_\e\!\|_{\Ld^\infty_t\Ld^\infty}$ and $b:=2+\|\!\md_\e\!\|_{\Ld^\infty_t\Ld^\infty}+\frac1K\|\!\vi_\e\!\|_{\Ld^\infty_t\Ld^\infty}$, and choosing $K\simeq_t1$ large enough to absorb the term $\|\!\vi_\e\!\|_{\Ld^\infty_t\Ld^\infty}$ appearing in the right-hand side, we find
\begin{align*}
\|\!\vi_\e\!\|_{\Ld^\infty_t\Ld^\infty}&\lesssim_t(1+\|\!\md_\e\!\|_{\Ld^\infty_t\Ld^\infty})\log\big(2+\|\!\md_\e\!\|_{\Ld^\infty_t\Ld^\infty}\big),
\end{align*}
so that~\eqref{eq:boundvepsLinfty4} takes the form,
\begin{align*}
\|\!\Div\!\vi_\e\!\|_{\Ld^\infty_t\Ld^\infty}\lesssim_t (1+\|\!\md_\e\!\|_{\Ld^\infty_t\Ld^\infty})^2\log\big(2+\|\!\md_\e\!\|_{\Ld^\infty_t\Ld^\infty}\big).
\end{align*}
The result in~\cite[Lemma~4.3(i)]{D-16} then gives the following bound on the vorticity $\md_\e$, in the considered regime~\GLup,
\[\|\!\md_\e^t\!\|_{\Ld^\infty}\lesssim\exp\bigg(Ct\Big(1+\frac{N_\e}\Log\|(\vi_\e,\Div\!\vi_\e)\|_{\Ld^\infty_t\Ld^\infty}\Big)\bigg)\lesssim_t\exp\bigg(\frac{CtN_\e}\Log\|\!\md_\e\!\|_{\Ld^\infty_t\Ld^\infty}^3\bigg).\]
As $N_\e\ll\Log$, this bound easily implies that for all $T>0$ there exists $\e_0(T)>0$ such that for all $0<\e<\e_0(T)$ the vorticity~$\md_\e^t$ (if it exists) remains bounded in $\Ld^\infty(\R^2)$ for all $t\in[0,T]$. Then repeating the arguments in~\cite[Sections~4.2--4.3]{D-16}, existence and uniqueness of a solution on the whole time interval $[0,T]$ follows from this a priori bound. This proves that the existence time blows up as $\e\downarrow0$.
\end{proof}

We now show how to pass to the limit in equation~\eqref{eq:GLv1} as $\e\downarrow0$, which is easily achieved e.g.\@ by a Grönwall argument on the $\Ld^2$-distance between $\vi_\e$ and the solution $\vi$ of the limiting equation.

\begin{lem}\label{lem:lastlimGL}
Let the same assumptions hold as in Proposition~\ref{prop:GLvprop}, and let $\vi_\e:[0,T)\times\R^2\to\R^2$ be the corresponding (local) solution of~\eqref{eq:GLv1}, for some $T>0$ (independent of~$\e$). Assume that $\vi_\e^\circ\to\vi^\circ$ in $\Ld^2_\uloc(\R^2)^2$ as $\e\downarrow0$. The following hold.
\begin{enumerate}[(i)]
\item \emph{Regime \GLu:}\\
We have $\vi_\e\to\vi$ in $\Ld^\infty_\loc([0,T);\Ld^2_\uloc(\R^2)^2)$ as $\e\downarrow0$, where $\vi\in\Ld^\infty_\loc(\R^+;\vi^\circ+\Ld^2(\R^2)^2)$ is the unique global (smooth) solution of
\begin{align}\label{eq:limGL1}
\begin{cases}
\partial_t\!\vi=\nabla\!\pre+(\alpha-\Jb\beta)(\nabla^\bot\hat h-\hat F^\bot-2\!\vi)\curl\!\vi,\\
\Div\!\vi=0,\quad\vi\!|_{t=0}=\vi^\circ.
\end{cases}
\end{align}
\item \emph{Regime~\GLd{} with $\frac{N_\e}\Log\to\lambda\in(0,\infty)$ and $\vi_\e^\circ=\vi^\circ$:}\\
We have $\vi_\e\to\vi$ in $\Ld^\infty_\loc([0,T);\Ld^2(\R^2)^2)$ as $\e\downarrow0$, where $\vi\in \Ld^\infty_\loc([0,T);\vi^\circ+\Ld^2(\R^2)^2)$ is the unique local (smooth) solution of
\begin{align}\label{eq:limGL2}
\begin{cases}
\partial_t\!\vi=\alpha^{-1}\nabla(\hat a^{-1}\Div(\hat a\!\vi))+(\alpha-\Jb\beta)(\nabla^\bot\hat h-\hat F^\bot-2\lambda\!\vi)\curl\!\vi,\\
\vi\!|_{t=0}=\vi^\circ.
\end{cases}
\end{align}
\item \emph{Regime~\GLup{} with $\vi_\e^\circ=\vi^\circ$:}\\
We have $\vi_\e\to\vi$ in $\Ld^\infty_\loc([0,T);\Ld^2(\R^2)^2)$ as $\e\downarrow0$, where $\vi\in\Ld^\infty_\loc(\R^+;\vi^\circ+\Ld^2(\R^2)^2)$ is the unique global (smooth) solution of
\begin{align}\label{eq:limGL4}
\begin{cases}
\partial_t\!\vi=\alpha^{-1}\nabla(\hat a^{-1}\Div(\hat a\!\vi))+(\alpha-\Jb\beta)(\nabla^\bot\hat h-\hat F^\bot)\curl\!\vi,\\
\vi\!|_{t=0}=\vi^\circ.
\end{cases}
\end{align}
\item \emph{Regime~\GLdp:}\\
We have $\vi_\e\to\vi$ in $\Ld^\infty_\loc([0,T);\Ld^2_\uloc(\R^2)^2)$ as $\e\downarrow0$, where $\vi\in\Ld^\infty_\loc(\R^+;\vi^\circ+\Ld^2(\R^2)^2)$ is the unique global (smooth) solution of
\begin{equation}\label{eq:limGL3}
\begin{cases}
\partial_t\!\vi=\nabla\!\pre+(\alpha-\Jb\beta)(\nabla^\bot\hat h-\hat F^\bot)\curl\!\vi,\\
\Div\!\vi=0,\quad\vi\!|_{t=0}=\vi^\circ.
\end{cases}\qedhere
\end{equation}
\end{enumerate}
\end{lem}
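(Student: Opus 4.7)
\medskip

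\noindent
\textbf{Proof proposal.}
The plan is to perform a stability argument on the difference $w_\e:=\vi_\e-\vi$, with $\vi$ the unique (local or global) smooth solution of the relevant limiting equation~\eqref{eq:limGL1}--\eqref{eq:limGL3} (existence and regularity of which are established in~\cite{D-16} by the same type of energy arguments used to prove Proposition~\ref{prop:GLvprop}). Using the explicit scalings~\GLu--\GLdp{} of $h=\lambda_\e\hat h$, $F=\lambda_\e\hat F$ (where relevant) and the identity $\lambda_\e^{-1}\frac{N_\e}\Log=1$ in~\GLu, one computes the following explicit expression for $\Gamma_\e-\Gamma$: in~\GLu{} we get $\Gamma_\e-\Gamma=-2(\alpha-\Jb\beta)w_\e$, in~\GLd{} we get $\Gamma_\e-\Gamma=-2(\alpha-\Jb\beta)(\frac{N_\e}\Log\!\vi_\e-\lambda\!\vi)$, while in~\GLup{} and~\GLdp{} we find $\Gamma_\e-\Gamma=-2(\alpha-\Jb\beta)\frac{N_\e}{\lambda_\e\Log}\!\vi_\e$, which tends to $0$ uniformly since $\frac{N_\e}{\lambda_\e\Log}\to0$ and $\|\!\vi_\e\!\|_{\Ld^\infty}\lesssim_t1$ from Proposition~\ref{prop:GLvprop}. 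In every case, modulo a term that tends to zero uniformly, $\Gamma_\e-\Gamma$ is Lipschitz in $w_\e$.

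Subtracting the equations for $\vi_\e$ and $\vi$, we obtain
\begin{equation*}
\partial_tw_\e=\nabla(\!\pre_\e-\!\pre)+(\Gamma_\e-\Gamma)\curl\!\vi_\e+\Gamma\curl w_\e.
\end{equation*}
In the compressible regimes (ii) and (iii), where $\vi_\e^\circ=\vi^\circ$, we test the above equation against $w_\e$ directly in $\Ld^2(\R^2)^2$; in the incompressible regimes (i) and (iv), where the initial data only converges in $\Ld^2_\uloc$, we test instead against $(\chi_R^z)^2w_\e$ for an arbitrary lattice point $z$, and take the supremum over $z$ at the end, producing only harmless cutoff-commutator terms of the form $\int(\!\pre_\e-\!\pre)\chi_R^z\nabla\chi_R^z\cdot w_\e$ controlled by $R^{-1}$ times local norms of $\pre_\e-\!\pre$ and $w_\e$. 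The term involving $\Gamma\curl w_\e$ is bounded by $\|\Gamma\|_{\Ld^\infty}\|\nabla w_\e\|_{\Ld^2}\|w_\e\|_{\Ld^2}$, and since $\nabla w_\e$ is itself controlled via the curl and the near-divergence-free property using the bounds from Proposition~\ref{prop:GLvprop}, this leads to a quadratic-in-$w_\e$ contribution suitable for Gr\"onwall.

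The main obstacle is the handling of the pressure term $\int\nabla(\!\pre_\e-\!\pre)\cdot w_\e$ in the incompressible regimes~\GLu, \GLdp, where $\pre_\e=(\lambda_\e\alpha a)^{-1}\!\dd_\e$ with $\dd_\e:=\Div(a\vi_\e)$ contains the singular prefactor $\lambda_\e^{-1}\gg1$. The key is to exploit the dissipative structure: integrating by parts and using $\Div\!\vi=0$ together with $\Div\!\vi_\e=a^{-1}\!\dd_\e-\nabla h\cdot\vi_\e=a^{-1}\!\dd_\e-\lambda_\e\nabla\hat h\cdot\vi_\e$, one obtains
\begin{equation*}
\int\nabla\!\pre_\e\cdot w_\e\,=\,-(\lambda_\e\alpha)^{-1}\!\int a^{-2}|\!\dd_\e\!|^2+\alpha^{-1}\!\int a^{-1}\!\dd_\e\,\nabla\hat h\cdot\vi_\e,
\end{equation*}
where the first term is nonpositive and the second is $O(\lambda_\e^{1/2})$ thanks to the crucial bound $\|\!\dd_\e\!\|_{\Ld^2}\lesssim_t\lambda_\e^{1/2}$ that follows from the estimates of Step~2 in the proof of Proposition~\ref{prop:GLvprop} applied to the homogeneous initial condition $\dd_\e^\circ=0$. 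The contribution $\int\nabla\!\pre\cdot w_\e$ is similarly controlled since $\Div w_\e=\Div\!\vi_\e=O(\lambda_\e^{1/2})$ in $\Ld^2_\uloc$ by the same bound, and $\pre$ is bounded in $\Ld^2+\Ld^\infty$ from the limiting equation. Combined with the estimate for $(\Gamma_\e-\Gamma)\curl\!\vi_\e$ (which is either $-2(\alpha-\Jb\beta)w_\e\curl\!\vi_\e$ with $\|\curl\!\vi_\e\!\|_{\Ld^\infty}\lesssim_t1$, or a quantity tending to zero uniformly), we reach the differential inequality
\begin{equation*}
\frac{d}{dt}\|w_\e\chi_R^z\|_{\Ld^2}^2\lesssim_t\|w_\e\chi_R^z\|_{\Ld^2}^2+o(1),
\end{equation*}
uniform in $z$ and $R$. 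Taking the supremum over $z\in R\Z^2$, and using $\|w_\e|_{t=0}\|_{\Ld^2_\uloc}\to0$, Gr\"onwall's lemma concludes. The compressible cases~\GLd, \GLup{} are strictly simpler: $\pre_\e$ and $\nabla\!\pre_\e$ are uniformly bounded by Proposition~\ref{prop:GLvprop}(i) and converge directly to their limits, so the same Gr\"onwall argument applies without the delicate cancellation above.
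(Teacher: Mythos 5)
Your overall structure (Grönwall on a localized $\Ld^2$-distance, explicit identification of $\Gamma_\e-\Gamma$ in each regime, use of the $\|\!\dd_\e\!\|_{\Ld^2}\lesssim_t\lambda_\e^{1/2}$ bound to tame the singular pressure prefactor) matches the paper's strategy, but there is a genuine gap in the treatment of the term $\int\xi_R^z\Gamma\cdot w_\e\curl w_\e$ (equivalently the term you call ``$\Gamma\curl w_\e$ tested against $w_\e$''). You propose to bound it by $\|\Gamma\|_{\Ld^\infty}\|\nabla w_\e\|_{\Ld^2}\|w_\e\|_{\Ld^2}$ and claim that $\nabla w_\e$ is ``controlled via the curl and the near-divergence-free property using the bounds from Proposition~\ref{prop:GLvprop}'', concluding that the contribution is quadratic in $w_\e$. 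This is not correct: Proposition~\ref{prop:GLvprop} gives $O(1)$ bounds on $\curl\!\vi_\e$ and $\curl\!\vi$ separately, so $\curl w_\e$ is merely bounded, not small of order $\|w_\e\|$. Consequently $\|\nabla w_\e\|_{\Ld^2}$ is $O(1)$ and the resulting bound is only \emph{linear} in $\|w_\e\|_{\Ld^2}$, which does not close a Grönwall argument proving $\|w_\e\|\to0$ (from $\frac{d}{dt}x\lesssim\sqrt x$ with $x(0)=o(1)$ one only gets $x(t)\lesssim(\sqrt{x(0)}+Ct)^2$, not $o(1)$).

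The missing ingredient is the weighted Delort-type identity~\eqref{eq:weightedDelort} used in the paper's proof, namely
\[
(\vi_\e-\vi)\,\curl(\vi_\e-\vi)
= a_\e^{-1}(\vi_\e-\vi)^\bot\Div(a_\e(\vi_\e-\vi))
-\tfrac12 a_\e^{-1}|\!\vi_\e-\vi\!|^2\nabla^\bot a_\e
-a_\e^{-1}\big(\Div(a_\e S_{\vi_\e-\vi})\big)^\bot,
\]
with $S_w:=w\otimes w-\tfrac12|w|^2\Id$. After integration by parts against $\xi_R^z\Gamma$ (or $\chi_R^z\Gamma$ if you prefer your cutoff), the stress-energy contribution becomes $-\int a_\e S_{\vi_\e-\vi}:\nabla(\xi_R^z\Gamma)$, which is \emph{pointwise} bounded by $|S_w|\lesssim|w|^2$ and hence genuinely quadratic in $w_\e$; the $\Div(a_\e(\vi_\e-\vi))$ factor contributes $\dd_\e-\dd$, controlled as in your treatment of the pressure; and the $\nabla^\bot a_\e$ term is quadratic with a small prefactor $\lambda_\e$. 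This identity is precisely what converts the bilinear term $w_\e\curl w_\e$ into a sum of terms that either are quadratic in $w_\e$ or carry a small prefactor, and without it the argument does not close. As a secondary remark, the nonpositive term $-4\alpha\int a_\e\xi_R^z|w_\e|^2\curl\!\vi_\e$ coming from $\Gamma_\e-\Gamma=-2(\alpha-\Jb\beta)w_\e$ (in regime \GLu{}) should be dropped rather than estimated in absolute value, since $\curl\!\vi_\e\ge0$; you do not explicitly exploit this sign.
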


\begin{proof}
We treat each of the four regimes separately. We denote by $\xi_R^z(x):=e^{-|x-z|/R}$ the exponential cut-off at the scale $R\ge1$ centered at $z\in R\Z^2$.

\medskip
\step1 Regime~\GLu.
\nopagebreak

Using the choice of the scalings for $\lambda_\e,h,F$ in the regime~\GLu, with $\lambda_\e=\frac{N_\e}\Log\ll1$, and setting $a_\e:=a=\hat a^{\lambda_\e}$, equation~\eqref{eq:GLv1} takes on the following guise,
\begin{gather*}
\partial_t\!\vi_\e=\nabla\!\pre_\e+(\alpha-\Jb\beta)(\nabla^\bot\hat h-\hat F^\bot-2\!\vi_\e)\,\curl\!\vi_\e,\quad\pre_\e:=(\lambda_\e\alpha a_\e)^{-1}\Div(a_\e\!\vi_\e),
\end{gather*}
with initial data $\vi_\e\!|_{t=0}=\vi_\e^\circ\to\vi^\circ$ in $\Ld^2_\uloc(\R^2)^2$.
As $\lambda_\e\to0$, it is then formally clear from the vorticity formulation of this equation that $\vi_\e$ should converge to the solution $\vi$ of~\eqref{eq:limGL1}.

The existence and uniqueness of a global smooth solution $\vi\in\Ld^\infty_\loc(\R^+;\vi^\circ+\Ld^2(\R^2)^2)$ of~\eqref{eq:limGL1} are established in~\cite[Theorems~1 and~3]{D-16}. Moreover, we show that the following estimates hold for all $t\ge0$ and $R,\theta>0$,
\begin{gather}\label{eq:boundvv}
\|\!\vi^t\!\|_{W^{1,\infty}}\lesssim_t1,\quad
\|(\vi^t,\pre^t)\|_{\Ld^2(B_R)}\lesssim_{t,\theta}R^\theta,\quad\|\curl\!\vi^t\!\|_{\Ld^1}=1.
\end{gather}
The bounds on $\vi$ are indeed direct consequences of the results in~\cite{D-16} together with the regularity assumptions on the data (in particular $\vi^\circ\in\Ld^q(\R^2)^2$ for all $q>2$).
It remains to check the bound on the pressure $\pre$. Taking the divergence of both sides of equation~\eqref{eq:limGL1}, we obtain the following equation for the pressure $\pre^t$, for all $t\ge0$,
\[-\triangle\!\pre^t=\Div\!\big((\alpha-\Jb\beta)(\nabla^\bot\hat h-\hat F^\bot-2\!\vi^t)\curl\!\vi^t\big).\]
By Riesz potential theory, we deduce for all $2<q<\infty$,
\[\|\!\pre^t\!\|_{\Ld^q}\lesssim_q(1+\|\!\vi^t\!\|_{\Ld^\infty})\|\curl\!\vi^t\!\|_{\Ld^{\frac{2q}{2+q}}}\lesssim(1+\|\!\vi^t\!\|_{\Ld^\infty})\big(\|\curl\!\vi^t\!\|_{\Ld^1}+\|\nabla\!\vi^t\!\|_{\Ld^\infty}\big)\lesssim_t1,\]
and the bound on the pressure $\pre$ follows.

We turn to the convergence $\vi_\e\to\vi$ in $\Ld^\infty_\loc([0,T);\Ld^2_\uloc(\R^2)^2)$ and argue by a Grönwall argument. Using the equations for $\vi_\e,\vi$, we find
\begin{multline}\label{eq:lastlimcomputGL1}
\partial_t\int_{\R^2} a_\e\xi_R^z|\!\vi_\e-\vi\!|^2=2\int_{\R^2} a_\e\xi_R^z(\vi_\e-\vi)\cdot\nabla(\pre_\e-\pre)-4\alpha\int_{\R^2} a_\e\xi_R^z|\!\vi_\e-\vi\!|^2\curl\!\vi_\e\\
+2\int_{\R^2} a_\e\xi_R^z(\alpha-\Jb\beta)(\nabla^\bot\hat h-\hat F^\bot-2\!\vi\!\big)\cdot(\vi_\e-\vi)\,\curl(\vi_\e-\vi).
\end{multline}
Integrating by parts in the first term, decomposing
\[\Div(a_\e\xi_R^z(\vi_\e-\vi))=a_\e\nabla\xi_R^z\cdot(\vi_\e-\vi)+\lambda_\e\alpha a_\e\xi_R^z\!\pre_\e-\lambda_\e a_\e\xi_R^z\nabla\hat h \cdot\vi,\]
noting that the second right-hand side term in~\eqref{eq:lastlimcomputGL1} is nonpositive, and using the following weighted Delort-type identity (as e.g.~in~\cite{D-16}),
\begin{eqnarray}\label{eq:weightedDelort}
\lefteqn{(\vi_\e-\vi)\,\curl(\vi_\e-\vi)}\nonumber\\
&=&a_\e^{-1}(\vi_\e-\vi)^\bot\Div(a_\e(\vi_\e-\vi))-\frac12a_\e^{-1}|\!\vi_\e-\vi\!|^2\nabla^\bot a_\e-a_\e^{-1}(\Div(a_\e S_{\vi_\e-\vi}))^\bot\\
&=&\lambda_\e\alpha\!\pre_\e(\vi_\e-\vi)^\bot-\lambda_\e(\nabla\hat h\cdot\vi)(\vi_\e-\vi)^\bot-\frac{\lambda_\e}2|\!\vi_\e-\vi\!|^2\nabla^\bot\hat h-a_\e^{-1}(\Div(a_\e S_{\vi_\e-\vi}))^\bot,\nonumber
\end{eqnarray}
in terms of the stress-energy tensor $S_{w}:=w\otimes w-\frac12|w|^2\Id$, we deduce
\begin{multline*}
\partial_t\int_{\R^2} a_\e\xi_R^z|\!\vi_\e-\vi\!|^2\le-2\int_{\R^2} a_\e(\pre_\e-\pre)\nabla\xi_R^z\cdot(\vi_\e-\vi)-2\lambda_\e\alpha\int_{\R^2} a_\e\xi_R^z\pre_\e(\pre_\e-\pre)\\
+2\lambda_\e\int_{\R^2} a_\e\xi_R^z(\pre_\e-\pre)\vi\cdot\nabla\hat h+2\lambda_\e\alpha\int_{\R^2} a_\e\xi_R^z\!\pre_\e(\alpha-\Jb\beta)(\nabla^\bot\hat h-\hat F^\bot-2\!\vi)\cdot(\vi_\e-\vi)^\bot\\
-2\lambda_\e\int_{\R^2} a_\e\xi_R^z(\nabla\hat h\cdot\vi)(\alpha-\Jb\beta)(\nabla^\bot\hat h-\hat F^\bot-2\!\vi)\cdot(\vi_\e-\vi)^\bot\\
-\lambda_\e\int_{\R^2} a_\e\xi_R^z|\!\vi_\e-\vi\!|^2(\alpha-\Jb\beta)(\nabla^\bot\hat h-\hat F^\bot-2\!\vi)\cdot\nabla^\bot\hat h\\
-2\int_{\R^2} a_\e S_{\vi_\e-\vi}:\nabla\Big(\xi_R^z(\alpha\Jb+\beta)(\nabla^\bot\hat h-\hat F^\bot-2\!\vi)\Big),
\end{multline*}
and hence, using~\eqref{eq:boundvv} in the form $\|\!\vi^t\!\|_{W^{1,\infty}}\lesssim1$, the assumption $\|(\nabla\hat h,\hat F)\|_{W^{1,\infty}}\lesssim1$, the property $|\nabla\xi_R^z|\lesssim R^{-1}\xi_R^z$ of the exponential cut-off, and the pointwise estimate $|S_{w}|\lesssim|w|^2$, we obtain
\begin{multline*}
\partial_t\int_{\R^2} a_\e\xi_R^z|\!\vi_\e-\vi\!|^2\le (R^{-2}-\lambda_\e\alpha)\int_{\R^2} a_\e\xi_R^z|\!\pre_\e\!|^2\\
+C_t(R^{-2}+\lambda_\e)\int_{\R^2} a_\e\xi_R^z(|\!\pre\!|^2+|\!\vi\!|^2)+C_t\int_{\R^2} a_\e\xi_R^z|\!\vi_\e-\vi\!|^2.
\end{multline*}
Choosing $R=\lambda_\e^{-n}$ for some $n\ge1$, we obtain $R^{-2}\ll\lambda_\e$ hence $R^{-2}-\lambda_\e\alpha<0$ for $\e$ small enough. Using~\eqref{eq:boundvv} to estimate the second right-hand side term then yields
\begin{align*}
\partial_t\int_{\R^2} a_\e\xi_R^z|\!\vi_\e-\vi\!|^2\lesssim_{t,\theta} R^{2\theta}\lambda_\e+\int_{\R^2} a_\e\xi_R^z|\!\vi_\e-\vi\!|^2\lesssim \lambda_\e^{1-2n\theta}+\int_{\R^2} a_\e\xi_R^z|\!\vi_\e-\vi\!|^2.
\end{align*}
For $\theta>0$ small enough, the conclusion follows from the Grönwall inequality.

\medskip
\step2 Regime~\GLd.

Using the choice of the scalings for $\lambda_\e,h,F$ in the regime~\GLd, equation~\eqref{eq:GLv1} takes on the following guise,
\begin{align*}
\partial_t\!\vi_\e=\alpha^{-1}\nabla(\hat a^{-1}\Div(\hat a\!\vi_\e))+\bigg((\alpha-\Jb\beta)\Big(\nabla^\bot \hat h-\hat F^\bot-\frac{2N_\e}\Log\!\vi_\e\Big)\bigg)\,\curl\!\vi_\e,
\end{align*}
with initial data $\vi_\e\!|_{t=0}=\vi^\circ$. As $\frac{N_\e}\Log\to\lambda\in(0,\infty)$, it is formally clear that $\vi_\e$ should converge to the (local) solution $\vi$ of equation~\eqref{eq:limGL2}. Existence and uniqueness of $\vi$ are given by Proposition~\ref{prop:GLvprop} just as for $\vi_\e$,
and the following bounds hold for all $t\in[0,T)$,
\begin{align}\label{eq:boundvveps2}
\|(\vi^t,\vi^t_\e)\|_{W^{1,\infty}}\lesssim_t1,\qquad \|\curl\!\vi^t\!\|_{\Ld^1}=1.
\end{align}
Using the equations for $\vi_\e,\vi$, we find
\begin{multline*}
\partial_t\int_{\R^2} \hat a\xi_R^z|\!\vi_\e-\vi\!|^2\\
=2\alpha^{-1}\int_{\R^2} \hat a\xi_R^z(\vi_\e-\vi)\cdot\nabla(\hat a^{-1}\Div(\hat a(\vi_\e-\vi)))-\frac{4\alpha N_\e}\Log\int_{\R^2} \hat a\xi_R^z|\!\vi_\e-\vi\!|^2\curl\!\vi_\e\\
+2\int_{\R^2} \hat a\xi_R^z\bigg((\alpha-\Jb\beta)\Big(\nabla^\bot\hat h-\hat F^\bot-\frac{2N_\e}\Log\vi\Big)\bigg)\cdot(\vi_\e-\vi)(\curl\!\vi_\e-\curl\!\vi)\\
-4\Big(\frac{N_\e}\Log-\lambda\Big)\int_{\R^2} \hat a\xi_R^z(\vi_\e-\vi)\cdot(\alpha-\Jb\beta)\vi\curl\!\vi.
\end{multline*}
Integrating by parts, using the weighted Delort-type identity~\eqref{eq:weightedDelort} in the form
\begin{multline*}
(\vi_\e-\vi)\,\curl(\vi_\e-\vi)=\hat a^{-1}(\vi_\e-\vi)^\bot\Div(\hat a(\vi_\e-\vi))\\
-\frac12|\!\vi_\e-\vi\!|^2\nabla^\bot \hat h-\hat a^{-1}(\Div(\hat a S_{\vi_\e-\vi}))^\bot,
\end{multline*}
using the properties~\eqref{eq:boundvveps2} of $\vi_\e,\vi$, the assumption $\|(\nabla\hat h,\hat F)\|_{W^{1,\infty}}\lesssim1$, and simplifying the terms as in Step~1, we easily deduce
\begin{multline*}
\partial_t\int_{\R^2} \hat a\xi_R^z|\!\vi_\e-\vi\!|^2\le-2\alpha^{-1}\int_{\R^2} \hat a^{-1}\xi_R^z|\Div(\hat a(\vi_\e-\vi))|^2\\
+C_t\int_{\R^2} \xi_R^z|\!\vi_\e-\vi\!||\Div(\hat a(\vi_\e-\vi))|+C_t\int_{\R^2} \hat a\xi_R^z|\!\vi_\e-\vi\!|^2+C_t\Big|\frac{N_\e}\Log-\lambda\Big|,
\end{multline*}
hence $\partial_t\int_{\R^2} \hat a\xi_R^z|\!\vi_\e-\vi\!|^2\lesssim C_t\int_{\R^2} \hat a\xi_R^z|\!\vi_\e-\vi\!|^2+o_t(1)$, and the conclusion now follows from the Grönwall inequality, letting $R\uparrow\infty$.

\medskip
\noindent\step3 Regime~\GLup.

Using the choice of the scalings for $\lambda_\e,h,F$ in the regime~\GLup, equation~\eqref{eq:GLv1} takes on the following guise,
\begin{align*}
\partial_t\!\vi_\e=\alpha^{-1}\nabla(\hat a^{-1}\Div(\hat a\!\vi_\e))+(\alpha-\Jb\beta)\Big(\nabla^\bot\hat h-\hat F^\bot-\frac{2N_\e}\Log\!\vi_\e\Big)\curl\!\vi_\e,
\end{align*}
with initial data $\vi_\e\!|_{t=0}=\vi^\circ$. As by assumption $\frac{N_\e}\Log\to0$, it is formally clear that $\vi_\e$ should converge to the solution $\vi$ of equation~\eqref{eq:limGL4} as $\e\downarrow0$. Existence, uniqueness, and regularity of this (global) solution $\vi$ are given by Proposition~\ref{prop:GLvprop} just as for $\vi_\e$, and the convergence result follows as in Step~2 (with $\lambda=0$).

\medskip
\noindent\step4 Regime~\GLdp.

Using the choice of the scalings for $\lambda_\e,h,F$ in the regime~\GLdp, equation~\eqref{eq:GLv1} takes the following form, with $a_\e:=\hat a^{\lambda_\e}$,
\begin{gather*}
\partial_t\!\vi_\e=\nabla\!\pre_\e+(\alpha-\Jb\beta)\Big(\nabla^\bot\hat h-\hat F^\bot-\frac{2\lambda_\e^{-1}N_\e}\Log\!\vi_\e\Big)\curl\!\vi_\e,\\
\pre_\e:=(\lambda_\e\alpha a_\e)^{-1}\Div(a_\e\!\vi_\e),
\end{gather*}
with initial data $\vi_\e\!|_{t=0}=\vi_\e^\circ\to\vi^\circ$ in $\Ld^2_\uloc(\R^2)^2$. As by assumption $\lambda_\e^{-1}\frac{N_\e}\Log\to0$, it is formally clear that $\vi_\e$ should converge to the solution $\vi$ of equation~\eqref{eq:limGL3} as $\e\downarrow0$. Existence, uniqueness, and regularity of this (global) solution $\vi$ are given by Proposition~\ref{prop:GLvprop} just as for $\vi_\e$,
and the convergence result follows as in Step~1.
\end{proof}

\subsection{Nondilute parabolic case}\label{chap:deg-eqn-exist-bound}

Let us examine the vorticity formulation of equation~\eqref{eq:GLv1-hd} for $\vi_\e$. As in~\eqref{eq:vortform}, in terms of $\md_\e:=\curl\!\vi_\e$ and $\dd_\e:=\Div(a\!\vi_\e)$, it takes on the following guise,
\begin{gather*}
\begin{cases}\partial_t\!\md_\e=-\Div(\Gamma_\e^\bot\!\md_\e),\\
\partial_t\!\dd_\e-\lambda_\e^{-1}\triangle\!\dd_\e+\lambda_\e^{-1}\Div(\dd_\e\!\nabla h)=\Div(a\Gamma_\e\!\md_\e),\\
\curl\!\vi_\e=\md_\e,\quad\Div(a\!\vi_\e)=\dd_\e,\\
\md_\e\!|_{t=0}=\curl\!\vi^\circ,\quad\dd_\e\!|_{t=0}=\Div(a\!\vi^\circ).
\end{cases}
\end{gather*}
In the present nondilute regime, as $\lambda_\e\uparrow\infty$, the diffusion tends to be degenerate and more work is thus needed to ensure the validity of uniform a priori estimates. The key consists in suitably exploiting the well-posedness of the degenerate limiting equation, studied in~\cite{D-16}. As an immediate corollary of such estimates, we also deduce that $\vi_\e$ converges to the solution $\vi$ of this degenerate equation.

\begin{prop}\label{prop:apriori-est-veps-hd}
Let $h:\R^2\to\R$, $a:=e^h$, $F:\R^2\to\R^2$, and let $\vi_\e^\circ:\R^2\to\R^2$ be bounded in $W^{1,q}(\R^2)^2$ for all $q>2$ and satisfy $\curl\!\vi_\e^\circ\in\Pc(\R^2)$. For some $s>0$, assume that $h\in W^{s+6,\infty}(\R^2)$, $F\in W^{s+5,\infty}(\R^2)^2$, that $\vi_\e^\circ$ is bounded in $W^{s+5,\infty}(\R^2)^2$, that $\curl\!\vi_\e^\circ$ is bounded in $H^{s+4}(\R^2)$, and that $\Div(a\!\vi_\e^\circ)$ is bounded in $H^{s+3}(\R^2)$.\\
In the regime~\GLt{} with $\vi_\e^\circ=\vi^\circ$, there exists a unique (global) solution $\vi_\e$ of~\eqref{eq:GLv1-hd} in $\R^+\times\R^2$, in the space $\Ld^\infty_\loc(\R^+;\vi^\circ+H^{s+4}(\R^2)^2)$.
Moreover, all the properties of Assumption~\ref{as:apveps}(a) are satisfied: for all $T>0$ and $q>2$, there is some $\e_0(T)>0$\footnote{Only depending on an upper bound on $T$, $s$, $s^{-1}$, $\|\hat h\|_{W^{s+6,\infty}}$, $\|(\hat F,\vi^\circ)\|_{W^{s+5,\infty}}$, $\|\!\vi^\circ\!\|_{W^{1,q}}$, $\|\!\md^\circ\!\|_{H^{s+4}}$, and $\|\!\dd^\circ\!\|_{H^{s+3}}$.} such that for all $0<\e<\e_0(T)$ and $0\le t\le T$,
\begin{gather}
\|(\vi_\e^t,\nabla\!\vi_\e^t)\|_{(\Ld^2+\Ld^q)\cap\Ld^\infty}\lesssim_{t,q}1,\qquad\|\!\md_\e^t\!\|_{\Ld^1\cap\Ld^\infty}\lesssim_{t}1,\qquad \|\partial_t\!\vi_\e^t\!\|_{\Ld^2\cap\Ld^\infty}\lesssim_{t}1,\nonumber\\
\|\!\dd_\e^t\!\|_{\Ld^2\cap\Ld^\infty}\lesssim_{t}1,\qquad\|\nabla\!\dd_\e^t\!\|_{\Ld^2\cap\Ld^\infty}\lesssim_{t}1,\qquad\|\partial_t\!\dd_\e^t\!\|_{\Ld^2}\lesssim_{t}1.
\label{eq:apriori-est-veps-hd}
\end{gather}
In addition, there holds $\vi_\e\to\vi$ in $\Ld^\infty_\loc(\R^+;\vi^\circ+H^{s+3}(\R^2)^2)$ as $\e\downarrow0$,
where $\vi$ is the unique (global) solution of
\begin{equation}\label{eq:GL3lim-conv}
\begin{cases}\partial_t\!\vi=-(\hat F^\bot+2\!\vi)\,\curl\!\vi,\\
\vi\!|_{t=0}=\vi^\circ,
\end{cases}
\end{equation}
in $\R^+\times\R^d$, in the space $\Ld^\infty_\loc(\R^+;\vi^\circ+H^{s+4}\cap W^{s+4,\infty}(\R^2)^2)$.
\end{prop}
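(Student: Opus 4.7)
The approach views~\eqref{eq:GLv1-hd} as a small viscous perturbation of the degenerate limiting equation~\eqref{eq:GL3lim-conv}, leveraging the well-posedness and regularity theory from the companion article~\cite{D-16}. First, I would import from~\cite{D-16} the existence and uniqueness of the global smooth solution $\vi\in\Ld^\infty_\loc(\R^+;\vi^\circ+H^{s+4}\cap W^{s+4,\infty}(\R^2)^2)$ of~\eqref{eq:GL3lim-conv}, together with the uniform-in-time control on $\md:=\curl\!\vi$ in $H^{s+4}\cap \Ld^1\cap \Ld^\infty$ and on $\theta:=\Div\!\vi$ in $H^{s+3}$ that is propagated from the regularity of $\vi^\circ$.

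Second, I would construct $\vi_\e$ locally by a fixed-point scheme on the vorticity-divergence system. The vorticity $\md_\e$ satisfies a nonlocal nonlinear continuity equation with transport field $\Gamma_\e^\bot$, while $\dd_\e:=\Div(a\!\vi_\e)$ satisfies a linear convection-diffusion equation with small diffusion coefficient $\lambda_\e^{-1}\ll1$. The estimates of Step~2 in the proof of Proposition~\ref{prop:GLvprop} are uniform with respect to this diffusivity, so they furnish a unique local solution $\vi_\e\in\Ld^\infty_\loc([0,T_\e);\vi^\circ+H^{s+4}(\R^2)^2)$ with bounds depending only on norms of the initial data. Third, and this is the heart of the argument, I would derive the uniform a priori bounds~\eqref{eq:apriori-est-veps-hd}. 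The decomposition $\Gamma_\e=(-\hat F^\bot-2\!\vi_\e)+\lambda_\e^{-1}\nabla^\bot\hat h$ exhibits $\Gamma_\e$ as the limiting transport field plus an $O(\lambda_\e^{-1})$ correction. Applying the regularity-propagation arguments of~\cite[Section~4]{D-16} in the $H^{s+4}\cap W^{s+4,\infty}$ framework, and using the diffusivity-independent estimates (a)--(d) of Step~2 in the proof of Proposition~\ref{prop:GLvprop} to control $\dd_\e$, $\nabla\dd_\e$, and $\partial_t\dd_\e$, one closes the estimates on any bounded time interval, for $\e$ small enough. These uniform bounds then extend $T_\e$ to $\infty$ via the standard extension criterion.

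Fourth, setting $w_\e:=\vi_\e-\vi$, the convergence follows by a Gr\"onwall argument. The difference equation has a right-hand side that splits into (i) a term linear in $w_\e$ multiplied by the smooth quantities $\md_\e$ and $\vi$, and (ii) the viscous remainders $\lambda_\e^{-1}\nabla(a^{-1}\dd_\e)$ and $\lambda_\e^{-1}\nabla^\bot\hat h\,\curl\!\vi_\e$, both of size $O(\lambda_\e^{-1})$. A weighted energy estimate at the $H^{s+3}$ level, combined with the weighted Delort-type identity~\eqref{eq:weightedDelort} to handle the quadratic transport term and with the uniform bounds from Step~3, closes the estimate and yields the claimed convergence $\vi_\e\to\vi$ in $\Ld^\infty_\loc(\R^+;\vi^\circ+H^{s+3}(\R^2)^2)$.

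The main obstacle is Step~3: in the vanishing-diffusion limit the parabolic smoothing in the $\dd_\e$ equation disappears, so $\dd_\e$ must be controlled purely through the transport structure of the limiting hyperbolic system, without any parabolic gain. This is what forces the stronger regularity hypotheses (six rather than three extra derivatives on $\hat h$) compared with the compressible regimes of Proposition~\ref{prop:GLvprop}: high-order commutators must be closed without the regularizing effect, and the resulting derivative loss must be reabsorbed using the well-posedness theory of~\cite{D-16} for the limiting degenerate system. Once these bounds are in place, Step~4 proceeds along the same lines as Lemma~\ref{lem:lastlimGL}.
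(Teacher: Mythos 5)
Your overall skeleton (import the well-posedness theory for $\vi$ from~\cite{D-16}, construct $\vi_\e$ locally, derive uniform-in-$\e$ bounds, conclude by Gr\"onwall) matches the paper's high-level plan, and you correctly identify that the uniform bounds are the crux. But there are two concrete problems in how you propose to obtain them.

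First, the appeal to ``the diffusivity-independent estimates (a)--(d) of Step~2 in the proof of Proposition~\ref{prop:GLvprop}'' is misplaced. Those estimates are stated for the convection-diffusion model problem with \emph{large} diffusivity $\nu\gtrsim1$ (which corresponds to the incompressible regimes $\lambda_\e\ll1$ where the prefactor $(\lambda_\e\alpha)^{-1}\gg1$). In the nondilute regime the diffusivity in the $\dd_\e$-equation is $\lambda_\e^{-1}\ll1$, i.e.\ \emph{small}, which is outside the hypothesis $\nu\gtrsim1$. You do acknowledge later that the parabolic smoothing degenerates in this regime, which contradicts the earlier claim that the Step~2 estimates apply; in fact they give no uniform control here.

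Second, and more essentially, the mechanism that makes the uniform bounds closeable is absent from your proposal: the paper does not try to propagate regularity for $\vi_\e$ directly and then prove convergence separately. Instead it introduces the rescaled difference $\delta\!\vi_\e:=\lambda_\e(\vi_\e-\vi)$ (with $\delta\!\md_\e=\lambda_\e(\md_\e-\md)$, $\delta\!\dd_\e=\lambda_\e(\dd_\e-\dd)$), derives the evolution equation for $\delta\!\vi_\e$ in terms of $\vi$, and shows that $\delta\!\vi_\e$, $\delta\!\md_\e$, $\delta\!\dd_\e$ remain $O(1)$ on any bounded time interval for $\e$ small. The source terms in the $\delta\!\dd_\e$-equation then involve only the \emph{limiting} quantities $G=\nabla(\hat a^{-1}\dd)+\md\nabla^\bot\hat h$ (bounded by the regularity of $\vi$) and lower-order products of the $\delta$-quantities, so the estimates close with a smallness condition $\lambda_\e^{-3/4}\le(2C_t)^{-3}$ (Substep~1.6). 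This rescaling simultaneously yields the uniform bounds on $\vi_\e=\vi+\lambda_\e^{-1}\delta\!\vi_\e$ and the convergence $\|\vi_\e-\vi\|\lesssim_t\lambda_\e^{-1}$ in one stroke; without it, your ``Step~3'' has no concrete route to control $\dd_\e$ in the vanishing-diffusion limit, and your ``Step~4'' (separate Gr\"onwall on $w_\e=\vi_\e-\vi$) cannot get off the ground because it presupposes the very uniform bounds that Step~3 fails to furnish. The Delort-type identity you invoke in Step~4 is also not used in the paper's argument for this proposition: the quadratic term is handled by the sign cancellation $-2\lambda_\e^{-1}\delta\!\vi_\e\,\delta\!\md_\e-2\!\md\,\delta\!\vi_\e=-2\!\md_\e\,\delta\!\vi_\e$ together with $\md_\e\ge0$, not by the weighted Delort decomposition.
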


\begin{proof}
Direct estimates on $\vi_\e$ as in~\cite{D-16} are not uniform with respect to $\lambda_\e\gg1$. As we show, however, exploiting strong a priori estimates on the limiting solution $\vi$ allows to deduce the desired uniform estimates on~$\vi_\e$. We split the proof into two steps.

\medskip
\noindent\step1 A priori estimates.

Let $s>0$, and assume that $\hat h\in W^{s+3,\infty}(\R^2)$, $\hat F\in W^{s+2,\infty}(\R^2)^2$, and that there exists a unique global solution $\vi$ of equation~\eqref{eq:GL3lim-conv} with $\vi\in\Ld^\infty_\loc(\R^+;\vi^\circ+\Ld^2(\R^2)^2)\cap \Ld^\infty_\loc(\R^+;W^{s+2,\infty}(\R^2)^2)$ and with $\md:=\curl\!\vi$, $\dd:=\Div(\hat a\!\vi)\in\Ld^\infty_\loc(\R^+;H^{s+2}(\R^2))$.
Also assume that there exists a unique global solution $\vi_\e$ of~\eqref{eq:GLv1-hd} in $\Ld^\infty_\loc(\R^+;\vi^\circ+H^{s+2}(\R^2))$.
In this step, we consider the nondilute regime $\lambda_\e\gg1$, and we show that for any fixed $t\ge0$ we have for all $\e>0$ small enough (that is, for all $\lambda_\e$ large enough),
\begin{gather}\label{eq:apriori-est-global-vmd-deg}
\|\!\vi_\e-\vi\!\|_{\Ld^\infty_tH^{s+1}}+\|\!\md_\e-\md\!\|_{\Ld^\infty_tH^{s+1}}+\|\!\dd_\e-\dd\!\|_{\Ld^\infty_tH^s}\le C_t\lambda_\e^{-1},\\
\|\!\dd_\e-\dd\!\|_{\Ld^\infty_tH^{s+1}}\le C_t\lambda_\e^{-1/2},\nonumber
\end{gather}
hence in particular,
\begin{gather}\label{eq:apriori-est-global-vmd-deg-boundedness}
\|\!\vi_\e-\vi^\circ\!\|_{\Ld^\infty_tH^{s+2}}+\|\!\md_\e\!\|_{\Ld^\infty_tH^{s+1}}+\|\!\dd_\e\!\|_{\Ld^\infty_tH^{s+1}}\le C_t,
\end{gather}
where the constant $C_t$ only depends on an upper bound on $\lambda_\e^{-1}$, $s$, $s^{-1}$, $\|\hat h\|_{W^{s+3,\infty}}$, $\|\hat F\|_{W^{s+2,\infty}}$, $\|\!\vi\!\|_{\Ld^\infty_tW^{s+2,\infty}}$, $\|(\md,\dd)\|_{\Ld^\infty_tH^{s+2}}$, $\|\!\vi-\vi^\circ\!\|_{\Ld^\infty_t\Ld^2}$, and on time $t$.
We split the proof into six further substeps. In this step, we use the notation $\lesssim_t$ for $\le$ up to a constant $C_t>0$ as above, and we use the notation $\lesssim$ for $\le$ up to a constant that depends only on an upper bound on $\lambda_\e^{-1}$, $\|\hat h\|_{W^{s+3,\infty}}$, and on $\|\hat F\|_{W^{s+2,\infty}}$.

\medskip
\noindent\substep{1.1} Notation.

Define $\delta\!\vi_\e:=\lambda_\e(\vi_\e-\vi)$, $\delta\!\md_\e:=\curl\delta\!\vi_\e=\lambda_\e(\md_\e-\md)$, and $\delta\!\dd_\e:=\Div(\hat a\delta\!\vi_\e)=\lambda_\e(\dd_\e-\dd)$.
Given the choice of the scalings, equation~\eqref{eq:GLv1-hd} for $\vi_\e$ takes on the following guise,
\begin{align}\label{eq:GLv1-hd-rewrite}
\partial_t\!\vi_\e=\lambda_\e^{-1}\nabla(\hat a^{-1}\!\dd_\e)+\big(\lambda_\e^{-1}\nabla^\bot\hat h-\hat F^\bot-2\!\vi_\e\!\big)\md_\e,
\end{align}
and hence, decomposing $\vi_\e=\vi+\lambda_\e^{-1}\delta\!\vi_\e$,
\begin{multline*}
\partial_t\!\vi+\lambda_\e^{-1}\partial_t\delta\!\vi_\e=-(\hat F^\bot+2\!\vi)\md+\lambda_\e^{-1}\Big(\nabla(\hat a^{-1}\!\dd)+\md\nabla^\bot\hat h-\hat F^\bot\delta\!\md_\e-2\!\vi\!\delta\!\md_\e-2\!\md\!\delta\!\vi_\e\Big)\\
+\lambda_\e^{-2}\Big(\nabla(\hat a^{-1}\delta\!\dd_\e)+\delta\!\md_\e\!\nabla^\bot\hat h-2\delta\!\vi_\e\!\delta\!\md_\e\Big).
\end{multline*}
Injecting equation~\eqref{eq:GL3lim-conv} for $\vi$ and multiplying both sides by $\lambda_\e$, we obtain the following equation for $\delta\!\vi_\e$,
\begin{align}\label{eq:deltaveps}
\partial_t\delta\!\vi_\e=\lambda_\e^{-1}\nabla(\hat a^{-1}\delta\!\dd_\e)+(W_\e-2\lambda_\e^{-1}\delta\!\vi_\e)\delta\!\md_\e-2\!\md\!\delta\!\vi_\e+G,
\end{align}
with initial data $\delta\!\vi_\e\!|_{t=0}=0$,
where we have set
\begin{align*}
G:=\nabla(\hat a^{-1}\!\dd)+\md\nabla^\bot\hat h,\qquad W_\e:=\lambda_\e^{-1}\nabla^\bot\hat h-\hat F^\bot-2\!\vi.
\end{align*}
Taking the curl of~\eqref{eq:deltaveps} leads to
\begin{align}\label{eq:deltamdeps-todegene}
\partial_t\delta\!\md_\e=-\Div((W_\e^\bot-2\lambda_\e^{-1}\delta\!\vi_\e^\bot)\delta\!\md_\e)+2\delta\!\vi_\e^\bot\cdot\nabla\!\md-2\!\md\delta\!\md_\e+\curl G,
\end{align}
while applying $\Div(\hat a\,\cdot)$ yields
\begin{multline}\label{eq:deltaddeps-todegene}
\partial_t\delta\!\dd_\e=\lambda_\e^{-1}\triangle\delta\!\dd_\e-\lambda_\e^{-1}\Div(\delta\!\dd_\e\!\nabla\hat h)\\
+\Div(\hat a(W_\e-2\lambda_\e^{-1}\delta\!\vi_\e)\delta\!\md_\e)-2\Div(\hat a\!\md\!\delta\!\vi_\e)+\Div(\hat aG),
\end{multline}
with initial data $\delta\!\md_\e\!|_{t=0}=0$ and $\delta\!\dd_\e\!|_{t=0}=0$.
Proving the result~\eqref{eq:apriori-est-global-vmd-deg} thus amounts to establishing uniform a priori estimates for the solutions $\delta\!\vi_\e$, $\delta\!\md_\e$, and $\delta\!\dd_\e$ of the above equations.

\medskip
\noindent\substep{1.2} $\Ld^2$-estimate on $\delta\!\vi_\e$ and $\delta\!\md_\e$.

\nopagebreak
In this step, we show that
\begin{align}\label{eq:boundL2-deltaveps}
\|\delta\!\vi_\e\!\|_{\Ld^\infty_t\Ld^2}+\|\delta\!\md_\e\!\|_{\Ld^\infty_t(\dot H^{-1}\cap\Ld^2)}+\|\delta\!\dd_\e\!\|_{\Ld^\infty_t\dot H^{-1}}~\lesssim_t1.
\end{align}
On the one hand, from equation~\eqref{eq:deltaveps}, noting that $-2\lambda_\e^{-1}\delta\!\vi_\e\delta\!\md_\e-2\!\md\!\delta\!\vi_\e=-2\!\md_\e\!\delta\!\vi_\e$, we find by integration by parts,
\begin{align*}
\partial_t\int_{\R^2}\hat a|\delta\!\vi_\e\!|^2&=-2\lambda_\e^{-1}\int_{\R^2}\hat a^{-1}|\delta\!\dd_\e\!|^2+2\int_{\R^2}\hat a\delta\!\vi_\e\cdot\big(W_\e\delta\!\md_\e-2\!\md_\e\!\delta\!\vi_\e+G\big)\\
&\le 2\int_{\R^2}\hat a\delta\!\vi_\e\cdot\big(W_\e\delta\!\md_\e+G\big),
\end{align*}
and hence, using the Cauchy-Schwarz inequality and injecting the definition of $G$ and $W_\e$,
\begin{align*}
\partial_t\Big(\int_{\R^2}\hat a|\delta\!\vi_\e^t\!|^2\Big)^{1/2}&\le\|W_\e^t\|_{\Ld^\infty}\Big(\int_{\R^2}\hat a|\delta\!\md_\e^t\!|^2\Big)^{1/2}+\Big(\int_{\R^2}\hat a|G^t|^2\Big)^{1/2}\\
&\lesssim\|(\nabla\hat h,\hat F,\vi^t)\|_{\Ld^\infty}\|\delta\!\md_\e^t\!\|_{\Ld^2}+\|\!\dd^t\!\|_{H^1}+\|\!\md^t\!\|_{\Ld^2}\\
&\lesssim_t 1+\|\delta\!\md_\e^t\!\|_{\Ld^2},
\end{align*}
that is,
\begin{align}\label{eq:boundL2-deltaveps-pre}
\|\delta\!\vi_\e\!\|_{\Ld^\infty_t\Ld^2}
\lesssim_t \|\delta\!\md_\e\!\|_{\Ld^\infty_t\Ld^2}+1.
\end{align}
On the other hand, equation~\eqref{eq:deltamdeps-todegene} yields by integration by parts,
\begin{multline*}
\partial_t\int_{\R^2}|\delta\!\md_\e\!|^2=\int_{\R^2}|\delta\!\md_\e\!|^{2}\Div(-W_\e^\bot+2\lambda_\e^{-1}\delta\!\vi_\e^\bot)\\
-4\int_{\R^2}|\delta\!\md_\e\!|^{2}\!\md+2\int_{\R^2}\delta\!\md_\e\big(2\delta\!\vi_\e^\bot\cdot\nabla\!\md+\curl G\big),
\end{multline*}
and hence, decomposing $\Div(\lambda_\e^{-1}\delta\!\vi_\e^\bot)=-\lambda_\e^{-1}\delta\!\md_\e=\md-\md_\e\le\md$,
\begin{align*}
\partial_t\int_{\R^2}|\delta\!\md_\e\!|^2&\le\int_{\R^2}|\delta\!\md_\e\!|^{2}\curl W_\e+2\int_{\R^2}\delta\!\md_\e\big(2\delta\!\vi_\e^\bot\cdot\nabla\!\md+\curl G\big)\\
&\le\|\nabla W_\e\|_{\Ld^\infty}\|\delta\!\md_\e\!\|_{\Ld^2}^{2}+4\|\nabla\!\md\!\|_{\Ld^\infty}\|\delta\!\vi_\e\!\|_{\Ld^{2}}\|\delta\!\md_\e\!\|_{\Ld^2}+2\|\curl G\|_{\Ld^2}\|\delta\!\md_\e\!\|_{\Ld^2}.
\end{align*}
Injecting the definitions of $G$ and $W_\e$ with $\lambda_\e^{-1}\lesssim1$, and using~\eqref{eq:boundL2-deltaveps-pre} to estimate the $\Ld^2$-norm of $\delta\!\vi_\e$ in the right-hand side, we deduce
\begin{align*}
\partial_t\|\delta\!\md_\e^t\!\|_{\Ld^2}&\lesssim_t \|\delta\!\md_\e^t\!\|_{\Ld^2}+\|\delta\!\vi_\e^t\!\|_{\Ld^{2}}+1\lesssim_t\|\delta\!\md_\e\!\|_{\Ld^\infty_t\Ld^2}+1.
\end{align*}
Combining this with~\eqref{eq:boundL2-deltaveps-pre} and with the obvious estimate $\|(\delta\!\md_\e,\delta\!\dd_\e)\|_{\dot H^{-1}}\lesssim\|\delta\!\vi_\e\!\|_{\Ld^2}$, the conclusion~\eqref{eq:boundL2-deltaveps} follows from the Grönwall inequality.

\medskip
\noindent\substep{1.3} $H^{s+1}$-estimate on $\delta\!\md_\e$.
\nopagebreak

In this step, we show that
\begin{multline}\label{eq:derboundHs-deltamdeps}
\partial_t\|\delta\!\md_\e\!\|_{H^{s+1}}\lesssim_t1+\|\delta\!\md_\e\!\|_{H^{s+1}}+\|\delta\!\dd_\e\!\|_{H^{s}}\\
+\lambda_\e^{-1}\big(\|\delta\!\md_\e\!\|_{H^{s+1}}^2+\|\delta\!\md_\e\!\|_{H^{s+1}}\|\delta\!\dd_\e\!\|_{H^{s+1}}\big).
\end{multline}
Arguing as in~\cite[Proof of Lemma~2.2]{D-16}, with $s>0$, the time derivative of the $H^s$-norm of the vorticity $\delta\!\md_\e$ is computed as follows,
\begingroup\allowdisplaybreaks
\begin{align*}
\partial_t\|\delta\!\md_\e\!\|_{H^{s+1}}&\le\frac12\|\Div(W_\e^\bot-2\lambda_\e^{-1}\delta\!\vi_\e^\bot)\|_{\Ld^\infty}\|\delta\!\md_\e\!\|_{H^{s+1}}+\|[\langle\nabla\rangle^{s+1}\Div,W_\e^\bot]\delta\!\md_\e\!\|_{\Ld^2}\\
&\hspace{1cm}+2\lambda_\e^{-1}\|[\langle\nabla\rangle^{s+1}\Div,\delta\!\vi_\e^\bot]\delta\!\md_\e\!\|_{\Ld^2}+2\|\delta\!\vi_\e^\bot\cdot\nabla\!\md\!\|_{H^{s+1}}\\
&\hspace{2cm}+2\|\!\md\delta\!\md_\e\!\|_{H^{s+1}}+\|\curl G\|_{H^{s+1}}\\
&\lesssim(\|W_\e\|_{W^{s+2,\infty}}+\|\!\md\!\|_{W^{s+1,\infty}})\|\delta\!\md_\e\!\|_{H^{s+1}}+\|\!\md\!\|_{H^{s+2}}\|\delta\!\vi_\e\!\|_{H^{s+1}}\\
&\hspace{1cm}+\lambda_\e^{-1}\big(\|\delta\!\vi_\e\!\|_{W^{1,\infty}}\|\delta\!\md_\e\!\|_{H^{s+1}}+\|\delta\!\md_\e\!\|_{\Ld^\infty}\|\delta\!\vi_\e\!\|_{H^{s+2}}\big)+\|\curl G\|_{H^{s+1}}.
\end{align*}
\endgroup
Injecting the definition of $G$ and $W_\e$ with $\lambda_\e^{-1}\lesssim1$, and using the Sobolev embedding for $\Ld^\infty(\R^2)$ into $H^{s+1}(\R^2)$ with $s>0$, we find
\begin{align}\label{eq:derboundHs-deltamdeps-pre}
\partial_t\|\delta\!\md_\e\!\|_{H^{s+1}}\lesssim_t\|\delta\!\md_\e\!\|_{H^{s+1}}+\|\delta\!\vi_\e\!\|_{H^{s+1}}
+\lambda_\e^{-1}\|\delta\!\vi_\e\!\|_{H^{s+2}}\|\delta\!\md_\e\!\|_{H^{s+1}}+1.
\end{align}
Decomposing $\delta\!\vi_\e=\hat a^{-1}\nabla^\bot(\Div\hat  a^{-1}\nabla)^{-1}\delta\!\md_\e+\nabla(\Div\hat  a\nabla)^{-1}\delta\!\dd_\e$, we appeal to e.g.~\cite[Lemma~2.6]{D-16} in the form
\begin{align}\label{eq:bound-pot-deltav-md}
\|\delta\!\vi_\e\!\|_{H^{r+1}}\lesssim\|\delta\!\md_\e\!\|_{\dot H^{-1}\cap H^{r}}+\|\delta\!\dd_\e\!\|_{\dot H^{-1}\cap H^{r}},
\end{align}
with successively $r=s$ and $r=s+1$. Injecting this into~\eqref{eq:derboundHs-deltamdeps-pre}, and using the result~\eqref{eq:boundL2-deltaveps} of Substep~1.2 in the form $\|(\delta\!\md_\e,\delta\!\dd_\e)\|_{\dot H^{-1}}\lesssim_t1$, the conclusion~\eqref{eq:derboundHs-deltamdeps} follows.

\medskip
\noindent\substep{1.4} $H^{s+1}$-estimate on $\delta\!\dd_\e$ without loss of regularity.

In this step we show that
\begin{align}\label{eq:boundHs+-deltaddeps}
\lambda_\e^{-1/2}\|\delta\!\dd_\e\!\|_{\Ld^\infty_tH^{s+1}}\lesssim_t 1+\|\delta\!\md_\e\!\|_{\Ld^\infty_tH^{s+1}}+\|\delta\!\dd_\e\!\|_{\Ld^\infty_tH^{s}}+\lambda_\e^{-1}\|\delta\!\md_\e\!\|_{\Ld^\infty_tH^{s+1}}^2.
\end{align}
Equation~\eqref{eq:deltaddeps-todegene} for the divergence $\delta\!\dd_\e$ takes the form $\partial_t\delta\!\dd_\e=\lambda_\e^{-1}\triangle\delta\!\dd_\e+\Div\!H_\e$, where we have set
\[H_\e:=-\lambda_\e^{-1}\delta\!\dd_\e\nabla\hat h+a(W_\e-2\lambda_\e^{-1}\delta\!\vi_\e)\delta\!\md_\e-2a\!\md\!\delta\!\vi_\e+aG.\]
Testing this equation with $(-\triangle)^{-1}\langle\nabla\rangle^{2(s+1)}\partial_t\delta\!\dd_\e$, arguing as in~\cite[Proof of Lemma~2.3(i)]{D-16}, we find
\begin{align*}
\lambda_\e^{-1}\|\delta\!\dd_\e\!\|_{\Ld^\infty_tH^{s+1}}^2\le \int_0^t\|H_\e^u\|_{H^{s+1}}^2du,
\end{align*}
and hence, injecting the definitions of $H_\e$, $G$, and $W_\e$, with $s>0$,
\begin{multline*}
\lambda_\e^{-1}\|\delta\!\dd_\e\!\|_{\Ld^\infty_tH^{s+1}}^2\lesssim_t \lambda_\e^{-2}\int_0^t\|\delta\!\dd_\e^u\!\|_{H^{s+1}}^2du+\lambda_\e^{-2}\|\delta\!\vi_\e\!\|_{\Ld^\infty_tH^{s+1}}^2\|\delta\!\md_\e\!\|_{\Ld^\infty_tH^{s+1}}^2\\
+\|\delta\!\md_\e\!\|_{\Ld^\infty_tH^{s+1}}^2+\|\delta\!\vi_\e\!\|_{\Ld^\infty_tH^{s+1}}^2+1.
\end{multline*}
The Grönwall inequality with $\lambda_\e^{-1}\lesssim1$ then yields
\begin{align*}
\lambda_\e^{-1}\|\delta\!\dd_\e\!\|_{\Ld^\infty_tH^{s+1}}^2\lesssim_t 1+\|\delta\!\md_\e\!\|_{\Ld^\infty_tH^{s+1}}^2+\|\delta\!\vi_\e\!\|_{\Ld^\infty_tH^{s+1}}^2+\lambda_\e^{-2}\|\delta\!\vi_\e\!\|_{\Ld^\infty_tH^{s+1}}^2\|\delta\!\md_\e\!\|_{\Ld^\infty_tH^{s+1}}^2.
\end{align*}
The conclusion~\eqref{eq:boundHs+-deltaddeps} follows from this together with the bound~\eqref{eq:bound-pot-deltav-md} and with the result~\eqref{eq:boundL2-deltaveps} of Substep~1.2 in the form $\|(\delta\!\md_\e,\delta\!\dd_\e)\|_{\dot H^{-1}}\lesssim_t1$.

\medskip
\noindent\substep{1.5} $H^{s}$-estimate on $\delta\!\dd_\e$ with loss of regularity.

In this step, we show that
\begin{align}\label{eq:derboundHs-deltaddeps}
\partial_t\|\delta\!\dd_\e\!\|_{H^s}\lesssim_t 1+\|\delta\!\dd_\e\!\|_{H^s}+\|\delta\!\md_\e\!\|_{H^{s+1}}+\lambda_\e^{-1}\big(\|\delta\!\md_\e\!\|_{H^{s+1}}^2+\|\delta\!\dd_\e\!\|_{H^{s}}\|\delta\!\md_\e\!\|_{H^{s+1}}\big).
\end{align}
Equation~\eqref{eq:deltaddeps-todegene} for the divergence $\delta\!\dd_\e$ yields after integration by parts,
\begin{align*}
\partial_t\|\delta\!\dd_\e\!\|_{H^s}^2&\le-2\lambda_\e^{-1}\int_{\R^2}|\nabla\langle\nabla\rangle^s\delta\!\dd_\e\!|^2+2\lambda_\e^{-1}\int_{\R^2}\langle\nabla\rangle^s(\delta\!\dd_\e\!\nabla\hat h)\cdot\nabla\langle\nabla\rangle^s\delta\!\dd_\e\\
&\hspace{1cm}+2\int_{\R^2}\big(\langle\nabla\rangle^s\delta\!\dd_\e\big)\Div\langle\nabla\rangle^s\big(a(W_\e-2\lambda_\e^{-1}\delta\!\vi_\e)\delta\!\md_\e-2a\!\md\!\delta\!\vi_\e+aG\big)\\
&\le\lambda_\e^{-1}\|\delta\!\dd_\e\!\nabla\hat h\|_{H^s}^2+2\|\delta\!\dd_\e\!\|_{H^s}\big(\|a(W_\e-2\lambda_\e^{-1}\delta\!\vi_\e)\delta\!\md_\e+aG\|_{H^{s+1}}\\
&\hspace{6cm}+2\|\!\md\!\delta\!\dd_\e\!\|_{H^s}+2\|a\delta\!\vi_\e\!\cdot\nabla\!\md\!\|_{H^s}\big),
\end{align*}
and hence, injecting the definition of $G$ and $W_\e$,
\begin{align*}
\partial_t\|\delta\!\dd_\e\!\|_{H^s}&\lesssim_t 1+\|\delta\!\dd_\e\!\|_{H^s}+\|\delta\!\md_\e\!\|_{H^{s+1}}+\|\delta\!\vi_\e\!\|_{H^s}+\lambda_\e^{-1}\|\delta\!\vi_\e\!\|_{H^{s+1}}\|\delta\!\md_\e\!\|_{H^{s+1}}.
\end{align*}
The result~\eqref{eq:derboundHs-deltaddeps} follows from this together with the bound~\eqref{eq:bound-pot-deltav-md} and with the result~\eqref{eq:boundL2-deltaveps} of Substep~1.2 in the form $\|(\delta\!\md_\e,\delta\!\dd_\e)\|_{\dot H^{-1}}\lesssim_t1$.

\medskip
\noindent\substep{1.6} Proof of~\eqref{eq:apriori-est-global-vmd-deg} and~\eqref{eq:apriori-est-global-vmd-deg-boundedness}.

Injecting~\eqref{eq:boundHs+-deltaddeps} into~\eqref{eq:derboundHs-deltamdeps} with $\lambda_\e^{-1}\lesssim1$, we find
\begin{multline*}
\partial_t\|\delta\!\md_\e\!\|_{\Ld^\infty_tH^{s+1}}\lesssim_t1+\|\delta\!\md_\e\!\|_{\Ld^\infty_tH^{s+1}}+\|\delta\!\dd_\e\!\|_{\Ld^\infty_tH^{s}}\\
+\lambda_\e^{-1/2}\big(\|\delta\!\md_\e\!\|_{\Ld^\infty_tH^{s+1}}^2+\|\delta\!\dd_\e\!\|_{\Ld^\infty_tH^{s}}^2\big)+\lambda_\e^{-3/2}\|\delta\!\md_\e\!\|_{\Ld^\infty_tH^{s+1}}^3.
\end{multline*}
Together with~\eqref{eq:derboundHs-deltaddeps}, this yields
\begin{eqnarray*}
\lefteqn{\partial_t\big(\|\delta\!\md_\e\!\|_{\Ld^\infty_tH^{s+1}}+\|\delta\!\dd_\e\!\|_{\Ld^\infty_tH^s}\big)}\\
&\lesssim_t&1+\|\delta\!\md_\e\!\|_{\Ld^\infty_tH^{s+1}}+\|\delta\!\dd_\e\!\|_{\Ld^\infty_tH^{s}}\\
&&\hspace{1cm}+\lambda_\e^{-1/2}\big(\|\delta\!\md_\e\!\|_{\Ld^\infty_tH^{s+1}}^2+\|\delta\!\dd_\e\!\|_{\Ld^\infty_tH^{s}}^2\big)+\lambda_\e^{-3/2}\|\delta\!\md_\e\!\|_{\Ld^\infty_tH^{s+1}}^3\\
&\lesssim_t&1+\|\delta\!\md_\e\!\|_{\Ld^\infty_tH^{s+1}}+\|\delta\!\dd_\e\!\|_{\Ld^\infty_tH^{s}}+\lambda_\e^{-3/4}(\|\delta\!\md_\e\!\|_{\Ld^\infty_tH^{s+1}}+\|\delta\!\dd_\e\!\|_{\Ld^\infty_tH^{s}})^3,
\end{eqnarray*}
and hence, by time integration,
\begin{align*}
\|\delta\!\md_\e\!\|_{\Ld^\infty_tH^{s+1}}+\|\delta\!\dd_\e\!\|_{\Ld^\infty_tH^s}&\le C_t\Big(1+\lambda_\e^{-3/4}\big(\|\delta\!\md_\e\!\|_{\Ld^\infty_tH^{s+1}}+\|\delta\!\dd_\e\!\|_{\Ld^\infty_tH^s}\big)^3\Big).
\end{align*}
For any $t\ge0$, choosing $\e>0$ small enough such that $\lambda_\e^{-3/4}\le(2C_t)^{-3}$, we obtain
\begin{align*}
\|\delta\!\md_\e\!\|_{\Ld^\infty_tH^{s+1}}+\|\delta\!\dd_\e\!\|_{\Ld^\infty_tH^s}\lesssim_t1.
\end{align*}
Combining this with the bound~\eqref{eq:bound-pot-deltav-md} and with the result~\eqref{eq:boundL2-deltaveps} of Substep~1.2 in the form of $\|(\delta\!\md_\e,\delta\!\dd_\e)\|_{\dot H^{-1}}\lesssim_t1$, we deduce
\begin{align*}
\|\delta\!\vi_\e\!\|_{\Ld^\infty_tH^{s+1}}+\|\delta\!\md_\e\!\|_{\Ld^\infty_tH^{s+1}}+\|\delta\!\dd_\e\!\|_{\Ld^\infty_tH^s}\lesssim_t1.
\end{align*}
Injecting this into the result~\eqref{eq:boundHs+-deltaddeps} of Substep~1.4, we find
\begin{align*}
\|\delta\!\dd_\e\!\|_{\Ld^\infty_tH^{s+1}}\lesssim_t\lambda_\e^{1/2},
\end{align*}
and the conclusion~\eqref{eq:apriori-est-global-vmd-deg} follows. Further decomposing $\vi_\e=\vi+\lambda_\e^{-1}\delta\!\vi_\e$, these results yield
\[\|\!\md_\e\!\|_{\Ld^\infty_tH^{s+1}}+\|\!\dd_\e\!\|_{\Ld^\infty_tH^{s+1}}\lesssim_t1.\]
Combining this again with~\eqref{eq:bound-pot-deltav-md}, we obtain
\begin{align*}
\|\!\vi_\e-\vi^\circ\!\|_{\Ld^\infty_tH^{s+2}}&\lesssim\|\!\md_\e-\md^\circ\!\|_{\Ld^\infty_tH^{s+1}}+\|\!\dd_\e-\dd^\circ\!\|_{\Ld^\infty_tH^{s+1}}+\|\!\vi_\e-\vi^\circ\!\|_{\Ld^\infty_t\Ld^2}\\
&\lesssim_t1+\lambda_\e^{-1}\big(\|\delta\!\md_\e\!\|_{\Ld^\infty_tH^{s+1}}+\|\delta\!\dd_\e\!\|_{\Ld^\infty_tH^{s+1}}+\|\delta\!\vi_\e\!\|_{\Ld^\infty_t\Ld^2}\big)\lesssim_t1,
\end{align*}
and the conclusion~\eqref{eq:apriori-est-global-vmd-deg-boundedness} follows.

\medskip
\noindent\step2 Conclusion.

Let $s>1$, and assume that $\hat h\in W^{s+5,\infty}(\R^2)$, $\hat F\in W^{s+4,\infty}(\R^2)^2$, $\vi^\circ\in W^{s+3,\infty}(\R^2)^2$, $\curl\!\vi^\circ\in H^{s+3}\cap W^{s+3,\infty}(\R^2)$, and $\Div(a\!\vi^\circ)\in H^{s+2}(\R^2)$. In this step, we use the notation $\lesssim$ for $\le$ up to a constant that depends only on an upper bound on the norms of these data and on $s$ and $(s-1)^{-1}$, and we write $\lesssim_t$ to indicate the further dependence on an upper bound on time~$t$.

Under these assumptions we know from~\cite[Theorem~4]{D-16} that equation~\eqref{eq:GL3lim-conv} admits a unique global solution $\vi\in\Ld^\infty_\loc(\R^+;\vi^\circ+H^{s+3}\cap W^{s+3,\infty}(\R^2)^2)$, which implies in particular
\[\|\!\vi-\vi^\circ\!\|_{\Ld^\infty_t H^{s+3}}+\|\!\vi\!\|_{\Ld^\infty_tW^{s+3,\infty}}+\|(\md,\dd)\|_{\Ld^\infty_tH^{s+2}}\lesssim_t1.\]
In addition, we know from~\cite[Theorem~1(i)]{D-16} that equation~\eqref{eq:GLv1-hd} also admits a unique global solution $\vi_\e\in\Ld^\infty_\loc(\R^+;\vi^\circ+H^{s+3}(\R^2)^2)$.
We may thus apply the result of Step~1, which for any $t\ge0$ yields for all $\e>0$ small enough,
\[\|\!\vi_\e-\vi^\circ\!\|_{\Ld^\infty_tH^{s+2}}+\|\!\md_\e\!\|_{\Ld^\infty_tH^{s+1}}+\|\!\dd_\e\!\|_{\Ld^\infty_tH^{s+1}}\lesssim_t1.\]
As $s>1$, this implies by the Sobolev embedding,
\[\|\!\vi_\e-\vi^\circ\!\|_{\Ld^\infty_t(H^3\cap W^{2,\infty})}+\|\!\md_\e\!\|_{\Ld^\infty_t(H^2\cap W^{1,\infty})}+\|\!\dd_\e\!\|_{\Ld^\infty_t(H^2\cap W^{1,\infty})}\lesssim_t1,\]
and hence, using these bounds in equation~\eqref{eq:GLv1-hd-rewrite},
\[\|\partial_t\!\vi_\e\!\|_{\Ld^\infty_t(H^1\cap\Ld^\infty)}+\|\partial_t\!\dd_\e\!\|_{\Ld^\infty_t\Ld^2}\lesssim_t1.\]
The desired estimates follow.
Finally, the result~\eqref{eq:apriori-est-global-vmd-deg} of Step~1 with $\lambda_\e\gg1$ directly implies the convergence $\vi_\e\to\vi$ in $\Ld^\infty_\loc(\R^+;\vi^\circ+H^{s+2}(\R^2)^2)$.
\end{proof}

\subsection{Conservative case}\label{chap:GP-lim}

Let us examine the vorticity formulation of equation~\eqref{eq:limGP} for $\vi_\e$. In terms of $\md_\e:=\curl\!\vi_\e$, it takes the form of a nonlocal nonlinear continuity equation for the vorticity $\md_\e$,
\begin{gather}\label{eq:limGPvort}
\begin{cases}\partial_t\!\md_\e=-\Div(\Gamma_\e^\bot\!\md_\e),\\
\curl\!\vi_\e=\md_\e,\quad\Div(a\!\vi_\e)=0,\\
\md_\e\!|_{t=0}=\curl\!\vi_\e^\circ.\end{cases}
\end{gather}
Given the form of $\Gamma_\e$ in~\eqref{eq:limGP}, this equation is a variant of the 2D Euler equation in vorticity form and is known as the lake equation in the context of 2D fluid dynamics (cf.~e.g.~\cite{CHL-1,CHL-2}): the pinning weight $a$ corresponds to the effect of a varying depth in shallow water, while the forcing $\nabla h-F$ is similar to a background flow.
A detailed study of this kind of equations is performed in the companion article~\cite{D-16}. The following proposition states that a solution $\vi_\e$ always exists globally and satisfies the various properties of Assumption~\ref{as:apveps}(b), under suitable regularity assumptions on the initial data $\vi_\e^\circ$.

\begin{prop}\label{prop:GPvprop}
Let $h:\R^2\to\R$, $a:=e^h$, $F:\R^2\to\R^2$,
and let $\vi_\e^\circ:\R^2\to\R^2$ be bounded in $W^{1,q}(\R^2)^2$ for all $q>2$ and satisfy $\curl\!\vi_\e^\circ\in\Pc(\R^2)$.
Assume that $h\in \Ld^\infty(\R^2)$, $\nabla h,F\in \Ld^4\cap W^{2,\infty}(\R^2)^2$, that $a(x)\to1$ uniformly as $|x|\uparrow\infty$, that $\vi_\e^\circ$ is bounded in $W^{2,\infty}(\R^2)^2$ with $\Div(a\!\vi_\e^\circ)=0$, and that $\curl\!\vi_\e^\circ$ is bounded in $H^{1}(\R^2)$.\\
In the regime~\GP,
there exists a unique (global) solution $\vi_\e$ of~\eqref{eq:limGP} in $\R^+\times\R^2$, in the space $\Ld^\infty_\loc(\R^+;\vi_\e^\circ+H^{2}\cap W^{1,\infty}(\R^2)^2)$. Moreover, all the properties of Assumption~\ref{as:apveps}(b) are satisfied, that is, for all $t\ge0$ and $q>2$ and $2<p<\infty$,
\begin{gather*}
\|(\vi_\e^t,\nabla\!\vi_\e^t)\|_{(\Ld^2+\Ld^q)\cap\Ld^\infty}\lesssim_{t,q}1,\quad\|\curl\!\vi_\e^t\!\|_{\Ld^1\cap\Ld^\infty}\lesssim_t1,\\
\|\!\pre_\e^t\!\|_{\Ld^q\cap\Ld^{\infty}}\lesssim_{t,q}1,\quad\|\nabla\!\pre_\e^t\!\|_{\Ld^2\cap\Ld^\infty}\lesssim_t1,\quad\|\partial_t\!\vi_\e^t\!\|_{\Ld^2}\lesssim_t1,\quad\|\partial_t\!\pre_\e^t\!\|_{\Ld^p}\lesssim_{t,p}1.
\end{gather*}
In addition, for all $\theta>0$ and $\varrho\ge1$, setting $\pre_{\e,\varrho}:=\chi_\varrho\pre_\e$, we have for all $t\ge0$,
\begin{equation}\label{eq:diffpressuretrunc}
\|\nabla(\pre_{\e,\varrho}^t-\pre_\e^t)\|_{\Ld^2}\lesssim_{\theta,t} \varrho^{\theta-2}+\int_{|x|>\varrho}|\curl\!\vi_\e^\circ\!|^2.\qedhere
\end{equation}
\end{prop}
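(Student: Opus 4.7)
The plan is to appeal to the companion article~\cite{D-16} for the global well-posedness of the lake-type equation~\eqref{eq:limGP} (or equivalently, its vorticity form~\eqref{eq:limGPvort}), which has been analyzed there in detail under precisely the regularity assumptions imposed here on $h$, $F$, and $\vi_\e^\circ$. This yields the existence and uniqueness of $\vi_\e\in\Ld^\infty_\loc(\R^+;\vi_\e^\circ+H^{2}\cap W^{1,\infty}(\R^2)^2)$, as well as the basic a priori bounds on $\vi_\e$, $\nabla\!\vi_\e$, and $\md_\e$. From the transport structure of~\eqref{eq:limGPvort}, the vorticity $\md_\e$ is advected by the (essentially divergence-free, since $a\,$is close to $1$ at infinity) vector field $-\Gamma_\e^\bot$, so its $\Ld^p$-norms remain bounded in time and $\int\md_\e=1$ is preserved; combined with the Biot--Savart-like representation $\vi_\e=a^{-1}\nabla^\bot(\Div a^{-1}\nabla)^{-1}\md_\e$ and the regularity assumptions on $\vi_\e^\circ$, this gives the claimed bounds on $\vi_\e^t$ in $(\Ld^2+\Ld^q)\cap\Ld^\infty$. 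The bound on $\partial_t\!\vi_\e$ then follows by plugging these estimates into equation~\eqref{eq:limGP}, once the pressure has been controlled.

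For the pressure, I would exploit the constraint $\Div(a\!\vi_\e)=0$: applying $\Div(a\,\cdot)$ to equation~\eqref{eq:limGP} yields the elliptic equation
\begin{equation*}
\Div(a\nabla\!\pre_\e)=-\Div(a\Gamma_\e\md_\e),
\end{equation*}
which is uniformly elliptic thanks to~\eqref{eq:nondegencond}. Standard Calder\'on--Zygmund theory then gives $\|\nabla\!\pre_\e\|_{\Ld^q}\lesssim\|\Gamma_\e\md_\e\|_{\Ld^q}$ for all $1<q<\infty$, which combined with the bounds on $\Gamma_\e\in W^{1,\infty}$ and $\md_\e\in\Ld^1\cap\Ld^\infty$ yields the required bounds on $\|\nabla\!\pre_\e\|_{\Ld^2\cap\Ld^\infty}$ and, via Riesz potential estimates applied to $\pre_\e$, the bound on $\|\!\pre_\e\|_{\Ld^q\cap\Ld^\infty}$ (the non-decay of $a-1$ only contributes a negligible perturbation because $a(x)\to1$ at infinity). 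Differentiating the elliptic equation in time similarly gives $\Div(a\nabla\partial_t\!\pre_\e)=-\Div(a\partial_t(\Gamma_\e\md_\e))$, and using $\partial_t\md_\e=-\Div(\Gamma_\e^\bot\md_\e)$ together with the already-proven $\Ld^\infty$-bounds on $\vi_\e,\md_\e,\partial_t\!\vi_\e$ yields the bound on $\|\partial_t\!\pre_\e\|_{\Ld^p}$.

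The truncation estimate~\eqref{eq:diffpressuretrunc} is the main technical obstacle. I would first decompose $\nabla(\pre_{\e,\varrho}-\pre_\e)=(\chi_\varrho-1)\nabla\!\pre_\e+\pre_\e\nabla\chi_\varrho$, so that it suffices to control $\|\nabla\!\pre_\e\|_{\Ld^2(|x|>\varrho)}$ and $\|\pre_\e\nabla\chi_\varrho\|_{\Ld^2}$. The key is to exploit the spatial localization of the source term $\Div(a\Gamma_\e\md_\e)$: writing $\md_\e^\circ=\md_\e^{\circ,\text{in}}+\md_\e^{\circ,\text{out}}$ with $\md_\e^{\circ,\text{in}}$ supported in $\{|x|\le\varrho\}$ and $\md_\e^{\circ,\text{out}}$ in $\{|x|>\varrho\}$, and using the transport structure to propagate this decomposition in time (with finite propagation speed controlled by $\|\Gamma_\e\|_{\Ld^\infty}$), one splits $\pre_\e=\pre_\e^{\text{in}}+\pre_\e^{\text{out}}$ accordingly. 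The contribution $\pre_\e^{\text{out}}$ is bounded by the $\Ld^2$-tail $\int_{|x|>\varrho}|\md_\e^\circ|^2$ (transported with at most doubling of support for fixed time). The contribution $\pre_\e^{\text{in}}$ comes from a source essentially concentrated in a ball of radius $\lesssim\varrho$ (plus $Ct$), so by the decay of the Green's function of $\Div(a\nabla\cdot)$ in the plane (which behaves like a logarithm, hence with gradient decaying like $|x|^{-2}$ away from the source), one gets $\|\nabla\!\pre_\e^{\text{in}}\|_{\Ld^2(|x|>2\varrho)}^2\lesssim_t\varrho^{-2}$; the factor $\varrho^{\theta-2}$ rather than $\varrho^{-2}$ accommodates a logarithmic loss from the borderline $\Ld^2$-integration and/or mild growth of the support with time. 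The term $\pre_\e\nabla\chi_\varrho$ is handled similarly, using the $\Ld^\infty$ decay of $\pre_\e^{\text{in}}$ outside a ball around the (bounded) support and $|\nabla\chi_\varrho|\lesssim\varrho^{-1}$ on an annulus of area $\sim\varrho^2$. The main delicate point here is ensuring that the (self-consistent) dependence of $\Gamma_\e$ on $\vi_\e$ does not spoil the localization: this is handled by the fact that the ``non-local part'' of $\Gamma_\e$ is bounded uniformly in $\Ld^\infty$, so the advection speed is bounded and the transport argument goes through.
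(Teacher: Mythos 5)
Your high-level plan (appeal to~\cite{D-16} for well-posedness, derive the elliptic equation for the pressure by applying $\Div(\hat a\,\cdot)$, and then establish the truncation estimate separately) is the right one and matches the paper's structure, but two of your three steps for the pressure hide genuine gaps where the paper does real work, and your argument for~\eqref{eq:diffpressuretrunc} takes a genuinely different (and incomplete) route.

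\textbf{The $\Ld^q$-bound on $\pre_\e$.} Your claim that ``via Riesz potential estimates applied to $\pre_\e$'' together with the observation that ``$a-1$ only contributes a negligible perturbation'' gives $\|\pre_\e\|_{\Ld^q}\lesssim1$ is exactly the statement that needs a proof, and it is not a consequence of standard Calder\'on--Zygmund theory. Rewriting $-\triangle\pre_\e=\Div(a\Gamma_\e\md_\e)+\Div((a-1)\nabla\pre_\e)$ and applying Riesz potentials would require $\nabla\pre_\e\in\Ld^p$ for some $p<2$, which is precisely what you do \emph{not} have a priori (the energy estimate only gives $p=2$). Since $a-1$ does not decay at any rate (only $a\to1$ uniformly), $(a-1)\nabla\pre_\e$ is not compactly supported, and the circularity cannot be broken by hand-waving. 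The paper resolves this with a dedicated Meyers-type lemma (Step~1 of its proof): it truncates the coefficient $b_r:=\chi_r+b(1-\chi_r)$ so that $b_r$ is within the Meyers threshold for $r$ large (this is where $a\to1$ enters), gets $\|\nabla v\|_{\Ld^p}$ for $p$ slightly below $2$ with a quantified $r$-dependence, and only then passes to $\|v\|_{\Ld^q}$ via the Laplacian decomposition. That perturbative argument is the heart of the pressure control; in your write-up it is absent.

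\textbf{The truncation estimate~\eqref{eq:diffpressuretrunc}.} The paper's proof is a two-line energy argument: test the pressure equation $-\Div(\hat a\nabla\pre_\e)=\Div(\hat a\Gamma_\e\md_\e)$ against $(1-\chi_\varrho)\pre_\e$, absorb, and invoke $\|\pre_\e\|_{\Ld^p}\lesssim_{p,t}1$ for the middle term; the only thing left to show is a tail estimate on the vorticity, $\int(1-\chi_\varrho)|\md_\e^t|^2\lesssim_t\varrho^{-2}+\int(1-\chi_\varrho)|\md_\e^\circ|^2$, which follows from the vorticity equation by a direct Gr\"onwall argument with a $\varrho^{-1}$ factor coming from $\nabla\chi_\varrho$. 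Your in/out transport decomposition is a genuinely different route, and it has problems. First, the Green's-function bound $|\nabla\pre_\e^{\text{in}}(x)|\lesssim d^{-2}$ (with $d$ the distance to the transported support) deteriorates to $\Ld^\infty$-type when $d\to0$, so the integral $\int_{|x|>\varrho}(1-\chi_\varrho)|\nabla\pre_\e^{\text{in}}|^2$ has an uncontrolled contribution from the annulus $\varrho<|x|<\varrho+C_t$ where the source still lives; you explicitly write $\Ld^2(|x|>2\varrho)$ and then assert that $\theta$ ``accommodates'' the rest, but no argument is given for $\varrho<|x|<2\varrho$ (this can be patched by cutting at $\varrho/2$ instead of $\varrho$, at the cost of a slightly weaker right-hand side, but you do not do this). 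Second, pointwise decay estimates for the Green's function of $\Div(a\nabla\cdot)$ with a coefficient $a$ that does not converge to $1$ at any rate are themselves delicate and would need justification --- this is another place where the paper's soft energy method is more robust than a pointwise-kernel approach. The transport/finite-speed idea is a reasonable alternative in spirit, but as written it does not close.

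For the remaining bounds ($\|\nabla\pre_\e\|_{\Ld^2\cap\Ld^\infty}$, $\|\partial_t\vi_\e\|_{\Ld^2}$, $\|\partial_t\pre_\e\|_{\Ld^p}$) your sketch is essentially what the paper does (energy estimate plus one differentiation plus Sobolev embedding, then back-substitution into the equation), and the appeal to~\cite{D-16} for existence, uniqueness, and the basic bounds on $\vi_\e$ is correct.
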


\begin{proof}
We split the proof into three steps.

\medskip
\noindent\step1 Preliminary.

In this step, we prove the following Meyers-type elliptic regularity estimate: if $b\in\Ld^\infty(\R^2)$ satisfies $\frac12\le b\le 1$ pointwise, and $b(x)\to1$ uniformly as $|x|\uparrow\infty$, then for all $g\in\Ld^1\cap\Ld^2(\R^2)^2$ the decaying solution $v$ of equation $-\Div(b\nabla v)=\Div g$ satisfies for all $2<q<\infty$,
\[\|v\|_{\Ld^{q}}\lesssim_{q}\|g\|_{\Ld^{\frac{2q}{q+2}}\cap\Ld^2}\lesssim\|g\|_{\Ld^1\cap\Ld^2}.\]

Let $b\in\Ld^\infty(\R^2)$ be fixed with $\frac12\le b\le 1$ pointwise and $b(x)\to1$ uniformly as $|x|\uparrow\infty$.
Set $b_{r}:=\chi_{r}+b(1-\chi_{r})$ and decompose the equation for $v$ as follows,
\[-\Div(b_{r}\nabla v)=\Div\!\big(g+(b-b_{r})\nabla v\big).\]
Let $1<p<2$. Meyers' perturbative argument~\cite{Meyers-63} gives a value $\kappa_p>0$ such that, if $\tilde b\in\Ld^\infty(\R^2)$ satisfies $\kappa_p\le \tilde b\le1$, then for all $k\in\Ld^1\cap\Ld^2(\R^2)^2$ the decaying solution $w$ of equation $-\Div(\tilde b\nabla w)=\Div k$ satisfies $\|\nabla w\|_{\Ld^p}\lesssim_p\|k\|_{\Ld^{p}}$.
By definition, for $r$ large enough, the truncated coefficient $b_{r}$ satisfies $\kappa_p\le b_r\le 1$, hence
\[\|\nabla v\|_{\Ld^p}\lesssim_p\|g+(b-b_{r})\nabla v\|_{\Ld^{p}}.\]
Using the elementary energy estimate $\|\nabla v\|_{\Ld^2}\lesssim\|g\|_{\Ld^2}$, and noting that $b_r=b$ in $\R^2\setminus B_{2r}$, Hölder's inequality yields
\[\|\nabla v\|_{\Ld^p}\lesssim_p\|g\|_{\Ld^p}+\|\nabla v\|_{\Ld^{p}(B_{2r})}\lesssim\|g\|_{\Ld^p}+r^{2(\frac1p-\frac12)}\|\nabla v\|_{\Ld^2}\lesssim\|g\|_{\Ld^p}+r^{2(\frac1p-\frac12)}\|g\|_{\Ld^2}.\]
Rather decomposing the equation for $v$ as follows,
\[-\triangle v=\Div(g+(b-1)\nabla v),\]
we deduce from Riesz potential theory, with $2<q:=\frac{2p}{2-p}<\infty$,
\[\|v\|_{\Ld^{q}}\lesssim_{q} \|g\|_{\Ld^{p}}+\|\nabla v\|_{\Ld^{p}}.\]
Combining this with the above, the conclusion follows.

\medskip
\noindent\step2 Proof of Assumption~\ref{as:apveps}(b).

The assumption $\|\hat h\|_{W^{3,\infty}}$, $\|(\nabla\hat h,\hat F)\|_{\Ld^4\cap W^{2,\infty}}\lesssim1$ yields $\|\lambda_\e^{-1}(\nabla^\bot h-F^\bot)\|_{\Ld^4\cap W^{2,\infty}}\lesssim1$ in the considered regime, and also $\lambda_\e^{-1}\frac{N_\e}\Log=1$ and $\lambda_\e^{-1}\lesssim1$. Using the assumptions on the initial data $\vi_\e^\circ$, it follows from~\cite[Theorems~1 and~3]{D-16} that there exists a unique (global) solution $\vi_\e\in\Ld^\infty_\loc(\R^+;\vi_\e^\circ+H^{2}\cap W^{1,\infty}(\R^2)^2)$ of~\eqref{eq:limGP} in $\R^+\times\R^2$ with initial data $\vi_\e^\circ$. Moreover, it is shown in~\cite{D-16} that this solution satisfies in particular, for all $t\ge0$,
\begin{align}\label{eq:resD16GP}
\|\!\vi_\e^t-\vi_\e^\circ\!\|_{H^2\cap W^{1,\infty}}\lesssim_t1,\quad\|\!\md_\e^t\!\|_{H^1\cap\Ld^\infty}\lesssim_t1,\quad\int_{\R^2}\md_\e^t=1,\quad \md_\e^t\ge0.
\end{align}
(In order to ensure $\vi_\e\in\Ld^\infty_\loc(\R^+;\vi_\e^\circ+H^2(\R^2)^2)$, the results in~\cite{D-16} would actually require $\nabla h,F,\vi^\circ\in W^{s+2,\infty}(\R^2)^2$ for some $s>0$ due to the use of the Sobolev embedding for $H^{s+1}(\R^2)$ into $W^{s,\infty}(\R^2)$ in~\cite[Proof of Lemma~4.6]{D-16}. However, this use of the Sobolev embedding is easily replaced by an a priori estimate for $\vi_\e$ in $W^{s,\infty}(\R^2)^2$, for which it is already enough to assume $\nabla h,F,\vi^\circ\in W^{2,\infty}(\R^2)^2$, cf.~\cite[Lemma~4.7]{D-16}.)

We argue that all the claimed properties of $\vi_\e$ follow from~\eqref{eq:resD16GP}. Combining~\eqref{eq:resD16GP} with the assumption that $\vi_\e^\circ$ is bounded in $W^{1,q}(\R^2)^2$ for all $q>2$, we obtain
\[\|(\vi_\e^t,\nabla\!\vi_\e^t)\|_{(\Ld^2+\Ld^q)\cap\Ld^\infty}\lesssim_{t,q}1.\]
Applying $\Div(\hat a\,\cdot)$ to both sides of equation~\eqref{eq:limGP}, we find the following equation for the pressure, in the considered regime~(GP),
\begin{align}\label{eq:pressureeqn}
-\Div(\hat a\nabla\!\pre_\e^t)=\Div(\hat a\Gamma_\e^t\!\md_\e^t)=-\Div\!\big(\hat a\!\md_\e^t(\lambda_\e^{-1}\nabla^\bot\hat h-\hat F^\bot-2\!\vi_\e^t)^\bot\big).
\end{align}
An energy estimate directly yields
\begin{align}\label{eq:boundpressureH1}
\|\nabla\!\pre_\e^t\!\|_{\Ld^2}&\lesssim\|\hat a\!\md_\e^t(\lambda_\e^{-1}\nabla^\bot\hat h-\hat F^\bot-2\!\vi_\e^t)^\bot\|_{\Ld^2}\lesssim_t1,
\end{align}
and similarly, first differentiating both sides of~\eqref{eq:pressureeqn},
\begin{align}\label{eq:boundpressureH2}
\|\nabla^2\!\pre_\e^t\!\|_{\Ld^2}\lesssim\|\nabla\!\pre_\e^t\!\|_{\Ld^2}+\big\|\nabla\big(\hat a\!\md_\e^t(\lambda_\e^{-1}\nabla^\bot\hat h-\hat F^\bot-2\!\vi_\e^t)^\bot\big)\big\|_{\Ld^2}\lesssim_t1.
\end{align}
Inserting~\eqref{eq:boundpressureH1} into~\eqref{eq:limGP} yields
\[\|\partial_t\!\vi_\e^t\!\|_{\Ld^2}\le\|\nabla\!\pre_\e^t\!\|_{\Ld^2}+\|\Gamma_\e^t\!\md_\e^t\!\|_{\Ld^2}\lesssim_{t}1.\]
Applying to equation~\eqref{eq:pressureeqn} the Meyers-type result of Step~1, we find for all $2<q<\infty$,
\[\|\!\pre_\e^t\!\|_{\Ld^q}\lesssim_q\|\hat a\!\md_\e^t(\lambda_\e^{-1}\nabla^\bot \hat h-\hat F^\bot-2\!\vi_\e^t)^\bot\|_{\Ld^1\cap\Ld^2}\lesssim_t1.\]
Combining this with~\eqref{eq:boundpressureH2}, we deduce from the Sobolev embedding $\|\!\pre_\e^t\!\|_{\Ld^q\cap\Ld^\infty}\lesssim_{q,t}1$ for all $q>2$.
Differentiating both sides of~\eqref{eq:pressureeqn} with respect to the time variable, the Meyers-type result of Step~1 further yields for all $2<q<\infty$,
\begin{align*}
\|\partial_t\!\pre_\e^t\!\|_{\Ld^q}&\lesssim_q\big\|\hat a\partial_t\big(\!\md_\e^t(\lambda_\e^{-1}\nabla^\bot \hat h-\hat F^\bot-2\!\vi_\e^t)^\bot\big)\big\|_{\Ld^1\cap\Ld^2}\\
&\lesssim\|\!\md_\e^t\!\|_{\Ld^2\cap\Ld^\infty}\|\partial_t\!\vi_\e^t\!\|_{\Ld^2}+\|\Gamma_\e^t\partial_t\!\md_\e^t\|_{\Ld^1\cap\Ld^2}\\
&\lesssim_t1+\|\Gamma_\e^t\partial_t\!\md_\e^t\|_{\Ld^1\cap\Ld^2}.
\end{align*}
Using equation~\eqref{eq:limGPvort} to estimate the time derivative of the vorticity, and using that $\|\lambda_\e^{-1}\nabla\hat h-\hat F\|_{\Ld^4\cap W^{1,\infty}}\lesssim1$, we find
\begin{multline*}
\|\Gamma_\e^t\partial_t\!\md_\e^t\!\|_{\Ld^1\cap\Ld^2}\lesssim\|\Gamma_\e^t\|_{\Ld^4\cap\Ld^\infty}^2\|\nabla\!\md_\e^t\!\|_{\Ld^2}+\|\Gamma_\e^t\|_{W^{1,\infty}}^2\|\!\md_\e^t\!\|_{\Ld^1\cap\Ld^2}\\
\lesssim_t\|\Gamma_\e^t\|_{\Ld^4\cap W^{1,\infty}}^2\lesssim1+\|\!\vi_\e^t\!\|_{\Ld^4\cap W^{1,\infty}}^2\lesssim_t1,
\end{multline*}
hence $\|\partial_t\!\pre_\e^t\!\|_{\Ld^q}\lesssim_{t,q}1$. All the claimed properties of $\vi_\e$ follow.

\medskip
\noindent\step3 Proof of~\eqref{eq:diffpressuretrunc}.
\nopagebreak

For all $t\ge0$, testing equation~\eqref{eq:pressureeqn} with $(1-\chi_\varrho)\pre_\e^t$, and using $|\nabla\chi_\varrho|\lesssim \varrho^{-1}(1-\chi_\varrho)^{1/2}$ and the inequality $2xy\le x^2+y^2$, we find
\begin{eqnarray*}
\lefteqn{\int_{\R^2} \hat a(1-\chi_\varrho)|\nabla\!\pre_\e^t\!|^2}\\
&=&\int_{\R^2} \hat a\pre_\e^t\nabla\chi_\varrho\cdot \nabla\!\pre_\e^t-\int_{\R^2} \hat a(1-\chi_\varrho)\nabla\!\pre_\e^t\cdot\Gamma_\e^t\md_\e^t+\int_{\R^2}\hat a\!\pre_\e^t\nabla\chi_\varrho\cdot \Gamma_\e^t\md_\e^t\\
&\le &\frac12\int_{\R^2}\hat a(1-\chi_\varrho)|\nabla\!\pre_\e^t\!|^2+C\varrho^{-2}\int_{\varrho\le |x|\le 2\varrho}|\!\pre_\e^t\!|^2+C\int_{\R^2} (1-\chi_\varrho)|\Gamma_\e^t|^2|\!\md_\e^t\!|^2.
\end{eqnarray*}
Absorbing the first right-hand side term and recalling that Step~2 gives $\|\Gamma_\e^t\|_{\Ld^\infty}$, $\|\!\md_\e^t\!\|_{\Ld^2}\lesssim_t1$ and $\|\!\pre_\e^t\!\|_{\Ld^{p}}\lesssim_{p,t}1$ for all $p>2$, we obtain with Hölder's inequality,
\begin{align*}
\int_{\R^2} (1-\chi_\varrho)|\nabla\!\pre_\e^t\!|^2&\lesssim_t \varrho^{-2}\int_{\varrho\le |x|\le 2\varrho}|\!\pre_\e^t\!|^2+\int_{\R^2} (1-\chi_\varrho)|\!\md_\e^t\!|^2\\
&\lesssim_{p,t} \varrho^{-\frac4p}+\int_{\R^2} (1-\chi_\varrho)|\!\md_\e^t\!|^2,
\end{align*}
and hence, for all $p>2$,
\begin{align*}
\|\nabla(\pre_{\e,\varrho}^t-\pre_\e^t)\|_{\Ld^2}^2&\lesssim\int_{\R^2} (1-\chi_\varrho)|\nabla\!\pre_\e^t\!|^2+\varrho^{-2}\int_{\varrho\le|x|\le2\varrho}|\!\pre_\e^t\!|^2\\
&\lesssim_{p,t} \varrho^{-\frac4p}+\int_{\R^2} (1-\chi_\varrho)|\!\md_\e^t\!|^2.
\end{align*}
It remains to estimate the last right-hand side term. For all $t\ge0$, using again  the bounds of Step~2 and the estimate $|\nabla\chi_\varrho|\lesssim \varrho^{-1}(1-\chi_\varrho)^{1/2}$, we deduce from~\eqref{eq:limGPvort},
\begin{align*}
\partial_t\int_{\R^2}(1-\chi_\varrho)|\!\md_\e^t\!|^2&=2\int_{\R^2}(1-\chi_\varrho)\md_\e^t\curl(\Gamma_\e^t\!\md_\e^t)\\
&=2\int_{\R^2} |\!\md_\e^t\!|^2\Gamma_\e^t\cdot \nabla^\bot\chi_\varrho-\int_{\R^2}(1-\chi_\varrho)\Gamma_\e^t\cdot\nabla^\bot|\!\md_\e^t\!|^2\\
&=2\int_{\R^2} |\!\md_\e^t\!|^2\Gamma_\e^t\cdot \nabla^\bot\chi_\varrho+\int_{\R^2}|\!\md_\e^t\!|^2\curl\big((1-\chi_\varrho)\Gamma_\e^t\big)\\
&\lesssim_t \varrho^{-1}\int_{\R^2}(1-\chi_\varrho)^{1/2}|\!\md_\e^t\!|^2+\int_{\R^2}(1-\chi_\varrho)|\!\md_\e^t\!|^2\\
&\lesssim_t \varrho^{-2}+\int_{\R^2}(1-\chi_\varrho)|\!\md_\e^t\!|^2,
\end{align*}
and hence, by the Grönwall inequality,
\[\int_{\R^2}(1-\chi_\varrho)|\!\md_\e^t\!|^2\lesssim_t \varrho^{-2}+\int_{\R^2}(1-\chi_\varrho)|\curl\!\vi_\e^\circ\!|^2,\]
and the result~\eqref{eq:diffpressuretrunc} follows.
\end{proof}

We now show how to pass to the limit in equation~\eqref{eq:limGP} as $\e\downarrow0$,
which is easily achieved by a Grönwall argument on the $\Ld^2$-distance between $\vi_\e$ and the solution $\vi$ of the limiting equation. Note that, in the limit, pinning effects only remain in the constraint.

\begin{lem}\label{lem:lastlimGP}
Let the same assumptions hold as in Proposition~\ref{prop:GPvprop}, and let $\vi_\e:\R^+\times\R^2\to\R^2$ be the corresponding (global) solution of~\eqref{eq:limGP}. In the regime \GP{} with $\vi_\e^\circ=\vi^\circ$, we have $\vi_\e\to\vi$ in $\Ld^\infty_\loc(\R^+;\Ld^2(\R^2)^2)$ as $\e\downarrow0$, where $\vi$ is the unique global (smooth) solution of
\begin{equation}\label{eq:limGPfin}
\begin{cases}
\partial_t\!\vi=\nabla\!\pre+(-\hat F+2\!\vi^\bot)\,\curl\!\vi,\\
\Div(\hat a\!\vi)=0,\quad\vi\!|_{t=0}=\vi^\circ.
\end{cases}\qedhere
\end{equation}
\end{lem}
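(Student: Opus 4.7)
I plan to follow the Gr\"onwall strategy of Step~1 of the proof of Lemma~\ref{lem:lastlimGL}, specialized to the conservative incompressible regime~\GP. Existence, uniqueness, and smoothness of a global solution $\vi\in\Ld^\infty_{\loc}(\R^+;\vi^\circ+H^2\cap W^{1,\infty}(\R^2)^2)$ of~\eqref{eq:limGPfin}, together with the a priori bounds $\|\vi^t\!\|_{W^{1,\infty}}+\|\curl\!\vi^t\!\|_{\Ld^1\cap\Ld^\infty}+\|\!\pre^t\!\|_{\Ld^q\cap\Ld^\infty}+\|\nabla\!\pre^t\!\|_{\Ld^2\cap\Ld^\infty}\lesssim_{t,q}1$ for all $q>2$, follow from Proposition~\ref{prop:GPvprop} (or directly from~\cite[Theorems~1, 3]{D-16}). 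In the regime~\GP{} we have $\lambda_\e^{-1}(2N_\e/\Log)=2$, $\lambda_\e^{-1}F=\hat F$ and $h=\hat h$, so the drift of~\eqref{eq:limGP} rewrites as $\Gamma_\e=\lambda_\e^{-1}\nabla\hat h+V+2(\vi_\e-\vi)^\bot$ with $V:=-\hat F+2\!\vi^\bot$, which formally converges to the drift of~\eqref{eq:limGPfin} as $\lambda_\e\uparrow\infty$.

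Subtracting~\eqref{eq:limGPfin} from~\eqref{eq:limGP}, testing against $\hat a(\vi_\e-\vi)$, and using the algebraic identity $(\vi_\e-\vi)\cdot(\vi_\e-\vi)^\bot=0$ to kill the quadratic cross-term, I obtain
\begin{multline*}
\tfrac12\partial_t\!\int_{\R^2}\!\hat a|\!\vi_\e-\vi\!|^2
=\int_{\R^2}\!\hat a(\vi_\e-\vi)\cdot\nabla(\pre_\e-\pre)\\
+\lambda_\e^{-1}\!\int_{\R^2}\!\hat a(\vi_\e-\vi)\cdot\nabla\hat h\,\curl\!\vi_\e
+\int_{\R^2}\!\hat a(\vi_\e-\vi)\cdot V\,\curl(\vi_\e-\vi).
\end{multline*}
The pressure contribution will vanish by integration by parts, thanks to the \emph{exact} incompressibility $\Div(\hat a(\vi_\e-\vi))=0$. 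The nonlinear term will be handled via the weighted Delort-type identity~\eqref{eq:weightedDelort}, which under this same incompressibility reduces to $(\vi_\e-\vi)\curl(\vi_\e-\vi)=-\tfrac12|\!\vi_\e-\vi\!|^2\nabla^\bot\hat h-\hat a^{-1}(\Div(\hat a S_{\vi_\e-\vi}))^\bot$; after another integration by parts (justified by truncating the non-decaying $V^\bot$ by $\chi_R$ and using $\hat a S_{\vi_\e-\vi}\in\Ld^1$), this gives a bound $C_t\!\int\!\hat a|\!\vi_\e-\vi\!|^2$ using $|S_w|\le|w|^2$ and $\|V\|_{W^{1,\infty}}\lesssim_t1$. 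The $\lambda_\e^{-1}$ term is bounded by $C_t\lambda_\e^{-1}(\!\int\!\hat a|\!\vi_\e-\vi\!|^2)^{1/2}$ via Cauchy--Schwarz and the $\Ld^2$-bound on $\curl\!\vi_\e$ (which follows from $\curl\!\vi_\e\in\Ld^1\cap\Ld^\infty$). Combining these estimates with Young's inequality yields $\partial_t\!\int\!\hat a|\!\vi_\e-\vi\!|^2\le C_t\!\int\!\hat a|\!\vi_\e-\vi\!|^2+C_t\lambda_\e^{-2}$, and since the initial data vanish by $\vi_\e^\circ=\vi^\circ$, $\lambda_\e\uparrow\infty$ and $\hat a\ge C^{-1}$, Gr\"onwall's lemma gives $\|\!\vi_\e^t-\vi^t\!\|_{\Ld^2}^2\lesssim_t\lambda_\e^{-2}\to0$, which is the desired conclusion.

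The main obstacle I anticipate is rigorously justifying the vanishing of the pressure term: since $\pre_\e,\pre$ are only known to lie in $\Ld^q\cap\Ld^\infty$ for some $q>2$ (and not in $\Ld^2$), the identity $\int\hat a(\vi_\e-\vi)\cdot\nabla(\pre_\e-\pre)=-\int(\pre_\e-\pre)\Div(\hat a(\vi_\e-\vi))=0$ is not directly available. My plan is to regularize by replacing $\pre_\e-\pre$ by the truncation $(\pre_\e-\pre)\chi_R$ (for which the identity holds trivially), and then pass to the limit $R\uparrow\infty$: the main contribution converges by dominated convergence thanks to $\|\nabla(\pre_\e-\pre)\|_{\Ld^2}\lesssim_t 1$, while the boundary contribution satisfies $\|(\pre_\e-\pre)\nabla\chi_R\|_{\Ld^2}\lesssim_q R^{-2/q}\|\pre_\e-\pre\|_{\Ld^q}\to0$ by H\"older's inequality. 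All other ingredients are direct adaptations of Step~1 of the proof of Lemma~\ref{lem:lastlimGL}, with the dissipative friction term $-4\alpha\!\int\!\hat a|\!\vi_\e-\vi\!|^2\curl\!\vi_\e$ being simply absent since $\alpha=0$ (it had a favorable sign there and is not needed here).
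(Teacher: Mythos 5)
Your proof is correct and follows essentially the same strategy as the paper: subtract the two equations, test against $\hat a(\vi_\e-\vi)$, use the incompressibility constraint $\Div(\hat a(\vi_\e-\vi))=0$ and the weighted Delort identity~\eqref{eq:weightedDelort} to handle the pressure and nonlinear terms, and close by Gr\"onwall using $\lambda_\e^{-1}\to0$. The one technical variation is that the paper inserts a uniform exponential cut-off $\xi_R^z$ from the start (so the quantity $\sup_z\int\hat a\xi_R^z|\vi_\e-\vi|^2$ is the Gr\"onwall quantity, and the bound $\lesssim_t R^{-1}+\lambda_\e^{-2}$ is obtained before sending $R\uparrow\infty$), whereas you exploit $\vi_\e^\circ=\vi^\circ$ and the regularity of Proposition~\ref{prop:GPvprop} to deduce $\vi_\e-\vi\in\Ld^\infty_\loc(\R^+;H^2(\R^2)^2)$ directly, so you can Gr\"onwall the global $\Ld^2$-norm and only need smooth truncations $\chi_R$ to justify the two integrations by parts (for the pressure term, using $\pre_\e-\pre\in\Ld^q$ with $q>2$ and $\nabla(\pre_\e-\pre)\in\Ld^2$; for the stress-energy term, using $\hat a S_{\vi_\e-\vi}\in\Ld^1$). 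Both routes yield the same bound $\|\vi_\e^t-\vi^t\|_{\Ld^2}\lesssim_t\lambda_\e^{-1}$ and hence the conclusion; yours is slightly more direct given the hypothesis $\vi_\e^\circ=\vi^\circ$, while the paper's exponential-cut-off setup mirrors the argument it runs for Lemma~\ref{lem:lastlimGL} and would also cover $\Ld^2_\uloc$-convergence without that hypothesis.
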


\begin{proof}
Using the choice of the scalings for $\lambda_\e,h,F$ in the regime~\GP, equation~\eqref{eq:limGP} takes on the following guise,
\begin{align*}
\partial_t\!\vi_\e=\nabla\!\pre_\e+\big(\lambda_\e^{-1}\nabla \hat h-\hat F+2\!\vi_\e^\bot\!\big)\curl\!\vi_\e,\qquad\Div(\hat a\!\vi_\e)=0,\qquad\vi_\e\!|_{t=0}=\vi^\circ.
\end{align*}
As $\lambda_\e^{-1}\to0$, it is formally clear that $\vi_\e$ should converge to the solution $\vi$ of equation~\eqref{eq:limGPfin}. Note that existence, uniqueness, and regularity of $\vi$ are given by Proposition~\ref{prop:GPvprop} just as for $\vi_\e$, and we have in particular the following bounds for all $t\ge0$,
\begin{align}\label{eq:boundvvepsGP}
\|(\vi^t,\vi_\e^t)\|_{W^{1,\infty}}\lesssim_t1,\quad \|\curl\!\vi_\e^t\|_{\Ld^1}=1,\quad
\|(\pre^t,\pre_\e^t)\|_{\Ld^\infty}\lesssim_{t}1,
\end{align}
and for all $R,\theta>0$,
\begin{align}\label{eq:boundvvepsGP+}
\|(\vi^t,\vi_\e^t)\|_{\Ld^2(B_R)}\lesssim_{t,\theta}R^\theta,\quad\|(\pre^t,\pre_\e^t)\|_{\Ld^2(B_R)}\lesssim_{t,\theta}R^\theta.
\end{align}
We denote by $\xi_R^z(x):=e^{-|x-z|/R}$ the exponential cut-off at the scale $R\ge1$ centered at $z\in R\Z^2$. Using the equations for $\vi_\e,\vi$, we find
\begin{multline*}
\partial_t\int \hat a\xi_R^z|\!\vi_\e-\vi\!|^2=2\int \hat a\xi_R^z(\vi_\e-\vi)\cdot\nabla(\pre_\e-\pre)\\
+2\int \hat a\xi_R^z(-\hat F+2\!\vi^\bot)\cdot(\vi_\e-\vi)(\curl\!\vi_\e-\curl\!\vi)
+2\lambda_\e^{-1}\int \hat a\xi_R^z\nabla\hat h\cdot(\vi_\e-\vi)\curl\!\vi_\e.
\end{multline*}
Integrating by parts in the first right-hand side term, using the relation $\Div(\hat a\xi_R^z(\vi_\e-\vi))=\hat a\nabla\xi_R^z\cdot(\vi_\e-\vi)$, and using the weighted Delort-type identity~\eqref{eq:weightedDelort} in the form
\begin{align*}
(\vi_\e-\vi)\curl(\vi_\e-\vi)&=-\frac12|\!\vi_\e-\vi\!|^2\nabla^\bot\hat h-\hat a^{-1}(\Div(\hat a S_{\vi_\e-\vi}))^\bot,
\end{align*}
we deduce
\begin{multline*}
\partial_t\int \hat a\xi_R^z|\!\vi_\e-\vi\!|^2=-2\int \hat a\nabla\xi_R^z\cdot(\vi_\e-\vi)(\pre_\e-\pre)-\int \hat a\xi_R^z\nabla^\bot\hat h\cdot(-\hat F+2\!\vi^\bot)|\!\vi_\e-\vi\!|^2\\
+2\int \hat aS_{\vi_\e-\vi}:\nabla(\xi_R^z(\hat F^\bot+2\!\vi))+2\lambda_\e^{-1}\int \hat a\xi_R^z\nabla\hat h\cdot(\vi_\e-\vi)\curl\!\vi_\e,
\end{multline*}
and hence, using~\eqref{eq:boundvvepsGP}--\eqref{eq:boundvvepsGP+}, the assumption $\|(\nabla\hat h,\hat F)\|_{W^{1,\infty}}\lesssim1$, the property $|\nabla\xi_R^z|\lesssim R^{-1}\xi_R^z$ of the exponential cut-off, and the pointwise estimate $|S_{w}|\lesssim|w|^2$,
\begin{align*}
\partial_t\int \hat a\xi_R^z|\!\vi_\e-\vi\!|^2\lesssim_{t,\theta} R^{-2(1-\theta)}+\lambda_\e^{-2}+\int \hat a\xi_R^z|\!\vi_\e-\vi\!|^2.
\end{align*}
Choosing $\theta=\frac12$, the Grönwall inequality yields $\sup_z\int a_\e\xi_R^z|\!\vi_\e-\vi\!|^2\lesssim_{t}R^{-1}+\lambda_\e^{-2}$, and the conclusion follows, letting $R\uparrow\infty$.
\end{proof}

\section{Computations on the modulated energy}\label{chap:modenergy}

In this section, we adapt to the weighted case with pinning and applied current the computations of~\cite{Serfaty-15}:
we compute the time derivative of the modulated energy excess~\eqref{der4} and express it with only quadratic terms in the error instead of terms which initially appear as linear and would thus make a Grönwall argument impossible. These computations are based on algebraic manipulations using all the equations and various appropriate physical quantities that are introduced below.

\subsection{Modulated energy}
We recall the definitions of modulated energy and energy excess~\eqref{der2}--\eqref{der4}. 
In order to prove that the rescaled supercurrent density $N_\e^{-1}j_\e:=N_\e^{-1}\langle\nabla u_\e,iu_\e\rangle$ is close to $\vi_\e$, we follow the strategy of~\cite{Serfaty-15}, considering the following {\it modulated energy}, which is modeled on the weighted Ginzburg-Landau energy, plays the role of an adapted (squared) distance between $j_\e$ and $N_\e\!\vi_\e$, and is localized by means of the cut-off function $\chi_R$ at some scale $R\gg1$ (to be later optimized as a function of $\e$),
\[\Ec_{\e,R}:=\int_{\R^2}\frac {a\chi_R}2\Big(|\nabla u_\e-iu_\e N_\e\!\vi_\e\!|^2+\frac{a}{2\e^2}(1-|u_\e|^2)^2\Big).\]
As usual, this modulated energy needs to be renormalized by subtracting the expected self-interaction energy of the vortices (compare with Lemma~\ref{lem:ballconstr} below), which then yields the following {\it modulated energy excess},
\begin{eqnarray*}
\D_{\e,R}&:=&\Ec_{\e,R}-\frac{\Log }2\int_{\R^2} {a\chi_R}\mu_\e\\
&=&\int_{\R^2}\frac {a\chi_R}2\Big(|\nabla u_\e-iu_\e N_\e\!\vi_\e\!|^2+\frac{a}{2\e^2}(1-|u_\e|^2)^2-\Log \mu_\e\Big).
\end{eqnarray*}
As explained in the introduction, the cut-off $\chi_R$ is not needed in the conservative case, where we only treat the case when $h,F,f$ decay at infinity. We write $\Ec_\e:=\Ec_{\e,\infty}$ for the corresponding quantity without the cut-off $\chi_R$ in the definition (formally $R=\infty$), and also $\D_\e:=\sup_{R\ge1}\D_{\e,R}$.

On the one hand, rather than the $\Ld^2$-norm restricted to the ball $B_R$ centered at the origin, our methods further allow to consider the uniform $\Ld^2_\loc$-norm at the scale $R$: setting $\chi_R^z:=\chi_R(\cdot-z)$, we define
\[\Ec_{\e,R}^*:=\sup_z\Ec_{\e,R}^z,\qquad \Ec_{\e,R}^z:=\int_{\R^2}\frac {a\chi_R^z}2\Big(|\nabla u_\e-iu_\e N_\e\!\vi_\e\!|^2+\frac{a}{2\e^2}(1-|u_\e|^2)^2\Big),\]
where henceforth the supremum always implicitly runs over all lattice points $z\in R\Z^2$, and similarly
\[\D_{\e,R}^*:=\sup_z\D_{\e,R}^z,\qquad\D_{\e,R}^z:=\Ec_{\e,R}^z-\frac\Log2\int_{\R^2} a\chi_R^z\mu_\e.\]
Note that by definition we have for all $x\in \R^2$ and $L>0$,
\begin{align}\label{eq:boundbyEmod}
\|\nabla u_\e-iu_\e N_\e\!\vi_\e\!\|_{\Ld^2(B_L(x))}^2+\e^{-2}\|1-|u_\e|^2\|_{\Ld^2(B_L(x))}^2\lesssim \Big(1+\frac LR\Big)^2\Ec_{\e,R}^*.
\end{align}

On the other hand, in order to simplify computations, we need as in~\cite{Serfaty-15}
to add some suitable lower-order terms, and rather consider, for some other scale $\varrho\gg1$ (to be also later optimized as a function of $\e$),
\[\hat\Ec_{\e,\varrho,R}:=\int_{\R^2}\frac {a}2\Big(\chi_R|\nabla u_\e-iu_\e N_\e\!\vi_\e\!|^2+\frac{a\chi_R}{2\e^2}(1-|u_\e|^2)^2+(1-|u_\e|^2)(N_\e^2\psi_{\e,\varrho,R}+f\chi_R)\Big),\]
and similarly for the modulated energy excess,
\begin{align}\label{eq:defDeR}
\hat\D_{\e,\varrho,R}:=\hat\Ec_{\e,\varrho,R}-\frac{\Log }2\int_{\R^2} {a\chi_R}\mu_\e,
\end{align}
where the function $\psi_{\e,\varrho,R}:\R^2\to\R$ is precisely chosen as follows,
\begin{align}\label{eq:choicepsi}
\psi_{\e,\varrho,R}:=-\chi_R|\!\vi_\e\!|^2+\frac{\Log }{N_\e}\chi_R\vi_\e\!\cdot\,(\nabla^\bot h-F^\bot)+\frac{\lambda_\e\beta\Log }{N_\e}\chi_R\pre_{\e,\varrho}-\frac{\Log }{N_\e}\nabla\chi_R\cdot \vi_\e^\bot,
\end{align}
in terms of the truncated pressure $\pre_{\e,\varrho}:=\chi_\varrho\pre_\e$.
This choice is motivated by the fact that it yields some useful cancellations in the proof of Lemma~\ref{lem:decompcruc} below.
Again, replacing $\chi_R$ and $\pre_{\e,\varrho}$ by $\chi_R^z$ and $\pre_{\e,\varrho}^z=\chi_\varrho^z\pre_\e$, we further define $\hat\Ec_{\e,\varrho,R}^z$ and $\hat\D_{\e,\varrho,R}^z$ for $z\in \R^2$, and we then set $\hat\Ec_{\e,\varrho,R}^*:=\sup_z\hat\Ec_{\e,\varrho,R}^z$ and $\hat\D_{\e,\varrho,R}^*:=\sup_z\hat\D_{\e,\varrho,R}^z$ (where suprema implicitly run over all lattice points $z\in R\Z^2$).
The additional truncation scale $\rho\gg1$ is introduced here to cure the lack of integrability of the pressure $\pre_\e$ in the conservative case: indeed, the pressure $\pre_\e$ does in general not belong to $\Ld^2(\R^2)$ (cf.~Assumption~\ref{as:apveps}(b) and Proposition~\ref{prop:GPvprop}, which are indeed optimal in that respect), while it does always in the case without pinning and applied current (cf.~\cite{Serfaty-15}).
In the dissipative case this truncation is not needed, so that we may set $\pre_{\e,\infty}:=\pre_\e$ with $\varrho:=\infty$, and we then drop for simplicity the subscript $\varrho$ from the notation, writing $\psi_{\e,R}:=\psi_{\e,\infty,R}$, $\hat\Ec_{\e,R}:=\hat\Ec_{\e,\infty,R}$, etc.

In the dissipative case, as a consequence of~\eqref{eq:scalingshFf} and of Assumption~\ref{as:apveps}(a), $\psi_{\e,R}$ is bounded in $\Ld^p(\R^2)$ uniformly with respect to $R$ for all $2<p\le \infty$ (but not in $\Ld^2(\R^2)$), and using the bound~\eqref{eq:scalingshFf} we have in the considered regimes, for all $t\in[0,T)$ and $\theta>0$,
\begin{gather}\label{eq:boundpsi}
\|\psi_{\e,R}^t\|_{\Ld^2}\lesssim_{t,\theta} 1+\frac{\Log}{N_\e}(\lambda_\e R^\theta+1\wedge\lambda_\e^{1/2}+R^{-1+\theta}),\\
\|\partial_t\psi_{\e,R}\|_{\Ld^2_t\Ld^2}\lesssim_{t} 1+\frac{\Log }{N_\e}.\nonumber
\end{gather}
In the conservative case, in the considered regime~\GP, the bound~\eqref{eq:scalingshFfdec} and Assumption~\ref{as:apveps}(b) rather yield,
for all $t\in[0,T)$ and $\theta>0$,
\begin{gather}\label{eq:boundpsi-2}
\|\psi_{\e,\varrho,R}^t\|_{\Ld^2}+\|\partial_t\psi_{\e,\varrho,R}^t\|_{\Ld^2}\lesssim_{t,\theta} 1+\frac{\Log}{N_\e}\lambda_\e\varrho^\theta\lesssim\varrho^\theta.
\end{gather}
Based on these estimates, the following lemma states that the additional terms in $\hat\Ec_{\e,\varrho,R}$ are indeed of lower order, so that $\hat\Ec_{\e,\varrho,R}$ is equivalent to the modulated energy $\Ec_{\e,R}$.

\begin{lem}[Neglecting lower-order terms]\label{lem:apestu}
Let $h:\R^2\to\R$, $a:=e^h$, $F:\R^2\to\R^2$ satisfy~\eqref{eq:scalingshFf} or~\eqref{eq:scalingshFfdec}, let $u_\e:[0,T)\times\R^2\to\C$, and let $\vi_\e:[0,T)\times\R^2\to\R^2$ be as in Assumption~\ref{as:apveps} for some $T>0$.
Further assume that $0<\e\ll1$ and $\varrho,R\gg1$ satisfy for some $\theta>0$, in the dissipative case,
\begin{align}\label{eq:lemapestuAS}
\e\big(N_\e^2+N_\e\Log(\lambda_\e R^\theta+1\wedge\lambda_\e^{1/2}+R^{-1+\theta})+R\lambda_\e^2\Log^2\big)\ll N_\e\Big(1\wedge\frac{N_\e}\Log\Big)^{1/2},
\end{align}
or in the conservative case,
\begin{align}\label{eq:lemapestuAS-dec}
\e N_\e^2(\varrho^\theta+R)\ll N_\e\Big(1\wedge\frac{N_\e}\Log\Big)^{1/2}.
\end{align}
Then for all $z\in\R^2$ we have
\[|\hat\Ec_{\e,\varrho,R}^{z,t}-\Ec_{\e,R}^{z,t}|=|\hat\D_{\e,\varrho,R}^{z,t}-\D_{\e,R}^{z,t}|\lesssim_t o(N_\e)\Big(1\wedge \frac{N_\e}\Log\Big)^{1/2}(\Ec_{\e,R}^{z,t})^{1/2}.\qedhere\]
\end{lem}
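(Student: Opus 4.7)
The starting point is the identity
\[
\hat\Ec_{\e,\varrho,R}^{z}-\Ec_{\e,R}^{z}\;=\;\hat\D_{\e,\varrho,R}^{z}-\D_{\e,R}^{z}\;=\;\tfrac12\int_{\R^2}a(1-|u_\e|^2)\bigl(N_\e^2\psi_{\e,\varrho,R}^z+f\chi_R^z\bigr),
\]
which follows from comparing the two definitions (the $\Log\mu_\e$ terms cancel). I would then bound the right-hand side by a weighted Cauchy--Schwarz. The energy $\Ec_{\e,R}^{z}$ directly controls the weighted $L^2$-norm of $1-|u_\e|^2$,
\[
\int_{\R^2} a^2\chi_R^z(1-|u_\e|^2)^2\;\le\;4\e^2\Ec_{\e,R}^z,
\]
and by~\eqref{eq:nondegencond} one has $a^2\gtrsim 1$, so it remains to produce the complementary $(\chi_R^z)^{-1}$ weight on the other factor.

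The key observation — and the reason for the specific shape of $\psi_{\e,\varrho,R}$ in~\eqref{eq:choicepsi} and of the cut-off $\chi$ — is that \emph{each} term of $N_\e^2\psi_{\e,\varrho,R}^z+f\chi_R^z$ carries a pointwise factor of $(\chi_R^z)^{1/2}$: the first three terms of $\psi_{\e,\varrho,R}^z$ and the $f\chi_R^z$ term are directly proportional to $\chi_R^z$, while the last term satisfies $|\nabla\chi_R^z|\lesssim(\chi_R^z)^{1/2}(1-\chi_R^z)^{1/2}\le(\chi_R^z)^{1/2}$ by the very choice of $\chi$. Writing $N_\e^2\psi_{\e,\varrho,R}^z+f\chi_R^z=(\chi_R^z)^{1/2}\Psi_\e^z$, Cauchy--Schwarz gives
\[
|\hat\Ec_{\e,\varrho,R}^{z}-\Ec_{\e,R}^{z}|\;\lesssim\;\e\,(\Ec_{\e,R}^z)^{1/2}\,\|\Psi_\e^z\|_{L^2}.
\]

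I would then estimate $\|\Psi_\e^z\|_{L^2}$ term by term, invoking the a priori bounds of Assumption~\ref{as:apveps} for $\vi_\e$ and $\pre_{\e,\varrho}$ together with the scaling bounds~\eqref{eq:scalingshFf} (dissipative) or~\eqref{eq:scalingshFfdec} (conservative) on $h,F,f$. These are precisely the bounds recorded in~\eqref{eq:boundpsi}--\eqref{eq:boundpsi-2}, plus the elementary estimate $\|f\chi_R^z\|_{L^2}\lesssim R\|f\|_{L^\infty}$. In the dissipative case this yields
\[
\|\Psi_\e^z\|_{L^2}\lesssim_t N_\e^2+N_\e\Log\bigl(\lambda_\e R^\theta+1\wedge\lambda_\e^{1/2}+R^{-1+\theta}\bigr)+R\lambda_\e^2\Log^2,
\]
while in the conservative case, using the $\varrho$-truncation of the pressure,
\[
\|\Psi_\e^z\|_{L^2}\lesssim_{t,\theta} N_\e^2(\varrho^\theta+R).
\]
Inserting either bound into the preceding estimate and comparing with~\eqref{eq:lemapestuAS} or~\eqref{eq:lemapestuAS-dec} delivers the claimed $o(N_\e)(1\wedge N_\e/\Log)^{1/2}(\Ec_{\e,R}^z)^{1/2}$ control.

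\paragraph{Main difficulty.}
The only non-routine step is producing the weight $(\chi_R^z)^{1/2}$ for the gradient cut-off term $\frac{\Log}{N_\e}\nabla\chi_R^z\cdot\vi_\e^\bot$: a naive Cauchy--Schwarz against the full $L^2$-norm of $1-|u_\e|^2$ on $\supp\chi_R^z$ would only give control by $\Ec_{\e,R}^*$, not by $\Ec_{\e,R}^z$ as required. This is exactly what the tailor-made condition $|\nabla\chi|\lesssim\chi^{1/2}(1-\chi)^{1/2}$ on the cut-off (imposed right after~\eqref{der}) is designed to circumvent; everything else is bookkeeping using~\eqref{eq:boundpsi}--\eqref{eq:boundpsi-2}.
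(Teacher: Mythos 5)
Your proof matches the paper's argument essentially line for line: you isolate the lower-order difference as $\tfrac12\int a(1-|u_\e|^2)(N_\e^2\psi_{\e,\varrho,R}^z+f\chi_R^z)$, split off a $(\chi_R^z)^{1/2}$ factor from each term using the special property $|\nabla\chi|\lesssim\chi^{1/2}(1-\chi)^{1/2}$ (which gives $|\nabla\chi_R^z|\lesssim R^{-1}(\chi_R^z)^{1/2}\mathds1_{B_{2R}(z)}$ --- note the $R^{-1}$ you omitted in the displayed estimate but clearly used in the final bound), apply weighted Cauchy--Schwarz to get $\e(\Ec_{\e,R}^z)^{1/2}$ against $\|(\chi_R^z)^{-1/2}(N_\e^2\psi_{\e,\varrho,R}^z+f\chi_R^z)\|_{\Ld^2}$, and close with the term-by-term bounds from Assumption~\ref{as:apveps} and~\eqref{eq:scalingshFf}--\eqref{eq:scalingshFfdec}. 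This is exactly the paper's proof, so nothing further to add.
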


\begin{proof}
We focus on the dissipative case, as the other is similar. The Cauchy-Schwarz inequality yields
\begin{align*}
|\hat\Ec_{\e,R}^z-\Ec_{\e,R}^z|&\lesssim\int_{\R^2} |1-|u_\e|^2|(N_\e^2|\psi_{\e,R}^z|+|f|\chi_R^z)\\
&\le\Big(\int_{\R^2} \chi_R^z(1-|u_\e|^2)^2\Big)^{1/2}\big(N_\e^2\|(\chi_R^z)^{-1/2}\psi_{\e,R}^z\|_{\Ld^2}+\|f\|_{\Ld^2(B_{2R}(z))}\big)\\
&\lesssim\e(\Ec_{\e,R}^z)^{1/2}\big(N_\e^2\|(\chi_R^z)^{-1/2}\psi_{\e,R}^z\|_{\Ld^2}+R\|f\|_{\Ld^\infty}\big).
\end{align*}
Arguing just as in~\eqref{eq:boundpsi}, using~\eqref{eq:scalingshFf}, Assumption~\ref{as:apveps}(a), and the fact that $|\chi_R^{-1/2}\nabla\chi_R|\lesssim R^{-1}\mathds1_{B_{2R}}$,  the choice~\eqref{eq:choicepsi} of $\psi_{\e,R}$ yields, for all $\theta>0$,
\begin{align*}
\|(\chi_R^z)^{-1/2}\psi_{\e,R}^z\|_{\Ld^2}\lesssim_{t,\theta} 1+\frac{\Log}{N_\e}(\lambda_\e R^\theta+1\wedge\lambda_\e^{1/2}+R^{-1+\theta}).
\end{align*}
Combined with~\eqref{eq:scalingshFf} and with assumption~\eqref{eq:lemapestuAS}, this proves the result.
\end{proof}

\subsection{Physical quantities and identities}
In addition to the \emph{supercurrent density} $j_\e:=\langle \nabla u_\e,iu_\e\rangle$ and to the \emph{vorticity} $\mu_\e:=\curl j_\e$, we define the {\it vortex velocity}
\[V_\e:=2\langle \nabla u_\e,i\partial_tu_\e\rangle.\]
The following identities are easily checked from these definitions (cf.~\cite{SS-prod}),
\begin{align}\label{eq:identity-1}
\partial_tj_\e=V_\e+\nabla\langle \partial_tu_\e,iu_\e\rangle,\qquad\partial_t\mu_\e=\curl V_\e,
\end{align}
and also, using equation~\eqref{eq:GL-1} for $u_\e$,
\begin{multline}\label{eq:identity-2}
\Div j_\e=\langle \triangle u_\e,iu_\e\rangle=\lambda_\e\alpha\langle\partial_tu_\e,iu_\e\rangle-j_\e\cdot\nabla h\\
-\frac{\lambda_\e\beta\Log}2\partial_t(1-|u_\e|^2)+\frac\Log2 F^\bot\cdot\nabla(1-|u_\e|^2).
\end{multline}
In the same vein as when introducing the modulated energy and energy excess, we define the following {\it modulated vorticity} and {\it modulated velocity},
\begin{align}
\tilde \mu_\e&:=\curl(N_\e\!\vi_\e+\langle \nabla u_\e-iu_\e N_\e\!\vi_\e, iu_\e\rangle)=\mu_\e+\curl(N_\e \!\vi_\e(1-|u_\e|^2)),\label{eq:defmutilde}\\
\tilde V_{\e,\varrho}&:=2\langle \nabla u_\e-iu_\e N_\e \!\vi_\e,i(\partial_t u_\e-iu_\e N_\e\!\pre_{\e,\varrho})\rangle=V_\e-N_\e \!\vi_\e\!\partial _t|u_\e|^2+N_\e\!\pre_{\e,\varrho}\!\nabla|u_\e|^2.\label{eq:defVtilde}
\end{align}
We also consider the {\it weighted Ginzburg-Landau energy density}
\[e_\e:=\frac a2\Big(|\nabla u_\e|^2+\frac{a}{2\e^2}(1-|u_\e|^2)^2+(1-|u_\e|^2)f\Big).\]
Another key quantity is the  $2\times 2$ {\it stress-energy tensor} $S_\e$,
\begin{align}\label{eq:defS}
(S_\e)_{kl}:=a\langle \partial_ku_\e,\partial_lu_\e\rangle -\frac a2\Id \Big(|\nabla u_\e|^2+ \frac a{2\e^2}(1-|u_\e|^2)^2+(1-|u_\e|^2)f\Big),
\end{align}
and its modulated version $\tilde S_\e$,
\begin{multline}\label{eq:defSmod}
(\tilde S_\e)_{kl}:=a\Big(\langle \partial_k u_\e-iu_\e N_\e \!\vi_{\e,k},\partial_l u_\e-iu_\e N_\e \!\vi_{\e,l}\rangle+ N_\e^2(1-|u_\e|^2)\vi_{\e,k}\!\vi_{\e,l}\Big)\\
-\frac a2\Id\Big(|\nabla u_\e-iu_\e N_\e \!\vi_\e\!|^2+\frac a{2\e^2}(1-|u_\e|^2)^2+(1-|u_\e|^2)(N_\e^2|\!\vi_\e\!|^2+f)\Big).
\end{multline}
The following pointwise estimates are abundantly used in the sequel.
\begin{lem}\label{lem:pointest}
We have
\begin{align*}
|j_\e-N_\e v_\e|&\le |\nabla u_\e-iu_\e N_\e \!\vi_\e\!|+|\nabla u_\e-iu_\e N_\e \!\vi_\e\!||1-|u_\e|^2|+N_\e|\!\vi_\e\!||1-|u_\e|^2|,\\
|\mu_\e|&\le2|\nabla u_\e|^2\le4|\nabla u_\e-iu_\e N_\e \!\vi_\e\!|^2+4N_\e^2|\!\vi_\e\!|^2+4N_\e^2|1-|u_\e|^2||\!\vi_\e\!|^2,\\
|V_\e|&\le2\big(|\nabla u_\e-iu_\e N_\e \!\vi_\e\!||\partial_tu_\e|+N_\e |\!\vi_\e\!||\partial_tu_\e|+N_\e|1-|u_\e|^2||\!\vi_\e\!||\partial_tu_\e|\big),\\
|\tilde V_{\e,\varrho}|&\le 2|\partial_tu_\e||\nabla u_\e-iu_\e N_\e \!\vi_\e\!|+2N_\e|\!\pre_{\e,\varrho}\!||\nabla u_\e-iu_\e N_\e \!\vi_\e\!|\\
&\hspace{2cm}+2N_\e|\!\pre_{\e,\varrho}\!||1-|u_\e|^2||\nabla u_\e-iu_\e N_\e \!\vi_\e\!|,\\
|\partial_t|u_\e||&\le|\partial_t u_\e-iu_\e N_\e\!\pre_{\e}\!|,\\
|\nabla|u_\e||&\le|\nabla u_\e-iu_\e N_\e \!\vi_\e\!|.\qedhere
\end{align*}
\end{lem}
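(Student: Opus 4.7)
The plan is to derive all six estimates as direct pointwise identities, using only the definitions of the quantities involved together with two very simple ingredients: the elementary bound $|u_\e|\le 1+|1-|u_\e|^2|$ (which holds since either $|u_\e|\le 1$ or $|u_\e|\le |u_\e|^2=1+(|u_\e|^2-1)$), and the orthogonality relation $\langle iu_\e,u_\e\rangle=\Re(i|u_\e|^2)=0$. The unifying idea is to decompose $\nabla u_\e=(\nabla u_\e-iu_\e N_\e\!\vi_\e)+iu_\e N_\e\!\vi_\e$, and analogously $\partial_tu_\e=(\partial_tu_\e-iu_\e N_\e\!\pre_{\e,\varrho})+iu_\e N_\e\!\pre_{\e,\varrho}$, so that the ``modulated'' pieces will be kept as such while the remaining phase contributions can be estimated by the Cauchy--Schwarz inequality combined with the bound on $|u_\e|$.

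For the bound on $j_\e-N_\e\!\vi_\e$, I will insert the above decomposition of $\nabla u_\e$ into $j_\e=\langle\nabla u_\e,iu_\e\rangle$ and use $\langle iu_\e N_\e\!\vi_\e,iu_\e\rangle=N_\e\!\vi_\e|u_\e|^2$ to obtain the exact identity $j_\e-N_\e\!\vi_\e=\langle\nabla u_\e-iu_\e N_\e\!\vi_\e,iu_\e\rangle-N_\e\!\vi_\e(1-|u_\e|^2)$; Cauchy--Schwarz and $|u_\e|\le 1+|1-|u_\e|^2|$ then yield the stated estimate. For the vorticity, a short direct computation with the two partials gives the classical algebraic identity $\mu_\e=2\langle\partial_2 u_\e,i\partial_1 u_\e\rangle$, hence $|\mu_\e|\le 2|\nabla u_\e|^2$; substituting $|\nabla u_\e|^2\le 2|\nabla u_\e-iu_\e N_\e\!\vi_\e|^2+2|u_\e|^2N_\e^2|\!\vi_\e|^2$ and bounding $|u_\e|^2\le 1+|1-|u_\e|^2|$ produces the claimed inequality. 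The estimate for $V_\e$ is obtained in the same way after splitting $V_\e=2\langle\nabla u_\e-iu_\e N_\e\!\vi_\e,i\partial_tu_\e\rangle+2\langle iu_\e N_\e\!\vi_\e,i\partial_tu_\e\rangle$ and bounding each term via Cauchy--Schwarz, while the estimate for $\tilde V_{\e,\varrho}$ is even more direct: Cauchy--Schwarz applied to its definition gives $|\tilde V_{\e,\varrho}|\le 2|\nabla u_\e-iu_\e N_\e\!\vi_\e||\partial_tu_\e-iu_\e N_\e\!\pre_{\e,\varrho}|$, after which the triangle inequality on the second factor together with $|u_\e|\le 1+|1-|u_\e|^2|$ concludes.

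The last two estimates for $\partial_t|u_\e|$ and $\nabla|u_\e|$ rely on the orthogonality $\langle iu_\e,u_\e\rangle=0$. Writing $|u_\e|^2=\langle u_\e,u_\e\rangle$, differentiating yields $|u_\e|\partial_t|u_\e|=\langle\partial_tu_\e,u_\e\rangle$ and $|u_\e|\nabla|u_\e|=\langle\nabla u_\e,u_\e\rangle$. Since $\langle iu_\e N_\e\!\pre_{\e},u_\e\rangle=N_\e\!\pre_{\e}\langle iu_\e,u_\e\rangle=0$ and similarly $\langle iu_\e N_\e\!\vi_\e,u_\e\rangle=0$, one can freely replace $\partial_tu_\e$ by $\partial_tu_\e-iu_\e N_\e\!\pre_{\e}$ and $\nabla u_\e$ by $\nabla u_\e-iu_\e N_\e\!\vi_\e$ in these identities; the Cauchy--Schwarz inequality and division by $|u_\e|$ (extended by continuity at zeros, or interpreted almost everywhere) then give $|\partial_t|u_\e||\le|\partial_tu_\e-iu_\e N_\e\!\pre_{\e}|$ and $|\nabla|u_\e||\le|\nabla u_\e-iu_\e N_\e\!\vi_\e|$.

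None of these steps presents a genuine obstacle; the entire lemma is a bookkeeping exercise built on the two ingredients above. The only minor point to handle carefully is the legitimacy of the division by $|u_\e|$ in the last two items, which is dealt with by working on the open set $\{|u_\e|>0\}$ and then extending by continuity (or by observing that the inequalities hold trivially at the zero set of $u_\e$, where $\nabla|u_\e|$ and $\partial_t|u_\e|$ vanish in the almost-everywhere sense).
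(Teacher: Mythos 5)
Your proof is correct and follows essentially the same route as the paper: decompose $\nabla u_\e$ and $\partial_t u_\e$ into their modulated pieces plus the phase contributions, apply Cauchy--Schwarz, and absorb $|u_\e|$ via $|u_\e|\le 1+|1-|u_\e|^2|$. For the last two items the paper phrases the estimate through the identity $|\partial_t u_\e - iu_\e N_\e\!\pre_\e|^2 = |\partial_t|u_\e||^2 + |u_\e|^2|\partial_t(u_\e/|u_\e|) - i(u_\e/|u_\e|)N_\e\!\pre_\e|^2$, which sidesteps the division by $|u_\e|$; your argument via $\langle iu_\e, u_\e\rangle=0$ reaches the same conclusion and your remark about the zero set is the right way to handle the division.
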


\begin{proof}
The first estimate is obtained as follows,
\begin{align*}
|j_\e-N_\e \!\vi_\e\!|&\le|\langle \nabla u_\e-iu_\e N_\e \!\vi_\e,iu_\e\rangle|+N_\e|1-|u_\e|^2||\!\vi_\e\!|\\
&\le|\nabla u_\e-iu_\e N_\e \!\vi_\e\!|+|\nabla u_\e-iu_\e N_\e \!\vi_\e\!||1-|u_\e|^2|+N_\e|\!\vi_\e\!||1-|u_\e|^2|,
\end{align*}
and the estimates on $V_\e$ and $\tilde V_{\e,\varrho}$ similarly follow from the definitions.
The estimate on $\mu_\e$ is a direct consequence of the representation $\mu_\e=\curl\langle\nabla u_\e,iu_\e\rangle=2\langle\nabla_2u_\e,i\nabla_1u_\e\rangle$. Finally noting that
\[|\partial_tu_\e-iu_\e N_\e\!\pre_\e|^2=|\partial_t|u_\e||^2+|u_\e|^2\Big|\partial_t\frac{u_\e}{|u_\e|}-i\frac{u_\e}{|u_\e|}N_\e\!\pre_\e\Big|^2,\]
the result on $\partial_t|u_\e|$ follows, and the result on $\nabla|u_\e|$ is obtained similarly.
\end{proof}

\subsection{Divergence of the modulated stress-energy tensor}

In the following lemma we explicitly compute the divergence of the modulated stress-energy tensor: as already mentioned, it plays a crucial role in the sequel in order to replace some linear terms in the error by quadratic ones (cf.\@ Step~3 of the proof of Lemma~\ref{lem:decompcruc} below).

\begin{lem}\label{lem:divS}
Let $u_\e:[0,T)\times\R^2\to\C$ be a solution of~\eqref{eq:GL-1} as in Proposition~\ref{prop:globGL}, and let $\vi_\e:[0,T)\times\R^2\to\R^2$ be as in Assumption~\ref{as:apveps}. Defining by $(\Div \tilde S_\e)_k:=\sum_{l}\partial_l(\tilde S_\e)_{kl}$ the divergence of the $2$-tensor $\tilde S_\e$, we have
\begin{align*}
&\Div \tilde S_\e=a\lambda_\e\alpha\left\langle \partial_tu_\e-iu_\e N_\e\!\pre_{\e,\varrho},\nabla u_\e-iu_\e N_\e \!\vi_\e\right\rangle-a\mu_\e(N_\e \!\vi_\e^\bot\!-\tfrac12\Log F)\\
&\hspace{3cm}+aN_\e(N_\e \!\vi_\e-j_\e)^\bot\curl\!\vi_\e+\frac{a\lambda_\e\beta}2\Log \tilde V_{\e,\varrho}\\
&\hspace{2cm}+aN_\e(N_\e \!\vi_\e-j_\e)(\Div\!\vi_\e+\nabla h\cdot \!\vi_\e-\lambda_\e\alpha\pre_{\e,\varrho})-\frac a2(1-|u_\e|^2)\nabla f\\
&\hspace{1cm}-\frac a2\nabla h\Big(|\nabla u_\e-iu_\e N_\e \!\vi_\e\!|^2+\frac{a}{\e^2}(1-|u_\e|^2)^2+ (1-|u_\e|^2)(N_\e^2|\!\vi_\e\!|^2+f)\Big)\\
&\hspace{-0.1cm}+a\lambda_\e\alpha N_\e^2 \! \vi_\e\!\pre_{\e,\varrho}(1-|u_\e|^2)-\frac{a\lambda_\e\beta}2N_\e\Log \pre_{\e,\varrho}\!\nabla|u_\e|^2+\frac a2N_\e\Log (F^\bot\cdot\nabla|u_\e|^2)\vi_\e.
\qedhere
\end{align*}
\end{lem}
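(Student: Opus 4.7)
The plan is a direct algebraic computation organized in three conceptual steps, using the respective PDEs for $u_\e$ and for $\vi_\e$ to convert $\triangle u_\e$, $\nabla\pre_{\e,\varrho}$ and the corresponding time derivatives into the advertised combinations of $\partial_tu_\e$, $\tilde V_{\e,\varrho}$, $\mu_\e$, $j_\e$ and $\curl\!\vi_\e$.

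\medskip
\emph{Step 1: divergence of the unmodulated weighted tensor $S_\e$.}
First I would apply the product rule to $(S_\e)_{kl}$ in~\eqref{eq:defS}. Writing $\sum_l\partial_l[a\langle\partial_ku_\e,\partial_lu_\e\rangle]$ and using $\sum_l\partial_l\langle\partial_ku_\e,\partial_lu_\e\rangle=\frac12\partial_k|\nabla u_\e|^2+\langle\partial_ku_\e,\triangle u_\e\rangle$, the $\frac12\partial_k|\nabla u_\e|^2$ contribution cancels against the corresponding trace part, while the weight derivative contributes $\nabla a\cdot\langle\partial_k u_\e,\nabla u_\e\rangle$. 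The two $(1-|u_\e|^2)$-terms in the trace yield $\frac{a^2}{\e^2}(1-|u_\e|^2)\langle u_\e,\partial_k u_\e\rangle+af\langle u_\e,\partial_k u_\e\rangle$ after using $\partial_k(1-|u_\e|^2)=-2\langle u_\e,\partial_k u_\e\rangle$, plus the $-\frac12\partial_k a$ and $-\frac a2(1-|u_\e|^2)\partial_k f$ remainders. Substituting the PDE~\eqref{eq:GL-1} for $\triangle u_\e+\frac{au_\e}{\e^2}(1-|u_\e|^2)+fu_\e$ and using $a\nabla h=\nabla a$ to cancel the $\nabla h\cdot\nabla u_\e$ contribution against the $\nabla a\cdot\langle\partial_k u_\e,\nabla u_\e\rangle$ term, I would use the identities $\langle\partial_ku_\e,i\partial_lu_\e\rangle=-\langle\partial_lu_\e,i\partial_ku_\e\rangle$ and $\mu_\e=2\langle\partial_2u_\e,i\partial_1u_\e\rangle$ to recognise $\Log\, a\langle\partial_k u_\e,iF^\bot\!\cdot\nabla u_\e\rangle=-\tfrac12\Log a F_k\mu_\e$. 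This yields
\[\Div S_\e=a\lambda_\e\alpha\langle\nabla u_\e,\partial_tu_\e\rangle+\tfrac{a\lambda_\e\beta\Log}{2}V_\e+\tfrac12 a\Log F\mu_\e-\tfrac{a}{2}(1-|u_\e|^2)\nabla f-\tfrac12\nabla h(\cdots),\]
where $(\cdots)$ is the bracket appearing in the statement (with $\vi_\e\equiv0$).

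\medskip
\emph{Step 2: passage from $S_\e$ to $\tilde S_\e$.}
Writing $\tilde S_\e=S_\e+M_\e$, where $M_\e$ gathers all $\vi_\e$-dependent additions in~\eqref{eq:defSmod}, I would expand $\Div M_\e$. The quadratic piece $aN_\e^2\vi_{\e,k}\vi_{\e,l}|u_\e|^2$ (after regrouping with the $|\!\vi_\e\!|^2$ term in the trace) produces, via the product rule applied to $a\vi_\e$ and $|u_\e|^2$, contributions that will be absorbed by the equation for $\vi_\e$ in Step~3, together with the key cross-term $-aN_\e\langle\partial_k u_\e,iu_\e\rangle(\nabla\!\vi_\e)^\mathsf{T}$, which, upon using $j_\e=\langle\nabla u_\e,iu_\e\rangle$, $\mu_\e=\curl j_\e$ and the identity $(\nabla\!\vi_\e)^\mathsf{T}j_\e-(\nabla\!\vi_\e)j_\e=-j_\e^\bot\curl\!\vi_\e$, combines naturally with Step~1 to produce the prescribed terms $-a\mu_\e N_\e\!\vi_\e^\bot$ and $aN_\e(N_\e\!\vi_\e-j_\e)^\bot\curl\!\vi_\e$.

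\medskip
\emph{Step 3: use of the equation for $\vi_\e$ and final regrouping.}
At this stage the remaining $\vi_\e$-terms all feature $\partial_t|u_\e|^2$, $\nabla|u_\e|^2$, $\pre_{\e,\varrho}$, or derivatives of $\vi_\e$. Substituting the chosen evolution $\partial_t\!\vi_\e=\nabla\!\pre_{\e,\varrho}+\Gamma_\e\curl\!\vi_\e$ and using the identity~\eqref{eq:identity-2} together with the definitions~\eqref{eq:defVtilde} and~\eqref{eq:defmutilde} converts
$\langle\nabla u_\e,\partial_tu_\e\rangle$ into $\langle\partial_tu_\e-iu_\e N_\e\!\pre_{\e,\varrho},\nabla u_\e-iu_\e N_\e\!\vi_\e\rangle$ up to terms of the form $aN_\e(N_\e\!\vi_\e-j_\e)(\Div\!\vi_\e+\nabla h\!\cdot\!\vi_\e-\lambda_\e\alpha\pre_{\e,\varrho})$, $a\lambda_\e\alpha N_\e^2\vi_\e\pre_{\e,\varrho}(1-|u_\e|^2)$, $-\frac{a\lambda_\e\beta}2N_\e\Log\pre_{\e,\varrho}\nabla|u_\e|^2$, and $\frac a2 N_\e\Log(F^\bot\!\cdot\nabla|u_\e|^2)\vi_\e$, matching exactly the expression in the statement. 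In parallel, $V_\e$ combines with $-N_\e\!\vi_\e\partial_t|u_\e|^2+N_\e\!\pre_{\e,\varrho}\nabla|u_\e|^2$ to give $\tilde V_{\e,\varrho}$, while $\frac12\Log F\mu_\e$ remains as $-a\mu_\e(-\tfrac12\Log F)$, and the unmodulated $-\frac{a}{2}\nabla h\,|\nabla u_\e|^2$ contribution combines with the $\vi_\e$-part to reconstruct the prescribed bracket $|\nabla u_\e-iu_\e N_\e\!\vi_\e\!|^2+\frac a{\e^2}(1-|u_\e|^2)^2+(1-|u_\e|^2)(N_\e^2|\!\vi_\e\!|^2+f)$.

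\medskip
The computation is purely algebraic, so the only genuine obstacle is the bookkeeping: the substitution from the $u_\e$-equation creates a large number of cross-terms, and the non-trivial cancellations rely crucially on the identity $a\nabla h=\nabla a$ and on the precise choice of $\pre_{\e,\varrho}$ in~\eqref{eq:choicepsi}. The hard part will be to verify that the mixed terms involving $F^\bot\cdot\nabla u_\e$ and the gradients of $\vi_\e$ reorganize into the stated combination $(N_\e\!\vi_\e-j_\e)^\bot\curl\!\vi_\e+(N_\e\!\vi_\e-j_\e)(\Div\!\vi_\e+\nabla h\cdot\!\vi_\e-\lambda_\e\alpha\pre_{\e,\varrho})$; this is best done componentwise, keeping in mind that $\nabla a\otimes\vi_\e-\vi_\e\otimes\nabla a$ produces a term proportional to $\nabla^\bot h$ which is absorbed into $\Gamma_\e$.
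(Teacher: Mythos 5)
Your overall plan—computing $\Div S_\e$ first, then decomposing $\tilde S_\e=S_\e+M_\e$ and recombining—matches the paper's approach. Step~1 is correct modulo a missing factor of $a$ in the $\nabla h$-bracket (you wrote $-\tfrac12\nabla h(\cdots)$; it should be $-\tfrac a2\nabla h(\cdots)$). Step~2's key identity $(F\cdot\nabla)G-\sum_lF_l\nabla G_l=F^\bot\curl G$ is also exactly what the paper uses.

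However, Step~3 contains a genuine conceptual error. You write that you "substitute the chosen evolution $\partial_t\!\vi_\e=\nabla\!\pre_{\e,\varrho}+\Gamma_\e\curl\!\vi_\e$" in the computation, but the divergence of the modulated stress-energy tensor $\tilde S_\e$ is a purely \emph{spatial} pointwise identity at fixed time: it involves no time derivative of $\vi_\e$ at all. The equation for $\vi_\e$ is \emph{not} used anywhere in the proof of this lemma; it only enters later, in the proof of Lemma~\ref{lem:decompcruc}, when computing $\partial_t\hat\D_{\e,\varrho,R}$. The function $\pre_{\e,\varrho}$ appearing in the statement is completely arbitrary here—every occurrence of $\pre_{\e,\varrho}$ on the right-hand side is produced by artificially adding and subtracting $iu_\eN_\e\pre_{\e,\varrho}$ inside the products, and the identity would hold verbatim for any choice of that scalar function. (Note also that the $\vi_\e$-equation as stated in~\eqref{eq:genveps} involves $\pre_\e$, not the truncated $\pre_{\e,\varrho}$, which should have alerted you that inserting that equation here cannot be what makes the $\pre_{\e,\varrho}$-terms appear.) Similarly, your remark that "the non-trivial cancellations rely crucially on \dots the precise choice of $\pre_{\e,\varrho}$ in~\eqref{eq:choicepsi}" is off: \eqref{eq:choicepsi} is a choice of $\psi_{\e,\varrho,R}$, and neither that choice nor any choice of $\pre_{\e,\varrho}$ is needed for the present algebraic identity.

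The tool you should be invoking instead is identity~\eqref{eq:identity-2}, which is derived from the equation for $u_\e$: it converts $\langle\partial_tu_\e,iu_\e\rangle$ into $\Div j_\e+j_\e\cdot\nabla h+\tfrac{\lambda_\e\beta\Log}2\partial_t(1-|u_\e|^2)-\tfrac\Log2 F^\bot\cdot\nabla(1-|u_\e|^2)$ up to a prefactor, after which the Delort-type recombinations you describe produce the factors $\Div\!\vi_\e$, $\nabla h\cdot\vi_\e$, $F^\bot\cdot\nabla|u_\e|^2$, etc. Concretely, one first splits
\begin{align*}
\lambda_\e\left\langle(\alpha+i\beta\Log)\partial_tu_\e,\nabla u_\e\right\rangle
&=\lambda_\e\alpha\left\langle\partial_tu_\e-iu_\eN_\e\pre_{\e,\varrho},\nabla u_\e-iu_\eN_\e\vi_\e\right\rangle\\
&\qquad+N_\e\lambda_\e\alpha\,\vi_\e\langle\partial_tu_\e,iu_\e\rangle+N_\e\lambda_\e\alpha\pre_{\e,\varrho}j_\e\\
&\qquad-N_\e^2\lambda_\e\alpha|u_\e|^2\pre_{\e,\varrho}\vi_\e+\tfrac{\lambda_\e\beta}2\Log V_\e,
\end{align*}
then replaces $\lambda_\e\alpha\langle\partial_tu_\e,iu_\e\rangle$ via~\eqref{eq:identity-2}, and finally converts $V_\e$ into $\tilde V_{\e,\varrho}$ by~\eqref{eq:defVtilde}. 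The rest of the bookkeeping is as you describe.
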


\begin{proof}
On the one hand, a direct computation yields, for the stress-energy tensor,
\begin{multline}\label{pdivs}
\Div S_\e=a\left\langle \nabla u_\e,\triangle u_\e+\frac{au_\e}{\e^2}(1-|u_\e|^2)+\nabla h\cdot \nabla u_\e+fu_\e\right\rangle\\
-\frac a2\nabla h\Big(|\nabla u_\e|^2+\frac{a}{\e^2}(1-|u_\e|^2)^2+ (1-|u_\e|^2)f\Big)-\frac a2(1-|u_\e|^2)\nabla f.
\end{multline}
On the other hand, the modulated stress-energy tensor can be decomposed as
\begin{align*}
\tilde S_\e=S_\e-aN_\e \!\vi_\e\otimes j_\e-aN_\e j_\e\otimes \vi_\e+aN_\e^2\!\vi_\e\!\otimes \vi_\e-\frac {aN_\e}2\Id\big(N_\e|\!\vi_\e\!|^2-2\!\vi_\e\!\cdot\, j_\e\big),
\end{align*}
which, combined with~\eqref{pdivs}, yields
\begin{multline*}
\Div \tilde S_\e=a\left\langle \nabla u_\e,\triangle u_\e+\frac{au_\e}{\e^2}(1-|u_\e|^2)+\nabla h\cdot \nabla u_\e+fu_\e\right\rangle\\
-\frac a2\nabla h\Big(|\nabla u_\e|^2+\frac{a}{\e^2}(1-|u_\e|^2)^2+ (1-|u_\e|^2)f\Big)-\frac a2(1-|u_\e|^2)\nabla f\\
-aN_\e\Big(j_\e\nabla h\cdot \!\vi_\e+\vi_\e\!\nabla h\cdot j_\e-N_\e \!\vi_\e\!\nabla h\cdot \!\vi_\e+\frac12N_\e|\!\vi_\e\!|^2\nabla h-\vi_\e\!\cdot\, j_\e\nabla h\Big)\\
-aN_\e j_\e\Div\!\vi_\e-aN_\e(\vi_\e\!\cdot\,\nabla)j_\e-aN_\e \!\vi_\e\!\Div j_\e-aN_\e(j_\e\cdot \nabla)\vi_\e+aN_\e^2\!\vi_\e\!\Div\!\vi_\e\\
+aN_\e^2(\vi_\e\!\cdot\,\nabla)\vi_\e\!-aN_\e^2\sum_l\vi_{\e,l}\nabla\!\vi_{\e,l}+aN_\e\sum_l \vi_{\e,l}\nabla j_{\e,l}+aN_\e\sum_l j_{\e,l}\nabla\!\vi_{\e,l},
\end{multline*}
where we denote by $\vi_{\e,l}$ and $j_{\e,l}$ the $l$-th component of the vector fields $\vi_\e$ and $j_\e$, respectively.
Noting that $(F\cdot\nabla)G-\sum_lF_l\nabla G_l=F^\bot\curl G$, and using equation~\eqref{eq:GL-1} for $u_\e$, this becomes
\begin{multline}\label{eq:lemdivSpreres}
\Div \tilde S_\e=a\lambda_\e\left\langle(\alpha+i\beta\Log )\partial_tu_\e, \nabla u_\e\right\rangle-a\Log \langle \nabla u_\e,iF^\bot\cdot\nabla u_\e\rangle\\
-\frac a2\nabla h\Big(|\nabla u_\e|^2+N_\e^2|\!\vi_\e\!|^2-2N_\e \!\vi_\e\!\cdot\, j_\e+\frac{a}{\e^2}(1-|u_\e|^2)^2+ (1-|u_\e|^2)f\Big)\\
-\frac a2(1-|u_\e|^2)\nabla f-aN_\e\big(j_\e\nabla h\cdot \!\vi_\e+\vi_\e\!\nabla h\cdot j_\e-N_\e \!\vi_\e\!\nabla h\cdot \!\vi_\e\!\big)\\
+aN_\e\big(-\vi_\e^\bot\!\mu_\e+(N_\e \!\vi_\e-j_\e)^\bot\curl\!\vi_\e-\vi_\e\!\Div j_\e+(N_\e \!\vi_\e-j_\e)\Div\!\vi_\e\big).
\end{multline}
Using identity~\eqref{eq:identity-2}, the first right-hand side term can be rewritten as
\begin{eqnarray*}
\lefteqn{\lambda_\e\left\langle(\alpha+i\beta\Log )\partial_tu_\e,\nabla u_\e\right\rangle}\\
&=&\lambda_\e\alpha\left\langle \partial_tu_\e-iu_\e N_\e\!\pre_{\e,\varrho},\nabla u_\e-iu_\e N_\e \!\vi_\e\right\rangle+N_\e\lambda_\e\alpha \!\vi_\e \langle \partial_tu_\e,i u_\e\rangle\\
&&\quad+N_\e\lambda_\e\alpha\pre_{\e,\varrho} j_\e -N_\e^2\lambda_\e\alpha|u_\e|^2\pre_{\e,\varrho} \!\vi_\e+\frac{\lambda_\e\beta}2\Log V_\e\\
&=&\lambda_\e\alpha\left\langle \partial_tu_\e-iu_\e N_\e\!\pre_{\e,\varrho},\nabla u_\e-iu_\e N_\e \!\vi_\e\right\rangle+N_\e \!\vi_\e(\Div j_\e+j_\e\cdot\nabla h)\\
&&\quad+\frac12N_\e\Log (F^\bot\cdot\nabla|u_\e|^2)\vi_\e+\frac{\lambda_\e\beta}2N_\e\Log\!\vi_\e\! \partial_t(1-|u_\e|^2)+N_\e\lambda_\e\alpha\pre_{\e,\varrho} j_\e\\
&&\quad-N_\e^2\lambda_\e\alpha|u_\e|^2\!\pre_{\e,\varrho} \!\vi_\e+\frac{\lambda_\e\beta}2\Log V_\e.
\end{eqnarray*}
Inserting this into~\eqref{eq:lemdivSpreres}, recombining $|\nabla u_\e|^2+N_\e^2|\!\vi_\e\!|^2-2N_\e \!\vi_\e\!\cdot\, j_\e=|\nabla u_\e-iu_\e N_\e \!\vi_\e\!|^2+N_\e^2(1-|u_\e|^2)|\!\vi_\e\!|^2$, noting that $\langle \nabla u_\e,iF^\bot\cdot\nabla u_\e\rangle=-\frac12F\mu_\e$, and using~\eqref{eq:defVtilde} to transform the vortex velocity $V_\e$ into its modulated version $\tilde V_{\e,\varrho}$,
we obtain
\begin{multline*}
\Div \tilde S_\e=a\lambda_\e\alpha\left\langle \partial_tu_\e-iu_\e N_\e\!\pre_{\e,\varrho},\nabla u_\e-iu_\e N_\e \!\vi_\e\right\rangle+aN_\e \!\vi_\e(\Div j_\e+j_\e\cdot\nabla h)\\
+\frac a2N_\e\Log (F^\bot\cdot\nabla|u_\e|^2)\vi_\e+\lambda_\e\alpha aN_\e\pre_{\e,\varrho} j_\e-aN_\e^2\lambda_\e\alpha|u_\e|^2\pre_{\e,\varrho} \!\vi_\e\\
+\frac{a\lambda_\e\beta}2\Log\tilde V_{\e,\varrho}-\frac{a\lambda_\e\beta}2N_\e\Log\pre_{\e,\varrho}\!\nabla|u_\e|^2-a\mu_\e(N_\e\!\vi_\e^\bot-\tfrac12\Log F)\\
-\frac a2\nabla h\Big(|\nabla u_\e-iu_\e N_\e \!\vi_\e\!|^2+\frac{a}{\e^2}(1-|u_\e|^2)^2+ (1-|u_\e|^2)(N_\e^2|\!\vi_\e\!|^2+f)\Big)\\
-\frac a2(1-|u_\e|^2)\nabla f-aN_\e\big(j_\e\nabla h\cdot \!\vi_\e+\vi_\e\!\nabla h\cdot j_\e-N_\e \!\vi_\e\!\nabla h\cdot \!\vi_\e\!\big)\\
+aN_\e\big((N_\e \!\vi_\e-j_\e)^\bot\curl\!\vi_\e-\vi_\e\!\Div j_\e+(N_\e \!\vi_\e-j_\e)\Div\!\vi_\e\big),
\end{multline*}
and the result follows after straightforward simplifications.
\end{proof}

\subsection{Time derivative of the modulated energy excess}
We establish the following decomposition of the time derivative of the modulated energy excess $\hat\D_{\e,\varrho,R}$. As will be seen in Sections~\ref{chap:MFL-GL}--\ref{chap:MFL-GP}, mean-field limit results are then reduced to the estimation of the different terms in this decomposition. To simplify notation, it is stated here with truncations centered at $z=0$, but the corresponding result of course also holds uniformly for all translations $z\in \R^2$.
\begin{lem}\label{lem:decompcruc}
Let $\alpha\ge0$, $\beta\in\R$, and let $h:\R^2\to\R$, $a:=e^h$, $F:\R^2\to\R^2$, $f:\R^2\to\R$ satisfy~\eqref{eq:scalingshFf} or~\eqref{eq:scalingshFfdec}. Let $u_\e:[0,T)\times\R^2\to\C$ and $\vi_\e:[0,T)\times\R^2\to\R^2$ be solutions of~\eqref{eq:GL-1} and of~\eqref{eq:genveps} as in Proposition~\ref{prop:globGL} and in Assumption~\ref{as:apveps}, respectively.
Let $0<\e\ll1$, $\varrho, R\gg1$, and let $\bar\Gamma_\e:[0,T)\times\R^2\to\R^2$ be a given vector field with $\|\bar\Gamma_\e^t\|_{W^{1,\infty}}\lesssim_t1$. Then, we have
\begin{align*}
\partial_t\hat\D_{\e,\varrho,R}=~&I_{\e,\varrho,R}^S+I_{\e,\varrho,R}^V+I_{\e,\varrho,R}^E+I_{\e,\varrho,R}^D+I_{\e,\varrho,R}^H+I_{\e,\varrho,R}^d+I_{\e,\varrho,R}^g+I_{\e,\varrho,R}^n+I_{\e,\varrho,R}',
\end{align*}
in terms of
\begingroup\allowdisplaybreaks
\begin{align*}
I_{\e,\varrho,R}^S:=~&-\int_{\R^2}\chi_R\nabla\bar\Gamma_\e^\bot: \tilde S_\e,\\
I_{\e,\varrho,R}^V:=~&\int_{\R^2} \frac {a\chi_R\Log}2\,\tilde V_{\e,\varrho}\cdot \Big(-\lambda_\e\beta \Gamma_\e^\bot+ \nabla^\bot h-F^\bot-\frac{2N_\e}\Log \!\vi_\e\Big),\\
I_{\e,\varrho,R}^E:=~&-\int_{\R^2} \frac {a\chi_R\Log}2\,\Gamma_\e\cdot \Big( \nabla^\bot h-F^\bot-\frac{2N_\e}\Log\vi_\e\Big)\mu_\e,\\
I_{\e,\varrho,R}^D:=~&-\int_{\R^2} \lambda_\e \alpha a\chi_R|\partial_tu_\e-iu_\e N_\e\!\pre_{\e,\varrho}\!|^2\\
&\qquad-\int_{\R^2} \lambda_\e\alpha a\chi_R \Gamma_\e^\bot\cdot\left\langle \partial_tu_\e-iu_\e N_\e\!\pre_{\e,\varrho},\nabla u_\e-iu_\e N_\e \!\vi_\e\right\rangle,\\
I_{\e,\varrho,R}^H:=~&\int_{\R^2} \frac {a\chi_R}2\Gamma_\e^\bot\cdot\nabla h\Big(|\nabla u_\e-iu_\e N_\e \!\vi_\e\!|^2+\frac{a}{\e^2}(1-|u_\e|^2)^2-\Log \mu_\e\Big),
\end{align*}
and
\begin{align*}
I_{\e,\varrho,R}^d:=~&\int_{\R^2} a\chi_RN_\e\Big(\bar\Gamma_\e^\bot\cdot(j_\e-N_\e \!\vi_\e)+\langle \partial_t u_\e-iu_\e N_\e\!\pre_{\e,\varrho},iu_\e\rangle\Big)\\
&\hspace{6cm}\times\big(\Div\!\vi_\e+\vi_\e\cdot\,\nabla h-\lambda_\e\alpha\pre_{\e,\varrho}\big),\\
I_{\e,\varrho,R}^g:=~&\int_{\R^2} a\chi_RN_\e(N_\e \!\vi_\e-j_\e)\cdot(\Gamma_\e-\bar\Gamma_\e)\curl\!\vi_\e+\int_{\R^2} \frac {a\chi_R}2 \lambda_\e\beta\Log\tilde V_{\e,\varrho}\cdot(\Gamma_\e-\bar\Gamma_\e)^\bot\\
&\qquad+\int_{\R^2} \lambda_\e\alpha a\chi_R (\Gamma_\e-\bar\Gamma_\e)^\bot\cdot\left\langle \partial_tu_\e-iu_\e N_\e\!\pre_{\e,\varrho},\nabla u_\e-iu_\e N_\e \!\vi_\e\right\rangle\\
&\qquad+\int_{\R^2} \frac {a\chi_R}2(\bar\Gamma_\e-\Gamma_\e)^\bot\cdot\nabla h\Big(|\nabla u_\e-iu_\e N_\e \!\vi_\e\!|^2+\frac{a}{\e^2}(1-|u_\e|^2)^2\Big)\\
&\qquad+\int_{\R^2} a\chi_R(\bar\Gamma_\e-\Gamma_\e)\cdot (N_\e \!\vi_\e+\tfrac12\Log F^\bot)\mu_\e\\
&\qquad+\int_{\R^2} a\chi_R{\lambda_\e\beta}N_\e\Log(\bar\Gamma_\e-\Gamma_\e)^\bot\cdot \!\vi_\e\partial_t|u_\e|^2,\\
I_{\e,\varrho,R}^n:=~&-\int_{\R^2}\nabla\chi_R\cdot\tilde S_\e\cdot\bar\Gamma_\e^\bot\\
&\qquad- \int_{\R^2} a\nabla\chi_R\cdot\Big(\langle\partial_tu_\e-iu_\e N_\e\!\pre_{\e,\varrho},\nabla u_\e-iu_\e N_\e \!\vi_\e\rangle+\frac{\Log }2\tilde V_{\e,\varrho}^\bot\Big),
\end{align*}
\endgroup
and where the error $I_{\e,\varrho,R}'$ is estimated as follows: in the dissipative case, in the considered regimes,
\begin{align}\label{eq:decompDeR-rest}
\int_0^t|I_{\e,\varrho,R}'|\lesssim_{t}\e R(N_\e^2+\Log^2)(\Ec_{\e,R}^*)^{1/2},
\end{align}
or in the conservative case, in the considered regime~\GP, for all $\theta>0$,
\begin{equation}\label{eq:decompDeR-rest-dec}
|I_{\e,\varrho,R}'|\lesssim_{t,\theta}\e N_\e\Ec_{\e,R}^*+N_\e(\Ec_{\e,R}^*)^{1/2}\|\nabla(\pre_\e-\pre_{\e,\varrho})\|_{\Ld^2}+\e N_\e^2\varrho^\theta(\Ec_{\e,R}^*)^{1/2}.
\qedhere
\end{equation}
\end{lem}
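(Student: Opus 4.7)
The plan is to compute $\partial_t \hat\D_{\e,\varrho,R}$ by term-by-term time differentiation of the integrand of~\eqref{eq:defDeR}, substituting the evolution equations~\eqref{eq:GL-1} for $u_\e$ and~\eqref{eq:genveps} for $\vi_\e$, and then reorganizing via the divergence identity for $\tilde S_\e$ established in Lemma~\ref{lem:divS}. The target structure is dictated by the subsequent Grönwall strategy: the output must contain no term that is merely \emph{linear} in the errors $\nabla u_\e - iu_\e N_\e\!\vi_\e$ or $\partial_t u_\e - iu_\e N_\e\!\pre_{\e,\varrho}$, since such a term would destroy the closure of the argument in Sections~\ref{chap:MFL-GL}--\ref{chap:MFL-GP}. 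The very choice~\eqref{eq:choicepsi} of $\psi_{\e,\varrho,R}$ is engineered for precisely this purpose: its time derivative paired with $(1-|u_\e|^2)$ and the duality between its gradient and $\nabla u_\e$ are what cancel the otherwise-unavoidable linear residues.

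Concretely, I would expand $|\nabla u_\e - iu_\e N_\e\!\vi_\e|^2 = |\nabla u_\e|^2 - 2N_\e\!\vi_\e\cdot j_\e + N_\e^2|u_\e|^2|\!\vi_\e|^2$, differentiate in time, and use~\eqref{eq:identity-1} together with~\eqref{eq:genveps} to convert $\partial_t j_\e$, $\partial_t\mu_\e$, and $\partial_t\!\vi_\e$ into $V_\e$, $\curl V_\e$, and $\nabla\!\pre_\e + \Gamma_\e\curl\!\vi_\e$ respectively. An integration by parts transfers derivatives onto the cut-off $a\chi_R$, producing the boundary-like block $I^n_{\e,\varrho,R}$; Lemma~\ref{lem:divS} is then invoked to recast the remaining bulk expressions. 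The linear-in-error pieces $a\lambda_\e\alpha\langle\partial_t u_\e-iu_\e N_\e\!\pre_{\e,\varrho}, \nabla u_\e-iu_\e N_\e\!\vi_\e\rangle$ and $a\mu_\e(N_\e\!\vi_\e^\bot - \tfrac12\Log F)$ appearing in $\Div\tilde S_\e$ are matched against the GL-derived dissipation $\lambda_\e\alpha|\partial_t u_\e - iu_\e N_\e\!\pre_{\e,\varrho}|^2$ and against the vortex-velocity contribution to $\partial_t\mu_\e$, thereby reconstituting the canonical blocks $I^D$, $I^V$, $I^E$, and $I^H$ (the last one from the $-\tfrac12\nabla h$ factor in $\Div\tilde S_\e$ acting on the modulated energy density). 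The split $\Gamma_\e = \bar\Gamma_\e + (\Gamma_\e - \bar\Gamma_\e)$ then parametrizes the decomposition by an arbitrary auxiliary field $\bar\Gamma_\e$: all terms proportional to the mismatch are collected into $I^g_{\e,\varrho,R}$, and the terms proportional to $\Div\!\vi_\e + \vi_\e\cdot\nabla h - \lambda_\e\alpha\!\pre_{\e,\varrho}$ (which measures the deviation of $\vi_\e$ from the divergence constraint built into~\eqref{eq:genveps} on the dissipative side) are gathered into $I^d_{\e,\varrho,R}$.

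The remainder $I'_{\e,\varrho,R}$ collects three residual contributions: (i) in the conservative case, the algebra natively produces $\pre_\e$ at a few spots where we want $\pre_{\e,\varrho}$, yielding a term controlled by $\|\nabla(\pre_\e-\pre_{\e,\varrho})\|_{\Ld^2}$; (ii) cross terms carrying a factor $(1-|u_\e|^2)$, each bounded via Lemma~\ref{lem:pointest} and Cauchy--Schwarz by $\e(\Ec_{\e,R}^*)^{1/2}$ times an $\Ld^2$- or $\Ld^\infty$-norm of various combinations of $h, F, f, \vi_\e, \partial_t\!\vi_\e, \partial_t\!\pre_{\e,\varrho}, \partial_t\psi_{\e,\varrho,R}$; and (iii) cut-off contributions from the derivatives of $\chi_R$ and $\chi_\varrho$ acting on the $\psi_{\e,\varrho,R}$ correction. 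The scalings~\eqref{eq:scalingshFf}--\eqref{eq:scalingshFfdec}, Assumption~\ref{as:apveps}, and the bounds~\eqref{eq:boundpsi}--\eqref{eq:boundpsi-2} on $\psi_{\e,\varrho,R}$ then deliver the advertised error estimates~\eqref{eq:decompDeR-rest}--\eqref{eq:decompDeR-rest-dec}. The main obstacle is not conceptual but purely organizational: several dozen terms are generated by the naive differentiation, and each linear-in-error piece must be paired with its exact counterpart arising from $\Div\tilde S_\e$ or from the $\psi_{\e,\varrho,R}$-correction; any uncompensated mismatch would degrade the error bound by a factor that would preclude passage to the limit. Careful bookkeeping, guided by the analogous unpinned derivation in~\cite{Serfaty-15}, is what makes the proof tractable.
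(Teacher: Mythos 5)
Your plan reproduces the paper's own route: Steps~1--2 of the proof differentiate first $\hat\Ec_{\e,\varrho,R}$ and then $\hat\D_{\e,\varrho,R}$, substitute \eqref{eq:GL-1}, \eqref{eq:identity-1}--\eqref{eq:identity-2}, and \eqref{eq:genveps}, and integrate by parts; Step~3 then integrates the divergence identity of Lemma~\ref{lem:divS} against $\chi_R\bar\Gamma_\e^\bot$ to trade the single dangerous linear term $\int a\chi_R N_\e(N_\e\!\vi_\e-j_\e)\cdot\bar\Gamma_\e\curl\!\vi_\e$ for stress-energy and mismatch blocks, collecting the small remainders $T^0,\dots,T^3$ into $I'_{\e,\varrho,R}$, exactly as you describe. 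The one slight imprecision is the assertion that $\psi_{\e,\varrho,R}$ acts through a ``gradient duality with $\nabla u_\e$'' --- in fact \eqref{eq:choicepsi} is engineered so that the dangerous $\partial_t(1-|u_\e|^2)$-factors in $T^3_{\e,\varrho,R}$ recombine into $(1-|u_\e|^2)\,\partial_t\big(\psi_{\e,\varrho,R}-\chi_R|\!\vi_\e\!|^2\big)$, with no $\nabla\psi_{\e,\varrho,R}$ appearing anywhere in the calculation --- but this is a detail that does not alter the structure of the argument.
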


\begin{proof}
We split the proof into three steps, first computing the time derivative $\partial_t\hat\Ec_{\e,\varrho,R}$, then deducing an expression for $\partial_t\hat\D_{\e,\varrho,R}$, and finally introducing the modulated stress-energy tensor to replace the linear terms by quadratic ones, which are better suited for the Grönwall argument.

\medskip
\noindent\step1 Time derivative of the modulated energy.

In this step, we prove the following identity,
\begin{multline}\label{eq:timederenergy}
\partial_t\hat\Ec_{\e,\varrho,R}=- \int_{\R^2} a\nabla \chi_R\cdot\langle \partial_t u_\e,\nabla u_\e-iu_\e N_\e \!\vi_\e\rangle+\int_{\R^2} \frac{aN_\e^2}2\partial_t\big((1-|u_\e|^2)(\psi_{\e,\varrho,R}-\chi_R|\!\vi_\e\!|^2)\big)\\
+\int_{\R^2} N_\e a\chi_R\langle \partial_tu_\e,iu_\e\rangle(\Div\!\vi_\e+\vi_\e\!\cdot\,\nabla h)\\
+\int_{\R^2} a\chi_R\Big(N_\e(N_\e \!\vi_\e-j_\e)\cdot\partial_t\!\vi_\e-\lambda_\e\alpha|\partial_tu_\e|^2-N_\e \!\vi_\e\!\cdot\, V_\e-\frac{\Log }2F^\bot\cdot V_\e\Big).
\end{multline}
For that purpose, let us first compute the time derivative of the modulated energy density
\begin{multline}
\frac12\partial_t\Big(\chi_R|\nabla u_\e-iu_\e N_\e \!\vi_\e\!|^2+\frac{a\chi_R}{2\e^2}(1-|u_\e|^2)^2+(1-|u_\e|^2)(N_\e^2\psi_{\e,\varrho,R}+f\chi_R)\Big)\label{eq:derenergydens-1}\\
=\chi_R\langle \nabla u_\e-iu_\e N_\e \!\vi_\e,\nabla \partial_t u_\e-iu_\e N_\e\partial_t \!\vi_\e-i\partial_tu_\e N_\e \!\vi_\e\rangle-\chi_R\langle \partial_tu_\e,\frac{au_\e}{\e^2}(1-|u_\e|^2)\rangle\\
+\frac12\partial_t\big((1-|u_\e|^2)(N_\e^2\psi_{\e,\varrho,R}+f\chi_R)\big).
\end{multline}
Note that the first right-hand side term can be rewritten as
\begingroup\allowdisplaybreaks
\begin{eqnarray}
\lefteqn{\langle \nabla u_\e-iu_\e N_\e \!\vi_\e,\nabla \partial_t u_\e-iu_\e N_\e \partial_t \!\vi_\e-i\partial_tu_\e N_\e \!\vi_\e\rangle}\nonumber\\
&=&\langle \nabla u_\e,\nabla \partial_tu_\e\rangle -N_\e\partial_t\!\vi_\e\!\cdot\, j_\e-N_\e \!\vi_\e\!\cdot\, \langle \nabla u_\e,i\partial_tu_\e\rangle-N_\e \!\vi_\e\!\cdot\, \langle iu_\e,\nabla\partial_t u_\e\rangle\nonumber\\
&&\hspace{3cm}+\frac{N_\e^2}2|u_\e|^2\partial_t |\!\vi_\e\!|^2+\frac{N_\e^2}2|\!\vi_\e\!|^2\partial_t|u_\e|^2\nonumber\\
&=&\Div \langle \nabla u_\e, \partial_tu_\e\rangle -\langle \partial_tu_\e,\triangle u_\e\rangle-N_\e\partial_t\!\vi_\e\!\cdot\, j_\e-N_\e \!\vi_\e\!\cdot\, \langle \nabla u_\e,i\partial_tu_\e\rangle\nonumber\\
&&\hspace{3cm}-N_\e \!\vi_\e\!\cdot\, (\partial_tj_\e-\langle i\partial_tu_\e,\nabla u_\e\rangle)+\frac{N_\e^2}2\partial_t(|u_\e|^2|\!\vi_\e\!|^2)\nonumber\\
&=&\Div \langle \nabla u_\e, \partial_tu_\e\rangle -\langle \partial_tu_\e,\triangle u_\e\rangle-N_\e \!\vi_\e\!\cdot\,\partial_tj_\e-N_\e j_\e\cdot\partial_t\!\vi_\e+\frac{N_\e^2}2\partial_t (|u_\e|^2|\!\vi_\e\!|^2),\hspace{1cm}\label{eq:derenergydens-2}
\end{eqnarray}\endgroup
where
\begin{multline}
\Div\langle \nabla u_\e, \partial_tu_\e\rangle =\Div\langle \partial_t u_\e,\nabla u_\e-iu_\e N_\e \!\vi_\e\rangle+\Div(N_\e \!\vi_\e\langle \partial_tu_\e,iu_\e\rangle)\\
=\Div\langle \partial_t u_\e,\nabla u_\e-iu_\e N_\e \!\vi_\e\rangle+N_\e\langle \partial_tu_\e,iu_\e\rangle\Div\!\vi_\e+N_\e \!\vi_\e\!\cdot\, (\partial_t j_\e-V_\e).\label{eq:derenergydens-3}
\end{multline}
Combining~\eqref{eq:derenergydens-1}, \eqref{eq:derenergydens-2} and~\eqref{eq:derenergydens-3}, the time derivative of the energy density takes on the following guise, after straightforward simplifications,
\begin{multline*}
\frac12\partial_t\Big(\chi_R|\nabla u_\e-iu_\e N_\e \!\vi_\e\!|^2+\frac{a\chi_R}{2\e^2}(1-|u_\e|^2)^2+(1-|u_\e|^2)(N_\e^2\psi_{\e,\varrho,R}+f\chi_R)\Big)\\
=\chi_R\Div\langle \partial_t u_\e,\nabla u_\e-iu_\e N_\e \!\vi_\e\rangle+N_\e\chi_R\langle \partial_tu_\e,iu_\e\rangle\Div\!\vi_\e-N_\e\chi_R \!\vi_\e\!\cdot\, V_\e\\
+N_\e\chi_R(N_\e \!\vi_\e-j_\e)\cdot\partial_t\!\vi_\e\!
-\chi_R\left\langle \partial_tu_\e,\triangle u_\e+\frac{au_\e}{\e^2}(1-|u_\e|^2)\right\rangle\\
+\frac12\partial_t\big((1-|u_\e|^2)(N_\e^2\psi_{\e,\varrho,R}-N_\e^2\chi_R|\!\vi_\e\!|^2+f\chi_R)\big).
\end{multline*}
Integrating this identity in space yields
\begin{multline*}
\partial_t\int_{\R^2} \frac{a}2\Big(\chi_R|\nabla u_\e-iu_\e N_\e \!\vi_\e\!|^2+\frac{a\chi_R}{2\e^2}(1-|u_\e|^2)^2+(1-|u_\e|^2)(N_\e^2\psi_{\e,\varrho,R}+f\chi_R)\Big)\\
=\int_{\R^2} a\chi_R\Big(N_\e\langle \partial_tu_\e,iu_\e\rangle\Div\!\vi_\e-N_\e \!\vi_\e\!\cdot\, V_\e+N_\e(N_\e \!\vi_\e-j_\e)\cdot\partial_t\!\vi_\e\Big)\\
-\int_{\R^2} a\chi_R\Big\langle \partial_tu_\e,\triangle u_\e+\frac{au_\e}{\e^2}(1-|u_\e|^2)\Big\rangle- \int_{\R^2}\nabla( a\chi_R)\cdot\langle \partial_t u_\e,\nabla u_\e-iu_\e N_\e \!\vi_\e\rangle\\
+\int_{\R^2} \frac a2\partial_t\big((1-|u_\e|^2)(N_\e^2\psi_{\e,\varrho,R}-N_\e^2\chi_R|\!\vi_\e\!|^2+f\chi_R)\big).
\end{multline*}
Decomposing $\nabla(a\chi_R)=a\chi_R\nabla h+a\nabla\chi_R$, and using~\eqref{eq:GL-1} in the form
\begin{multline*}
\left\langle \partial_tu_\e,\triangle u_\e+\frac{au_\e}{\e^2}(1-|u_\e|^2)+\nabla h\cdot\nabla u_\e\right\rangle\\
=\left\langle \partial_tu_\e,\lambda_\e(\alpha+i\beta\Log )\partial_t u_\e-i\Log F^\bot\cdot\nabla u_\e-fu_\e\right\rangle\\
=\lambda_\e\alpha|\partial_tu_\e|^2+\frac{\Log }2F^\bot\cdot V_\e-\frac12f\partial_t|u_\e|^2,
\end{multline*}
the result~\eqref{eq:timederenergy} follows after straightforward simplifications.

\medskip
\noindent\step2 Time derivative of the modulated energy excess.
\nopagebreak

In this step, we prove the following identity,
\begingroup\allowdisplaybreaks
\begin{multline}\label{eq:timederenergyexcess}
\partial_t\hat\D_{\e,\varrho,R}=\int_{\R^2} \frac {a\chi_R}2\tilde V_{\e,\varrho}\cdot (\Log  (\nabla^\bot h-F^\bot)-2N_\e \!\vi_\e)\\
+\int_{\R^2} a\chi_RN_\e(N_\e \!\vi_\e-j_\e)\cdot\Gamma_\e\curl\!\vi_\e
-\int_{\R^2} \lambda_\e\alpha a\chi_R|\partial_tu_\e-iu_\e N_\e\!\pre_{\e,\varrho}\!|^2\\
+\int_{\R^2} a\chi_R N_\e\langle\partial_tu_\e-iu_\e N_\e\!\pre_{\e,\varrho},iu_\e\rangle(\Div\!\vi_\e+\vi_\e\!\cdot\,\nabla h-\lambda_\e\alpha\pre_{\e,\varrho})\\
-\int_{\R^2} a\nabla\chi_R\cdot\Big(\langle\partial_tu_\e-iu_\e N_\e\!\pre_{\e,\varrho},\nabla u_\e-iu_\e N_\e \!\vi_\e\rangle+\frac{\Log }2\tilde V_{\e,\varrho}^\bot\Big)\\
+\int_{\R^2} a\chi_RN_\e (N_\e \!\vi_\e-j_\e)\cdot\nabla(\pre_\e-\pre_{\e,\varrho})+\int_{\R^2}\frac{aN_\e^2}2\partial_t\big((1-|u_\e|^2)(\psi_{\e,\varrho,R}-\chi_R|\!\vi_\e\!|^2)\big)\\
-\int_{\R^2} a N_\e^2\!\pre_{\e,\varrho}(1-|u_\e|^2)\big( \!\vi_\e\!\cdot\,\nabla\chi_R+\chi_R (\Div\!\vi_\e+\vi_\e\!\cdot\,\nabla h)\big)\\
+\int_{\R^2}\frac{aN_\e\Log}2\partial_t(1-|u_\e|^2)\bigg(\vi_\e^\bot\!\cdot\nabla\chi_R-\lambda_\e\beta{\chi_R}\!\pre_{\e,\varrho}-{\chi_R} \!\vi_\e\!\cdot\, \Big(\nabla^\bot h-F^\bot-2\frac{N_\e}\Log \!\vi_\e\Big)\bigg)\\
+\int_{\R^2} \frac{aN_\e\Log }2 \pre_{\e,\varrho}\!\nabla(1-|u_\e|^2)\cdot\bigg(\nabla^\bot\chi_R+\chi_R\Big(\nabla^\bot h-2F^\bot-2\frac{N_\e}\Log \!\vi_\e\Big)\bigg).
\end{multline}\endgroup
Noting that identity~\eqref{eq:identity-1} implies
\begin{multline*}
\Log \int_{\R^2} a\chi_R\partial_t\mu_\e=\Log \int_{\R^2} a\chi_R\curl V_\e\\
=-\Log \int_{\R^2} a \chi_RV_\e\cdot\nabla^\bot h-\Log \int_{\R^2} a V_\e\cdot\nabla^\bot \chi_R,
\end{multline*}
it is immediate to deduce from~\eqref{eq:timederenergy} the following identity for the time derivative of the modulated energy excess,
\begin{multline}\label{eq:timederenergyexcess-0}
\partial_t\hat\D_{\e,\varrho,R}=\int_{\R^2} \frac {a\chi_R}2V_\e\cdot (\Log  (\nabla^\bot h-F^\bot)-2N_\e \!\vi_\e)\\
+\int_{\R^2} aN_\e\chi_R \langle \partial_t u_\e,iu_\e\rangle (\Div\!\vi_\e+\vi_\e\!\cdot\,\nabla h)
+\int_{\R^2} a\chi_RN_\e(N_\e \!\vi_\e-j_\e)\cdot\partial_t\!\vi_\e\\
-\int_{\R^2} \lambda_\e \alpha a\chi_R|\partial_tu_\e|^2 +\int_{\R^2}\frac{aN_\e^2}2\partial_t\big((1-|u_\e|^2)(\psi_{\e,\varrho,R}-\chi_R|\!\vi_\e\!|^2)\big)\\
- \int_{\R^2} a\nabla \chi_R\cdot \bigg(\langle\partial_t u_\e,\nabla u_\e-iu_\e N_\e \!\vi_\e\rangle+\frac{\Log }2V_\e^\bot\bigg).
\end{multline}
Now using equation~\eqref{eq:genveps} for the time evolution of $\vi_\e$ and an integration by parts, we find
\begin{eqnarray*}
\lefteqn{\int_{\R^2} a\chi_RN_\e(N_\e \!\vi_\e-j_\e)\cdot\partial_t\!\vi_\e}\\
&=&\int_{\R^2} a\chi_RN_\e(N_\e \!\vi_\e-j_\e)\cdot\Gamma_\e\curl\!\vi_\e+\int_{\R^2} a\chi_RN_\e (N_\e \!\vi_\e-j_\e)\cdot\nabla\!\pre_\e\\
&=&\int_{\R^2} a\chi_RN_\e(N_\e \!\vi_\e-j_\e)\cdot\Gamma_\e\curl\!\vi_\e+\int_{\R^2} a\chi_RN_\e (N_\e \!\vi_\e-j_\e)\cdot\nabla(\pre_\e-\pre_{\e,\varrho})\\
&&\quad-\int_{\R^2} a\chi_RN_\e\!\pre_{\e,\varrho}(N_\e\Div\!\vi_\e-\Div j_\e)-\int_{\R^2} a\chi_RN_\e\!\pre_{\e,\varrho}\nabla h\cdot(N_\e \!\vi_\e- j_\e)\\
&&\hspace{4cm}-\int_{\R^2} aN_\e\!\pre_{\e,\varrho}\nabla \chi_R\cdot(N_\e \!\vi_\e- j_\e).
\end{eqnarray*}
Combining this with identity~\eqref{eq:identity-2} yields
\begingroup\allowdisplaybreaks
\begin{eqnarray*}
\lefteqn{\int_{\R^2} a\chi_RN_\e(N_\e \!\vi_\e-j_\e)\cdot\partial_t\!\vi_\e}\\
&=&\int_{\R^2} a\chi_RN_\e(N_\e \!\vi_\e-j_\e)\cdot\Gamma_\e\curl\!\vi_\e+\int_{\R^2} a\chi_RN_\e (N_\e \!\vi_\e-j_\e)\cdot\nabla(\pre_\e-\pre_{\e,\varrho})\\
&&-\int_{\R^2} a\chi_RN_\e\!\pre_{\e,\varrho}\nabla h\cdot(N_\e \!\vi_\e- j_\e)-\int_{\R^2} aN_\e\!\pre_{\e,\varrho}\nabla \chi_R\cdot(N_\e \!\vi_\e- j_\e)\\
&&-\int_{\R^2} a\chi_RN_\e\!\pre_{\e,\varrho}\Big(N_\e\Div\!\vi_\e+j_\e\cdot \nabla h-\lambda_\e\alpha\langle\partial_tu_\e,iu_\e\rangle\\
&&\hspace{4cm}+\frac{\Log }2F^\bot\cdot\nabla |u_\e|^2-\frac{\lambda_\e\beta\Log }2\partial_t|u_\e|^2\Big)\\
&=&\int_{\R^2} a\chi_RN_\e(N_\e \!\vi_\e-j_\e)\cdot\Gamma_\e\curl\!\vi_\e+\int_{\R^2} a\chi_RN_\e (N_\e \!\vi_\e-j_\e)\cdot\nabla(\pre_\e-\pre_{\e,\varrho})\\
&&-\int_{\R^2} a\chi_RN_\e^2\pre_{\e,\varrho}(\Div\!\vi_\e+\vi_\e\!\cdot\, \nabla h)
-\int_{\R^2} aN_\e\!\pre_{\e,\varrho}\nabla \chi_R\cdot(N_\e \!\vi_\e- j_\e)\\
&&+\int_{\R^2} a\chi_RN_\e\!\pre_{\e,\varrho}\Big(\lambda_\e\alpha\langle\partial_tu_\e,iu_\e\rangle-\frac{\Log }2F^\bot\cdot\nabla |u_\e|^2+\frac{\lambda_\e\beta\Log }2\partial_t|u_\e|^2\Big).
\end{eqnarray*}\endgroup
Inserting this into~\eqref{eq:timederenergyexcess-0}, we then find
\begingroup\allowdisplaybreaks
\begin{multline}\label{eq:timederenergyexcess-pre}
\partial_t\hat\D_{\e,\varrho,R}=\int_{\R^2} \frac {a\chi_R}2V_\e\cdot \big(\Log  (\nabla^\bot h-F^\bot)-2N_\e \!\vi_\e\big)\\
+\int_{\R^2} a\chi_R N_\e\langle \partial_t u_\e,iu_\e\rangle(\Div\!\vi_\e+\vi_\e\!\cdot\,\nabla h+\lambda_\e\alpha\pre_{\e,\varrho})\\
-\int_{\R^2} a\chi_RN_\e^2\pre_{\e,\varrho}(\Div\!\vi_\e+\vi_\e\!\cdot\, \nabla h)
+\int_{\R^2} a\chi_RN_\e(N_\e \!\vi_\e-j_\e)\cdot\Gamma_\e\curl\!\vi_\e\\
+\int_{\R^2} a\chi_RN_\e (N_\e \!\vi_\e-j_\e)\cdot\nabla(\pre_\e-\pre_{\e,\varrho})
+\int_{\R^2}\frac{aN_\e^2}2\partial_t\big((1-|u_\e|^2)(\psi_{\e,\varrho,R}-\chi_R|\!\vi_\e\!|^2)\big)\\
+\int_{\R^2}\frac{a\chi_R}2N_\e\Log\!\pre_{\e,\varrho}\!\big({\lambda_\e\beta}\partial_t|u_\e|^2-F^\bot\cdot\nabla |u_\e|^2\big)
-\int_{\R^2} \lambda_\e\alpha a\chi_R|\partial_tu_\e|^2\\
- \int_{\R^2} a\nabla \chi_R\cdot \Big(\langle\partial_t u_\e,\nabla u_\e-iu_\e N_\e \!\vi_\e\rangle+\frac{\Log }2V_\e^\bot+N_\e\!\pre_{\e,\varrho}(N_\e \!\vi_\e- j_\e)\Big).
\end{multline}\endgroup
Using identity~\eqref{eq:defVtilde} to turn $V_\e$ into~$\tilde V_{\e,\varrho}$, the first right-hand side term is rewritten as
\begin{multline*}
\int_{\R^2} \frac {a\chi_R}2V_\e\cdot (\Log  (\nabla^\bot h-F^\bot)-2N_\e \!\vi_\e)\\
=\int_{\R^2} \frac {a\chi_R}2\big(\tilde V_{\e,\varrho}-N_\e \!\vi_\e\!\partial_t(1-|u_\e|^2)-N_\e\!\pre_{\e,\varrho}\!\nabla|u_\e|^2\big)\cdot \big(\Log  (\nabla^\bot h-F^\bot)-2N_\e \!\vi_\e\big),
\end{multline*}
while the last right-hand side term becomes
\begin{multline*}
\int_{\R^2} a\nabla \chi_R\cdot \Big(\langle\partial_t u_\e,\nabla u_\e-iu_\e N_\e \!\vi_\e\rangle+\frac{\Log }2V_\e^\bot+N_\e\!\pre_{\e,\varrho}(N_\e \!\vi_\e- j_\e)\Big)\\
=\int_{\R^2} a\nabla\chi_R\cdot\Big(\langle\partial_tu_\e-iu_\e N_\e\!\pre_{\e,\varrho},\nabla u_\e-iu_\e N_\e \!\vi_\e\rangle+N_\e^2\pre_{\e,\varrho} \!\vi_\e(1-|u_\e|^2)\\
+\frac{\Log }2\tilde V_{\e,\varrho}^\bot-\frac{N_\e\Log }2 \!\vi_\e^\bot\!\partial_t(1-|u_\e|^2)-\frac{N_\e\Log }2\pre_{\e,\varrho}\!\nabla^\bot|u_\e|^2\Big).
\end{multline*}
Further decomposing
\begin{align*}
|\partial_tu_\e|^2&=|\partial_tu_\e-iu_\e N_\e\!\pre_{\e,\varrho}\!|^2+2N_\e\!\pre_{\e,\varrho}\langle\partial_tu_\e-iu_\e N_\e\!\pre_{\e,\varrho},iu_\e\rangle\\
&\hspace{2cm}+N_\e^2|\!\pre_{\e,\varrho}\!|^2-(1-|u_\e|^2)N_\e^2|\!\pre_{\e,\varrho}\!|^2,\\
\langle\partial_tu_\e,iu_\e\rangle&=\langle\partial_tu_\e-iu_\e N_\e\!\pre_{\e,\varrho},iu_\e\rangle+|u_\e|^2 N_\e\!\pre_{\e,\varrho},
\end{align*}
the result~\eqref{eq:timederenergyexcess} easily follows after straightforward simplifications.

\medskip
\noindent\step3 Conclusion.

In the right-hand side of~\eqref{eq:timederenergyexcess}, the term $\int_{\R^2} a\chi_RN_\e(N_\e \!\vi_\e-j_\e)\cdot\Gamma_\e\curl\!\vi_\e$ is linear in $N_\e\!\vi_\e-j_\e$, thus preventing a direct use of a Grönwall argument. As in~\cite{Serfaty-15}, we replace this term by others involving the modulated stress-energy tensor $\tilde S_\e$, which is indeed a nicer {\it quadratic} quantity. For that purpose, let us integrate the result of Lemma~\ref{lem:divS} in space with $\chi_R\bar\Gamma_\e^\bot$, where $\bar\Gamma_\e:[0,T)\to W^{1,\infty}(\R^2)^2$ is a given vector field (we would like to choose $\bar\Gamma_\e=\Gamma_\e$, but a suitable perturbation will be needed),
\begingroup\allowdisplaybreaks
\begin{multline*}
\int_{\R^2}\chi_R\bar\Gamma_\e^\bot\cdot \Div \tilde S_\e=\int_{\R^2} \lambda_\e\alpha a\chi_R \bar\Gamma_\e^\bot\cdot\left\langle \partial_tu_\e-iu_\e N_\e\!\pre_{\e,\varrho},\nabla u_\e-iu_\e N_\e \!\vi_\e\right\rangle\\
-\int_{\R^2} a\chi_R\bar\Gamma_\e\cdot (N_\e \!\vi_\e+\tfrac12\Log F^\bot)\mu_\e
+\int_{\R^2} a\chi_RN_\e(N_\e \!\vi_\e-j_\e)\cdot \bar\Gamma_\e\curl\!\vi_\e\\
+\int_{\R^2} \lambda_\e\beta\frac{a\chi_R}2\Log \bar\Gamma_\e^\bot\cdot \tilde V_{\e,\varrho}-\int_{\R^2} \lambda_\e\beta\frac{a\chi_R}2N_\e\Log \pre_{\e,\varrho}\bar\Gamma_\e^\bot\cdot\nabla|u_\e|^2\\
+\int_{\R^2} a\chi_RN_\e\bar\Gamma_\e^\bot\cdot(N_\e \!\vi_\e-j_\e)(\Div\!\vi_\e+\nabla h\cdot \!\vi_\e-\lambda_\e\alpha\pre_{\e,\varrho})-\int_{\R^2} \frac {a\chi_R}2(1-|u_\e|^2)\bar\Gamma_\e^\bot\cdot \nabla f\\
-\int_{\R^2} \frac {a\chi_R}2\bar\Gamma_\e^\bot\cdot\nabla h\Big(|\nabla u_\e-iu_\e N_\e \!\vi_\e\!|^2+\frac{a}{\e^2}(1-|u_\e|^2)^2+ (1-|u_\e|^2)(N_\e^2|\!\vi_\e\!|^2+f)\Big)\\
+\int_{\R^2} \lambda_\e\alpha a\chi_RN_\e^2\!\pre_{\e,\varrho}(1-|u_\e|^2)(\bar\Gamma_\e^\bot\cdot \!\vi_\e)+\int_{\R^2} \frac {a\chi_R}2N_\e\Log (F^\bot\cdot\nabla|u_\e|^2)(\bar\Gamma_\e^\bot\cdot \!\vi_\e).
\end{multline*}
\endgroup
In the right-hand side, the term $\int_{\R^2} a\chi_RN_\e(N_\e \!\vi_\e-j_\e)\cdot\bar\Gamma_\e\curl\!\vi_\e$ exactly corresponds to the bad term in the right-hand side of~\eqref{eq:timederenergyexcess}. Replacing it by this new expression involving the modulated stress-energy tensor, and treating as errors all the terms involving the difference $\bar\Gamma_\e-\Gamma_\e$, we find
\begingroup\allowdisplaybreaks
\begin{multline*}
\partial_t\hat\D_{\e,\varrho,R}=\sum_{j=0}^3T_{\e,R}^j+I_{\e,\varrho,R}^g+I_{\e,\varrho,R}^n\\
-\int_{\R^2}\chi_R\nabla\bar\Gamma_\e^\bot: \tilde S_\e-\int_{\R^2} \lambda_\e\alpha a\chi_R \Gamma_\e^\bot\cdot\left\langle \partial_tu_\e-iu_\e N_\e\!\pre_{\e,\varrho},\nabla u_\e-iu_\e N_\e \!\vi_\e\right\rangle\\
+\int_{\R^2} \frac {a\chi_R}2\Gamma_\e^\bot\cdot\nabla h\Big(|\nabla u_\e-iu_\e N_\e \!\vi_\e\!|^2+\frac{a}{\e^2}(1-|u_\e|^2)^2\Big)\\
-\int_{\R^2} \lambda_\e\alpha a\chi_R|\partial_tu_\e-iu_\e N_\e\!\pre_{\e,\varrho}\!|^2
+\int_{\R^2} a\chi_R\Gamma_\e\cdot (N_\e \!\vi_\e+\tfrac12\Log F^\bot)\mu_\e\\
+\int_{\R^2} \frac {a\chi_R}2\tilde V_{\e,\varrho}\cdot (-\lambda_\e\beta\Log\Gamma_\e^\bot+\Log  (\nabla^\bot h-F^\bot)-2N_\e \!\vi_\e)\\
+\int_{\R^2} a\chi_R N_\e\big(\langle\partial_tu_\e-iu_\e N_\e\!\pre_{\e,\varrho},iu_\e\rangle+\bar\Gamma_\e^\bot\cdot(j_\e-N_\e \!\vi_\e)\big)(\Div\!\vi_\e+\vi_\e\!\cdot\,\nabla h-\lambda_\e\alpha\pre_{\e,\varrho}),
\end{multline*}\endgroup
where $I_{\e,\varrho,R}^g$ and $I_{\e,\varrho,R}^n$ are given as in the statement, and where we have set
\begin{align*}
T_{\e,\varrho,R}^0:=&\int_{\R^2} a\chi_R N_\e(N_\e\!\vi_\e-j_\e)\cdot\nabla(\pre_\e-\pre_{\e,\varrho}),\\
T_{\e,\varrho,R}^1:=&\int_{\R^2} \frac {a\chi_R}2(1-|u_\e|^2)(N_\e^2|\!\vi_\e\!|^2+f)\bar\Gamma_\e^\bot\cdot\nabla h\\
&\qquad-\int_{\R^2} a N_\e^2\!\pre_{\e,\varrho}(1-|u_\e|^2)\big( \!\vi_\e\!\cdot\,\nabla\chi_R+\chi_R (\Div\!\vi_\e+\vi_\e\!\cdot\,\nabla h)\big)\\
&\qquad+\int_{\R^2} \frac {a\chi_R}2(1-|u_\e|^2)\bar\Gamma_\e^\bot\cdot \nabla f-\int_{\R^2} \lambda_\e\alpha a\chi_RN_\e^2 \pre_{\e,\varrho}(1-|u_\e|^2)\bar\Gamma_\e^\bot\cdot \!\vi_\e,\\
T_{\e,\varrho,R}^2:=&\int_{\R^2} \frac {a\chi_R}2N_\e\Log \big(F^\bot\cdot\nabla(1-|u_\e|^2)\big)\bar\Gamma_\e^\bot\cdot \!\vi_\e\\
&\qquad+\int_{\R^2} \frac{aN_\e\Log }2 \pre_{\e,\varrho}\!\nabla(1-|u_\e|^2)\\
&\hspace{2cm}\cdot\bigg(\nabla^\bot\chi_R+\chi_R\Big(\nabla^\bot h-2F^\bot-\lambda_\e\beta\bar\Gamma_\e^\bot-2\frac{N_\e}\Log \!\vi_\e\Big)\bigg),
\end{align*}
\begin{align*}
T_{\e,\varrho,R}^3:=&\int_{\R^2}\frac{aN_\e^2}2\partial_t\big((1-|u_\e|^2)(\psi_{\e,\varrho,R}-\chi_R|\!\vi_\e\!|^2)\big)\\
&\qquad+\int_{\R^2}\frac{aN_\e\Log}2\partial_t(1-|u_\e|^2)\\
&\hspace{2cm}\times\bigg(\vi_\e^\bot\!\cdot\nabla\chi_R-\lambda_\e\beta{\chi_R}\!\pre_{\e,\varrho}- {\chi_R} \!\vi_\e\!\cdot\, \Big(\nabla^\bot h-F^\bot-2\frac{N_\e}\Log \!\vi_\e\Big)\bigg).
\end{align*}
It remains to estimate these four error terms $T_{\e,\varrho,R}^i$, $0\le i\le3$. We start with $T_{\e,\varrho,R}^0$. In the dissipative case  we take $\varrho=\infty$, hence  $T_{\e,\varrho,R}^0=0$. In the conservative case, using the pointwise estimate of Lemma~\ref{lem:pointest} for $j_\e-N_\e\!\vi_\e$,
and using Assumption~\ref{as:apveps}(b), with in particular
\[\|\nabla(\pre_\e^t-\pre_{\e,\varrho}^t)\|_{\Ld^2\cap\Ld^\infty}\lesssim \|\nabla\!\pre_\e^t\!\|_{\Ld^2\cap\Ld^\infty}+\varrho^{-1}\|\!\pre_{\e,\varrho}^t\!\|_{\Ld^2\cap\Ld^\infty}\lesssim_t1,\]
we find
\begin{align*}
|T_{\e,\varrho,R}^0|&\lesssim_t N_\e \|\nabla u_\e-iu_\e N_\e \!\vi_\e\!\|_{\Ld^2(B_{2R})}\big(\|\nabla(\pre_\e-\pre_{\e,\varrho})\|_{\Ld^2}+\|1-|u_\e|^2\|_{\Ld^2(B_{2R})}\big)\\
&\hspace{3cm}+N_\e^2\|1-|u_\e|^2\|_{\Ld^2(B_{2R})}\|\nabla(\pre_\e-\pre_{\e,\varrho})\|_{\Ld^2}\\
&\lesssim_t\e N_\e\Ec_{\e,R}^*+(1+\e N_\e)N_\e(\Ec_{\e,R}^*)^{1/2}\|\nabla(\pre_\e-\pre_{\e,\varrho})\|_{\Ld^2}.
\end{align*}
Using~\eqref{eq:scalingshFf} or~\eqref{eq:scalingshFfdec}, Assumption~\ref{as:apveps},
and the assumption $\|\bar\Gamma_\e\|_{\Ld^\infty}\lesssim_t1$, we obtain in the considered regimes, in the dissipative case,
\begin{align*}
|T_{\e,\varrho,R}^1|&\lesssim_t\e\big(\lambda_\e^{-1/2}N_\e^2+R\lambda_\e^2\Log^2\big)(\Ec_{\e,R}^*)^{1/2},
\end{align*}
and in the conservative case,
\begin{align*}
|T_{\e,\varrho,R}^1|&\lesssim_t\e(N_\e^2+\lambda_\e^2\Log^2)(\Ec_{\e,R}^*)^{1/2}\lesssim\e N_\e^2(\Ec_{\e,R}^*)^{1/2}.
\end{align*}
Integrating by parts, $T_{\e,\varrho,R}^2$ takes the form
\begin{multline*}
T_{\e,\varrho,R}^2=-\int_{\R^2} \frac{N_\e\Log }2 (1-|u_\e|^2)\\
\times\Div\bigg(a\!\pre_{\e,\varrho}\!\nabla^\bot\chi_R+a\chi_RF^\bot(\bar\Gamma_\e^\bot\cdot \!\vi_\e)+a\!\pre_{\e,\varrho}\!\chi_R\Big(\nabla^\bot h-2F^\bot-\lambda_\e\beta\bar\Gamma_\e^\bot-2\frac{N_\e}\Log \!\vi_\e\!\Big)\bigg),
\end{multline*}
and hence, again using~\eqref{eq:scalingshFf} or~\eqref{eq:scalingshFfdec}, Assumption~\ref{as:apveps},
and the bound $\|\bar\Gamma_\e\|_{W^{1,\infty}}\lesssim1$, we obtain in the considered regimes, for all $\theta>0$, in the dissipative case,
\begin{align*}
|T_{\e,\varrho,R}^2|&\lesssim_{t,\theta} \e N_\e\Log \big(1+R^{-1}\lambda_\e^{-1/2}+\lambda_\e R^\theta\big)(\Ec_{\e,R}^*)^{1/2},
\end{align*}
and in the conservative case,
\begin{align*}
|T_{\e,\varrho,R}^{2}|&\lesssim_{t,\theta} \e N_\e\Log (1+\lambda_\e\varrho^\theta)(\Ec_{\e,R}^*)^{1/2}\lesssim \e N_\e^2\varrho^\theta(\Ec_{\e,R}^*)^{1/2}.
\end{align*}
Finally, we note that the choice~\eqref{eq:choicepsi} of $\psi_{\e,\varrho,R}$ exactly yields
\begin{align*}
T_{\e,\varrho,R}^3&=\int_{\R^2}\frac{aN_\e^2}2(1-|u_\e|^2)\partial_t(\psi_{\e,\varrho,R}-\chi_R|\!\vi_\e\!|^2)\\
&=\int_{\R^2}\frac{aN_\e^2}2(1-|u_\e|^2)(\partial_t\psi_{\e,\varrho,R}-2\chi_R\vi_\e\!\cdot\,\partial_t\!\vi_\e),
\end{align*}
and hence, using \eqref{eq:boundpsi} or~\eqref{eq:boundpsi-2}, and Assumption~\ref{as:apveps}, we obtain in the considered regimes, in the dissipative case,
\begin{align*}
\|T_{\e,\varrho,R}^3\|_{\Ld^1_t}&\lesssim_t \e N_\e^2\Big(1+\frac\Log{N_\e}\Big)(\Ec_{\e,R}^*)^{1/2}\lesssim\e (N_\e^2+N_\e\Log)(\Ec_{\e,R}^*)^{1/2},
\end{align*}
and in the conservative case,
\begin{align*}
|T_{\e,\varrho,R}^{3}|&\lesssim_{t,\theta} \e N_\e^2\varrho^\theta(\Ec_{\e,R}^*)^{1/2}.
\end{align*}
The conclusion follow from the above with $I_{\e,\varrho,R}':=T_{\e,\varrho,R}^0+T_{\e,\varrho,R}^1+T_{\e,\varrho,R}^2+T_{\e,\varrho,R}^3$.
\end{proof}

\section{Vortex analysis}\label{sec:vortex}
In this section, we recall and revisit some standard tools for vortex analysis, which are needed in order to control the various terms appearing in the decomposition of $\partial_t\hat\D_{\e,\varrho,R}$ in Lemma~\ref{lem:decompcruc}. These tools will only be used in the dissipative case, and we restrict in this section to the dilute regime $N_\e\lesssim \Log$.
(Suitable adaptations to the nondilute regime $N_\e\gg\Log$ are postponed to Section~\ref{chap:vort-anal-hd}.)

\subsection{Ball construction lower bounds}
We need a version of the Jerrard-Sandier ball-construction lower bounds~\cite{Sandier-98,Jerrard-99} that is {\it localizable} in order to be adapted both to the weighted case and to the setting of the infinite plane with no finite energy control (hence no a priori bound on the number of vortices), and which further yields very small errors (we need an error of order $o(N_\e^2)$, which gets very small when $N_\e$ diverges slowly).
For that purpose we use the version developed in~\cite{SS-11}, which in particular allows to cover the plane with balls centered at the points of the lattice $R\Z^2$, make the standard ball construction in each ball of the covering, assemble all the constructed balls, and then discard some balls from the collection so as to make it disjoint again. 
The error in the lower bounds given by this ball construction is essentially $N_\e|\!\log r|$, where $r$ is the total radius of the balls, so that we need to take $r$ large enough (almost as large as $O(1)$ when $N_\e$ diverges slowly), but here the pinning weight adds again a difficulty since it may vary significantly over the size of the balls of this construction, thus perturbing the lower bound itself.

The following preliminary result describes the precise contribution of the vortices to the energy, and in particular defines the vortex ``locations''.

\begin{lem}[Localized lower bound]\label{lem:ballconstr}
Let $h:\R^2\to\R$, $a:=e^h$, with $1\lesssim a\le1$, let $u_\e:\R^2\to\C$, $\vi_\e:\R^2\to\R^2$, with $\|\curl\!\vi_\e\!\|_{\Ld^2\cap\Ld^\infty}\lesssim1$. Let $0<\e\ll1$, $N_\e,R\ge1$, and assume that $\log\Ec_{\e,R}^*\ll\Log$.
Then, for some $\bar r\simeq1$, for all $\e>0$ small enough and all $r\in(\e^{1/2},\bar r)$, there exists a locally finite union of disjoint closed balls $\B_{\e,R}^r$, monotone in $r$ and covering the set $\{x:|u_\e(x)|<\frac12\}$, such that for all $z\in R\Z^2$ the sum of the radii of the balls of the collection $\B_{\e,R}^r$ centered at points in $B_R(z)$ is bounded by $r$, and such that, letting $\B_{\e,R}^r:=\biguplus_jB^j$, $B^j:=\bar B(y_j,r_j)$, $d_j:=\deg(u_\e,\partial B^j)$, and defining the point-vortex measure $\nu_{\e,R}^r:=2\pi\sum_jd_j\delta_{y_j}$, the following properties hold,
\begin{enumerate}[(i)]
\item \emph{Localized lower bound:} For all $\phi\in W^{1,\infty}(\R^2)$ with $\phi\ge0$, we have for all $j$,
\begin{multline}\label{eq:lowerboundclaim2}
\qquad\frac12\int_{B^j}\phi\Big(|\nabla u_\e-iu_\e N_\e \!\vi_\e\!|^2+\frac{a}{2\e^2}(1-|u_\e|^2)^2\Big)\ge \pi\phi(y_j)|d_j|\log(\tfrac r\e)\\
-O(r_j\Ec_{\e,R}^*)\|\nabla\phi\|_{\Ld^{\infty}}-O\bigg(r_j^2N_\e^2+|d_j|\log\Big(2+\frac{\Ec_{\e,R}^*}\Log\Big)\bigg)\|\phi\|_{\Ld^{\infty}}.
\end{multline}
Similarly, if $\phi$ is further supported in a ball of radius $R$,
\begin{multline}\label{eq:lowerboundclaim1}
\qquad\frac12\int_{\B_{\e,R}^r}\phi\Big(|\nabla u_\e-iu_\e N_\e \!\vi_\e\!|^2+\frac{a}{2\e^2}(1-|u_\e|^2)^2\Big)\ge \frac{\log(\tfrac r\e) }2\int_{\R^2} \phi |\nu_{\e,R}^r|\\
-O(r\Ec_{\e,R}^*)\|\nabla\phi\|_{\Ld^{\infty}}-O\bigg(r^2N_\e^2+\frac{N_\e^2+\Ec_{\e,R}^*}\Log\log\Big(2+\frac{\Ec_{\e,R}^*}\Log\Big)\bigg)\|\phi\|_{\Ld^{\infty}}.
\end{multline}
\item \emph{Number of vortices:}
\begin{align}\label{eq:boundvortnumb}
\sup_z\int_{B_R(z)}|\nu_{\e,R}^r|\lesssim \frac{N_\e^2+\Ec_{\e,R}^*}\Log.
\end{align}
\item \emph{Jacobian estimate:}
For all $\gamma\in[0,1]$,
\[\sup_z\|\nu_{\e,R}^r-\tilde\mu_\e\|_{(C_c^\gamma(B_{R}(z)))^*}\lesssim r^\gamma\frac{N_\e^2+\Ec_{\e,R}^*}\Log+\e^{\gamma/2}(\Ec_{\e,R}^*+\e^{2}N_\e^2).\qedhere\]
\end{enumerate}
\end{lem}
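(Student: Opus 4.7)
The plan is to adapt the Jerrard-Sandier ball construction to three simultaneous difficulties: (a) the modulated energy replaces $|\nabla u_\e|^2$ by $|\nabla u_\e-iu_\e N_\e\vi_\e|^2$, introducing a cross term in $\vi_\e$; (b) we have no global energy bound, only the uniform-local bound $\Ec_{\e,R}^*$; and (c) the lower bound is weighted by an arbitrary nonnegative test function $\phi$. The strategy is to combine the cell-by-cell ball construction of~\cite{SS-11} with a pointwise expansion of the modulated energy, and to treat $\phi$ and $a$ as locally constant on each output ball up to a controlled gradient error.

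First I would run the growth-and-merging procedure cell-by-cell: in each $B_R(z)$, $z\in R\Z^2$, start from disjoint $\e$-balls covering $\{|u_\e|<\tfrac12\}\cap B_R(z)$ and let them evolve until the sum of radii reaches $r$. Then assemble the cell-wise collections into a single locally finite family and apply the pigeonhole removal of~\cite{SS-11} to kill overlaps across cell boundaries while preserving disjointness and the per-cell radius bound $r$. Denote the final family $\B_{\e,R}^r=\biguplus_j B^j$ with $B^j=\bar B(y_j,r_j)$ and $d_j:=\deg(u_\e,\partial B^j)$.

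For item~(i), the classical annulus lower bound on each $B^j$ containing a vortex gives, for the unmodulated energy density,
\[\tfrac12\int_{B^j}\bigl(|\nabla u_\e|^2+\tfrac{a}{2\e^2}(1-|u_\e|^2)^2\bigr)\ge \pi|d_j|\log(r_j/\e)-O\bigl(|d_j|\log(2+\tfrac{\Ec_{\e,R}^*}\Log)\bigr),\]
where the error absorbs the freedom in the stopping radius via the local bound on $\Ec_{\e,R}^*$. To replace $|\nabla u_\e|^2$ by $|\nabla u_\e-iu_\e N_\e\vi_\e|^2$, I would expand
\[|\nabla u_\e-iu_\e N_\e\vi_\e|^2=|\nabla u_\e|^2-2N_\e\vi_\e\cdot j_\e+N_\e^2|u_\e|^2|\vi_\e|^2\]
and use the pointwise estimate from Lemma~\ref{lem:pointest} in the form $|j_\e|\lesssim|\nabla u_\e-iu_\e N_\e\vi_\e|+N_\e|\vi_\e|$, so that Cauchy-Schwarz on $B^j$ with $\|\vi_\e\|_{\Ld^\infty}\lesssim1$ produces an error $O(r_j^2 N_\e^2)$ together with a term absorbed into the leading one. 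Freezing $\phi$ at $y_j$ costs an additional $O(r_j\Ec_{\e,R}^*)\|\nabla\phi\|_{\Ld^\infty}$, yielding~\eqref{eq:lowerboundclaim2}; summing over $j$ with $y_j$ in the support of $\phi$ (which has radius $O(R)$, hence contains $O(N_\e^2+\Ec_{\e,R}^*)/\Log$ vortices by item~(ii)) gives~\eqref{eq:lowerboundclaim1} after regrouping the $|d_j|\log(\cdots)$ terms.

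Item~(ii) follows by applying~(i) in a single cell $B_R(z)$ with $\phi\equiv1$, so that $\pi\log(r/\e)\sum_{j:y_j\in B_R(z)}|d_j|\lesssim\Ec_{\e,R}^*+O(N_\e^2)+O(\Ec_{\e,R}^*)$, and choosing $r\simeq1$ together with the hypothesis $\log\Ec_{\e,R}^*\ll\Log$ yields the claim. Item~(iii) is obtained by combining the classical Jerrard-Soner Jacobian estimate (cf.~\cite[Chapter~6]{SS-book}) to compare $\nu_{\e,R}^r$ with $\mu_\e$ against a $C^\gamma$-test function supported in $B_R(z)$, giving the error $\e^{\gamma/2}(\Ec_{\e,R}^*+\e^2 N_\e^2)$, with the identity $\tilde\mu_\e-\mu_\e=\curl(N_\e\vi_\e(1-|u_\e|^2))$; the latter is estimated by an integration by parts against the $C^\gamma$ test function and Cauchy-Schwarz, using the bound on the number of vortices from~(ii) to produce the $r^\gamma(N_\e^2+\Ec_{\e,R}^*)/\Log$ contribution.

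The main obstacle will be the simultaneous calibration of all error terms. The cross term $N_\e\vi_\e\cdot j_\e$ behaves badly near vortex cores where $j_\e$ is singular, and must be absorbed into the leading $|d_j|\log(r/\e)$ piece without inflating the prefactor; this forces the precise stopping-radius accounting that produces the $\log(2+\Ec_{\e,R}^*/\Log)$ remainder, which is crucial for the subsequent Grönwall argument. In addition, the absence of a global energy bound means every estimate must be performed locally, which is why the [SS-11]-style covering by lattice cells (rather than a single ball construction) is essential.
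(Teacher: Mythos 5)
Your cell-by-cell covering of~\cite{SS-11} is the right scaffold, and your treatments of items~(ii) and~(iii) are essentially correct in outline. The gap is in item~(i), at the cross term, and it cannot be patched in the way you describe. You expand $|\nabla u_\e - iu_\e N_\e\vi_\e|^2 = |\nabla u_\e|^2 - 2N_\e\vi_\e\cdot j_\e + N_\e^2|u_\e|^2|\vi_\e|^2$ and treat $-2N_\e\vi_\e\cdot j_\e$ as an error. After applying Lemma~\ref{lem:pointest}, the dominant piece of this term is $N_\e\int_{B^j}|\nabla u_\e - iu_\e N_\e\vi_\e|$. Controlling it by Young's inequality, $N_\e\int_{B^j}|\nabla u_\e - iu_\e N_\e\vi_\e|\le \tfrac{\delta}{2}\int_{B^j}|\nabla u_\e - iu_\e N_\e\vi_\e|^2 + C\delta^{-1}N_\e^2 r_j^2$, and absorbing the first piece into the modulated energy inevitably yields a lower bound of the form $\tfrac{1+\delta}{2}\int_{B^j}|\nabla u_\e - iu_\e N_\e\vi_\e|^2 + \ldots \ge \pi|d_j|\log(r/\e) - \ldots$, so the coefficient of $\pi|d_j|\log(r/\e)$ degrades to $(1+\delta)^{-1}<1$, and you cannot send $\delta\downarrow 0$ without blowing up the $\delta^{-1}N_\e^2 r_j^2$ error. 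Losing the sharp constant is fatal here: the whole modulated-energy scheme hinges on subtracting \emph{exactly} the vortex self-interaction $\pi|d_j|a(y_j)\Log$ (see the definition of $\D_{\e,R}$ and Steps~4--5 in the proof of Proposition~\ref{prop:ballconstr}), so a coefficient $<\pi$ would pollute the excess by a term $\simeq N_\e\Log$, larger than the target $o(N_\e^2)$ in the dilute regimes. Your estimate also silently invokes $\|\vi_\e\|_{\Ld^\infty}\lesssim 1$, which is not among the hypotheses of this lemma — only $\|\curl\vi_\e\|_{\Ld^2\cap\Ld^\infty}\lesssim 1$ is assumed.

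The missing idea is that~\cite[Proposition~2.1]{SS-11} is already formulated for \emph{covariant} derivatives and should be applied directly to the modulated energy, not to the unmodulated one. Taking $A_\e = N_\e\vi_\e$ (with $\e$ replaced by $\e/\sqrt{a_{\min}}$ and the lattice cover $(B_R(z))_z$) yields, for each constructed ball $B$, the sharp-constant lower bound on $\int_B\big(\tilde e_\e + \tfrac{N_\e^2}{2}|\curl\vi_\e|^2\big)$ with leading term $\pi|d_B|\log\tfrac{r}{\e\bar C_B}$ — no cross-term decomposition needed. The curvature contribution is then harmlessly bounded by $\tfrac{N_\e^2}{2}\int_{B^j}|\curl\vi_\e|^2\lesssim N_\e^2 r_j^2$ using precisely the $\Ld^\infty$-hypothesis on $\curl\vi_\e$, and the factor $\bar C_B\lesssim\Log^{-1}\Ec_{\e,R}^*$ from~\cite[Lemma~2.1]{SS-11} produces the $|d_j|\log(2+\Ec_{\e,R}^*/\Log)$ remainder. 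Freezing $\phi$ at $y_j$ then gives~\eqref{eq:lowerboundclaim2} at the cost of $O(r_j\Ec_{\e,R}^*)\|\nabla\phi\|_{\Ld^\infty}$, as you proposed, and the remaining items follow as you describe.
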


\begin{proof}
We split the proof into two steps.

\medskip
\noindent\step1 Proof of~(i)--(ii).

We use the notation $\tilde\Ec_{\e,R}^*:=\sup_z\int_{B_{R}(z)}\tilde e_\e$, with
\[\tilde e_\e:=\frac{1}2\Big(|\nabla u_\e-iu_\e N_\e \!\vi_\e\!|^2+\frac{a_{\min}}{2\e^2}(1-|u_\e|^2)^2\Big),\qquad a_{\min}:=\inf_x a(x)\gtrsim1.\]
Note that by assumption we have in particular $\tilde \Ec_{\e,R}^*\lesssim \Ec_{\e,R}^*\lesssim \e^{-1/5}$.
We may apply~\cite[Proposition~2.1]{SS-11} with $\Omega_\e=\R^2$, $A_\e=N_\e\!\vi_\e$, with $\e$ replaced by $\e/\sqrt{a_{\min}}$, and with the open cover $(U_\alpha)_\alpha=(B_R(z))_{z\in R\Z^2}$ (note that the argument in~\cite{SS-11} indeed works identically on the whole space, and that the energy bound is only needed uniformly on all elements of the open cover).  For some $\e_0,C_0,\bar r\simeq1$, for all $\e<\e_0$ and $r\in(\e^{1/2},\bar r)$, we obtain a locally finite collection $\B_{\e,R}^r$ of disjoint closed balls covering the set $\{x:|u_\e(x)|<\frac12\}$, such that for all $B\in\B_{\e,R}^r$ we have
\[\int_B\Big(\tilde e_\e+\frac{N_\e^2}2|\curl\!\vi_\e\!|^2\Big)\ge\pi|d_B|\Big(\log\frac{r}{\e \bar C_B}-C_0\Big),\]
where we have set $d_B:=\deg(u_\e,\partial B)$, and where $\bar C_B$ is defined as in~\cite[(2.4)]{SS-11}. Moreover, the construction in~\cite{SS-11} ensures that $\B_{\e,R}^r$ is monotone in $r$ and that $B_R(z)\cap\B_{\e,R}^r$ has total radius bounded by $r$ for all $z\in R\Z^2$. By~\cite[Lemma~2.1]{SS-11}, we have $\bar C_B\le {16}\Log^{-1} \tilde\Ec_{\e,R}^*\lesssim\Log^{-1}\Ec_{\e,R}^*$, so that the above becomes, for all $B\in\B_{\e,R}^r$,
\begin{align}\label{eq:lowerboundSS11}
\int_B\Big(\tilde e_\e+\frac{N_\e^2}2|\curl\!\vi_\e\!|^2\Big)\ge\pi|d_B|\log(\tfrac{r}{\e})-|d_B|O\bigg(\log\Big(2+\frac{\Ec_{\e,R}^*}\Log\Big)\bigg).
\end{align}
Let $r\in (\e^{1/2},\bar r)$ be fixed, and set $\B_{\e,R}^r=\biguplus_jB^j$, $B^j:=\bar B(y_j,r_j)$, with corresponding degrees $d_j:=d_{B^j}$.
Noting that by assumption we have
\[\int_{B^j}|\curl\!\vi_\e\!|^2\lesssim|B^j|\lesssim r_j^2,\]
the result~\eqref{eq:lowerboundSS11} takes the following form, for all $j$,
\begin{align}\label{eq:lowerboundSS11+}
\int_{B^j}\tilde e_\e\ge\pi|d_j|\log(\tfrac{r}{\e})-|d_j|O\bigg(\log\Big(2+\frac{\Ec_{\e,R}^*}\Log\Big)\bigg)-O(r_j^2N_\e^2).
\end{align}
Using the assumption $\log\Ec_{\e,R}^*\ll\Log$ and the choice $r>\e^{1/2}$, the above right-hand side is bounded from below by $\frac\pi2|d_j|\Log(1-o(1))-O(r_j^2N_\e^2)$, and hence, summing over $B^j\in\B_{\e,R}^r$ with $y_j\in B_{R}(z)$, we find for all $\e>0$ small enough,
\begin{align*}
\frac{\pi}3\Log\sum_{j:y_j\in B_R(z)}|d_j|~\le~ \int_{B_{R+1}(z)\cap\B_{\e,R}^r}\tilde e_\e+O(N_\e^2)\sum_{j:y_j\in B_R(z)}r_j^2 ~\lesssim~ \Ec_{\e,R}^*+r^2N_\e^2,
\end{align*}
and hence, with the choice $r\lesssim1$,
\begin{align}\label{eq:sum|dB|}
\sum_{j:y_j\in B_R(z)}|d_j|\lesssim \frac{N_\e^2+\Ec_{\e,R}^*}\Log,
\end{align}
that is, item~(ii). 
Let us now prove item~(i). Let $\phi\in W^{1,\infty}(\R^2)$, $\phi\ge0$. For all $B^j\in\B_{\e,R}^r$, we have from~\eqref{eq:lowerboundSS11+},
\begin{align*}
\int_{B^j}\phi\tilde e_\e&\ge \phi(y_j)\int_{B^j} \tilde e_\e-r_j\|\nabla\phi\|_{\Ld^\infty}\int_{B^j}\tilde e_\e\\
&\ge \pi\phi(y_j)|d_j|\log(\tfrac r\e)\\
&\qquad-\phi(y_j)|d_j|\,O\bigg(\log \Big(2+\frac{\Ec_{\e,R}^*}\Log\Big)\bigg)-\phi(y_j)O(r_j^2N_\e^2)-r_j\|\nabla\phi\|_{\Ld^\infty}\int_{B^j}\tilde e_\e,
\end{align*}
hence
\begin{multline*}
\int_{B^j}\phi\tilde e_\e\ge \pi\phi(y_j)|d_j|\log(\tfrac r\e)\\
-O\bigg(r_j^2N_\e^2+|d_j|\log\Big(2+\frac{\Ec_{\e,R}^*}\Log\Big)\bigg)\|\phi\|_{\Ld^{\infty}}-O(r_j\Ec_{\e,R}^*)\|\nabla\phi\|_{\Ld^{\infty}}.
\end{multline*}
Further assuming that $\phi$ is supported in $B_R(z)$ for some $z\in R\Z^2$, summing the above with respect to $j$ with $y_j\in B_R$, setting $\nu_{\e,R}^r:=2\pi\sum_{j}d_j\delta_{y_j}$, and using~\eqref{eq:sum|dB|}, we find
\begin{multline*}
\int_{\B_{\e,R}^r}\phi\tilde e_\e\ge \frac{\log(\tfrac r\e) }2\int_{\R^2} \phi |\nu_{\e,R}^r|\\
-O\bigg(r^2N_\e^2+\frac{N_\e^2+\Ec_{\e,R}^*}\Log\log\Big(2+\frac{\Ec_{\e,R}^*}\Log\Big)\bigg)\|\phi\|_{\Ld^{\infty}}
-O(r\Ec_{\e,R}^*)\|\nabla\phi\|_{\Ld^{\infty}}.
\end{multline*}
Item~(i) then follows by definition of $\tilde e_\e$ with $a_{\min}\le a$.

\medskip
\noindent\step2 Proof of~(iii).

\nopagebreak
Using item~(i) and arguing just as in~\cite[Proposition~4.4(5)]{Serfaty-15}, for $\gamma\in[0,1]$, we obtain for all $r\in(\e^{1/2},\bar r)$ and all $\phi\in C^{\gamma}_c(\R^2)$ supported in $B_R(z)$ for some $z\in R\Z^2$,
\begingroup\allowdisplaybreaks
\begin{eqnarray}
\lefteqn{\bigg|\int\phi(\nu_{\e,R}^r-\tilde\mu_\e)\bigg|}\nonumber\\
&\lesssim& r^\gamma|\phi|_{C^{\gamma}} \sum_{j:y_j\in B_R(z)}|d_j|\nonumber\\
&&\quad+\e^{\gamma/2}\|\phi\|_{C^{\gamma}}\int_{B_R}\Big(|\nabla u_\e-iu_\e N_\e\!\vi_\e\!|^2+\frac{(1-|u_\e|^2)^2}{2\e^2}+N_\e|1-|u_\e|^2||\curl\!\vi_\e\!|\Big)\nonumber\\
&\lesssim& r^\gamma \frac{N_\e^2+\Ec_{\e,R}^*}\Log|\phi|_{C^{\gamma}}+\bigg(\e^{\gamma/2}\Ec_{\e,R}^*+\e^{2+\gamma/2}N_\e^2\int_{B_R}|\curl\!\vi_\e\!|^2\bigg)\|\phi\|_{C^{\gamma}},\label{eq:lem3.2iii-lip}
\end{eqnarray}
\endgroup
where $|\cdot|_{C^\gamma}$ denotes the usual Hölder seminorm and where $\|\cdot\|_{C^\gamma}:=|\cdot|_{C^\gamma}+\|\cdot\|_{\Ld^\infty}$.
The result follows from the assumption $\|\curl\!\vi_\e\!\|_{\Ld^2}\lesssim1$.
\end{proof}

In Section~\ref{chap:MFL-GL}, strong estimates are proved on the time derivative of the modulated energy excess $\D_{\e,R}^*$, but these estimates a priori involve the modulated energy $\Ec_{\e,R}^*$. In order to buckle the argument, it is thus crucial to independently find an optimal control on $\Ec_{\e,R}^*$, or equivalently on the number of vortices, in terms of $\D_{\e,R}^*$. Note that in the case without pinning and applied current no cut-off is needed and this difficulty is absent (the excess is then indeed simply defined by $\tilde\D_\e=\tilde\Ec_\e-\pi N_\e\Log$, cf.~\eqref{eq:excess-SS15}). This control of $\Ec_{\e,R}^*$ is the main content of the following result, and  allows to further refine the conclusions of Lemma~\ref{lem:ballconstr} above. Particular attention is needed in the strongly dilute regime $N_\e\lesssim\log\Log$ to ensure an error as small as $o(N_\e^2)$ in the energy lower bound. Various useful corollaries are further included.
In particular, item~(vi) gives an optimal control of the energy inside the small balls, measured in $\Ld^p$ for any $p<2$. Since this $\Ld^p$ result is already enough for our purposes, we do not adapt the more precise Lorentz estimates of~\cite[Corollary~1.2]{S-Tice-08} to the present weighted context, and we instead use a more direct argument adapted from~\cite{Struwe-94}.

\begin{prop}[Refined lower bound]\label{prop:ballconstr}
Let $h:\R^2\to\R$, $a:=e^h$, with $1\lesssim a\le1$ and $\|\nabla h\|_{\Ld^\infty}\lesssim1$, let $u_\e:\R^2\to\C$, $\vi_\e:\R^2\to\R^2$, with $\|\curl\!\vi_\e\!\|_{\Ld^1\cap\Ld^\infty},\|\!\vi_\e\!\|_{\Ld^\infty}\lesssim1$. Let $0<\e\ll1$, $1\ll N_\e\lesssim\Log$, and $R\ge1$ with $\Log\lesssim R\lesssim\Log^n$ for some $n\ge1$, and assume that $\D_{\e,R}^*\lesssim N_\e^2$.
Then $\Ec_{\e,R}^*\lesssim N_\e\Log$ holds for all $\e>0$ small enough.
Moreover, for some $\bar r\simeq1$, for all $\e>0$ small enough and all $r\in(\e^{1/2},\bar r)$, there exists a locally finite union of disjoint closed balls $\B_{\e,R}^r$, monotone in $r$ and covering the set $\{x:|u_\e(x)|<\frac12\}$, and for all $r_0\in(\e^{1/2},\bar r)$ and $r\ge r_0$ there exists a locally finite union of disjoint closed balls $\tilde\B_{\e,R}^{r_0,r}$, monotone in $r$ and covering the set $\{x:||u_\e(x)|-1|\ge\Log^{-1}\}$, such that $\B_{\e,R}^{r_0}\subset\tilde\B_{\e,R}^{r_0,r_0}$, such that for all $z\in R\Z^2$ the sum of the radii of the balls of the collection $\B_{\e,R}^r$ centered at points of $B_R(z)$ is bounded by $r$ and the sum of the radii of the balls of the collection $\tilde\B_{\e,R}^{r_0,r}$ centered at points of $B_R(z)$ is bounded by $Cr$, and such that, letting $\B_{\e,R}^r:=\biguplus_jB^j$, $B^j:=\bar B(y_j,r_j)$, $d_j:=\deg(u_\e,\partial B^j)$, and defining the point-vortex measure $\nu_{\e,R}^r:=2\pi\sum_j d_j\delta_{y_j}$, the following properties hold,
\begin{enumerate}[(i)]
\item \emph{Lower bound:} In the regime $N_\e\gg\log\Log$, we have for all $e^{-o(N_\e)}\le r\ll \frac{N_\e}\Log$ and $z\in \R^2$,
\begin{multline}\label{eq:lowerbound}
\qquad\frac12\int_{\B_{\e,R}^r}a\chi_R^z\Big(|\nabla u_\e-iu_\e N_\e \!\vi_\e\!|^2+\frac{a}{2\e^2}(1-|u_\e|^2)^2\Big)\\
\ge \frac{\Log}2\int_{\R^2} a\chi_R^z |\nu_{\e,R}^r|-o(N_\e^2),
\end{multline}
while in the regime $1\ll N_\e\lesssim\log\Log$ we have for all $e^{-o(N_\e)}\le r\ll1$ and $r_0\le r$ with $\e^{1/2}<r_0\ll\frac{N_\e}\Log$, for all $z\in\R^2$,
\begin{multline}\label{eq:lowerboundbis}
\qquad\frac12\int_{\tilde\B_{\e,R}^{r_0,r}}a\chi_R^z\Big(|\nabla u_\e-iu_\e N_\e \!\vi_\e\!|^2+\frac{a}{2\e^2}(1-|u_\e|^2)^2\Big)\\
\ge \frac{\Log}2\int_{\R^2} a\chi_R^z \nu_{\e,R}^{r_0}-o(N_\e^2).
\end{multline}
\item \emph{Number of vortices:} For $\e^{1/2}<r\ll1$,
\begin{align}\label{eq:numbvort}
\sup_z\int_{B_R(z)}|\nu_{\e,R}^r|~\lesssim~ N_\e,
\end{align}
and moreover in the regime $1\ll N_\e\ll\Log^{1/2}$ the measure $\nu_{\e,R}^r$ is nonnegative for all $e^{-o(1)\frac\Log{N_\e}}\le r<\bar r$.
\item \emph{Jacobian estimate:} For $\e^{1/2}<r\ll1$, for all $\gamma\in[0,1]$,
\begin{gather}
\sup_z\|\nu_{\e,R}^r-\tilde\mu_\e\|_{(C_c^\gamma(B_{R}(z)))^*}\lesssim r^\gamma N_\e+\e^{\gamma/2}N_\e\Log,\label{eq:jacobest}\\
\sup_z\|\mu_\e-\tilde\mu_\e\|_{(C_c^\gamma(B_R(z)))^*}\lesssim \e^\gamma N_\e\Log^{n+1},\label{eq:jacobestbis}
\end{gather}
hence in particular, for all $\gamma\in(0,1]$,
\begin{align}\label{eq:vortmass}
\sup_z\|\tilde\mu_\e\|_{(C_c^\gamma(B_{R}(z)))^*}~\simeq_\gamma~N_\e,\qquad \sup_z\|\mu_\e\|_{(C_c^\gamma(B_{R}(z)))^*}~\simeq_\gamma~ N_\e.
\end{align}
\item \emph{Excess energy estimate:} For all $\phi\in W^{1,\infty}(\R^2)$ supported in a ball of radius $R$,
\begin{multline}\label{eq:excessestim}
\qquad\int_{\R^2}\phi\Big(|\nabla u_\e-iu_\e N_\e\!\vi_\e\!|^2+\frac{a}{2\e^2}(1-|u_\e|^2)^2-\Log\mu_\e\Big)\\
\lesssim (\D_{\e,R}^*+o(N_\e^2))\|\phi\|_{W^{1,\infty}}.
\end{multline}
\item \emph{Energy outside small balls:} In the regime $N_\e\gg\log\Log$, we have for all $e^{-o(N_\e)}\le r<\bar r$ and $z\in\R^2$,
\begin{align}\label{eq:energyoutballs}
\int_{\R^2\setminus\B_{\e,R}^r}a\chi_R^z\Big(|\nabla u_\e-iu_\e N_\e \!\vi_\e\!|^2+\frac{a}{2\e^2}(1-|u_\e|^2)^2\Big)\le \D_{\e,R}^z+o(N_\e^2),
\end{align}
while in the regime $1\ll N_\e\lesssim\log\Log$ we have for all $r\ge e^{-o(N_\e)}$ and $r_0\le r$ with $\e^{1/2}<r_0\ll \frac{N_\e}\Log$, for all $z\in\R^2$,
\begin{align}\label{eq:energyoutballs2}
\int_{\R^2\setminus\tilde\B_{\e,R}^{r_0,r}}a\chi_R^z\Big(|\nabla u_\e-iu_\e N_\e \!\vi_\e\!|^2+\frac{a}{2\e^2}(1-|u_\e|^2)^2\Big)\le \D_{\e,R}^z+o(N_\e^2).
\end{align}
\item \emph{$\Ld^p$-estimate inside small balls:} In the regime $N_\e\gg\log\Log$, we have for all $\e^{1/2}< r<\bar r$ and $1\le p<2$,
\begin{align}\label{eq:Lpstruwe}
\sup_z\int_{\B_{\e,R}^r}\chi_R^z|\nabla u_\e-iu_\e N_\e\!\vi_\e\!|^p\lesssim_p o(N_\e^p),
\end{align}
while in the regime $1\ll N_\e\lesssim\log\Log$ we have for all $r>\e^{1/2}$ and $r_0\le r$ with $\e^{1/2}<r_0\ll \frac{N_\e}\Log$, for all $1\le p<2$,
\begin{equation}\label{eq:Lpstruwe2}
\sup_z\int_{\tilde \B_{\e,R}^{r_0,r}}\chi_R^z|\nabla u_\e-iu_\e N_\e\!\vi_\e\!|^p\lesssim_p o(N_\e^p).
\qedhere
\end{equation}
\end{enumerate}
\end{prop}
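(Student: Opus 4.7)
The overall strategy is a bootstrap argument built on Lemma~\ref{lem:ballconstr}. First, I would apply the localized lower bound~\eqref{eq:lowerboundclaim1} with $\phi=a\chi_R^z$ and a fixed small $r\simeq 1$, exploiting that $\|\nabla(a\chi_R^z)\|_{\Ld^\infty}\lesssim 1$ under the assumption $\|\nabla h\|_{\Ld^\infty}\lesssim1$. Combining with the Jacobian estimate of Lemma~\ref{lem:ballconstr}(iii) to replace $\nu_{\e,R}^r$ by $\tilde\mu_\e$, and then $\tilde\mu_\e$ by $\mu_\e$ via the identity~\eqref{eq:defmutilde} and the bound~\eqref{eq:boundbyEmod}, I would get an inequality of the form
\[\Ec_{\e,R}^* \le \D_{\e,R}^* + \tfrac12\Log\!\int a\chi_R^z|\nu_{\e,R}^r| + \text{(errors)},\]
where the errors are controlled by a small fraction of $\Ec_{\e,R}^*$ plus terms of order $N_\e\Log$. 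Using $\D_{\e,R}^*\lesssim N_\e^2\lesssim N_\e\Log$, one absorbs the small fraction of $\Ec_{\e,R}^*$ and buckles the argument to produce $\Ec_{\e,R}^*\lesssim N_\e\Log$. This bound immediately yields~\eqref{eq:numbvort} via~\eqref{eq:boundvortnumb}, and the sharper Jacobian estimates~\eqref{eq:jacobest}--\eqref{eq:vortmass} by reinjecting the improved $\Ec$-bound into Lemma~\ref{lem:ballconstr}(iii), with~\eqref{eq:jacobestbis} following from standard $C^\gamma$ Jacobian estimates (e.g.\@ \cite[Chapter~6]{SS-book}) together with $R\lesssim\Log^n$.

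For the refined lower bound~(i), which is the crux of the statement, I would revisit Lemma~\ref{lem:ballconstr}(i) with the now-known bound $\Ec_{\e,R}^*\lesssim N_\e\Log$. In the regime $N_\e\gg\log\Log$, for $r\ge e^{-o(N_\e)}$ one has $\log(r/\e)=\Log(1-o(1))$, the term $r\Ec_{\e,R}^*\|\nabla(a\chi_R^z)\|_{\Ld^\infty}$ is controlled by $o(N_\e\Log)\cdot O(N_\e/\Log)=o(N_\e^2)$ after summing degrees via~\eqref{eq:numbvort}, and the combined error of size $\frac{N_\e^2+\Ec_{\e,R}^*}{\Log}\log(2+\Ec_{\e,R}^*/\Log)\lesssim N_\e\log(2+N_\e)$ is $o(N_\e^2)$ precisely because $N_\e\gg\log\Log\gtrsim\log(2+N_\e)$. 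This gives~\eqref{eq:lowerbound}. In the regime $1\ll N_\e\lesssim\log\Log$, the log error $N_\e\log(2+N_\e)$ can dominate $N_\e^2$, so the direct application fails; I would instead repeat the ball-growth of~\cite{SS-11} starting from $\B_{\e,R}^{r_0}$ with the very small parameter $r_0\ll N_\e/\Log$, now covering the larger set $\{||u_\e|-1|\ge \Log^{-1}\}$ to obtain the second collection $\tilde\B_{\e,R}^{r_0,r}$; the smaller $r_0$ together with the fact that the extra covered region carries negligible vorticity (thanks to~\eqref{eq:boundbyEmod} and $\D_{\e,R}^*\lesssim N_\e^2$) allows one to keep the degrees of $\B_{\e,R}^{r_0}$ as the effective vortex measure while using the larger annular energy to absorb the logarithmic error, yielding~\eqref{eq:lowerboundbis}. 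The non-negativity of $\nu_{\e,R}^r$ in the very dilute regime $N_\e\ll\Log^{1/2}$ follows from the quantization: a vortex of negative degree would force at least $O(\Log)$ extra self-interaction energy on top of the positive ones, contradicting $\D_{\e,R}^*\lesssim N_\e^2\ll N_\e\Log$.

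The remaining items are then essentially bookkeeping consequences. For the excess energy estimate~(iv), I would apply~\eqref{eq:lowerbound} (resp.~\eqref{eq:lowerboundbis}) with a partition-of-unity argument, multiplying by $\phi$ and using the Jacobian estimates to replace $\int\phi|\nu_{\e,R}^r|$ by $\int\phi\mu_\e$ up to $o(N_\e^2)\|\phi\|_{W^{1,\infty}}$. Item~(v), the energy outside balls, is immediate by subtracting~\eqref{eq:lowerbound} (resp.~\eqref{eq:lowerboundbis}) from the definition~\eqref{eq:defDeR} of $\D_{\e,R}^z$. Item~(vi), the $\Ld^p$ control inside balls, would be obtained by a Struwe-type argument~\cite{Struwe-94}: on each ball $B^j$, Cauchy-Schwarz gives $\int_{B^j}|\nabla u_\e-iu_\e N_\e\vi_\e|^p\lesssim r_j^{2-p}(\Ec^{B^j}_\e)^{p/2}$; summing over balls with total radius $\lesssim r$ and using the degree sum $\sum_j|d_j|\lesssim N_\e$ together with Jensen/H\"older to redistribute the energy, one recovers an $o(N_\e^p)$ bound. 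The main technical obstacle throughout is the bootstrap in the presence of the non-trivial weight $a$: the weight $a$ varies at unit scale so that on balls of radius $r\simeq1$ it is not essentially constant, forcing the use of the localized, $\phi$-weighted form of the ball construction from~\cite{SS-11} together with the crucial observation that errors proportional to $\|\nabla(a\chi_R^z)\|_{\Ld^\infty}$ absorb a controlled fraction of $\Ec_{\e,R}^*$ only when $r$ is large enough — a condition competing with the lower bound $r>\e^{1/2}$ needed for the Jerrard-Sandier construction.
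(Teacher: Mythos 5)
Your bootstrap for the central estimate $\Ec_{\e,R}^*\lesssim N_\e\Log$ does not close. Starting from the lower bound with $\phi=a\chi_R^z$ and the Jacobian estimate, what you can write is
\[\Ec_{\e,R}^z=\D_{\e,R}^z+\tfrac{\Log}2\int a\chi_R^z\mu_\e \le \D_{\e,R}^z+\tfrac{\Log}2\int a\chi_R^z|\nu_{\e,R}^r|+o(\Ec_{\e,R}^*)+O(N_\e\Log),\]
and separately, from the lower bound, $\tfrac{\Log}2\int a\chi_R^z|\nu_{\e,R}^r|\le(1+o(1))\Ec_{\e,R}^z+o(\Ec_{\e,R}^*)+O(N_\e\Log)$. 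Substituting the latter into the former gives $\Ec_{\e,R}^z\le\D_{\e,R}^z+\Ec_{\e,R}^z+o(\Ec_{\e,R}^*)+O(N_\e\Log)$ --- a triviality. The problem is that the term $\tfrac{\Log}2\int a\chi_R^z|\nu_{\e,R}^r|$ is \emph{not} a small fraction of $\Ec_{\e,R}^*$; the Jacobian estimate only trades one unknown (\,$\nu$\,) for another (\,$\mu_\e$\,), and both are tied circularly to $\Ec_{\e,R}^*$. The ingredient you are missing, which is what the paper uses to break the circularity, is a direct estimate of $\int\chi_R^z\mu_\e$ by integration by parts: writing $\mu_\e=N_\e\curl\vi_\e+\curl(j_\e-N_\e\vi_\e)$ and integrating $\chi_R^z$ against the second term produces a factor $|\nabla\chi_R^z|\lesssim R^{-1}$, and since $R\gtrsim\Log$ this yields $R^{-1}(\Ec_{\e,R}^*)^{1/2}\ll\Ec_{\e,R}^*/\Log$. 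Splitting the $(j_\e-N_\e\vi_\e)$-integral into its parts inside and outside the balls, and bounding the outside part by $\D_{\e,R}^*$ plus small errors (the inequality you would separately obtain from the lower bound), produces an inequality for $\int\chi_R^z\mu_\e$ in terms of $N_\e$, $|\!\log r|$, and a genuinely small fraction of $\Ec_{\e,R}^*/\Log$; only then can the absorption you describe happen. In addition, since $\nabla h$ is not assumed integrable, one must first compare $\int a\chi_R^z\nu$ and $\int\chi_R^z\nu$ by showing the negative part of $\nu$ is $O(N_\e)$ plus a small fraction of $|\nu|$ --- another step your argument omits entirely. These gaps are in the part of the proposition on which everything else depends, so the remaining items, while sketched plausibly (your Struwe-type argument for~(vi), the growth-from-$r_0$ idea for the dilute regime in~(i), and the quantization argument for nonnegativity of $\nu$ are all in the right spirit), cannot be validated as they stand, since they all feed on the optimal energy bound.
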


\begin{proof}
We split the proof into eight steps. The main work consists in checking that the assumptions imply the optimal bound on the energy $\Ec_{\e,R}^*\lesssim N_\e\Log$. The conclusion is obtained in Step~5 for the regime $\log\Log\lesssim N_\e\lesssim\Log$, but only in Step~7 for the complementary regime $1\ll N_\e\ll\log\Log$. The various other claims are finally deduced in Step~8.

\medskip
\noindent\step1 Rough a priori bound on the energy.

In this step, we prove $\Ec_{\e,R}^*\lesssim R^2\Log^2$, and hence by the choice of $R$ we deduce $\Ec_{\e,R}^*\lesssim\Log^m$ for some $m\ge4$.
Decomposing $\mu_\e=N_\e\curl\!\vi_\e+\curl(j_\e-N_\e\!\vi_\e)$, the assumption $\D_{\e,R}^*\lesssim N_\e^2$ yields for all $z\in \R^2$,
\begin{align}\label{eq:decompEepsR}
\Ec_{\e,R}^z&\le \D_{\e,R}^*+\frac\Log2\int_{\R^2} a\chi_{R}^z\mu_\e\nonumber\\
&\lesssim N_\e^2+{N_\e\Log}\int_{\R^2} a\chi_{R}^z|\curl\!\vi_\e\!|+\Log\int_{\R^2}|\nabla (a\chi_{R}^z)||j_\e-N_\e\!\vi_\e\!|.
\end{align}
Using the pointwise estimate of Lemma~\ref{lem:pointest} for $j_\e-N_\e\!\vi_\e$,
using $|\nabla(a\chi_{R}^z)|\lesssim \mathds1_{B_{2R}(z)}$, $\|\curl\!\vi_\e\!\|_{\Ld^1}\lesssim1$, and $\|\!\vi_\e\!\|_{\Ld^\infty}\lesssim1$, we obtain
\begin{align*}
\Ec_{\e,R}^z&\lesssim \Log^2+\Log\Big(\int_{B_{2R}(z)}(1-|u_\e|^2)^2\Big)^{1/2}\Big(\int_{B_{2R}(z)}|\nabla u_\e-iu_\e N_\e\!\vi_\e\!|^2\Big)^{1/2}\\
&\qquad+R\Log\Big(\int_{B_{2R}(z)}|\nabla u_\e-iu_\e N_\e\!\vi_\e\!|^2\Big)^{1/2}+RN_\e\Log\Big(\int_{B_{2R}(z)}(1-|u_\e|^2)^2\Big)^{1/2}\\
&\lesssim \Log^2+\e\Log\Ec_{\e,R}^*+R\Log(\Ec_{\e,R}^*)^{1/2}.
\end{align*}
Taking the supremum over $z$, and absorbing $\Ec_{\e,R}^*$ into the left-hand side, the result follows.

\medskip
\noindent\step2 Application of Lemma~\ref{lem:ballconstr}.
\nopagebreak

The result of Step~1 yields in particular $\log\Ec_{\e,R}^*\ll\Log$, which allows to apply Lemma~\ref{lem:ballconstr}. For fixed $r\in(\e^{1/2},\bar r)$, let $\B_{\e,R}^r=\biguplus_jB^j$ denote the union of disjoint closed balls given by Lemma~\ref{lem:ballconstr}, and let $\nu_{\e,R}^r$ denote the associated point-vortex measure.
Using Lemma~\ref{lem:ballconstr}(ii) in the form
\begin{align}\label{eq:estnumbervort}
\int_{B_R(z)}|\nu_{\e,R}^r|=\sum_{j:y_j\in B_R(z)}|d_j|\lesssim N_\e+\frac{\Ec_{\e,R}^*}\Log,
\end{align}
Lemma~\ref{lem:ballconstr}(i) gives,
for all $\phi\in W^{1,\infty}(\R^2)$ supported in a ball of radius~$R$, with $\phi\ge0$,
\begin{multline}\label{eq:lowerboundclaim1b}
\frac12\int_{\B_{\e,R}^r}\phi\Big(|\nabla u_\e-iu_\e N_\e \!\vi_\e\!|^2+\frac{a}{2\e^2}(1-|u_\e|^2)^2\Big)\\
\ge \frac{\Log}2\int_{\R^2} \phi |\nu_{\e,R}^r|-O(r\Ec_{\e,R}^*)\|\nabla\phi\|_{\Ld^{\infty}}\\
-O\bigg(r^2N_\e^2+|\!\log r|\Big(N_\e+\frac{\Ec_{\e,R}^*}\Log\Big)+\Big(N_\e+\frac{\Ec_{\e,R}^*}\Log\Big)\log\Big(2+\frac{\Ec_{\e,R}^*}\Log\Big)\bigg)\|\phi\|_{\Ld^{\infty}}.
\end{multline}
We now prove the following consequence of these  bounds, for all $z\in\R^2$,
\begin{multline}\label{eq:energyR2minB}
\int_{\R^2\setminus\B_{\e,R}^r}a\chi_R^z\Big(|\nabla u_\e-iu_\e N_\e \!\vi_\e\!|^2+\frac{a}{2\e^2}(1-|u_\e|^2)^2\Big)\\
\le \D_{\e,R}^z+O\bigg(r\Ec_{\e,R}^*+(|\!\log r|+r\Log)\Big(N_\e+\frac{\Ec_{\e,R}^*}\Log\Big)\\
+\Big(N_\e+\frac{\Ec_{\e,R}^*}\Log\Big)\log\Big(2+\frac{\Ec_{\e,R}^*}\Log\Big)\bigg).
\end{multline}
First, the lower bound~\eqref{eq:lowerboundclaim1b} applied to  $\phi=a\chi_R^z$ is rewritten as follows,
\begin{multline*}
\frac12\int_{\R^2\setminus\B_{\e,R}^r}a\chi_R^z\Big(|\nabla u_\e-iu_\e N_\e \!\vi_\e\!|^2+\frac{a}{2\e^2}(1-|u_\e|^2)^2\Big)\\
\le T_{\e,R}^{r,z}+O\bigg(r\Ec_{\e,R}^*+r^2N_\e^2+|\!\log r|\Big(N_\e+\frac{\Ec_{\e,R}^*}\Log\Big)+\Big(N_\e+\frac{\Ec_{\e,R}^*}\Log\Big)\log\Big(2+\frac{\Ec_{\e,R}^*}\Log\Big)\bigg),
\end{multline*}
where we have set
\[T_{\e,R}^{r,z}:=\frac12\int_{\R^2} a\chi_R^z\Big(|\nabla u_\e-iu_\e N_\e \!\vi_\e\!|^2+\frac{a}{2\e^2}(1-|u_\e|^2)^2-\Log\nu_{\e,R}^r\Big).\]
If $\nu_{\e,R}^r$ was replaced by $\mu_\e$ in this last expression, we would recognize the definition of the excess $\D_{\e,R}^z$, and the result~\eqref{eq:energyR2minB} would follow. Hence, it only remains to check that for all $\phi\in W^{1,\infty}(\R^2)$ supported in a ball of radius $R$,
\begin{align}\label{eq:munuexch}
\Big|\int_{\R^2} \phi(\mu_\e-\nu_{\e,R}^r)\Big|\lesssim r\Big(N_\e+\frac{\Ec_{\e,R}^*}\Log\Big)\|\phi\|_{W^{1,\infty}}+\e^{1/3}\|\phi\|_{W^{1,\infty}}.
\end{align}
Using the result of Step~1 in the form $\e^{1/6}\Ec_{\e,R}^*\lesssim1$, Lemma~\ref{lem:ballconstr}(iii) with $\gamma=1$ yields
\begin{align*}
\Big|\int_{\R^2} \phi(\tilde\mu_\e-\nu_{\e,R}^r)\Big|\lesssim r\Big(N_\e+\frac{\Ec_{\e,R}^*}\Log\Big)\|\phi\|_{W^{1,\infty}}+\e^{1/3}\|\phi\|_{W^{1,\infty}}.
\end{align*}
It remains to replace $\tilde \mu_\e$ by $\mu_\e$ in this estimate. By definition~\eqref{eq:defmutilde}, with $\|\!\vi_\e\!\|_{\Ld^\infty}\lesssim1$ and $|\nabla \phi|\le\mathds1_{B_{R}(z)}\|\phi\|_{W^{1,\infty}}$, and using the result of Step~1 in the form $\e^{2/3} RN_\e(\Ec_{\e,R}^*)^{1/2}\lesssim1$, we find
\begin{align}
\Big|\int_{\R^2} \phi(\tilde\mu_\e-\mu_\e)\Big|&\le N_\e\int_{B_{R}(z)}|\nabla\phi||\!\vi_\e\!||1-|u_\e|^2|\nonumber\\
&\lesssim RN_\e\|\phi\|_{W^{1,\infty}}\Big(\int_{B_{R}(z)}(1-|u_\e|^2)^2\Big)^{1/2}\nonumber\\
&\lesssim \e RN_\e(\Ec_{\e,R}^*)^{1/2}\|\phi\|_{W^{1,\infty}}\lesssim\e^{1/3}\|\phi\|_{W^{1,\infty}},\label{eq:boundmumutildeequ}
\end{align}
and the result~\eqref{eq:munuexch} follows.

\medskip
\noindent\step3 Energy and number of vortices.

In this step, we show that~\eqref{eq:estnumbervort} is essentially an equality, in the following sense: for all $\e^{1/2}<r\ll1$,
\begin{align}\label{eq:energyvort}
\sup_z\int_{\R^2}\chi_R^z|\nu_{\e,R}^r|\lesssim N_\e+\frac{\Ec_{\e,R}^*}\Log \lesssim N_\e+\sup_z\int_{\R^2} \chi_R^z|\nu_{\e,R}^r|.
\end{align}
The lower bound follows from~\eqref{eq:estnumbervort}. We turn to the upper bound.
Since the energy excess satisfies $\D_{\e,R}^z\lesssim N_\e^2$, we deduce from~\eqref{eq:munuexch},
\begin{align}\label{eq:energyvort-pre}
\Ec_{\e,R}^z&\le \D_{\e,R}^z+\frac\Log2\int_{\R^2} a\chi_R^z\mu_\e\nonumber\\
&\le \frac\Log2\int_{\R^2} a\chi_R^z\nu_{\e,R}^r+O\bigg(N_\e^2+r\Log\Big(N_\e+\frac{\Ec_{\e,R}^*}\Log\Big)\bigg).
\end{align}
Taking the supremum in $z$, and absorbing $\Ec_{\e,R}^*$ in the left-hand side with $r\ll1$,
the upper bound in~\eqref{eq:energyvort} follows.

\medskip
\noindent\step4 Bound on the total variation of the vorticity.

In this step, we prove that for all $e^{-o(\Log)}<r\ll1$,
\begin{align}\label{eq:signrough}
\sup_z\int_{\R^2}\chi_R^z|\nu_{\e,R}^r|\le (1+o(1))\sup_z\int_{\R^2}\chi_R^z\nu_{\e,R}^r+O(N_\e).
\end{align}
This result is used in Step~5 below in order to replace $\int a\chi_R^z\nu_{\e,R}^r$ (resp. $\int a\chi_R^z\mu_{\e}$) by $\int \chi_R^z\nu_{\e,R}^r$ (resp. $\int \chi_R^z\mu_{\e}$), which is crucial if we want to avoid integrability assumptions on $\nabla h$, as we do here.

The lower bound~\eqref{eq:lowerboundclaim1b} of Step~2 with $\phi=a\chi_R^y$ yields for all $y\in\R^2$, using the upper bound in~\eqref{eq:energyvort} to replace the energy $\Ec_{\e,R}^*$ in the error terms,
\begin{align*}
\Ec_{\e,R}^y&\ge \frac12\int_{\B_{\e,R}^r}a\chi_R^y\Big(|\nabla u_\e-iu_\e N_\e \!\vi_\e\!|^2+\frac{a}{2\e^2}(1-|u_\e|^2)^2\Big)\\
&\ge \frac{\Log}2\int_{\R^2} a\chi_R^y |\nu_{\e,R}^r|-O\bigg((|\!\log r|+r\Log) \Big(N_\e+\sup_z\int_{\R^2} \chi_R^z|\nu_{\e,R}^r|\Big)\\
&\hspace{5cm}+\Big(N_\e+\sup_z\int_{\R^2} \chi_R^z|\nu_{\e,R}^r|\Big)\log\Big(2+\frac{\Ec_{\e,R}^*}\Log\Big)\bigg).
\end{align*}
For $e^{-o(\Log)}<r\ll1$, using the result of Step~1 in the form $\log\Ec_{\e,R}^*\ll\Log$, we obtain for all $y\in\R^2$,
\begin{align}\label{eq:preboundinfenergy}
\Ec_{\e,R}^y\ge \frac{\Log}2\int_{\R^2} a\chi_R^y |\nu_{\e,R}^r|-o(\Log)\sup_z\int_{\R^2} \chi_R^z|\nu_{\e,R}^r|-o(N_\e\Log).
\end{align}
On the other hand, the upper bound~\eqref{eq:energyvort-pre} yields
\begin{align}\label{eq:preboundsupenergy}
\Ec_{\e,R}^y\le\frac\Log2\int_{\R^2} a\chi_R^y\nu_{\e,R}^r+O(N_\e\Log)+o(1)\Ec_{\e,R}^*,
\end{align}
and thus, taking the supremum over $y$ and absorbing $\Ec_{\e,R}^*$ in the left-hand side,
\begin{align*}
\Ec_{\e,R}^*\le\frac\Log2(1+o(1))\sup_z\int_{\R^2} a\chi_R^z|\nu_{\e,R}^r|+O(N_\e\Log),
\end{align*}
so that~\eqref{eq:preboundsupenergy} takes the form, for all $y\in\R^2$,
\begin{align*}
\Ec_{\e,R}^y\le\frac\Log2\int_{\R^2} a\chi_R^y\nu_{\e,R}^r+O(N_\e\Log)+o(\Log)\sup_z\int_{\R^2} \chi_R^z|\nu_{\e,R}^r|.
\end{align*}
Combining this with~\eqref{eq:preboundinfenergy}, dividing both sides by $\frac12\Log$, and taking the supremum over~$y$, we find
\begin{align*}
\sup_z\int_{\R^2} \chi_R^z (\nu_{\e,R}^r)^-\lesssim\sup_z\int_{\R^2} a\chi_R^z (|\nu_{\e,R}^r|-\nu_{\e,R}^r)\le O(N_\e)+o(1)\sup_z\int \chi_R^z|\nu_{\e,R}^r|,
\end{align*}
hence
\begin{multline*}
\sup_z\int_{\R^2} \chi_R^z |\nu_{\e,R}^r|=\sup_z\int_{\R^2} \chi_R^z (\nu_{\e,R}^r+2(\nu_{\e,R}^r)^-)\\
\le \sup_z\int_{\R^2} \chi_R^z \nu_{\e,R}^r+O(N_\e)+o(1)\sup_z\int_{\R^2} a\chi_R^z|\nu_{\e,R}^r|,
\end{multline*}
and the result~\eqref{eq:signrough} follows after absorbing the last right-hand side term.

\medskip
\noindent\step5 Refined bound on the energy.

In this step, we prove $\Ec_{\e,R}^*\lesssim(N_\e+\log\Log)\Log$. By~\eqref{eq:estnumbervort} this implies in particular $\sup_z\int_{\R^2}\chi_R^z|\nu_{\e,R}^r|\lesssim N_\e+\log\Log$. In the regime $N_\e\gtrsim\log\Log$, these bounds are already the optimal ones. The strongly dilute regime $1\ll N_\e\ll\log\Log$ is treated in Steps~6--7.

Let $e^{-o(\Log)}<r\ll1$ to be suitably chosen later. Using~\eqref{eq:munuexch}, the bound on the energy excess $\D_{\e,R}^*\lesssim N_\e^2$ yields for all $z\in R\Z^2$,
\begin{align*}
\Ec_{\e,R}^z&\le \D_{\e,R}^z+\frac\Log2\int_{\R^2} a\chi_R^z\mu_\e\lesssim N_\e^2+\Log\int_{\R^2} \chi_R^z|\nu_{\e,R}^r|+r(N_\e\Log+\Ec_{\e,R}^*),
\end{align*}
and hence, using the result~\eqref{eq:signrough} of Step~4,
\begin{align*}
\Ec_{\e,R}^*&\lesssim N_\e\Log+\Log\sup_z\int_{\R^2} \chi_R^z\nu_{\e,R}^r+r\Ec_{\e,R}^*.
\end{align*}
Using~\eqref{eq:munuexch} again, and absorbing $\Ec_{\e,R}^*$ in the left-hand side with $r\ll1$, this takes the form
\begin{align}\label{eq:boundenergypreconcl}
\Ec_{\e,R}^*&\lesssim N_\e\Log+\Log\sup_z\int_{\R^2} \chi_R^z\mu_{\e}.
\end{align}
It remains to estimate $\int_{\R^2}\chi_R^z\mu_\e$.
Decomposing $\mu_\e=N_\e\curl\!\vi_\e+\curl(j_\e-N_\e\!\vi_\e)$, using the pointwise estimate of Lemma~\ref{lem:pointest} for $j_\e-N_\e\!\vi_\e$, using $|\nabla \chi_{R}^z|\lesssim R^{-1}\mathds1_{B_{2R}(z)}$, $\|\nabla\chi_R^z\|_{\Ld^2}\lesssim1$, $\|\curl\!\vi_\e\!\|_{\Ld^1}\lesssim1$, $\|\!\vi_\e\!\|_{\Ld^\infty}\lesssim1$, and using the result of Step~1 in the form $\e \Ec_{\e,R}^*\lesssim1$, we find
\begin{align*}
\int_{\R^2} \chi_R^z\mu_\e&=N_\e\int_{\R^2}\chi_R^z\curl\!\vi_\e-\int_{\R^2}\nabla^\bot\chi_R^z\cdot(j_\e-N_\e\!\vi_\e)\\
&\lesssim N_\e+\int_{\R^2}|\nabla\chi_R^z||\nabla u_\e-iu_\e N_\e\!\vi_\e\!|.
\end{align*}
Regarding the last integral, we distinguish between the contributions inside and outside the balls $\B_{\e,R}^r$, with $|\nabla\chi_R^z|\lesssim R^{-1}\mathds1_{B_{2R}(z)}\le R^{-1}\chi_{2R}^z$, $\|\nabla\chi_R^z\|_{\Ld^2}\lesssim1$, and $|B_{2R}(z)\cap\B_{\e,R}^r|\lesssim r^2$,
\begin{align}
\int_{\R^2} \chi_R^z\mu_\e&\lesssim N_\e+\int_{\R^2\setminus\B_{\e,R}^r}|\nabla\chi_R^z||\nabla u_\e-iu_\e N_\e\!\vi_\e\!|+R^{-1}\int_{B_{2R}(z)\cap\B_{\e,R}^r}|\nabla u_\e-iu_\e N_\e\!\vi_\e\!|\nonumber\\
&\lesssim N_\e+\Big(\int_{\R^2\setminus\B_{\e,R}^r}\chi_{2R}^z|\nabla u_\e-iu_\e N_\e\!\vi_\e\!|^2\Big)^{1/2}\nonumber\\
&\hspace{5cm}+rR^{-1}\Big(\int_{B_{2R}(z)}|\nabla u_\e-iu_\e N_\e\!\vi_\e\!|^2\Big)^{1/2}.\label{eq:boundvortpreconcl}
\end{align}
Estimating the last right-hand side term by $rR^{-1}(\Ec_{\e,R}^*)^{1/2}$, using~\eqref{eq:energyR2minB} to estimate the first, using the bound on the energy excess $\D_{\e,R}^*\lesssim N_\e^2$, and noting that $k^{1/2}\log^{1/2}(2+k)\ll k$ holds for $k\gg1$, we obtain
\begin{eqnarray*}
\lefteqn{\int_{\R^2} \chi_R^z\mu_\e\lesssim N_\e+(\D_{\e,R}^*)^{1/2}+rR^{-1}(\Ec_{\e,R}^*)^{1/2}+r^{1/2}(N_\e\Log+\Ec_{\e,R}^*)^{1/2}}\\
&&\hspace{4cm}+\Big(N_\e+\frac{\Ec_{\e,R}^*}\Log\Big)^{1/2}\bigg(|\!\log r|+\log\Big(2+\frac{\Ec_{\e,R}^*}\Log\Big)\bigg)^{1/2}\\
&\lesssim& N_\e+ r^{1/2}(N_\e\Log)^{1/2}+ r^{1/2}(\Ec_{\e,R}^*)^{1/2}+o(1)\frac{\Ec_{\e,R}^*}\Log+|\!\log r|^{1/2}\Big(N_\e+\frac{\Ec_{\e,R}^*}\Log\Big)^{1/2}.
\end{eqnarray*}
Combining this with~\eqref{eq:boundenergypreconcl} yields
\begin{multline*}
\frac{\Ec_{\e,R}^*}\Log\lesssim N_\e+ r^{1/2}(N_\e\Log)^{1/2}\\
+ r^{1/2}(\Ec_{\e,R}^*)^{1/2}+o(1)\frac{\Ec_{\e,R}^*}\Log+|\!\log r|^{1/2}\Big(N_\e+\frac{\Ec_{\e,R}^*}\Log\Big)^{1/2},
\end{multline*}
and hence, 
\begin{align*}
\frac{\Ec_{\e,R}^*}\Log&\lesssim N_\e+|\!\log r|+ r^{1/2}\Log.
\end{align*}
The result follows from the choice $r=\Log^{-2}$.

\medskip
\noindent\step6 Refined lower bound in the strongly dilute regime.

In this step, we study the regime $1\ll N_\e\lesssim\log\Log$, for which the result of Step~5 is not optimal. More precisely, we consider the whole regime $1\ll N_\e\lesssim\Log$ and we show the following: for all $r_0\in(\e^{1/2},\bar r)$ and $r\ge r_0$, there exists a locally finite union of disjoint closed balls $\tilde\B_{\e,R}^{r_0,r}$, monotone in $r$, covering the set $\{x:||u_\e(x)|-1|\ge\Log^{-1}\}$, such that for all $z$ the sum of the radii of the balls intersecting $B_R(z)$ is bounded by $Cr$, and such that for all $\e>0$ small enough, and all $r_0\le r$ satisfying
\begin{align}\label{eq:choicerr0}
\e^{1/2}<r_0\ll \frac{N_\e}{\Log}\frac{N_\e}{N_\e+\log\Log},\qquad e^{-o(N_\e)}\le r\ll1,
\end{align}
we have for all $z\in \R^2$,
\begin{multline}\label{eq:lowerboundref}
\frac12\int_{\tilde\B_{\e,R}^{r_0,r}}a\chi_R^z\Big(|\nabla u_\e-iu_\e N_\e \!\vi_\e\!|^2+\frac{a}{2\e^2}(1-|u_\e|^2)^2\Big)\\
\ge \frac{\Log}2\int_{\R^2} a\chi_R^z \nu_{\e,R}^{r_0}-o(1)\Big(\frac{\Ec_{\e,R}^*}\Log\Big)^2-o(N_\e^2).
\end{multline}
We split the proof into three further substeps.

\medskip
\noindent\substep{6.1} Enlarged balls: in this step, given some fixed $r_0\in(\e^{1/2},\bar r)$, we construct the enlarged collections of balls $\tilde\B_{\e,R}^{r_0,r}$ for $r\ge r_0$.

According to \cite[Proposition~4.8]{SS-book}, and using the energy estimate of Step~5, we have
\[\mathcal{H}^1(\{ x \in B_R(z), ||u_\e(x)|-1|\ge \Log^{-1}\})\le C \e \Log^2\Ec_{\e,R}^*\le C\e\Log^4,\]
where $\mathcal H^1$ denotes the $1$-dimensional Haussdorff measure. From~\cite[Section~4.4.1]{SS-book} and~\cite[Section~2.2]{SS-11}, it follows that we may cover the set $\{ x: ||u_\e(x)|-1|\ge \Log^{-1}\} $ by a locally finite union of disjoint closed balls such that for all $z$ the sum of the radii of the balls intersecting $B_R(z)$ is bounded by $C \e \Log^4$.
We then combine this collection of balls with the collection $\B_{\e, R}^{r_0}$. Inductively merging as in \cite[Lemma 4.1]{SS-book} any two such balls that intersect into a ball with the same total radius, we obtain a new collection 
$\tilde\B_{\e, R}^{r_0}$ of disjoint closed balls that cover the set $\{x:||u_\e(x)|-1|\ge\Log^{-1}\}$, and such that for all $z$ the sum of the radii of the balls intersecting $B_R(z)$ is bounded by $r_0 + C \e \Log^6\le Cr_0$.

Let us now grow the balls of this new collection $\tilde \B_{\e,R}^{r_0}$ following Sandier's ball construction, as described e.g.\@ in~\cite[Theorem 4.2]{SS-book}. This consists in growing simultaneously all the balls  keeping their centers fixed and multiplying their radius by the same factor $t$. If some balls touch at some point during the growth, the corresponding balls are merged into one larger ball containing the previous ones and with the same total radius. This construction ensures that the balls always remain disjoint. Stopping the growth process at some value of the factor $t$, and setting $r=tr_0$, we denote by $\tilde\B_{\e,R}^{r_0,r}$ the corresponding locally finite collection of disjoint closed balls. By construction,
for all $z$, the sum of the radii of the balls that intersect $B_R(z)$ is bounded by $Ct(r_0+C\e\Log^6)\le Cr$. Note that by construction $\B_{\e,R}^{r_0}\subset \tilde\B_{\e,R}^{r_0}=\tilde\B_{\e,R}^{r_0,r_0}$.

\medskip
\noindent\substep{6.2} Preliminary estimate.

According to \cite[Lemma 3.2]{S-Tice-08} (applied with  $c=d$ and $\lambda = 1$), we have, for any $\Sb^1$-valued map $v$  with  degree $d$ on a generic ball $B$ of radius $r$, and for any vector field $A:\partial B\to\R^2$,
\begin{equation*}
\frac12 \int_{\partial B} |\nab v - iv A |^2 + \frac12   \int_B |\curl A|^2\ge \frac{\pi d^2}{r} - \frac{\pi d^2}2 + \frac12 \int_{\partial B} \left|\nab v- iv A - i vd \frac{\tau }{r}\right|^2,
\end{equation*}
where $\tau $ denotes the unit tangent to the circle $\partial B$.
Applying it to $v=\frac{u_\e}{|u_\e|} $ and $A=N_\e\! \vi_\e$, and noting that $|\nabla u_\e-iu_\e F|^2=|u_\e|^2|\nabla\frac{u_\e}{|u_\e|}-i\frac{u_\e}{|u_\e|}F|^2+|\nabla|u_\e||^2$ holds for any real-valued vector field $F$, we obtain the following improved lower bound on annuli: if the condition $||u_\e|-1|\le\Log^{-1}$ holds on $\partial B$, then we have
\begin{multline}\label{stice}
(1+O(\Log^{-1}))\frac12 \int_{\partial B} |\nab u_\e - iu_\e N_\e \!\vi_\e\!|^2   + \frac12   N_\e^2 \int_B |\curl\!\vi_\e\!|^2
\\  \ge  \frac{\pi d^2}{r} -\frac{\pi d^2}2 + \frac12 (1-O(\Log^{-1})) \int_{\partial B} \Big|\nab u_\e- iu_\e N_\e \!\vi_\e - i u_\e d \frac{\tau }{r}\Big|^2.
\end{multline}

\medskip
\noindent\substep{6.3} Proof of~\eqref{eq:lowerboundref}.

Let $r_0>0$ be chosen as in~\eqref{eq:choicerr0}. We start from Lemma~\ref{lem:ballconstr}(i) with $\phi=a\chi_R^z$, combined with the refined energy estimate of Step~5 and the choice of~$r_0$, which yields
\begin{multline}\label{eq:lowerboundr0}
\frac12\int_{\B_{\e,R}^{r_0}}a\chi_R^z\Big(|\nabla u_\e-iu_\e N_\e \!\vi_\e\!|^2+\frac{a}{2\e^2}(1-|u_\e|^2)^2\Big)\\
\ge \frac{\log(\tfrac{r_0}\e)}2\int a\chi_R^z |\nu_{\e,R}^{r_0}|-o(N_\e^2)-C\Big(N_\e+\frac{\Ec_{\e,R}^*}\Log\Big)\log\Big(2+\frac{\Ec_{\e,R}^*}\Log\Big).
\end{multline}
We next need to show that this lower bound for the energy is essentially maintained during the ball growth and merging process, hence holds as well for the collections $\tilde\B_{\e,R}^{r_0,r}$ with $r>r_0$.

Assume that some ball $B=\bar B(y,s)$  gets grown into $B'=\bar B(y,t s)$ without merging, for some $t\ge1$, and assume that $B'\setminus B$ does not intersect $\tilde\B_{\e,R}^{r_0}$, so that $||u_\e|-1|\le\Log^{-1}$ holds on $B'\setminus B$. Let $d$ denote the degree of $B$ (hence of~$B'$). Since by assumption we have
\begin{align}\label{eq:Lip-achi}
|a(x)\chi_R^z(x)-a(y)\chi_R^z(y)|&\le \chi_R^z(y)|a(x)-a(y)|+a(x)|\chi_R^z(x)-\chi_R^z(y)|\nonumber\\
&\le C\big(R^{-1}+\chi_R^z(y)\big)|x-y|,
\end{align}
we may write
\begin{multline*}
\frac12\int_{B' \setminus B} a\chi_R^z\Big(|\nabla u_\e-iu_\e N_\e\!\vi_\e\!|^2+\frac a{2\e^2}(1-|u_\e|^2)^2\Big)\\
\ge\frac{a(y)\chi_R^z(y)}2 \int_{B'\setminus B} |\nabla u_\e-iu_\e N_\e\!\vi_\e\!|^2
-CR^{-1}\int_{B'\setminus B}|\cdot-y| |\nabla u_\e-iu_\e N_\e\!\vi_\e\!|^2\\
-C\chi_R^z(y)\int_{B'\setminus B}|\cdot-y| |\nabla u_\e-iu_\e N_\e\!\vi_\e\!|^2.
\end{multline*}
Using that $|u_\e|\le1+\Log^{-1}$ holds on $B'\setminus B$, the last right-hand side term above is estimated as follows,
\begin{multline}\label{eq:upbound-radcomp}
\int_{B'\setminus B}|\cdot-y| |\nabla u_\e-iu_\e N_\e\!\vi_\e\!|^2\\
\le 2 \int_{B'\setminus B}|\cdot-y|\,|u_\e|^2\Big| \frac{\tau d}{|\cdot-y|}\Big|^2+ 2\int_{B'\setminus B}|\cdot-y|~ \Big|\nabla u_\e-iu_\e N_\e\!\vi_\e - iu_\e\frac{\tau d}{|\cdot-y|}\Big|^2\\
\le Cd^2(t-1)s + 2ts\int_{B'\setminus B} \Big|\nabla u_\e-iu_\e N_\e\!\vi_\e- iu_\e \frac{\tau d}{|\cdot-y|}\Big|^2,
\end{multline}
where $\tau(x)=\frac{(x-y)^\bot}{|x-y|}$ is the unit tangent to the circle centered at $y$, and we may then deduce
\begin{multline}\label{eq:prelowerb}
\frac12\int_{B' \setminus B} a\chi_R^z\Big(|\nabla u_\e-iu_\e N_\e\!\vi_\e\!|^2+\frac a{2\e^2}(1-|u_\e|^2)^2\Big)\\
\ge\frac{a(y)\chi_R^z(y)}2\int_{B'\setminus B} |\nabla u_\e-iu_\e N_\e\!\vi_\e\!|^2
-CtsR^{-1}\int_{B'\setminus B} |\nabla u_\e-iu_\e N_\e\!\vi_\e\!|^2\\
-Cd^2 (t-1)s\chi_R^z(y)
-Cts\chi_R^z(y)\int_{B'\setminus B} \Big|\nabla u_\e-iu_\e N_\e\!\vi_\e - iu_\e \frac{\tau d}{|\cdot-y|}\Big|^2.
\end{multline}
Again using that $||u_\e|-1|\le\Log^{-1}$ holds on $B'\setminus B$, the estimate~\eqref{stice} on the ball $B(y,\rho)$ for $\rho$ integrated between $s$ and $ts$ takes the form
\begin{multline*}
(1+C\Log^{-1}) \frac12\int_{B'\setminus B} |\nab u_\e - iu_\e N_\e \!\vi_\e\!|^2\\
\ge \pi d^2\log t -\frac{\pi}2d^2(t-1)s-\frac12N_\e^2(t-1)s \int_{B'} |\curl\!\vi_\e\!|^2\\
+ (1-C\Log^{-1})\frac12 \int_{B'\setminus B} \Big|\nab u_\e- iu_\e N_\e \!\vi_\e - i u_\e \frac{\tau d}{|\cdot-y|}\Big|^2.
\end{multline*}
Combining this with~\eqref{eq:prelowerb}, we are led to
\begin{multline}\label{eq:stice-int}
(1+C\Log^{-1})\frac12\int_{B' \setminus B} a\chi_R^z\Big(|\nabla u_\e-iu_\e N_\e\!\vi_\e\!|^2+\frac a{2\e^2}(1-|u_\e|^2)^2\Big)\\
\ge a(y)\chi_R^z(y)\pi d^2\log t-C(t-1)s\Big(d^2+N_\e^2\int_{B'} |\curl\!\vi_\e\!|^2\Big)\\
-CtsR^{-1}\int_{B'\setminus B} |\nabla u_\e-iu_\e N_\e\!\vi_\e\!|^2\\
+\Big(\frac{a(y)}2 (1-C\Log^{-1}) -Cts\Big)\chi_R^z(y)\int_{B'\setminus B} \Big|\nabla u_\e-iu_\e N_\e\!\vi_\e - iu_\e \frac{\tau d}{|\cdot-y|}\Big|^2.
\end{multline}
For $\e$ small enough and $ts\le\min\{1,\frac1{4C}\inf a\}=:\tilde r$ (note that by assumption $\tilde r\simeq1$), the last right-hand side term is nonnegative, so that we conclude
\begin{multline}
{(1+C\Log^{-1})\frac12\int_{B' \setminus B} a\chi_R^z\Big(|\nabla u_\e-iu_\e N_\e\!\vi_\e\!|^2+\frac a{2\e^2}(1-|u_\e|^2)^2\Big)}\\
\ge a(y)\chi_R^z(y)\pi d^2\log t-C(t-1)s(d^2+N_\e^2)\\
-CtsR^{-1}\int_{B'\setminus B} |\nabla u_\e-iu_\e N_\e\!\vi_\e\!|^2.\label{eq:lowerboundinflate}
\end{multline}

If the ball $B=\bar B(y,s)$ belongs to the collection $\tilde\B_{\e,R}^{r_0,r}$ for some $r\ge r_0$, only a finite number of balls of the collection $\B_{\e,R}^{r_0}$ are included in the ball $B$. Denote them by $B^j=\bar B(y_j,s_j)$, $j=1,\ldots,k$. By definition, the degree $d$ of $B$ is then equal to $d=\sum_jd_j$, where~$d_j$ denotes the degree of $B^j$.
We may then write
\begin{align*}
a(y)\chi_R^z(y)d^2\ge a(y)\chi_R^z(y)\sum_jd_j
&\ge \sum_ja(y_j)\chi_R^z(y_j)d_j-C\sum_j|d_j||y-y_j|\mathds1_{B_{2R}(z)}(y_j)\\
&\ge \sum_ja(y_j)\chi_R^z(y_j)d_j-Cs\sum_j|d_j|\mathds1_{B_{2R}(z)}(y_j),
\end{align*}
and hence, in terms of the point-vortex measure $\nu_{\e,R}^{r_0}$,
\begin{align}\label{eq:usealower}
a(y)\chi_R^z(y)d^2\ge \frac1{2\pi}\int_Ba\chi_R^z\nu_{\e,R}^{r_0}-Cs\int_{B_{2R}(z)}|\nu_{\e,R}^{r_0}|.
\end{align}
Therefore, if the ball $B=\bar B(y,s)$ belongs to the collection $\tilde\B_{\e,R}^{r_0,r}$ for some $r\ge r_0$ and gets grown without merging into a ball $B'=\bar B(y,t s)$ for some $t\ge1$ with $ts\le \tilde r$, then combining~\eqref{eq:lowerboundinflate} and~\eqref{eq:usealower} yields
\begin{multline*}
(1+C\Log^{-1})\frac12\int_{B' \setminus B} a\chi_R^z\Big(|\nabla u_\e-iu_\e N_\e\!\vi_\e\!|^2+\frac a{2\e^2}(1-|u_\e|^2)^2\Big)\\
\ge \frac{\log t}2\int_B a\chi_R^z\nu_{\e,R}^{r_0}-Cs\log t\int_{B_{2R}(z)}|\nu_{\e,R}^{r_0}|-C(t-1)s\Big(N_\e+\int_{B_{2R}(z)}|\nu_{\e,R}^{r_0}|\Big)^2\\
-CtsR^{-1}\int_{B'\setminus B} |\nabla u_\e-iu_\e N_\e\!\vi_\e\!|^2,
\end{multline*}
hence, using Lemma~\ref{lem:ballconstr}(ii) and the inequality $|\!\log t|\le t-1$ for $t\ge1$,
\begin{multline*}
\frac12\int_{B' \setminus B} a\chi_R^z\Big(|\nabla u_\e-iu_\e N_\e\!\vi_\e\!|^2+\frac a{2\e^2}(1-|u_\e|^2)^2\Big)\\
\ge \frac{\log t}2\int_B a\chi_R^z\nu_{\e,R}^{r_0}-C(t-1)s\Big(N_\e+\frac{\Ec_{\e,R}^*}\Log\Big)^2
-CtsR^{-1}\int_{B'\setminus B} |\nabla u_\e-iu_\e N_\e\!\vi_\e\!|^2.
\end{multline*}

By construction of the ball growth and merging process, this easily implies the following: if a ball $B=\bar B(y_B,s_B)$ belongs to the collection $\tilde\B_{\e,R}^{r_0,r}$ for some $r_0\le r\le \tilde r$, then we have
\begin{multline*}
\frac12\int_{B\setminus \tilde\B_{\e,R}^{r_0}} a\chi_R^z\Big(|\nabla u_\e-iu_\e N_\e\!\vi_\e\!|^2+\frac a{2\e^2}(1-|u_\e|^2)^2\Big)\\
\ge \frac{\log(\tfrac r{r_0})}2\int_B a\chi_R^z\nu_{\e,R}^{r_0}-Cs_B\Big(N_\e+\frac{\Ec_{\e,R}^*}\Log\Big)^2-Cs_BR^{-1}\int_{B\setminus \tilde\B_{\e,R}^{r_0}} |\nabla u_\e-iu_\e N_\e\!\vi_\e\!|^2,
\end{multline*}
hence, using the choice $R\gtrsim\Log$,
\begin{multline*}
\frac12\int_{B\setminus \tilde\B_{\e,R}^{r_0}} a\chi_R^z\Big(|\nabla u_\e-iu_\e N_\e\!\vi_\e\!|^2+\frac a{2\e^2}(1-|u_\e|^2)^2\Big)\\
\ge \frac{\log(\tfrac r{r_0})}2\int_B a\chi_R^z\nu_{\e,R}^{r_0}-Cs_B\Big(N_\e+\frac{\Ec_{\e,R}^*}\Log\Big)^2.
\end{multline*}
Summing this estimate over all the balls $B$ of the collection $\tilde\B_{\e,R}^{r_0,r}$ that intersect $B_{2R}(z)$, and recalling that the sum of the radii of these balls is by construction bounded by $Cr$, we deduce for all $r_0\le r\le\tilde r$,
\begin{multline*}
\frac12\int_{\tilde\B_{\e,R}^{r_0,r}\setminus \tilde\B_{\e,R}^{r_0}} a\chi_R^z\Big(|\nabla u_\e-iu_\e N_\e\!\vi_\e\!|^2+\frac a{2\e^2}(1-|u_\e|^2)^2\Big)\\
\ge \frac{\log(\tfrac r{r_0})}2\int_{\R^2} a\chi_R^z\nu_{\e,R}^{r_0}-Cr\Big(N_\e+\frac{\Ec_{\e,R}^*}\Log\Big)^2.
\end{multline*}
Combining this with~\eqref{eq:lowerboundr0}, and recalling that by definition $\B_{\e,R}^{r_0}\subset\tilde\B_{\e,R}^{r_0}$, we deduce
\begin{multline}\label{eq:lowerboundrefpre}
\frac12\int_{\tilde B_{\e,R}^{r_0,r}}a\chi_R^z\Big(|\nabla u_\e-iu_\e N_\e \!\vi_\e\!|^2+\frac{a}{2\e^2}(1-|u_\e|^2)^2\Big)
\ge \frac{\log(\tfrac r\e)}2\int_{\R^2} a\chi_R^z\nu_{\e,R}^{r_0}\\
-Cr\Big(\frac{\Ec_{\e,R}^*}\Log\Big)^2-o(N_\e^2)-C\Big(N_\e+\frac{\Ec_{\e,R}^*}\Log\Big)\log\Big(2+\frac{\Ec_{\e,R}^*}\Log\Big),
\end{multline}
and hence, using Lemma~\ref{lem:ballconstr}(ii) and the choice~\eqref{eq:choicerr0} of $r$,
\begin{eqnarray*}
\lefteqn{\frac12\int_{\tilde B_{\e,R}^{r_0,r}}a\chi_R^z\Big(|\nabla u_\e-iu_\e N_\e \!\vi_\e\!|^2+\frac{a}{2\e^2}(1-|u_\e|^2)^2\Big)}\\
&\ge& \frac{\Log}2\int_{\R^2} a\chi_R^z \nu_{\e,R}^{r_0}-C|\!\log r|\Big(N_\e+\frac{\Ec_{\e,R}^*}\Log\Big)\\
&&\qquad-Cr\Big(\frac{\Ec_{\e,R}^*}\Log\Big)^2-o(N_\e^2)-C\Big(N_\e+\frac{\Ec_{\e,R}^*}\Log\Big)\log\Big(2+\frac{\Ec_{\e,R}^*}\Log\Big)\\
&\ge& \frac{\Log}2\int_{\R^2} a\chi_R^z \nu_{\e,R}^{r_0}-o(1)\Big(\frac{\Ec_{\e,R}^*}\Log\Big)^2-o(N_\e^2),
\end{eqnarray*}
that is, \eqref{eq:lowerboundref}.

\medskip
\noindent\step7 Optimal bound on the energy.

In this step, we prove $\Ec_{\e,R}^*\lesssim N_\e\Log$, thus completing the result of Step~5 in all regimes $1\ll N_\e\lesssim\Log$. Note that by Step~3 this also implies $\sup_z\int_{\R^2}\chi_R^z|\nu_{\e,R}^r|\lesssim N_\e$.

By Step~5, it only remains to consider the strongly dilute regime $1\ll N_\e\lesssim\log\Log$.
Let $r_0\le r\ll1$ be fixed as in~\eqref{eq:choicerr0}. On the one hand, using the estimate~\eqref{eq:munuexch},
we deduce from the result~\eqref{eq:lowerboundref} of Step~6,
\begin{multline*}
\frac12\int_{\R^2\setminus\tilde\B_{\e,R}^{r_0,r}}a\chi_R^z\Big(|\nabla u_\e-iu_\e N_\e \!\vi_\e\!|^2+\frac{a}{2\e^2}(1-|u_\e|^2)^2\Big)\\
\le\D_{\e,R}^z
+O\bigg(r_0\Log\Big(N_\e+\frac{\Ec_{\e,R}^*}\Log\Big)\bigg)+o(1)\Big(\frac{\Ec_{\e,R}^*}\Log\Big)^2+o(N_\e^2)
\end{multline*}
and hence, using the assumption $\D_{\e,R}^*\lesssim N_\e^2$, the suboptimal energy bound of Step~5, and the choice~\eqref{eq:choicerr0} of~$r_0$,
\begin{align}\label{eq:estRminBrefined}
\frac12\int_{\R^2\setminus\tilde\B_{\e,R}^{r_0,r}}a\chi_R^z\Big(|\nabla u_\e-iu_\e N_\e \!\vi_\e\!|^2+\frac{a}{2\e^2}(1-|u_\e|^2)^2\Big)\lesssim N_\e^2+o(1)\Big(\frac{\Ec_{\e,R}^*}\Log\Big)^2.
\end{align}
On the other hand, combining the estimates~\eqref{eq:boundenergypreconcl} and~\eqref{eq:boundvortpreconcl} (with $\B_{\e,R}^r$ replaced by $\tilde\B_{\e,R}^{r_0,r}$) of Step~5, we find
\begin{align*}
\Ec_{\e,R}^*\lesssim N_\e\Log+\Log\Big(\sup_z\int_{\R^2\setminus\tilde\B_{\e,R}^{r_0,r}}\chi_{R}^z|\nabla u_\e-iu_\e N_\e\!\vi_\e\!|^2\Big)^{1/2}+r\Log R^{-1}(\Ec_{\e,R}^*)^{1/2}.
\end{align*}
Now inserting~\eqref{eq:estRminBrefined} yields
\begin{align*}
\Ec_{\e,R}^*\lesssim N_\e\Log+o(1)\Ec_{\e,R}^*+\Log R^{-1}(\Ec_{\e,R}^*)^{1/2},
\end{align*}
and thus, recalling the choice $R\gtrsim\Log$, and absorbing $\Ec_{\e,R}^*$ in the left-hand side, the result $\Ec_{\e,R}^*\lesssim N_\e\Log$ follows.

\medskip
\noindent\step8 Conclusion.\nopagebreak

The optimal energy bound $\Ec_{\e,R}^*\lesssim N_\e\Log$ is now proved. In the present step, we check that the remaining statements follow from this bound.
We split the proof into seven further substeps.

\medskip
\noindent\substep{8.1} Proof of~(i).

The result~\eqref{eq:lowerbound} follows from~\eqref{eq:lowerboundclaim1b} in Step~2 with $\phi=a\chi_R^z$, combined with the optimal energy bound. Repeating the argument of Step~6 with the optimal energy bound rather than with the suboptimal bound of Step~5, the choice~\eqref{eq:choicerr0} can be replaced by $\e^{1/2}<r_0\ll \frac{N_\e}\Log$. For such a choice of $r_0$, and for $r\ge r_0$ as in~\eqref{eq:choicerr0}, the result~\eqref{eq:lowerboundref} together with the optimal energy bound directly implies the result~\eqref{eq:lowerboundbis} in the strongly dilute regime $1\ll N_\e\lesssim\log\Log$.

\medskip
\noindent\substep{8.2} Proof of~(ii).

The bound~\eqref{eq:numbvort} on the number of vortices follows from the result~\eqref{eq:energyvort} of Step~3 together with the optimal energy bound.
It remains to prove that in the regime $1\ll N_\e\ll\Log^{1/2}$ for $e^{-o(1)\frac\Log{N_\e}}\le r<\bar r$ each ball of the collection $\B_{\e,R}^r$ has a nonnegative degree. This is a refinement of the result of Step~4.
The lower bound~\eqref{eq:lowerboundclaim1b} of Step~2 with $\phi=a\chi_R^z$ can be rewritten as follows, using the optimal energy bound, for all $z\in \R^2$,
\begin{multline*}
{\Log}\int_{\R^2} a\chi_R^z (\nu_{\e,R}^r)^-=\frac{\Log}2\int_{\R^2} a\chi_R^z (|\nu_{\e,R}^r|-\nu_{\e,R}^r)\\\le\Ec_{\e,R}^z-\frac\Log2\int_{\R^2} a\chi_R^z\nu_{\e,R}^r+O\Big(rN_\e\Log+r^2N_\e^2+N_\e|\!\log r|\Big)+o(N_\e^2),
\end{multline*}
and hence, using~\eqref{eq:munuexch} to replace $\nu_{\e,R}^r$ by $\mu_\e$ in the right-hand side, and using the assumption $\D_{\e,R}^z\lesssim N_\e^2$, we find
\begin{align}\label{eq:boundnegpart}
{\Log}\int_{\R^2} \chi_R^z (\nu_{\e,R}^r)^-\lesssim N_\e^2+rN_\e\Log+N_\e|\!\log r|.
\end{align}
Dividing both sides by $\Log$, we deduce for $N_\e\ll\Log^{1/2}$ with $e^{-o(1)\frac\Log{N_\e}}\le r\ll N_\e^{-1}$,
\[\sup_z\int_{\R^2} \chi_R^z (\nu_{\e,R}^r)^-\ll1,\]
which means that for $\e$ small enough there exists no single ball $B^j\in\B_{\e,R}^r$ with negative degree $d_j<0$. This proves the result for $r\ll N_\e^{-1}$. Now for $N_\e^{-1}\lesssim r<\bar r$ the same property must hold, since, by monotonicity of the collection $\B_{\e,R}^r$ with respect to $r$, for any $r>r'$ the degree of a ball $B\in\B_{\e,R}^r$ equals the sum of the degrees of all the balls $B'\in\B_{\e}(r')$ with $B'\subset B$.

\medskip
\noindent\substep{8.3} Proof of~(v).

In the regime $N_\e\gg\log\Log$, for $e^{-o(N_\e)}\le r\ll \frac{N_\e}\Log$, the result~\eqref{eq:energyoutballs} follows from~\eqref{eq:energyR2minB} together with the optimal energy bound. Monotonicity of $\B_{\e,R}^r$ with respect to $r$ then implies~\eqref{eq:energyoutballs} for all $r\ge e^{-o(N_\e)}$ in the regime $N_\e\gg\log\Log$. In the regime $1\ll N_\e\lesssim\log\Log$, it suffices to argue as for~\eqref{eq:energyR2minB} in Step~2, but with the lower bound~\eqref{eq:lowerboundclaim1b} replaced by its refined version~\eqref{eq:lowerboundref}:
for $r_0\le r$ with $\e^{1/2}<r_0\ll \frac{N_\e}\Log$ and $e^{-o(N_\e)}\le r\ll1$,
the estimate~\eqref{eq:lowerboundref} together with~\eqref{eq:munuexch} indeed yields
\begin{multline*}
\frac12\int_{\R^2\setminus\tilde\B_{\e,R}^{r_0,r}}a\chi_R^z\Big(|\nabla u_\e-iu_\e N_\e \!\vi_\e\!|^2+\frac{a}{2\e^2}(1-|u_\e|^2)^2\Big)\\
\le \frac12\int_{\R^2}a\chi_R^z\Big(|\nabla u_\e-iu_\e N_\e \!\vi_\e\!|^2+\frac{a}{2\e^2}(1-|u_\e|^2)^2\Big)-\frac{\Log}2\int_{\R^2} a\chi_R^z \nu_{\e,R}^{r_0}+o(N_\e^2)\\
\le \D_{\e,R}^z+r_0N_\e\Log+o(N_\e^2)=\D_{\e,R}^z+o(N_\e^2),
\end{multline*}
and the result~\eqref{eq:energyoutballs2} follows by monotonicity of $\tilde\B_{\e,R}^{r_0,r}$ with respect to $r$.

\medskip
\noindent\substep{8.4} Proof of~(iii).\nopagebreak

The Jacobian estimate~\eqref{eq:jacobest} follows from Lemma~\ref{lem:ballconstr}(iii) together with the optimal energy bound, and the estimate~\eqref{eq:jacobestbis} with $\gamma=1$ similarly follows from~\eqref{eq:boundmumutildeequ}. The result~\eqref{eq:jacobestbis} for all $\gamma\in[0,1]$ is then obtained by interpolation (as e.g.\@ in~\cite{Jerrard-Soner-98}) provided we also manage to prove, for all $\phi\in\Ld^\infty(\R^2)$ supported in a ball $B_R(z)$,
\begin{align}\label{eq:mumutildeC0}
\Big|\int_{\R^2} \phi(\tilde\mu_\e-\mu_\e)\Big|\lesssim RN_\e\Log\|\phi\|_{\Ld^\infty}.
\end{align}
Let $\phi\in\Ld^\infty(\R^2)$ be supported in $B_R(z)$, for some $z\in \R^2$. By definition~\eqref{eq:defmutilde}, we find
\begin{multline*}
\int_{\R^2} \phi(\tilde\mu_\e-\mu_\e)= N_\e\int_{\R^2} \phi\,\big((1-|u_\e|^2)\curl\!\vi_\e+2\langle\nabla u_\e-iu_\e N_\e\!\vi_\e,u_\e\rangle\cdot\vi_\e^\bot\big)\\
\le N_\e\|\phi\|_{\Ld^\infty}\int_{B_R(z)} \Big(|1-|u_\e|^2||\curl\!\vi_\e\!|+2|\!\vi_\e\!||1-|u_\e|^2||\nabla u_\e-iu_\e N_\e\!\vi_\e|\\
+2|\!\vi_\e\!||\nabla u_\e-iu_\e N_\e\!\vi_\e|\Big),
\end{multline*}
hence
we deduce from the optimal energy bound, with $\|\!\vi_\e\!\|_{\Ld^\infty},\|\curl\!\vi_\e\!\|_{\Ld^2}\lesssim1$,
\begin{align*}
\Big|\int_{\R^2} \phi(\tilde\mu_\e-\mu_\e)\Big|\lesssim \big(\e N_\e^2\Log+RN_\e\Log\big)\|\phi\|_{\Ld^\infty},
\end{align*}
that is,~\eqref{eq:mumutildeC0}.

\medskip
\noindent\substep{8.5} Proof of~(iv) in the regime $N_\e\gg\log\Log$.

\nopagebreak
We focus on the regime $N_\e\gg\log\Log$. Let $\e^{1/2}<r\ll1$ to be later optimized as a function of $\e$.
We write as before $\B_{\e,R}^r=\biguplus_jB^j$, $B^j=\bar B(y_j,r_j)$, we denote by $d_j$ the degree of $B^j$, and we set $\nu_{\e,R}^r=2\pi\sum_jd_j\delta_{y_j}$.
Given $\phi\in W^{1,\infty}(\R^2)$ supported in the ball~$B_R(z)$, we decompose 
\begin{eqnarray}
\lefteqn{\int_{\R^2}\phi\Big(|\nabla u_\e-iu_\e N_\e\!\vi_\e\!|^2+\frac{a}{2\e^2}(1-|u_\e|^2)^2-\Log\nu_{\e,R}^r\Big)}\nonumber\\
&\le&\int_{\R^2\setminus\B_{\e,R}^r}\phi\Big(|\nabla u_\e-iu_\e N_\e\!\vi_\e\!|^2+\frac{a}{2\e^2}(1-|u_\e|^2)^2\Big)\nonumber\\
&&+\sum_j\bigg|\int_{B^j}\phi\Big(|\nabla u_\e-iu_\e N_\e\!\vi_\e\!|^2+\frac{a}{2\e^2}(1-|u_\e|^2)^2\Big)-2\pi\phi(y_j)d_j\Log\bigg|\nonumber\\
&\le&\|a^{-1}\phi\|_{\Ld^\infty}\int_{\R^2\setminus\B_{\e,R}^r}a\chi_R^z\Big(|\nabla u_\e-iu_\e N_\e\!\vi_\e\!|^2+\frac{a}{2\e^2}(1-|u_\e|^2)^2\Big)\nonumber\\
&&+\|a^{-1}\phi\|_{\Ld^\infty}\sum_{j}\chi_R^z(y_j)\bigg|\int_{B^j}a\Big(|\nabla u_\e-iu_\e N_\e\!\vi_\e\!|^2+\frac{a}{2\e^2}(1-|u_\e|^2)^2\Big)\nonumber\\
&&\hspace{9cm}-2\pi a(y_j)d_j\Log\bigg|\nonumber\\
&&+r\|a^{-1}\phi\|_{W^{1,\infty}}\int_{B_{2R}(z)\cap\B_{\e,R}^r}\Big(|\nabla u_\e-iu_\e N_\e\!\vi_\e\!|^2+\frac{a}{2\e^2}(1-|u_\e|^2)^2\Big).\label{eq:boundTer}
\end{eqnarray}
Combined with the optimal energy bound, the localized lower bound~\eqref{eq:lowerboundclaim2} in Lemma~\ref{lem:ballconstr}(i) with $\phi=a$ yields for all $j$,
\begin{multline*}
\frac12\int_{B^j}a\Big(|\nabla u_\e-iu_\e N_\e \!\vi_\e\!|^2+\frac{a}{2\e^2}(1-|u_\e|^2)^2\Big)\\
\ge \pi a(y_j)|d_j|\Log-O\big(r_jN_\e\Log+|d_j||\!\log r|+|d_j|\log N_\e\big),
\end{multline*}
hence
\begin{multline*}
\bigg|\int_{B^j}a\Big(|\nabla u_\e-iu_\e N_\e \!\vi_\e\!|^2+\frac{a}{2\e^2}(1-|u_\e|^2)^2\Big)-2\pi a(y_j)|d_j|\Log\bigg|\\
\le\int_{B^j}a\Big(|\nabla u_\e-iu_\e N_\e \!\vi_\e\!|^2+\frac{a}{2\e^2}(1-|u_\e|^2)^2\Big)-2\pi a(y_j)|d_j|\Log\\
+O\big(r_jN_\e\Log+|d_j||\!\log r|+|d_j|\log N_\e\big).
\end{multline*}
Noting that $\chi_R^z(y_j)\le\chi_R^z(y)+O(R^{-1}r_j)\chi_{2R}^z(y_j)$ holds for $y\in B_j$, we obtain
\begin{multline*}
\chi_R^z(y_j)\bigg|\int_{B^j}a\Big(|\nabla u_\e-iu_\e N_\e \!\vi_\e\!|^2+\frac{a}{2\e^2}(1-|u_\e|^2)^2\Big)-2\pi a(y_j)|d_j|\Log\bigg|\\
\le\int_{B^j}a\chi_R^z\Big(|\nabla u_\e-iu_\e N_\e \!\vi_\e\!|^2+\frac{a}{2\e^2}(1-|u_\e|^2)^2\Big)-2\pi a(y_j)\chi_R^z(y_j)|d_j|\Log\\
+\chi_{2R}^z(y_j)O\big(r_jN_\e\Log+|d_j||\!\log r|+|d_j|\log N_\e\big).
\end{multline*}
Inserting this into~\eqref{eq:boundTer}, and using the bound of item~(ii) on the number of vortices, we find
\begingroup\allowdisplaybreaks
\begin{multline*}
\int_{\R^2}\phi\Big(|\nabla u_\e-iu_\e N_\e\!\vi_\e\!|^2+\frac{a}{2\e^2}(1-|u_\e|^2)^2-\Log\nu_{\e,R}^r\Big)\\
\le\|a^{-1}\phi\|_{\Ld^\infty}\int_{\R^2} a\chi_{R}^z\Big(|\nabla u_\e-iu_\e N_\e\!\vi_\e\!|^2+\frac{a}{2\e^2}(1-|u_\e|^2)^2-\Log\nu_{\e,R}^r\Big)\\
+r\|a^{-1}\phi\|_{W^{1,\infty}}\int_{B_{2R}(z)\cap\B_{\e,R}^r}\Big(|\nabla u_\e-iu_\e N_\e\!\vi_\e\!|^2+\frac{a}{2\e^2}(1-|u_\e|^2)^2\Big)\\
+O\big(rN_\e\Log+N_\e|\!\log r|+N_\e\log N_\e\big)\|\phi\|_{\Ld^\infty},
\end{multline*}
\endgroup
where the second right-hand side term is estimated by $rN_\e\Log\|a^{-1}\phi\|_{W^{1,\infty}}$, and where the bound~\eqref{eq:munuexch} can be used to replace $\nu_{\e,R}^r$ by $\mu_\e$ in both sides up to an error of order $(rN_\e\Log+1)\|\phi\|_{\Ld^{\infty}}$. In the present regime $N_\e\gg\log\Log$, we may choose $e^{-o(N_\e)}\le r\ll \frac{N_\e}\Log$, and the conclusion~\eqref{eq:excessestim} follows for that choice.

\medskip
\noindent\substep{8.6} Proof of~(iv) in the regime $1\ll N_\e\lesssim\log\Log$.

We turn to the regime $1\ll N_\e\lesssim\log\Log$, in which case the proof of~(iv) needs to be adapted in the spirit of the computations in Step~6.
Let $\phi\in W^{1,\infty}(\R^2)$ be supported in the ball $B_R(z)$, and let $e^{-o(1)\frac\Log{N_\e}}\le r_0\ll \frac{N_\e}\Log$. First arguing as in Substep~8.5 with this choice of $r_0$, we obtain
\begin{multline}\label{eq:(iv)-elt1}
\int_{\B_{\e,R}^{r_0}}\phi\Big(|\nabla u_\e-iu_\e N_\e\!\vi_\e\!|^2+\frac{a}{2\e^2}(1-|u_\e|^2)^2-\log(\tfrac{r_0}\e)\nu_{\e,R}^{r_0}\Big)\\
\le \|a^{-1}\phi\|_{\Ld^\infty}\int_{\B_{\e,R}^{r_0}}a\chi_R^z\Big(|\nabla u_\e-iu_\e N_\e\!\vi_\e\!|^2+\frac{a}{2\e^2}(1-|u_\e|^2)^2-\log(\tfrac{r_0}\e)\nu_{\e,R}^{r_0}\Big)\\
+o(N_\e^2)\|\phi\|_{W^{1,\infty}}.
\end{multline}

Now we consider the modified ball collection $\tilde\B_{\e,R}^{r_0,r}$ with $r\ge r_0$, as constructed in Step~6.1.
Assume that some ball $B=\bar B(y,s)$ gets grown into $B'=\bar B(y,ts)$ without merging, for some $t\ge1$, and assume that $B'\setminus B$ does not intersect $\tilde\B_{\e,R}^{r_0}$, so that by construction $||u_\e|-1|\le\Log^{-1}$ holds on $B'\setminus B$. Let $d$ denote the degree of $B$ (hence of $B'$). We may then decompose
\begin{multline*}
\bigg|\frac12\int_{B'\setminus B}\phi\Big(|\nabla u_\e-iu_\e N_\e\!\vi_\e\!|^2+\frac{a}{2\e^2}(1-|u_\e|^2)^2\Big)-\phi(y)\pi d\log t\bigg|\\
\le \|a^{-1}\phi\|_{\Ld^\infty}\bigg|\frac12\int_{B'\setminus B}a\chi_R^z\Big(|\nabla u_\e-iu_\e N_\e\!\vi_\e\!|^2+\frac{a}{2\e^2}(1-|u_\e|^2)^2\Big)- a(y)\chi_R^z(y)\pi d\log t\bigg|\\
+\|a^{-1}\phi\|_{W^{1,\infty}}\frac{1}2\int_{B'\setminus B}|\cdot-y|\chi_R^z\Big(|\nabla u_\e-iu_\e N_\e\!\vi_\e\!|^2+\frac{a}{2\e^2}(1-|u_\e|^2)^2\Big),
\end{multline*}
and hence, decomposing $\chi_R^z(x)\le \chi_R^z(y)+O(R^{-1})$ for all $x\in B'\setminus B$,
\begin{multline*}
\bigg|\frac12\int_{B'\setminus B}\phi\Big(|\nabla u_\e-iu_\e N_\e\!\vi_\e\!|^2+\frac{a}{2\e^2}(1-|u_\e|^2)^2\Big)-\phi(y)\pi d\log t\bigg|\\
\le \|a^{-1}\phi\|_{\Ld^\infty}\bigg|\frac12\int_{B'\setminus B}a\chi_R^z\Big(|\nabla u_\e-iu_\e N_\e\!\vi_\e\!|^2+\frac{a}{2\e^2}(1-|u_\e|^2)^2\Big)- a(y)\chi_R^z(y)\pi d\log t\bigg|\\
+\frac{\chi_R^z(y)}2\|a^{-1}\phi\|_{W^{1,\infty}}\int_{B'\setminus B}|\cdot-y|\Big(|\nabla u_\e-iu_\e N_\e\!\vi_\e\!|^2+\frac{a}{2\e^2}(1-|u_\e|^2)^2\Big)\\
+CtsR^{-1}\|\phi\|_{W^{1,\infty}}\int_{B'\setminus B}\Big(|\nabla u_\e-iu_\e N_\e\!\vi_\e\!|^2+\frac{a}{2\e^2}(1-|u_\e|^2)^2\Big).
\end{multline*}
Arguing as in~\eqref{eq:upbound-radcomp} yields
\begin{multline}\label{eq:increasball-(iv)}
\bigg|\frac12\int_{B'\setminus B}\phi\Big(|\nabla u_\e-iu_\e N_\e\!\vi_\e\!|^2+\frac{a}{2\e^2}(1-|u_\e|^2)^2\Big)-\phi(y)\pi d\log t\bigg|\\
\le \|a^{-1}\phi\|_{\Ld^\infty}\bigg|\frac12\int_{B'\setminus B}a\chi_R^z\Big(|\nabla u_\e-iu_\e N_\e\!\vi_\e\!|^2+\frac{a}{2\e^2}(1-|u_\e|^2)^2\Big)- a(y)\chi_R^z(y)\pi d\log t\bigg|\\
+\chi_R^z(y)\|a^{-1}\phi\|_{W^{1,\infty}}\bigg(Cd^2(t-1)s
+ts\int_{B'\setminus B}\Big|\nabla u_\e-iu_\e N_\e\!\vi_\e-iu_\e\frac{\tau d}{|\cdot-y|}\Big|^2\bigg)\\
+CtsR^{-1}\|\phi\|_{W^{1,\infty}}\int_{B'\setminus B}|\nabla u_\e-iu_\e N_\e\!\vi_\e\!|^2
+Cts\|a^{-1}\phi\|_{W^{1,\infty}}\int_{B'\setminus B}\frac{a}{\e^2}(1-|u_\e|^2)^2.
\end{multline}
Recalling the improved lower bound~\eqref{eq:stice-int}, and combining it with the bound of item~(ii) on the number of vortices, and with the assumption $\|\curl\!\vi_\e\!\|_{\Ld^\infty}\lesssim1$, we find
\begin{multline*}
(1+O(\Log^{-1}))\frac12\int_{B' \setminus B} a\chi_R^z\Big(|\nabla u_\e-iu_\e N_\e\!\vi_\e\!|^2+\frac a{2\e^2}(1-|u_\e|^2)^2\Big)\\
\ge a(y)\chi_R^z(y)\pi d^2\log t-C(t-1)sN_\e^2-CtsR^{-1}\int_{B'\setminus B} |\nabla u_\e-iu_\e N_\e\!\vi_\e\!|^2\\
+\Big(\frac{a(y)}2 (1-C\Log^{-1}) -Cts\Big)\chi_R^z(y)\int_{B'\setminus B} \Big|\nabla u_\e-iu_\e N_\e\!\vi_\e - iu_\e \frac{\tau d}{|\cdot-y|}\Big|^2.
\end{multline*}
and hence, injecting this estimate into~\eqref{eq:increasball-(iv)}, we deduce for $\e$ small enough and $ts\ll1$,
\begin{multline*}
\bigg|\frac12\int_{B'\setminus B}\phi\Big(|\nabla u_\e-iu_\e N_\e\!\vi_\e\!|^2+\frac{a}{2\e^2}(1-|u_\e|^2)^2\Big)-\phi(y)\pi d\log t\bigg|\\
\le C\|\phi\|_{W^{1,\infty}}\bigg(\frac12\int_{B'\setminus B}a\chi_R^z\Big(|\nabla u_\e-iu_\e N_\e\!\vi_\e\!|^2+\frac{a}{2\e^2}(1-|u_\e|^2)^2\Big)- a(y)\chi_R^z(y)\pi d\log t\bigg)\\
+C(t-1)sN_\e^2\|\phi\|_{W^{1,\infty}}
+CtsR^{-1}\|\phi\|_{W^{1,\infty}}\int_{B'\setminus B}|\nabla u_\e-iu_\e N_\e\!\vi_\e\!|^2\\
+Cts\|\phi\|_{W^{1,\infty}}\int_{B'\setminus B}\frac{a}{\e^2}(1-|u_\e|^2)^2.
\end{multline*}
Using the bound of item~(ii) on the number of vortices, we find
\begin{align*}
\Big|\phi(y)\pi d\log t-\frac{\log t}2\int_B\phi \nu_{\e,R}^{r_0}\Big|&\le \frac{s\log t}2\|\nabla\phi\|_{\Ld^\infty}\int_B|\nu_{\e,R}^{r_0}|\le C(t-1)sN_\e\|\nabla\phi\|_{\Ld^\infty},
\end{align*}
so that the above becomes
\begin{multline*}
\bigg|\frac12\int_{B'\setminus B}\phi\Big(|\nabla u_\e-iu_\e N_\e\!\vi_\e\!|^2+\frac{a}{2\e^2}(1-|u_\e|^2)^2\Big)-\frac{\log t}2\int_{B'}\phi\nu_{\e,R}^{r_0}\bigg|\\
\le C\|\phi\|_{W^{1,\infty}}\bigg(\int_{B'\setminus B}a\chi_R^z\Big(|\nabla u_\e-iu_\e N_\e\!\vi_\e\!|^2+\frac{a}{2\e^2}(1-|u_\e|^2)^2\Big)-{\log t}\int_{B'}a\chi_R^z\nu_{\e,R}^{r_0}\\
+C(t-1)sN_\e^2
+CtsR^{-1}\int_{B'\setminus B}|\nabla u_\e-iu_\e N_\e\!\vi_\e\!|^2
+Cts\int_{B'\setminus B}\frac{a}{\e^2}(1-|u_\e|^2)^2\bigg).
\end{multline*}
By construction of the ball growth and merging process, this easily implies the following: if a ball $B=\bar B(y_B,s_B)$ belongs to the collection $\tilde\B_{\e,R}^{r_0,r}$ for some $r_0\le r\ll1$, then we have
\begin{multline*}
\bigg|\frac12\int_{B\setminus\tilde\B_{\e,R}^{r_0}}\phi\Big(|\nabla u_\e-iu_\e N_\e\!\vi_\e\!|^2+\frac{a}{2\e^2}(1-|u_\e|^2)^2\Big)-\frac{\log (\tfrac r{r_0})}2\int_{B}\phi\nu_{\e,R}^{r_0}\bigg|\\
\le C\|\phi\|_{W^{1,\infty}}\bigg(\int_{B\setminus \tilde\B_{\e,R}^{r_0}}a\chi_R^z\Big(|\nabla u_\e-iu_\e N_\e\!\vi_\e\!|^2+\frac{a}{2\e^2}(1-|u_\e|^2)^2\Big)- \log(\tfrac r{r_0})\int_Ba\chi_R^z\nu_{\e,R}^{r_0}\\
+Cs_BN_\e^2
+Cs_BR^{-1}\int_{B\setminus \tilde\B_{\e,R}^{r_0}}|\nabla u_\e-iu_\e N_\e\!\vi_\e\!|^2
+Cs_B\int_{B\setminus \tilde\B_{\e,R}^{r_0}}\frac{a}{\e^2}(1-|u_\e|^2)^2\bigg).
\end{multline*}
Summing this estimate over all balls $B$ of the collection $\tilde\B_{\e,R}^{r_0,r}$ that intersect $B_R(z)$, and recalling that the sum of the radii of these balls is by construction bounded by $Cr$,
\begin{multline}\label{eq:increasball-(iv)-rewr}
\bigg|\frac12\int_{\tilde\B_{\e,R}^{r_0,r}\setminus\tilde\B_{\e,R}^{r_0}}\phi\Big(|\nabla u_\e-iu_\e N_\e\!\vi_\e\!|^2+\frac{a}{2\e^2}(1-|u_\e|^2)^2\Big)-\frac{\log (\tfrac r{r_0})}2\int_{\R^2}\phi\nu_{\e,R}^{r_0}\bigg|\\
\le C\|\phi\|_{W^{1,\infty}}\bigg(\int_{\tilde\B_{\e,R}^{r_0,r}\setminus \tilde\B_{\e,R}^{r_0}}a\chi_R^z\Big(|\nabla u_\e-iu_\e N_\e\!\vi_\e\!|^2+\frac{a}{2\e^2}(1-|u_\e|^2)^2\Big)- \log(\tfrac r{r_0})\int_{\R^2}a\chi_R^z\nu_{\e,R}^{r_0}\\
+CrN_\e^2+CrR^{-1}\int_{B_{2R}(z)}|\nabla u_\e-iu_\e N_\e\!\vi_\e\!|^2+Cr\int_{B_{2R}(z)}\frac{a}{\e^2}(1-|u_\e|^2)^2\bigg).
\end{multline}

Let us estimate the last right-hand side term of~\eqref{eq:increasball-(iv)-rewr}. Applying the lower bound~\eqref{eq:lowerboundref} with $\e$ replaced by $2\e$ (with $\e<1/2$), together with the optimal energy bound, we obtain, for $r\ge r_0$ with $e^{-o(N_\e)}\le r\ll1$,
\begin{multline*}
\frac{\Log}2\int_{\R^2} a\chi_R^z |\nu_{\e,R}^{r_0}|-\frac{\log2}2\int_{\R^2} a\chi_R^z |\nu_{\e,R}^{r_0}|-o(N_\e^2)=\frac{|\!\log(2\e)|}2\int_{\R^2} a\chi_R^z |\nu_{\e,R}^{r_0}|-o(N_\e^2)\\
\le \frac12\int_{\tilde\B_{\e,R}^{r_0,r}}a\chi_R^z\Big(|\nabla u_\e-iu_\e N_\e \!\vi_\e\!|^2+\frac{a}{2(2\e)^2}(1-|u_\e|^2)^2\Big)\\
\le \D_{\e,R}^*+\frac{\Log}2\int_{\R^2} a\chi_R^z\mu_\e-\frac3{16\e^2}\int_{\tilde\B_{\e,R}^{r_0,r}}a^2\chi_R^z(1-|u_\e|^2)^2.
\end{multline*}
Using~\eqref{eq:munuexch}, the bound of item~(ii) on the number of vortices, and the choice of $r_0$, we then find
\begin{eqnarray*}
\lefteqn{\frac3{16\e^2}\int_{\tilde\B_{\e,R}^{r_0,r}}a^2\chi_R^z(1-|u_\e|^2)^2}\\
&\le& \D_{\e,R}^*+\frac{\Log}2\int_{\R^2} a\chi_R^z(\mu_\e-\nu_{\e,R}^{r_0})+\frac{\log2}2\int_{\R^2} a\chi_R^z |\nu_{\e,R}^{r_0}|+o(N_\e^2)\\
&\le& \D_{\e,R}^*+o(N_\e^2)~\lesssim\, N_\e^2.
\end{eqnarray*}
Combining this with the result~\eqref{eq:energyoutballs2} of item~(v), we deduce the (suboptimal) estimate
\begin{align}\label{eq:boundprepoho}
\sup_z\int_{\R^2}\frac{\chi_R^z}{\e^2}(1-|u_\e|^2)^2\lesssim N_\e^2.
\end{align}
Injecting this result into~\eqref{eq:increasball-(iv)-rewr}, together with the optimal energy bound and the choice $R\gtrsim\Log$, we find
\begin{multline}\label{eq:reincreasball-(iv)}
\bigg|\frac12\int_{\tilde\B_{\e,R}^{r_0,r}\setminus\tilde\B_{\e,R}^{r_0}}\phi\Big(|\nabla u_\e-iu_\e N_\e\!\vi_\e\!|^2+\frac{a}{2\e^2}(1-|u_\e|^2)^2\Big)-\frac{\log (\tfrac r{r_0})}2\int_{\R^2}\phi\nu_{\e,R}^{r_0}\bigg|\\
\le C\|\phi\|_{W^{1,\infty}}\bigg(\int_{\tilde\B_{\e,R}^{r_0,r}\setminus \tilde\B_{\e,R}^{r_0}}a\chi_R^z\Big(|\nabla u_\e-iu_\e N_\e\!\vi_\e\!|^2+\frac{a}{2\e^2}(1-|u_\e|^2)^2\Big)\\
-\log(\tfrac r{r_0})\int_{\R^2}a\chi_R^z\nu_{\e,R}^{r_0}
+CrN_\e^2\bigg).
\end{multline}
Combining this with~\eqref{eq:(iv)-elt1}, and recalling that by definition $\B_{\e,R}^{r_0}\subset\tilde\B_{\e,R}^{r_0}$, we deduce
\begin{multline*}
\bigg|\frac12\int_{\tilde\B_{\e,R}^{r_0,r}}\phi\Big(|\nabla u_\e-iu_\e N_\e\!\vi_\e\!|^2+\frac{a}{2\e^2}(1-|u_\e|^2)^2\Big)-\frac{\log (\tfrac r\e)}2\int_{\R^2}\phi\nu_{\e,R}^{r_0}\bigg|\\
\le C\|\phi\|_{W^{1,\infty}}\bigg(\int_{\tilde\B_{\e,R}^{r_0,r}}a\chi_R^z\Big(|\nabla u_\e-iu_\e N_\e\!\vi_\e\!|^2+\frac{a}{2\e^2}(1-|u_\e|^2)^2\Big)- \log(\tfrac r\e)\int_{\R^2}a\chi_R^z\nu_{\e,R}^{r_0}+o(N_\e^2)\bigg).
\end{multline*}
Using~\eqref{eq:munuexch} to replace $\nu_{\e,R}^{r_0}$ by $\mu_\e$ up to an error of order $(r_0N_\e\Log+1)\|\phi\|_{W^{1,\infty}}\ll N_\e^2\|\phi\|_{W^{1,\infty}}$, the result~\eqref{eq:excessestim} follows.

\medskip
\noindent\substep{8.7} Proof of~(vi).

We adapt an argument by Struwe~\cite{Struwe-94} (see also~\cite[Proof of Lemma~4.7]{SS-12}).
Recalling that $|B_{2R}(z)\cap\B_{\e,R}^r|\lesssim r^2$, a direct application of the Hölder inequality yields
\[\int_{\B_{\e,R}^r}\chi_R^z|\nabla u_\e-iu_\e N_\e\!\vi_\e\!|^p\lesssim r^{2-p}\Big(\int_{\B_{\e,R}^r}\chi_R^z|\nabla u_\e-iu_\e N_\e\!\vi_\e\!|^2\Big)^{p/2}\lesssim r^{2-p}(N_\e\Log)^{p/2},\]
which only implies the result if we are allowed to choose the total radius $r$ small enough.
Otherwise, it is useful to rather work on dyadic ``annuli''.
For each integer $0\le k\le K_\e:=\lfloor\log_2(\frac{r}{\e^{1/2}})\rfloor$, define the ``annulus'' $E_k:=\B_{\e,R}^{r2^{-k}}\setminus \B_{\e,R}^{r2^{-k-1}}$. We set for simplicity $s_{k}:=r2^{-k}$. Applying the Hölder inequality separately on each annulus yields
\begin{multline*}
\int_{\B_{\e,R}^r}\chi_R^z|\nabla u_\e-iu_\e N_\e\!\vi_\e\!|^p
\le\Big(\int_{\B_{\e,R}^{\e^{1/2}}}\chi_R^z|\nabla u_\e-iu_\e N_\e\!\vi_\e\!|^2\Big)^{p/2}|B_{2R}(z)\cap\B_{\e,R}^{\e^{1/2}}|^{1-p/2}\\
+\sum_{k=0}^{K_\e}\Big(\int_{E_k}\chi_R^z|\nabla u_\e-iu_\e N_\e\!\vi_\e\!|^2\Big)^{p/2}|B_{2R}(z)\cap E_k|^{1-p/2}.
\end{multline*}
Using that $|B_{2R}(z)\cap\B_{\e,R}^{\e^{1/2}}|\lesssim \e$, that $|B_{2R}(z)\cap E_k|\lesssim s_{k}^2$, and that the integral over $\B_{\e,R}^{\e^{1/2}}$ in the right-hand side is bounded by $\Ec_{\e,R}^z\lesssim N_\e\Log $, we deduce
\begin{multline}\label{eq:struwedecomp}
\int_{\B_{\e,R}^r}\chi_R^z|\nabla u_\e-iu_\e N_\e\!\vi_\e\!|^p\\
\lesssim\e^{1-p/2}(N_\e\Log)^{p/2}
+\sum_{k=0}^{K_\e}s_{k}^{2-p}\Big(\int_{\R^2\setminus\B_{\e,R}^{s_{k+1}}}\chi_R^z|\nabla u_\e-iu_\e N_\e\!\vi_\e\!|^2\Big)^{p/2}.
\end{multline}
It remains to estimate the last integrals.
Using Lemma~\ref{lem:ballconstr}(i)--(ii) in the forms~\eqref{eq:lowerboundclaim1} and~\eqref{eq:boundvortnumb}, together with the optimal energy bound, we obtain
\begin{multline*}
\frac12\int_{\B_{\e,R}^{s_{k+1}}}a\chi_R^z\Big(|\nabla u_\e-iu_\e N_\e\!\vi_\e\!|^2+\frac{a}{2\e^2}(1-|u_\e|^2)^2\Big)\\
\ge \frac\Log2\int_{\R^2} a\chi_R^z\nu_{\e,R}^{s_{k+1}}-O\big(N_\e|\!\log s_{k+1}|+s_{k+1}N_\e\Log\big)-o(N_\e^2),
\end{multline*}
and hence, using~\eqref{eq:munuexch} to replace $\nu_{\e,R}^{s_{k+1}}$ by $\mu_\e$,
\begin{align*}
\frac12\int_{\R^2\setminus\B_{\e,R}^{s_{k+1}}}a\chi_R^z|\nabla u_\e-iu_\e N_\e\!\vi_\e\!|^2&\le \D_{\e,R}^z+O(N_\e|\!\log s_{k+1}|+s_{k+1}N_\e\Log)+o(N_\e^2).
\end{align*}
If $r\ll \frac{N_\e}\Log$, then $s_k\le r\ll \frac{N_\e}\Log$ for all $k$, so that we find
\begin{align}\label{eq:struweboundballcompl}
\int_{\R^2\setminus\B_{\e,R}^{s_{k+1}}}\chi_R^z|\nabla u_\e-iu_\e N_\e\!\vi_\e\!|^2&\lesssim N_\e^2+N_\e(|\!\log r|+k).
\end{align}
Inserting this into~\eqref{eq:struwedecomp} yields for all $p<2$, with $r\ll \frac{N_\e}\Log$,
\begin{eqnarray*}
\lefteqn{\int_{\B_{\e,R}^r}\chi_R^z|\nabla u_\e-iu_\e N_\e\!\vi_\e\!|^p}\\
&\lesssim&\e^{1-p/2}(N_\e\Log)^{p/2}+\sum_{k=0}^{K_\e}(r2^{-k})^{2-p}\Big(N_\e^p+N_\e^{p/2}|\!\log r|^{p/2}+N_\e^{p/2}k^{p/2}\Big)\\
&\lesssim_p&\e^{1-p/2}(N_\e\Log)^{p/2}+r^{2-p}N_\e^p+r^{2-p}N_\e^{p/2}|\!\log r|^{p/2}.
\end{eqnarray*}
In the regime $N_\e\gg\log\Log$, we may choose $e^{-o(N_\e)}\le r\ll \frac{N_\e}\Log$, and the above yields for that choice
\begin{align}\label{eq:preendproofstruwe}
\int_{\B_{\e,R}^r}\chi_R^z|\nabla u_\e-iu_\e N_\e\!\vi_\e\!|^p\ll_p N_\e^p,
\end{align}
that is, \eqref{eq:Lpstruwe}.

We now consider the regime $1\ll N_\e\lesssim\log\Log$. In that case, we need to prove~\eqref{eq:preendproofstruwe} for larger values of the radius $r\ge e^{-o(N_\e)}$, and the above argument no longer  holds. Given $\e^{1/2}< r_0\ll \frac{N_\e}\Log$, we replace the initial total radius $\e^{1/2}$ by $r_0$, and for $r_0\le r\ll1$ we consider the modified dyadic ``annuli'' $\tilde E_k:=\tilde\B_{\e,R}^{r_0,r2^{-k}}\setminus\tilde\B_{\e,R}^{r_0,r2^{-k-1}\vee r_0}$, with $0\le k\le K:=\lfloor\log_2(\frac r{r_0})\rfloor$. We set for simplicity $\tilde s_k:=(r2^{-k})\vee r_0$. The decomposition~\eqref{eq:struwedecomp} is then replaced by
\begin{multline}\label{eq:struwedecomp2}
\int_{\tilde\B_{\e,R}^{r_0,r}}\chi_R^z|\nabla u_\e-iu_\e N_\e\!\vi_\e\!|^p\\
\lesssim r_0^{2-p}(N_\e\Log)^{p/2}
+\sum_{k=0}^{K}s_{k}^{2-p}\Big(\int_{\R^2\setminus\tilde\B_{\e,R}^{r_0,\tilde s_{k+1}}}\chi_R^z|\nabla u_\e-iu_\e N_\e\!\vi_\e\!|^2\Big)^{p/2},
\end{multline}
where it remains to adapt the estimate~\eqref{eq:struweboundballcompl} for the last integrals. The lower bound~\eqref{eq:lowerboundrefpre} of Step~6 together with the optimal energy bound and with the bound of item~(ii) on the number of vortices yields
\begin{multline*}
\frac12\int_{\tilde\B_{\e,R}^{r_0,\tilde s_{k+1}}}a\chi_R^z\Big(|\nabla u_\e-iu_\e N_\e \!\vi_\e\!|^2+\frac{a}{2\e^2}(1-|u_\e|^2)^2\Big)\ge \frac{\log(\tfrac{\tilde s_{k+1}}\e)}2\int_{\R^2} a\chi_R^z \nu_{\e,R}^{r_0}-o(N_\e^2)\\
\ge \frac{\Log}2\int_{\R^2} a\chi_R^z\nu_{\e,R}^{r_0}-O(N_\e|\!\log s_{k+1}|)-o(N_\e^2),
\end{multline*}
and hence, using~\eqref{eq:munuexch} to replace $\nu_{\e,R}^{r_0}$ by $\mu_\e$,
\begin{align*}
\frac12\int_{\R^2\setminus\tilde\B_{\e,R}^{r_0,\tilde s_{k+1}}}a\chi_R^z|\nabla u_\e-iu_\e N_\e \!\vi_\e\!|^2&\le \D_{\e,R}^z+O(N_\e|\!\log s_{k+1}|+r_0N_\e\Log)+o(N_\e^2).
\end{align*}
The choice $r_0\ll \frac{N_\e}\Log$ then yields
\begin{align*}
\frac12\int_{\R^2\setminus\tilde\B_{\e,R}^{r_0,\tilde s_{k+1}}}a\chi_R^z|\nabla u_\e-iu_\e N_\e \!\vi_\e\!|^2&\lesssim N_\e^2+N_\e(|\!\log r|+k).
\end{align*}
Inserting this into~\eqref{eq:struwedecomp2}, the result~\eqref{eq:Lpstruwe2} follows.
\end{proof}

Given the above ball construction, we state the following approximation result, which is obtained as in~\cite[Proposition~9.6]{SS-book}.
\begin{lem}\label{lem:approx}
Let $\e^{1/2}<r_0\le r<\bar r$, and let $\B_{\e,R}^r$ and $\tilde\B_{\e,R}^{r_0,r}$ denote the collections of the balls constructed in Proposition~\ref{prop:ballconstr}.
Then, given $\Gamma_\e\in W^{2,\infty}(\R^2)^2$, there exist approximate vector fields $\bar\Gamma_\e,\tilde\Gamma_\e\in W^{2,\infty}(\R^2)^2$ such that $\bar\Gamma_\e$ is constant in each ball of the collection $\B_{\e,R}^r$ and $\tilde\Gamma_\e$ is constant in each ball of the collection $\tilde\B_{\e,R}^{r_0,r}$, such that $\|\bar\Gamma_\e\|_{\Ld^\infty}\le\|\Gamma_\e\|_{\Ld^\infty}$ and $\|\tilde\Gamma_\e\|_{\Ld^\infty}\le\|\Gamma_\e\|_{\Ld^\infty}$, such that for all $0\le\gamma\le1$,
\[\|\bar\Gamma_\e-\Gamma_\e\|_{C^\gamma}+\|\tilde\Gamma_\e-\Gamma_\e\|_{C^\gamma}\lesssim r^{1-\gamma}\|\nabla\Gamma_\e\|_{\Ld^\infty},\]
and such that for all $R\ge1$,
\[\sup_z\|\nabla(\bar\Gamma_\e-\Gamma_\e)\|_{\Ld^1(B_R(z))}+\sup_z\|\nabla(\tilde\Gamma_\e-\Gamma_\e)\|_{\Ld^1(B_R(z))}\lesssim rR^2\|\nabla\Gamma_\e\|_{W^{1,\infty}}.\qedhere\]
\end{lem}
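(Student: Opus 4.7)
The plan is to follow the Whitney-type construction of \cite[Proposition~9.6]{SS-book}. Since the two constructions are identical up to replacing $\B_{\e,R}^r$ by $\tilde\B_{\e,R}^{r_0,r}$, I focus on $\bar\Gamma_\e$. The idea is to glue the piecewise constant field $x\mapsto\Gamma_\e(y_j)$ on each $B^j$ to the original field $\Gamma_\e$ on the complement. Concretely, I would fix, for each $j$, a smooth cutoff $\zeta^j$ equal to $1$ on $B^j$ and supported in a slight thickening of $B^j$, with $|\nabla^k\zeta^j|\lesssim r_j^{-k}$ for $k=1,2$, chosen so that $\zeta^k$ vanishes identically on $B^j$ whenever $k\ne j$, and then set
\[
\bar\Gamma_\e\ :=\ \Gamma_\e\ +\ \sum_j \zeta^j\big(\Gamma_\e(y_j)-\Gamma_\e\big).
\]
The bound $\|\bar\Gamma_\e\|_{\Ld^\infty}\le\|\Gamma_\e\|_{\Ld^\infty}$ is then immediate from the convex-combination form of the definition, and constancy on each $B^j$ holds by construction.

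The four quantitative estimates are then routine consequences of Proposition~\ref{prop:ballconstr}. Pointwise on $\supp\zeta^j$ one has $|\Gamma_\e(y_j)-\Gamma_\e(x)|\le r_j\|\nabla\Gamma_\e\|_{\Ld^\infty}\le r\|\nabla\Gamma_\e\|_{\Ld^\infty}$, which together with the bounded overlap of the supports gives the $\Ld^\infty$ estimate $\|\bar\Gamma_\e-\Gamma_\e\|_{\Ld^\infty}\lesssim r\|\nabla\Gamma_\e\|_{\Ld^\infty}$. Differentiating the definition, the two contributions $\zeta^j\nabla\Gamma_\e$ and $\nabla\zeta^j(\Gamma_\e(y_j)-\Gamma_\e)$ are each bounded in $\Ld^\infty$ by $\|\nabla\Gamma_\e\|_{\Ld^\infty}$, the second because the gain $|\nabla\zeta^j|\lesssim r_j^{-1}$ is exactly compensated by the loss $|\Gamma_\e(y_j)-\Gamma_\e|\lesssim r_j\|\nabla\Gamma_\e\|_{\Ld^\infty}$ on $\supp\zeta^j$. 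Standard interpolation between the resulting $C^0$ and $C^1$ bounds yields the desired $C^\gamma$ estimate, and a second-order computation involving $\nabla^2\zeta^j$ and $\|\nabla\Gamma_\e\|_{W^{1,\infty}}$ yields $\bar\Gamma_\e\in W^{2,\infty}(\R^2)^2$.

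For the localized $\Ld^1$ bound on $\nabla(\bar\Gamma_\e-\Gamma_\e)$, I would combine the $\Ld^\infty$ bound on the gradient with the fact that its support lies in $\bigcup_j\supp\zeta^j\subset\bigcup_j 2B^j$, yielding
\[
\int_{B_R(z)}|\nabla(\bar\Gamma_\e-\Gamma_\e)|\ \lesssim\ \|\nabla\Gamma_\e\|_{\Ld^\infty}\!\!\!\sum_{B^j\cap B_R(z)\ne\emptyset}\!\! r_j^2.
\]
Since each $r_j\le r\le 1\le R$, every ball intersecting $B_R(z)$ is centered in $B_{2R}(z)$, which is covered by $O(1)$ lattice cells of $R\Z^2$, so the radius bound of Proposition~\ref{prop:ballconstr} gives $\sum_{B^j\cap B_R(z)\ne\emptyset}r_j\lesssim r$. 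Combining with $r_j\le r$ yields $\sum r_j^2\lesssim r^2\le rR^2$, as desired.

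The only genuinely delicate step is the Whitney-type choice of the cutoffs $\zeta^j$ achieving $\zeta^k|_{B^j}\equiv 0$ for $k\ne j$: for pairs of balls that happen to be tangent in the collection $\B_{\e,R}^r$, no uniform enlargement is possible and one must select the thickenings adaptively (so that the derivative bounds $|\nabla^k\zeta^j|\lesssim r_j^{-k}$ are borderline but still hold). This is the only nontrivial point, and its justification is precisely the content of the construction of \cite[Proposition~9.6]{SS-book}, to which I would defer; everything else is a direct verification using only the radius bound and local finiteness of $\B_{\e,R}^r$ and $\tilde\B_{\e,R}^{r_0,r}$ from Proposition~\ref{prop:ballconstr}.
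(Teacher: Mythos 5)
Your approach matches the paper's: the paper itself gives no proof, only the one-line remark that the lemma ``is obtained as in [SS-book, Proposition 9.6],'' so your decision to defer the Whitney construction to that reference and verify the quantitative estimates by hand is exactly in the spirit intended. Your unpacking of the $\Ld^\infty$ and $\Ld^1$ bounds from the radius bound $\sum_{B^j\cap B_R(z)\ne\emptyset}r_j\lesssim r$ and from $r_j\le r$ is correct and adds useful detail that the paper omits.

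Two cautionary remarks on the part you treat as routine. First, the assertion $\|\bar\Gamma_\e\|_{\Ld^\infty}\le\|\Gamma_\e\|_{\Ld^\infty}$ via the ``convex-combination form'' requires not merely that each $\zeta^j\ge0$ and $\zeta^k|_{B^j}\equiv0$ for $k\ne j$, but the stronger pointwise bound $\sum_j\zeta^j\le1$ \emph{everywhere} (including the transition annuli, where several thickenings may overlap). With the properties you list this is not automatic; it has to be built in, e.g.\ by making the supports of the $\zeta^j$ pairwise disjoint. Second — and this is the genuine crux — achieving $|\nabla^k\zeta^j|\lesssim r_j^{-k}$ together with $\zeta^k|_{B^j}\equiv0$ and disjoint supports is not a ``borderline but still holds'' matter when two balls of the disjoint collection are separated by a gap much smaller than their radii: any cutoff that passes from $1$ on $B^j$ to $0$ on $B^k$ across a gap of width $\delta\ll r_j$ has gradient $\gtrsim\delta^{-1}$, not $r_j^{-1}$. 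Circumventing this — whether by exploiting additional structure of the Jerrard--Sandier ball-growth process, or by a construction that does not jump between genuinely distinct constant values across small gaps — is precisely the nontrivial content of [SS-book, Proposition 9.6]. Your sketch is a correct description of the target and of the easy verifications, but the statement ``everything else is a direct verification'' overstates what is routine: the $C^\gamma$ and $\Ld^1$ estimates you carry out presuppose exactly the cutoff gradient bounds that are deferred. Since both you and the paper cite [SS-book] for this, the overall route matches; just be aware that the deferral carries the main weight, not merely a technical detail.
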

\subsection{Additional results}
In order to control the velocity of the vortices, the following quantitative version of the ``product estimate'' of~\cite{SS-prod} is needed; the proof is omitted, as it is a direct adaptation of~\cite[Appendix~A]{Serfaty-15} (further deforming the metric in a non-constant way in the time direction; see also~\cite[Section~III]{SS-prod} and~\cite[Theorem~1.3]{Roman-17}).

\begin{lem}[Product estimate]\label{lem:prodest}
Denote by $M_\e$ any quantity such that for all $q>0$,
\[\lim_{\e\downarrow0}\e^qM_\e=\lim_{\e\downarrow0}\Log M_\e^{-q}=\lim_{\e\downarrow0}\Log ^{-1}\log M_\e=0.\]
Let $u_\e:[0,T]\times\R^2\to\C$, $\vi_\e:[0,T]\times\R^2\to\R^2$, and $\pre_{\e}:[0,T]\times\R^2\to\R$.
Assume that $\Ec_{\e,R}^{*,t}\lesssim \Log ^2$ for all $t$ and that $\bar\Ec_{\e,R}^{*,T}\le M_\e$, where we have set
\[\bar\Ec_{\e,R}^{*,T}:=\sup_z\int_0^T\bigg(\Ec_{\e,R}^{z,t}+\int_{\R^2}\chi_R^z|\partial_tu_\e^t-iu_\e^t N_\e\!\pre_{\e}^t\!|^2\bigg)dt.\]
Then, for all $X\in W^{1,\infty}([0,T]\times\R^2)^2$ and $Y\in W^{1,\infty}([0,T]\times\R^2)$, we have for all $z\in \R^2$,
\begin{align*}
&\bigg|\int_0^T\int_{\R^2}\chi_R^z \tilde V_{\e}\cdot XY\bigg|\\
&\hspace{0.5cm}\le\frac{1+C\frac{\log M_\e}\Log}{\Log }\bigg(\int_0^T\int_{\R^2}\chi_R^z|(\partial_tu_\e-iu_\e N_\e\!\pre_{\e})Y|^2\\
&\hspace{7cm}+\int_0^T\int_{\R^2}\chi_R^z|(\nabla u_\e-iu_\e N_\e\!\vi_\e)\cdot X|^2\bigg)\\
&\hspace{1cm}+C\big(1+\|(X,Y)\|_{W^{1,\infty}([0,T]\times \R^2)}^5\big)\big(M_\e^{-1/8}+\e N_\e\big)\Big(\bar\Ec_{\e,R}^{*,T}+\sup_{0\le \tau\le T}\Ec_{\e,R}^{*,\tau}+N_\e^2\Big).\qedhere
\end{align*}
\end{lem}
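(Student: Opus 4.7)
The plan is to follow closely the space-time ``deformation of the metric'' argument of~\cite[Appendix~A]{Serfaty-15}, with the new ingredient being that the deformation is allowed to be non-constant in the time direction, so as to accommodate the space-time scalar weight $Y$ (in~\cite{Serfaty-15} only a constant-in-time deformation is needed, because no analogue of $Y$ appears). By translation invariance one reduces at once to $z=0$. Given the test fields $X$ and $Y$, introduce the one-parameter family of space-time diffeomorphisms
\begin{equation*}
\phi_s(t,x):=\bigl(t+sY(t,x)\chi_R(x)^{1/2},\ x+sX(t,x)\chi_R(x)^{1/2}\bigr),\qquad s\in[0,s_0],
\end{equation*}
together with the composed maps $u_\e^s:=u_\e\circ\phi_s$, to be viewed as maps from a $3$-dimensional space-time slab into $\C$. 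By the chain rule, using the definition~\eqref{eq:defVtilde} of the modulated vortex velocity $\tilde V_\e$, and viewing the modulation $N_\e\!\vi_\e\,dx+N_\e\!\pre_{\e}\,dt$ as a $1$-form along the deformation, the $s$-derivative at $s=0$ of the (modulated) $3$D Ginzburg-Landau energy of $u_\e^s$ equals $\int_0^T\!\!\int \chi_R\,\tilde V_\e\cdot XY$ up to terms controlled by $\|(X,Y)\|_{W^{1,\infty}}$ and the divergence $\partial_t Y+\Div X$ of the deformation.

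The key lower bound then comes from a $3$D Jerrard-Sandier ball construction applied to $u_\e^s$, analogous to Lemma~\ref{lem:ballconstr} but in dimension three (cf.~\cite{SS-prod}, and the sharp quantitative version in~\cite[Theorem~1.3]{Roman-17}). Under the hypothesis $\bar\Ec_{\e,R}^{*,T}\le M_\e$, one expects a lower bound of the form
\begin{equation*}
\int_0^T\!\!\int \chi_R\Bigl(|\nabla_{t,x}u_\e^s|_g^2+\tfrac1{2\e^2}(1-|u_\e^s|^2)^2\Bigr)\,\ge\,2\pi(\Log-C\log M_\e)\,\mathcal L_\e^s-C\,M_\e^{7/8}\,\sup\Ec_{\e,R}^*,
\end{equation*}
where $g$ is the induced space-time metric, $\mathcal L_\e^s$ is the total weighted length of the space-time vortex filaments traced out by $u_\e^s$, and the exponent $\tfrac78$ reflects an optimal choice of the $3$D ball radius as a fractional power of $M_\e$. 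Taylor-expanding both sides in $s$ to first order, the $O(s)$ coefficient on the right is quadratic in $(X,Y)$ via the curve-length functional, and an application of the pointwise Young inequality $2ab\le \lambda a^2+\lambda^{-1}b^2$ with space-time dependent weight $\lambda=|X|/|Y|$ produces the desired \emph{asymmetric} upper bound on $|\!\int\chi_R\tilde V_\e\cdot XY|$ with pre-factor $(1+C\log M_\e/\Log)/\Log$. The residual $\e N_\e$ factor in the error tracks the $3$D Jacobian estimate needed to identify $\mathcal L_\e^s$ with the space-time concentration of $V_\e$, and the $M_\e^{-1/8}$ factor comes from optimizing the ball radius.

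The main obstacle will be twofold. First, the non-constant-in-time character of $\phi_s$: unlike in~\cite{Serfaty-15}, the induced $3$D metric $g$ depends on the base point $(t,x)$, so the ball construction must be performed in a non-flat anisotropic setting with a point-dependent radius parameter; this requires a uniform-locally-integrable, quantitative version of the $3$D Jerrard-Sandier lower bound, sharp enough to preserve the leading constant $\Log$ up to the stated $C\log M_\e$ loss. Second, the modulation $1$-form $N_\e\!\vi_\e\,dx+N_\e\!\pre_{\e}\,dt$ is not closed in general, so it cannot be absorbed by a gauge change; instead it must be carried throughout as a connection on the pulled-back bundle, and the resulting curvature contributions have to be absorbed into the $\|(X,Y)\|_{W^{1,\infty}}^5$-prefactor of the error. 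Once these two points are handled, the remainder of the computation proceeds verbatim as in~\cite[Appendix~A]{Serfaty-15}, with the upper bound extracted from the first-order Taylor expansion matched against the $3$D ball-construction lower bound.
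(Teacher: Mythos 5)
The paper omits the proof and indicates precisely the strategy you follow: a direct adaptation of~\cite[Appendix~A]{Serfaty-15}, now with a metric deformation that is non-constant in the time direction, with pointers to~\cite[Section~III]{SS-prod} and~\cite[Theorem~1.3]{Roman-17}. Your space-time diffeomorphism $\phi_s$, the identification of the $s$-derivative of the modulated $3$D Ginzburg--Landau energy with $\int\chi_R\,\tilde V_{\e}\cdot XY$, the $3$D Jerrard--Sandier ball-construction lower bound with $M_\e$-optimized radius, and the two obstacles you single out (an anisotropic, base-point-dependent ball construction; a non-closed modulation $1$-form) are exactly the ingredients the paper's parenthetical comment calls for.
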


We now turn to some useful a priori estimates on the solution $u_\e$ of equation~\eqref{eq:GL-1}. We start with the following (suboptimal) a priori bound on the velocity of the vortices, adapted from~\cite[Lemma~4.1]{Serfaty-15}.

\begin{lem}[A priori bound on velocity]\label{lem:velapbound}
Let $\alpha\ge0$, $\beta\in\R$, and let $h:\R^2\to\R$, $a:=e^h$, $F:\R^2\to\R^2$, $f:\R^2\to\R$ satisfy~\eqref{eq:scalingshFf}. Let $u_\e:\R^+\times\R^2\to\C$ and $\vi_\e:[0,T)\times\R^2\to\R^2$ be the solutions of~\eqref{eq:GL-1} and~\eqref{eq:GLv1} as in Proposition~\ref{prop:globGL}(i) and in Assumption~\ref{as:apveps}(a),
respectively, for some $T>0$.
Let $0<\e\ll1$, $1\le N_\e\lesssim \e^{-1}$, and $R\ge1$ with $\e R^\theta\ll1$ for some $\theta>0$, and assume that $\Ec_{\e,R}^{*,t}\lesssim_t N_\e\Log$ for all $t$. Then, in each of the considered regimes~\GLu, \GLd, \GLt, \GLup, and~\GLdp, we have for all $\theta>0$ and $t\in[0,T)$,
\begin{align*}
\alpha^2 \sup_z\int_0^t\int_{\R^2} a\chi_R^z|\partial_tu_\e|^2&\lesssim_{t,\theta} N_\e\Log^3+R^\theta N_\e^2\Log^2\\
&\lesssim R^\theta N_\e(N_\e+\Log)\Log^2.\qedhere
\end{align*}
\end{lem}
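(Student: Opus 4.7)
The plan is to follow the strategy of~\cite[Lemma~4.1]{Serfaty-15}, suitably adapted to the weighted setting with pinning and applied current. The core idea is to derive an energy-dissipation identity by pairing equation~\eqref{eq:GL-1} with the test function $a\chi_R^z\partial_tu_\e$ in the real sense, and then to control the resulting error terms.

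First I would take $\langle\cdot,\partial_tu_\e\rangle$ of both sides of~\eqref{eq:GL-1} and multiply by $a\chi_R^z$: the coefficient $i\Log\beta$ drops out since $\langle i\partial_tu_\e,\partial_tu_\e\rangle=0$, leaving a prefactor $\lambda_\e\alpha$ on the $|\partial_tu_\e|^2$-term. Integrating the Laplacian by parts produces $-\int\nabla(a\chi_R^z)\cdot\langle\nabla u_\e,\partial_tu_\e\rangle$, and the key algebraic point is that this combines with the pinning contribution $\int a\chi_R^z\nabla h\cdot\langle\nabla u_\e,\partial_tu_\e\rangle$ via $\nabla(a\chi_R^z)=a\chi_R^z\nabla h+a\nabla\chi_R^z$ to yield the net quantity $-\int a\nabla\chi_R^z\cdot\langle\nabla u_\e,\partial_tu_\e\rangle$, which is supported where $|\nabla\chi_R^z|\lesssim R^{-1}$. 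Gathering the remaining terms into a time derivative of $\int\chi_R^z e_\e$, I obtain
\[
\lambda_\e\alpha\int a\chi_R^z|\partial_tu_\e|^2+\partial_t\!\int\chi_R^z e_\e=-\!\int a\nabla\chi_R^z\cdot\langle\nabla u_\e,\partial_tu_\e\rangle+\Log\!\int a\chi_R^z F^\bot\!\cdot\langle i\nabla u_\e,\partial_tu_\e\rangle.
\]

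Next I would integrate this identity in time on $[0,t]$ and apply Cauchy-Schwarz with small constants on both right-hand-side terms, in order to absorb the $|\partial_tu_\e|^2$ pieces into the left-hand side. Using $\|F\|_{\Ld^\infty}\lesssim\lambda_\e$ from~\eqref{eq:scalingshFf}, the $F^\bot$ term contributes an error of order $(\Log^2\lambda_\e/\alpha)\int_0^t\!\int a\chi_R^z|\nabla u_\e|^2$, while the cut-off term contributes an error of order $(\lambda_\e\alpha R^2)^{-1}\int_0^t\!\int a\chi_{2R}^z|\nabla u_\e|^2$. Taking the supremum over lattice points $z\in R\Z^2$ and observing that $\chi_{2R}^z$ is dominated by a bounded sum of translates of $\chi_R^{z'}$ closes the absorption. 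The initial energy $\int\chi_R^z e_\e^0$ is controlled similarly: its $|\nabla u_\e|^2$ and $(1-|u_\e|^2)^2$ pieces fit into the scheme, while the $f(1-|u_\e|^2)$ piece is handled by Cauchy-Schwarz using $\|f\|_{\Ld^\infty}\lesssim\lambda_\e^2\Log^2$ together with the smallness hypothesis $\e R^\theta\ll 1$.

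The last ingredient is the bound on $\int a\chi_R^z|\nabla u_\e|^2$. From the pointwise identity $|\nabla u_\e|^2\le 2|\nabla u_\e-iu_\e N_\e\!\vi_\e|^2+2N_\e^2|u_\e|^2|\!\vi_\e|^2$, combining the maximum-principle bound $\|u_\e\|_{\Ld^\infty}\lesssim 1$ (standard for~\eqref{eq:GL-1} with $\Ld^\infty$-data) with the hypothesis $\Ec_{\e,R}^*\lesssim N_\e\Log$ and the estimate $\int\chi_R^z|\!\vi_\e|^2\lesssim R^\theta$ (obtained by H\"older from the $(\Ld^2+\Ld^q)\cap\Ld^\infty$ control of Assumption~\ref{as:apveps}(a), taking $q>2$ close to $2$), I get $\int a\chi_R^z|\nabla u_\e|^2\lesssim N_\e\Log+R^\theta N_\e^2$. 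Multiplying by the prefactor $\Log^2/\alpha^2$ that arises after dividing the $F^\bot$ error by $\lambda_\e\alpha$ and rescaling by $\alpha^2$ yields exactly $\alpha^2\sup_z\int_0^t\!\int a\chi_R^z|\partial_tu_\e|^2\lesssim_{t,\theta}N_\e\Log^3+R^\theta N_\e^2\Log^2$; the cut-off error carries an extra $R^{-2}\lambda_\e^{-2}$ factor and is of lower order in every regime.

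The main obstacle is the $\nabla h$ term in~\eqref{eq:GL-1}: its natural Cauchy-Schwarz estimate would produce a contribution of order $\|\nabla h\|_{\Ld^\infty}^2\alpha^{-1}\!\int|\nabla u_\e|^2\lesssim\lambda_\e^2\alpha^{-1}\!\int|\nabla u_\e|^2$, which is uncontrolled at our level of accuracy since $\lambda_\e$ can be of critical size. The resolution is precisely the algebraic cancellation with the integration-by-parts boundary term highlighted above, which is specific to the weighted modulated-energy framework and is what makes the argument close in all five regimes \GLu, \GLd, \GLt, \GLup, \GLdp{} simultaneously.
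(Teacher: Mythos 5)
Your proposal takes a different, more direct route than the paper's. The paper derives the estimate from the pre-established modulated-energy identity~\eqref{eq:timederenergy} and then estimates each of its many terms using the pointwise estimates of Lemma~\ref{lem:pointest}, the bound~\eqref{eq:boundpsi} on $\psi_{\e,R}$, and Lemma~\ref{lem:apestu}. You instead test equation~\eqref{eq:GL-1} directly against $a\chi_R^z\partial_tu_\e$ and work with the \emph{unmodulated} energy density $e_\e$. Your key algebraic observation --- that $\nabla(a\chi_R^z)=a\chi_R^z\nabla h+a\nabla\chi_R^z$ makes the contribution of the pinning drift $\nabla h\cdot\nabla u_\e$ cancel exactly against the weight-derivative term produced when integrating the Laplacian by parts, leaving only the benign cutoff term $-\int a\nabla\chi_R^z\cdot\langle\nabla u_\e,\partial_tu_\e\rangle$ --- is correct and is indeed the structural reason the estimate is uniform over the pinning scalings. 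Your identity, the absorption by Young's inequality using $\|F\|_{\Ld^\infty}\lesssim\lambda_\e$, and the final scaling count are sound.

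However, there is one genuine gap. To control $\int a\chi_R^z|\nabla u_\e|^2$ via $|\nabla u_\e|^2\lesssim|\nabla u_\e-iu_\e N_\e\vi_\e|^2+N_\e^2|u_\e|^2|\vi_\e|^2$, you invoke the maximum-principle bound $\|u_\e\|_{\Ld^\infty}\lesssim 1$ as ``standard.'' It is exactly this that the paper cannot use: Remark~\ref{rems:GLMFL} explains that the maximum principle fails in the presence of an applied current ($F\not\equiv0$) and in the mixed-flow case ($\beta\neq0$), and the authors ``systematically avoid the use of such $\Ld^\infty$-estimates.'' Since Lemma~\ref{lem:velapbound} must hold for general $\alpha>0$, $\beta\in\R$, and nontrivial $F$, this step is not licit as stated, and you are smuggling in a bound that the framework is not entitled to.

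The fix is short and leaves your structure intact: write $|u_\e|^2=1-(1-|u_\e|^2)$, so that $N_\e^2\int\chi_R^z|u_\e|^2|\vi_\e|^2\le N_\e^2\int\chi_R^z|\vi_\e|^2+N_\e^2\|\vi_\e\|_{\Ld^\infty}^2\|1-|u_\e|^2\|_{\Ld^1(B_{2R}(z))}$, and bound $\|1-|u_\e|^2\|_{\Ld^1(B_{2R}(z))}\lesssim R\e(\Ec_{\e,R}^*)^{1/2}\lesssim R\e(N_\e\Log)^{1/2}$ by Cauchy--Schwarz. The resulting correction $\e RN_\e^2(N_\e\Log)^{1/2}$ is then dominated by $R^\theta N_\e^2$ in the considered regimes under the stated smallness of $\e R^\theta$ together with the scaling $R\lesssim\Log^n$ used in the applications. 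The $(1-|u_\e|^2)f$ contribution to $e_\e$ --- which you say fits ``into the scheme'' --- needs the same unpacking, again avoiding any $\Ld^\infty$-control of $|u_\e|$. With this replacement your argument closes.
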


\begin{proof}
Integrating identity~\eqref{eq:timederenergy} in time, reorganizing the terms, and setting $D_{\e,R}^{z,t}:=\int_0^t\int_{\R^2} a\chi_R^z|\partial_tu_\e|^2$, we obtain
\begingroup\allowdisplaybreaks
\begin{multline}\label{eq:1st-est-velapbound}
\lambda_\e\alpha D_{\e,R}^{z,t}=\hat\Ec_{\e,R}^{z,\circ}-\hat\Ec_{\e,R}^{z,t}\\
- \int_0^t\int_{\R^2} a\nabla \chi_R^z\cdot\langle \partial_t u_\e,\nabla u_\e-iu_\e N_\e \!\vi_\e\rangle+\int_0^t\int_{\R^2} N_\e\chi_R^z\langle \partial_tu_\e,iu_\e\rangle \Div(a\!\vi_\e)\\
+\int_{\R^2} \frac{aN_\e^2}2(1-|u_\e^t|^2)(\psi_{\e,R}^{z,t}-\chi_R^z|\!\vi_\e^t\!|^2)-\int_{\R^2} \frac{aN_\e^2}2(1-|u_\e^\circ|^2)(\psi_{\e,R}^{z,\circ}-\chi_R^z|\!\vi_\e^\circ\!|^2)\\
+\int_0^t\int_{\R^2} a\chi_R^z\Big(N_\e(N_\e \!\vi_\e-j_\e)\cdot\partial_t\!\vi_\e-N_\e \!\vi_\e\!\cdot\, V_\e-\frac{\Log }2F^\bot\cdot V_\e\Big).
\end{multline}
\endgroup
Noting that $|\nabla\chi_R^z|\lesssim R^{-1}(\chi_R^z)^{1/2}$, using the pointwise estimates of Lemma~\ref{lem:pointest} for $V_\e$ and $j_\e-N_\e\!\vi_\e$,
and using assumptions~\eqref{eq:scalingshFf}, the properties of $\vi_\e$ in Assumption~\ref{as:apveps}(a), the bound~\eqref{eq:boundpsi} on $\psi_{\e,R}^z$, and Lemma~\ref{lem:apestu} in the form $\hat\Ec_{\e,R}^{z,t}\lesssim \Ec_{\e,R}^{*,t}+o(N_\e^2)\lesssim_t N_\e\Log$, we find for $\theta>0$ small enough, in the considered regimes,
\begingroup\allowdisplaybreaks
\begin{eqnarray*}
\lefteqn{\lambda_\e\alpha D_{\e,R}^{z,t}}\\
&\lesssim_{t,\theta}& N_\e\Log+ R^{-1}(N_\e\Log)^{1/2}(D_{\e,R}^{z,t})^{1/2}+N_\e(1+\e(N_\e\Log)^{1/2})(D_{\e,R}^{z,t})^{1/2}\\
&&\qquad+\e N_\e^2(N_\e\Log)^{1/2}\Big(1+\frac\Log{N_\e}(\lambda_\e R^\theta+1\wedge\lambda_\e^{1/2}+R^{-1+\theta})\Big)\\
&&\qquad+N_\e(N_\e\Log)^{1/2}(1+\e N_\e)+\e \lambda_\e^{-1/2}N_\e^2\Log\\
&&\qquad+(N_\e+\lambda_\e\Log)\big((1+\e N_\e)(N_\e\Log)^{1/2}+N_\e R^\theta\big)(D_{\e,R}^{z,t})^{1/2}\\
&\lesssim_\theta& \Log(N_\e+\Log)+\big(N_\e\Log R^\theta+\Log(N_\e\Log)^{1/2}\big)(D_{\e,R}^{z,t})^{1/2}+o(1).
\end{eqnarray*}
\endgroup
Absorbing $(D_{\e,R}^{z,t})^{1/2}$ in the left-hand side, the result follows.
\end{proof}

The following optimal a priori estimate is also crucially needed in our analysis in the presence of pinning, due to the absence of a factor $\frac12$ in front of the quantity $\frac a{\e^2}(1-|u_\e|^2)^2$ as it appears in the term $I_{\e,\varrho,R}^H$ in Lemma~\ref{lem:decompcruc}.
A simple computation based on the energy lower bound in Proposition~\ref{prop:ballconstr} yields a similar bound with $N_\e$ replaced by $N_\e^2$ (cf.~indeed~\eqref{eq:boundprepoho}), but the optimal result below is much more subtle.
It is proved as a combination of the Pohozaev ball construction of~\cite[Section~5]{SS-book} together with some careful cut-off techniques inspired by~\cite[Proof of Proposition~13.4]{SS-book}.

\begin{lem}\label{lem:poho}
Let $\alpha\ge0$, $\beta\in\R$, and let $h:\R^2\to\R$, $a:=e^h$, $F:\R^2\to\R^2$, $f:\R^2\to\R$ satisfy~\eqref{eq:scalingshFf}. Let $u_\e:\R^+\times\R^2\to\C$ and $\vi_\e:[0,T)\times\R^2\to\R^2$ be the solutions of~\eqref{eq:GL-1} and~\eqref{eq:GLv1} as in Proposition~\ref{prop:globGL}(i) and in Assumption~\ref{as:apveps}(a), respectively, for some $T>0$.
Let $0<\e\ll1$, $1\le N_\e\lesssim\Log$, and $R\ge1$ with $\e R\Log^3\lesssim1$, and assume that $\Ec_{\e,R}^{*,t}\lesssim_t N_\e\Log$ for all $t$. Then, in the nondegenerate dissipative case, in each of the considered regimes~\GLu, \GLd, \GLup, and~\GLdp, we have for all $t\in[0,T)$,
\begin{equation}\label{eq:pohoconcl}
\alpha^2\sup_z\int_0^t\int_{\R^2}\frac{\chi_R^z}{\e^2}(1-|u_\e|^2)^2\lesssim_t N_\e.
\qedhere
\end{equation}
\end{lem}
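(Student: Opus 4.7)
The goal is an optimal estimate on the potential part $\e^{-2}(1-|u_\e|^2)^2$ that is sharper by one factor of $\Log$ than what the energy bound $\Ec_{\e,R}^*\lesssim N_\e\Log$ directly gives. The natural tool is a Pohozaev multiplier in the spirit of \cite[Section~5]{SS-book}, applied ball-by-ball on the Jerrard--Sandier collection $\B_{\e,R}^r$, and then assembled with a cut-off argument away from the balls, in the style of \cite[Proof of Proposition~13.4]{SS-book}. The key payoff will be that each vortex core contributes $O(1)$ and there are $\lesssim N_\e$ vortices per translate of $B_R$, yielding $N_\e$ rather than $N_\e\Log$.

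First, I would invoke Proposition~\ref{prop:ballconstr} with a well-chosen radius $r\ll 1$ (for instance $r\simeq \Log^{-1}$) to get a disjoint collection $\B_{\e,R}^r=\biguplus_j B^j$, $B^j=\bar B(y_j,r_j)$, covering $\{|u_\e|<1/2\}$, with $\sum_{j: y_j\in B_{2R}(z)}r_j\lesssim r$ and $\sum_{j:y_j\in B_{2R}(z)}|d_j|\lesssim N_\e$. Inside each $B^j$ I would apply Pohozaev's identity: multiply equation~\eqref{eq:GL-1} by $(x-y_j)\cdot\nabla \bar u_\e$, take real part, and integrate over $B^j$. The Laplacian term produces the standard identity $-\Re\int_{B^j}\triangle u_\e\,(x-y_j)\cdot\nabla\bar u_\e=\frac12\int_{\partial B^j}r_j|\nabla u_\e|^2-\int_{\partial B^j}r_j|\partial_nu_\e|^2$, while the potential term yields, via $\Re[u_\e(1-|u_\e|^2)\cdot\nabla\bar u_\e]=-\frac14\nabla(1-|u_\e|^2)^2$ and integration by parts, precisely $\frac{1}{2\e^2}\int_{B^j}a(1-|u_\e|^2)^2$ plus a lower-order term in $\nabla h$ and a boundary term on $\partial B^j$. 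This produces, after multiplication by the cut-off~$\chi_R^z$ and controlling the commutator $(x-y_j)\cdot\nabla\chi_R^z=O(r_j/R)$, an identity of the form
\begin{multline*}
\frac{1}{2\e^2}\int_{B^j}a\chi_R^z(1-|u_\e|^2)^2\lesssim r_j\int_{\partial B^j}\chi_R^z\,e_\e
+\lambda_\e\alpha\,r_j\Big(\int_{B^j}\chi_R^z|\partial_tu_\e|^2\Big)^{\!1/2}\!\big(\Ec_{\e,R}^z\big)^{\!1/2}\\
+r_j\big(\lambda_\e\beta\Log+\|\nabla h\|_{\Ld^\infty}+\Log\|F\|_{\Ld^\infty}+\|f\|_{\Ld^\infty}\,r_j\big)\,\Ec_{\e,R}^z,
\end{multline*}
the boundary term on $\partial B^j$ being handled by a standard Fubini--averaging in $r_j$ (cf.~\cite[Section~5]{SS-book}) after slightly inflating the balls so as to keep $\sum r_j\lesssim r$.

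Second, for the contribution outside $\B_{\e,R}^r$, I would apply the same Pohozaev calculation but with a smooth radial vector field $X$ on $\R^2$ that equals the identity outside a neighborhood of each $B^j$ and vanishes on $B^j$ itself, multiplied by $\chi_R^z$; this is the cut-off trick from \cite[Proof of Proposition~13.4]{SS-book}. Since $|u_\e|\ge 1/2$ on the support, the potential term becomes coercive in the sharp form $\int_{\R^2\setminus\B_{\e,R}^r}\chi_R^z\e^{-2}(1-|u_\e|^2)^2$, and it is bounded by boundary terms on $\partial\B_{\e,R}^r$ (already included above), plus interior contributions in $\nabla\chi_R^z$ (of size $R^{-1}\Ec_{\e,R}^*$) and in $\partial_t u_\e$ and the forcing.

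Third, I would sum over $j$ with $y_j\in B_{2R}(z)$, using $\sum_j r_j\lesssim r$ and $\sum_j|d_j|\lesssim N_\e$, integrate in time over $[0,t]$, and absorb the time-derivative contribution using Lemma~\ref{lem:velapbound}, which gives $\alpha^2\int_0^t\!\int\chi_R^z|\partial_tu_\e|^2\lesssim_t R^\theta N_\e(N_\e+\Log)\Log^2$. With $\alpha$ appearing linearly in front of $\partial_t u_\e$ in Pohozaev and squared through Cauchy--Schwarz, together with the scalings~\eqref{eq:scalingshFf} (in particular $\lambda_\e\Log\lesssim N_\e$ and $\lambda_\e\alpha\cdot\lambda_\e\lesssim 1$), the time-derivative piece contributes at most $\alpha^2\lambda_\e^2 r\,(N_\e\Log)\cdot\text{(velocity L}^2\text{)}\lesssim N_\e$ for the choice $r\simeq\Log^{-1}$, and the forcing terms contribute $r\,\Log\cdot N_\e\Log\lesssim N_\e$. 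The pure boundary/kinetic pieces give $r\cdot \Ec_{\e,R}^*\lesssim N_\e$, and the Fubini averaging step only loses a constant. Taking the supremum over $z$ then yields~\eqref{eq:pohoconcl}.

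The main obstacle, as in \cite{Serfaty-15}, is the time-derivative term: the velocity estimate from Lemma~\ref{lem:velapbound} has an intrinsic $\Log^3$ loss, so the factor $\alpha^2$ on the left and the sharp dissipative structure of the equation must be used precisely to avoid any spurious $\Log$ factor on the right. A secondary difficulty is that the pinning weight $a=e^h$ introduces variable coefficients inside the Pohozaev identity: one must carefully track the terms coming from $\nabla h$ and $\nabla a$ in the boundary integrals and ensure that the non-degeneracy $a\gtrsim 1$ and $\|\nabla h\|_{\Ld^\infty}\lesssim 1\wedge\lambda_\e$ are enough to absorb them without spoiling the $N_\e$ scaling.
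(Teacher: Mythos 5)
Your high-level structure (Pohozaev identity inside the vortex balls, a separate argument outside them, and Lemma~\ref{lem:velapbound} to control the $\partial_t u_\e$ contribution) matches the paper's Steps~1--3. However, the proposal contains a genuine gap at the point where you claim ``the pure boundary/kinetic pieces give $r\cdot\Ec_{\e,R}^*\lesssim N_\e$.'' After the Fubini averaging you describe (inflating each $B^j$ by a bounded factor), the boundary term $r_j\int_{\partial B^j}e_\e$ becomes an annular \emph{area} integral $\int_{2B^j\setminus B^j}e_\e$, \emph{without} any extra factor of~$r$; summing over $j$ then gives $\Ec_{\e,R}^*\lesssim N_\e\Log$, not $N_\e$. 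To actually gain the factor $\Log^{-1}$ you must grow the balls by a factor $e^s$ with $s\simeq\Log$ (not ``slightly'') and \emph{integrate the Pohozaev estimate in the growth parameter} $s$: the left-hand side picks up a factor $s$ while the boundary contributions collapse (via the coarea/ball-growth identity of \cite[Prop.~4.1]{SS-book}) to a single area integral bounded by $\Ec_{\e,R}^*$, giving $\Ec_{\e,R}^*/s\simeq N_\e$. This forces the \emph{starting} collection to have total radius of order $\e^{\kappa/2}$ (so that the grown balls still have radius $\e^{\kappa/4}\ll1$): the Jerrard--Sandier collection $\B_{\e,R}^r$ with $r\simeq\Log^{-1}$ is far too large for this. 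The paper therefore uses the much finer collection of \cite[Prop.~4.8]{SS-book} covering the set $\{||u_\e|-1|\ge\e^{\kappa/4}\}$, which has total radius $\e^{\kappa/2}$.

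The treatment of the region outside the balls is also on shakier ground than you suggest. You propose a Pohozaev identity with a vector field $X$ that vanishes on each $B^j$ and equals the identity away from them; the resulting commutator terms involve $|\nabla X|\sim r_j^{-1}$ on annuli of thickness $\sim r_j$ around each ball, and these do not obviously sum to $O(N_\e)$. What the paper does instead is quite different: it tests the real (modulus) part of equation~\eqref{eq:GL-1} for $\rho_\e:=|u_\e|$ against $\chi_R(\zeta_\e(\rho_\e)-\rho_\e)$, where $\zeta_\e$ is a cut-off \emph{in the modulus variable} equal to~$1$ on $\{|\rho-1|\le\e^{\kappa/4}\}$ and to the identity outside $\{|\rho-1|\le\tfrac12\}$. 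The sign structure of $(\zeta_\e(\rho)-\rho)(1-\rho)\ge0$ makes the potential term coercive on exactly the complement of the small balls, while the smallness $|\zeta_\e(\rho)-\rho|\lesssim\e^{\kappa/4}$ renders the remaining terms negligible. This modulus cut-off has no Pohozaev analogue in your sketch, and it is precisely what makes the outside contribution close without a spurious $\Log$ loss. In short: the proposal correctly identifies the Pohozaev mechanism as the source of improvement and the velocity estimate as the price to pay, but it misses both the ball-growth integration that produces the $\Log^{-1}$ gain and the modulus cut-off that handles the complement, and as written the boundary term estimate is off by a factor $\Log$.
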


\begin{proof}
To simplify notation, we focus on the case $z=0$, but the result of course holds uniformly with respect to the translation $z\in R\Z^2$. We split the proof into three steps.

\medskip
\noindent\step1 Pohozaev estimate on balls.
\nopagebreak

In this step, we prove the following Pohozaev-type estimate, adapted from~\cite[Theorem~5.1]{SS-book}: for any ball $B_r(x_0)$ with $r\le1$, we have
\begin{multline}
\alpha^2\int_0^t\int_{B_r(x_0)} \frac {a^2\chi_R}{2\e^2}(1-|u_\e|^2)^2\\
\lesssim_{t} r\lambda_\e N_\e\Log ^3
+r\int_0^t\int_{\partial B_r(x_0)}\frac{a\chi_R}2\Big(|\nabla u_\e-iu_\e N_\e\!\vi_\e\!|^2+\frac{a}{2\e^2}(1-|u_\e|^2)^2\\
+|1-|u_\e|^2|(N_\e^2|\!\vi_\e\!|^2+|f|)\Big).\label{eq:poho}
\end{multline}
For any smooth vector field $X$ and any bounded open set $U\subset\R^2$, we find by integration by parts
\begin{align*}
-\int_U\chi_R\nabla X:\tilde S_\e=\int_U\chi_R\Div\!\tilde S_\e\cdot X+\int_UX\cdot\tilde S_\e\cdot\nabla\chi_R
-\int_{\partial U}\chi_RX\cdot\tilde S_\e\cdot n,
\end{align*}
and hence, for $U= B_r(x_0)$, $r>0$, and $X= x-x_0$,
\begin{multline*}
-\int_{B_r(x_0)} \chi_R\Tr\tilde S_\e\\
=\int_{B_r(x_0)} \chi_R\Div\!\tilde S_\e\cdot(x-x_0)+\int_{B_r(x_0)}(x-x_0)\cdot\tilde S_\e\cdot\nabla\chi_R
-r\int_{\partial B_r(x_0)}\chi_R\,\tilde S_\e:n\otimes n.
\end{multline*}
By definition~\eqref{eq:defSmod} of the modulated stress-energy tensor $\tilde S_\e$, this means
\begin{multline*}
\int_{B_r(x_0)}a\chi_R\Big(\frac {a}{2\e^2}(1-|u_\e|^2)^2+(1-|u_\e|^2)f\Big)\\
=\int_{B_r(x_0)}\chi_R\Div\!\tilde S_\e\cdot(x-x_0)+\int_{B_r(x_0)}(x-x_0)\cdot\tilde S_\e\cdot\nabla\chi_R\\
+r\int_{\partial B_r(x_0)}\frac {a\chi_R}2\Big(|n^\bot\cdot(\nabla u_\e-iu_\e N_\e \!\vi_\e)|^2-|n\cdot(\nabla u_\e-iu_\e N_\e \!\vi_\e)|^2\\+\frac{a}{2\e^2}(1-|u_\e|^2)^2+(1-|u_\e|^2)\big(N_\e^2(|n^\bot\cdot \!\vi_\e\!|^2-|n\cdot \!\vi_\e\!|^2)+f\big)\Big),
\end{multline*}
so that we may simply estimate
\begin{multline}\label{eq:poho1}
\int_{B_r(x_0)} \frac {a^2\chi_R}{2\e^2}(1-|u_\e|^2)^2\\
\le r\int_{B_r(x_0)} |\!\Div\!\tilde S_\e|+r\int_{B_r(x_0)}|\nabla\chi_R||\tilde S_\e|+\int_{B_r(x_0)}a|1-|u_\e|^2||f|\\
+r\int_{\partial B_r(x_0)}\frac {a\chi_R}2\Big(|\nabla u_\e-iu_\e N_\e \!\vi_\e\!|^2+\frac{a}{2\e^2}(1-|u_\e|^2)^2\\
+|1-|u_\e|^2|(N_\e^2|\!\vi_\e\!|^2+|f|)\Big).
\end{multline}
It remains to estimate the first three right-hand side terms. Using the pointwise estimates of Lemma~\ref{lem:pointest}, and using assumption~\eqref{eq:scalingshFf} and the boundedness properties of $\vi_\e,\pre_{\e}$ in Assumption~\ref{as:apveps}(a), Lemma~\ref{lem:divS} directly yields in the considered regimes,
\begin{multline*}
|\!\Div\!\tilde S_\e|\lesssim \lambda_\e\Log |\partial_tu_\e||\nabla u_\e-iu_\e N_\e \!\vi_\e\!|\\
+N_\e(1+\lambda_\e^{1/2}\Log) (1+|1-|u_\e|^2|)|\nabla u_\e-iu_\e N_\e \!\vi_\e\!|\\
+{\lambda_\e}N_\e\Log |\partial_tu_\e|(1+|1-|u_\e|^2|)
+(N_\e+\lambda_\e\Log)|\nabla u_\e-iu_\e N_\e \!\vi_\e\!|^2\\
+\e^{-2}(1-|u_\e|^2)^2
+|1-|u_\e|^2|\big(N_\e^2(N_\e+\lambda_\e\Log)+\lambda_\e^2\Log^2\big)+N_\e^2(N_\e+\lambda_\e\Log),
\end{multline*}
which gives for $N_\e\lesssim\Log$,
\begin{multline*}
|\!\Div\!\tilde S_\e|\lesssim\lambda_\e|\partial_tu_\e|^2+\lambda_\e\Log^2|\nabla u_\e-iu_\e N_\e\!\vi_\e\!|^2\\
+\lambda_\e N_\e^2\Log^2(1+(1-|u_\e|^2)^2)+\e^{-2}(1-|u_\e|^2)^2.
\end{multline*}
By Lemma~\ref{lem:velapbound} with $R=1$, we deduce for all $r\le1$,
\begin{align*}
\alpha^2\int_0^t\int_{B_r(x_0)}|\!\Div \tilde S_\e|&\lesssim_{t} \lambda_\e N_\e\Log ^3+\lambda_\e N_\e^2\Log^2(1+\e^2N_\e\Log)\lesssim \lambda_\e N_\e\Log ^3.
\end{align*}
Inserting this into~\eqref{eq:poho1}, and noting that~\eqref{eq:scalingshFf} in the form $\|f\|_{\Ld^\infty}\lesssim\Log^2$ yields
\[\int_{B_r(x_0)} a|1-|u_\e|^2||f|\lesssim_t \e r(N_\e\Log )^{1/2}\|f\|_{\Ld^\infty}\lesssim \e r\Log^3,\]
and
\begin{multline*}
\int_{B_r(x_0)}|\nabla\chi_R||\tilde S_\e|\\
\lesssim R^{-1}\int_{B_r(x_0)}\Big(|\nabla u_\e-iu_\e N_\e \!\vi_\e\!|^2+\frac{1}{\e^2}(1-|u_\e|^2)^2+\e^2(N_\e^4|\!\vi_\e\!|^4+|f|^2)\Big)\\
\lesssim R^{-1}\big(N_\e\Log+\e^2(N_\e^4+\|f\|_{\Ld^\infty}^2)\big)\lesssim N_\e\Log,
\end{multline*}
the result~\eqref{eq:poho} follows.

\medskip
\noindent\step2 Estimate inside small balls.

In this step, we prove the desired estimate~\eqref{eq:pohoconcl} for the integral restricted to suitable small balls centered at the vortex locations.
More precisely, since we have by assumption $\Ec_{\e,R}^*\lesssim N_\e\Log \lesssim \Log ^2$, we may apply~\cite[Proposition~4.8]{SS-book} with $M=\e^{\kappa-1}$ and $\delta=\e^{\kappa/4}$ for any $\kappa\in(0,1)$.
This yields a finite union $\hat\B_{\e,0}$ of disjoint closed balls with total radius $r(\hat\B_{\e,0})=\e^{\kappa/2}$, covering the set $\{x\in B_{2R}:||u_\e(x)|-1|\ge\e^{\kappa/4}\}$. We then prove that
\begin{align}\label{eq:poho-concl}
\alpha^2\int_0^t\int_{\hat\B_{\e,0}}\frac{a^2\chi_R}{2\e^2}(1-|u_\e|^2)^2&\lesssim_t N_\e.
\end{align}

For that purpose, we let the initial collection of balls $\hat\B_{\e,0}$ grow, and we use the Pohozaev estimate of Step~1 as in~\cite[Proof of Theorem~5.1]{SS-book}.
By~\cite[Theorem~4.2]{SS-book}, there exists a monotone family $(\hat\B_\e^s)_{s\ge0}$ of unions of disjoint closed balls, such that $\hat\B_\e^0=\hat\B_{\e,0}$, $\hat\B_\e^s$ has total radius $r(\hat\B_\e^s)=e^sr(\hat\B_{\e,0})$ for all $s\ge0$,
and $\hat\B_\e^s=e^{s-r}\hat\B_{\e}^r$ for all $0\le r\le s$ with $[r,s]\subset\R^+\setminus \Tc_\e$, for some finite set $\Tc_\e\subset\R^+$ (corresponding to the merging times in the growth process).
For all $s\ge0$ with $r(\hat\B_\e^s)\le1$, the result~\eqref{eq:poho} of Step~1 gives the following estimate,
\begin{multline*}
\alpha^2\int_0^t\int_{\hat\B_\e^s} \frac {a^2\chi_R}{2\e^2}(1-|u_\e|^2)^2\\
\lesssim_{t} r(\hat\B_\e^s) N_\e\Log ^3
+\sum_{B_r(x)\in\hat\B_\e^s}r\int_0^t\int_{\partial B_r(x)}\frac {a\chi_R}2\Big(|\nabla u_\e-iu_\e N_\e \!\vi_\e\!|^2+\frac{a}{2\e^2}(1-|u_\e|^2)^2\\
+|1-|u_\e|^2|(N_\e^2|\!\vi_\e\!|^2+f)\Big).
\end{multline*}
Integrating this estimate over $s$ and applying~\cite[Proposition~4.1]{SS-book}, we find, for all $s\ge0$ with $r(\hat\B_{\e}(s))\le1$,
\begin{multline*}
s\alpha^2\int_0^t\int_{\hat\B_{\e,0}}\frac{a^2\chi_R}{2\e^2}(1-|u_\e|^2)^2\le\alpha^2\int_0^sdv\int_0^t\int_{\hat\B_\e^v}\frac{a^2\chi_R}{2\e^2}(1-|u_\e|^2)^2\\
\lesssim_{t} s\,r(\hat\B_\e^s) N_\e\Log ^3+\int_0^t\int_{\hat\B_\e^s\setminus\hat\B_{\e,0}}\frac{a\chi_R}2\Big(|\nabla u_\e-iu_\e N_\e \!\vi_\e\!|^2+\frac{a}{2\e^2}(1-|u_\e|^2)^2\\
+|1-|u_\e|^2|(N_\e^2|\!\vi_\e\!|^2+f)\Big),
\end{multline*}
and hence, using assumption~\eqref{eq:scalingshFf}, the boundedness of $\vi_\e$ in Assumption~\ref{as:apveps}(a), and the assumed energy bound, 
\begin{align*}
s\alpha^2\int_0^t\int_{\hat\B_{\e,0}}\frac{a^2\chi_R}{2\e^2}(1-|u_\e|^2)^2&\lesssim_{t} s\,r(\hat\B_\e^s)N_\e\Log ^3+N_\e\Log.
\end{align*}
Recalling that $r(\hat\B_\e^s)=e^s\e^{\kappa/2}$, this yields for all $s\ge1$ with $r(\hat\B_\e^s)\le1$,
\begin{align*}
\alpha^2\int_0^t\int_{\hat\B_{\e,0}}\frac{a^2\chi_R}{2\e^2}(1-|u_\e|^2)^2&\lesssim_{t} e^s\e^{\kappa/2} N_\e\Log ^3+\frac{N_\e\Log }{s},
\end{align*}
and the result~\eqref{eq:poho-concl} now follows for the choice $s=|\!\log \e^{\kappa/4}|$.

\medskip
\noindent\step3 Estimate outside small balls.

It remains to show that the desired estimate~\eqref{eq:pohoconcl} also holds for the integral restricted to the complement of the  small balls $\hat\B_{\e,0}$. More precisely, we prove that for all $\theta>0$,
\begin{align}\label{eq:poho-compl}
\alpha\int_0^t\int_{||u_\e|-1|\le\e^{\kappa/4}}\chi_R\Big( |\nabla|u_\e||^2+\frac{a(1-|u_\e|^2)^2}{2\e^2}\Big)\lesssim_{t,\theta}
\e^{\kappa/4}R^\theta\Log^2+\e R\Log^3.
\end{align}
The conclusion~\eqref{eq:pohoconcl} follows from this together with~\eqref{eq:poho-concl}, for $\theta>0$ small enough.

In order to prove~\eqref{eq:poho-compl}, we adapt the argument of~\cite[Proof of Proposition~13.4]{SS-book}.
For $0<\e\le2^{-4/\kappa}$, we define a cut-off function $\zeta_\e$ as follows,
\[\zeta_\e(y):=\begin{cases}y,&\text{if $0\le y\le \frac12$;}\\\frac12+\frac{y-\frac12}{1-2\e^{\kappa/4}},&\text{if $\frac12\le y\le 1-\e^{\kappa/4}$};\\1,&\text{if $1-\e^{\kappa/4}\le y\le 1+\e^{\kappa/4}$};\\1+\frac{y-1-\e^{\kappa/4}}{1-2\e^{\kappa/4}},&\text{if $1+\e^{\kappa/4}\le y\le \frac32$};\\y,&\text{if $y\ge\frac32$}.\end{cases}\]
Writing $u_\e:=\rho_\e e^{i\varphi_\e}$ locally, equation~\eqref{eq:GL-1} for $u_\e$ implies in particular
\begin{multline}\label{eq:GLrhoeps}
\alpha\lambda_\e\partial_t\rho_\e-\beta\lambda_\e\Log \rho_\e\partial_t\varphi_\e\\
=\triangle \rho_\e-\rho_\e|\nabla\varphi_\e|^2+\frac{a\rho_\e}{\e^2}(1-\rho_\e^2)+\nabla h\cdot\nabla \rho_\e-\rho_\e\Log F^\bot\cdot\nabla\varphi_\e+f\rho_\e.
\end{multline}
Testing this equation against $\chi_R(\zeta_\e(\rho_\e)-\rho_\e)$ and rearranging the terms,
we obtain
\begin{multline}\label{st1}
\int_{\R^2} \chi_R(1-\zeta_\e'(\rho_\e))|\nabla\rho_\e|^2+\int_{\R^2}\frac{a\chi_R}{\e^2}\rho_\e(\zeta_\e(\rho_\e)-\rho_\e)(1-\rho_\e^2)\\
=\alpha\lambda_\e\int_{\R^2}\chi_R(\zeta_\e(\rho_\e)-\rho_\e)\partial_t\rho_\e
-\beta\lambda_\e\Log \int_{\R^2} \chi_R\rho_\e(\zeta_\e(\rho_\e)-\rho_\e)\partial_t\varphi_\e\\
+\int_{\R^2}(\zeta_\e(\rho_\e)-\rho_\e)\nabla\chi_R\cdot\nabla\rho_\e+\int_{\R^2}\chi_R(\zeta_\e(\rho_\e)-\rho_\e)\rho_\e|\nabla\varphi_\e|^2\\
-\int_{\R^2}\chi_R(\zeta_\e(\rho_\e)-\rho_\e)\nabla h\cdot\nabla \rho_\e
+\Log \int_{\R^2}\chi_R\rho_\e (\zeta_\e(\rho_\e)-\rho_\e)F^\bot\cdot\nabla\varphi_\e\\
-\int_{\R^2} \chi_R(\zeta_\e(\rho_\e)-\rho_\e)f\rho_\e.
\end{multline}
Using that the cut-off function $\zeta_\e$ satisfies for all $y\ge0$,
\begin{gather*}
|\zeta_\e(y)-y|\lesssim \e^{\kappa/4}\mathds1_{|y-1|\le\frac12}, \qquad |\zeta_\e(y)-y|\le|1-y|\le|1-y^2|,\\
|\zeta_\e'(y)-1|\lesssim \mathds1_{|y-1|\le\e^{\kappa/4}}+\e^{\kappa/4}\mathds1_{|y-1|\le\frac12},\qquad(\zeta_\e(y)-y)(1-y)\ge0,
\end{gather*}
noting that
\begin{align*}
\int_{|\rho_\e-1|\le\e^{\kappa/4}}\frac{a\chi_R}{5\e^2}(1-\rho_\e^2)^2&\le\int_{|\rho_\e-1|\le\e^{\kappa/4}}\frac{a\chi_R}{\e^2}\rho_\e(1-\rho_\e)(1-\rho_\e^2)\\
&\le\int_{\R^2}\frac{a\chi_R}{\e^2}\rho_\e(\zeta_\e(\rho_\e)-\rho_\e)(1-\rho_\e^2),
\end{align*}
and using~\eqref{eq:scalingshFf}, we deduce from~\eqref{st1},
\begin{multline*}
\int_{|\rho_\e-1|\le\e^{\kappa/4}} \chi_R\Big(|\nabla\rho_\e|^2+\frac{a}{2\e^2}(1-\rho_\e^2)^2\Big)\lesssim\e^{\kappa/4}\int_{|\rho_\e-1|\le1/2} \chi_R(|\nabla\rho_\e|^2+\rho_\e^2|\nabla\varphi_\e|^2)\\
+\lambda_\e\Log\int_{|\rho_\e-1|\le1/2}\chi_R|1-\rho_\e^2|(|\partial_t\rho_\e|+\rho_\e|\partial_t\varphi_\e|)\\
+(1+\lambda_\e\Log)\int_{|\rho_\e-1|\le1/2}\chi_R|1-\rho_\e^2|(|\nabla \rho_\e|+\rho_\e|\nabla\varphi_\e|)\\
+\int_{|\rho_\e-1|\le1/2} \chi_R|f||1-\rho_\e^2|+\int_{|\rho_\e-1|\le1/2}|\nabla\chi_R||1-\rho_\e^2||\nabla\rho_\e|.
\end{multline*}
Noting that
$|\nabla u_\e|^2=|\nabla\rho_\e|^2+\rho_\e^2|\nabla\varphi_\e|^2$ and $|\partial_t u_\e|^2=|\partial_t\rho_\e|^2+\rho_\e^2|\partial_t\varphi_\e|^2$, and using~\eqref{eq:scalingshFf}, we obtain
\begin{multline*}
\int_{|\rho_\e-1|\le\e^{\kappa/4}} \chi_R\Big(|\nabla|u_\e||^2+\frac{a}{2\e^2}(1-|u_\e|^2)^2\Big)\\
\lesssim\e^{\kappa/4}\|\nabla u_\e\|_{\Ld^2(B_{2R})}^2+\lambda_\e\Log\|1-|u_\e|^2\|_{\Ld^2(B_{2R})}\|\partial_tu_\e\|_{\Ld^2(B_{2R})}\\
+(1+\lambda_\e\Log)\|1-|u_\e|^2\|_{\Ld^2(B_{2R})}\|\nabla u_\e\|_{\Ld^2(B_{2R})}\\
+R(1+\lambda_\e^2\Log^2)\|1-|u_\e|^2\|_{\Ld^2(B_{2R})}.
\end{multline*}
By the integrability properties of $\vi_\e$ in Assumption~\ref{as:apveps}(a), we have for all $\theta>0$,
\begin{gather*}
\|\nabla u_\e\|_{\Ld^2(B_{2R})}\lesssim_\theta\|\nabla u_\e-iu_\e N_\e\!\vi_\e\!\|_{\Ld^2(B_{2R})}+N_\e(R^\theta+\|1-|u_\e|^2\|_{\Ld^2(B_{2R})}),
\end{gather*}
hence, by Lemma~\ref{lem:velapbound} and the energy bound,
\begin{align*}
\alpha\int_0^t\int_{|\rho_\e-1|\le\e^{\kappa/4}} \chi_R\Big(|\nabla|u_\e||^2+\frac{a}{2\e^2}(1-|u_\e|^2)^2\Big)\lesssim_{t,\theta}\e^{\kappa/4}R^\theta\Log^2+\e R\Log^3,
\end{align*}
and the result~\eqref{eq:poho-compl} follows.
\end{proof}

\section{Mean-field limit in the dissipative case}\label{chap:MFL-GL}

In this section we prove Theorem~\ref{th:mainGL}, that is, the mean-field limit result in the dissipative mixed-flow case ($\alpha>0$) in the regimes \GLu, \GLd,
\GLup, and~\GLdp.
More precisely, we establish the following result, which states that the rescaled supercurrent density $\frac1{N_\e}j_\e$ remains close to the solution $\vi_\e$ of equation~\eqref{eq:GLv1}.
Combining this with the results of Section~\ref{chap:dissip-lim} (in particular, with Lemma~\ref{lem:lastlimGL}), the result of Theorem~\ref{th:mainGL} follows.
The proof consists in making use of the various estimates and technical tools for vortex analysis developed in Section~\ref{sec:vortex} in order to estimate the terms in the decomposition of $\partial_t\hat\D_{\e,R}$ in Lemma~\ref{lem:decompcruc}, and then deduce the smallness of the modulated energy excess $\hat\D_{\e,R}$ by a Grönwall argument.
(In this section, as we assume $\alpha>0$, all multiplicative constants are implicitly allowed to additionally depend on an upper bound on $\alpha^{-1}$.)

\begin{prop}\label{prop:mflGL}
Let $\alpha>0$, $\beta\in\R$, $\alpha^2+\beta^2=1$, and let $h:\R^2\to\R$, $a:=e^h$, $F:\R^2\to\R^2$, $f:\R^2\to\R$ satisfy~\eqref{eq:scalingshFf}. Let $u_\e:\R^+\times\R^2\to\C$ and $\vi_\e:[0,T)\times\R^2\to\R^2$ be solutions of~\eqref{eq:GL-1} and~\eqref{eq:GLv1} as in Propositions~\ref{prop:globGL}(i) and~\ref{prop:GLvprop}, respectively, for some $T>0$. Let $0<\e\ll1$, $1\ll N_\e\lesssim\Log$, $R\ge1$, $\frac\Log{N_\e}\ll R\lesssim \Log^n$, for some $n\ge1$, and assume that the initial modulated energy excess satisfies $\D_{\e,R}^{*,\circ}\ll N_\e^2$.
Then,
\begin{enumerate}[(i)]
\item If $\log\Log\ll N_\e\lesssim\Log$, in each of the regimes~\GLu, \GLd, \GLup, and~\GLdp, we have $\D_{\e,R}^{*,t}\ll_t N_\e^2$ for all $t\in[0,T)$.
\item If $1\ll N_\e\lesssim\log\Log$, in the parabolic case ($\alpha=1$, $\beta=0$), either in the regime~\GLu, or in the regime~\GLdp{} with $\lambda_\e\lesssim \frac{e^{o(N_\e)}}\Log$, the same conclusion $\D_{\e,R}^{*,t}\ll_t N_\e^2$ holds for all $t\in[0,T)$.
\end{enumerate}
In particular, in both cases, we deduce $\frac1{N_\e}j_\e-\vi_\e\to0$ in $\Ld^\infty_\loc([0,T);\Ld^1_\uloc(\R^2)^2)$ as $\e\downarrow0$.
If we further assume $\D_{\e,\infty}^{*,\circ}\ll N_\e^2$, then for any $\ell\ge1$ we obtain more precisely for all $t\in[0,T)$ and $L\ge1$,
\begin{equation}\label{eq:boundL1L2jNv}
\sup_z\|\tfrac1{N_\e}j_\e-\vi_\e\!\|_{(\Ld^1+\Ld^2)(B_L(z))}\ll_{t,\ell} \Big(1+\frac L{\Log^\ell}\Big)^2.
\qedhere
\end{equation}
\end{prop}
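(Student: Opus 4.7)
\medskip

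\noindent\textbf{Proof strategy.} The plan is a Grönwall argument on the uniform modulated energy excess $\hat\D_{\e,R}^*$, using the decomposition of $\partial_t\hat\D_{\e,R}^z$ provided by Lemma~\ref{lem:decompcruc} (with $\varrho=\infty$, since in the dissipative case no pressure truncation is needed) together with the vortex-analytic tools of Section~\ref{sec:vortex}. First I would choose $R$ satisfying the smallness conditions of Lemma~\ref{lem:apestu} and of~\eqref{eq:decompDeR-rest}, so that $\hat\D_{\e,R}^z$ and $\D_{\e,R}^z$ differ by $o(N_\e^2)$ and the error $I'_{\e,R}$ is $o(N_\e^2)$ after time integration. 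Setting up a continuity-in-time bootstrap on the assumption $\D_{\e,R}^{*,t}\lesssim N_\e^2$, Proposition~\ref{prop:ballconstr} provides the optimal a priori bound $\Ec_{\e,R}^*\lesssim N_\e\Log$, the existence of a ball collection $\B_{\e,R}^r$ (or its enlarged version $\tilde\B_{\e,R}^{r_0,r}$ in the strongly dilute regime $N_\e\lesssim \log\Log$), and sharp quantitative estimates (number of vortices, Jacobian estimate, excess energy, $\Ld^p$-estimate inside balls, optimal lower bound with error $o(N_\e^2)$).

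\medskip

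\noindent\textbf{Estimating the individual terms.} Given the ball construction, I would invoke Lemma~\ref{lem:approx} to build an approximation $\bar\Gamma_\e$ of $\Gamma_\e$ that is constant on each ball of the collection, with $\|\bar\Gamma_\e-\Gamma_\e\|_{C^0}=o(1)$ for $r$ chosen small (say $r=e^{-o(N_\e)}$). The eight terms in Lemma~\ref{lem:decompcruc} are then handled as follows. The \emph{stress term} $I_{\e,R}^S=-\int\chi_R\nabla\bar\Gamma_\e^\bot:\tilde S_\e$ is supported outside the balls by construction, where $\tilde S_\e$ is pointwise controlled by the modulated energy density; the excess energy bound~\eqref{eq:excessestim} applied to $\chi_R\nabla\bar\Gamma_\e^\bot$ yields $|I_{\e,R}^S|\lesssim_t\hat\D_{\e,R}^*+o(N_\e^2)$. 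The \emph{evolution term} $I_{\e,R}^V$ is controlled via the product estimate of Lemma~\ref{lem:prodest}, which gives an almost-optimal bound of the form $(1+o(1))\Log^{-1}$ times a sum of dissipation-like quantities; choosing $\bar\Gamma_\e$ so that the coefficient matches exactly the structure of $I_{\e,R}^D$, the product estimate bound is absorbed by the (strictly negative) dissipation term $-\int\lambda_\e\alpha a\chi_R|\partial_tu_\e-iu_\e N_\e\!\pre_{\e}\!|^2$ plus a lower-order cross-term controlled by Cauchy--Schwarz. The \emph{Euler-type term} $I_{\e,R}^E$ is rewritten via the Jacobian estimate~\eqref{eq:jacobest}--\eqref{eq:jacobestbis} in terms of the point-vortex measure $\nu_{\e,R}^r$, up to an error $o(N_\e^2)$, after which the choice $\bar\Gamma_\e\approx\Gamma_\e$ makes $I^E+I^S$ combine into a linear expression in the vorticity controlled again by $\hat\D_{\e,R}^*+o(N_\e^2)$. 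The \emph{divergence term} $I_{\e,R}^d$ is bounded directly using the properties of $\Div(a\!\vi_\e)-\lambda_\e\alpha\!\pre_\e=0$ (which vanishes by the choice of $\pre_\e$ in~\eqref{eq:GLv1}) or the corresponding smallness in regimes \GLu--\GLdp. The \emph{gauge-error term} $I_{\e,R}^g$ collects all differences $\bar\Gamma_\e-\Gamma_\e$; each piece is estimated either by $r^\gamma\|\nabla\Gamma_\e\|_{\Ld^\infty}(\Ec_{\e,R}^*)^{1/2}$ or through the $\Ld^1$-estimate on $\nabla(\bar\Gamma_\e-\Gamma_\e)$ from Lemma~\ref{lem:approx}, and the choice $r=e^{-o(N_\e)}$ makes all such contributions $o(N_\e^2)$. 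The \emph{boundary term} $I_{\e,R}^n$ is controlled by the uniform-in-$z$ structure: since $|\nabla\chi_R|\lesssim R^{-1}\chi_R^{1/2}(1-\chi_R)^{1/2}$ is supported in an annulus of bounded thickness, the integrand can be bounded by neighbouring $\hat\Ec_{\e,R}^{z'}$ with $z'$ on the lattice, yielding a uniform bound in terms of the supremum $\hat\D_{\e,R}^*+o(N_\e^2)$.

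\medskip

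\noindent\textbf{The main obstacle: the pinning term $I_{\e,R}^H$.} The critical difficulty, absent in the unpinned case of~\cite{Serfaty-15}, is the pinning term
\[
I_{\e,R}^H=\int_{\R^2}\frac{a\chi_R}2\Gamma_\e^\bot\cdot\nabla h\,\Big(|\nabla u_\e-iu_\e N_\e\!\vi_\e\!|^2+\frac{a}{\e^2}(1-|u_\e|^2)^2-\Log\mu_\e\Big).
\]
The key pathology is the coefficient $\frac{a}{\e^2}$ (not $\frac{a}{2\e^2}$ as in the modulated energy density), so the integrand is \emph{not} bounded by the modulated energy density — the extra contribution $\int a\chi_R|\Gamma_\e||\nabla h|\frac{a}{2\e^2}(1-|u_\e|^2)^2$ must be handled separately. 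This is exactly the role of the Pohozaev-type estimate in Lemma~\ref{lem:poho}, which provides, after time integration, the uniform bound $\alpha^2\sup_z\int_0^t\int\frac{\chi_R^z}{\e^2}(1-|u_\e|^2)^2\lesssim_t N_\e$. Using $\|\nabla h\|_{\Ld^\infty}\lesssim1\wedge\lambda_\e$ from~\eqref{eq:scalingshFf}, the resulting contribution is $\lesssim_t(1\wedge\lambda_\e)N_\e\Log=o(N_\e^2)$ in all considered regimes. The remaining part of $I_{\e,R}^H$ involves only the modulated energy density and is treated as $I_{\e,R}^S$, yielding again $\lesssim\hat\D_{\e,R}^*+o(N_\e^2)$.

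\medskip

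\noindent\textbf{Conclusion.} Combining all estimates, the inequality $\partial_t\hat\D_{\e,R}^*\le C_t\hat\D_{\e,R}^*+o_t(N_\e^2)$ follows; Grönwall's lemma together with the well-preparedness $\D_{\e,R}^{*,\circ}\ll N_\e^2$ gives $\hat\D_{\e,R}^{*,t}\ll_t N_\e^2$ for all $t\in[0,T)$, closing the bootstrap. The remaining claim $\frac1{N_\e}j_\e-\vi_\e\to0$ in $\Ld^\infty_\loc([0,T);\Ld^1_\uloc(\R^2)^2)$ follows from the pointwise inequality of Lemma~\ref{lem:pointest} for $j_\e-N_\e\!\vi_\e$, Hölder's inequality on unit balls, and $\Ec_{\e,R}^*\lesssim N_\e\Log$: the diffuse part contributes $(\Ec_{\e,R}^*)^{1/2}=o(N_\e)$ and the $(1-|u_\e|^2)$-contributions $o(N_\e)$ using the energy bound. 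For the quantitative bound~\eqref{eq:boundL1L2jNv} on balls of arbitrary radius $L$, one notes that a ball $B_L$ is covered by $\lesssim (1+L/R)^2$ unit cells of the lattice $R\Z^2$, and the triangle inequality applied to each cell with the uniform bound $\hat\D_{\e,R}^{*}\ll N_\e^2$ (using $R\simeq\Log^\ell$) yields the claimed scaling. The strongly dilute case $1\ll N_\e\lesssim\log\Log$ is handled identically but with the refined ball collection $\tilde\B_{\e,R}^{r_0,r}$ and the corresponding lower bound~\eqref{eq:lowerboundbis}, which is available in the parabolic case ($\beta=0$) under the stated compatibility condition $\lambda_\e\lesssim e^{o(N_\e)}/\Log$ ensuring that the gauge-error contributions from Lemma~\ref{lem:approx} remain $o(N_\e^2)$.
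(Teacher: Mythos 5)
Your proposal captures the high-level strategy correctly — a Grönwall argument on $\hat\D_{\e,R}^*$ bootstrapping from $\D_{\e,R}^*\lesssim N_\e^2$, the optimal energy bound $\Ec_{\e,R}^*\lesssim N_\e\Log$ from Proposition~\ref{prop:ballconstr}, the ball construction and the choice of $\bar\Gamma_\e$, and crucially the Pohozaev estimate of Lemma~\ref{lem:poho} to handle the coefficient mismatch $a/\e^2$ (vs.\@ $a/2\e^2$) in the pinning term $I^H$. The treatment of $I^d$, $I^g$, $I^n$ is also on target, and so is the deduction of $\Ld^1_\uloc$-convergence from (v)--(vi) of Proposition~\ref{prop:ballconstr} (though you should be careful that the diffuse contribution must be bounded by the energy \emph{outside balls} $\lesssim\D^*+o(N_\e^2)\ll N_\e^2$, not by $(\Ec_{\e,R}^*)^{1/2}\simeq(N_\e\Log)^{1/2}$, which is \emph{not} $o(N_\e)$ when $N_\e\lesssim\Log$).

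However, there is a genuine gap in your treatment of the dominant terms $I^V$, $I^D$, $I^E$. You claim that $I^E$ can be rewritten via the Jacobian estimate and combined with $I^S$ to yield something controlled by $\hat\D_{\e,R}^*+o(N_\e^2)$. This cannot work: $I^E=-\int\frac{a\chi_R\Log}2\,\Gamma_\e\cdot\Gamma_{\e,0}\,\mu_\e$ is of size $\Log\cdot\lambda_\e\cdot N_\e\simeq N_\e^2$, and by itself it is neither small nor bounded by the energy excess — it is a dominant term of the same order as $N_\e^2$. The actual mechanism is an \emph{exact algebraic recombination} between $I^V$, $I^D$, and $I^E$. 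Using $\alpha^2+\beta^2=1$ one has $\Gamma_{\e,0}-\beta\Gamma_\e^\bot=\alpha\Gamma_\e$ and $\Gamma_\e\cdot\Gamma_{\e,0}=\alpha|\Gamma_\e|^2$, so that $I^V=\lambda_\e\alpha\Log\int\frac{a\chi_R}2\tilde V_\e\cdot\Gamma_\e$ and $I^E=-\frac{\lambda_\e\alpha\Log}2\int a\chi_R|\Gamma_\e|^2\mu_\e$. Applying the product estimate to $I^V$ (with the tuning parameter $\Lambda=2$ or $4$), the Young inequality to $I^D$, and Pythagoras' theorem $|X\cdot\Gamma_\e|^2+|X\cdot\Gamma_\e^\bot|^2=|X|^2|\Gamma_\e|^2$, the dissipation cancels exactly and the leftover combines with $I^E$ to produce the \emph{excess-energy form} $\frac{\lambda_\e\alpha}2\int a\chi_R(|\nabla u_\e-iu_\e N_\e\vi_\e|^2-\Log\mu_\e)|\Gamma_\e|^2$, which is then controlled by Proposition~\ref{prop:ballconstr}(iv). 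Without this precise cancellation, neither $I^V$ nor $I^E$ can be absorbed. Moreover, the product estimate only gives the constant $1+o(1)$ rather than $1$ in front of the dissipation term, and this excess factor must be handled by the Case~1/Case~2 dichotomy (comparing $\int|\partial_tu_\e-iu_\e N_\e\pre_\e|^2$ to $\int|(\nabla u_\e-iu_\e N_\e\vi_\e)\cdot\Gamma_\e|^2$) in order to ensure that the residual dissipation term is either bounded by the energy or remains strictly negative; your proposal does not address this and it cannot be dismissed as a Cauchy--Schwarz cross-term.

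Finally, a smaller inaccuracy: for $I^S$, since $\nabla\bar\Gamma_\e$ vanishes inside the balls, the bound $|\tilde S_\e|\lesssim$ (modulated energy density) directly reduces $I^S$ to the energy outside the balls, which is what Proposition~\ref{prop:ballconstr}(v) controls; invoking the excess-energy estimate (iv) with test function $\chi_R\nabla\bar\Gamma_\e^\bot$ would require subtracting $\Log\mu_\e$ and does not directly apply.
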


\begin{rem}\label{rems:GLMFL}
If we further assume $\|u_\e^t\|_{\Ld^\infty}\lesssim_t1$ for all $t$, then the proof shows that the convergence $\frac1{N_\e}j_\e-\vi_\e\to0$ actually holds in $\Ld^\infty_\loc([0,T);\Ld^p_\uloc(\R^2)^2)$ for all $p<2$.
In the parabolic case without applied current ($F\equiv0$, $f\equiv0$), a maximum principle type argument gives that $\|u_\e^\circ\|_{\Ld^\infty}\le1$ implies $\|u_\e^t\|_{\Ld^\infty}\le1$ for all $t\ge0$ (cf.\@ e.g.~\cite[Proposition~4.4]{Chen-Hoffmann-Liang-93}).
However, the same argument fails in the presence of an applied current.
Moreover, such a uniform $\Ld^\infty$-bound on $u_\e$ is expected to fail in the conservative case due to the time reversibility of the equation in that case, and similarly it is expected to fail as well in the parabolic mixed-flow case. We therefore systematically avoid the use of such $\Ld^\infty$-estimates.
\end{rem}

\begin{proof}[Proof of Proposition~\ref{prop:mflGL}]
We choose $R\gg\frac\Log{N_\e}$ with $R^{\theta_0}\lesssim\Log$ for some $\theta_0>0$.
Given the assumption $\D_{\e,R}^{*,\circ}\ll N_\e^2$ on the initial data, for all $\e>0$ we define $T_\e>0$ as the maximum time $\le T$ such that $\D_{\e,R}^{*,t}\le N_\e^2$ holds for all $t\le T_\e$.
By Lemma~\ref{lem:apestu} and Proposition~\ref{prop:ballconstr}, we deduce $\hat\D_{\e,R}^{*,\circ}\ll N_\e^2$ and for all $t\le T_\e$,
\begin{align}\label{eq:corTeps}
\Ec_{\e,R}^{*,t}\lesssim_t N_\e\Log,\quad \hat\Ec_{\e,R}^{*,t}\lesssim_tN_\e\Log,\quad \hat\D_{\e,R}^{*,t}\lesssim_t N_\e^2,\quad \D_{\e,R}^{*,t}\lesssim \hat\D_{\e,R}^{*,t}+o_t(N_\e^2).
\end{align}
The strategy of the proof consists in showing that for all $t\le T_\e$,
\begin{align}\label{eq:stratprGL}
\hat\D_{\e,R}^{*,t}\lesssim_t o(N_\e^2)+\int_0^t\hat\D_{\e,R}^{*}.
\end{align}
By the Grönwall inequality, this implies $\hat\D_{\e,R}^{*,t}\ll_t N_\e^2$, hence $\D_{\e,R}^{*,t}\ll_tN_\e^2$ for all $t\le T_\e$. This gives in particular $T_\e=T$ for all $\e>0$ small enough and the main conclusion follows.

To simplify notation, we focus on~\eqref{eq:stratprGL} with the left-hand side $\hat\D_{\e,R}^t$ centered at $z=0$, but the result of course holds uniformly with respect to the translation.
We start with the general mixed-flow case in the regime $\log\Log\ll N_\e\lesssim\Log$. The proof of~\eqref{eq:stratprGL} in that case is split into three steps, while the additional statements are deduced in Step~4. Finally, Step~5 describes the modifications needed in the parabolic case for $1\ll N_\e\lesssim\log\Log$.

Let us first introduce some notation.
In the regime $\log\Log\ll N_\e\lesssim\Log$, for all $t\le T_\e$, as we are in the framework of Proposition~\ref{prop:ballconstr} with $u_\e^t,\vi_\e^t$, we let $\B_\e^t:=\B_{\e,R}^t$ denote the constructed collection of disjoint closed balls $\B_{\e,R}^{r_\e}(u_\e^t,\vi_\e^t)$ with total radius $r_\e:=\Log^{-4}e^{-\sqrt{N_\e}}$, hence $e^{-o(N_\e)}\le r_\e\ll \frac{N_\e}\Log$. Let then $\bar\Gamma_\e^t$ denote the corresponding approximation of $\Gamma_\e^t$ given by Lemma~\ref{lem:approx}.
We decompose $\Gamma_\e:=\alpha\Gamma_{\e,0}-\beta\Gamma_{\e,0}^\bot$ with
\[\Gamma_{\e,0}:=\lambda_\e^{-1}\Big(\nabla^\bot h-F^\bot-\frac{2N_\e}\Log\!\vi_\e\Big).\]

\medskip
\noindent\step1 Time derivative of the modulated energy excess.

Lemma~\ref{lem:decompcruc} yields the following decomposition,
\begin{align}\label{eq:decompGL0}
\partial_t\hat\D_{\e,R}=~&I_{\e,R}^S+I_{\e,R}^V+I_{\e,R}^E+I_{\e,R}^D+I_{\e,R}^H+I_{\e,R}^d+I_{\e,R}^g+I_{\e,R}^n+I_{\e,R}',
\end{align}
where the eight first terms are as in the statement of Lemma~\ref{lem:decompcruc},
and where the error $I_{\e,R}'$ is estimated as follows (cf.~\eqref{eq:decompDeR-rest}) in the considered regimes,
\begin{align*}
\int_0^t|I_{\e,R}'|&\lesssim_{t} \e R(N_\e\Log)^{1/2}\Log^2= o(N_\e^2).
\end{align*}

\medskip
\noindent\step2 Bound on the error terms.
\nopagebreak

In this step, we consider the regime $\log\Log\ll N_\e\lesssim\Log$, we study the three error terms $I_{\e,R}^d$, $I_{\e,R}^g$, and $I_{\e,R}^n$, and we prove for all $t\le T_\e$,
\begin{align}\label{eq:errorterms}
\int_0^t(I_{\e,R}^d+I_{\e,R}^g+I_{\e,R}^n)&\lesssim_t o(N_\e^2)+ o\Big(\frac{N_\e}\Log\Big)\int_0^t\int_{\R^2}\chi_R|\partial_tu_\e-iu_\e N_\e\!\pre_{\e}\!|^2.
\end{align}

We start with the bound on $I_{\e,R}^n$.
Using~\eqref{eq:corTeps}, Lemma~\ref{lem:velapbound}, and the boundedness properties of $\pre_{\e}$ (cf.~Proposition~\ref{prop:GLvprop}), the quantity $\bar\Ec_{\e,R}^*$ defined in Lemma~\ref{lem:prodest} is estimated as follows in the considered regimes, for all $\theta>0$,
\begin{multline*}
\bar\Ec_{\e,R}^{*,t}\lesssim\sup_z\int_0^t\Ec_{\e,R}^{z}+\sup_z\int_0^t\int_{\R^2}\chi_R^z\big(|\partial_tu_\e|^2+N_\e^2|\!\pre_{\e}\!|^2+N_\e^2|1-|u_\e|^2||\!\pre_{\e}\!|^2\big)\\
\lesssim_{t,\theta} R^\theta N_\e\Log^3+\lambda_\e^{-1}N_\e^2\lesssim R^\theta\Log^4,
\end{multline*}
hence, for $\theta>0$ small enough, $\bar\Ec_{\e,R}^{*,t}\lesssim_t\Log^5$. Using $|\nabla\chi_R|\lesssim R^{-1}\chi_R^{1/2}$, Lemma~\ref{lem:prodest} then yields
\begin{multline*}
\bigg|\int_0^t\int_{\R^2} a\tilde V_{\e}\cdot\nabla^\bot\chi_R\bigg|\lesssim_t \Log^{-1}\\
+R^{-1}\Log^{-1}\Big(\int_0^t\int_{\R^2}\chi_{R}|\partial_tu_\e-iu_\e N_\e\pre_{\e}\!|^2+\int_0^t\int_{B_{2R}}|\nabla u_\e-iu_\e N_\e\!\vi_\e\!|^2\Big),
\end{multline*}
and hence,
\begin{multline*}
\Big|\int_0^tI_{\e,R}^n\Big|\lesssim_t 1+R^{-1}\int_0^t\int_{\R^2}\chi_{R}|\partial_tu_\e-iu_\e N_\e\pre_{\e}\!|^2\\
+R^{-1}\int_0^t\int_{B_{2R}}\Big(|\nabla u_\e-iu_\e N_\e\!\vi_\e\!|^2+\frac a{2\e^2}(1-|u_\e|^2)^2+|1-|u_\e|^2|(N_\e^2|\!\vi_\e\!|^2+|f|)\Big).
\end{multline*}
Using~\eqref{eq:corTeps}, \eqref{eq:scalingshFf}, and the integrability properties of $\vi_\e$ (cf.\@ Proposition~\ref{prop:GLvprop}), with the choice $R\gg\frac\Log{N_\e}$, we conclude
\begin{multline}\label{st3}
\Big|\int_0^tI_{\e,R}^n\Big|\lesssim_t 1+R^{-1}N_\e\Log+R^{-1}\int_0^t\int_{\R^2}\chi_{R}|\partial_tu_\e-iu_\e N_\e\pre_{\e}|^2\\
\lesssim o(N_\e^2)+o\Big(\frac{N_\e}\Log\Big)\int_0^t\int_{\R^2}\chi_{R}|\partial_tu_\e-iu_\e N_\e\pre_{\e}|^2.
\end{multline}
We turn to the bound on $I_{\e,R}^g$.
Using~\eqref{eq:scalingshFf} and the pointwise estimates of Lemma~\ref{lem:pointest},
\begingroup\allowdisplaybreaks
\begin{multline*}
|I_{\e,R}^g|\lesssim \|\Gamma_\e-\bar\Gamma_\e\|_{\Ld^\infty}\bigg(N_\e\int_{B_{2R}} (|\nabla u_\e-iu_\e N_\e \!\vi_\e\!|+N_\e|1-|u_\e|^2|)|\curl\!\vi_\e\!|\\
+N_\e\int_{B_{2R}} |1-|u_\e|^2||\nabla u_\e-iu_\e N_\e \!\vi_\e\!|\\
+\lambda_\e\int_{\R^2} \chi_R\Big(|\nabla u_\e-iu_\e N_\e\!\vi_\e\!|^2+\frac a{\e^2}(1-|u_\e|^2)^2\Big)\\
+\lambda_\e\Log \int_{\R^2} \chi_R  |\partial_tu_\e-iu_\e N_\e\!\pre_{\e,\varrho}\!||\nabla u_\e-iu_\e N_\e \!\vi_\e\!|\\
+(N_\e+\lambda_\e\Log)\int_{\R^2} \chi_R(|\nabla u_\e-iu_\e N_\e \!\vi_\e\!|^2+N_\e^2|1-|u_\e|^2||\!\vi_\e\!|^2)\\
+N_\e^2\int_{\R^2} \chi_R|\!\vi_\e\!|^2(N_\e|\!\vi_\e\!|+\Log|F|)\\
+{\lambda_\e}N_\e\Log|\beta|\int_{\R^2} \chi_R|\partial_tu_\e-iu_\e N_\e\!\pre_{\e}\!|(|\!\vi_\e\!|+|1-|u_\e|^2|)\bigg).
\end{multline*}
\endgroup
By~\eqref{eq:corTeps}, by Lemma~\ref{lem:approx} in the form $\|\Gamma_\e-\bar\Gamma_\e\|_{\Ld^\infty}\lesssim r_\e=\Log^{-4}e^{-\sqrt{N_\e}}$, and by the integrability properties of $\vi_\e$ (cf.\@ Proposition~\ref{prop:GLvprop}), we deduce in the considered regimes, for all $\theta>0$,
\begin{align}\label{eq:Igestpreconcl}
|I_{\e,R}^g|&\lesssim_{t,\theta} \frac{e^{-\sqrt{N_\e}}}{\Log^4}R^\theta N_\e\Log^2\Big(1+\int_{\R^2} \chi_R  |\partial_tu_\e-iu_\e N_\e\!\pre_{\e}\!|^2\Big)^{1/2},
\end{align}
and hence, for $\theta>0$ small enough,
\begin{align}\label{eq:Igestconcl0}
|I_{\e,R}^g|&\lesssim_t o(N_\e^2)+o\Big(\frac{N_\e}\Log\Big)\int_{\R^2} \chi_R  |\partial_tu_\e-iu_\e N_\e\!\pre_{\e}\!|^2.
\end{align}
Regarding the last term~$I_{\e,R}^d$, the definition of the pressure $\pre_\e$ in~\eqref{eq:GLv1} simply yields $I_{\e,R}^d=0$, and the conclusion~\eqref{eq:errorterms} follows.

\medskip
\noindent\step3 Bound on the dominant terms.
\nopagebreak

In this step, we consider the regime $\log\Log\ll N_\e\lesssim\Log$ and we turn to the estimation of the five first terms in~\eqref{eq:decompGL0}, showing more precisely that for all $t\le T_\e$,
\begin{align}\label{eq:conclstep3Dt}
\hat\D_{\e,R}^t\lesssim_to(N_\e^2)+\int_0^t\hat\D_{\e,R}.
\end{align}
As this holds uniformly with respect to translations of the cut-off functions, the conclusion~\eqref{eq:stratprGL} follows.

We start with the bound on the first term $I_{\e,R}^S$.
Since for all $t$ the field $\bar\Gamma_\e^t$ is constant in each ball of the collection $\B_\e^t$ and satisfies $\|\nabla\bar\Gamma_\e^t\|_{\Ld^\infty}\lesssim\|\nabla\Gamma_\e^t\|_{\Ld^\infty}$, we find
\begin{multline*}
|I_{\e,R}^{S}|\lesssim \int_{\R^2\setminus\B_\e}\chi_R|\tilde S_\e|\lesssim \int_{\R^2\setminus\B_\e}a\chi_R\Big(|\nabla u_\e-iu_\e N_\e\!\vi_\e\!|^2+\frac{a}{2\e^2}(1-|u_\e|^2)^2\Big)\\
+\int_{\R^2}\chi_R|1-|u_\e|^2|(N_\e^2|\!\vi_\e\!|^2+|f|).
\end{multline*}
Since $\B_\e$ has total radius $r_\e:=\Log^{-4}e^{-\sqrt{N_\e}}$, and since the choice $N_\e\gg\log\Log$ ensures $r_\e\ge e^{-o(N_\e)}$, we may apply Proposition~\ref{prop:ballconstr}(v), which shows that the first integral in the above right-hand side is bounded by $\D_{\e,R}^*+o(N_\e^2)$. Further using~\eqref{eq:corTeps}, \eqref{eq:scalingshFf}, and the integrability properties of $\vi_\e$ (cf.\@ Proposition~\ref{prop:GLvprop}), we obtain in the considered regimes,
\begin{align}\label{eq:estIS}
|I_{\e,R}^{S}|&\lesssim \D_{\e,R}+o(N_\e^2)+\e(N_\e\Log)^{1/2}(N_\e^2+R\lambda_\e^2\Log^2)\lesssim \hat\D_{\e,R}+o(N_\e^2).
\end{align}
We turn to $I_{\e,R}^H$. Since $\|(\Gamma_\e,\nabla h)\|_{\Ld^{\infty}}\lesssim_t1$, Lemma~\ref{lem:poho} yields
\begin{multline*}
\int_0^tI_{\e,R}^H=O_t(N_\e)\\
+\int_0^t\int_{\R^2} \frac {a\chi_R}2\Gamma_\e^\bot\cdot\nabla h\Big(|\nabla u_\e-iu_\e N_\e \!\vi_\e\!|^2+\frac{a}{2\e^2}(1-|u_\e|^2)^2-\Log \mu_\e\Big),
\end{multline*}
and hence, by Proposition~\ref{prop:ballconstr}(iv) and by~\eqref{eq:corTeps},
\begin{align}\label{eq:estIH}
\int_0^tI_{\e,R}^H&\lesssim_{t}o(N_\e^2)+\int_0^t\D_{\e,R}\lesssim_{t}o(N_\e^2)+\int_0^t\hat\D_{\e,R}.
\end{align}
The term $I_{\e,R}^D$ is simply estimated by
\begin{align}\label{eq:estID}
I_{\e,R}^D\le-\frac{\lambda_\e \alpha}2\int_{\R^2}{a\chi_R}|\partial_tu_\e-iu_\e N_\e\!\pre_{\e}\!|^2+\frac{\lambda_\e \alpha}2\int_{\R^2}{a\chi_R}|(\nabla u_\e-iu_\e N_\e \!\vi_\e)\cdot\Gamma_\e^\bot|^2.
\end{align}
We finally turn to $I_{\e,R}^V$. Using $\alpha^2+\beta^2=1$, we have by definition
\[\Gamma_{\e,0}-\beta \Gamma_\e^\bot=\Gamma_{\e,0}-\beta(\alpha\Gamma_{\e,0}^\bot+\beta\Gamma_{\e,0})=\alpha^2\Gamma_{\e,0}-\alpha\beta \Gamma_{\e,0}^\bot=\alpha\Gamma_\e,\]
so that $I_{\e,R}^V$ takes on the following guise,
\begin{align}\label{eq:rewrite-term-IV}
I_{\e,R}^V=\lambda_\e\Log\int_{\R^2} \frac {a\chi_R}2\tilde V_{\e}\cdot (\Gamma_{\e,0}-\beta \Gamma_\e^\bot)=\lambda_\e\alpha\Log\int_{\R^2} \frac {a\chi_R}2\tilde V_{\e}\cdot \Gamma_{\e}.
\end{align}
As shown in Step~2, the quantity $\bar\Ec_{\e,R}^*$ defined in Lemma~\ref{lem:prodest}  satisfies $\bar\Ec_{\e,R}^{*,t}\lesssim_t\Log^5$. In the regime $\log\Log\ll N_\e\lesssim\Log$, choosing e.g.\@ $M_\e:=\exp((N_\e\log\Log)^{1/2})$, Lemma~\ref{lem:prodest} yields for any $\Lambda\simeq1$,
\begin{multline*}
\Big|\int_0^tI_{\e,R}^V\Big|\le o_t(1)+{\lambda_\e\alpha}\Big(1+\frac{C_t(N_\e\log\Log)^{1/2}}{\Log}\Big)\\
\times\bigg(\frac1\Lambda\int_0^t\int_{\R^2} a\chi_R|\partial_tu_\e-iu_\e N_\e\!\pre_{\e}\!|^2+\frac \Lambda4\int_0^t\int_{\R^2} a\chi_R|(\nabla u_\e-iu_\e N_\e\!\vi_\e)\cdot \Gamma_\e|^2\bigg),
\end{multline*}
and thus, using the optimal energy bound~\eqref{eq:corTeps}, we obtain in the considered regimes,
\begin{multline}\label{eq:applprodest}
\Big|\int_0^tI_{\e,R}^V\Big|\le o_t(N_\e^2)+\Big(\lambda_\e+o\Big(\frac{N_\e}\Log\Big)\Big)\frac{\alpha}\Lambda\int_0^t\int_{\R^2} a\chi_R|\partial_tu_\e-iu_\e N_\e\!\pre_{\e}\!|^2\\+\frac{\lambda_\e\alpha\Lambda}4\int_0^t\int_{\R^2} a\chi_R|(\nabla u_\e-iu_\e N_\e\!\vi_\e)\cdot \Gamma_\e|^2.
\end{multline}
We distinguish between two cases,
\begin{align}
\text{Case 1:}\quad\int_0^t\int_{\R^2} a\chi_R|\partial_tu_\e-iu_\e N_\e\!\pre_{\e}\!|^2\le 5\int_0^t\int_{\R^2} a\chi_R|(\nabla u_\e-iu_\e N_\e\!\vi_\e)\cdot\Gamma_\e|^2,\label{eq:case1bound}\\
\text{Case 2:}\quad\int_0^t\int_{\R^2} a\chi_R|\partial_tu_\e-iu_\e N_\e\!\pre_{\e}\!|^2> 5\int_0^t\int_{\R^2} a\chi_R|(\nabla u_\e-iu_\e N_\e\!\vi_\e)\cdot\Gamma_\e|^2.\label{eq:case2bound}
\end{align}
In Case~1, choosing $\Lambda=2$ in~\eqref{eq:applprodest} yields
\begin{multline*}
\Big|\int_0^tI_{\e,R}^V\Big|\le o_t(N_\e^2)+\Big(\lambda_\e+o\Big(\frac{N_\e}\Log\Big)\Big)\frac{\alpha}2\int_0^t\int_{\R^2} {a\chi_R}|\partial_tu_\e-iu_\e N_\e\!\pre_{\e}\!|^2\\+\frac{\lambda_\e\alpha}2\int_0^t\int_{\R^2} {a\chi_R}|(\nabla u_\e-iu_\e N_\e\!\vi_\e)\cdot \Gamma_{\e}|^2.
\end{multline*}
In Case~2, the condition~\eqref{eq:case2bound} can be rewritten as
\begin{multline*}
\frac14\int_0^t\int_{\R^2} a\chi_R|\partial_tu_\e-iu_\e N_\e\!\pre_{\e}\!|^2+\int_0^t\int_{\R^2} a\chi_R|(\nabla u_\e-iu_\e N_\e\!\vi_\e)\cdot\Gamma_{\e}|^2\\
\le\Big(\frac14+\frac1{10}\Big)\int_0^t\int_{\R^2} a\chi_R|\partial_tu_\e-iu_\e N_\e\!\pre_{\e}\!|^2+\frac12\int_0^t\int_{\R^2} {a\chi_R}|(\nabla u_\e-iu_\e N_\e\!\vi_\e)\cdot\Gamma_{\e}|^2,
\end{multline*}
and choosing $\Lambda=4$ in~\eqref{eq:applprodest} then yields, with $\frac{N_\e}\Log\lesssim\lambda_\e$ in the considered regimes,
\begin{multline*}
\Big|\int_0^tI_{\e,R}^V\Big|\le o_t(N_\e^2)
+{\lambda_\e\alpha}\bigg(\Big(\frac14+\frac1{10}+o(1)\Big)\int_0^t\int_{\R^2} a\chi_R|\partial_tu_\e-iu_\e N_\e\!\pre_{\e}\!|^2\\
+\frac12\int_0^t\int_{\R^2} {a\chi_R}|(\nabla u_\e-iu_\e N_\e\!\vi_\e)\cdot\Gamma_{\e}|^2\bigg).
\end{multline*}
Further noting that in Case~1 the condition~\eqref{eq:case1bound} together with the energy bound~\eqref{eq:corTeps} yields
\begin{align*}
o\Big(\frac{N_\e}\Log\Big)\int_{\R^2} a\chi_R|\partial_tu_\e-iu_\e N_\e\!\pre_{\e}\!|^2&\le o\Big(\frac{N_\e}\Log\Big)\int_0^t\int_{\R^2} a\chi_R|\nabla u_\e-iu_\e N_\e\!\vi_\e\!|^2\ll_t N_\e^2,
\end{align*}
and combining this with~\eqref{eq:errorterms} and~\eqref{eq:estID}, we observe an exact recombination of the terms, and obtain in Case~1,
\begin{multline}\label{eq:preconclcase1}
\int_0^t(I_{\e,R}^V+I_{\e,R}^D+I_{\e,R}^d+I_{\e,R}^g+I_{\e,R}^n+I'_{\e,R})\\
\le\frac{\lambda_\e \alpha}2\int_0^t\int_{\R^2}{a\chi_R}|\nabla u_\e-iu_\e N_\e \!\vi_\e|^2|\Gamma_\e|^2+o_t(N_\e^2),
\end{multline}
and in Case~2,
\begin{multline*}
\int_0^t(I_{\e,R}^V+I_{\e,R}^D+I_{\e,R}^d+I_{\e,R}^g+I_{\e,R}^n+I'_{\e,R})\\
\le-\frac{\lambda_\e \alpha}2\Big(\frac12-\frac1{5}-o(1)\Big)\int_0^t\int_{\R^2}{a\chi_R}|\partial_tu_\e-iu_\e N_\e\!\pre_{\e,\varrho}\!|^2\\
+\frac{\lambda_\e \alpha}2\int_0^t\int_{\R^2}{a\chi_R}|\nabla u_\e-iu_\e N_\e \!\vi_\e|^2|\Gamma_\e|^2+o_t(N_\e^2),
\end{multline*}
so that~\eqref{eq:preconclcase1} holds in both cases for $\e>0$ small enough.
Using $\alpha^2+\beta^2=1$, we find $\Gamma_\e\cdot\Gamma_{\e,0}=\alpha|\Gamma_{\e,0}|^2=\alpha|\Gamma_\e|^2$, so that the term $I_{\e,R}^E$ takes on the following guise,
\[I_{\e,R}^E=-\frac{\lambda_\e}2\Log\int_{\R^2} {a\chi_R}\Gamma_\e\cdot\Gamma_{\e,0}\,\mu_\e=-\frac{\lambda_\e\alpha}2\Log\int_{\R^2} {a\chi_R}|\Gamma_\e|^2\mu_\e.\]
Together with~\eqref{eq:preconclcase1}, this leads to
\begin{multline*}
\int_0^t(I_{\e,R}^V+I_{\e,R}^E+I_{\e,R}^D+I_{\e,R}^d+I_{\e,R}^g+I_{\e,R}^n+I'_{\e,R})\\
\le\frac{\lambda_\e \alpha}2\int_0^t\int_{\R^2}{a\chi_R}\big(|\nabla u_\e-iu_\e N_\e \!\vi_\e\!|^2-\Log\mu_\e\big)|\Gamma_\e|^2+o_t(N_\e^2).
\end{multline*}
Combining this with~\eqref{eq:decompGL0}, \eqref{eq:estIS}, \eqref{eq:estIH}, and with $\hat\D_{\e,R}^{*,\circ}\ll N_\e^2$, we conclude
\begin{align*}
\hat\D_{\e,R}^t\le o_t(N_\e^2)+C_t\int_0^t\hat\D_{\e,R}+\frac{\lambda_\e\alpha}2\int_0^t\int_{\R^2}{a\chi_R}\big(|\nabla u_\e-iu_\e N_\e \!\vi_\e\!|^2-\Log\mu_\e\big)|\Gamma_\e|^2,
\end{align*}
and the result~\eqref{eq:conclstep3Dt} now follows from Proposition~\ref{prop:ballconstr}(iv).

\medskip
\noindent\step4 Consequences.

In the previous steps, the results $T_\e=T$ and $\D_{\e,R}^{*,t}\ll_tN_\e^2$ for all $t\in[0,T)$ are established in the setting of item~(i) of the statement (that is, in the regime $\log\Log\ll N_\e\lesssim\Log$). We now show that it implies the stated convergence $\frac1{N_\e}j_\e-\vi_\e\to0$.

For all $t\in[0,T)$, since there holds $\D_{\e,R}^{*,t}\ll_tN_\e^2$, Proposition~\ref{prop:ballconstr}(v)--(vi) implies
\[\sup_z\int_{\R^2\setminus\B_{\e}}\chi_R^z|\nabla u_\e-iu_\e N_\e\!\vi_\e\!|^2\ll_t N_\e^2,\]
and for all $1\le p<2$,
\[\sup_z\int_{\B_{\e}}\chi_R^z|\nabla u_\e-iu_\e N_\e\!\vi_\e\!|^p\ll_t N_\e^p.\]
Using the pointwise estimates of Lemma~\ref{lem:pointest}, we deduce
\begin{multline*}
\sup_z\int_{B(z)}|j_\e-N_\e\!\vi_\e\!|\lesssim_t \sup_z\int_{B(z)}|\nabla u_\e-iu_\e N_\e \!\vi_\e\!|+\e N_\e\Log\\
\lesssim_t \sup_z\int_{\B_{\e}}\chi_R^z|\nabla u_\e-iu_\e N_\e\!\vi_\e\!|+\sup_z\Big(\int_{B(z)\setminus\B_{\e}}|\nabla u_\e-iu_\e N_\e\!\vi_\e\!|^2\Big)^{1/2}+o(N_\e)\ll_t N_\e,
\end{multline*}
hence $\frac1{N_\e}j_\e-\vi_\e\to0$ in $\Ld^\infty_\loc([0,T);\Ld^1_\uloc(\R^2)^2)$.
More precisely, for all $L\ge1$, we may decompose
\begin{multline*}
\sup_z\|j_\e-N_\e\!\vi_\e\!\|_{(\Ld^1+\Ld^2)(B_L(z))}\\
\lesssim_t \sup_z\|\nabla u_\e-iu_\e N_\e\!\vi_\e\!\|_{\Ld^1(\B_{\e}\cap B_L(z))}+\sup_z\|\nabla u_\e-iu_\e N_\e\!\vi_\e\!\|_{\Ld^2(B_L(z)\setminus\B_{\e})}\\
+N_\e\sup_z\|1-|u_\e|^2\|_{\Ld^2(B_L(z))}+\sup_z\|1-|u_\e|^2\|_{\Ld^2(B_L(z))}\|\nabla u_\e-iu_\e N_\e\!\vi_\e\!\|_{\Ld^2(B_L(z))},
\end{multline*}
hence
\begin{multline*}
\sup_z\|j_\e-N_\e\!\vi_\e\!\|_{(\Ld^1+\Ld^2)(B_L(z))}\\
\lesssim_t o(N_\e)(1+\tfrac LR)^2+\e N_\e(N_\e\Log)^{1/2}(1+\tfrac LR)+\e N_\e\Log(1+\tfrac LR)^2,
\end{multline*}
and the result~\eqref{eq:boundL1L2jNv} follows.
As mentioned in Remark~\ref{rems:GLMFL}, under the additional assumption that $\|u_\e^t\|_{\Ld^\infty}\lesssim_t1$, the convergence $\frac1{N_\e}j_\e-\vi_\e\to0$ also holds in $\Ld^\infty_\loc([0,T);\Ld^p_\loc(\R^2)^2)$ for all $1\le p<2$; this follows from a similar argument as above, replacing the pointwise estimate of Lemma~\ref{lem:pointest} for $j_\e-N_\e\!\vi_\e$ by
\begin{align*}
|j_\e-N_\e \!\vi_\e\!|&\le\|u_\e\|_{\Ld^\infty}|\nabla u_\e-iu_\e N_\e \!\vi_\e\!|+N_\e|1-|u_\e|^2||\!\vi_\e\!|.
\end{align*}

\medskip
\noindent\step5 Refinement in the parabolic case.

In this step, we consider the parabolic case ($\alpha=1$, $\beta=0$) both in the regime~\GLu{} and in the regime~\GLdp{} with $\lambda_\e\le \frac{e^{o(N_\e)}}\Log$, and we show that the additional assumption $N_\e\gg \log\Log$ can then be dropped.
In Steps~1--4 above, the main limitation comes from the fact that we need to use balls $\B_\e$ with a particularly small total radius $r_\e$ in order to obtain smallness of the error term $I_{\e,\varrho,R}^g$ in~\eqref{eq:Igestpreconcl}, while on the other hand the term $I_{\e,\varrho,R}^S$ corresponds to the energy outside the small balls $\B_\e$ so that we need to choose $r_\e\ge e^{-o(N_\e)}$ in order to apply Proposition~\ref{prop:ballconstr}(v). As we now show, the worst terms in $I_{\e,\varrho,R}^g$ vanish in the parabolic case, and the total radius $r_\e$ may then be chosen much larger.

We focus on the strongly dilute regime $1\ll N_\e\lesssim\log\Log$. Choose $\e^{1/2}<\tilde r_\e^0\ll \frac{N_\e}\Log$ and let $\tilde r_\e:=(\lambda_\e\Log)^{-2}\ge e^{-o(N_\e)}$.
For all $t\le T_\e$, as we are in the framework of Proposition~\ref{prop:ballconstr} with $u_\e^t,\vi_\e^t$, we let $\tilde\B_\e^t:=\tilde\B_{\e,R}^t$ denote the corresponding collection of disjoint closed balls $\tilde\B_{\e,R}^{\tilde r_\e^0,\tilde r_\e}(u_\e^t,\vi_\e^t)$. Let then $\tilde\Gamma_\e^t$ denote the associated approximation of $\Gamma_\e^t$ given by Lemma~\ref{lem:approx}.
As in Step~1, Lemma~\ref{lem:decompcruc} yields the following decomposition, with the approximate vector field $\bar\Gamma_\e$ replaced by $\tilde\Gamma_\e$,
\begin{align*}
\partial_t\hat\D_{\e,R}=~&I_{\e,R}^S+I_{\e,R}^V+I_{\e,R}^E+I_{\e,R}^D+I_{\e,R}^H+I_{\e,R}^d+I_{\e,R}^g+I_{\e,R}^n+I_{\e,R}',
\end{align*}
where all the terms are estimated just as above, except $I_{\e,R}^S$, $I_{\e,R}^V$, and $I_{\e,R}^g$. We start with the discussion of $I_{\e,R}^g$. For $\alpha=1$, $\beta=0$, this term takes on the following simpler form,
\begin{multline}\label{eq:Igdefpure}
I_{\e,R}^g=\int_{\R^2} a\chi_RN_\e(N_\e \!\vi_\e-j_\e)\cdot(\Gamma_\e-\tilde\Gamma_\e)\curl\!\vi_\e\\
+\int_{\R^2} \lambda_\e a\chi_R (\Gamma_\e-\tilde\Gamma_\e)^\bot\cdot\left\langle \partial_tu_\e-iu_\e N_\e\!\pre_{\e},\nabla u_\e-iu_\e N_\e \!\vi_\e\right\rangle\\
+\int_{\R^2} \frac {a\chi_R}2(\tilde\Gamma_\e-\Gamma_\e)^\bot\cdot\nabla h\Big(|\nabla u_\e-iu_\e N_\e \!\vi_\e\!|^2+\frac{a}{\e^2}(1-|u_\e|^2)^2\Big)\\+\int_{\R^2} a\chi_R(\tilde\Gamma_\e-\Gamma_\e)\cdot (N_\e \!\vi_\e+\tfrac12\Log F^\bot)\mu_\e.
\end{multline}
We estimate each of the four right-hand side terms separately.
We start with the first term. Using the pointwise estimates of Lemma~\ref{lem:pointest} and the integrability properties of $\vi_\e$ (cf.\@ Proposition~\ref{prop:GLvprop}), we find
\begin{multline*}
\int_{\R^2} a\chi_RN_\e(N_\e \!\vi_\e-j_\e)\cdot(\Gamma_\e-\tilde\Gamma_\e)\curl\!\vi_\e\\
\lesssim N_\e\|\Gamma_\e-\tilde\Gamma_\e\|_{\Ld^\infty}\bigg(\int_{\tilde\B_\e^t}\chi_R|\nabla u_\e-iu_\e N_\e\!\vi_\e\!|+\Big(\int_{\R^2\setminus\tilde\B_\e^t}\chi_R|\nabla u_\e-iu_\e N_\e\!\vi_\e\!|^2\Big)^{1/2}\bigg)\\
+N_\e\|\Gamma_\e-\tilde\Gamma_\e\|_{\Ld^\infty}\Big(\int_{\R^2}\chi_R|1-|u_\e|^2||\nabla u_\e-iu_\e N_\e\!\vi_\e\!|+N_\e\int_{\R^2}\chi_R|1-|u_\e|^2||\curl\!\vi_\e\!|\Big),
\end{multline*}
and hence, using~\eqref{eq:corTeps} and Proposition~\ref{prop:ballconstr}(v)--(vi) with $p=1$ to estimate the first two integrals in the right-hand side, and using Lemma~\ref{lem:approx} in the form $\|\Gamma_\e^t-\tilde\Gamma_\e^t\|_{\Ld^\infty}\lesssim_t \tilde r_\e\ll1$,
\begin{align*}
\int_{\R^2} a\chi_RN_\e(N_\e \!\vi_\e-j_\e)\cdot(\Gamma_\e-\tilde\Gamma_\e)\curl\!\vi_\e\lesssim N_\e^2\|\Gamma_\e-\tilde\Gamma_\e\|_{\Ld^\infty}\ll_t N_\e^2.
\end{align*}
For the second right-hand side term in~\eqref{eq:Igdefpure}, using~\eqref{eq:corTeps} and again Lemma~\ref{lem:approx}, with $\tilde r_\e\lambda_\e\ll \frac{N_\e}\Log$, we obtain
\begin{multline*}
\int_{\R^2} \lambda_\e a\chi_R (\Gamma_\e-\tilde\Gamma_\e)^\bot\cdot\left\langle \partial_tu_\e-iu_\e N_\e\!\pre_{\e},\nabla u_\e-iu_\e N_\e \!\vi_\e\right\rangle\\
\lesssim \lambda_\e({N_\e}\Log)^{1/2}\|\Gamma_\e-\tilde\Gamma_\e\|_{\Ld^\infty}\Big(\int_{\R^2} \chi_R |\partial_tu_\e-iu_\e N_\e\!\pre_{\e}\!|^2\Big)^{1/2}\\
\lesssim o(N_\e^2)+ o\Big(\frac{N_\e}\Log\Big)\int_{\R^2} \chi_R |\partial_tu_\e-iu_\e N_\e\!\pre_{\e}\!|^2.
\end{multline*}
For the third right-hand side term in~\eqref{eq:Igdefpure}, using~\eqref{eq:corTeps}, \eqref{eq:scalingshFf}, and Lemma~\ref{lem:approx} in the form $\|(\tilde\Gamma_\e-\Gamma_\e)^\bot\cdot\nabla h\|_{\Ld^\infty}\lesssim_t \tilde r_\e\lambda_\e\ll \frac{N_\e}\Log$, we find
\begin{align*}
\int_{\R^2} \frac {a\chi_R}2(\tilde\Gamma_\e-\Gamma_\e)^\bot\cdot\nabla h\Big(|\nabla u_\e-iu_\e N_\e \!\vi_\e\!|^2+\frac{a}{\e^2}(1-|u_\e|^2)^2\Big)\ll_tN_\e^2.
\end{align*}
It remains to estimate the fourth term in~\eqref{eq:Igdefpure}. Using~\eqref{eq:corTeps}, Proposition~\ref{prop:ballconstr}(iii) in the form~\eqref{eq:vortmass} with $\gamma=\frac12$, the regularity properties of $\vi_\e$ (cf.\@ Proposition~\ref{prop:GLvprop}), \eqref{eq:scalingshFf} in the form $\|F\|_{C^{1/2}}\lesssim\lambda_\e$, and Lemma~\ref{lem:approx} in the form $\|\tilde\Gamma_\e-\Gamma_\e\|_{C^{1/2}}\lesssim_t \tilde r_\e^{1/2}=(\lambda_\e\Log)^{-1}$, we obtain
\begin{multline*}
\int_{\R^2} a\chi_R(\tilde\Gamma_\e-\Gamma_\e)\cdot (N_\e \!\vi_\e+\tfrac12\Log F^\bot)\mu_\e\\
\lesssim N_\e\| a\chi_R(\tilde\Gamma_\e-\Gamma_\e)\cdot (N_\e \!\vi_\e+\tfrac12\Log F^\bot)\|_{C^{1/2}}\\
\lesssim N_\e(N_\e+\lambda_\e\Log)\|\tilde\Gamma_\e-\Gamma_\e\|_{C^{1/2}}\ll_tN_\e^2.
\end{multline*}
Inserting these various estimates into~\eqref{eq:Igdefpure} leads to
\[I_{\e,R}^g\lesssim_t o(N_\e^2)+o\Big(\frac{N_\e}\Log\Big)\int_{\R^2}\chi_R|\partial_tu_\e-iu_\e N_\e\!\pre_{\e}\!|^2,\]
proving that~\eqref{eq:Igestconcl0} again holds in the present setting.
We turn to the discussion of $I_{\e,R}^S$. Since the total radius satisfies $\tilde r_\e\ge e^{-o(N_\e)}$, we may apply Proposition~\ref{prop:ballconstr}(v), so that the same argument as in Step~3 leads to the estimate~\eqref{eq:estIS} for $I_{\e,R}^S$.
It remains to discuss the bound on the term $I_{\e,R}^V$.
In the regime $1\ll N_\e\lesssim\log\Log$, the assumption on $\lambda_\e$ leads to $\lambda_\e\lesssim \frac{e^{o(N_\e)}}\Log\ll \frac{N_\e}{\log\Log}$, that is, $\frac{N_\e}{\lambda_\e\log\Log}\gg1$. Writing $I_{\e,R}^V$ as in~\eqref{eq:rewrite-term-IV}, we may thus apply Lemma~\ref{lem:prodest} with the choice
\[M_\e:=\exp\bigg(\Big(\frac{N_\e}{\lambda_\e\log\Log}\Big)^{1/2}\log\Log\bigg),\]
and hence, for any $\Lambda\simeq1$, noting that $\lambda_\e\frac{\log M_\e}{\Log}=\frac{1}\Log(N_\e\lambda_\e\log\Log)^{1/2}=o(\frac{N_\e}{\Log})$,
\begin{multline*}
\Big|\int_0^tI_{\e,R}^V\Big|=\lambda_\e\Log\Big|\int_0^t\int_{\R^2}\frac{a\chi_R}2\tilde V_\e\cdot\Gamma_\e\Big|\\
\le o_t(1)+\Big(\lambda_\e+o\Big(\frac{N_\e}\Log\Big)\Big)\bigg(\frac1\Lambda\int_0^t\int_{\R^2} a\chi_R|\partial_tu_\e-iu_\e N_\e\!\pre_{\e}\!|^2\\
+\frac \Lambda4\int_0^t\int_{\R^2} a\chi_R|(\nabla u_\e-iu_\e N_\e\!\vi_\e)\cdot \Gamma_\e|^2\bigg).
\end{multline*}
Further using the energy bound~\eqref{eq:corTeps}, the estimate~\eqref{eq:applprodest} follows. With these ingredients at hand, we may now repeat the argument in Steps~2--3 and conclude with~\eqref{eq:stratprGL}. Finally, the convergence $\frac1{N_\e}j_\e-\vi_\e\to0$ follows as in Step~4, with $\B_\e$ replaced by $\tilde\B_\e$.
\end{proof}

\section{Mean-field limit in the nondilute parabolic case}\label{chap:mean-field-hd}
In this section we prove Theorem~\ref{th:main-hd}, that is, the mean-field limit result in the dissipative case ($\alpha>0$) in the nondilute regime~\GLt. More precisely, we
make use of the modulated energy strategy and
show that the rescaled supercurrent density $\frac1{N_\e}j_\e$ remains close to the solution $\vi_\e$ of equation~\eqref{eq:GLv1-hd}. Combining this with the convergence results of Section~\ref{chap:deg-eqn-exist-bound}, the result of Theorem~\ref{th:main-hd} follows.
Note that in this nondilute regime the proof of Proposition~\ref{prop:mflGL} indicates that we expect to find
\begin{align}\label{eq:bound-D-gen}
\hat\D_{\e,R}^t\le o_t(N_\e^2)+C_t(1+\alpha\lambda_\e)\int_0^t\hat\D_{\e,R}.
\end{align}
As $\lambda_\e\gg1$, the Grönwall inequality does of course not allow us to conclude $\hat\D_{\e,R}^t \ll_t N_\e^2$ for any $t>0$.
(In contrast, in the conservative case $\alpha=0$, the prefactor $\lambda_\e$ would disappear in~\eqref{eq:bound-D-gen}, cf.\@ Section~\ref{chap:MFL-GP}.)
In the sequel, the strategy consists in refining the magnitude of the error $o(N_\e^2)$ in~\eqref{eq:bound-D-gen} as much as possible, showing that it can be reduced to $O(N_\e^{2-\delta})$ for some $\delta>0$. For $\lambda_\e=\frac{N_\e}\Log\gg1$, the Grönwall inequality then still leads to $\hat\D_{\e,R}^t \ll_t N_\e^2$ for all $t\ge0$ in the regime $\Log\ll N_\e\ll\Log\log\Log$.
Since in~\cite{D-16} the well-posedness of the degenerate mean-field equation~\eqref{eq:GLv1-hd} could only be established in the parabolic case, we have to restrict to that case.

\subsection{Preliminary: vortex analysis}\label{chap:vort-anal-hd}

We adapt the crucial vortex analysis of Section~\ref{sec:vortex} to the present situation with a large number of vortices $N_\e\gg\Log$.
We start with establishing the following version of Proposition~\ref{prop:ballconstr}.

\begin{prop}[Refined lower bound]\label{prop:ballconstr-hd}
Let $h:\R^2\to\R$, $a:=e^h$, with $1\lesssim a\le1$ and $\|\nabla h\|_{\Ld^\infty}\lesssim1$, let $u_\e:\R^2\to\C$, $\vi_\e:\R^2\to\R^2$, with $\|\curl\!\vi_\e\!\|_{\Ld^1\cap\Ld^\infty},\|\!\vi_\e\!\|_{\Ld^\infty}\lesssim1$. Let $0<\e\ll1$, $N_\e\gtrsim\Log$, and $R\ge1$ with $\log N_\e\ll\Log$ and $\Log\lesssim R\lesssim\Log^n$ for some $n\ge1$, and assume that $\D_{\e,R}^*\lesssim N_\e^2$.
Then $\Ec_{\e,R}^*\lesssim N_\e^2$ holds for all $\e>0$ small enough.
Moreover, for some $\bar r\simeq1$,  for all $\e>0$ small enough and $r\in(\e^{1/2},\bar r)$, letting $\B_{\e,R}^r$ and $\nu_{\e,R}^r$ denote the locally finite union of disjoint closed balls and the point-vortex measure constructed in Lemma~\ref{lem:ballconstr}, the following properties hold,
\begin{enumerate}[(i)]
\item \emph{Lower bound:} In the regime $N_\e\gg\log\Log$, we have for all $\e^{1/2}<r<\bar r$ and $z\in \R^2$,
\begin{multline}\label{eq:lowerbound-hd}
\qquad\frac12\int_{\B_{\e,R}^r}a\chi_R^z\Big(|\nabla u_\e-iu_\e N_\e \!\vi_\e\!|^2+\frac{a}{2\e^2}(1-|u_\e|^2)^2\Big)\\
\ge \frac{\Log}2\int_{\R^2} a\chi_R^z |\nu_{\e,R}^r|-O\Big(rN_\e^2+\frac{N_\e^2}\Log(|\!\log r|+\log N_\e)\Big).
\end{multline}
\item \emph{Number of vortices:} For $\e^{1/2}<r\ll1$,
\begin{align}\label{eq:numbvort-hd}
\sup_z\int_{B_R(z)}|\nu_{\e,R}^r|~\lesssim~ \frac{N_\e^2}\Log.
\end{align}
\item \emph{Jacobian estimate:} For $\e^{1/2}<r\ll1$, for all $\gamma\in[0,1]$,
\begin{gather}
\sup_z\|\nu_{\e,R}^r-\tilde\mu_\e\|_{(C_c^\gamma(B_{R}(z)))^*}\lesssim r^\gamma \frac{N_\e^2}\Log+\e^{\gamma/2}N_\e^2,\label{eq:jacobest-hd}\\
\sup_z\|\mu_\e-\tilde\mu_\e\|_{(C_c^\gamma(B_R(z)))^*}\lesssim \e^\gamma N_\e^{2}\Log^n.\label{eq:jacobestbis-hd}
\end{gather}
\item \emph{Excess energy estimate:} For all $\phi\in W^{1,\infty}(\R^2)$ supported in a ball of radius $R$,
\begin{multline}\label{eq:excessestim-hd}
\qquad\int_{\R^2}\phi\Big(|\nabla u_\e-iu_\e N_\e\!\vi_\e\!|^2+\frac{a}{2\e^2}(1-|u_\e|^2)^2-\Log\mu_\e\Big)\\
\lesssim \bigg(\D_{\e,R}^*+\frac{N_\e^2}\Log\log N_\e\bigg)\|\phi\|_{W^{1,\infty}}.
\end{multline}
\item \emph{Energy outside small balls:} For all $\gamma\ge1$, $N_\e^{-\gamma}\le r<\bar r$, and $z\in\R^2$,
\begin{equation}\label{eq:energyoutballs-hd}
\quad~~\int_{\R^2\setminus\B_{\e,R}^r}a\chi_R^z\Big(|\nabla u_\e-iu_\e N_\e \!\vi_\e\!|^2+\frac{a}{2\e^2}(1-|u_\e|^2)^2\Big)
\le \D_{\e,R}^z+O_\gamma\Big(\frac{N_\e^2}\Log\log N_\e\Big).
\qedhere
\end{equation}
\end{enumerate}
\end{prop}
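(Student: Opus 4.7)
\smallskip
The plan is to adapt the strategy of Proposition~\ref{prop:ballconstr} to the nondilute regime $N_\e\gtrsim\Log$, tracking the new scalings carefully. The overall architecture is the same: first, bootstrap from the assumption $\D_{\e,R}^*\lesssim N_\e^2$ to the optimal energy bound $\Ec_{\e,R}^*\lesssim N_\e^2$, and only then derive the various consequences~(i)--(v). A crucial simplification compared to Proposition~\ref{prop:ballconstr} is that the delicate Sandier-type ball-growth refinement carried out in Step~6 there is not needed here: since the hypothesis includes $N_\e\gtrsim\Log$, we are automatically in a regime analogous to $N_\e\gg\log\Log$, for which the generic Jerrard--Sandier lower bound of Lemma~\ref{lem:ballconstr} is already precise enough.

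\smallskip
First, I would reproduce Step~1 of the proof of Proposition~\ref{prop:ballconstr} verbatim, splitting $\mu_\e=N_\e\curl\!\vi_\e+\curl(j_\e-N_\e\!\vi_\e)$ in the integrand defining $\D_{\e,R}^*$, integrating by parts, and applying Cauchy--Schwarz together with the pointwise estimate of Lemma~\ref{lem:pointest}. In the present regime the bound $\|\!\vi_\e\!\|_{\Ld^\infty},\|\curl\!\vi_\e\!\|_{\Ld^1}\lesssim1$ yields $\Ec_{\e,R}^*\lesssim N_\e^2+R^2\Log^2$, which together with $R\lesssim\Log^n$ and $\log N_\e\ll\Log$ gives the weaker but sufficient fact $\log\Ec_{\e,R}^*\ll\Log$. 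This legitimates Lemma~\ref{lem:ballconstr}, so that for any $r\in(\e^{1/2},\bar r)$ we obtain a collection $\B_{\e,R}^r$ of balls together with the localized lower bound~\eqref{eq:lowerboundclaim1} and the crude count $\sum_{j:y_j\in B_R(z)}|d_j|\lesssim(N_\e^2+\Ec_{\e,R}^*)/\Log$; note the error term in the lower bound involves $\log(2+\Ec_{\e,R}^*/\Log)\lesssim\log N_\e+\log\Log\lesssim\log N_\e$.

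\smallskip
The next step is to upgrade the rough bound to $\Ec_{\e,R}^*\lesssim N_\e^2$. Proceeding as in Steps~3--5 of Proposition~\ref{prop:ballconstr}, I would combine the Jacobian estimate of Lemma~\ref{lem:ballconstr}(iii) with the assumption $\D_{\e,R}^*\lesssim N_\e^2$ to obtain
\[\Ec_{\e,R}^*\lesssim N_\e^2+\Log\sup_z\int \chi_R^z\mu_\e,\]
then bound $\int\chi_R^z\mu_\e$ by splitting the contribution inside and outside $\B_{\e,R}^r$ and applying~\eqref{eq:energyR2minB} (suitably transcribed with the $N_\e$-dominated error). Optimizing in $r$ (e.g.\@ $r=\Log^{-2}$) and absorbing $\Ec_{\e,R}^*$ on the left yields $\Ec_{\e,R}^*\lesssim N_\e^2$ since in the regime $N_\e\gtrsim\Log$ the term $N_\e|\!\log r|$ is dominated by $N_\e^2/\Log\cdot|\!\log r|$ which is itself dominated by $N_\e^2$ for our choice of $r$. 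Unlike in the strongly dilute regime, the sign analysis of Step~4 in Proposition~\ref{prop:ballconstr} producing~\eqref{eq:signrough} is not needed here, because the error $(N_\e+\Ec_{\e,R}^*/\Log)\log(2+\Ec_{\e,R}^*/\Log)\lesssim(N_\e^2/\Log)\log N_\e$ is already compatible with the target $O(N_\e^2)$.

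\smallskip
Once the optimal bound $\Ec_{\e,R}^*\lesssim N_\e^2$ is known, items~(i)--(v) follow by substitution of the new bounds into the arguments of Substeps~8.1--8.6 of Proposition~\ref{prop:ballconstr}. For~(i) and~(ii) the output of Lemma~\ref{lem:ballconstr} applied with $\phi=a\chi_R^z$ together with the replacement $\nu_{\e,R}^r\leftrightarrow\mu_\e$ from~\eqref{eq:munuexch} directly yields~\eqref{eq:lowerbound-hd} and~\eqref{eq:numbvort-hd}. For~(iii) the argument is unchanged, with $N_\e\Log$ replaced by $N_\e^2$ in the error controlling $\tilde\mu_\e-\mu_\e$ (note that here $\|1-|u_\e|^2\|_{\Ld^2(B_R(z))}^2\lesssim\e^2 N_\e^2$). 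For~(iv) the same decomposition~\eqref{eq:boundTer} applies, choosing e.g.\@ $r=N_\e^{-\gamma}$ so that $rN_\e^2\cdot\|\phi\|_{W^{1,\infty}}$ contributes at most $O((N_\e^2/\Log)\log N_\e)$. Item~(v) is then read off the combination of~\eqref{eq:lowerbound-hd} with the control $\int a\chi_R^z\mu_\e$ via~\eqref{eq:munuexch}. The main obstacle will be bookkeeping: keeping track of which error terms survive in the nondilute regime and ensuring that the $(N_\e^2/\Log)\log N_\e$ error (which is genuinely larger than in Proposition~\ref{prop:ballconstr}) does not pollute the downstream Gr\"onwall argument of Section~\ref{chap:mean-field-hd}; this is precisely what fixes the scaling $N_\e\ll\Log\log\Log$ that ultimately appears in Theorem~\ref{th:main-hd}.
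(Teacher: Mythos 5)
Your overall architecture matches the paper (rough energy bound, bootstrap to $\Ec_{\e,R}^*\lesssim N_\e^2$, then read off items~(i)--(v) by substitution), and you are correct that the delicate Sandier ball-growth refinement of Step~6 of Proposition~\ref{prop:ballconstr} is dispensable here, since $N_\e\gtrsim\Log\gg\log\Log$ automatically puts us in the regime where the generic Jerrard--Sandier lower bound of Lemma~\ref{lem:ballconstr} has small enough error. However, there is a genuine gap: you assert that the sign analysis (the analogue of Step~4 of Proposition~\ref{prop:ballconstr}, producing a bound of the form~\eqref{eq:signrough}) is not needed, yet your own proposed chain
\[\Ec_{\e,R}^*\lesssim N_\e^2+\Log\sup_z\int\chi_R^z\mu_\e\]
cannot be derived without it. Indeed, starting from $\D_{\e,R}^*\lesssim N_\e^2$ and $\Ec_{\e,R}^z=\D_{\e,R}^z+\frac\Log2\int a\chi_R^z\mu_\e$, one needs to bound $\Log\int a\chi_R^z\mu_\e$ by $N_\e^2$. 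The vorticity $\mu_\e$ (or its point-vortex proxy $\nu_{\e,R}^r$) is a priori signed, and since $a\le1$ one cannot simply drop the weight $a$ from the integral unless the negative part of $\nu_{\e,R}^r$ is under control: for a signed measure, $a\le1$ gives no comparison between $\int a\chi_R^z\nu^r$ and $\int\chi_R^z\nu^r$. Trying instead to estimate $\int a\chi_R^z\mu_\e$ directly by integration by parts fails because $|\nabla(a\chi_R^z)|\lesssim1$ (the gradient of $a=e^{\hat h}$ is of order one in the nondilute scaling, not of order $\lambda_\e$), so the resulting error $\Log\,R(\Ec_{\e,R}^*)^{1/2}$ is far too large when $R\simeq\Log^n$.

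The circularity in your justification is the phrase ``the error $(N_\e+\Ec_{\e,R}^*/\Log)\log(2+\Ec_{\e,R}^*/\Log)$ is already compatible with the target $O(N_\e^2)$'': this is only true once $\Ec_{\e,R}^*\lesssim N_\e^2$ is known, while at this stage you only have the Step~1 bound $\Ec_{\e,R}^*\lesssim N_\e^2+R^2\Log^2$, and $R^2\Log^2$ is not $\lesssim N_\e^2$ in general (e.g.\ $R\simeq\Log^n$ with $N_\e\simeq\Log$). The paper's proof inserts a dedicated Step~4 establishing $\sup_z\int\chi_R^z|\nu_{\e,R}^r|\le(1+o(1))\sup_z\int\chi_R^z\nu_{\e,R}^r+O(N_\e^2/\Log)$ (essentially by comparing the lower bound~\eqref{eq:lowerboundclaim1b-hd} applied to translates with the upper bound coming from $\D_{\e,R}^*\lesssim N_\e^2$), and it is precisely this step that lets one pass from $\int a\chi_R^z|\nu^r|$ to $\int\chi_R^z\nu^r$ and hence to $\int\chi_R^z\mu_\e$, after which the weight $a$ no longer obstructs. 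Without it, your Step~5 does not close. The remaining parts of your proposal --- items~(i)--(v) by substitution once the optimal bound is in hand --- are sound, though item~(iv) in the paper takes $r=N_\e^{-1}$ rather than $r=N_\e^{-\gamma}$, producing the error $\frac{N_\e^2}\Log\log N_\e$ directly.
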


\begin{proof}
We split the proof into six steps. The main work consists in checking that the assumptions imply the optimal bound on the energy $\Ec_{\e,R}^*\lesssim N_\e^2$. This main conclusion is obtained in Step~5, while the various other claims are deduced in Step~6.

\medskip
\noindent\step1 Rough a priori estimate on the energy.

A direct adaptation of Step~1 of the proof of Proposition~\ref{prop:ballconstr} yields $\Ec_{\e,R}^*\lesssim N_\e^2+R^2\Log^2$, and hence by the choice of $R$ we deduce $\Ec_{\e,R}^*\lesssim N_\e^2+\Log^m$ for some $m\ge4$.

\medskip
\noindent\step2 Application of Lemma~\ref{lem:ballconstr}.

\nopagebreak
By assumption $\log N_\e\ll\Log$, the result of Step~1 yields in particular $\log\Ec_{\e,R}^*\ll\Log$, which allows to apply Lemma~\ref{lem:ballconstr}. For fixed $r\in(\e^{1/2},\bar r)$, let $\B_{\e,R}^r=\biguplus_jB^j$ denote the union of disjoint closed balls given by Lemma~\ref{lem:ballconstr}, and let $\nu_{\e,R}^r$ denote the associated point-vortex measure.
Using Lemma~\ref{lem:ballconstr}(ii) in the form
\begin{align}\label{eq:estnumbervort-hd}
\int_{B_R(z)}|\nu_{\e,R}^r|=\sum_{j:y_j\in B_R(z)}|d_j|\lesssim\frac{N_\e^2+\Ec_{\e,R}^*}\Log,
\end{align}
Lemma~\ref{lem:ballconstr}(i) gives,
for all $\phi\in W^{1,\infty}(\R^2)$ supported in a ball of radius $R$, with $\phi\ge0$,
\begin{multline}\label{eq:lowerboundclaim1b-hd}
\frac12\int_{\B_{\e,R}^r}\phi\Big(|\nabla u_\e-iu_\e N_\e \!\vi_\e\!|^2+\frac{a}{2\e^2}(1-|u_\e|^2)^2\Big)\\
\ge \frac{\Log}2\int_{\R^2} \phi |\nu_{\e,R}^r|-O(r\Ec_{\e,R}^*)\|\nabla\phi\|_{\Ld^{\infty}}\\-O\bigg(r^2N_\e^2+|\!\log r|\frac{N_\e^2+\Ec_{\e,R}^*}\Log+\frac{N_\e^2+\Ec_{\e,R}^*}\Log\log\Big(2+\frac{\Ec_{\e,R}^*}\Log\Big)\bigg)\|\phi\|_{\Ld^{\infty}}.
\end{multline}
Arguing as in Step~2 of the proof of Proposition~\ref{prop:ballconstr}, we then find for all $z\in\R^2$,
\begin{multline}\label{eq:energyR2minB-hd}
\int_{\R^2\setminus\B_{\e,R}^r}a\chi_R^z\Big(|\nabla u_\e-iu_\e N_\e \!\vi_\e\!|^2+\frac{a}{2\e^2}(1-|u_\e|^2)^2\Big)\\
\le \D_{\e,R}^z+O\bigg(1+(|\!\log r|+r\Log)\frac{N_\e^2+\Ec_{\e,R}^*}\Log+\frac{N_\e^2+\Ec_{\e,R}^*}\Log\log\Big(2+\frac{\Ec_{\e,R}^*}\Log\Big)\bigg),
\end{multline}
and in addition,
\begin{gather}
\Big|\int_{\R^2} \phi(\mu_\e-\nu_{\e,R}^r)\Big|\lesssim \bigg(r\frac{N_\e^2+\Ec_{\e,R}^*}\Log+\e^{1/3}\bigg)\|\phi\|_{W^{1,\infty}},\label{eq:munuexch-hd}\\
\Big|\int_{\R^2} \phi(\tilde\mu_\e-\mu_\e)\Big|\lesssim\e R N_\e(\Ec_{\e,R}^*)^{1/2}\|\phi\|_{W^{1,\infty}}\lesssim\e^{1/3}\|\phi\|_{W^{1,\infty}}.\label{eq:boundmumutildeequ-hd}
\end{gather}

\medskip
\noindent\step3 Energy and number of vortices.

In this step, we show that~\eqref{eq:estnumbervort-hd} is essentially an equality, in the following sense: for all $\e^{1/2}<r\ll1$,
\begin{align}\label{eq:energyvort-hd}
\sup_z\int_{\R^2}\chi_R^z|\nu_{\e,R}^r|\lesssim \frac{N_\e^2+\Ec_{\e,R}^*}\Log \lesssim \frac{N_\e^2}\Log+\sup_z\int_{\R^2} \chi_R^z|\nu_{\e,R}^r|.
\end{align}
The lower bound follows from~\eqref{eq:estnumbervort-hd}. We turn to the upper bound.
Since the energy excess satisfies $\D_{\e,R}^z\lesssim N_\e^2$, we deduce from~\eqref{eq:munuexch-hd},
\begin{align}\label{eq:energyvort-pre-hd}
\Ec_{\e,R}^z\le \D_{\e,R}^z+\frac\Log2\int_{\R^2} a\chi_R^z\mu_\e\le \frac\Log2\int_{\R^2} a\chi_R^z\nu_{\e,R}^r+O\big(N_\e^2+r\Ec_{\e,R}^*\big).
\end{align}
Taking the supremum in $z$, and absorbing $\Ec_{\e,R}^*$ in the left-hand side with $r\ll1$, the upper bound in~\eqref{eq:energyvort-hd} follows.

\medskip
\noindent\step4 Bound on the total variation of the vorticity.

In this step, we prove that for all $e^{-o(\Log)}<r\ll1$,
\begin{align}\label{eq:signrough-hd}
\sup_z\int_{\R^2}\chi_R^z|\nu_{\e,R}^r|\le (1+o(1))\sup_z\int_{\R^2}\chi_R^z\nu_{\e,R}^r+O\Big(\frac{N_\e^2}\Log\Big).
\end{align}
The lower bound~\eqref{eq:lowerboundclaim1b-hd} of Step~2 with $\phi=a\chi_R^y$ yields for all $y\in\R^2$, using the upper bound in~\eqref{eq:energyvort-hd} to replace the energy $\Ec_{\e,R}^*$ in the error terms,
\begin{multline*}
\Ec_{\e,R}^y\ge \frac12\int_{\B_{\e,R}^r}a\chi_R^y\Big(|\nabla u_\e-iu_\e N_\e \!\vi_\e\!|^2+\frac{a}{2\e^2}(1-|u_\e|^2)^2\Big)\ge \frac{\Log}2\int_{\R^2} a\chi_R^y |\nu_{\e,R}^r|\\-O\Big(\frac{N_\e^2}\Log+\sup_z\int_{\R^2}\chi_R^z|\nu_{\e,R}^r\Big)\bigg(|\!\log r|+r\Log+\log\Big(2+\frac{N_\e^2+\Ec_{\e,R}^*}{\Log}\Big)\bigg).
\end{multline*}
For $e^{-o(\Log)}<r\ll1$, using the result of Step~1 in the form $\log(N_\e^2+\Ec_{\e,R}^*)\ll\Log$, we obtain for all $y\in\R^2$,
\begin{align}\label{eq:preboundinfenergy-hd}
\Ec_{\e,R}^y\ge \frac{\Log}2\int_{\R^2} a\chi_R^y |\nu_{\e,R}^r|-o(\Log)\sup_z\int_{\R^2} \chi_R^z|\nu_{\e,R}^r|-o(N_\e^2).
\end{align}
On the other hand, the upper bound~\eqref{eq:energyvort-pre-hd} yields
\begin{align}\label{eq:preboundsupenergy-hd}
\Ec_{\e,R}^y\le\frac\Log2\int_{\R^2} a\chi_R^y\nu_{\e,R}^r+O(N_\e^2)+o(1)\Ec_{\e,R}^*,
\end{align}
and thus, taking the supremum over $y$ and absorbing $\Ec_{\e,R}^*$ in the left-hand side,
\begin{align*}
\Ec_{\e,R}^*\le\frac\Log2\sup_z\int_{\R^2} a\chi_R^z|\nu_{\e,R}^r|+O(N_\e^2),
\end{align*}
so that~\eqref{eq:preboundsupenergy-hd} takes the form, for all $y\in\R^2$,
\begin{align*}
\Ec_{\e,R}^y\le\frac\Log2\int_{\R^2} a\chi_R^y\nu_{\e,R}^r+O(N_\e^2)+o(\Log)\sup_z\int_{\R^2} \chi_R^z|\nu_{\e,R}^r|.
\end{align*}
Combining this with~\eqref{eq:preboundinfenergy-hd}, dividing both sides by $\frac12\Log$, and taking the supremum over~$y$, we find
\begin{align*}
\sup_z\int_{\R^2} \chi_R^z (\nu_{\e,R}^r)^-\lesssim\sup_z\int_{\R^2} a\chi_R^z (|\nu_{\e,R}^r|-\nu_{\e,R}^r)\le O\Big(\frac{N_\e^2}\Log\Big)+o(1)\sup_z\int_{\R^2} \chi_R^z|\nu_{\e,R}^r|,
\end{align*}
hence
\begin{multline*}
\sup_z\int_{\R^2} \chi_R^z |\nu_{\e,R}^r|=\sup_z\int_{\R^2} \chi_R^z (\nu_{\e,R}^r+2(\nu_{\e,R}^r)^-)\\
\le \sup_z\int_{\R^2} \chi_R^z \nu_{\e,R}^r+O\Big(\frac{N_\e^2}\Log\Big)+o(1)\sup_z\int_{\R^2} \chi_R^z|\nu_{\e,R}^r|,
\end{multline*}
and the result~\eqref{eq:preboundinfenergy} follows after absorbing the last right-hand side term.

\medskip
\noindent\step5 Refined bound on the energy.

In this step, we prove the optimal energy bound $\Ec_{\e,R}^*\lesssim N_\e^2$. By~\eqref{eq:estnumbervort-hd} this yields in particular $\sup_z\int_{\R^2}\chi_R^z|\nu_{\e,R}^r|\lesssim \frac{N_\e^2}\Log$.

Let $e^{-o(\Log)}<r\ll1$ be suitably chosen later. Using~\eqref{eq:munuexch-hd}, the bound on the energy excess $\D_{\e,R}^*\lesssim N_\e^2$ yields for all $z\in R\Z^2$,
\begin{align*}
\Ec_{\e,R}^z&\le \D_{\e,R}^z+\frac\Log2\int_{\R^2} a\chi_R^z\mu_\e\lesssim N_\e^2+r\Ec_{\e,R}^*+\Log\int_{\R^2} \chi_R^z|\nu_{\e,R}^r|,
\end{align*}
and hence, using the result~\eqref{eq:signrough-hd} of Step~4 and absorbing $\Ec_{\e,R}^*$ with $r\ll1$,
\begin{align}\label{eq:boundenergypreconcl-hd}
\Ec_{\e,R}^*&\lesssim N_\e^2+\Log\sup_z\int_{\R^2} \chi_R^z\nu_{\e,R}^r\lesssim N_\e^2+\Log\sup_z\int_{\R^2}\chi_R^z\mu_\e.
\end{align}
It remains to estimate $\int_{\R^2}\chi_R^z\mu_\e$. Arguing as in Step~5 of the proof of Proposition~\ref{prop:ballconstr}, we find
\begin{multline}
\int_{\R^2} \chi_R^z\mu_\e\lesssim N_\e+\Big(\int_{\R^2\setminus\B_{\e,R}^r}\chi_{2R}^z|\nabla u_\e-iu_\e N_\e\!\vi_\e\!|^2\Big)^{1/2}\\
+rR^{-1}\Big(\int_{B_{2R}(z)}|\nabla u_\e-iu_\e N_\e\!\vi_\e\!|^2\Big)^{1/2},\label{eq:boundvortpreconcl-hd}
\end{multline}
and then using~\eqref{eq:energyR2minB-hd} to estimate the second right-hand side term,
\begin{align*}
\int_{\R^2} \chi_R^z\mu_\e&\lesssim N_\e+(\D_{\e,R}^*)^{1/2}+rR^{-1}(\Ec_{\e,R}^*)^{1/2}+r^{1/2}(N_\e^2+\Ec_{\e,R}^*)^{1/2}\\
&\hspace{4cm}+\Big(\frac{N_\e^2+\Ec_{\e,R}^*}\Log\Big)^{1/2}\bigg(|\!\log r|+\log\Big(2+\frac{\Ec_{\e,R}^*}\Log\Big)\bigg)^{1/2}\\
&\lesssim N_\e+ r^{1/2}(\Ec_{\e,R}^*)^{1/2}+o(1)\frac{N_\e^2+\Ec_{\e,R}^*}\Log+|\!\log r|^{1/2}\Big(\frac{N_\e^2+\Ec_{\e,R}^*}\Log\Big)^{1/2}.
\end{align*}
Combining this with~\eqref{eq:boundenergypreconcl-hd} leads to
\begin{align*}
\frac{\Ec_{\e,R}^*}\Log&\lesssim\frac{N_\e^2}\Log+ r^{1/2}(\Ec_{\e,R}^*)^{1/2}+o(1)\frac{\Ec_{\e,R}^*}\Log+|\!\log r|^{1/2}\Big(\frac{N_\e^2+\Ec_{\e,R}^*}\Log\Big)^{1/2},
\end{align*}
hence, 
\begin{align*}
\frac{\Ec_{\e,R}^*}\Log&\lesssim \frac{N_\e^2}\Log+|\!\log r|.
\end{align*}
and the result follows from the choice $r=\Log^{-1}$.

\medskip
\noindent\step6 Conclusion.

The optimal energy bound $\Ec_{\e,R}^*\lesssim N_\e^2$ is now proved. In the present step, we check that the remaining statements follow from this bound.
The result~\eqref{eq:lowerbound-hd} follows from~\eqref{eq:lowerboundclaim1b-hd} in Step~2 with $\phi=a\chi_R^z$, combined with the optimal energy bound.
The bound~\eqref{eq:numbvort-hd} on the number of vortices follows from the result~\eqref{eq:energyvort-hd} of Step~3 together with the optimal energy bound.
For $r=N_\e^{-\gamma}$, $\gamma\ge1$, the result~\eqref{eq:energyoutballs-hd} follows from~\eqref{eq:energyR2minB-hd} together with the optimal energy bound. Monotonicity of $\B_{\e,R}^r$ with respect to $r$ then implies~\eqref{eq:energyoutballs-hd} for all $r\ge N_\e^{-\gamma}$.
It remains to establish items~(iii) and~(iv). We split the proof into two further substeps.

\medskip
\noindent\substep{6.1} Proof of~(iii).

The Jacobian estimate~\eqref{eq:jacobest-hd} follows from Lemma~\ref{lem:ballconstr}(iii) together with the optimal energy bound, and the estimate~\eqref{eq:jacobestbis-hd} with $\gamma=1$ similarly follows from~\eqref{eq:boundmumutildeequ-hd} and from the bound $R\lesssim\Log^n$. As in Step~8.4 of the proof of Proposition~\ref{prop:ballconstr}, we further find for all $\phi\in\Ld^\infty(\R^2)$ supported in a ball $B_R(z)$, $z\in\R^2$,
\begin{multline}\label{eq:mumutildeC0-hd}
\Big|\int_{\R^2} \phi(\tilde\mu_\e-\mu_\e)\Big|
\lesssim N_\e\|\phi\|_{\Ld^\infty}\int_{B_R(z)} \Big(|1-|u_\e|^2||\curl\!\vi_\e\!|\\
+2|\!\vi_\e\!||1-|u_\e|^2||\nabla u_\e-iu_\e N_\e\!\vi_\e|
+2|\!\vi_\e\!||\nabla u_\e-iu_\e N_\e\!\vi_\e|\Big),
\end{multline}
hence $|\!\int_{\R^2} \phi(\tilde\mu_\e-\mu_\e)|\lesssim RN_\e^2\|\phi\|_{\Ld^\infty}$, and the result~\eqref{eq:jacobestbis-hd} follows by interpolation.

\medskip
\noindent\substep{6.2} Proof of~(iv).
\nopagebreak

Let $\e^{1/2}<r\ll1$ to be later optimized as a function of $\e$.
Arguing as in Step~8.5 of the proof of Proposition~\ref{prop:ballconstr}, we find for all $\phi\in W^{1,\infty}(\R^2)$ supported in the ball $B_R(z)$,
\begin{multline*}
\int_{\R^2}\phi\Big(|\nabla u_\e-iu_\e N_\e\!\vi_\e\!|^2+\frac{a}{2\e^2}(1-|u_\e|^2)^2-\Log\nu_{\e,R}^r\Big)\\
\le\|a^{-1}\phi\|_{\Ld^\infty}\int_{\R^2}a\chi_{R}^z\Big(|\nabla u_\e-iu_\e N_\e\!\vi_\e\!|^2+\frac{a}{2\e^2}(1-|u_\e|^2)^2-\Log\nu_{\e,R}^r\Big)\\
+O\Big(\frac{N_\e^2}\Log(|\!\log r|+\log N_\e)\Big)\|a^{-1}\phi\|_{\Ld^\infty}+O(rN_\e^2)\|a^{-1}\phi\|_{W^{1,\infty}}.
\end{multline*}
Using~\eqref{eq:munuexch-hd} to replace $\nu_{\e,R}^r$ by $\mu_\e$ in both sides up to an error of order $(1+rN_\e^2)\|\phi\|_{W^{1,\infty}}$, and choosing $r=N_\e^{-1}$, the conclusion~\eqref{eq:excessestim-hd} follows.
\end{proof}

We now establish the following version of the (suboptimal) a priori estimate of Lemma~\ref{lem:velapbound} on the velocity of the vortices in the nondilute regime $N_\e\gg\Log$.

\begin{lem}[A priori bound on velocity]\label{lem:velapbound-hd}
Let $\alpha\ge0$, $\beta\in\R$, and let $h:\R^2\to\R$, $a:=e^h$, $F:\R^2\to\R^2$, $f:\R^2\to\R$ satisfy~\eqref{eq:scalingshFf}.
Let $u_\e:\R^+\times\R^2\to\C$ and $\vi_\e:\R^+\times\R^2\to\R^2$ be the solutions of~\eqref{eq:GL-1} and~\eqref{eq:GLv1-hd} as in Propositions~\ref{prop:globGL}(i) and~\ref{prop:apriori-est-veps-hd}, respectively.
Let $0<\e\ll1$, $\Log\ll N_\e\lesssim\e^{-1}$, and $R\ge1$ with $\e R\lesssim1$, and assume that $\Ec_{\e,R}^{*,t}\lesssim_t N_\e^2$ for all $t\ge0$. Then, in the regime~\GLt, we have for all $\theta>0$ and $t\ge0$,
\[\alpha^2\sup_z\int_0^t\int_{\R^2} a\chi_R^z|\partial_t u_\e|^2\lesssim_{t,\theta} (1+\e RN_\e)N_\e\Log+R^\theta N_\e^2\Log^2\lesssim R^\theta N_\e^2\Log^2.\qedhere\]
\end{lem}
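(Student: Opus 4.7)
The plan is to adapt the argument of Lemma~\ref{lem:velapbound} to the nondilute regime $\Log\ll N_\e\lesssim\e^{-1}$, exploiting the uniform a priori bounds on $\vi_\e$ from Proposition~\ref{prop:apriori-est-veps-hd} in place of those of Assumption~\ref{as:apveps}(a). First I would integrate identity~\eqref{eq:timederenergy} in time and reorganize it (as in~\eqref{eq:1st-est-velapbound}) to express
\begin{multline*}
\lambda_\e\alpha D_{\e,R}^{z,t} = \hat\Ec_{\e,R}^{z,\circ}-\hat\Ec_{\e,R}^{z,t}-\int_0^t\int a\nabla\chi_R^z\cdot\langle\partial_tu_\e,\nabla u_\e-iu_\e N_\e\vi_\e\rangle\\
+\int_0^t\int N_\e\chi_R^z\langle\partial_tu_\e,iu_\e\rangle\Div(a\vi_\e)+[\psi\text{-terms}]\\
+\int_0^t\int a\chi_R^z\Big(N_\e(N_\e\vi_\e-j_\e)\cdot\partial_t\vi_\e-N_\e\vi_\e\cdot V_\e-\tfrac{\Log}2F^\bot\cdot V_\e\Big),
\end{multline*}
where $D_{\e,R}^{z,t}:=\int_0^t\int a\chi_R^z|\partial_tu_\e|^2$, and where by Lemma~\ref{lem:apestu}-style computation together with the scaling~\eqref{eq:scalingshFf} ($\|F\|_{\Ld^\infty}\lesssim\lambda_\e$, $\|f\|_{\Ld^\infty}\lesssim\lambda_\e^2\Log^2=N_\e^2$ in regime~\GLt) and~\eqref{eq:boundpsi} ($\|\psi_{\e,R}^z\|_{\Ld^2}\lesssim_\theta R^\theta$) the $\psi$-terms and the energy correction $\hat\Ec-\Ec$ contribute at most $O(R^\theta N_\e^2)$.

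Next I would bound the remaining five terms using the pointwise estimates of Lemma~\ref{lem:pointest} for $V_\e$ and $j_\e-N_\e\vi_\e$, the integrability of $\vi_\e,\partial_t\vi_\e,\Div(a\vi_\e)$ from Proposition~\ref{prop:apriori-est-veps-hd}, and the local bound $\|\vi_\e\|_{\Ld^2(B_{2R}(z))}\lesssim_\theta R^\theta$ following from $\vi_\e\in(\Ld^2+\Ld^q)\cap\Ld^\infty$. The gradient term is estimated by $R^{-1}N_\e(D^{*,t})^{1/2}$ via $|\nabla\chi_R^z|\lesssim R^{-1}\mathds1_{B_{2R}}$ and $\Ec^*\lesssim N_\e^2$; the divergence term by $N_\e(1+(\e RN_\e)^{1/2})(D^{*,t})^{1/2}$ after splitting $\langle\partial_tu_\e,iu_\e\rangle\le|\partial_tu_\e|(1+|1-|u_\e|^2|^{1/2})$; and the two $V_\e$-terms by $R^\theta N_\e^2(D^{*,t})^{1/2}$ using $\|(F,N_\e\vi_\e)\|_{\Ld^\infty}\lesssim N_\e$ together with the $R^\theta$-bound on $\|\vi_\e\|_{\Ld^2(B_{2R})}$.

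The main obstacle will be the careful treatment of the $N_\e(N_\e\vi_\e-j_\e)\cdot\partial_t\vi_\e$ term: since no $\Ld^\infty$-bound on $|u_\e|$ is available in the nondilute regime, the pointwise decomposition $|j_\e-N_\e\vi_\e|\le|\nabla u_\e-iu_\e N_\e\vi_\e|(1+|1-|u_\e|^2|)+N_\e|\vi_\e||1-|u_\e|^2|$ cannot be squared and integrated directly. Instead I would pair with $\|\partial_t\vi_\e\|_{\Ld^\infty_t\Ld^\infty}\lesssim_t1$ and integrate in $\Ld^1$, using Cauchy--Schwarz to estimate $\int|\nabla u_\e-iu_\e N_\e\vi_\e||1-|u_\e|^2|\lesssim\e\Ec^*\lesssim\e N_\e^2$ and $\|\nabla u_\e-iu_\e N_\e\vi_\e\|_{\Ld^1(B_{2R})}\le R(\Ec^*)^{1/2}\lesssim RN_\e$; this produces precisely the contribution $RN_\e^2$, which after dividing by $\lambda_\e\alpha=N_\e\alpha/\Log$ and multiplying by $\alpha^2$ yields the correction term $\e RN_\e^2\Log$ appearing in $(1+\e RN_\e)N_\e\Log$.

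Finally I would collect all the coefficients $A_i$ of $(D^{*,t})^{1/2}$ arising above, apply Young's inequality $A_i(D^{*,t})^{1/2}\le\tfrac{1}{10}\lambda_\e\alpha D^{*,t}+C\alpha^{-1}\lambda_\e^{-1}A_i^2$ to absorb the linear parts into the left-hand side, take the supremum over $z\in R\Z^2$, and conclude with $\alpha^2 D_{\e,R}^{*,t}\lesssim_{t,\theta}(1+\e RN_\e)N_\e\Log+R^{2\theta}N_\e^2\Log^2$; the second form of the bound follows by observing that $(1+\e RN_\e)N_\e\Log\lesssim N_\e^2\le R^\theta N_\e^2\Log^2$ using $\e R\lesssim1$.
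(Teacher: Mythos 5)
Your proposal follows the paper's overall strategy (integrate~\eqref{eq:timederenergy}, bound the error terms via Lemma~\ref{lem:pointest} and the a priori bounds on $\vi_\e$, absorb $(D_{\e,R}^{z,t})^{1/2}$), but the treatment of the term $N_\e(N_\e\!\vi_\e-j_\e)\cdot\partial_t\!\vi_\e$ contains a genuine error that breaks the final bound.

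You propose to estimate this term by pairing $|\nabla u_\e-iu_\e N_\e\!\vi_\e\!|$ in $\Ld^1(B_{2R})$ against $\|\partial_t\!\vi_\e\!\|_{\Ld^\infty}$, obtaining $\|\nabla u_\e-iu_\e N_\e\!\vi_\e\!\|_{\Ld^1(B_{2R})}\lesssim R(\Ec_{\e,R}^*)^{1/2}\lesssim RN_\e$ and hence a contribution $RN_\e^2$ to the upper bound on $\lambda_\e\alpha D_{\e,R}^{z,t}$. But dividing by $\lambda_\e=\frac{N_\e}\Log$ then gives $\alpha^2 D_{\e,R}^{z,t}\lesssim \alpha\, RN_\e\Log$, \emph{not} the $\e RN_\e^2\Log$ you claim (these differ by the factor $\e N_\e\lesssim1$, and in the wrong direction). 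And $RN_\e\Log$ does \emph{not} fit inside $(1+\e RN_\e)N_\e\Log + R^\theta N_\e^2\Log^2$: for example with $R\simeq\e^{-1}$ and $N_\e\simeq\Log$ this would require $\e^{-1}\Log^2\lesssim\e^{-\theta}\Log^4$, i.e.\ $\e^{\theta-1}\lesssim\Log^2$, which fails as $\e\downarrow0$ for any $\theta<1$. The extra factor $R$ introduced by the $\Ld^1$--$\Ld^\infty$ H\"older pairing is fatal.

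The correct estimate (as in the paper) uses the $\Ld^2$ bound $\|\partial_t\!\vi_\e\!\|_{\Ld^\infty_t\Ld^2}\lesssim_t1$ from Proposition~\ref{prop:apriori-est-veps-hd} and Cauchy--Schwarz in $\Ld^2$--$\Ld^2$:
\[
N_\e\int_{B_{2R}}|\nabla u_\e-iu_\e N_\e\!\vi_\e\!|\,|\partial_t\!\vi_\e\!|\;\lesssim\;N_\e\,\|\partial_t\!\vi_\e\!\|_{\Ld^2}\,(\Ec_{\e,R}^*)^{1/2}\;\lesssim_t\;N_\e^2,
\]
with no factor of $R$, and similarly the quadratic and cubic pieces of the pointwise decomposition of $|j_\e-N_\e\!\vi_\e\!|$ give $\e N_\e^3\lesssim N_\e^2$ using $\|1-|u_\e|^2\|_{\Ld^2(B_{2R})}\lesssim\e(\Ec_{\e,R}^*)^{1/2}$, $\|\!\vi_\e\!\|_{\Ld^4}\lesssim1$, and $\|\partial_t\!\vi_\e\!\|_{\Ld^2\cap\Ld^\infty}\lesssim1$. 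Squaring is never needed, so there is in fact no obstruction from the absence of an $\Ld^\infty$-bound on $|u_\e|$: your concern that motivated the $\Ld^1$--$\Ld^\infty$ estimate was unfounded. The $\e RN_\e^2\Log$ contribution in the lemma comes from a different term (the one the paper writes as $\e RN_\e^3(1+\|\!\vi_\e\!\|_{\Ld^\infty})(1+\|\Gamma_\e\|_{\Ld^\infty})$), not from $N_\e(N_\e\!\vi_\e-j_\e)\cdot\partial_t\!\vi_\e$.
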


\begin{proof}
Set $D_{\e,R}^{z,t}:=\int_0^t\int_{\R^2} a\chi_R^z|\partial_tu_\e|^2$. From identity~\eqref{eq:1st-est-velapbound}, using $|\nabla\chi_R^z|\lesssim R^{-1}(\chi_R^z)^{1/2}$, the pointwise estimates of Lemma~\ref{lem:pointest} for $V_\e$ and $j_\e-N_\e\!\vi_\e$,
assumption~\eqref{eq:scalingshFf}, the bound~\eqref{eq:boundpsi} on $\psi_{\e,R}^z$, and the definition of $\hat\Ec_{\e,R}^{z,t}$, we find in the considered regime,
\begin{multline*}
\lambda_\e\alpha D_{\e,R}^{z,t}\lesssim_{t,\theta} N_\e^2\big(1+\|\!\vi_\e\!\|_{\Ld^\infty_t\Ld^4}^2\big)\big(1+\|\partial_t\!\vi_\e\!\|_{\Ld^\infty_t(\Ld^2\cap\Ld^\infty(B_R))}\big)\\
+\e R N_\e^3\big(1+\|\!\vi_\e\!\|_{\Ld^\infty_t\Ld^\infty}\big)\big(1+\|\Gamma_\e\|_{\Ld^\infty_t\Ld^\infty}\big)+\e N_\e^2\Log\|\!\Div(a\!\vi_\e)\|_{\Ld^\infty_t\Ld^2}\\
+N_\e^2\big(1+\|\!\vi_\e\!\|_{\Ld^\infty_t(\Ld^2\cap\Ld^\infty(B_{2R}))}^2+\|\Div(a\!\vi_\e)\|_{\Ld^\infty_t(\Ld^2\cap\Ld^\infty)}\big)(D_{\e,R}^{z,t})^{1/2}+R^{-1}N_\e(D_{\e,R}^{z,t})^{1/2},
\end{multline*}
and hence, using the properties of $\vi_\e$ in Proposition~\ref{prop:apriori-est-veps-hd}, for all $\theta>0$,
\begin{align*}
\lambda_\e\alpha D_{\e,R}^{z,t}\lesssim_{t,\theta} N_\e^2+\e RN_\e^3+N_\e^2R^\theta(D_{\e,R}^{z,t})^{1/2}.
\end{align*}
Absorbing $(D_{\e,R}^{z,t})^{1/2}$ in the left-hand side, the result follows.
\end{proof}

We finally turn to the adaptation of the crucial a priori estimate of Lemma~\ref{lem:poho} to the nondilute regime $N_\e\gg\Log$.

\begin{lem}\label{lem:poho-hd}
Let $\alpha\ge0$, $\beta\in\R$, and let $h:\R^2\to\R$, $a:=e^h$, $F:\R^2\to\R^2$, $f:\R^2\to\R$ satisfy~\eqref{eq:scalingshFf}. Let $u_\e:\R^+\times\R^2\to\C$ and $\vi_\e:\R^+\times\R^2\to\R^2$ be the solutions of~\eqref{eq:GL-1} and~\eqref{eq:GLv1-hd} as in Propositions~\ref{prop:globGL}(i) and~\ref{prop:apriori-est-veps-hd}, respectively.
Let $0<\e\ll1$, $\Log\ll N_\e\lesssim\e^{-1}$, and $R\ge1$ with $\e RN_\e^3\lesssim1$, and assume that $\Ec_{\e,R}^{*,t}\lesssim_t N_\e^2$ for all $t\ge0$. Then, in the regime~\GLt, we have for all $t\ge0$,
\[\alpha^2\sup_z\int_0^t\int_{\R^2}\frac{\chi_R^z}{\e^2}(1-|u_\e|^2)^2\lesssim_t \frac{N_\e^2}\Log.\qedhere\]
\end{lem}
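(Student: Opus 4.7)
The proof will closely parallel that of Lemma~\ref{lem:poho}, with the scalings in each estimate adjusted to the nondilute regime \GLt{} where $\Log \ll N_\e \lesssim \e^{-1}$, $\lambda_\e = N_\e/\Log \gg 1$, and the a priori inputs are supplied by Proposition~\ref{prop:apriori-est-veps-hd} and Lemma~\ref{lem:velapbound-hd} instead of Proposition~\ref{prop:GLvprop} and Lemma~\ref{lem:velapbound}. The plan consists in three steps mirroring the original proof.

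\textbf{Step 1 (Pohozaev estimate on small balls).} Exactly as in Step 1 of Lemma~\ref{lem:poho}, for any ball $B_r(x_0)$ with $r \le 1$, integrating by parts the identity $-\int_{B_r(x_0)} \chi_R \nabla X : \tilde S_\e = \int \chi_R \Div \tilde S_\e \cdot X + \int X \cdot \tilde S_\e \cdot \nabla \chi_R - \int_{\partial B_r(x_0)} \chi_R X \cdot \tilde S_\e \cdot n$ against $X = x-x_0$ and using the trace formula from~\eqref{eq:defSmod} to recover the term $\frac{a^2}{2\e^2}(1-|u_\e|^2)^2$, I obtain a Pohozaev-type estimate of the form
\begin{multline*}
\alpha^2\int_0^t\int_{B_r(x_0)}\frac{a^2\chi_R}{2\e^2}(1-|u_\e|^2)^2
\lesssim_t r\,\lambda_\e N_\e^2\Log^2\\
+r\int_0^t\int_{\partial B_r(x_0)}\frac{a\chi_R}2\Big(|\nabla u_\e-iu_\e N_\e\!\vi_\e\!|^2+\tfrac{a}{2\e^2}(1-|u_\e|^2)^2+|1-|u_\e|^2|(N_\e^2|\!\vi_\e\!|^2+|f|)\Big).
\end{multline*}
The bulk term $r\lambda_\e N_\e^2\Log^2$ comes from bounding $\alpha^2 \int_0^t \int_{B_r(x_0)} |\Div\tilde S_\e|$ via Lemma~\ref{lem:divS}: the worst contributions are $\lambda_\e|\partial_t u_\e|^2$ (bounded using the velocity estimate of Lemma~\ref{lem:velapbound-hd}, which yields $\alpha^2 \int_0^t\int a\chi_R|\partial_t u_\e|^2 \lesssim_t R^\theta N_\e^2\Log^2$) and $\lambda_\e \Log^2 |\nabla u_\e - iu_\e N_\e\vi_\e|^2$ (bounded by $\lambda_\e\Log^2 \Ec_{\e,R}^* \lesssim \lambda_\e N_\e^2 \Log^2$), as well as $\lambda_\e N_\e^2\Log^2(1 + (1-|u_\e|^2)^2)$ and $\e^{-2}(1-|u_\e|^2)^2$; the cut-off boundary term $\int|\nabla\chi_R||\tilde S_\e|$ and the $f$-term are absorbed using $R^{-1}\lesssim 1$, $\e R N_\e^3 \lesssim 1$, and $\|f\|_{\Ld^\infty}\lesssim \lambda_\e^2\Log^2$.

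\textbf{Step 2 (Inside small balls via growing balls).} I apply~\cite[Proposition~4.8]{SS-book} with $M = \e^{\kappa-1}$ and $\delta = \e^{\kappa/4}$ to produce a finite collection $\hat\B_{\e,0}$ of disjoint closed balls of total radius $\e^{\kappa/2}$ covering $\{x \in B_{2R} : ||u_\e|-1| \ge \e^{\kappa/4}\}$; note that the input energy bound $\Ec_{\e,R}^* \lesssim N_\e^2$ gives $\Ec_{\e,R}^* \lesssim \e^{\kappa-1}$ for $\kappa$ small enough since $N_\e\lesssim\e^{-1}$. Sandier's ball growth construction~\cite[Theorem~4.2]{SS-book} yields a monotone family $(\hat\B_\e^s)_{s\ge 0}$ with total radius $e^s \e^{\kappa/2}$; applying the Pohozaev estimate of Step 1 on each ball and integrating in $s$ via~\cite[Proposition~4.1]{SS-book} gives, for any $s\ge 1$ with $e^s\e^{\kappa/2}\le 1$,
\[
s\,\alpha^2\int_0^t\int_{\hat\B_{\e,0}}\frac{a^2\chi_R}{2\e^2}(1-|u_\e|^2)^2 \lesssim_t s\,e^s\e^{\kappa/2}\lambda_\e N_\e^2\Log^2 + N_\e^2,
\]
where the last term bounds $\int_0^t\int_{\hat\B_\e^s}a\chi_R(|\nabla u_\e - iu_\e N_\e\vi_\e|^2 + \ldots)$ by $t \Ec_{\e,R}^* + (\text{lower order}) \lesssim_t N_\e^2$. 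Choosing $s = |\!\log \e^{\kappa/4}|$, the first right-hand side term becomes negligible and I conclude
\[
\alpha^2\int_0^t\int_{\hat\B_{\e,0}}\frac{a^2\chi_R}{2\e^2}(1-|u_\e|^2)^2 \lesssim_t \frac{N_\e^2}{\Log}.
\]

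\textbf{Step 3 (Outside small balls via Struwe-type cut-off).} Following Step 3 of Lemma~\ref{lem:poho}, I introduce the same Lipschitz cut-off $\zeta_\e$ which is the identity outside $[\frac12,\frac32]$ and flattens on $[1-\e^{\kappa/4},1+\e^{\kappa/4}]$. Writing $u_\e = \rho_\e e^{i\varphi_\e}$ locally and testing~\eqref{eq:GLrhoeps} against $\chi_R(\zeta_\e(\rho_\e)-\rho_\e)$, then using the standard pointwise properties of $\zeta_\e$ together with~\eqref{eq:scalingshFf}, I obtain an estimate of the form
\[
\alpha\int_0^t\int_{||u_\e|-1|\le\e^{\kappa/4}}\chi_R\Big(|\nabla|u_\e||^2+\tfrac{a(1-|u_\e|^2)^2}{2\e^2}\Big)\lesssim_{t,\theta} \e^{\kappa/4}R^\theta N_\e^2\Log + \e R N_\e^2 \Log^2,
\]
where the right-hand side uses the energy bound $\Ec_{\e,R}^* \lesssim N_\e^2$, Lemma~\ref{lem:velapbound-hd}, and the integrability of $\vi_\e$ from Proposition~\ref{prop:apriori-est-veps-hd}. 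Under the hypothesis $\e R N_\e^3 \lesssim 1$ this right-hand side is $\ll N_\e^2/\Log$. Combined with Step 2, this proves the desired bound.

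\textbf{Main obstacle.} The delicate point is the choice of radius scale in Step 2: the Pohozaev bulk term in Step 1 scales like $r\lambda_\e N_\e^2\Log^2$, which is far worse than in the dilute case since $\lambda_\e \gg 1$, so one cannot afford to let the balls grow too much. The ball growth must be stopped precisely at $s \simeq |\!\log \e^{\kappa/4}|$, and the competition with the energy term of size $N_\e^2$ is what forces the final bound $N_\e^2/\Log$ rather than the dilute target $N_\e$. Verifying that all lower-order terms in $\Div \tilde S_\e$ (in particular those involving $|\partial_t u_\e|^2$, for which the only available bound is the suboptimal Lemma~\ref{lem:velapbound-hd}) remain compatible with this scaling, and that the condition $\e R N_\e^3 \lesssim 1$ is enough to absorb all boundary and cut-off contributions, will be the most technical part.
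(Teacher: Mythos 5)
Your proposal is correct and follows essentially the same three-step structure as the paper's proof: a Pohozaev estimate on balls, Sandier's ball-growth argument stopped at $s=|\log\e^{\kappa/4}|\simeq\Log$, and the Struwe-type cut-off outside the small balls. The only noticeable discrepancy is in the scaling of the Pohozaev bulk term --- you report $r\lambda_\e N_\e^2\Log^2=rN_\e^3\Log$ after a Young-split of $\Div\tilde S_\e$, whereas the paper records the (more generous) bound $r N_\e^4\Log^2$ --- but both are of size $o(N_\e^2/\Log)$ when multiplied by $r=\e^{\kappa/4}$ in the regime where the lemma is applied, so this does not affect the argument.
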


\begin{proof}
Using the pointwise estimates of Lemma~\ref{lem:pointest}, assumption~\eqref{eq:scalingshFf}, and the properties of $\vi_\e$ in~\eqref{eq:apriori-est-veps-hd}, Lemma~\ref{lem:divS} directly yields
\begin{multline*}
|\Div\! \tilde S_\e|\lesssim \big((\lambda_\e+\beta N_\e)|\nabla u_\e-iu_\e N_\e \!\vi_\e\!|+\beta N_\e^2+\beta N_\e^2|1-|u_\e|^2|\big)\big(1+\|\!\vi_\e\!\|_{\Ld^\infty}\big)|\partial_tu_\e|\\
+\big((\lambda_\e+\beta N_\e) N_\e\|\!\pre_{\e}\!\|_{\Ld^\infty}+N_\e\|\curl\!\vi_\e\!\|_{\Ld^\infty}+N_\e^2\|\!\vi_\e\!\|_{\Ld^\infty}\big)(1+|1-|u_\e|^2|)|\nabla u_\e-iu_\e N_\e \!\vi_\e\!|\\
+N_\e(1+\|\!\vi_\e\!\|_{\Ld^\infty})^3(|\nabla u_\e-iu_\e N_\e \!\vi_\e\!|^2+(1-|u_\e|^2)^2+N_\e^2)\\
+N_\e^2|1-|u_\e|^2|\big(1+\|\!\vi_\e\!\|_{\Ld^\infty}\big)\big(N_\e(1+\|\!\vi_\e\!\|_{\Ld^\infty})^3+\lambda_\e\|\!\pre_{\e}\!\|_{\Ld^\infty}+\|\curl\!\vi_\e\!\|_{\Ld^\infty}\big).
\end{multline*}
Using the assumption $\Ec_{\e,R}^{*,t}\lesssim_t N_\e^2$, Lemma~\ref{lem:velapbound-hd} with $R=1$, and the properties of $\vi_\e$ in~\eqref{eq:apriori-est-veps-hd}, we find for $r\le1$,
\begin{align*}
\int_0^t\int_{B_r(x_0)}|\!\Div\!\tilde S_\e| \lesssim_{t} N_\e^4\Log(1+\beta\Log)\lesssim N_\e^4\Log^2.
\end{align*}
Further noting that assumption~\eqref{eq:scalingshFf} yields
\[\int_{B_r(x_0)} a|1-|u_\e|^2||f|\lesssim_t \e r N_\e\|f\|_{\Ld^\infty}\lesssim \e r N_\e^3,\]
and also
\begin{multline*}
\int_{B_r(x_0)}|\nabla\chi_R||\tilde S_\e|\\
\lesssim R^{-1}\int_{B_r(x_0)}\Big(|\nabla u_\e-iu_\e N_\e \!\vi_\e\!|^2+\frac{1}{\e^2}(1-|u_\e|^2)^2+\e^2(N_\e^4|\!\vi_\e\!|^4+|f|^2)\Big)\\
\lesssim R^{-1}\big(N_\e^2+\e^2(N_\e^4\|\!\vi_\e\!\|_{\Ld^\infty}^4+\|f\|_{\Ld^\infty}^2)\big)\lesssim_t R^{-1}N_\e^2,
\end{multline*}
and arguing as in Step~1 of the proof of Lemma~\ref{lem:poho}, we deduce the following Pohozaev type estimate, adapted from~\cite[Theorem~5.1]{SS-book}: for any ball $B_r(x_0)$ with $r\le1$,
\begin{multline*}
\int_0^t\int_{B_r(x_0)} \frac {a^2\chi_R}{2\e^2}(1-|u_\e|^2)^2\lesssim_t r N_\e^4\Log^2\\
+r\int_0^t\int_{\partial B_r(x_0)}\frac {a\chi_R}2\Big(|\nabla u_\e-iu_\e N_\e \!\vi_\e\!|^2+\frac{a}{2\e^2}(1-|u_\e|^2)^2+|1-|u_\e|^2|(N_\e^2|\!\vi_\e\!|^2+|f|)\Big).
\end{multline*}
With this estimate at hand, the conclusion follows from a direct adaptation of Steps~2--3 of the proof of Lemma~\ref{lem:poho}.
\end{proof}

\subsection{Modulated energy argument}

With the above vortex analysis at hand, in the nondilute regime~\GLt{} with $\Log\ll N_\e\ll\Log\log\Log$, we adapt the modulated energy argument of Section~\ref{chap:MFL-GL} and show that the rescaled supercurrent density $\frac1{N_\e}j_\e$ remains close to the solution $\vi_\e$ of equation~\eqref{eq:GLv1-hd}.
Although the well-posedness result of Section~\ref{chap:deg-eqn-exist-bound} for equation~\eqref{eq:GLv1-hd} (hence the final statement of Theorem~\ref{th:main-hd}) is reduced to the parabolic case,
we show that the modulated energy argument formally works in the mixed-flow case as well.
(As we assume $\alpha>0$, all multiplicative constants are implicitly allowed to additionally depend on an upper bound on $\alpha^{-1}$.)

\begin{prop}
Let $\alpha>0$, $\beta\in\R$, $\alpha^2+\beta^2=1$, and let $h:\R^2\to\R$, $a:=e^h$, $F:\R^2\to\R^2$, $f:\R^2\to\R$ satisfy~\eqref{eq:scalingshFf}. Let $u_\e:\R^+\times\R^2\to\C$ be the solution of~\eqref{eq:GL-1} as in Proposition~\ref{prop:globGL}(i). Assume that for some $T>0$ for all $\e>0$ there exists a solution $\vi_\e:[0,T)\times\R^2\to\R^2$ of the following mixed-flow version of~\eqref{eq:GLv1-hd},
\begin{gather}\label{eq:GLv1-hd-mix}
\partial_t\!\vi_\e=\nabla\!\pre_\e+\Gamma_\e\curl\!\vi_\e,\qquad \vi_\e\!|_{t=0}=\vi^\circ,\\
\Gamma_\e:=\lambda_\e^{-1}(\alpha-\Jb\beta)\Big(\nabla^\bot h- F^\bot-\frac{2N_\e}\Log\vi_\e\Big),
\qquad \pre_\e:=(\lambda_\e\alpha a)^{-1}\Div(a\!\vi_\e),\nonumber
\end{gather}
and assume that $\vi_\e$ satisfies the bounds~\eqref{eq:apriori-est-veps-hd} on $[0,T)$.
Let $0<\e\ll1$, $\Log\lesssim N_\e\ll\Log\log\Log$, and $\Log\lesssim R\lesssim\Log^n$ for some $n\ge1$. Assume that the initial modulated energy excess satisfies $\D_{\e,R}^{*,\circ}\lesssim N_\e^{2-\delta}$ for some $\delta>0$.
Then we have $\D_{\e,R}^{*,t}\ll_t N_\e^2$ for all $t\in[0,T)$, hence $\frac1{N_\e}j_\e-\vi_\e\to0$ in $\Ld^\infty_\loc([0,T);\Ld^1_\uloc(\R^2)^2)$ as $\e\downarrow0$.
\end{prop}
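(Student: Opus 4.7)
The plan is to adapt the modulated energy Gr\"onwall scheme of Proposition~\ref{prop:mflGL} to the nondilute regime, replacing the dilute vortex analysis of Section~\ref{sec:vortex} by its nondilute counterpart developed in Section~\ref{chap:vort-anal-hd}. Concretely, I would first fix $R\simeq\Log$ and define $T_\e\le T$ as the maximal time such that $\D_{\e,R}^{*,t}\le N_\e^2$ holds on $[0,T_\e]$; the initial assumption $\D_\e^{*,\circ}\lesssim N_\e^{2-\delta}$ gives $T_\e>0$. For $t\le T_\e$, Proposition~\ref{prop:ballconstr-hd} supplies the optimal energy bound $\Ec_{\e,R}^{*,t}\lesssim_t N_\e^2$, the ball construction $\B_{\e,R}^r$ with vortex measure $\nu_{\e,R}^r$, the Jacobian estimates, and crucially the excess energy estimate~\eqref{eq:excessestim-hd} with error $O(N_\e^2\Log^{-1}\log N_\e)$.

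Next I would apply the time-derivative decomposition of Lemma~\ref{lem:decompcruc} (with $\varrho=\infty$), picking the approximate vector field $\bar\Gamma_\e$ given by Lemma~\ref{lem:approx} from the collection $\B_{\e,R}^{r_\e}$ with a polynomially small radius, say $r_\e=N_\e^{-K}$ for $K$ large. Each term is then estimated as in Section~\ref{chap:MFL-GL}, but using now Proposition~\ref{prop:ballconstr-hd}, Lemma~\ref{lem:velapbound-hd}, and Lemma~\ref{lem:poho-hd} in place of their dilute analogues: $I^S$ is controlled by the energy-outside-balls bound~\eqref{eq:energyoutballs-hd}; $I^H$ by the excess energy estimate~\eqref{eq:excessestim-hd} combined with the Pohozaev bound of Lemma~\ref{lem:poho-hd}; the crucial $I^V$ via the product estimate of Lemma~\ref{lem:prodest} (applied with $M_\e$ a suitable intermediate scale, which is licit since in the regime $N_\e\ll\Log\log\Log$ all relevant quantities are subexponential in $\Log$); $I^D$ provides the dissipation absorbing the remainder of $I^V$ through the two-case trick of Proposition~\ref{prop:mflGL}; $I^E$ recombines with the kinetic term via $\alpha^2+\beta^2=1$; $I^d$ vanishes by definition of $\pre_\e$; and $I^g$, $I^n$ are handled exactly as in Steps~2--3 of the proof of Proposition~\ref{prop:mflGL}, the error being kept under control by the polynomially small radius $r_\e$. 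The outcome should be a differential inequality of the form
\begin{equation*}
\partial_t\hat\D_{\e,R}^{*,t}\,\le\, C_t\lambda_\e\hat\D_{\e,R}^{*,t}+C_tN_\e^{2-\delta'},
\end{equation*}
for some $\delta'>0$ (independent of $\e$), where the error $N_\e^{2-\delta'}$ collects the excess-energy error $\lambda_\e N_\e^2\log N_\e/\Log\simeq N_\e^3\log N_\e/\Log^2$ and the initial data; checking that this is indeed bounded by $N_\e^{2-\delta'}$ uses precisely $N_\e\ll\Log\log\Log$.

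Applying Gr\"onwall's inequality then yields $\hat\D_{\e,R}^{*,t}\le C_tN_\e^{2-\delta'}\exp(C_t\lambda_\e t)$. Because $\lambda_\e=N_\e/\Log\ll\log\Log\simeq\log N_\e$ in the regime $N_\e\ll\Log\log\Log$, we have $\exp(C_t\lambda_\e t)\le N_\e^{\delta'/2}$ for all $\e$ small enough, hence $\hat\D_{\e,R}^{*,t}\ll_tN_\e^2$. By Lemma~\ref{lem:apestu} this forces $\D_{\e,R}^{*,t}\ll_tN_\e^2$ on $[0,T_\e]$, which by a standard bootstrap gives $T_\e=T$ for all small $\e$. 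The convergence $N_\e^{-1}j_\e-\vi_\e\to0$ in $\Ld^\infty_\loc([0,T);\Ld^1_\uloc(\R^2)^2)$ follows exactly as in Step~4 of the proof of Proposition~\ref{prop:mflGL}, combining the pointwise estimate of Lemma~\ref{lem:pointest} for $j_\e-N_\e\!\vi_\e$ with the $\Ld^2$ bound outside the balls (from~\eqref{eq:energyoutballs-hd}) and an $\Ld^p$ bound inside (for $p<2$, obtained by a direct Struwe-type argument analogous to Proposition~\ref{prop:ballconstr}(vi), now using the nondilute lower bound~\eqref{eq:lowerbound-hd}).

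The main obstacle is the logarithmic tightness of the Gr\"onwall step: because $\lambda_\e\gg1$ one cannot afford an arbitrary $o(N_\e^2)$ error, but needs a \emph{polynomially small} gain $N_\e^{-\delta'}$; this is precisely what forces the strengthened well-preparedness $\D_\e^{*,\circ}\lesssim N_\e^{2-\delta}$ and the ceiling $N_\e\ll\Log\log\Log$, and it is the reason why the excess-energy estimate~\eqref{eq:excessestim-hd} of Proposition~\ref{prop:ballconstr-hd} must be sharpened down to the $\Log^{-1}\log N_\e$ factor rather than the softer $o(1)$ that would suffice in the dilute regime.
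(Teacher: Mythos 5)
Your proposal reproduces the paper's proof strategy faithfully: define the stopping time $T_\e$, invoke the nondilute vortex analysis of Proposition~\ref{prop:ballconstr-hd} (optimal energy bound $\Ec_{\e,R}^*\lesssim N_\e^2$, lower bound, excess estimate with $O(N_\e^2\Log^{-1}\log N_\e)$ error), estimate each term of Lemma~\ref{lem:decompcruc} with the nondilute analogues of the velocity and Pohozaev lemmas (Lemma~\ref{lem:velapbound-hd}, Lemma~\ref{lem:poho-hd}), use the product estimate and the two-case dissipation trick as in Proposition~\ref{prop:mflGL}, and arrive at a Gr\"onwall inequality with prefactor $\lambda_\e$, which together with $\lambda_\e\ll\log\Log$ and the strengthened well-preparedness $\D^{*,\circ}\lesssim N_\e^{2-\delta}$ closes the argument since $e^{C_t\lambda_\e}=\Log^{o(1)}\ll N_\e^{\delta/2}$. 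This is precisely what the paper does.

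Two small remarks. First, the error terms produced by the paper's Gr\"onwall step are $N_\e\lambda_\e^3\log\Log$ (from the product-estimate choice $M_\e=\exp((\lambda_\e\log\Log)\wedge\Log^{1/2})$) and $\lambda_\e N_\e\log N_\e$ (from the ball lower bound and the Pohozaev lemma); your accounting gives $\lambda_\e N_\e^2\log N_\e/\Log=\lambda_\e^2N_\e\log N_\e$, which carries a spurious extra $\lambda_\e$ factor but remains comfortably $\lesssim N_\e^{2-\delta'}$ in the stated regime, so the conclusion is unaffected. Second, the Struwe-type $\Ld^p$ estimate inside the balls is not actually needed in the nondilute regime: since here $\Ec^*_{\e,R}\lesssim N_\e^2$ (rather than $N_\e\Log$) and the ball radius is $r_\e=N_\e^{-4}$, the trivial H\"older bound $\int_{\B_\e}\chi_R^z|\nabla u_\e-iu_\e N_\e\vi_\e|^p\lesssim r_\e^{2-p}N_\e^p\ll N_\e^p$ already suffices, and this is what the paper uses; the dyadic-annulus refinement of Proposition~\ref{prop:ballconstr}(vi) was only necessary when the available energy bound was $N_\e\Log$ and the radius could not be taken too small.
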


\begin{proof}
Let $\Log\lesssim N_\e\lesssim\Log^n$ and $\Log\lesssim R\lesssim\Log^n$ for some $n\ge1$.
Given the assumption $\D_{\e,R}^{*,\circ}\ll N_\e^{2}$ on the initial data, for all $\e>0$ we define $T_\e>0$ as the maximum time $\le T$ such that $\D_{\e,R}^{*,t}\le N_\e^2$ holds for all $t\le T_\e$.
By the proof of Lemma~\ref{lem:apestu} and by Proposition~\ref{prop:ballconstr-hd}, we deduce
for all $t\le T_\e$,
\begin{align}\label{eq:corTeps-hd}
\Ec_{\e,R}^{*,t}\lesssim_t N_\e^2,\quad \hat\Ec_{\e,R}^{*,t}\lesssim_tN_\e^2,\quad \hat\D_{\e,R}^{*,t}\lesssim_t N_\e^2,\quad \D_{\e,R}^{*,t}\lesssim \hat\D_{\e,R}^{*,t}+o_t(\e^{1/2}).
\end{align}
The strategy of the proof consists in showing that for all $t\le T_\e$,
\begin{align}\label{eq:stratprGL-hd}
\hat\D_{\e,R}^{*,t}\lesssim_t\hat\D_{\e,R}^{*,\circ}+N_\e\lambda_\e^3\log\Log+\lambda_\e N_\e\log N_\e+\lambda_\e\int_0^t\hat\D_{\e,R}^{*}.
\end{align}
Combined with~\eqref{eq:corTeps-hd} and with the Grönwall inequality, this implies
\[\D_{\e,R}^{*,t}\lesssim_t e^{C_t\lambda_\e}\big(\D^{*,\circ}_{\e,R}+N_\e\lambda_\e^3\log\Log+\lambda_\e N_\e\log N_\e\big).\]
Then choosing $\Log\lesssim N_\e\ll\Log\log\Log$ and $\D_{\e,R}^\circ\lesssim N_\e^{2-\delta}$ for some $\delta>0$, we deduce $\D_{\e,R}^{*,t}\ll_t N_\e^2$ for all $t\le T_\e$. This gives in particular $T_\e=T$ for $\e>0$ small enough, and the conclusion follows.
To simplify notation, we focus on~\eqref{eq:stratprGL-hd} with the left-hand side $\hat\D_{\e,R}^t$ centered at $z=0$, but the result of course holds uniformly with respect to the translation.

Let us first introduce some notation.
For all $t\le T_\e$, as we are in the framework of Proposition~\ref{prop:ballconstr-hd} with $u_\e^t,\vi_\e^t$, we let $\B_\e^t:=\B_{\e,R}^t$ denote the constructed collection of disjoint closed balls $\B_{\e,R}^{r_\e}(u_\e^t,\vi_\e^t)$ with total radius $r_\e:=N_\e^{-4}$. Let then $\bar\Gamma_\e^t$ denote the corresponding approximation of $\Gamma_\e^t$ given by Lemma~\ref{lem:approx}.
We decompose $\Gamma_\e:=\alpha\Gamma_{\e,0}-\beta\Gamma_{\e,0}^\bot$ with
\[\Gamma_{\e,0}:=\lambda_\e^{-1}\Big(\nabla^\bot h-F^\bot-\frac{2N_\e}\Log\!\vi_\e\Big).\]

\medskip
\noindent\step1 Time derivative of the modulated energy excess.
\nopagebreak

Lemma~\ref{lem:decompcruc} yields the following decomposition,
\begin{align}\label{eq:decompGL0-hd}
\partial_t\hat\D_{\e,R}=~&I_{\e,R}^S+I_{\e,R}^V+I_{\e,R}^E+I_{\e,R}^D+I_{\e,R}^H+I_{\e,R}^d+I_{\e,R}^g+I_{\e,R}^n+I_{\e,R}',
\end{align}
where the eight first terms are as in the statement of Lemma~\ref{lem:decompcruc} while the error $I_{\e,R}'$ is estimated as follows (cf.~\eqref{eq:decompDeR-rest}),
\begin{align*}
\int_0^t|I_{\e,\varrho,R}'|\lesssim_{t}\e R(N_\e^2+\Log^2)(\Ec_{\e,R}^*)^{1/2}\lesssim_t\e^{1/2}.
\end{align*}

\medskip
\noindent\step2 Bound on the error terms.
\nopagebreak

In this step, we prove for all $t\le T_\e$,
\begin{multline}\label{eq:errorterms-hd}
\int_0^t(I_{\e,R}^d+I_{\e,R}^g+I_{\e,R}^n)\\
\lesssim_t 1+R^{-1}N_\e^2+(R^{-1}+N_\e^{-2})\int_0^t\int_{\R^2}\chi_{R}|\partial_tu_\e-iu_\e N_\e\pre_{\e}\!|^2.
\end{multline}
We start with the bound on $I_{\e,R}^n$.
Using Lemma~\ref{lem:velapbound-hd} and the properties of $\vi_{\e}$ in~\eqref{eq:apriori-est-veps-hd}, the quantity $\bar\Ec_{\e,R}^*$ defined in Lemma~\ref{lem:prodest} is estimated as follows, for $\theta>0$ small enough,
\begin{multline*}
\bar\Ec_{\e,R}^{*,t}\lesssim\sup_z\int_0^t\Ec_{\e,R}^{z}+\sup_z\int_0^t\int_{\R^2}\chi_R^z\big(|\partial_tu_\e|^2+N_\e^2|\!\pre_{\e}\!|^2+N_\e^2|1-|u_\e|^2||\!\pre_{\e}\!|^2\big)\\
\lesssim_{t,\theta} N_\e^2+(1+\e RN_\e)N_\e\Log+R^\theta N_\e^2\Log^2+N_\e\Log\|\!\Div(a\!\vi_\e)\|_{\Ld^\infty_t(\Ld^2\cap\Ld^\infty)}^2\\
\lesssim_{t,\theta} \e RN_\e^2\Log+R^\theta N_\e^2\Log^2\lesssim N_\e^2\Log^3\lesssim\Log^{n+3}.
\end{multline*}
Noting that $|\nabla\chi_R|\lesssim R^{-1}\chi_R^{1/2}$ and using Lemma~\ref{lem:approx} in the form $\|\bar\Gamma_\e\|_{\Ld^\infty}\lesssim\|\Gamma_\e\|_{\Ld^\infty}\lesssim1$, Lemma~\ref{lem:prodest} then yields
\begin{multline*}
\bigg|\int_0^t\int_{\R^2} a\tilde V_{\e}\cdot\nabla^\bot\chi_R\bigg|\lesssim_t \Log^{-1}\\
+R^{-1}\Log^{-1}\bigg(\int_0^t\int_{\R^2}\chi_{R}|\partial_tu_\e-iu_\e N_\e\pre_{\e}\!|^2+\int_0^t\int_{B_{2R}}|\nabla u_\e-iu_\e N_\e\!\vi_\e\!|^2\bigg),
\end{multline*}
and hence,
\begin{multline*}
\Big|\int_0^tI_{\e,R}^n\Big|\lesssim_t 1+R^{-1}\int_0^t\int_{\R^2}\chi_{R}|\partial_tu_\e-iu_\e N_\e\pre_{\e}\!|^2\\
+R^{-1}\int_0^t\int_{B_{2R}}\Big(|\nabla u_\e-iu_\e N_\e\!\vi_\e\!|^2+\frac a{2\e^2}(1-|u_\e|^2)^2+|1-|u_\e|^2|(N_\e^2|\!\vi_\e\!|^2+|f|)\Big).
\end{multline*}
Using~\eqref{eq:corTeps-hd}, assumption~\eqref{eq:scalingshFf}, and the properties of $\vi_\e$ in~\eqref{eq:apriori-est-veps-hd}, we conclude
\begin{multline*}
\Big|\int_0^tI_{\e,R}^n\Big|\lesssim_t 1+R^{-1}N_\e^2+\e N_\e^3\big(1+\|\!\vi_\e\!\|_{\Ld^\infty_t\Ld^4}^2\big)+R^{-1}\int_0^t\int_{\R^2}\chi_{R}|\partial_tu_\e-iu_\e N_\e\pre_{\e}\!|^2\\
\lesssim_t 1+R^{-1}N_\e^2+R^{-1}\int_0^t\int_{\R^2}\chi_{R}|\partial_tu_\e-iu_\e N_\e\pre_{\e}\!|^2.
\end{multline*}
We turn to the bound on $I_{\e,R}^g$.
Using~\eqref{eq:scalingshFf} and the pointwise estimates of Lemma~\ref{lem:pointest},
we find
\begingroup\allowdisplaybreaks
\begin{multline*}
|I_{\e,R}^g|\lesssim \|\Gamma_\e-\bar\Gamma_\e\|_{\Ld^\infty}(1+\|\!\vi_\e\!\|_{\Ld^\infty})\bigg(N_\e\int_{\R^2} \chi_R\big(|\nabla u_\e-iu_\e N_\e\!\vi_\e\!|+N_\e|1-|u_\e|^2|\big)|\curl\!\vi_\e\!|\\
+N_\e\int_{\R^2} \chi_R\Big(|\nabla u_\e-iu_\e N_\e \!\vi_\e\!|^2+\frac{a}{\e^2}(1-|u_\e|^2)^2\Big)\\
+N_\e^3\int_{\R^2}\chi_R(1+|1-|u_\e|^2|)|\!\vi_\e\!|^2+\lambda_\e\int_{\R^2} \chi_R |\partial_tu_\e-iu_\e N_\e\!\pre_{\e}\!||\nabla u_\e-iu_\e N_\e \!\vi_\e\!|\\
+\beta N_\e\int_{\R^2} \chi_R |\partial_tu_\e-iu_\e N_\e\!\pre_{\e}\!|\big(|\nabla u_\e-iu_\e N_\e \!\vi_\e\!|+N_\e|1-|u_\e|^2|+N_\e|\!\vi_\e\!|\big)\bigg).
\end{multline*}
\endgroup
By Lemma~\ref{lem:approx} in the form $\|\Gamma_\e-\bar\Gamma_\e\|_{\Ld^\infty}\lesssim r_\e=N_\e^{-4}$ and by the properties of $\vi_\e$ in~\eqref{eq:apriori-est-veps-hd}, we deduce for $\theta>0$ small enough,
\begin{multline}\label{eq:Igestconcl}
|I_{\e,R}^g|\lesssim r_\e N_\e^3R^\theta+r_\e(\lambda_\e N_\e+ R^\theta N_\e^2)\Big(\int_{\R^2} \chi_R |\partial_tu_\e-iu_\e N_\e\!\pre_{\e}\!|^2\Big)^{1/2}\\
\lesssim 1+ N_\e^{-1}\Big(\int_{\R^2} \chi_R |\partial_tu_\e-iu_\e N_\e\!\pre_{\e}\!|^2\Big)^{1/2}\\
\lesssim 1+ N_\e^{-2}\int_{\R^2} \chi_R |\partial_tu_\e-iu_\e N_\e\!\pre_{\e}\!|^2.
\end{multline}
Regarding the last term $I_{\e,R}^d$, the definition of the pressure in~\eqref{eq:GLv1-hd-mix} simply yields $I_{\e,R}^d=0$, and the conclusion~\eqref{eq:errorterms-hd} follows.

\medskip
\noindent\step3 Bound on the dominant terms.
\nopagebreak

In this step, we turn to the estimation of the five first terms in~\eqref{eq:decompGL0-hd}, showing more precisely that for all $t\le T_\e$,
\begin{align}\label{eq:conclstep3Dt-hd}
\hat\D_{\e,R}^t\lesssim_t \hat\D_{\e,R}^{\circ}+N_\e\lambda_\e^3\log\Log+\lambda_\e N_\e\log N_\e+\lambda_\e\int_0^t\hat D_{\e,R}.
\end{align}
As this holds uniformly with respect to translations of the cut-off functions, the conclusion~\eqref{eq:stratprGL-hd} follows.

We start with the bound on the first term $I_{\e,R}^S$.
Since  for all $t$ the field $\bar\Gamma_\e^t$ is constant in each ball of the collection $\B_\e^t$ and satisfies $\|\nabla\bar\Gamma_\e^t\|_{\Ld^\infty}\lesssim\|\nabla\Gamma_\e^t\|_{\Ld^\infty}\lesssim1$, we find
\begin{multline*}
|I_{\e,R}^{S}|\lesssim \int_{\R^2\setminus\B_\e}\chi_R|\tilde S_\e|
\lesssim \int_{\R^2\setminus\B_\e}a\chi_R\Big(|\nabla u_\e-iu_\e N_\e\!\vi_\e\!|^2+\frac{a}{2\e^2}(1-|u_\e|^2)^2\Big)\\
+\int_{\R^2}\chi_R|1-|u_\e|^2|(N_\e^2|\!\vi_\e\!|^2+|f|).
\end{multline*}
Since $\B_\e$ has total radius $r_\e=N_\e^{-4}$, Proposition~\ref{prop:ballconstr-hd}(v) yields
\begin{align*}
|I_{\e,R}^{S}|&\lesssim \D_{\e,R}+\lambda_\e N_\e\log N_\e+\int_{\R^2}\chi_R|1-|u_\e|^2|(N_\e^2|\!\vi_\e\!|^2+|f|).
\end{align*}
Further using~\eqref{eq:corTeps-hd}, assumption~\eqref{eq:scalingshFf}, and the properties of $\vi_\e$ in~\eqref{eq:apriori-est-veps-hd}, we conclude
\begin{align}\label{eq:estIS-hd}
|I_{\e,R}^{S}|&\lesssim \hat\D_{\e,R}+\lambda_\e N_\e\log N_\e.
\end{align}
We turn to $I_{\e,R}^H$. Using the assumption~\eqref{eq:scalingshFf} and the properties of~$\vi_\e$ in~\eqref{eq:apriori-est-veps-hd}, Lemma~\ref{lem:poho-hd} yields
\begin{multline*}
\int_0^tI_{\e,R}^H=O_t(\lambda_\e N_\e)\\
+\int_0^t\int_{\R^2} \frac {a\chi_R}2\Gamma_\e^\bot\cdot\nabla h\Big(|\nabla u_\e-iu_\e N_\e \!\vi_\e\!|^2+\frac{a}{2\e^2}(1-|u_\e|^2)^2-\Log \mu_\e\Big),
\end{multline*}
and hence by Proposition~\ref{prop:ballconstr-hd}(iv) and by~\eqref{eq:corTeps-hd},
\begin{align}\label{eq:estIH-hd}
\int_0^tI_{\e,R}^H&\lesssim_{t}\lambda_\e N_\e\log N_\e+\int_0^t\D_{\e,R}\lesssim_{t}\lambda_\e N_\e\log N_\e+\int_0^t\hat\D_{\e,R}.
\end{align}
The term $I_{\e,R}^D$ is simply estimated by
\begin{align}\label{eq:estID-hd}
I_{\e,R}^D\le-\frac{\lambda_\e \alpha}2\int_{\R^2}{a\chi_R}|\partial_tu_\e-iu_\e N_\e\!\pre_{\e}\!|^2+\frac{\lambda_\e \alpha}2\int_{\R^2}{a\chi_R}|(\nabla u_\e-iu_\e N_\e \!\vi_\e)\cdot\Gamma_\e^\bot|^2.
\end{align}
We finally turn to $I_{\e,R}^V$. Using $\alpha^2+\beta^2=1$, we find $\Gamma_{\e,0}-\beta \Gamma_\e^\bot=\alpha\Gamma_\e$, so that $I_{\e,R}^V$ takes on the following guise,
\begin{align*}
I_{\e,R}^V=N_\e\int_{\R^2} \frac {a\chi_R}2\tilde V_{\e}\cdot (\Gamma_{\e,0}-\beta \Gamma_\e^\bot)=\alpha N_\e\int_{\R^2} \frac {a\chi_R}2\tilde V_{\e}\cdot \Gamma_{\e}.
\end{align*}
As shown in Step~2, the quantity $\bar\Ec_{\e,R}^*$ defined in Lemma~\ref{lem:prodest}  satisfies $\bar\Ec_{\e,R}^{*,t}\lesssim_t\Log^{n+3}$. Choosing $M_\e:=\exp((\lambda_\e\log\Log)\wedge\Log^{1/2})$, Lemma~\ref{lem:prodest} then yields for any $\Lambda\simeq1$,
\begin{multline*}
\Big|\int_0^tI_{\e,R}^V\Big|\le o_t(1)+\lambda_\e\alpha\bigg(1+O_t\Big(\Log^{-1/2}\wedge\frac{\lambda_\e\log\Log}{\Log}\Big)\bigg)\\
\times\bigg(\frac1\Lambda\int_0^t\int_{\R^2} a\chi_R|\partial_tu_\e-iu_\e N_\e\!\pre_{\e}\!|^2+\frac \Lambda4\int_0^t\int_{\R^2} a\chi_R|(\nabla u_\e-iu_\e N_\e\!\vi_\e)\cdot \Gamma_\e|^2\bigg),
\end{multline*}
and thus, using the optimal energy bound~\eqref{eq:corTeps-hd},
\begin{multline}\label{eq:applprodest-hd}
\Big|\int_0^tI_{\e,R}^V\Big|\le O_t\big(N_\e\lambda_\e^3\log\Log\big)\\
+\bigg(1+O_t\Big(\Log^{-1/2}\wedge\frac{\lambda_\e\log\Log}{\Log}\Big)\bigg)\frac{\lambda_\e\alpha}\Lambda\int_0^t\int_{\R^2} a\chi_R|\partial_tu_\e-iu_\e N_\e\!\pre_{\e}\!|^2\\+\frac{\lambda_\e\alpha\Lambda}4\int_0^t\int_{\R^2} a\chi_R|(\nabla u_\e-iu_\e N_\e\!\vi_\e)\cdot \Gamma_\e|^2.
\end{multline}
We distinguish between two cases,
\begin{align}
\text{Case 1:}\quad\int_0^t\int_{\R^2} a\chi_R|\partial_tu_\e-iu_\e N_\e\!\pre_{\e}\!|^2\le 5\int_0^t\int_{\R^2} a\chi_R|(\nabla u_\e-iu_\e N_\e\!\vi_\e)\cdot\Gamma_\e|^2,\label{eq:case1bound-hd}\\
\text{Case 2:}\quad\int_0^t\int_{\R^2} a\chi_R|\partial_tu_\e-iu_\e N_\e\!\pre_{\e}\!|^2> 5\int_0^t\int_{\R^2} a\chi_R|(\nabla u_\e-iu_\e N_\e\!\vi_\e)\cdot\Gamma_\e|^2.\label{eq:case2bound-hd}
\end{align}
In Case~1, choosing $\Lambda=2$ in~\eqref{eq:applprodest-hd} yields
\begin{multline*}
\Big|\int_0^tI_{\e,R}^V\Big|\le O_t\big(N_\e\lambda_\e^3\log\Log\big)+\frac{\lambda_\e\alpha}2\bigg(1+O_t\Big(\frac{\lambda_\e\log\Log}{\Log}\Big)\bigg)\int_0^t\int_{\R^2} a\chi_R|\partial_tu_\e-iu_\e N_\e\!\pre_{\e}\!|^2\\+\frac{\lambda_\e\alpha}2\int_0^t\int_{\R^2} a\chi_R|(\nabla u_\e-iu_\e N_\e\!\vi_\e)\cdot \Gamma_\e|^2.
\end{multline*}
In Case~2, the condition~\eqref{eq:case2bound-hd} can be rewritten as
\begin{multline*}
\frac14\int_0^t\int_{\R^2} a\chi_R|\partial_tu_\e-iu_\e N_\e\!\pre_{\e}\!|^2+\int_0^t\int_{\R^2} a\chi_R|(\nabla u_\e-iu_\e N_\e\!\vi_\e)\cdot\Gamma_{\e}|^2\\
\le\Big(\frac14+\frac1{10}\Big)\int_0^t\int_{\R^2} a\chi_R|\partial_tu_\e-iu_\e N_\e\!\pre_{\e}\!|^2+\frac12\int_0^t\int_{\R^2} {a\chi_R}|(\nabla u_\e-iu_\e N_\e\!\vi_\e)\cdot\Gamma_{\e}|^2,
\end{multline*}
and choosing $\Lambda=4$ in~\eqref{eq:applprodest-hd} then yields
\begin{multline*}
\Big|\int_0^tI_{\e,R}^V\Big|\le O_t\big(N_\e\lambda_\e^3\log\Log\big)+\lambda_\e\alpha\Big(\frac14+\frac1{10}+o_t(1)\Big)\int_0^t\int_{\R^2} a\chi_R|\partial_tu_\e-iu_\e N_\e\!\pre_{\e}\!|^2\\+\frac{\lambda_\e\alpha}2\int_0^t\int_{\R^2} a\chi_R|(\nabla u_\e-iu_\e N_\e\!\vi_\e)\cdot \Gamma_\e|^2.
\end{multline*}
Further noting that in Case~1 the condition~\eqref{eq:case1bound-hd} together with the energy bound~\eqref{eq:corTeps-hd} yields
\begin{multline*}
\Big(R^{-1}+N_\e^{-2}+\frac{\lambda_\e^2\log\Log}\Log\Big)\int_{\R^2} a\chi_R|\partial_tu_\e-iu_\e N_\e\!\pre_{\e}\!|^2\\
\lesssim\Big(R^{-1}+N_\e^{-2}+\frac{\lambda_\e^2\log\Log}\Log\Big)\int_0^t\int_{\R^2} a\chi_R|\nabla u_\e-iu_\e N_\e\!\vi_\e\!|^2\lesssim_t N_\e\lambda_\e^3 \log\Log,
\end{multline*}
and combining this with~\eqref{eq:errorterms-hd} and~\eqref{eq:estID-hd}, we observe an exact recombination of the terms, and obtain in Case~1,
\begin{multline}\label{eq:preconclcase1-hd}
\int_0^t(I_{\e,R}^V+I_{\e,R}^D+I_{\e,R}^d+I_{\e,R}^g+I_{\e,R}^n+I'_{\e,R})\\
\le\frac{\lambda_\e \alpha}2\int_0^t\int_{\R^2}{a\chi_R}|\nabla u_\e-iu_\e N_\e \!\vi_\e|^2|\Gamma_\e|^2+O_t(N_\e\lambda_\e^3\log\Log),
\end{multline}
and in Case~2,
\begin{multline*}
\int_0^t(I_{\e,R}^V+I_{\e,R}^D+I_{\e,R}^g+I_{\e,R}^n+I'_{\e,R})\\
\le-\frac{\lambda_\e \alpha}2\Big(\frac12-\frac1{5}-o_t(1)\Big)\int_0^t\int_{\R^2}{a\chi_R}|\partial_tu_\e-iu_\e N_\e\!\pre_{\e}\!|^2\\
+\frac{\lambda_\e \alpha}2\int_0^t\int_{\R^2}{a\chi_R}|\nabla u_\e-iu_\e N_\e \!\vi_\e|^2|\Gamma_\e|^2+O_t(N_\e\lambda_\e^3\log\Log),
\end{multline*}
so that~\eqref{eq:preconclcase1-hd} holds in both cases for $\e>0$ small enough.
Using $\alpha^2+\beta^2=1$, we find $\Gamma_\e\cdot\Gamma_{\e,0}=\alpha|\Gamma_{\e,0}|^2=\alpha|\Gamma_\e|^2$, so that the term $I_{\e,R}^E$ takes on the following guise,
\[I_{\e,R}^E=-\frac{\lambda_\e}2\Log\int_{\R^2} {a\chi_R}\Gamma_\e\cdot\Gamma_{\e,0}\,\mu_\e=-\frac{\lambda_\e\alpha}2\Log\int_{\R^2} {a\chi_R}|\Gamma_\e|^2\mu_\e.\]
Together with~\eqref{eq:preconclcase1-hd}, this yields
\begin{multline*}
\int_0^t(I_{\e,R}^V+I_{\e,R}^E+I_{\e,R}^D+I_{\e,R}^g+I_{\e,R}^n+I'_{\e,R})\\
\le\frac{\lambda_\e \alpha}2\int_0^t\int_{\R^2}{a\chi_R}\big(|\nabla u_\e-iu_\e N_\e \!\vi_\e\!|^2-\Log\mu_\e\big)|\Gamma_\e|^2+O_t(N_\e\lambda_\e^3\log\Log).
\end{multline*}
Combining this with~\eqref{eq:decompGL0-hd}, \eqref{eq:estIS-hd}, and~\eqref{eq:estIH-hd}, we conclude
\begin{multline*}
\hat\D_{\e,R}^t- \hat\D_{\e,R}^\circ\lesssim_t\int_0^t\hat\D_{\e,R}+\frac{\lambda_\e \alpha}2\int_0^t\int_{\R^2}{a\chi_R}\big(|\nabla u_\e-iu_\e N_\e \!\vi_\e\!|^2-\Log\mu_\e\big)|\Gamma_\e|^2\\
+N_\e\lambda_\e^3\log\Log+\lambda_\e N_\e\log N_\e,
\end{multline*}
and the result~\eqref{eq:conclstep3Dt-hd} now follows from Proposition~\ref{prop:ballconstr-hd}(iv).

\medskip
\noindent\step4 Conclusion.

As explained at the beginning of the proof, in the regime $\Log\lesssim N_\e\ll\Log\log\Log$ with $\D_{\e,R}^\circ\lesssim N_\e^{2-\delta}$ for some $\delta>0$, the estimate~\eqref{eq:stratprGL-hd} implies $T_\e=T$ and $\D_{\e,R}^{*,t}\ll_tN_\e^2$ for all $t\in[0,T)$. We now show that it implies the convergence $\frac1{N_\e}j_\e-\vi_\e\to0$.
For all $t\in[0,T)$, Proposition~\ref{prop:ballconstr-hd}(v) gives
\[\sup_z\int_{\R^2\setminus\B_{\e}}\chi_R^z|\nabla u_\e-iu_\e N_\e\!\vi_\e\!|^2\ll_t N_\e^2,\]
and for all $1\le p<2$,
\[\sup_z\int_{\B_{\e}}\chi_R^z|\nabla u_\e-iu_\e N_\e\!\vi_\e\!|^p\lesssim|\B_{\e}|^{1-p/2}(\Ec_{\e,R}^{*})^{p/2}\lesssim_t r_\e^{2-p}N_\e^p\ll_p N_\e^p.\]
Using the pointwise estimates of Lemma~\ref{lem:pointest}, we deduce
\begin{multline*}
\sup_z\int_{B(z)}|j_\e-N_\e\!\vi_\e\!|\lesssim_t \sup_z\int_{B(z)}|\nabla u_\e-iu_\e N_\e \!\vi_\e\!|+\e N_\e^2\\
\lesssim_t \sup_z\int_{\B_{\e}}\chi_R^z|\nabla u_\e-iu_\e N_\e\!\vi_\e\!|+\sup_z\Big(\int_{B(z)\setminus\B_{\e}}|\nabla u_\e-iu_\e N_\e\!\vi_\e\!|^2\Big)^{1/2}+\e N_\e^2\ll_t N_\e,
\end{multline*}
hence $\frac1{N_\e}j_\e-\vi_\e\to0$ in $\Ld^\infty_\loc([0,T);\Ld^1_\uloc(\R^2)^2)$.
\end{proof}

\section{Mean-field limit in the conservative case}\label{chap:MFL-GP}
In this section, we prove Theorem~\ref{th:mainGP}, that is, the mean-field limit result in the conservative case ($\alpha=0$, $\beta=1$) in the regime \GP. More precisely, the rescaled supercurrent density $\frac1{N_\e}j_\e$ is shown to remain close to the solution $\vi_\e$ of equation~\eqref{eq:limGP}.
Combining this with the results of Section~\ref{chap:GP-lim} (in particular, with Lemma~\ref{lem:lastlimGP}), the result of Theorem~\ref{th:mainGP} follows.

\subsection{Preliminary: vortex analysis}
In the present situation, it is not needed to adapt the ball-construction lower bound of Section~\ref{sec:vortex} to the nondilute regime $N_\e\gg\Log$: we only need the following elementary estimate on the number of vortices based on a bound on the modulated energy excess. Since the vector field $\nabla h$ is assumed here to decay at infinity, the proof is considerably reduced with respect to the corresponding statement in Section~\ref{chap:vort-anal-hd}.
Note that in the considered regime $N_\e\gg\Log$ we show that $\Ec_{\e,R}$ and $\D_{\e,R}$ are interchangeable up to an error of order $o(N_\e^2)$.

\begin{lem}\label{lem:ballconstrGP}
Let $h:\R^2\to\R$, $a:=e^h$, with $a\le1$ and $\|\nabla h\|_{\Ld^2\cap\Ld^\infty}\lesssim1$, let $u_\e:\R^2\to\C$, $\vi_\e:\R^2\to\R^2$, with $\|\curl\!\vi_\e\!\|_{\Ld^1\cap\Ld^\infty},\|\!\vi_\e\!\|_{\Ld^\infty}\lesssim1$. Let $0<\e\ll1$, $\Log\ll N_\e\lesssim\e^{-1}$, and $R\ge1$, and assume that $\D_{\e,R}^*\lesssim N_\e^2$. Then,
\[\sup_z\|\mu_\e\|_{(\dot H^1\cap W^{1,\infty}(B_{R}(z)))^*}\lesssim N_\e,\]
hence in particular
\[\sup_z|\Ec_{\e,R}^z-\D_{\e,R}^z|\lesssim N_\e\Log\ll N_\e^2.\qedhere\]
\end{lem}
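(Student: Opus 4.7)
\emph{Plan.} The second estimate is an immediate consequence of the first, since by definition $\Ec_{\e,R}^z-\D_{\e,R}^z=\frac\Log2\int a\chi_R^z\mu_\e$, and the test function $a\chi_R^z$ lies in $\dot H^1\cap W^{1,\infty}(B_{2R}(z))$ uniformly in $R,z$: indeed $\|a\chi_R^z\|_{\Ld^\infty}\le1$, while $\nabla(a\chi_R^z)=\chi_R^z a\nabla h+a\nabla\chi_R^z$ is bounded in $\Ld^2\cap\Ld^\infty$ using $a\le1$, $\|\nabla h\|_{\Ld^2\cap\Ld^\infty}\lesssim1$, $|\nabla\chi_R^z|\lesssim R^{-1}\mathds1_{B_{2R}(z)}$, and $\|\nabla\chi_R^z\|_{\Ld^2}\lesssim 1$. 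I therefore focus on the dual-norm estimate.

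For a smooth test function $\phi$ supported in $B_R(z)$ in the unit ball of $\dot H^1\cap W^{1,\infty}$, I use $\mu_\e=\curl j_\e$ and integrate by parts to write $\int\phi\mu_\e=-\int\nabla^\bot\phi\cdot j_\e$, then split $j_\e=N_\e\!\vi_\e+(j_\e-N_\e\!\vi_\e)$. The contribution of $N_\e\!\vi_\e$, after a second integration by parts, becomes $N_\e\!\int\phi\curl\!\vi_\e$, bounded directly by $N_\e\|\phi\|_{\Ld^\infty}\|\curl\!\vi_\e\|_{\Ld^1}\lesssim N_\e$ from the hypotheses. The remaining term is controlled by Cauchy--Schwarz as $\|\nabla\phi\|_{\Ld^2}\|j_\e-N_\e\!\vi_\e\|_{\Ld^2(B_R(z))}$, so the proof reduces to the local uniform estimate $\sup_z\|j_\e-N_\e\!\vi_\e\|_{\Ld^2(B_R(z))}\lesssim N_\e$.

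For this $\Ld^2$ bound, I appeal to the pointwise estimate $|j_\e-N_\e\!\vi_\e|\le|u_\e||\nabla u_\e-iu_\e N_\e\!\vi_\e|+N_\e|\!\vi_\e||1-|u_\e|^2|$ of Lemma~\ref{lem:pointest}. The second summand is the easy piece: combining $\|\!\vi_\e\|_{\Ld^\infty}\lesssim1$ and the Ginzburg--Landau bound $\|1-|u_\e|^2\|_{\Ld^2(B_R(z))}\lesssim\e(\Ec_{\e,R}^*)^{1/2}$ with the a priori bound $\Ec_{\e,R}^*\lesssim N_\e^2$ established below and the scaling $\e N_\e\lesssim1$, its $\Ld^2$-norm is $\lesssim\e N_\e^2\lesssim N_\e$. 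The a priori energy bound $\Ec_{\e,R}^*\lesssim N_\e^2$ is itself obtained by a bootstrap from the identity $\Ec_{\e,R}^z=\D_{\e,R}^z+\frac\Log2\int a\chi_R^z\mu_\e$ together with a preliminary version of the dual-norm estimate, yielding $\Ec_{\e,R}^*\lesssim N_\e^2+\Log(\Ec_{\e,R}^*)^{1/2}$, which closes in the nondilute regime $\Log\ll N_\e$.

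The main technical obstacle is the first summand $|u_\e||\nabla u_\e-iu_\e N_\e\!\vi_\e|$: there is no a priori $\Ld^\infty$ control on $|u_\e|$, so one cannot simply factor out this weight. I would handle it by writing $|u_\e|^2\le 2+2(1-|u_\e|^2)^2$, which splits the squared $\Ld^2$-norm into a clean piece controlled by $\Ec_{\e,R}^*\lesssim N_\e^2$ and a cross term $\int_{B_R(z)}(1-|u_\e|^2)^2|\nabla u_\e-iu_\e N_\e\!\vi_\e|^2$. This cross term is the crux of the argument: it must be absorbed using the small parameter $\e$ hidden in the Ginzburg--Landau term $\frac a{2\e^2}(1-|u_\e|^2)^2$, together with the scaling $\e N_\e\lesssim1$, to ensure its $\Ld^2$ contribution remains $O(N_\e)$ rather than blowing up faster.
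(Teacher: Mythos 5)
Your plan follows the paper's first two reductions faithfully (splitting $\mu_\e=N_\e\curl\!\vi_\e+\curl(j_\e-N_\e\!\vi_\e)$, integrating by parts, and then bootstrapping the a priori energy bound $\Ec_{\e,R}^*\lesssim N_\e^2$ from the identity $\Ec_{\e,R}^z=\D_{\e,R}^z+\frac\Log2\int a\chi_R^z\mu_\e$), and your treatment of the second claim from the first is correct. But there is a genuine gap in the way you handle the core estimate.

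The problem is the sentence ``the remaining term is controlled by Cauchy--Schwarz as $\|\nabla\phi\|_{\Ld^2}\|j_\e-N_\e\!\vi_\e\|_{\Ld^2(B_R(z))}$, so the proof reduces to $\sup_z\|j_\e-N_\e\!\vi_\e\|_{\Ld^2(B_R(z))}\lesssim N_\e$.'' This reduction overshoots the statement and lands you on something that is false (or at least not provable from the hypotheses). Indeed, if such an $\Ld^2$ bound held you would be proving a bound in $\dot H^{-1}=(\dot H^1)^*$, whereas the lemma only claims a bound in the weaker dual $(\dot H^1\cap W^{1,\infty})^*$ --- the presence of $W^{1,\infty}$ in the test space is essential, not decorative. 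Concretely, when you expand $|u_\e|^2\le 2+2(1-|u_\e|^2)^2$ inside the squared $\Ld^2$ norm, the cross term $\int(1-|u_\e|^2)^2|\nabla u_\e-iu_\e N_\e\!\vi_\e\!|^2$ has no bound in terms of the modulated energy: the energy controls $\|1-|u_\e|^2\|_{\Ld^2}\lesssim\e(\Ec_{\e,R}^*)^{1/2}$ and $\|\nabla u_\e-iu_\e N_\e\!\vi_\e\!\|_{\Ld^2}\lesssim(\Ec_{\e,R}^*)^{1/2}$ separately, which gives the \emph{product} $(1-|u_\e|^2)(\nabla u_\e-iu_\e N_\e\!\vi_\e)$ only in $\Ld^1$ (with norm $\lesssim\e\Ec_{\e,R}^*$), not in $\Ld^2$. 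No $\Ld^4$ or $\Ld^\infty$ information on $1-|u_\e|^2$ is available here (and the paper explicitly disavows any $\Ld^\infty$ control on $u_\e$ in the conservative setting, cf.\ Remark~\ref{rems:GLMFL}), so the gap cannot be closed in this form.

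The fix, which is what the paper does, is to never package things into a single $\Ld^2$ estimate on $j_\e-N_\e\!\vi_\e$. Use the three-term pointwise bound of Lemma~\ref{lem:pointest} directly inside $-\int\nabla^\bot\phi\cdot(j_\e-N_\e\!\vi_\e)$ and pair each piece against the appropriate norm of $\nabla\phi$: the terms $|\nabla u_\e-iu_\e N_\e\!\vi_\e\!|$ and $N_\e|\!\vi_\e\!||1-|u_\e|^2|$ are in $\Ld^2(B_R(z))$ with norms $(\Ec_{\e,R}^*)^{1/2}$ and $\e N_\e(\Ec_{\e,R}^*)^{1/2}$, and hence pay $\|\nabla\phi\|_{\Ld^2}$; but the middle product $|\nabla u_\e-iu_\e N_\e\!\vi_\e\!||1-|u_\e|^2|$ is only in $\Ld^1$ with norm $\e\Ec_{\e,R}^*$, and hence must be paired against $\|\nabla\phi\|_{\Ld^\infty}$. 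This yields
\[\Big|\int\phi\mu_\e\Big|\lesssim N_\e\|\phi\|_{\Ld^\infty}+(1+\e N_\e)(\Ec_{\e,R}^*)^{1/2}\|\nabla\phi\|_{\Ld^2}+\e\Ec_{\e,R}^*\|\nabla\phi\|_{\Ld^\infty},\]
and after the bootstrap giving $\Ec_{\e,R}^*\lesssim N_\e^2$ (which uses the same inequality with $\phi=a\chi_R^z$ and absorbs), the right-hand side becomes $\lesssim N_\e\|\phi\|_{\dot H^1\cap W^{1,\infty}}$ using $\e N_\e\lesssim1$. The $W^{1,\infty}$ part of the dual norm is exactly what lets you cash in the $\Ld^1$ control of the cross term.
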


\begin{proof}
Let $\phi\in \dot H^1\cap W^{1,\infty}(\R^2)$ be supported in a ball of radius $R$. We decompose
\begin{align*}
\int_{\R^2} \phi\mu_\e&=\int_{\R^2} \phi\big(N_\e\curl\!\vi_\e+\,\curl(j_\e-N_\e\!\vi_\e)\big)\\
&= N_\e\int_{\R^2} \phi\,\curl\! \vi_\e-\int_{\R^2} \nabla^\bot\phi\cdot(j_\e-N_\e\!\vi_\e),
\end{align*}
hence, using the pointwise estimates of Lemma~\ref{lem:pointest},
\begin{align}\label{eq:boundmupre}
\Big|\int_{\R^2} \phi\mu_\e\Big|&\lesssim N_\e\|\phi\|_{\Ld^\infty}+(1+\e N_\e)(\Ec_{\e,R}^*)^{1/2}\|\nabla\phi\|_{\Ld^2}+\e\Ec_{\e,R}^*\|\nabla\phi\|_{\Ld^\infty}.
\end{align}
In particular, using the assumptions $\D_{\e,R}^*\lesssim N_\e^2$ and $\|\nabla h\|_{\Ld^2\cap\Ld^\infty}\lesssim1$, we obtain
\[\Ec_{\e,R}^z=\D_{\e,R}^z+\Log\int_{\R^2} a\chi_R^z\mu_\e\lesssim N_\e^2+(1+\e N_\e)\Log(\Ec_{\e,R}^*)^{1/2}+\e\Log \Ec_{\e,R}^*,\]
which implies, taking the supremum in $z$ and absorbing $\Ec_{\e,R}^*$ in the left-hand side, for $\e>0$ small enough,
\[\Ec_{\e,R}^*\lesssim N_\e^2+(1+\e N_\e)^2\Log^2\lesssim N_\e^2.\]
Inserting this into~\eqref{eq:boundmupre} yields $|\int_{\R^2}\phi\mu_\e|\lesssim N_\e\|\phi\|_{\dot H^1\cap W^{1,\infty}}$, and the result follows.
\end{proof}

\subsection{Modulated energy argument}
By a modulated energy argument, we show that the rescaled supercurrent density $\frac1{N_\e}j_\e$ remains close to the solution $\vi_\e$ of equation~\eqref{eq:limGP}. The proof consists in estimating the different terms in the decomposition of $\partial_t\hat\D_{\e,\varrho,R}$ in Lemma~\ref{lem:decompcruc} and then deducing the smallness of the modulated energy $\hat\Ec_{\e,\varrho,R}$ by a Grönwall argument.
Note that in the nondilute regime $N_\e\gg\Log$ the situation is greatly simplified with respect to Section~\ref{chap:MFL-GL}, since the modulated energy $\Ec_{\e,R}$ and the excess $\D_{\e,R}$ are now interchangeable up to an error $o(N_\e^2)$ (cf.~Lemma~\ref{lem:ballconstrGP}). The different terms appearing in Lemma~\ref{lem:decompcruc} thus only need to be estimated by means of the modulated energy $\Ec_{\e,R}$ without having to take care to substract the correct vortex self-interaction energy. In particular, the vector field $\Gamma_\e$ does no longer need to be truncated on small balls around the vortex locations, and we simply set $\bar\Gamma_\e=\Gamma_\e$. For this choice, all the terms involving the vortex velocity $\tilde V_{\e,\varrho}$ in Lemma~\ref{lem:decompcruc} vanish. This simplification is crucial since in the present conservative case no good a priori control on the vortex velocity is available (apart from rough a priori estimates of the form $\|\partial_tu_\e-iu_\e N_\e\!\pre_{\e,\varrho}\!\|_{\Ld^2}\lesssim\e^{-2}$), which indeed prevents us from extending this modulated energy argument to the case $N_\e\lesssim\Log$.

\begin{prop}\label{prop:mflGP}
Let $\alpha=0$, $\beta=1$, and let $h:\R^2\to\R$, $a:=e^h$, $F:\R^2\to\R^2$, $f:\R^2\to\R$ satisfy~\eqref{eq:scalingshFfdec}. Let $u_\e:[0,T)\times\R^2\to\C$ and $\vi_\e:\R^+\times\R^2\to\R^2$ be solutions of~\eqref{eq:GL-1} and~\eqref{eq:limGP} as in Propositions~\ref{prop:globGL}(ii) and~\ref{prop:GPvprop}, respectively, for some $T>0$. Let $0<\e\ll1$, $\Log\ll N_\e\ll\e^{-1}$, $R\gtrsim\|\partial_tu_\e\|_{\Ld^\infty_T\Ld^2}+\Log^2$, and assume that the initial modulated energy satisfies $\Ec_{\e,R}^{*,\circ}\ll N_\e^2$. Then, in the regime \GP, we have $\Ec_{\e,R}^{*,t}\ll_t N_\e^2$ for all $t\in[0,T)$, hence $\frac1{N_\e}j_\e-\vi_\e\to0$ in $\Ld^\infty_\loc([0,T);\Ld^1_{\uloc}(\R^2)^2)$ as $\e\downarrow0$.
Under the stronger assumption $\Ec_{\e}^{*,\circ}\ll N_\e^2$, the same convergence holds in $\Ld^\infty_\loc([0,T);(\Ld^1+\Ld^2)(\R^2)^2)$.
\end{prop}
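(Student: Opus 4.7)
The plan is to apply the decomposition of Lemma~\ref{lem:decompcruc} with the choice $\bar\Gamma_\e=\Gamma_\e$, exploit the structural vanishing of most terms in the conservative case, and close a Gr\"onwall argument directly on the modulated energy excess $\hat\D_{\e,\varrho,R}^*$. With $\alpha=0$, $\beta=1$, a direct computation using the definition $\Gamma_\e=-\lambda_\e^{-1}(\nabla^\bot h-F^\bot-\frac{2N_\e}\Log\vi_\e)^\bot$ shows that $\lambda_\e\Gamma_\e^\bot=\nabla^\bot h-F^\bot-\frac{2N_\e}\Log\vi_\e$, so that $I_{\e,\varrho,R}^V$ and $I_{\e,\varrho,R}^E$ vanish identically; the factor $\alpha$ kills $I_{\e,\varrho,R}^D$, the constraint $\Div(a\vi_\e)=0$ together with $\alpha\!\pre_\e=0$ eliminates $I_{\e,\varrho,R}^d$, and the choice $\bar\Gamma_\e=\Gamma_\e$ makes $I_{\e,\varrho,R}^g=0$. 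Thus $\partial_t\hat\D_{\e,\varrho,R}=I_{\e,\varrho,R}^S+I_{\e,\varrho,R}^H+I_{\e,\varrho,R}^n+I_{\e,\varrho,R}'$.

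As $\Log\ll N_\e$, Lemma~\ref{lem:ballconstrGP} yields $|\Ec_{\e,R}^z-\D_{\e,R}^z|\lesssim N_\e\Log=o(N_\e^2)$, so $\hat\D_{\e,\varrho,R}^*$, $\hat\Ec_{\e,\varrho,R}^*$, and $\Ec_{\e,R}^*$ are all interchangeable up to $o(N_\e^2)$ (using Lemma~\ref{lem:apestu} for the lower-order terms in $\hat\Ec_{\e,\varrho,R}$). The term $I_{\e,\varrho,R}^S$ is then controlled by $\|\nabla\Gamma_\e^t\|_{\Ld^\infty}\lesssim_t1$ from Proposition~\ref{prop:GPvprop}, which gives $|I_{\e,\varrho,R}^S|\lesssim_t\Ec_{\e,R}^*+o(N_\e^2)$. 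For $I_{\e,\varrho,R}^H$, the first two contributions give $\lesssim\|\Gamma_\e^\bot\cdot\nabla h\|_{\Ld^\infty}\Ec_{\e,R}^*$, while the term $-\frac\Log2\int a\chi_R\Gamma_\e^\bot\cdot\nabla h\,\mu_\e$ is handled by Lemma~\ref{lem:ballconstrGP} with the test function $a\chi_R^z\Gamma_\e^\bot\cdot\nabla h$ bounded in $\dot H^1\cap W^{1,\infty}$ (using $\nabla h\in H^2$ and the regularity of $\Gamma_\e$), yielding $|I_{\e,\varrho,R}^H|\lesssim_t\Ec_{\e,R}^*+N_\e\Log=\Ec_{\e,R}^*+o(N_\e^2)$. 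The boundary term $I_{\e,\varrho,R}^n$ is controlled via $|\nabla\chi_R|\lesssim R^{-1}$, combined with the pointwise estimate of Lemma~\ref{lem:pointest} for $\tilde V_{\e,\varrho}$, the $\Ld^2$-bound on $\partial_t u_\e$ from Proposition~\ref{prop:globGL}(ii), and the $\Ld^\infty$-bound on $\pre_{\e,\varrho}$ from Proposition~\ref{prop:GPvprop}; the hypothesis $R\gtrsim\|\partial_t u_\e\|_{\Ld^\infty_T\Ld^2}+\Log^2$ then gives $\int_0^t|I_{\e,\varrho,R}^n|\lesssim_to(N_\e^2)+o(1)\int_0^t\Ec_{\e,R}^*$.

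The main obstacle is the error term $I_{\e,\varrho,R}'$, which contains the contribution $N_\e(\Ec_{\e,R}^*)^{1/2}\|\nabla(\pre_\e-\pre_{\e,\varrho})\|_{\Ld^2}$ stemming from the lack of square-integrability of $\pre_\e$ in the conservative case. This is the reason the truncation scale $\varrho\gg 1$ was introduced in the first place, and it has to be chosen as a function of $\e$. The pressure truncation estimate~\eqref{eq:diffpressuretrunc} of Proposition~\ref{prop:GPvprop} gives $\|\nabla(\pre_{\e,\varrho}^t-\pre_\e^t)\|_{\Ld^2}\lesssim_{\theta,t}\varrho^{\theta-2}+\int_{|x|>\varrho}|\curl\vi_\e^\circ|^2$, and since $\curl\vi^\circ\in H^1(\R^2)\subset\Ld^2(\R^2)$ by Assumption~\ref{as:main}(b), the right-hand side vanishes as $\varrho\uparrow\infty$ uniformly in $\e$. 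Choosing $\varrho=\varrho_\e\uparrow\infty$ sufficiently slowly so that $\e N_\e^2\varrho_\e^\theta=o(N_\e^2)$ (which is possible since $\e N_\e\ll1$) and $N_\e\|\nabla(\pre_\e-\pre_{\e,\varrho_\e})\|_{\Ld^2}=o(N_\e)$, the error $I_{\e,\varrho_\e,R}'$ becomes $o(N_\e^2)+o(1)\Ec_{\e,R}^*$. The same truncation also ensures the applicability of Lemma~\ref{lem:apestu} via~\eqref{eq:lemapestuAS-dec}.

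Collecting all estimates yields $\hat\Ec_{\e,\varrho_\e,R}^{*,t}\le\hat\Ec_{\e,\varrho_\e,R}^{*,\circ}+C_t\int_0^t\Ec_{\e,R}^{*}+o_t(N_\e^2)$, and by a standard Gr\"onwall argument starting from the well-preparedness assumption $\Ec_{\e,R}^{*,\circ}\ll N_\e^2$ (hence $\hat\Ec_{\e,\varrho_\e,R}^{*,\circ}\ll N_\e^2$ by Lemma~\ref{lem:apestu}), we deduce $\Ec_{\e,R}^{*,t}\ll_t N_\e^2$ for all $t\in[0,T)$. The convergence statement then follows from the pointwise estimate of Lemma~\ref{lem:pointest}: using $|j_\e-N_\e\vi_\e|\le|\nabla u_\e-iu_\e N_\e\vi_\e|(1+|1-|u_\e|^2|)+N_\e|\vi_\e||1-|u_\e|^2|$, Cauchy--Schwarz gives $\sup_z\|j_\e-N_\e\vi_\e\|_{\Ld^1(B(z))}\lesssim_t(1+\e N_\e)(\Ec_{\e,R}^*)^{1/2}\ll_t N_\e$, yielding the $\Ld^1_\uloc$-convergence. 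Under the stronger global assumption $\Ec_\e^{*,\circ}\ll N_\e^2$, the same argument applies with $R=\infty$, and splitting $|j_\e-N_\e\vi_\e|$ into its $\Ld^1$- and $\Ld^2$-parts (the first handled by the $\e N_\e|1-|u_\e|^2|$-terms, the second by $|\nabla u_\e-iu_\e N_\e\vi_\e|$) yields the $(\Ld^1+\Ld^2)$-convergence.
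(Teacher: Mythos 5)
Your proposal is correct and follows essentially the same route as the paper: choose $\bar\Gamma_\e=\Gamma_\e$ in Lemma~\ref{lem:decompcruc}, observe the structural vanishing of $I^V,I^E,I^D,I^d,I^g$ in the conservative incompressible setting, use Lemma~\ref{lem:ballconstrGP} to make $\Ec_{\e,R}$ and $\D_{\e,R}$ interchangeable up to $o(N_\e^2)$ when $N_\e\gg\Log$, estimate $I^S,I^H,I^n,I^{\prime}$ exactly as the paper does, choose $\varrho=\varrho_\e\uparrow\infty$ slowly enough via~\eqref{eq:diffpressuretrunc} to kill the untruncated-pressure error, and close by Gr\"onwall. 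The only minor imprecision is in the last paragraph's description of the $(\Ld^1+\Ld^2)$ splitting: the $\Ld^1$ piece comes from the cross term $|\nabla u_\e-iu_\e N_\e\!\vi_\e||1-|u_\e|^2|$ (a product of two $\Ld^2$ factors), while both $|\nabla u_\e-iu_\e N_\e\!\vi_\e|$ and $N_\e|\!\vi_\e\!||1-|u_\e|^2|$ go into the $\Ld^2$ piece; this does not affect the conclusion.
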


\begin{proof}
In the sequel, we choose $1\ll \varrho\le R$ with $\varrho^{\theta_0}\ll(\e N_\e)^{-1}$ for some $\theta_0>0$. Regarding the global truncation at the scale $R$, it is not really needed in the present context (as a consequence of the decay assumption for $\nabla h,F,f$) and can be sent to infinity arbitrarily fast; here it suffices to choose $R\ge\|\partial_tu_\e\|_{\Ld^\infty_T\Ld^2}+\Log^2$ (where the right-hand side is indeed finite by Proposition~\ref{prop:globGL}(ii)).
Given the assumption $\Ec_{\e,R}^{*,\circ}\ll N_\e^2$ on the initial data, for all $\e>0$ we define $T_\e>0$ as the maximum time $\le T$ such that $\Ec_{\e,R}^{*,t}\le N_\e^2$ holds for all $t\le T_\e$. By Lemmas~\ref{lem:apestu} and~\ref{lem:ballconstrGP}, we deduce $\hat\D_{\e,\varrho,R}^{*,\circ}\ll N_\e^2$ and for all $t\le T_\e$,
\begin{gather}\label{eq:corTepsGP}
\D_{\e,R}^{*,t}\lesssim_t N_\e^2,\qquad \hat\Ec_{\e,\varrho,R}^{*,t}\lesssim_tN_\e^2,\qquad \hat\D_{\e,\varrho,R}^{*,t}\lesssim_t N_\e^2,\nonumber\\
\Ec_{\e,R}^{*,t}\lesssim \hat\Ec_{\e,\varrho,R}^{*,t}+o_t(N_\e^2),\qquad \hat\Ec_{\e,\varrho,R}^{*,t}\lesssim \hat\D_{\e,\varrho,R}^{*,t}+o_t(N_\e^2).
\end{gather}
The strategy of the proof consists in showing that for all $t\le T_\e$,
\begin{align}\label{eq:stratprGP}
\hat\Ec_{\e,\varrho,R}^{*,t}\lesssim_t o(N_\e^2)+\int_0^t\hat\Ec_{\e,\varrho,R}^{*}.
\end{align}
This estimate is proved in Step~1 below. To simplify notation, we focus on~\eqref{eq:stratprGP} with the left-hand side $\hat\Ec_{\e,\varrho,R}^t$ centered at $z=0$, but the result of course holds uniformly with respect to the translation. By the Grönwall inequality, it implies $\hat\Ec_{\e,\varrho,R}^{*,t}\ll_t N_\e^2$, hence $\Ec_{\e,R}^{*,t}\ll_tN_\e^2$ for all $t\le T_\e$. This yields in particular $T_\e=T$ for all $\e>0$ small enough, and the main conclusion follows, while the additional statements are deduced in Step~2.

\medskip
\noindent\step1 Proof of~\eqref{eq:stratprGP}.

\nopagebreak
Using the constraint $0=a^{-1}\Div(a\!\vi_\e)=\Div\!\vi_\e+\vi_\e\!\cdot\,\nabla h$, and choosing $\bar\Gamma_\e:=\Gamma_\e$, the result of Lemma~\ref{lem:decompcruc} takes the following simpler form,
\begin{align}\label{eq:GPapplydecompcruc}
\partial_t\hat\D_{\e,\varrho,R}=~&I_{\e,\varrho,R}^S+I_{\e,\varrho,R}^V+I_{\e,\varrho,R}^E+I_{\e,\varrho,R}^H+I_{\e,\varrho,R}^n+I_{\e,\varrho,R}',
\end{align}
where we have set
\begin{eqnarray*}
I_{\e,\varrho,R}^S&:=&-\int_{\R^2}\chi_R\nabla\Gamma_\e^\bot: \tilde S_\e,\\
I_{\e,\varrho,R}^V&:=&\int_{\R^2} \frac {a\chi_R\Log}2\tilde V_{\e,\varrho}\cdot \Big(-\lambda_\e \Gamma_\e^\bot+\nabla^\bot h-F^\bot-\frac{2N_\e}\Log \!\vi_\e\Big),\\
I_{\e,\varrho,R}^E&:=&-\int_{\R^2} \frac {a\chi_R\Log}2\Gamma_\e\cdot \Big(\nabla^\bot h-F^\bot-\frac{2N_\e}\Log \!\vi_\e\Big)\mu_\e,\\
I_{\e,\varrho,R}^H&:=&\int_{\R^2} \frac {a\chi_R}2\Gamma_\e^\bot\cdot\nabla h\Big(|\nabla u_\e-iu_\e N_\e \!\vi_\e\!|^2+\frac{a}{\e^2}(1-|u_\e|^2)^2-\Log \mu_\e\Big),\\
I_{\e,\varrho,R}^n&:=&-\int_{\R^2}\nabla\chi_R\cdot\tilde S_\e\cdot\Gamma_\e^\bot\\
&&\qquad- \int_{\R^2} a\nabla\chi_R\cdot\Big(\langle\partial_tu_\e-iu_\e N_\e\!\pre_\e,\nabla u_\e-iu_\e N_\e \!\vi_\e\rangle+\frac{\Log }2\tilde V_{\e,\varrho}^\bot\Big),
\end{eqnarray*}
and where the error $I_{\e,\varrho,R}'$ is estimated as follows (cf.~\eqref{eq:decompDeR-rest-dec}),
\begin{align*}
|I_{\e,\varrho,R}'|\lesssim_{t,\theta}\e N_\e\Ec_{\e,R}^*+N_\e(\Ec_{\e,R}^*)^{1/2}\|\nabla(\pre_\e-\pre_{\e,\varrho})\|_{\Ld^2}+\e N_\e^2\varrho^\theta(\Ec_{\e,R}^*)^{1/2}.
\end{align*}
Choosing $\theta>0$ small enough, and using Proposition~\ref{prop:GPvprop} in the form $\|\nabla(\pre_\e^t-\pre_{\e,\varrho}^t)\|_{\Ld^2}\ll_t1$ (cf.~\eqref{eq:diffpressuretrunc}), we obtain
\begin{align}\label{eq:decompDeR-rest-dec-GP}
|I_{\e,\varrho,R}'|\lesssim_{t,\theta}\Ec_{\e,R}^*+o(N_\e)(\Ec_{\e,R}^*)^{1/2}.
\end{align}
The choice~\eqref{eq:limGP} of $\Gamma_\e$ yields $I_{\e,\varrho,R}^V=I_{\e,\varrho,R}^E=0$, hence
\begin{align}\label{eq:decompderivDeR}
\partial_t\hat\D_{\e,\varrho,R}=~&I_{\e,\varrho,R}^S+I_{\e,\varrho,R}^H+I_{\e,\varrho,R}^n+I_{\e,\varrho,R}'.
\end{align}
It remains to estimate the first three right-hand side terms.
By assumption~\eqref{eq:scalingshFfdec} in the form $\|f\|_{\Ld^2}\lesssim N_\e^2$ and by the integrability properties of $\vi_\e$ (cf.\@ Proposition~\ref{prop:GPvprop}), the first right-hand side term $I_{\e,\varrho,R}^S$ is estimated as follows, for all $t\le T_\e$,
\begin{multline}\label{eq:ISestGP}
I_{\e,\varrho,R}^S\lesssim\|\nabla\Gamma_\e\|_{\Ld^\infty}\int_{\R^2}a\chi_R\Big(|\nabla u_\e-iu_\e N_\e \!\vi_\e\!|^2+\frac a{\e^2}(1-|u_\e|^2)^2+|1-|u_\e|^2|(N_\e^2|\!\vi_\e\!|^2+|f|)\Big)\\
\lesssim_t\Ec_{\e,R}+\e N_\e^2(\Ec_{\e,R})^{1/2}\lesssim \Ec_{\e,R}+o(N_\e^2).
\end{multline}
We turn to the second right-hand side term in~\eqref{eq:decompderivDeR}.
Lemma~\ref{lem:ballconstrGP} yields
\begin{align*}
I_{\e,\varrho,R}^H&\le \|\Gamma_\e^\bot\cdot\nabla h\|_{\Ld^\infty}\int_{\R^2} \chi_R\Big(|\nabla u_\e-iu_\e N_\e\!\vi_\e\!|^2+\frac a{\e^2}(1-|u_\e|^2)^2\Big)\\
&\hspace{4cm}+\Log\Big|\int_{\R^2} a\chi_R\Gamma_\e^\bot\cdot\nabla h\,\mu_\e\Big|\\
&\lesssim\Ec_{\e,R}\|\Gamma_\e^\bot\cdot\nabla h\|_{\Ld^\infty}+N_\e\Log\|a\chi_R\Gamma_\e^\bot\cdot\nabla h\|_{\dot H^1\cap W^{1,\infty}},
\end{align*}
and hence, using assumption~\eqref{eq:scalingshFfdec} and the properties of $\vi_\e$ (cf.\@ Proposition~\ref{prop:GPvprop}),
\begin{align}\label{eq:IHestGP}
I_{\e,\varrho,R}^H&\lesssim_t\Ec_{\e,R}+N_\e\Log\le \Ec_{\e,R}+o(N_\e^2).
\end{align}
It remains to estimate the third right-hand side term in~\eqref{eq:decompderivDeR}.
By definition of $\tilde S_\e$ and $\tilde V_{\e,\varrho}$, we find
\begin{multline*}
I_{\e,\varrho,R}^n\lesssim R^{-1}\Log\int_{B_{2R}}|\partial_t u_\e-iu_\e N_\e\!\pre_{\e,\varrho}\!||\nabla u_\e-iu_\e N_\e\!\vi_\e\!|\\
+R^{-1}\|\Gamma_\e\|_{\Ld^\infty}\int_{B_{2R}}\Big(|\nabla u_\e-iu_\e N_\e \!\vi_\e\!|^2+\frac a{\e^2}(1-|u_\e|^2)^2+|1-|u_\e|^2|(N_\e^2|\!\vi_\e\!|^2+|f|)\Big),
\end{multline*}
and hence, using assumption~\eqref{eq:scalingshFfdec}, the properties of $\vi_\e$ (cf.\@ Proposition~\ref{prop:GPvprop}), and the bound $\Ec_{\e,2R}^*\lesssim\Ec_{\e,R}^*$ (cf.~\eqref{eq:boundbyEmod}),
\begin{align*}
I_{\e,\varrho,R}^n
\lesssim_t \Ec_{\e,R}^*+R^{-1}\Log (\Ec_{\e,R}^*)^{1/2}\|\partial_tu_\e-iu_\e N_\e\!\pre_{\e,\varrho}\!\|_{\Ld^2(B_{2R})}+o(N_\e^2).
\end{align*}
The properties of $\pre_\e$ (cf.\@ Proposition~\ref{prop:GPvprop}) yield for all $\theta>0$,
\begin{eqnarray*}
\lefteqn{\|\partial_tu_\e-iu_\e N_\e\!\pre_{\e,\varrho}\!\|_{\Ld^2(B_{2R})}}\\
&\lesssim&\|\partial_tu_\e\|_{\Ld^2(B_{2R})}+N_\e\|\!\pre_{\e,\varrho}\!\|_{\Ld^2(B_{2R})}+N_\e\|\!\pre_{\e,\varrho}\!\|_{\Ld^\infty(B_{2R})}\|1-|u_\e|^2|\|_{\Ld^2(B_{2R})}\\
&\lesssim_{t,\theta}& \|\partial_tu_\e\|_{\Ld^2(B_{2R})}+N_\e\varrho^{\theta}+\e N_\e(\Ec_{\e,R}^*)^{1/2},
\end{eqnarray*}
so that the above takes the form
\begin{align*}
I_{\e,\varrho,R}^n&\lesssim_{t,\theta} \Ec_{\e,R}^*+R^{-2}\Log^2\|\partial_tu_\e\|_{\Ld^2(B_{2R})}^2+R^{-2(1-\theta)}N_\e^2\Log^2+o(N_\e^2).
\end{align*}
Using the choice $R\gtrsim\|\partial_tu_\e\|_{\Ld^2}+\Log^2$, and choosing $\theta>0$ small enough, we deduce $I_{\e,\varrho,R}^n\lesssim_t \Ec_{\e,R}^*+o(N_\e^2)$. Combining this with~\eqref{eq:decompDeR-rest-dec-GP}, \eqref{eq:decompderivDeR}, \eqref{eq:ISestGP}, and~\eqref{eq:IHestGP}, we conclude
\[\partial_t\hat\D_{\e,\varrho,R}\lesssim_t \Ec_{\e,R}^*+o(N_\e^2).\]
Integrating this in time with $\hat\D_{\e,\varrho,R}^{*,\circ}\ll N_\e^2$, using~\eqref{eq:corTepsGP}, and noting that the result holds uniformly with respect to translations of the cut-off functions, the conclusion~\eqref{eq:stratprGP} follows.

\medskip
\noindent\step2 Conclusion.

\nopagebreak
As explained, the result of Step~1 implies $T_\e=T$ and $\Ec_{\e,R}^{*,t}\ll_t N_\e^2$ for all $t\in[0,T)$. We now show that it implies $\frac1{N_\e}j_\e-\vi_\e\to0$. Using the pointwise estimates of Lemma~\ref{lem:pointest}, we obtain
\begin{multline*}
\|j_\e-N_\e\!\vi_\e\!\|_{(\Ld^1+\Ld^2)(B_R(z))}\\
\lesssim \|\nabla u_\e-iu_\e N_\e\!\vi_\e\!\|_{\Ld^2(B_R(z))}\big(1+\|1-|u_\e|^2\|_{\Ld^2(B_R(z))}\big)+N_\e\|1-|u_\e|^2\|_{\Ld^2(B_R(z))}\\
\ll_t N_\e(1+\e N_\e)~\lesssim N_\e,
\end{multline*}
and the conclusion follows, letting $R\uparrow\infty$.
\end{proof}

\section{Homogenization regimes}\label{chap:homog}

In this section, we briefly examine homogenization regimes and we prove the few rigorous results mentioned in Section~\ref{sec:homog}.
We focus on the dissipative case and for simplicity we restrict to the periodic setting, that is,
\begin{align}\label{eq:fastoscpin}
\hat a(x):=\hat a^0\big(x,\tfrac1{\eta_\e}x\big)^{\eta_\e},
\end{align}
with $\hat a^0:\R^d\times Q\to[\frac1C,1]$ periodic in its second variable.
We set $\hat h:=\log\hat a$ and $\hat h^0:=\log\hat a^0$.

\subsection{Homogenization diagonal result}\label{chap:modarghomog}

In this section, we adapt the modulated energy approach to the case with wiggly pinning weight~\eqref{eq:fastoscpin}. As the first term in the decomposition of $\partial_t\hat\D_{\e,\varrho,R}$ in Lemma~\ref{lem:decompcruc} involves the gradient of the mean-field driving vector field $\Gamma_\e$ (cf.~\eqref{eq:GLv1}), the wiggly pinning force leads to a divergent prefactor $O(\eta_\e^{-1})$ that destroys the Grönwall relation on $\hat\D_{\e,\varrho,R}$. For that reason, such an argument can only work in a suitable diagonal regime, as stated in Corollary~\ref{cor:homogdiag}. Note that the choice of the diagonal regime $\eta_{\e,0}\ll\eta_\e\ll1$ could be made more explicit, but this is left to the reader.

\begin{proof}[Proof of Corollary~\ref{cor:homogdiag}]
Given a fast oscillating pinning potential~\eqref{eq:fastoscpin}, we consider the regimes~\GLu, \GLd, \GLup, and \GLdp, and in the regime~\GLd{} we restrict to the parabolic case $\beta=0$.
We now denote by $\vi_\e$ the unique local (smooth) solution of~\eqref{eq:GLv1} with wiggly pinning force
\begin{align}\label{eq:fastoscpin+}
\nabla\hat h(x):=\eta_\e\nabla_1\hat h^0(x,\tfrac1{\eta_\e}x)+\nabla_2 \hat h^0(x,\tfrac1{\eta_\e}x).
\end{align}
We further denote by
$\tilde\vi_\e$ the unique global (smooth) solution of the corresponding mean-field equation~\eqref{eq:GL1lim}--\eqref{eq:GL2'lim} with $\nabla\hat h(x)$ replaced by $\nabla_2 \hat h^0(x,\frac1{\eta_\e}x)$.
We split the proof into three steps.

\medskip
\step1 Grönwall relation.

In this step, we show that $\vi_\e$ is defined on the time interval $[0,T_\e)$, with $T_\e^0:=\eta_\e T$ and with $T$ as in Proposition~\ref{prop:GLvprop}. In addition, we adapt the proof of Proposition~\ref{prop:mflGL}: with the same restrictions on the regimes, we show that there exist $\sigma>0$ and an increasing bijection $\theta:\R^+\to\R^+$ such that for all $t\ge0$ the conditions $\D_{\e,R}^{*,\circ}=o(N_\e^2)$ and $\sup_{0\le s\le t}\hat\D_{\e,R}^{*,s}\le N_\e^2$ imply
\begin{align}\label{eq:mfletagron}
\hat \D_{\e,R}^{*,t}\le \theta(\tfrac t{\eta_\e})\Big(\eta_\e^{-\sigma}o(N_\e^2)+\eta_\e^{-1}\int_0^t\hat \D_{\e,R}^*\Big).
\end{align}

We first check how $\vi_\e$ depends on the small parameter $\eta_\e$, thus adapting Proposition~\ref{prop:GLvprop}. A scaling argument shows that the solution $\vi_\e$ exists up to time $\eta_\e T$, where $T$ is as in Proposition~\ref{prop:GLvprop}.
Moreover, an inspection of the proofs in~\cite{D-16} together with a scaling argument shows that all the estimates in Proposition~\ref{prop:GLvprop} still hold up to multiplicative constants of the form $\eta_\e^{-\sigma}\theta(\frac t{\eta_\e})$ for all $t\in[0,\eta_\e T)$, for some $\sigma\ge0$ and some increasing bijection $\theta:\R^+\to\R^+$.
A scaling argument yields more precisely, for all $t\in[0,T_\e)$,
\[\|\Gamma_\e^t\|_{\Ld^\infty}\le \theta(\tfrac{t}{\eta_\e}),\qquad\|\nabla\Gamma_\e^t\|_{\Ld^\infty}\le \eta_\e^{-1}\theta(\tfrac{t}{\eta_\e}).\]
With such estimates at hand, repeating the proof of Proposition~\ref{prop:mflGL} leads to the claim~\eqref{eq:mfletagron}.

\medskip
\step2 Grönwall argument.

In this step, we show that there exists $\eta_{\e,0}\ll1$ (possibly depending on all the data of the problem) such that for $\eta_{\e,0}\ll\eta_\e\ll1$ the conclusions of Proposition~\ref{prop:mflGL} hold in each of the corresponding regimes.

Since in the regime~\GLd{} we restrict to the parabolic case, we deduce that there exists $\eta_{\e,0}\ll1$ such that for $\eta_{\e,0}\ll\eta_\e\ll1$ the time $T_\e^0$ in Step~1 diverges as $\e\downarrow0$.
Given the assumption $\D_{\e,R}^{*,\circ}\ll N_\e^2$ on the initial data, for all $\e>0$ we define $T_\e>0$ as the maximum time $\le T_\e^0$ such that $\D_{\e,R}^{*,t}\le N_\e^2$ holds for all $t\le T_\e$. The result of Step~1 then yields for all $0\le t\le T_\e$,
\begin{align*}
\hat \D_{\e,R}^t\le \theta(\tfrac t{\eta_\e})\Big(\eta_\e^{-\sigma}o(N_\e^2)+\eta_\e^{-1}\int_0^t\hat \D_{\e,R}\Big),
\end{align*}
and hence, by the Grönwall inequality,
\begin{align*}
\hat\D_{\e,R}^t\le \eta_\e^{-\sigma}\psi(\tfrac t{\eta_\e})\,o(N_\e^2),\qquad \psi(t):=\theta(t)e^{\int_0^t\theta}.
\end{align*}
Choosing e.g.\@ $\eta_{\e,0}:=\big[\psi^{-1}\big(\sqrt{\frac{N_\e^2}{o(N_\e^2)}}\big)\big]^{-1/(\sigma\vee1)}$, we deduce for $\eta_{\e,0}\ll\eta_\e\ll1$ that $\hat\D_{\e,R}^t\ll N_\e^2$ holds for all $0\le t\le T_\e^0$, and the claim follows as in Step~4 of the proof of Proposition~\ref{prop:mflGL}.

\medskip
\step3 Conclusion.

It remains to show that there exists $\eta_{\e,0}\ll1$ such that for $\eta_{\e,0}\ll\eta_\e\ll1$ there holds $\vi_\e-\tilde\vi_\e\to0$ in $\Ld^\infty_\loc(\R^+;\Ld^1_\uloc(\R^2)^2)$.
This convergence result directly follows from the computations in the proof of Lemma~\ref{lem:lastlimGL}, now taking into account the $\eta_\e$-dependence of $\vi_\e$ and $\tilde\vi_\e$ as in Step~1 and applying a Grönwall argument in a suitable diagonal regime.
\end{proof}

\subsection{Mesoscopic initial-boundary layer}

In non-diagonal regimes, the Grönwall relation~\eqref{eq:mfletagron} only yields conclusions in the short timescale $t=O(\eta_\e)$. This allows to rigorously explore the mesoscopic initial-boundary layer that occurs in that timescale: in each mesoscopic periodicity cell, the vorticity gets projected onto the support of the invariant measure for the cell dynamics associated with the initial mean-field driving vector field $\Gamma_\e^\circ$ (cf.~\eqref{eq:GLv1}). This is captured in terms of $2$-scale convergence.
The proof is particularly easy as the nonlinearity plays no role yet in that timescale.

\begin{prop}\label{prop:locrelax}
Let the same assumptions hold as in Theorem~\ref{th:mainGL}, with wiggly pinning weight~\eqref{eq:fastoscpin}. In the regime~\GLd, we restrict to the parabolic case.
For all $\e>0$ let $u_\e$ be the unique global solution of~\eqref{eq:GL-1} as in Proposition~\ref{prop:globGL}(i),
and for all $x\in\R^2$ let $\md_0(x,\cdot)$ denote the unique global solution of the following continuity equation in the torus $Q$,
\begin{gather}\label{eq:limloctimem0}
\partial_t\!\md_0(x,\cdot)=-\Div\!\!_y\big(\Gamma^\circ(x,\cdot)^\bot\md_0(x,\cdot)\big),\qquad
\md_0(x,\cdot)|_{t=0}=\curl\!\vi^\circ(x),\\
\Gamma^\circ(x,y):=(\alpha-\Jb\beta)\big(\nabla_2^\bot \hat h^0(x,y)-\hat F(x)^\bot-2\kappa\!\vi^\circ(x)\big),\nonumber
\end{gather}
where $\kappa:=1$ in the regime~\GLu, $\kappa:=\lambda$ in the regime~\GLd, and $\kappa:=0$ in the regimes~\GLup{} and~\GLdp.
Then there exists a sequence $\eta_{\e,0}\downarrow0$ (depending on all the data of the problem) such that for $\eta_{\e,0}\ll\eta_\e\ll1$ the slowed-down rescaled vorticity $\frac1{N_\e}\mu_\e^{\eta_\e t}$ $2$-scale converges to $\md_0^t$, that is, for all $\phi\in C_c^\infty([0,T)\times\R^2;C^\infty_\per(Q))$,
\[\lim_{\e\downarrow0}\,\iint_{\R^+\times\R^2}\phi(t,x,\tfrac{x}{\eta_\e})\, \tfrac1{N_\e}\mu_\e^{\eta_\e t}(x)\,dxdt=\iiint_{\R^+\times\R^2\times Q}\phi(t,x,y)\,\md_0^t(x,y)\,dydxdt.\qedhere\]
\end{prop}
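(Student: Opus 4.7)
The plan is to combine the short-time Grönwall estimate already established in Step~1 of the proof of Corollary~\ref{cor:homogdiag} (which reduces the problem to the analysis of $\md_\e=\curl\!\vi_\e$) with a direct two-scale convergence argument for the \emph{linear} continuity equation satisfied by $\md_\e$, on the rescaled time interval $t\in[0,\eta_\e T]$, exploiting that the driving vector field $\Gamma_\e$ stays frozen at leading order in that window.

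First, I would invoke Step~1 of the proof of Corollary~\ref{cor:homogdiag} to fix $\eta_{\e,0}\downarrow 0$ slowly enough (depending on $T$ and all data) so that the Grönwall estimate~\eqref{eq:mfletagron} yields $\hat\D_{\e,R}^{*,\eta_\e s}\ll N_\e^2$ uniformly for $s\in[0,T]$ whenever $\eta_{\e,0}\ll\eta_\e\ll1$; the restriction to the parabolic case in regime~\GLd{} is needed exactly so that $T_\e^0=\eta_\e T$ is accessible. Repeating Step~4 of the proof of Proposition~\ref{prop:mflGL} then gives $\tfrac1{N_\e}j_\e^{\eta_\e s}-\vi_\e^{\eta_\e s}\to0$ in $\Ld^\infty([0,T];\Ld^1_\uloc(\R^2)^2)$, and the Jacobian estimate of Proposition~\ref{prop:ballconstr}(iii) (or~\ref{prop:ballconstr-hd}(iii) if needed) upgrades this to $\tfrac1{N_\e}\mu_\e^{\eta_\e s}-\md_\e^{\eta_\e s}\to0$ in the norm of $(C^\gamma_c(\R^2))^*$. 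Thus the assertion reduces to the two-scale convergence $\md_\e^{\eta_\e s}\to\md_0^s$.

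Next, I would test the vorticity equation $\partial_t\!\md_\e=-\Div(\Gamma_\e^\bot\!\md_\e)$ against an oscillating test function $\phi_\e(s,x):=\phi(s,x,x/\eta_\e)$ after rescaling $t=\eta_\e s$. Integrating by parts in space and time, and writing $\nabla\phi_\e=(\nabla_x\phi)_\e+\eta_\e^{-1}(\nabla_y\phi)_\e$, one obtains
\begin{multline*}
-\iint(\partial_s\phi)_\e\,\md_\e^{\eta_\e s}-\int\phi_\e(0,\cdot)\,\md_\e^\circ\\
=\eta_\e\iint(\nabla_x\phi)_\e\cdot\Gamma_\e^{\eta_\e s,\bot}\md_\e^{\eta_\e s}+\iint(\nabla_y\phi)_\e\cdot\Gamma_\e^{\eta_\e s,\bot}\md_\e^{\eta_\e s}.
\end{multline*}
The key technical step is then to show that $\Gamma_\e^{\eta_\e s}=\Gamma^\circ(\cdot,\cdot/\eta_\e)+o(1)$ in $\Ld^\infty$, uniformly for $s\in[0,T]$. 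By the explicit form~\eqref{eq:fastoscpin+} of the wiggly pinning force and the definition of $\Gamma_\e$, it suffices to bound $\|\!\vi_\e^{\eta_\e s}-\vi_\e^\circ\!\|_{\Ld^\infty}\lesssim \eta_\e s\,\|\partial_t\!\vi_\e\!\|_{\Ld^\infty_{[0,\eta_\e T]}\Ld^\infty}$, which is $o(1)$ by the degenerate $\partial_t\!\vi_\e$-estimate from Proposition~\ref{prop:GLvprop} and the scaling argument in Step~1 of the proof of Corollary~\ref{cor:homogdiag}, possibly after further reducing $\eta_{\e,0}$. The energy bounds of Proposition~\ref{prop:ballconstr} (or~\ref{prop:ballconstr-hd}) provide a uniform $(C^\gamma_c)^*$-bound on $\md_\e^{\eta_\e s}/N_\e$ that permits extraction of a two-scale limit $\md_0\in\Ld^\infty([0,T];(C^\gamma_c(\R^2;C_\per(Q)))^*)$.

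Finally, passing $\e\downarrow0$ in the rescaled identity, the term with the prefactor $\eta_\e$ vanishes, and the $\nabla_y\phi$-term two-scale converges to $\iiint\nabla_y\phi\cdot\Gamma^\circ(x,y)^\bot\md_0^s(x,y)$, so that the limit satisfies the weak formulation of~\eqref{eq:limloctimem0} in the torus $Q$ cell-by-cell in $x$. The initial condition $\md_0^\circ(x,y)=\curl\!\vi^\circ(x)$ (independent of $y$) follows from the well-preparedness of Assumption~\ref{as:main} applied at $t=0$, which forces $\tfrac1{N_\e}\mu_\e^\circ\to\curl\!\vi^\circ$ strongly and hence without macroscopic fast oscillations. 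Uniqueness of the solution of~\eqref{eq:limloctimem0} for each $x$ (as a linear transport equation on the torus with smooth coefficients) identifies the full limit and completes the argument. The main obstacle is controlling $\|\partial_t\!\vi_\e\!\|_{\Ld^\infty_{[0,\eta_\e T]}\Ld^\infty}$ with dependence on $\eta_\e^{-\sigma}$ factors; this is precisely what dictates the rate at which $\eta_{\e,0}\downarrow 0$ must be chosen.
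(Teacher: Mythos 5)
Your overall route is the same as the paper's: reduce via the Grönwall estimate to the two-scale convergence of $\bar\md_\e^t:=\curl\!\vi_\e^{\eta_\e t}$, then test the rescaled continuity equation against oscillating test functions, extract a two-scale limit by Nguetseng compactness, and identify it by uniqueness. Two points in your plan, however, need repair.

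First, the control of the driving field: you propose to show $\Gamma_\e^{\eta_\e s}=\Gamma^\circ(\cdot,\cdot/\eta_\e)+o(1)$ in $\Ld^\infty$ via $\|\!\vi_\e^{\eta_\e s}-\vi_\e^\circ\!\|_{\Ld^\infty}\lesssim\eta_\e s\,\|\partial_t\!\vi_\e\!\|_{\Ld^\infty_{[0,\eta_\e T]}\Ld^\infty}$. But in the incompressible regimes of Proposition~\ref{prop:GLvprop}(ii) the estimate on $\|\partial_t\!\vi_\e^t\!\|_{\Ld^\infty}$ carries a factor $\lambda_\e^{-1/2}$ (and with the wiggly weight a further $\eta_\e^{-\sigma}\theta(t/\eta_\e)$ degradation from the rescaling), so $\eta_\e\,\|\partial_t\!\vi_\e\!\|_{\Ld^\infty}$ is \emph{not} $o(1)$ without imposing an upper bound like $\eta_\e\ll\lambda_\e^{1/2}$, which is a constraint the statement does not allow (it only imposes a \emph{lower} bound $\eta_\e\gg\eta_{\e,0}$). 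The paper sidesteps this by using the $\eta_\e$-uniform $\Ld^2$ energy bound $\|\!\vi_\e^t-\vi_\e^\circ\!\|_{\Ld^2}^2\lesssim t$ from \cite[Lemma~4.1(iii)]{D-16}, which after rescaling gives $\|\bar\vi_\e^t-\vi_\e^\circ\!\|_{\Ld^2}^2\lesssim_t\eta_\e\to0$; combined with the $\Ld^\infty$-bound on $\bar\md_\e$ this is exactly enough to pass to the limit in the nonlinear term $\bar\md_\e\bar\vi_\e$, with no $\lambda_\e^{-1/2}$-loss. You should replace your $\Ld^\infty$-argument by this $\Ld^2$-argument.

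Second, the transfer from $\curl\bar\vi_\e$ to $\frac1{N_\e}\mu_\e$: invoking the Jacobian estimate of Proposition~\ref{prop:ballconstr}(iii) is qualitatively plausible but quantitatively delicate, since after rescaling you must absorb the $\eta_\e^{-1}$ that comes from differentiating $\phi(t,x,x/\eta_\e)$ in $x$. The paper's argument is more direct: it writes $\int\phi_\e\,\curl(\tfrac1{N_\e}j_\e^{\eta_\e t}-\bar\vi_\e^t)$, integrates by parts to get $\eta_\e^{-1}\|\nabla\phi\|_{\Ld^\infty}\int_0^{T_0}\int_{B_{R_0}}|\tfrac1{N_\e}j_\e^{\eta_\e t}-\bar\vi_\e^t|$ (cf.~\eqref{eq:conclcurljm0}), and then further restricts $\eta_\e\gg\eta_{\e,0}'$ so that this quantity vanishes, relying on the \emph{rate} in the $\Ld^1_\uloc$-convergence of $\frac1{N_\e}j_\e-\vi_\e$ coming from the Grönwall estimate~\eqref{eq:mfletagron}. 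You need some version of this quantitative step; a bare qualitative $(C^\gamma_c)^*$-convergence is not enough to beat the $\eta_\e^{-1}$.
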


\begin{proof}
As in Step~1 of the proof of Corollary~\ref{cor:homogdiag} above,
the solution $\vi_\e$ is defined on the time interval $[0,\eta_\e T)$ with $T$ as in Proposition~\ref{prop:GLvprop}, hence $T$ diverges as $\e\downarrow0$.
Applying~\eqref{eq:mfletagron} and choosing $\eta_{\e,0}:=\big(\frac{o(N_\e^2)}{N_\e^2}\big)^{1/\sigma}$, we deduce for $\eta_{\e,0}\ll\eta_\e\ll1$, for all $t\in[0,T)$,
\[\hat\D_{\e,R}^{*,\eta_\e t}\lesssim_to(N_\e^2)+\int_0^{t}\hat\D_{\e,R}^{*,\eta_\e s}ds.\]
The Grönwall inequality then implies $\hat\D_{\e,R}^{*,\eta_\e t}=o(N_\e^2)$ for all $t\in[0,T)$. As in Step~4 of the proof of Proposition~\ref{prop:mflGL}, we deduce $\tfrac1{N_\e}j_\e^{\eta_\e t}-\vi_\e^{\eta_\e t}\to0$ in $\Ld^\infty_\loc(\R^+;\Ld^1_\uloc(\R^2)^2)$ as $\e\downarrow0$. We may then find a sequence $\eta_{\e,0}\ll\eta_{\e,0}'\ll1$ such that for $\eta_{\e,0}'\ll\eta_\e\ll1$ we have for all $T_0,R_0>0$,
\begin{align}\label{eq:locrelax-est1}
\lim_{\e\downarrow0}\eta_\e^{-1}\int_0^{T_0}\int_{B_{R_0}}|\tfrac1{N_\e}j_\e^{\eta_\e t}-\vi_\e^{\eta_\e t}\!|=0.
\end{align}
It remains to determine the asymptotic behavior of $\vi_\e^{\eta_\e t}$. We split the proof into two steps.

\medskip
\noindent\step1 $2$-scale convergence of $\curl\!\vi_\e^{\eta_\e t}$.

Let $\bar\vi_\e^t:=\vi_\e^{\eta_\e t}$ and $\bar\md_\e:=\curl\bar\vi_\e$. Taking the $\curl$ of both sides of~\eqref{eq:GLv1}, we deduce the following equation for $\bar\md_\e$,
\begin{gather}\label{eq:barmdeqn2sc}
\partial_t\bar\md_\e=-\eta_\e\Div(\hat\Gamma_\e^\bot\bar\md_\e),\qquad\bar\md_\e|_{t=0}=\curl\!\vi_\e^\circ\\
\bar\Gamma_\e:=\lambda_\e^{-1}(\alpha-\Jb\beta)\Big(\nabla^\bot h-F^\bot-\frac{2N_\e}\Log\bar\vi_\e\Big).
\end{gather}
By~\cite[Lemma~4.1(iii)]{D-16} in the dissipative case with $\|h\|_{W^{1,\infty}}$, $\|\lambda_\e^{-1}(\nabla^\bot h-F^\bot)\|_{\Ld^\infty}$, $\|\!\vi_\e^\circ\!\|_{\Ld^\infty}$, $\|\!\Div(a\!\vi_\e^\circ)\|_{\Ld^2}\lesssim1$, we have $\|\!\vi_\e^t-\vi_\e^\circ\!\|_{\Ld^2}^2\lesssim t$ for all $t\in[0,\eta_\e T)$. By~\cite[Lemmas~4.2--4.3]{D-16} and a scaling argument, we have
$\|\curl\!\vi_\e^t\!\|_{\Ld^\infty}\lesssim_{t/\eta_\e}1$. After time rescaling, these estimates yield for all $t\in[0,T)$,
\begin{align}\label{eq:nonlinconv0}
\|\bar\vi_\e^t-\vi_\e^\circ\!\|_{\Ld^2}^2\lesssim_t\eta_\e,\qquad\|\bar\md_\e^t\|_{\Ld^\infty}\lesssim_t1.
\end{align}
Nguetseng's $2$-scale compactness theorem~\cite{Nguetseng-89} (e.g.\@ in the form of~\cite[Theorem~3.2]{E-92}) then implies the existence of some $\bar\md_0\in\Ld^\infty_\loc(\R^+;\Ld^\infty(\R^2\times Q))$ such that up to a subsequence $\bar\md_\e$ $2$-scale converges to $\bar\md_0$, that is, for all $\phi\in C_c^\infty(\R^+\times\R^2;C^\infty_\per(Q))$,
\begin{align*}
\lim_{\e\downarrow0}\iint_{\R^+\times\R^2} \phi(t,x,\tfrac{x}{\eta_\e})\,\bar\md_\e^t(x)\,dxdt=\iiint_{\R^+\times\R^2\times Q} \phi(t,x,y)\,\bar\md_0^t(x,y)\,dydxdt.
\end{align*}
Testing equation~\eqref{eq:barmdeqn2sc} with $\phi(t,x,\tfrac x{\eta_\e})$, we find
\begin{multline*}
-\int_{\R^2}\phi(0,x,\tfrac x{\eta_\e})\,\curl\!\vi_\e^\circ(x)\,dx-\iint_{\R^+\times\R^2}\partial_t\phi(t,x,\tfrac x{\eta_\e})\bar\md_\e^t(x)dxdt\\=\iint_{\R^+\times\R^2}\bar\md_\e^t(x)\,(\eta_\e\nabla_1\phi(t,x,\tfrac x{\eta_\e})+\nabla_2\phi(t,x,\tfrac x{\eta_\e}))\cdot\bar\Gamma_\e^t(x)^\bot dxdt,
\end{multline*}
and hence, passing to the limit $\e\downarrow0$ along the subsequence and noting that~\eqref{eq:nonlinconv0} implies $\bar\vi_\e\to\vi^\circ$ in $\Ld^\infty_\loc(\R^+;\Ld^2_\uloc(\R^2))$,
\begin{multline*}
-\iint_{\R^2\times Q}\phi(0,x,y)\,\curl\!\vi^\circ(x)\,dydx-\iiint_{\R^+\times\R^2\times Q}\partial_t\phi(t,x,y)\,\bar\md_0^t(x,y)\,dydxdt\\
=\iiint_{\R^+\times\R^2\times Q}\bar\md_0^t(x,y)\,\nabla_2\phi(t,x,y)\cdot\Gamma^\circ(x,y)^\bot dydxdt.
\end{multline*}
This proves that $\bar\md_0$ satisfies the weak formulation of the linear continuity equation~\eqref{eq:limloctimem0} and is therefore its unique solution $\bar\md_0=\md_0$. 

\medskip
\noindent\step2 Conclusion.

Let $\phi\in C_c^\infty(\R^+\times\R^2;C^\infty_\per(Q))$, with $\phi(t,x,y)=0$ for $|x|>R_0$ or $|t|>T_0$. Integration by parts yields
\begin{multline}\label{eq:conclcurljm0}
\bigg|\iint_{\R^+\times\R^2}\phi(t,x,\tfrac{x}{\eta_\e})\,\curl(\tfrac1{N_\e}j_\e^{\eta_\e t})(x)\,dxdt-\iiint_{\R^+\times\R^2\times Q}\phi(t,x,y)\,\md_0^t(x,y)\,dydxdt\bigg|\\
\le\eta_\e^{-1}\|\nabla\phi\|_{\Ld^\infty}\int_0^{T_0}\int_{B_{R_0}}|\tfrac1{N_\e}j_\e^{\eta_\e t}-\bar\vi_\e^{t}|\\
+\bigg|\iint_{\R^+\times\R^2}\phi(t,x,\tfrac{x}{\eta_\e})\,\curl\bar\vi_\e^{t}(x)\,dxdt-\iiint_{\R^+\times\R^2\times Q}\phi(t,x,y)\,\md_0^t(x,y)\,dydxdt\bigg|.
\end{multline}
Combining this with~\eqref{eq:locrelax-est1} and with the result of Step~1, the conclusion follows.
\end{proof}

\subsection{Small applied force implies pinning}

In this section, we establish the following intuitive result: in the presence of a small applied force $\|F\|_{\Ld^\infty}\ll\|\nabla h\|_{\Ld^\infty}$, with a wiggly pinning potential, vortices are pinned. The proof is based on energy methods and is limited to the non-critical scalings~\GLup{} and~\GLdp.

\begin{prop}\label{prop:smallforcing}
Let $\alpha>0$, $\beta\in\R$, $\alpha^2+\beta^2=1$, let Assumption~\ref{as:main}(a) hold with the initial data $(u_\e^\circ,\vi_\e^\circ,\vi^\circ)$ satisfying the well-preparedness condition~\eqref{eq:convincond}, and assume that
\begin{gather*}
1\ll N_\e\ll\Log,\qquad \frac{N_\e}\Log\ll\lambda_\e\lesssim1,\qquad\frac{\e}{\lambda_\e(N_\e\Log)^{1/2}}\ll\eta_\e\ll1,\\
h(x):=\lambda_\e\eta_\e \hat h^0(x,\tfrac x{\eta_\e}),\qquad \|F\|_{W^{1,\infty}}\ll\lambda_\e,
\end{gather*}
with $\hat h^0$ independent of $\e$.
Let $u_\e:\R^+\times\R^2\to\C$ be the solution of~\eqref{eq:GL-1} as in Proposition~\ref{prop:globGL}(i).
We consider the regime~\GLup{} with $\vi_\e^\circ=\vi^\circ$ and the regime~\GLdp{} with $\Div(a\!\vi_\e^\circ)=0$.
Then $\frac1{N_\e}\mu_\e\cvf*\curl\!\vi^\circ$ in $\Ld^\infty_\loc(\R^+;(C^{\gamma}_c(\R^2))^*)$ for all $\gamma>0$.
\end{prop}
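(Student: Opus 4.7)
The plan has two stages: first, a modulated energy argument to pass from the microscopic model to the wiggly mean-field equation; second, an analysis of this mean-field equation itself.

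\emph{Stage~1 (modulated energy comparison with wiggly $\vi_\e$).} Let $\vi_\e$ denote the unique solution of the wiggly mean-field equation~\eqref{eq:GLv1} associated with the data of the statement, as granted by Proposition~\ref{prop:GLvprop}. The goal of this stage is to prove $\frac1{N_\e}j_\e-\vi_\e\to0$ in $\Ld^\infty_\loc(\R^+;\Ld^1_\uloc(\R^2)^2)$ by adapting the modulated energy argument of Proposition~\ref{prop:mflGL}(iii)--(iv). The weakly-interacting regimes~\GLup{} and~\GLdp{} considered here are particularly favourable for the mean-field argument, since inside $\Gamma_\e=\lambda_\e^{-1}(\alpha-\Jb\beta)(\nabla^\bot h-F^\bot-\frac{2N_\e}\Log\vi_\e)$ the interaction contribution $\lambda_\e^{-1}\frac{2N_\e}\Log\vi_\e$ and the applied-current contribution $\lambda_\e^{-1}F^\bot$ are both $o(1)$, so that $\Gamma_\e$ is dominated by the pinning part $\lambda_\e^{-1}\nabla^\bot h=\nabla_2^\bot\hat h^0(\cdot,\cdot/\eta_\e)+O(\eta_\e)$. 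The new complication compared with Proposition~\ref{prop:mflGL} is that $\|\nabla^2 h\|_{\Ld^\infty}=O(\lambda_\e\eta_\e^{-1})$, so that the ball-approximation error $\|\Gamma_\e-\bar\Gamma_\e\|_{\Ld^\infty}\lesssim r_\e\eta_\e^{-1}$ from Lemma~\ref{lem:approx} (with $r_\e$ the total Jerrard--Sandier radius) enters the bound on $I^g_{\e,R}$ in Step~2 of the proof of Proposition~\ref{prop:mflGL} with an extra factor $\eta_\e^{-1}$. Tracking this through the decomposition of $\partial_t\hat\D_{\e,R}$ and balancing it against the smallness of $r_\e$ and of the modulated energy, the explicit scaling $\e\ll\eta_\e\lambda_\e(N_\e\Log)^{1/2}$ is precisely what closes the Gr\"onwall inequality on bounded time intervals, rather than only on the mesoscopic window $[0,O(\eta_\e)]$ accessible by Corollary~\ref{cor:homogdiag}.

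\emph{Stage~2 (weak stationarity of $\curl\vi_\e$).} It remains to show that $\md_\e:=\curl\vi_\e$ converges weakly-$*$ to $\curl\vi^\circ$, uniformly on bounded time intervals. Up to a vanishing interaction correction, $\md_\e$ satisfies the \emph{linear} continuity equation
\[\partial_t\md_\e=\lambda_\e^{-1}\Div\!\big((\alpha-\Jb\beta)(\nabla h-F)\,\md_\e\big)+o(1),\qquad \md_\e|_{t=0}=\curl\vi^\circ,\]
as follows from~\eqref{eq:vortform} and the chosen scalings. Testing against $\phi\in C_c^\infty(\R^2)$ yields
\[\int\phi(\md_\e^t-\curl\vi^\circ)=-\int_0^t\!\int\lambda_\e^{-1}(\alpha-\Jb\beta)(\nabla h-F)\cdot\nabla\phi\,\md_\e+o(1),\]
with driving velocity $\lambda_\e^{-1}(\nabla h-F)=\nabla_2\hat h^0(x,x/\eta_\e)+\eta_\e\nabla_1\hat h^0(x,x/\eta_\e)-\lambda_\e^{-1}F$ whose leading oscillating part $\nabla_2\hat h^0$ has zero cell-mean, while the subleading terms are $O(\eta_\e)+o(1)$. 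Combined with the uniform $\Ld^1\cap\Ld^\infty$ bounds on $\md_\e$ from Proposition~\ref{prop:GLvprop}, a standard two-scale convergence argument in the spirit of Proposition~\ref{prop:locrelax} shows that this right-hand side is $o(1)$ locally uniformly in $t$, which yields the claim.

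\emph{Main obstacle.} Stage~1 is the main technical hurdle, since the wiggly structure of $\nabla h$ destroys the Gr\"onwall relation at face value and closing the argument requires precisely tracking competing powers of $\eta_\e$, $\lambda_\e$, $N_\e$, and $\Log$ through the vortex ball construction of Proposition~\ref{prop:ballconstr} and through the product and Pohozaev estimates (Lemmas~\ref{lem:prodest} and~\ref{lem:poho}). The explicit condition $\e\ll\eta_\e\lambda_\e(N_\e\Log)^{1/2}$ in the statement is engineered to make this balance work; weakening it appears to require either a genuine homogenization corrector inside the modulated energy or a fundamentally different strategy.
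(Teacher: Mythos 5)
Your plan would not close. The critical obstruction is in Stage~1: you propose to run the Gr\"onwall argument of Proposition~\ref{prop:mflGL} with the solution $\vi_\e$ of the wiggly mean-field equation~\eqref{eq:GLv1} as the reference field. In that argument the term $I^S_{\e,R}=-\int\chi_R\nabla\bar\Gamma_\e^\bot:\tilde S_\e$ carries a prefactor $\|\nabla\bar\Gamma_\e\|_{\Ld^\infty}\lesssim\|\nabla\Gamma_\e\|_{\Ld^\infty}$, and in the wiggly regime $\|\nabla\Gamma_\e\|_{\Ld^\infty}\simeq\lambda_\e^{-1}\|\nabla^2 h\|_{\Ld^\infty}\simeq\eta_\e^{-1}$. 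This produces a Gr\"onwall relation $\partial_t\hat\D_{\e,R}^*\lesssim\eta_\e^{-1}\hat\D_{\e,R}^*+\dots$, whose integral yields a factor $e^{Ct/\eta_\e}$, which blows up on any fixed time interval as $\eta_\e\downarrow0$. The scaling hypothesis $\e\ll\eta_\e\lambda_\e(N_\e\Log)^{1/2}$ does nothing to tame this: it concerns the $\e$-to-$\eta_\e$ relation and is used to control lower-order terms of the form $\e R\|f\|_{\Ld^\infty}\simeq\e R\lambda_\e\eta_\e^{-1}$, not to dampen the $\eta_\e^{-1}$ in the Gr\"onwall rate. This is exactly the obstacle that forces the diagonal restriction $\eta_{\e,0}\ll\eta_\e$ in Corollary~\ref{cor:homogdiag} (and the mesoscopic-window restriction $t=O(\eta_\e)$ in Proposition~\ref{prop:locrelax}), and you cannot overcome it by the same mechanism.

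Your Stage~2 also has a gap. Even if $\nabla_2\hat h^0(x,\cdot)$ has zero cell-mean (which holds since $\hat h^0$ is periodic), this does \emph{not} imply that the wiggly linear continuity equation $\partial_t\md_\e=\Div(\nabla_2\hat h^0(\cdot,\cdot/\eta_\e)\,\md_\e)+o(1)$ has stationary limit: homogenization of a transport equation by a zero-mean oscillating compressible velocity field in 2D generally produces a nontrivial effective drift given by averaging against the invariant measure, cf.\ the formula~\eqref{eq:def-eff-vel}. Stationarity of the limit here comes from the stick-slip structure (small applied current keeps the invariant measure concentrated on fixed points, so the effective velocity vanishes), which your argument does not capture.

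The actual proof takes a different and more elementary route that sidesteps both problems. Instead of comparing with the wiggly $\vi_\e$, it \emph{freezes} the reference field to the initial $\vi_\e^\circ$, which is time-independent, so the mean-field driving field $\Gamma_\e$ and its pathological gradient never enter. One then computes $\partial_t\hat\Ec_{\e,R}^z$ directly from the Ginzburg--Landau equation, exploits the dissipation $-\lambda_\e\alpha\int a\chi_R^z|\partial_tu_\e|^2$ to absorb the transport contributions via the product estimate (using the smallness assumption $\|F\|_{W^{1,\infty}}\ll\lambda_\e$ crucially to make the cross-term $\Log F^\bot+2N_\e\vi_\e^\circ$ of order $o(\lambda_\e\Log)$ in $\Ld^\infty$), converts the resulting energy quasi-monotonicity into a vorticity-displacement bound via the ball-construction lower bound (Lemma~\ref{lem:ballconstr}), and concludes from the smallness of the time-integrated $\|\partial_tu_\e\|_{\Ld^2_\uloc}^2$ and the product estimate that the vortex velocity vanishes in the limit, i.e.\ $\partial_t(\frac1{N_\e}\mu_\e)\rightharpoonup^*0$. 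This is a direct a priori estimate argument, not a mean-field-limit argument; it never involves the wiggly $\Gamma_\e$.
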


\begin{proof}
We choose $\vi_\e:=\vi_\e^\circ$ in the definition of the modulated energy~\eqref{der2}, thus redefining for all $z\in\R^2$,
\begin{gather*}
\Ec_{\e,R}^z:=\int_{\R^2}\frac{a\chi_R^z}2\Big(|\nabla u_\e-iu_\e N_\e\!\vi_\e^\circ\!|^2+\frac{a}{2\e^2}(1-|u_\e|^2)^2\Big),\\
\D_{\e,R}^z:=\Ec_{\e,R}^z-\frac\Log2\int_{\R^2} a\chi_R^z\mu_\e,
\end{gather*}
as well as $\Ec_{\e,R}^*:=\sup_z\Ec_{\e,R}^z$ and $\D_{\e,R}^*:=\sup_z\D_{\e,R}^z$ (where the suprema implicitly run over $z\in R\Z^2$). We further consider the following modification of this modulated energy, including suitable lower-order terms,
\begin{multline*}
\hat\Ec_{\e,R}^z:=\int_{\R^2}\frac{a\chi_R^z}2\Big(|\nabla u_\e-iu_\e N_\e\!\vi_\e^\circ\!|^2+\frac{a}{2\e^2}(1-|u_\e|^2)^2\\
+(1-|u_\e|^2)(f-N_\e^2|\!\vi_\e^\circ\!|^2-N_\e\Log\vi_\e^\circ\cdot F^\bot)\Big),
\end{multline*}
and $\hat\Ec_{\e,R}^*:=\sup_z\hat\Ec_{\e,R}^z$.
The lower bound assumption on the pin separation $\eta_\e$ allows to choose the cut-off length $R\ge1$ in such a way that
\[\lambda_\e^{-1}\ll R\ll\e^{-1}\frac{(N_\e\Log)^{1/2}}{\lambda_\e\Log^2},\qquad R\ll\eta_\e \e^{-1}(N_\e\Log)^{1/2}.\]
By Proposition~\ref{prop:ballconstr}, the well-preparedness condition~\eqref{eq:convincond} implies $\Ec_{\e,R}^{*,\circ}\le C_0N_\e\Log$ for some $C_0\simeq1$.
Let $T>0$ be fixed and define $T_\e>0$ as the maximum time $\le T$ such that the bound $\Ec_{\e,R}^{*,t}\le 2C_0N_\e\Log$ holds for all $t\le T_\e$.
Using~\eqref{eq:formf} in the form $\|f\|_{\Ld^\infty}\lesssim\lambda_\e\eta_\e^{-1}+\lambda_\e^2\Log^2$, the assumptions on $\vi_\e^\circ$, and the choice of $\eta_\e,R$, we deduce for all $t\le T_\e$,
\begin{multline}\label{eq:frozenhatpashat}
|\hat\Ec_{\e,R}^{z,t}-\Ec_{\e,R}^{z,t}|\le\int_{\R^2} \chi_R^z|1-|u_\e^t|^2|(|f|+N_\e^2|\!\vi_\e^\circ\!|^2+N_\e\Log|\!\vi_\e^\circ\!||F|)\\
\lesssim \e R(\lambda_\e\eta_\e^{-1}+\lambda_\e^2\Log^2)(\Ec_{\e,R}^{z,t})^{1/2}+\e R^\theta o(\lambda_\e N_\e\Log)(\Ec_{\e,R}^{z,t})^{1/2}\ll \lambda_\e N_\e\Log,
\end{multline}
hence in particular $\hat\Ec_{\e,R}^{*,t}\lesssim N_\e\Log$ for all $t\le T_\e$. We split the proof into three steps.

\medskip
\noindent\step1 Evolution of the modulated energy.
\nopagebreak

In this step, for all $\e>0$ small enough, we show that $T_\e=T$ and that for all $t\le T$,
\begin{align}\label{eq:frozenSt1}
\frac{\lambda_\e\alpha}4\int_0^t\int_{\R^2} a\chi_R^z|\partial_tu_\e|^2&\le \hat \Ec_{\e,R}^{z,\circ}-\hat \Ec_{\e,R}^{z,t}+o_t(\lambda_\e N_\e\Log)\lesssim_tN_\e\Log.
\end{align}
The time derivative of the modulated energy $\hat\Ec_{\e,R}^z$ is computed as follows, by integration by parts,
\begingroup\allowdisplaybreaks
\begin{align*}
\partial_t\hat\Ec_{\e,R}^z&=\int_{\R^2} a\chi_R^z\Big(\langle\nabla u_\e-iu_\e N_\e\!\vi_\e^\circ,\nabla \partial_tu_\e\rangle-N_\e\!\vi_\e^\circ\!\cdot\,\langle\nabla u_\e-iu_\e N_\e\!\vi_\e^\circ,i\partial_tu_\e\rangle\\
&\qquad\qquad-\frac{a}{\e^2}(1-|u_\e|^2)\langle u_\e,\partial_tu_\e\rangle-(f-N_\e^2|\!\vi_\e^\circ\!|^2-N_\e\Log\vi_\e^\circ\cdot F^\bot)\langle u_\e,\partial_t u_\e\rangle\Big)\\
&=-\int_{\R^2} a\chi_R^z\Big\langle\triangle u_\e+\frac{au_\e}{\e^2}(1-|u_\e|^2)+\nabla h\cdot\nabla u_\e+i\Log F^\bot\cdot\nabla u_\e+f u_\e,\partial_tu_\e\Big\rangle\\
&\quad+N_\e\int_{\R^2} a\chi_R^z(\vi_\e^\circ\cdot\nabla h+\Div\!\vi_\e^\circ)\langle\partial_tu_\e,iu_\e\rangle-\int_{\R^2} a\nabla\chi_R^z\cdot\langle\nabla u_\e-iu_\e N_\e\!\vi_\e^\circ,\partial_t u_\e\rangle\\
&\qquad-\int_{\R^2} a\chi_R^z(\Log F^\bot+2N_\e\!\vi_\e^\circ)\cdot\langle\nabla u_\e-iu_\e N_\e\!\vi_\e^\circ,i\partial_tu_\e\rangle,
\end{align*}
\endgroup
hence, inserting equation~\eqref{eq:GL-1} in the first right-hand side term,
\begin{multline*}
\partial_t\hat\Ec_{\e,R}^z=-\lambda_\e\alpha\int_{\R^2} a\chi_R^z|\partial_tu_\e|^2-\int_{\R^2} a\chi_R^z(\Log F^\bot+2N_\e\!\vi_\e^\circ)\cdot\,\langle\nabla u_\e-iu_\e N_\e\!\vi_\e^\circ,i\partial_tu_\e\rangle\\
+N_\e\int_{\R^2} \chi_R^z\Div(a\!\vi_\e^\circ)\langle\partial_tu_\e,iu_\e\rangle-\int_{\R^2} a\nabla\chi_R^z\cdot\langle\nabla u_\e-iu_\e N_\e\!\vi_\e^\circ,\partial_t u_\e\rangle.
\end{multline*}
In particular, using the energy bound $\Ec_{\e,2R}^{*,t}\lesssim\Ec_{\e,R}^{*,t}\lesssim N_\e\Log$, we find for all $t\le T_\e$,
\begin{align*}
\partial_t\hat\Ec_{\e,R}^z&\le-\frac{\lambda_\e\alpha}2\int_{\R^2} a\chi_R^z|\partial_tu_\e|^2-\int_{\R^2} a\chi_R^z(\Log F^\bot+2N_\e\!\vi_\e^\circ)\cdot\,\langle\nabla u_\e-iu_\e N_\e\!\vi_\e^\circ,i\partial_tu_\e\rangle\\
&\hspace{1cm}+C_t\lambda_\e^{-1}N_\e^2\int_{\R^2} \chi_R^z|\Div(a\!\vi_\e^\circ)|^2(1+|1-|u_\e|^2|)\\
&\hspace{4cm}+C_t\lambda_\e^{-1}R^{-2}\int_{B_{2R}(z)} |\nabla u_\e-iu_\e N_\e\!\vi_\e^\circ\!|^2\\
&\le-\frac{\lambda_\e\alpha}2\int_{\R^2} a\chi_R^z|\partial_tu_\e|^2-\int_{\R^2} a\chi_R^z(\Log F^\bot+2N_\e\!\vi_\e^\circ)\cdot\,\langle\nabla u_\e-iu_\e N_\e\!\vi_\e^\circ,i\partial_tu_\e\rangle\\
&\hspace{1cm}+C_t\lambda_\e^{-1}N_\e^2\|\Div(a\!\vi_\e^\circ)\|_{\Ld^2\cap\Ld^\infty(B_{2R})}^2+C_t\lambda_\e^{-1}R^{-2}N_\e\Log,
\end{align*}
so that the assumptions on $\Div(a\!\vi_\e^\circ)$ and the choice of the cut-off length $R$ yield
\begin{multline}\label{eq:derEhatF0}
\partial_t\hat\Ec_{\e,R}^z\le-\frac{\lambda_\e\alpha}2\int_{\R^2} a\chi_R^z|\partial_tu_\e|^2\\
-\int_{\R^2} a\chi_R^z(\Log F^\bot+2N_\e\!\vi_\e^\circ)\cdot\langle\nabla u_\e-iu_\e N_\e\!\vi_\e^\circ,i\partial_tu_\e\rangle
+o_t(\lambda_\e N_\e\Log).
\end{multline}
Using the Cauchy-Schwarz inequality to estimate the second right-hand side term, with $\|F\|_{\Ld^\infty}\lesssim\lambda_\e$ and $\|\!\vi_\e^\circ\!\|_{\Ld^\infty}\lesssim1$, we find the following rough estimate,
\begin{align*}
\partial_t\hat\Ec_{\e,R}^z&\le-\frac{\lambda_\e\alpha}4\int_{\R^2} a\chi_R^z|\partial_tu_\e|^2+C\lambda_\e\Log^2\int_{\R^2} a\chi_R^z|\nabla u_\e-iu_\e N_\e\!\vi_\e^\circ\!|^2+o_t(\lambda_\e N_\e\Log)\\
&\le-\frac{\lambda_\e\alpha}4\int_{\R^2} a\chi_R^z|\partial_tu_\e|^2+O_t(\lambda_\e N_\e\Log^3),
\end{align*}
and thus, integrating in time with $\lambda_\e\lesssim1$, we find for all $t\le T_\e$,
\begin{align*}
\frac{\lambda_\e\alpha}4\int_0^t\int_{\R^2} a\chi_R^z|\partial_tu_\e|^2&\le \hat\Ec_{\e,R}^{z,\circ}-\hat\Ec_{\e,R}^{z,t}+O_t(\Log^4)\lesssim_t \Log^4.
\end{align*}
This rough estimate now allows to apply Lemma~\ref{lem:prodest} (with $\vi_\e=\vi_\e^\circ$ and $\pre_{\e}=0$), using that $\Log\|F\|_{\Ld^\infty}+N_\e\ll\lambda_\e\Log$, to the effect of
\begin{eqnarray*}
\lefteqn{\Big|\int_0^t\int_{\R^2} a\chi_R^z(\Log F^\bot+2N_\e\!\vi_\e^\circ)\cdot\,\langle\nabla u_\e-iu_\e N_\e\!\vi_\e^\circ,i\partial_tu_\e\rangle\Big|}\\
&\lesssim&\frac{\Log\|F\|_{\Ld^\infty}+N_\e}\Log\Big(\int_0^t\int_{\R^2} a\chi_R^z|\partial_tu_\e|^2+\int_0^t\int_{\R^2} a\chi_R^z|\nabla u_\e-iu_\e N_\e\!\vi_\e^\circ\!|^2\Big)+o_t(1)\\
&\lesssim&o(\lambda_\e)\int_0^t\int_{\R^2} a\chi_R^z|\partial_tu_\e|^2+o_t(\lambda_\e N_\e\Log).
\end{eqnarray*}
Inserting this into~\eqref{eq:derEhatF0} and integrating in time, we find for all $t\le T_\e$,
\begin{align*}
\hat \Ec_{\e,R}^{z,t}-\hat \Ec_{\e,R}^{z,\circ}&\le-\Big(\frac{\lambda_\e\alpha}2-o(\lambda_\e)\Big)\int_0^t\int_{\R^2} a\chi_R^z|\partial_tu_\e|^2+o_t(\lambda_\e N_\e\Log),
\end{align*}
and the result~\eqref{eq:frozenSt1} follows for all $t\le T_\e$. In particular, combined with~\eqref{eq:frozenhatpashat}, this yields for all $t\le T_\e$,
\begin{multline*}
\Ec_{\e,R}^{z,t}\le\hat\Ec_{\e,R}^{z,t}+o(\lambda_\e N_\e\Log)\le\hat\Ec_{\e,R}^{z,\circ}+o_t(\lambda_\e N_\e\Log)\le\Ec_{\e,R}^{z,\circ}+o_t(\lambda_\e N_\e\Log)\\
\le (C_0+o_t(1))N_\e\Log,
\end{multline*}
and thus, taking the supremum in $z$, we conclude $T_\e=T$ for $\e>0$ small enough.

\medskip
\noindent\step2 Lower bound on the modulated energy.

In this step, we prove for all $t\le T$,
\begin{align*}
\Ec_{\e,R}^{z,t}&\ge\frac{\Log}2\int_{\R^2} a\chi_R^z\mu_\e^t-o_t(\lambda_\e N_\e\Log),
\end{align*}
and hence, combined with the well-preparedness assumption $\D_{\e,R}^{z,\circ}\ll N_\e^2$
and with~\eqref{eq:frozenhatpashat},
\begin{align*}
\hat\Ec_{\e,R}^{z,\circ}-\hat\Ec_{\e,R}^{z,t}\le \Ec_{\e,R}^{z,\circ}-\Ec_{\e,R}^{z,t}+o(\lambda_\e N_\e\Log)\le\frac\Log2\int_{\R^2} a\chi_R^z(\mu_\e^\circ-\mu_\e^t)+o(\lambda_\e N_\e\Log).
\end{align*}

As we show, this is a simple consequence of Lemma~\ref{lem:ballconstr}. (However note that we may not directly apply Proposition~\ref{prop:ballconstr}(i)--(iii) as the assumption $R\gtrsim\Log$ does not hold.) Noting that $\|\nabla(a\chi_R^z)\|_{\Ld^\infty}\lesssim\lambda_\e+R^{-1}\lesssim\lambda_\e$, we deduce from Lemma~\ref{lem:ballconstr}(i) with $\phi=a\chi_R^z$, with $\Ec_{\e,R}^*\lesssim_t N_\e\Log$, and with $\e^{1/2}< r\ll1$,\\
\begin{align*}
\Ec_{\e,R}^z&\ge \frac{\log(\tfrac r\e)}2\int_{\R^2} a\chi_R^z|\nu_{\e,R}^r|-O_t(\lambda_\e rN_\e\Log)-O_t(r^2N_\e^2)-O_t(N_\e\log N_\e)\\
&\ge\frac{\Log}2\int_{\R^2} a\chi_R^z|\nu_{\e,R}^r|-O(|\!\log r|)\int_{\R^2} \chi_R^z|\nu_{\e,R}^r|-o_t(\lambda_\e N_\e\Log),
\end{align*}
hence by Lemma~\ref{lem:ballconstr}(ii), for $e^{-N_\e}\lesssim r\ll1$,
\begin{multline*}
\Ec_{\e,R}^z\ge\frac{\Log}2\int_{\R^2} a\chi_R^z|\nu_{\e,R}^r|-O_t(N_\e|\!\log r|)-o_t(\lambda_\e N_\e\Log)\\
\ge\frac{\Log}2\int_{\R^2} a\chi_R^z\nu_{\e,R}^r-o_t(\lambda_\e N_\e\Log).
\end{multline*}
By Lemma~\ref{lem:ballconstr}(iii) in the form~\eqref{eq:lem3.2iii-lip} with $\gamma=1$, and by~\eqref{eq:boundmumutildeequ}, using $\|\nabla(a\chi_R^z)\|_{\Ld^\infty}\lesssim\lambda_\e$, we may replace $\nu_{\e,R}^r$ by $\mu_\e$ in the right-hand side,
\begin{multline*}
\Ec_{\e,R}^z\ge\frac{\Log}2\int_{\R^2} a\chi_R^z\mu_{\e}\\
-\lambda_\e\Log O_t\big(\e RN_\e(N_\e\Log)^{1/2}+rN_\e\big)
-\Log O_t(\e^{1/2} N_\e\Log\big)-o_t(\lambda_\e N_\e\Log),
\end{multline*}
and the result follows from the choice $R\ll\e^{-1}(N_\e\Log)^{-1/2}$.

\medskip
\noindent\step3 Estimate on the total vorticity.
\nopagebreak

In this step, we show for all $t\le T$,
\[\Big|\int_{\R^2} a\chi_R^z(\mu_\e^t-\mu_\e^\circ)\Big|\ll_t \lambda_\e N_\e.\]
We first prove (a weaker version of) the result with the weight $a$ replaced by $1$.
Using identity~\eqref{eq:identity-1}, we may decompose
\begin{multline*}
\int_{\R^2} \chi_R^z(\mu_\e^t-\mu_\e^\circ)=\int_0^t\int_{\R^2} \chi_R^z\partial_t\mu_\e^t=\int_0^t\int_{\R^2} \chi_R^z\,\curl V_\e^t=-\int_0^t\int_{\R^2} \nabla^\bot\chi_R^z\cdot V_\e^t\\
=-2\int_0^t\int_{\R^2} \nabla^\bot \chi_R^z\cdot \langle\nabla u_\e-iu_\e N_\e\!\vi_\e^\circ,i\partial_t u_\e\rangle+N_\e\int_0^t\int_{\R^2} \nabla^\bot\chi_R^z\cdot\vi_\e^\circ \partial_t(1-|u_\e|^2).
\end{multline*}
Applying Lemma~\ref{lem:prodest} as in Step~1, with $|\nabla\chi_R|\lesssim R^{-1}\chi_R^{1/2}$, we deduce for all $t\le T$ and $\Log^{-2}\lesssim K\lesssim\Log^2$,
\begin{eqnarray*}
\lefteqn{\Big|\int_{\R^2} \chi_R^z(\mu_\e^t-\mu_\e^\circ)\Big|}\\
&\lesssim& \frac1\Log\Big(K^{-2}\int_0^t\int_{\R^2} \chi_R^z|\partial_tu_\e|^2+K^2R^{-2}\int_0^t\int_{B_{2R}}|\nabla u_\e-iu_\e N_\e\!\vi_\e^\circ\!|^2\Big)\\
&&\hspace{1cm}+o_t(\Log^{-1})+N_\e\int_{\R^2} \big(|1-|u_\e^t|^2|+|1-|u_\e^\circ|^2|\big)|\nabla^\bot\chi_R^z|\\
&\lesssim_t& \frac{K^{-2}}\Log\int_0^t\int_{\R^2} \chi_R^z|\partial_tu_\e|^2+K^2R^{-2}N_\e+\e N_\e\Log+o(\Log^{-1}).
\end{eqnarray*}
Using~\eqref{eq:frozenSt1} to estimate the first right-hand side term, and choosing $\lambda_\e^{-1}\ll K^2\ll \lambda_\e R^2$, we obtain
\begin{align}\label{eq:semiresfrozenst3}
\Big|\int_{\R^2} \chi_R^z(\mu_\e^t-\mu_\e^\circ)\Big|&\lesssim_t \frac{K^{-2}}{\lambda_\e\Log}(\hat \Ec_{\e,R}^{z,\circ}-\hat \Ec_{\e,R}^{z,t})_++o(K^{-2}N_\e)+K^2R^{-2}N_\e+o(\Log^{-1})\nonumber\\
&\lesssim_t o(\Log^{-1})(\hat \Ec_{\e,R}^{z,\circ}-\hat \Ec_{\e,R}^{z,t})_++o(\lambda_\e N_\e).
\end{align}
It remains to smuggle the weight $a$ into the left-hand side. For all $t\le T$, applying
Lemma~\ref{lem:ballconstr}(iii) in the form~\eqref{eq:lem3.2iii-lip} with $\gamma=1$, as well as~\eqref{eq:boundmumutildeequ}, and using the choice of $R\ll \e^{-1}(N_\e\Log)^{-1/2}$, we find for $\e^{1/2}<r\ll1$,
\[\Big|\int_{\R^2}(1-a)\chi_R^z(\mu_\e^t-\nu_{\e,R}^{r,t})\Big|\lesssim_t \lambda_\e rN_\e+\e^{1/2}N_\e\Log+\lambda_\e\e RN_\e(N_\e\Log)^{1/2}\ll \lambda_\e N_\e,\]
and hence, by Lemma~\ref{lem:ballconstr}(ii) with $\|1-a\|_{\Ld^\infty}\lesssim \lambda_\e\eta_\e\ll\lambda_\e$,
\[\Big|\int_{\R^2}(1-a)\chi_R^z\mu_\e^t\Big|\lesssim \|1-a\|_{\Ld^\infty}\int_{\R^2}\chi_R^z|\nu_{\e,R}^{r,t}|+o(\lambda_\e N_\e)\ll\lambda_\e N_\e.\]
Combining this with~\eqref{eq:semiresfrozenst3} and with the result of Step~2, we deduce
\begin{multline*}
\Big|\int_{\R^2} a\chi_R^z(\mu_\e^t-\mu_\e^\circ)\Big|\lesssim_t o(\Log^{-1})(\hat \Ec_{\e,R}^{z,\circ}-\hat \Ec_{\e,R}^{z,t})_++o(\lambda_\e N_\e)\\
\lesssim_t o(1)\Big|\int_{\R^2} a\chi_R^z(\mu_\e^t-\mu_\e^\circ)\Big|+o(\lambda_\e N_\e),
\end{multline*}
and the result follows.

\medskip
\noindent\step4 Conclusion.
\nopagebreak

Combining the results of Steps~1--3, we find
\[\int_0^T\int_{\R^2} a\chi_R^z|\partial_tu_\e|^2\ll_T N_\e\Log.\]
Applying Lemma~\ref{lem:prodest} (see also~\cite[Proposition~4.8]{Serfaty-15}) then yields for $X\in W^{1,\infty}([0,T]\times\R^2)^2$ and $\Log^{-1}\lesssim K\lesssim\Log$,
\begin{eqnarray*}
\lefteqn{\Big|\int_0^T\int_{\R^2} \chi_R^zX\cdot V_\e\Big|}\\
&\lesssim& \frac1\Log\Big(\frac1K\int_0^T\int_{\R^2} \chi_R^z|\partial_tu_\e|^2+K\int_0^T\int_{\R^2}\chi_R^z|X\cdot(\nabla u_\e-iu_\e N_\e\!\vi_\e^\circ)|^2\Big)\\
&&\hspace{3cm}+o(1)\big(1+\|X\|_{W^{1,\infty}([0,T]\times\R^2)}^5\big)\\
&\lesssim_T&\big(o(K^{-1}N_\e)+KN_\e+o(1)\big)\big(1+\|X\|_{W^{1,\infty}([0,T]\times\R^2)}^5\big),
\end{eqnarray*}
hence, for a suitable choice of $K$,
\begin{align*}
\sup_{z}\Big|\int_0^T\int_{\R^2} \chi_R^zX\cdot V_\e\Big|&\ll_T N_\e\big(1+\|X\|_{W^{1,\infty}([0,T]\times\R^2)}^5\big).
\end{align*}
This implies $\frac1{N_\e}V_\e\cvf* 0$ in $(C^{1}_c([0,T]\times \R^2))^*$, so that identity~\eqref{eq:identity-1} yields $\partial_t(\frac1{N_\e}\mu_\e)=\frac1{N_\e}\curl V_\e\cvf*0$ in $(C^1([0,T];C^{2}_c(\R^2)))^*$.
Arguing as in Step~4 of the proof of Proposition~\ref{prop:mflGL}, the well-preparedness assumption on the initial data implies $\frac1{N_\e}j_\e^\circ\to\vi^\circ$ in $\Ld^1_\uloc(\R^2)^2$, hence $\frac1{N_\e}\mu_\e^\circ\cvf*\curl\!\vi^\circ$ in $(C^{1}_c(\R^2))^*$. We easily deduce $\frac1{N_\e}\mu_\e\cvf*\curl\!\vi^\circ$ in $(C([0,T];C^2_c(\R^2)))^*$. Noting that Lemma~\ref{lem:ballconstr}(iii) together with~\eqref{eq:jacobestbis} ensures that the sequence $(\frac1{N_\e}\mu_\e)_\e$ is bounded in $\Ld^\infty([0,T];(C^{\gamma}_c(\R^2))^*)$ for all $\gamma>0$, the conclusion follows.
\end{proof}

\appendix
\section{Well-posedness of the mesoscopic model}\label{app:wellposed}

In this appendix, we address the global well-posedness of the mesoscopic model~\eqref{eq:GL-1}, establishing Proposition~\ref{prop:globGL} as well as additional regularity. We start with the decaying setting, that is, when $\nabla h,F,f$ decay at infinity.
Note that in this setting no advection is expected to occur at infinity. As is classical since the work of Bethuel and Smets~\cite{Bethuel-Smets-07} (see also~\cite{Miot-09}), we consider solutions $u_\e$ in the affine space $\Ld^\infty_\loc(\R^+;U_\e+H^1(\R^2;\C))$ for some ``reference map'' $U_\e$, which is typically chosen smooth and equal (in polar coordinates) to $e^{iN_\e\theta}$ outside a ball at the origin, for some given $N_\e\in\Z$, thus imposing for $u_\e$ a fixed total degree $N_\e$ at infinity. More generally, we consider the following spaces of ``admissible'' reference maps, for $k\ge0$,
\begin{multline*}
E_k(\R^2):=\big\{U\in \Ld^{\infty}(\R^2;\C): \nabla^2 U\in H^{k}(\R^2;\C),\,\nabla|U|\in\Ld^2(\R^2),\,1-|U|^2\in\Ld^2(\R^2),\\
\nabla U\in\Ld^p(\R^2;\C)~\forall p>2\big\}.
\end{multline*}
(Note that this definition slightly differs from the usual one in~\cite{Bethuel-Smets-07}, but this form is more adapted in the presence of pinning and applied current.)
The map $U_{N_\e}:=U_\e$ above clearly belongs to the space $E_\infty(\R^2)$. Global well-posedness and regularity in this framework are provided by the following proposition. Note that the proof requires a stronger decay of $\nabla h,F,f$ in the conservative case, but we do not know whether this is necessary.

\begin{samepage}
\begin{prop}[Well-posedness of~\eqref{eq:GL-1}, decaying setting]\label{prop:globGLapp}
Set $a:=e^h$ with $h:\R^2\to\R$.
\begin{enumerate}[(i)]
\item \emph{Dissipative case ($\alpha>0$, $\beta\in\R$):}\\
Given $h\in W^{1,\infty}(\R^2)$, $F\in\Ld^\infty(\R^2)^2$, $f\in \Ld^2\cap\Ld^\infty(\R^2)$, with $\nabla h,F\in\Ld^p(\R^2)^2$ for some $p<\infty$, and $u^\circ_\e\in U+H^1(\R^2;\C)$ for some $ U\in E_0(\R^2)$, there exists a unique global solution $u_\e\in\Ld^\infty_\loc(\R^+; U+H^1(\R^2;\C))$ of~\eqref{eq:GL-1} in $\R^+\times\R^2$ with initial data $u^\circ_\e$.
Moreover, if for some $k\ge0$ we have $h\in W^{k+1,\infty}(\R^2)$, $F\in W^{k,\infty}(\R^2)^2$, $f\in H^{k}\cap W^{k,\infty}(\R^2)$, with $\nabla h,F\in W^{k,p}(\R^2)^2$ for some $p<\infty$, and $U\in E_{k}(\R^2)$, then $u_\e\in\Ld^\infty_\loc([\delta,\infty); U+H^{k+1}(\R^2;\C))$ for all $\delta>0$. If in addition $u_\e^\circ\in U+H^{k+1}(\R^2;\C)$, then $u_\e\in\Ld^\infty_\loc(\R^+; U+H^{k+1}(\R^2;\C))$.
\item \emph{Conservative case ($\alpha=0$, $\beta=1$):}\\
Given $h\in W^{2,\infty}(\R^2)$, $\nabla h\in H^1(\R^2)^2$, $F\in H^2\cap W^{2,\infty}(\R^2)^2$, $f\in\Ld^2\cap\Ld^\infty(\R^2)$, with $\Div F=0$, and $u^\circ_\e\in U+H^1(\R^2;\C)$ for some $ U\in E_0(\R^2)$, there exists a unique global solution $u_\e\in\Ld^\infty_\loc(\R^+;U+H^1(\R^2;\C))$ of~\eqref{eq:GL-1} in $\R^+\times\R^2$ with initial data~$u^\circ_\e$.
Moreover, if for some $k\ge0$ we have $h\in W^{k+2,\infty}(\R^2)$, $\nabla h\in H^{k+1}(\R^2)^2$, $F\in H^{k+2}\cap W^{k+2,\infty}(\R^2)^2$, $f\in H^{k+1}\cap W^{k+1,\infty}(\R^2)$, with $\Div F=0$, and $u^\circ_\e\in U+H^{k+1}(\R^2;\C)$ with $ U\in E_{k+1}(\R^2)$, then $u_\e\in \Ld^\infty_\loc(\R^+;U+H^{k+1}(\R^2;\C))$.
\qedhere
\end{enumerate}
\end{prop}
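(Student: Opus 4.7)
\medskip

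My overall plan is to handle the dissipative and conservative cases by two quite different strategies, reflecting the difference between parabolic smoothing and Schrödinger-type evolution. In both cases I will work in the affine framework, writing $u_\e = U + \hat u_\e$ with $\hat u_\e \in H^k(\R^2;\C)$, noting that the admissibility of $U \in E_k(\R^2)$ ensures that the ``inhomogeneous'' part $a(1-|U|^2)U/\e^2$, $\nabla h \cdot \nabla U$, $\Log F^\bot \cdot \nabla U$, and $fU$ all lie in $L^2(\R^2)$ (using $\nabla U \in L^p$ for all $p>2$, $1-|U|^2 \in L^2$, and the decay/integrability assumptions on $h,F,f$). The equation for $\hat u_\e$ is then a standard inhomogeneous semilinear equation on $H^k(\R^2;\C)$.

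For the \emph{dissipative case} (item~(i)), I would proceed by a classical Galerkin or contraction-mapping argument: the linear part $(\alpha + i\Log\beta)^{-1}(\triangle + \nabla h \cdot \nabla)$ generates an analytic semigroup on $L^2(\R^2)$ (since $\alpha > 0$), and the cubic nonlinearity $a|u_\e|^2 u_\e/\e^2$ is locally Lipschitz on $H^1(\R^2)$ by the Sobolev embedding $H^1 \hookrightarrow L^p$ for all $p < \infty$ in dimension $2$. Local existence in $U+H^1$ follows. For global existence I would test the equation against $\partial_t u_\e$ (or against $\hat u_\e$) weighted by $a$; this yields decay of the modified Ginzburg--Landau energy $\int a(|\nabla u_\e|^2 + \frac{a}{2\e^2}(1-|u_\e|^2)^2)/2$ up to bounded lower-order contributions from $F$ and $f$, giving an a priori bound on $\|\hat u_\e\|_{H^1}$ via Gr\"onwall. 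Higher regularity $H^{k+1}$ for $t \ge \delta$ follows by parabolic bootstrap: differentiating the equation and using the smoothing effect $\|\hat u_\e(t)\|_{H^{k+1}} \lesssim t^{-1/2}\|\hat u_\e\|_{L^\infty_\delta H^k}$ on each time step; the persistence of $H^{k+1}$ regularity from $t=0$ when $u_\e^\circ \in U+H^{k+1}$ is standard.

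The \emph{conservative case} (item~(ii)) is the main difficulty because there is no smoothing effect, and the first-order terms $\nabla h \cdot \nabla u_\e$ and $i\Log F^\bot \cdot \nabla u_\e$ are \emph{not} controlled by the energy. My strategy is to eliminate both first-order terms by a gauge-type change of variables. Since $\Div F = 0$, the field $-\frac12 F^\bot$ is curl-free, so there exists $\psi \in W^{3,\infty}_{\mathrm{loc}}(\R^2)$ with $\nabla \psi = -\frac12 F^\bot$, and the assumption $F \in H^2\cap W^{2,\infty}$ ensures $\nabla\psi$ has the requisite regularity. Setting $v_\e := e^{-i\Log\psi} a^{1/2} u_\e$, a direct computation shows that the transport terms $2i\Log\nabla\psi \cdot \nabla\tilde u_\e$ (from the gauge) and $i\Log F^\bot \cdot \nabla \tilde u_\e$ cancel, while the change of amplitude $a^{1/2}$ similarly cancels the real first-order term $\nabla h \cdot \nabla u_\e$ against $-\nabla h \cdot \nabla v_\e$ produced by $\triangle(a^{-1/2}v_\e)$, at the price of new bounded zeroth-order multipliers involving $\triangle h$, $|\nabla h|^2$, $\triangle \psi$, $|\nabla\psi|^2$ and mixed terms. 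The resulting equation for $v_\e$ is a standard Gross--Pitaevskii equation with a weight in front of the nonlinearity and a bounded zeroth-order potential, to which the method of Bethuel--Smets~\cite{Bethuel-Smets-07} (and its refinements by Miot~\cite{Miot-09}) directly applies in the affine space $\tilde U + H^1$, with $\tilde U := e^{-i\Log\psi}a^{1/2} U$, still in the appropriate admissibility class. The higher-regularity statement then follows by differentiating the transformed equation and running the usual energy estimates for Schr\"odinger-type flows, which conserve but do not gain Sobolev regularity; this is where the stronger assumptions $h \in W^{k+2,\infty}$, $\nabla h \in H^{k+1}$, $F \in H^{k+2} \cap W^{k+2,\infty}$, $f \in H^{k+1}\cap W^{k+1,\infty}$ are used, precisely to guarantee that the zeroth-order coefficients of the transformed equation lie in the required Sobolev spaces.

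The main obstacle is verifying rigorously that the change of variables $u_\e \leftrightarrow v_\e$ preserves the affine spaces $U+H^k$ and the reference-map class $E_k$. Because $e^{-i\Log\psi}$ and $a^{\pm1/2}$ are bounded but \emph{not} in any $L^p(\R^2)$, one cannot simply multiply; instead I will check separately that $\tilde U \in E_k$ by direct computation using $\|a^{\pm1/2}\|_{L^\infty}$, $\nabla(a^{\pm 1/2}) = \mp\tfrac12 a^{\pm1/2}\nabla h \in H^k \cap L^\infty$, and the analogous properties of $e^{-i\Log\psi}$, and that the multiplier $(u,v)\mapsto e^{-i\Log\psi}a^{1/2}\cdot$ is a bicontinuous isomorphism between $U+H^k$ and $\tilde U + H^k$. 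The decay assumptions on $\nabla h$ and $F$ (together with $a(x)\to 1$ at infinity, which may be implicit in $\nabla h \in H^1$) are precisely what make these multipliers well-behaved at infinity, explaining why the conservative case demands the decaying setting.
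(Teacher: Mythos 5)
Your proposal is correct and follows essentially the same route as the paper: for the dissipative case, a fixed-point argument with the analytic semigroup, a priori energy estimates tested against $\partial_t u_\e$, and parabolic bootstrap for higher regularity; for the conservative case, the key gauge/amplitude change of variables $w_\e := \sqrt a\,e^{i\Log\psi}u_\e$ (with $F=-2\nabla^\bot\psi$, made possible by $\Div F=0$) that eliminates both first-order terms and reduces to a Gross-Pitaevskii equation with zeroth-order coefficients, to which the Bethuel--Smets framework applies. The paper handles the technical point you flag (that the multipliers $a^{\pm1/2}$ and $e^{i\Log\psi}$ are not $L^p$) by introducing the weighted admissibility class $E_k^a(\R^2)$ and checking that the transformation is a bijection between $U+H^{k+1}$ with $U\in E_k$ and $W+H^{k+1}$ with $W\in E_k^a$, exactly as you propose.
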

\end{samepage}

The proof below is based on arguments by~\cite{Bethuel-Smets-07,Miot-09}, which need to be adapted in the present setting with both pinning and applied current. The conservative case is however more delicate, and we then use the structure of the equation to make a crucial change of variables that transforms the first-order terms into zeroth-order ones. As shown in the proof, in the dissipative case, the decay assumption $\nabla h,F\in\Ld^p(\R^2)^2$ (for some $p<\infty$) can be replaced by $(|\nabla h|+|F|)\nabla U\in\Ld^2(\R^2;\C)^2$.

\begin{proof}[Proof of Proposition~\ref{prop:globGLapp}]
We split the proof into seven steps. We start with the (easiest) case $\alpha>0$, and then turn to the conservative case $\alpha=0$ in Steps~4--7.

\medskip
\noindent\step1 Local existence in $U+H^{k+1}(\R^2;\C)$ for $\alpha>0$.

In this step, given $k\ge0$, we assume $h\in W^{k+1,\infty}(\R^2)$, $F\in W^{k,\infty}(\R^2)^2$, $f\in H^{k}\cap W^{k,\infty}(\R^2)$, $\nabla h,F\in W^{k,p}(\R^2)$ for some $p<\infty$, and $u_\e^\circ\in U+H^{k+1}(\R^2;\C)$ for some $U\in E_{k}(\R^2)$, and we prove that there exists some $T>0$ and a unique solution $u_\e\in \Ld^\infty([0,T);U+H^{k+1}(\R^2;\C))$ of~\eqref{eq:GL-1} in $[0,T)\times\R^2$. To simplify notation, we replace equation~\eqref{eq:GL-1} by its rescaled version
\begin{align}\label{eq:GLrescale}
(\alpha+i\beta)\partial_tu=\triangle u+{au}(1-|u|^2)+\nabla h\cdot \nabla u+iF^\bot\cdot\nabla u+fu,\qquad u|_{t=0}=u^\circ.
\end{align}
We start with the case $k=0$, and briefly comment afterwards on the adaptations needed for $k\ge1$.
We argue by a fixed-point argument in the set $E_{U,u^\circ}(C_0,T):=\{u:\|u-U\|_{\Ld^\infty_TH^1}\le C_0,\,u|_{t=0}=u^\circ\}$, for some $C_0,T>0$ to be suitably chosen.
We denote by $C\ge1$ any constant that only depends on an upper bound on $\alpha$, $\alpha^{-1}$, $|\beta|$, $\|h\|_{W^{1,\infty}}$, $\|(F,f,U)\|_{\Ld^\infty}$, $\|1-|U|^2\|_{\Ld^2}$, $\|\triangle U\|_{\Ld^2}$, $\|f\|_{\Ld^2}$, and $\|(|F|+|\nabla h|)\nabla U\|_{\Ld^2}$, and we add a subscript to indicate dependence on further parameters.

For $\alpha>0$, the kernel of the semigroup operator $e^{(\alpha+i\beta)^{-1}t\triangle}$ is given explicitly by
\[S^t(x):=(\alpha+i\beta)(4\pi t)^{-1}e^{-(\alpha+i\beta)|x|^2/(4t)},\]
which decays just like the standard heat kernel,
\begin{align}\label{eq:estkernel0}
|S^t(x)|\le Ct^{-1}e^{-\alpha|x|^2/(4t)},
\end{align}
and we have the following obvious estimates, for all $1\le r\le\infty$, $k\ge1$,
\begin{align}\label{eq:estkernel}
\|S^t\|_{\Ld^r}\le Ct^{\frac1r-1},\qquad\|\nabla^kS^t\|_{\Ld^r}\le C_kt^{\frac1r-1-\frac k2}.
\end{align}
Setting $\hat u:=u-U$, we may rewrite equation~\eqref{eq:GLrescale} as follows,
\begin{multline}
(\alpha+i\beta)\partial_t\hat u=\triangle \hat u+\triangle U+{a(\hat u+U)}(1-| U|^2)-2a(\hat u+U)\langle U,\hat u\rangle-a(\hat u+U)|\hat u|^2\\+\nabla h\cdot \nabla \hat u+\nabla h\cdot \nabla U+iF^\bot\cdot\nabla \hat u+iF^\bot\cdot\nabla U+f\hat u+fU,
\end{multline}
with initial data $\hat u|_{t=0}=\hat u^\circ:=u^\circ-U$.
Any solution $\hat u\in \Ld^\infty([0,T);H^1(\R^2;\C))$ satisfies the Duhamel formula $\hat u=\Xi_{U,\hat u^\circ}(\hat u)$, where we have set
\begin{align*}
\Xi_{U,\hat u^\circ}(\hat u)^t:=&~S^t\ast \hat u^\circ+(\alpha+i\beta)^{-1}\int_0^tS^{t-s}\ast Z_{U,\hat u^\circ}(\hat u^s)ds,\\
Z_{U,\hat u^\circ}(\hat u^s):=&~\triangle  U+{a(\hat u^s+ U)}(1-| U|^2)-2a(\hat u^s+ U)\langle U,\hat u^s\rangle-a(\hat u^s+ U)|\hat u^s|^2\\
&\hspace{2cm}+\nabla h\cdot \nabla \hat u^s+\nabla h\cdot \nabla  U+iF^\bot\cdot\nabla \hat u^s+iF^\bot\cdot\nabla  U+f\hat u^s+f U.\nonumber
\end{align*}
Let us examine the map $\Xi_{U,\hat u^\circ}$ more closely. Using~\eqref{eq:estkernel} in the forms $\|S^t\|_{\Ld^1}\le C$ and $\|\nabla S^{t}\|_{\Ld^1}\le Ct^{-1/2}$, we obtain by the triangle inequality
\begin{multline*}
\|\Xi_{U,\hat u^\circ}(\hat u)^t\|_{H^1}\le\|S^t\|_{\Ld^1}\|\hat u^\circ\|_{H^1}\\
+C\int_0^t(1+(t-s)^{-1/2})\Big(1+\|\hat u^s\|_{\Ld^2}+\|\hat u^s\|^3_{\Ld^6}+\|\nabla \hat u^s\|_{\Ld^2}\Big)ds,
\end{multline*}
hence, by Sobolev embedding in the form $\|\hat u^s\|_{\Ld^6}\le C\|\hat u^s\|_{H^1}$, for all $\hat u\in -U+E_{U,u^\circ}(C_0,T)$,
\begin{align*}
\|\Xi_{U,\hat u^\circ}(\hat u)\|_{\Ld^\infty_TH^1}\le C\|\hat u^\circ\|_{H^1}+C(T+T^{1/2})(1+C_0^3).
\end{align*}
Similarly, again using the Sobolev embedding, we easily find for all $\hat u,\hat v\in -U+E_{U,u^\circ}(C_0,T)$,
\begin{eqnarray*}
\lefteqn{\|\Xi_{U,\hat u^\circ}(\hat u)-\Xi_{U,\hat u^\circ}(\hat v)\|_{\Ld^\infty_TH^1}}\\
&\le& C\int_0^t(1+(t-s)^{-1/2})(1+\|\hat u^s\|_{H^1}^2+\|\hat v^s\|_{H^1}^2)\|\hat u^s-\hat v^s\|_{H^1}ds\\
&\le& C(T+T^{1/2})(1+C_0^2)\|\hat u-\hat v\|_{\Ld^\infty_TH^1}.
\end{eqnarray*}
Choosing $C_0:=1+C\|\hat u^\circ\|_{H^{1}}$ and $T:=1\wedge (4C(1+C_0^3))^{-2}$, we deduce that $\Xi_{U,\hat u^\circ}$ maps the set $-U+E_{U,u^\circ}(C_0,T)$ into itself and is contracting on that set. The conclusion follows from a fixed-point argument.

We now briefly comment on the case $k\ge1$ and explain how to adapt the above argument.
We again proceed by a fixed point argument, but this time we estimate $\Xi_{U,\hat u^\circ}(w)$ in $H^{k+1}(\R^2;\C)$ as follows,
\[\|\Xi_{U,\hat u^\circ}(\hat u)^t\|_{H^{k+1}}\le\|S^t\|_{\Ld^1}\|\hat u^\circ\|_{H^{k+1}}+C\int_0^t(\|S^{t-s}\|_{\Ld^1}+\|\nabla S^{t-s}\|_{\Ld^1})\|Z_{U,\hat u^\circ}(\hat u^s)\|_{H^{k}},\]
where we easily check with the Sobolev embedding that
\begin{align}\label{eq:boundHkZ}
\|Z_{U,\hat u^\circ}(\hat u^s)\|_{H^{k}}\le C_k(1+\|\hat u^s\|_{H^{k+1}}^3),
\end{align}
for some constant $C_k\ge1$ that only depends on an upper bound on $\alpha$, $\alpha^{-1}$, $|\beta|$, $k$, $\|h\|_{W^{k+1,\infty}}$, $\|F\|_{W^{k,\infty}}$, $\|f\|_{H^{k}\cap W^{k,\infty}}$, $\| U\|_{\Ld^{\infty}}$, $\|\nabla| U|\|_{\Ld^2}$, $\|\nabla^2  U\|_{H^{k}}$, $\|1-| U|^2\|_{\Ld^2}$, and on $\sum_{j\le k}\|(|\nabla^jF|+|\nabla^j\nabla h|)\nabla U\|_{\Ld^2}$. Similarly estimating the $H^{k+1}$-norm of the difference $\Xi_{U,\hat u^\circ}(\hat u)-\Xi_{U,\hat u^\circ}(\hat v)$, the result follows.

\medskip
\noindent\step2 Regularizing effect for $\alpha>0$.

In this step, given $k\ge0$, we assume $h\in W^{k+1,\infty}(\R^2)$, $F\in W^{k,\infty}(\R^2)^2$, $f\in H^{k}\cap W^{k,\infty}(\R^2)$, $\nabla h,F\in W^{k,p}(\R^2)^2$ for some $p<\infty$, and $ U\in E_{k}(\R^2)$, and we prove that any solution $u\in \Ld^\infty([0,T); U+H^1(\R^2;\C))$ of~\eqref{eq:GLrescale} satisfies $u\in \Ld^\infty([\delta,T); U+H^{k+1}(\R^2;\C))$ for all $\delta>0$. We denote by $C_k\ge1$ any constant that only depends on an upper bound on $\alpha$, $\alpha^{-1}$, $|\beta|$, $k$, $\|h\|_{W^{k+1,\infty}}$, $\|F\|_{W^{k,\infty}}$, $\|f\|_{H^{k}\cap W^{k,\infty}}$, $\| U\|_{\Ld^{\infty}}$, $\|1-| U|^2\|_{\Ld^2}$, $\|\nabla| U|\|_{\Ld^2}$, $\|\nabla^2  U\|_{H^{k}}$, $\sum_{j\le k}\|(|\nabla^jF|+|\nabla^j\nabla h|)\nabla U\|_{\Ld^2}$, and $\|u^\circ- U\|_{H^1}$. We write $C$ for such a constant in the case $k=1$. We denote by $C_{k,t}\ge1$ any such constant that additionally depends on an upper bound on $t$, $t^{-1}$, and $\|u- U\|_{\Ld^\infty_tH^1}$. We add a subscript to indicate dependence on further parameters.

Let $u\in \Ld^\infty([0,T); U+H^1(\R^2;\C))$ be a solution of~\eqref{eq:GLrescale}, and let $\hat u:=u-U$. We prove by induction that $\|\hat u^t\|_{H^{k+1}}\le C_{k,t}$ for all $t\in(0,T)$ and $k\ge0$. As it is obvious for $k=0$, we assume that it holds for some $k\ge0$ and we then deduce that it also holds for $k$ replaced by $k+1$. Using the Duhamel formula $\hat u=\Xi_{U,\hat u^\circ}(\hat u)$ as in Step~1, we find
\begin{multline}\label{eq:boundHkhatustep2}
\|\nabla^{k+1}\hat u^t\|_{\Ld^2}\le \|\nabla^{k}S^t\|_{\Ld^1}\|\nabla \hat u^\circ\|_{\Ld^2}\\
+C\int_{t/2}^t\|\nabla S^{t-s}\ast\nabla^kZ_{U,\hat u^\circ}(\hat u^s)\|_{\Ld^2}ds
+C\int_0^{t/2}\|\nabla^{k+1} S^{t-s}\ast Z_{U,\hat u^\circ}(\hat u^s)\|_{\Ld^2}ds.
\end{multline}
A finer estimate than~\eqref{eq:boundHkZ} is now needed. Arguing as in~\cite[Lemma~2]{Bethuel-Smets-07} by means of various Sobolev embeddings, we find for all $1<r<2$,
\begin{align}\label{eq:estL2LrZ}
\|\nabla Z_{U,\hat u^\circ}(\hat u^t)\|_{\Ld^2+\Ld^r}\le C_r(1+\|\hat u^t\|_{H^1}^3+\|\hat u^t\|_{H^2}).
\end{align}
(Note that we cannot choose $r=2$ here due to terms of the form $\||\hat u^s|^2\nabla\hat u^s\|_{\Ld^r}$, and the term $\|\hat u^t\|_{H^2}$ in the right-hand side comes from the forcing terms $(\nabla h+i F^\bot)\cdot\nabla \hat u^t$ appearing in the expression for $Z_{U,\hat u^\circ}(\hat u^t)$.)
By a similar argument (cf.\@ e.g.~\cite[Step~1 of the proof of Proposition~A.8]{Miot-09}), we find for all $k\ge0$ and $1<r<2$,
\begin{align}\label{eq:dersupZ}
\|\nabla^kZ_{U,\hat u^\circ}(\hat u^t)\|_{\Ld^2+\Ld^r}\le C_{k,r} (1+\|\hat u^t\|_{H^{k}}^3+\|\hat u^t\|_{H^{k+1}}).
\end{align}
We may then deduce from~\eqref{eq:boundHkhatustep2}, together with Young's convolution inequality and~\eqref{eq:estkernel}, for all $1<r<2$,
\begin{align*}
\|\nabla^{k+1}\hat u^t\|_{\Ld^2}&\le \|\nabla^{k}S^t\|_{\Ld^1}\|\nabla \hat u^\circ\|_{\Ld^2}+C\int_{\frac12t}^t\|\nabla S^{t-s}\|_{\Ld^1\cap\Ld^{\frac{2r}{3r-2}}}\|\nabla^kZ_{U,\hat u^\circ}(\hat u^s)\|_{\Ld^2+\Ld^r}ds\\
&\hspace{3cm}+C\int_0^{\frac12t}\|\nabla^{k+1} S^{t-s}\|_{\Ld^1}\|Z_{U,\hat u^\circ}(\hat u^s)\|_{\Ld^2}ds\\
&\le Ct^{-k/2}+C_{k,r}\int_{\frac12t}^t((t-s)^{-1/2}+(t-s)^{-1/r})(1+\|\hat u^s\|_{H^{k}}^3+\|\hat u^s\|_{H^{k+1}})ds\\
&\hspace{3cm}+C\int_0^{\frac12t}(t-s)^{-(k+1)/2}(1+\|\hat u^s\|_{H^1}^3)ds\\
&\le C_{k,t}+C_{k,t}\sup_{\frac12t\le s\le t}\|\hat u^s\|_{H^k}^3+C_{k,t}\bigg(\int_0^t\|\nabla^{k+1}\hat u^s\|_{\Ld^2}^3ds\bigg)^{1/3}.
\end{align*}
By induction hypothesis, this yields $\|\nabla^{k+1}\hat u^t\|_{\Ld^2}^3\le C_{k,t}+C_{k,t}\int_0^t\|\nabla^{k+1}\hat u^s\|_{\Ld^2}^3ds$,
and the result follows from the Grönwall inequality.

\medskip
\noindent\step3 Global existence for $\alpha>0$.

In this step, we assume $h\in \Ld^\infty(\R^2)$, $f\in \Ld^2\cap\Ld^\infty(\R^2)$, $\nabla h,F\in \Ld^p\cap\Ld^\infty(\R^2)$ for some $p<\infty$, $u^\circ\in U+H^1(\R^2;\C)$, and $ U\in E_0(\R^2)$, and we prove that~\eqref{eq:GLrescale} admits a unique global solution $u\in \Ld^\infty_\loc(\R^+; U+H^1(\R^2;\C))$.
We denote by $C>0$ any constant that only depends on an upper bound on $\alpha$, $\alpha^{-1}$, $|\beta|$, $\|h\|_{W^{1,\infty}}$, $\|(F,U)\|_{\Ld^\infty}$, $\|1-| U|^2\|_{\Ld^2}$, $\|\triangle  U\|_{\Ld^2}$, $\|f\|_{\Ld^2\cap\Ld^\infty}$, and $\|(|F|+|\nabla h|)\nabla U\|_{\Ld^2}$.

Given $T>0$ and a solution $u\in\Ld^\infty([0,T); U+H^1(\R^2;\C))$ of~\eqref{eq:GLrescale}, we claim that the following a priori estimate holds for all $t\in[0,T)$,
\begin{multline}\label{eq:globalexGL}
\frac\alpha2\int_0^t\int_{\R^2}|\partial_t u|^2+\frac12\int_{\R^2}\Big(|\nabla (u^t- U)|^2+\frac a2(1-|u^t|^2)^2+|u^t- U|^2\Big)\\
\le Ce^{Ct} (1+\|u^\circ- U\|_{H^1}^2).
\end{multline}
Combining this with the local existence result of Step~1 in the space $ U+H^1(\R^2;\C)$, we deduce that local solutions can be extended globally in that space, and the result follows. It remains to prove the claim~\eqref{eq:globalexGL}.
For simplicity, we assume in the computations below that $u\in\Ld^\infty([0,T); U+H^2(\R^2;\C))$, which in particular implies $\partial_tu\in\Ld^\infty([0,T);\Ld^2(\R^2;\C))$ by~\eqref{eq:GLrescale}. The general result then follows from an approximation argument based on the local existence result of Step~1 in the space $U+H^{2}(\R^2;\C)$.

We set for simplicity $(\alpha+i\beta)^{-1}=\alpha'+i\beta'$, $\alpha'>0$. Using equation~\eqref{eq:GLrescale}, we compute the following time derivative, suitably organizing the terms and integrating by parts,
\begin{eqnarray*}
\lefteqn{\hspace{-0.6cm}\frac12\partial_t\int_{\R^2} |u- U|^2=\int_{\R^2} \langle u- U,(\alpha'+i\beta')(\triangle u+au(1-|u|^2)+\nabla h\cdot\nabla u+iF^\bot\cdot\nabla u+fu)\rangle}\\
&=&-\alpha'\int_{\R^2} |\nabla(u- U)|^2+\alpha'\int_{\R^2} a|u- U|^2(1-|u|^2)\\
&&+\int_{\R^2} \langle u- U,(\alpha'+i\beta')(\nabla h\cdot\nabla (u- U)+iF^\bot\cdot\nabla (u- U)+f(u- U))\rangle\\
&&+\int_{\R^2} \langle u- U,(\alpha'+i\beta')(\triangle  U+a U(1-|u|^2)+\nabla h\cdot\nabla  U+iF^\bot\cdot\nabla  U+f U)\rangle,
\end{eqnarray*}
which is estimated as follows,
\begin{align*}
\frac12\partial_t\int_{\R^2} |u- U|^2&\le-\alpha'\int_{\R^2} |\nabla(u- U)|^2+C\int_{\R^2} |u- U|^2+C\int_{\R^2} |u- U||\nabla (u- U)|\\
&\hspace{2cm}+\int_{\R^2} |u- U|(|\triangle  U|+|1-|u|^2|+(|\nabla h|+|F|)|\nabla U|+|f|)\\
&\le-\frac{\alpha'}2\int_{\R^2} |\nabla(u- U)|^2+C+C\int_{\R^2} |u- U|^2+C\int_{\R^2} (1-|u|^2)^2.
\end{align*}
On the other hand, again using the equation and integrating by parts, we compute
\begingroup\allowdisplaybreaks
\begin{eqnarray*}
\lefteqn{\frac12\partial_t\int_{\R^2}|\nabla (u- U)|^2=\int_{\R^2}\langle\nabla (u- U),\nabla\partial_t u\rangle=-\int_{\R^2}\langle\triangle (u- U),\partial_t u\rangle}\\
&=&-\int_{\R^2}\langle(\alpha+i\beta)\partial_tu-\triangle U-au(1-|u|^2)-\nabla h\cdot\nabla u-iF^\bot\cdot\nabla u-fu,\partial_t u\rangle\\
&=&-\alpha\int_{\R^2}|\partial_t u|^2-\frac14\partial_t\int_{\R^2} {a}(1-|u|^2)^2\\
&&\qquad+\int_{\R^2}\langle\nabla h\cdot\nabla(u- U)+iF^\bot\cdot\nabla(u- U)+f(u- U),\partial_t u\rangle\\
&&\qquad\qquad\qquad+\int_{\R^2}\langle \triangle U+\nabla h\cdot\nabla U+iF^\bot\cdot\nabla U+f U,\partial_t u\rangle,
\end{eqnarray*}
\endgroup
and hence
\begingroup\allowdisplaybreaks
\begin{eqnarray*}
\lefteqn{\frac12\partial_t\int_{\R^2}|\nabla (u- U)|^2+\frac14\partial_t\int_{\R^2} {a}(1-|u|^2)^2}\\
&\le&-\alpha\int_{\R^2}|\partial_t u|^2+C\int_{\R^2}|\partial_tu|(|u- U|+|\nabla(u- U)|)\\
&&\hspace{4cm}+C\int_{\R^2}|\partial_tu|(|\triangle U|+(|\nabla h|+|F|)|\nabla U|+|f|)\\
&\le&-\frac\alpha2\int_{\R^2}|\partial_t u|^2+C+C\int_{\R^2}|u- U|^2+C\int_{\R^2}|\nabla(u- U)|^2.
\end{eqnarray*}
\endgroup
Combining the above yields
\begin{multline*}
\frac\alpha2\int_{\R^2}|\partial_tu|^2+\partial_t\int_{\R^2}\Big(\frac12|\nabla (u- U)|^2+\frac a4(1-|u|^2)^2+\frac12|u- U|^2\Big)\\
\le C+ C\int_{\R^2}\Big(\frac12|\nabla (u- U)|^2+\frac a4(1-|u|^2)^2+\frac12|u- U|^2\Big),
\end{multline*}
and the claim~\eqref{eq:globalexGL} follows from the Grönwall inequality.

\medskip
\noindent\step4 A useful change of variables.

We turn to the conservative case $\alpha=0$. The first-order forcing terms in the right-hand side of equation~\eqref{eq:GL-1} can no longer be treated as errors since the lost derivative is not retrieved by the Schrödinger operator, and the proof of local existence in Step~1 can thus not be adapted to this case. The global estimates in Step~3 similarly fail, as no dissipation is available to absorb the first-order terms. To remedy this, we start by performing a useful change of variables transforming first-order terms into zeroth-order ones, which are much easier to deal with.
Since by assumption $\Div F=0$ with $F\in\Ld^\infty(\R^2)^2$, we deduce from a Hodge decomposition that there exists $\psi\in H^1_\loc(\R^2)$ such that $F=-2\nabla^\bot\psi$. Using the relation $a=e^h$, and setting $w_\e:=\sqrt a u_\e e^{i\Log\psi}$, a straightforward computation shows that equation~\eqref{eq:GL-1} for $u_\e$ is equivalent to
\begin{align}\label{eq:GL-1b}\begin{cases}
\lambda_\e(\alpha+i\Log\beta)\partial_t w_\e=\triangle w_\e+\frac{w_\e}{\e^2}(a-|w_\e|^2)+(f_0+ig_0)w_\e,&\text{in $\R^+\times\R^2$,}\\
w_\e|_{t=0}=w_\e^\circ:=\sqrt ae^{i\Log\psi}u_\e^\circ.
\end{cases}\end{align}
where we have set
\[f_0:=f-\frac{\triangle\sqrt a}{\sqrt a}+\frac14\Log^2|F|^2,\qquad g_0:=\frac12\Log a^{-1}\curl(aF).\]
We look for solutions $w_\e$ in the class $W+H^1(\R^2;\C)$ for some ``weighted reference map'' $W$, that is, an element of
\begin{multline*}
E_k^a(\R^2):=\{W\in \Ld^{\infty}(\R^2;\C):\nabla^2W\in H^{k}(\R^2;\C),\nabla|W|\in\Ld^2(\R^2),\\
a-|W|^2\in\Ld^2(\R^2),\nabla W\in \Ld^p(\R^2;\C)~\forall p>2\}.
\end{multline*}
For $k\ge0$, and $\nabla h,\nabla\psi\in H^{k+1}(\R^2)^2$, we indeed observe that $w_\e$ is a solution of~\eqref{eq:GL-1b} in $\Ld^\infty([0,T);W+H^{k+1}(\R^2;\C))$ for some $W\in E_k^a$ if and only if $u_\e$ is a solution of~\eqref{eq:GL-1} in $\Ld^\infty([0,T);U+H^{k+1}(\R^2;\C))$ for some $U\in E_k$.

\medskip
\noindent\step5 Local existence for $\alpha=0$.
\nopagebreak

In this step, given $k\ge0$, we assume $h\in W^{k+1,\infty}(\R^2)$, $\nabla h\in H^{k}(\R^2)^2$, $f_0,g_0\in H^{k+1}\cap W^{k+1,\infty}(\R^2)$, and $w^\circ\in W+H^{k+1}(\R^2;\C)$ for some $W\in E_{k+1}^a(\R^2)$, and we prove that there exists some $T>0$ and a unique solution $w_\e\in \Ld^\infty([0,T);W+H^{k+1}(\R^2;\C))$ of~\eqref{eq:GL-1b} in $[0,T)\times\R^2$. To simplify notation, we replace equation~\eqref{eq:GL-1b} (with $\alpha=0$) by its rescaled version
\begin{align}\label{eq:GLrescaleb}
i\partial_t w=\triangle w+{w}(a-| w|^2)+(f_0+ig_0) w,\qquad  w|_{t=0}= w^\circ.
\end{align}
We start with the case $k=0$, and comment afterwards on the adaptations needed for $k\ge1$.
We argue by a fixed-point argument in the set $E_{W,w^\circ}(C_0,T):=\{w:\|w-W\|_{\Ld^\infty_TH^1}\le C_0,w|_{t=0}=w^\circ\}$, for some $C_0,T>0$ to be suitably chosen.
We denote by $C\ge1$ any constant that only depends on an upper bound on $\|\nabla h\|_{\Ld^2\cap\Ld^\infty}$, $\|(f_0,g_0)\|_{H^1\cap W^{1,\infty}}$, $\|(h,W)\|_{\Ld^\infty}$, $\|a-|W|^2\|_{\Ld^2}$, $\|\nabla|W|\|_{\Ld^2}$, and $\|\triangle W\|_{H^1}$, and we add a subscript to indicate dependence on further parameters.

Let $S^t$ denote the kernel of the semigroup operator $e^{-it\triangle}$. Setting $\hat w:=w-W$, we may rewrite equation~\eqref{eq:GLrescaleb} as follows,
\begin{multline*}
i\partial_t\hat w=\triangle \hat w+\triangle W+{(\hat w+W)}(a-| W|^2)-2(\hat w+W)\langle W,\hat w\rangle-(\hat w+W)|\hat w|^2\\
+(f_0+ig_0)\hat w+(f_0+ig_0)W,
\end{multline*}
with initial data $\hat w|_{t=0}=\hat w^\circ:=w^\circ-W$.
Any solution $\hat w\in \Ld^\infty([0,T);H^1(\R^2;\C))$ satisfies the Duhamel formula $\hat w=\Xi_{W,\hat w^\circ}(\hat w)$, where we have set
\begin{align*}
\Xi_{W,\hat w^\circ}(\hat w)^t:=&~S^t\ast \hat w^\circ-i\int_0^tS^{t-s}\ast Z_{W,\hat w^\circ}(w^s)ds,\\
Z_{W,\hat w^\circ}(\hat w^s):=&~\triangle  W+(\hat w^s+ W)(a-| W|^2)-2(\hat w^s+ W)\langle W,\hat w^s\rangle-(\hat w^s+ W)|\hat w^s|^2\\
&\hspace{6cm}+(f_0+ig_0)\hat w^s+(f_0+ig_0) W.\nonumber
\end{align*}
Similarly as in Step~1, we find $\|Z_{W,\hat w^\circ}(\hat w^s)\|_{\Ld^2}\le C(1+\|\hat w^s\|_{H^1}^3)$. On the other hand, arguing as in~\cite[Lemma~2]{Bethuel-Smets-07} by means of various Sobolev embeddings, we obtain the following version of~\eqref{eq:estL2LrZ}: we may decompose $\nabla Z_{W,\hat w^\circ}(\hat w^s)=Z^{1}_{W,\hat w^\circ}(\hat w^s)+Z^{2}_{W,\hat w^\circ}(w^s)$, such that for all $1<r<2$,
\begin{align}\label{eq:estL2LrZ2}
\|\nabla Z_{W,\hat w^\circ}(\hat w^s)\|_{\Ld^2+\Ld^r}&\le\|Z^{1}_{W,\hat w^\circ}(\hat w^s)\|_{\Ld^2}+\|Z^{2}_{W,\hat w^\circ}(\hat w^s)\|_{\Ld^r}\nonumber\\
&\le C_r(1+\|\hat w^s\|_{H^1}^3).
\end{align}
(Recall that we cannot choose $r=2$ here due to terms of the form $\||\hat w^s|^2\nabla\hat w^s\|_{\Ld^r}$.)
Let us now examine the map $\Xi_{W,\hat w^\circ}$ more closely. We have
\begin{multline*}
\|\Xi_{W,\hat w^\circ}(\hat w)^t\|_{H^1}\le \|S^t\ast (\hat w^\circ,\nabla \hat w^\circ)\|_{\Ld^2}\\
+\bigg\|\int_0^te^{-i(t-s)\triangle}(Z_{W,\hat w^\circ}(\hat w^s),Z^1_{W,\hat w^\circ}(\hat w^s),Z^2_{W,\hat w^\circ}(\hat w^s))ds\bigg\|_{\Ld^2},
\end{multline*}
and hence by the Strichartz estimates for the Schrödinger operator~\cite{Keel-Tao-98}, for all $1<r\le2$,
\begin{multline*}
\|\Xi_{W,\hat w^\circ}(\hat w)\|_{\Ld^\infty_TH^1}\le C\|\hat w^\circ\|_{H^1}\\
+C\|(Z_{W,\hat w^\circ}(\hat w),Z^1_{W,\hat w^\circ}(\hat w))\|_{\Ld^{1}_T\Ld^2}+C_r\|Z^2_{W,\hat w^\circ}(\hat w)\|_{\Ld^{\frac{2r}{3r-2}}_T\Ld^r}.
\end{multline*}
Injecting~\eqref{eq:estL2LrZ2} then yields for all $1<r<2$,
\begin{align*}
\|\Xi_{W,\hat w^\circ}(\hat w)\|_{\Ld^\infty_TH^1}&\le C\|\hat w^\circ\|_{H^1}+(CT+C_rT^{\frac32-\frac1r})(1+\|\hat w\|_{\Ld^\infty_TH^1}^3).
\end{align*}
Choosing $r=\frac43$, this yields in particular, for all $\hat w\in -W+E_{W,\hat w^\circ}(C_0,T)$,
\begin{align*}
\|\Xi_{W,\hat w^\circ}(\hat w)\|_{\Ld^\infty_TH^1}&\le C\|\hat w^\circ\|_{H^1}+C(T+T^{3/4})(1+C_0^3).
\end{align*}
Similarly, again using Sobolev embeddings and Strichartz estimates, we easily for all $\hat v,\hat w\in -W+E_{W,\hat w^\circ}(C_0,T)$,
\begin{align*}
\|\Xi_{W,\hat w^\circ}(\hat v)-\Xi_{W,\hat w^\circ}(\hat w)\|_{\Ld^\infty_TH^1}&\le C(T+T^{3/4})(1+C_0^2)\|\hat v-\hat w\|_{\Ld^\infty_TH^1}.
\end{align*}
Choosing $C_0:=1+C\|\hat w^\circ\|_{H^{1}}$ and $T:=1\wedge (4C(1+C_0^3))^{-4/3}$, we deduce that $\Xi_{W,\hat w^\circ}$ maps the set $-W+E_{W,\hat w^\circ}(C_0,T)$ into itself and is contracting on that set. The conclusion follows from a fixed-point argument.

We now briefly comment on the case $k\ge1$ and explain how to adapt the above argument. We again proceed by a fixed point argument, estimating this time $\Xi_{W,\hat w^\circ}(\hat w)$ and $Z_{W,\hat w^\circ}(\hat w)$ in $H^{k+1}(\R^2;\C)$. Arguing similarly as in~\cite[Step~1 of the proof of Proposition~A.8]{Miot-09} by means of various Sobolev embeddings, we obtain the following version of~\eqref{eq:dersupZ}, for all $k\ge1$ and $1<r<2$,
\begin{align}\label{eq:dersupZ2}
\|\nabla^{k+1}Z_{W,\hat w^\circ}(\hat w)\|_{\Ld^\infty_t(\Ld^2+\Ld^r)}\le C_{k,r} (1+\|\hat w\|_{\Ld^\infty_tH^{k+1}}^3),
\end{align}
for some constant $C_{k,r}\ge1$ that only depends on an upper bound on $k$, $\|\nabla h\|_{H^{k}\cap W^{k,\infty}}$, $\|(h,W)\|_{\Ld^\infty}$, $\|(f_0,g_0)\|_{H^{k+1}\cap W^{k+1,\infty}}$, $\|a-|W|^2\|_{\Ld^2}$, $\|\nabla|W|\|_{\Ld^2}$, $\|\nabla^2W\|_{H^{k+1}}$, $(r-1)^{-1}$, and $(2-r)^{-1}$. The result then easily follows as above.

\medskip
\noindent\step6 Global existence for $\alpha=0$.

In this step, we assume $h\in \Ld^{\infty}(\R^2)$, $f_0\in\Ld^2\cap\Ld^\infty(\R^2)$, $g_0\in H^1\cap W^{1,\infty}(\R^2)$, and $w^\circ\in W+H^1(\R^2;\C)$ for some $ W\in E_0^a(\R^2)$, and we prove that~\eqref{eq:GLrescaleb} admits a unique global solution $w\in \Ld^\infty_\loc(\R^+; W+H^1(\R^2;\C))$.
We denote by $C>0$ any constant that only depends on an upper bound on $\|h\|_{\Ld^\infty}$, $\|f_0\|_{\Ld^2\cap\Ld^\infty}$, $\|g_0\|_{H^1\cap W^{1,\infty}}$, $\|W\|_{\Ld^\infty}$, $\|1-| W|^2\|_{\Ld^2}$, and $\|\triangle  W\|_{\Ld^2}$.

Given a solution $w\in\Ld^\infty([0,T); W+H^1(\R^2;\C))$ of~\eqref{eq:GLrescaleb}, we claim that the following a priori estimate holds for all $t\in[0,T)$,
\begin{align}\label{eq:globalexGLb}
\int_{\R^2}\Big(|\nabla (w^t- W)|^2+\frac 12(a-|w^t|^2)^2+|w^t-W|^2\Big)\le Ce^{Ct} (1+\|w^\circ- W\|_{H^1}^2).
\end{align}
Combining this with the local existence result of Step~5 in the space $ W+H^1(\R^2;\C)$, we deduce that local solutions can be extended globally in that space, and the result follows. It remains to prove the claim~\eqref{eq:globalexGLb}.
For simplicity, we assume in the computations below that $w\in\Ld^\infty([0,T); W+H^2(\R^2;\C))$, which in particular implies $\partial_tw\in\Ld^\infty([0,T);\Ld^2(\R^2;\C))$ by~\eqref{eq:GLrescaleb}. The general result then follows from a simple approximation argument based on the local existence result of Step~5 in the space $W+H^{2}(\R^2;\C)$.

Using equation~\eqref{eq:GLrescaleb}, we compute the following time derivative, suitably organizing the terms and integrating by parts,
\begin{eqnarray}
\lefteqn{\frac12\partial_t\int_{\R^2}|w-W|^2=\int_{\R^2}\langle i(w- W), \triangle w+w(a-|w|^2)+f_0w+ig_0w\rangle}\nonumber\\
&\qquad=&\int_{\R^2}\langle i(w-W), \triangle  W+ W(a-|w|^2)+f_0W+ig_0W\rangle+\int_{\R^2} g_0|w-W|^2\nonumber\\
&\qquad\le& C+C\int_{\R^2} |w-W|^2+C\int_{\R^2}(a-|w|^2)^2.\label{eq:dertempuUL2}
\end{eqnarray}
Likewise, we compute
\begingroup\allowdisplaybreaks
\begin{eqnarray}
\lefteqn{\partial_t\int_{\R^2}|\nabla(w-W)|^2=2\int_{\R^2} \langle \nabla(w-W),\nabla\partial_t w\rangle}\nonumber\\
&=&-2\int_{\R^2} \langle \triangle(w-W),\partial_t w-g_0w\rangle\nonumber\\
&&\qquad+2\int_{\R^2} \langle \nabla(w-W), g_0 \nabla(w-W)+g_0\nabla W+(w- W)\nabla g_0+ W\nabla g_0\rangle\nonumber\\
&\le&-2\int_{\R^2} \langle \triangle(w-W),\partial_t w-g_0w\rangle\nonumber\\
&&\qquad+C+C\int_{\R^2}|\nabla(w-W)|^2+C\int_{\R^2}|w-W|^2,\label{eq:dertempuUH1-0}
\end{eqnarray}
\endgroup
where we have
\begin{eqnarray*}
\lefteqn{-2\int_{\R^2} \langle \triangle(w- W),\partial_t w-g_0w\rangle}\\
&=&-2\int_{\R^2} \langle i(\partial_tw-g_0w)-w(a-|w|^2)-f_0w-\triangle W,\partial_t w-g_0w\rangle\\
&=&~2\int_{\R^2} \langle w(a-|w|^2)+f_0w+\triangle W,\partial_t w-g_0w\rangle\\
&=&-\partial_t\int_{\R^2}\Big( \frac12(a-|w|^2)^2- f_0 |w|^2-2 \langle \triangle W, w\rangle\Big)\\
&&\qquad+2\int_{\R^2} g_0(a-|w|^2)^2-2\int_{\R^2} ag_0(a-|w|^2)-2\int_{\R^2} f_0g_0|w|^2-2\int_{\R^2} g_0\langle \triangle W,w\rangle\\
&\le&-\partial_t\int_{\R^2} \Big( \frac12(a-|w|^2)^2- f_0 |w-W|^2-2\langle w,\triangle W+f_0W\rangle\Big)\\
&&\qquad+C+C\int_{\R^2}(a-|w|^2)^2+C\int_{\R^2}|w-W|^2.
\end{eqnarray*}
Combining this with~\eqref{eq:dertempuUL2} and~\eqref{eq:dertempuUH1-0}, we obtain
\begin{multline*}
\partial_t\int_{\R^2}\Big((C-f_0)|w-W|^2+|\nabla(w-W)|^2+\frac12(a-|w|^2)^2-2\langle w,\triangle  W+f_0W\rangle\Big)\\
\le C+C\int_{\R^2}\big(|w-W|^2+|\nabla(w- W)|^2+(a-|w|^2)^2\big),
\end{multline*}
and the result easily follows from the Grönwall inequality, choosing a large enough constant $C$ in the left-hand side.

\medskip
\noindent\step7 Propagation of regularity for $\alpha=0$.

In this step, given $k\ge0$, we assume $h\in W^{k+1,\infty}(\R^2)$, $\nabla h\in H^{k}(\R^2)^2$, $f_0,g_0\in H^{k+1}\cap W^{k+1,\infty}(\R^2)$, and $w^\circ\in W+H^{k+1}(\R^2;\C)$ for some $W\in E_{k+1}^a(\R^2)$, and we prove that the global solution $w$ of Step~6 belongs to $\Ld^\infty_\loc(\R^+; W+H^{k+1}(\R^2;\C))$. We denote by $C_k\ge1$ any constant that only depends on an upper bound on $k$, $\|\nabla h\|_{H^{k}\cap W^{k,\infty}}$, $\|(f_0,g_0)\|_{H^{k+1}\cap W^{k+1,\infty}}$, $\|(h,W)\|_{\Ld^\infty}$, $\|a-|W|^2\|_{\Ld^2}$, $\|\nabla|W|\|_{\Ld^2}$, and $\|\nabla^2W\|_{H^{k+1}}$. We add a subscript to indicate dependence on further parameters.

Let $w\in \Ld^\infty([0,T); W+H^1(\R^2;\C))$ be a solution of~\eqref{eq:GLrescale} and let $\hat w:=w-W$. We argue by induction: as the result is obvious for $k=0$, we assume that it holds for some $k\ge0$ and we deduce that it then also holds for $k$ replaced by $k+1$.
By a similar argument as e.g.~in~\cite[Lemma~4]{Bethuel-Smets-07} or in~\cite[Step~1 of the proof of Proposition~A.8]{Miot-09}, we obtain the following version of~\eqref{eq:dersupZ} (which generalizes~\eqref{eq:estL2LrZ2} to higher derivatives): for all $k\ge0$ we may decompose $\nabla^{k+1}Z_{W,\hat w^\circ}(\hat w^t)=\nabla^{k}Z_{W,\hat w^\circ}^1(\hat w^t)+\nabla^{k}Z_{W,\hat w^\circ}^2(w^t)$ such that for all $1<r<2$,
\begin{align*}
\|\nabla^{k+1}Z_{W,\hat w^\circ}(\hat w^t)\|_{\Ld^2+\Ld^r}&\le\|\nabla^{k}Z_{W,\hat w^\circ}^1(\hat w^t)\|_{\Ld^2}+\|\nabla^{k}Z_{W,\hat w^\circ}^2(\hat w^t)\|_{\Ld^r}\\
&\le C_{k,r} (1+\|\hat w^t\|_{H^{k+1}}^3),
\end{align*}
or more precisely,
\begin{align}\label{eq:dersupZ2}
\|\nabla^{k+1}Z_{W,\hat w^\circ}(\hat w^t)\|_{\Ld^2+\Ld^r}\le C_{k,r} (1+\|\hat w^t\|_{H^{k}}^2)(1+\|\hat w^t\|_{H^{k+1}}).
\end{align}
Using Duhamel's formula $\hat w=\Xi_{W,\hat w^\circ}(\hat w)$ and applying the Strichartz estimates for the Schrödinger operator~\cite{Keel-Tao-98} as in Step~5, we find for all $k\ge0$ and $1<r\le2$,
\begin{multline*}
\|\nabla^{k+1}\hat w^t\|_{\Ld^2}\le \|S^t\ast \nabla^{k+1}\hat w^\circ\|_{\Ld^2}+\bigg\|\int_0^t S^{t-s}\ast\nabla^{k+1}Z_{W,\hat w^\circ}(\hat w^s)ds\bigg\|_{\Ld^2}\\
\le C\|\nabla^{k+1}\hat w^\circ\|_{\Ld^2}+C\|\nabla^{k+1}Z^1_{W,\hat w^\circ}(\hat w)\|_{\Ld^1_t\Ld^2}+C_r\|\nabla^{k+1}Z^2_{W,\hat w^\circ}(\hat w)\|_{\Ld^{2r/(3r-2)}_t\Ld^r},
\end{multline*}
and hence, by~\eqref{eq:dersupZ2}, for all $k\ge0$,
\begin{align*}
\|\hat w^t\|_{H^{k+1}}&\le C_k\|\hat w^\circ\|_{H^{k+1}}+C_{k,r}(1+t)(1+\|\hat w\|_{\Ld^\infty_tH^k}^2)(1+\|\hat w\|_{\Ld^{2r/(3r-2)}_tH^{k+1}}).
\end{align*}
The result then follows from the induction hypothesis and the Grönwall inequality.
\end{proof}

In the dissipative case, we now prove a well-posedness result for equation~\eqref{eq:GL-1} in the general non-decaying setting, that is, without decay assumption on the data $\nabla h,F,f$. In this case, subtle advection forces may occur at infinity, preventing the solution $u_\e$ from staying in the same affine space $\Ld^\infty_\loc(\R^+;U+H^1(\R^2;\C))$ for any reference map $U$. The well-posedness result below is rather obtained in the space $\Ld^\infty(\R^+;H^1_\uloc(\R^2;\C))$, which yields no information at all on the behavior of the constructed solution at infinity. It is in particular completely unclear whether the total degree of the solution remains well-defined for positive times. In the proof, the key observation is that the Grönwall argument in Step~3 of the proof of Proposition~\ref{prop:globGLapp} can be localized by means of an exponential cut-off. Note that the same argument does not seem adaptable to the conservative case.

\begin{prop}[Well-posedness of~\eqref{eq:GL-1}, non-decaying setting]\label{prop:globGLappnondec}
Set $a:=e^h$ with $h:\R^2\to\R$. In the dissipative case ($\alpha>0$, $\beta\in\R$), given $h\in W^{1,\infty}(\R^2)$, $F\in\Ld^\infty(\R^2)^2$, $f\in\Ld^\infty(\R^2)$, and $u_\e^\circ\in H^1_\uloc(\R^2;\C)$, there exists a unique global solution $u_\e\in\Ld^\infty_\loc(\R^+;H^1_\uloc(\R^2;\C))$ of~\eqref{eq:GL-1} in $\R^+\times\R^2$ with initial data $u_\e^\circ$, and this solution satisfies $\partial_tu_\e\in \Ld^\infty_\loc(\R^+;\Ld^2_\uloc(\R^2;\C))$.
Moreover, if for some $k\ge0$ we have $h\in W^{k+1,\infty}(\R^2)$, $F\in W^{k,\infty}(\R^2)^2$, $f\in W^{k,\infty}(\R^2)$, and $u_\e^\circ\in H^{k+1}_\uloc(\R^2;\C)$, then $u_\e\in\Ld^\infty_\loc(\R^+;H^{k+1}_\uloc(\R^2;\C))$ and $\partial_tu_\e\in \Ld^\infty_\loc(\R^+;H^k_\uloc(\R^2;\C))$.
\end{prop}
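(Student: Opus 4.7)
The plan is to deduce the result from the decaying-data version (Proposition~\ref{prop:globGLapp}(i)) by approximating the data in space, extracting a solution through $R$-uniform $H^1_\uloc$ estimates obtained by a localized energy method, and finally establishing uniqueness and higher regularity by analogous localized arguments. Fix a smooth cut-off $\eta_R \in C_c^\infty(\R^2;[0,1])$ with $\eta_R=1$ on $B_R$ and $\supp\eta_R\subset B_{2R}$, and define the truncated data $h_R:=\eta_R h$, $F_R:=\eta_R F$, $f_R:=\eta_R f$, and $u_{\e,R}^\circ:=\eta_R u_\e^\circ$. These satisfy the hypotheses of Proposition~\ref{prop:globGLapp}(i) with reference map $U\equiv 0$ (in particular $\nabla h_R,F_R\in \Ld^p\cap \Ld^\infty$ for all $p<\infty$ and $f_R\in \Ld^2\cap\Ld^\infty$), so there is a unique global solution $u_{\e,R}\in \Ld^\infty_\loc(\R^+;H^1(\R^2;\C))$ of the corresponding version of~\eqref{eq:GL-1}, with the additional regularity $u_{\e,R}\in\Ld^\infty_\loc((0,\infty);H^2(\R^2;\C))$ from the regularizing effect in Step~2 of that proof.

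The key step will be an $R$-uniform bound on $u_{\e,R}$ in $\Ld^\infty_\loc(\R^+;H^1_\uloc(\R^2;\C))$. For $z\in\R^2$, let $\xi^z(x):=e^{-|x-z|}$, which satisfies $|\nabla\xi^z|\le\xi^z$ pointwise, and consider the localized energy
\[
\Ec^{z,t}:=\int_{\R^2}\xi^z\Big(\tfrac12|\nabla u_{\e,R}^t|^2+\tfrac a{4\e^2}(1-|u_{\e,R}^t|^2)^2\Big).
\]
Testing the equation against $\xi^z\,\overline{\partial_tu_{\e,R}}$ and taking the real part, an integration by parts of the Laplacian analogous to Step~3 of the proof of Proposition~\ref{prop:globGLapp} yields
\[
\lambda_\e\alpha\int\xi^z|\partial_tu_{\e,R}|^2+\partial_t\Ec^z=-\int\nabla\xi^z\cdot\langle\nabla u_{\e,R},\partial_tu_{\e,R}\rangle+R_\e,
\]
where $R_\e$ collects the first-order and zeroth-order source contributions and is estimated by Cauchy--Schwarz. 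Using $|\nabla\xi^z|\le\xi^z$ and Young's inequality to absorb $\partial_tu_{\e,R}$ into the left-hand side, together with the elementary pointwise bound $|u_{\e,R}|^2\le 2+2(1-|u_{\e,R}|^2)^2$ to control $\int\xi^z|u_{\e,R}|^2$ by $\Ec^z$ (at the price of an $\e$-dependent constant), one arrives at the localized Gr\"onwall relation $\tfrac{\lambda_\e\alpha}{2}\int\xi^z|\partial_tu_{\e,R}|^2+\partial_t\Ec^z\lesssim_\e 1+\Ec^z$. Taking the supremum over $z$, the Gr\"onwall inequality gives $\sup_z\Ec^{z,t}\lesssim_{\e,t}1+\|u_\e^\circ\|_{H^1_\uloc}^2$, uniformly in $R$, and time-integration of the same identity delivers the $\Ld^2_\uloc$ bound on $\partial_tu_{\e,R}$.

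Equipped with these uniform estimates, a standard diagonal extraction produces a subsequence $u_{\e,R_k}$ converging in $\Ld^\infty_\loc(\R^+;\Ld^2_\loc)$ and weakly-$\ast$ in $\Ld^\infty_\loc(\R^+;H^1_\uloc)$ to a limit $u_\e$, which by consistency of the approximation solves~\eqref{eq:GL-1} distributionally and inherits the announced regularity. Uniqueness is obtained by applying the same localized energy method to the difference $w:=u_\e^{(1)}-u_\e^{(2)}$ of two solutions: $w$ satisfies a linear equation whose cubic $w$-dependent source is absorbed using the uniform $H^1_\uloc$ bound combined with the Sobolev embedding $H^1_\uloc\hookrightarrow\Ld^p_\uloc$ for all $p<\infty$. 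Propagation of higher regularity $H^{k+1}_\uloc$ is obtained by iterating the same localized energy argument on $\nabla^{k+1}u_{\e,R}$, commuting the cut-off with derivatives via standard commutator estimates to handle the boundary terms.

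The main technical obstacle will be the localized handling of the Ginzburg--Landau nonlinearity: the contribution $\int\xi^z\tfrac{a}{\e^2}\langle u_{\e,R},\partial_tu_{\e,R}\rangle(1-|u_{\e,R}|^2)$ is not sign-definite on its own but must be re-expressed as $-\tfrac14\partial_t\int\xi^z a(1-|u_{\e,R}|^2)^2$ plus a commutator with $\xi^z a$ that contributes lower-order boundary terms absorbable via the exponential decay of $\nabla\xi^z$; for the higher-regularity argument, the cubic structure produces products of three $H^k_\uloc$-factors that must be closed via the Sobolev embedding and the already-established $H^k_\uloc$ bound. All the $\e$-dependent prefactors ($\Log$, $\e^{-2}$) are harmless because $\e$ is fixed and the statement is qualitative in time, but they force all constants to depend on $\e$---in line with the statement of the proposition.
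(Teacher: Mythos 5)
Your overall strategy — truncate the data, solve via the decaying-data Proposition~\ref{prop:globGLapp}(i), prove an $n$-uniform localized energy estimate with the exponential weight $\xi^z(x)=e^{-|x-z|}$ (using $|\nabla\xi^z|\le\xi^z$), extract a limit by Aubin--Simon compactness, then localize again for uniqueness and higher regularity — is precisely the paper's strategy. However, there is a concrete gap in your approximation of the initial data that makes the reduction to Proposition~\ref{prop:globGLapp}(i) fail.

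You propose $u_{\e,R}^\circ:=\eta_R u_\e^\circ$ with reference map $U\equiv0$. But $0\notin E_0(\R^2)$, since $E_0$ requires $1-|U|^2\in\Ld^2(\R^2)$, and $1-|0|^2=1\notin\Ld^2(\R^2)$. More seriously, there is \emph{no} admissible $U$: every $U\in E_0$ satisfies $|U|\to1$ at infinity (again because $1-|U|^2\in\Ld^2$), so $\eta_Ru_\e^\circ-U\to -U$ at infinity with $|U|\approx1$, hence $\eta_Ru_\e^\circ-U\notin\Ld^2(\R^2)$ and the truncated datum does not lie in $U+H^1(\R^2;\C)$ for any $U\in E_0$. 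Moreover, $\eta_Ru_\e^\circ$ itself cannot serve as its own reference map, since $u_\e^\circ\in H^1_\uloc$ does not put $\eta_Ru_\e^\circ$ in $\Ld^\infty$, nor does it give $\nabla^2(\eta_Ru_\e^\circ)\in\Ld^2$, as required by $E_0$. The paper's approximation $u_n^\circ:=\chi_n(u_\e^\circ\ast\rho_n)+1-\chi_n$ fixes both defects: the mollification provides the regularity required by $E_0$, and the term $1-\chi_n$ pins the modulus to $1$ outside a ball so that $1-|u_n^\circ|^2$ is compactly supported; then $u_n^\circ\in E_0$ and one can take $U=u_n^\circ$ with vanishing $H^1$ perturbation. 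Without a correction of this kind your reduction to the decaying-data proposition does not get off the ground, even though the downstream localized energy, compactness, uniqueness, and higher-regularity arguments you sketch are all essentially correct. Incidentally, the trick $|u|^2\le 2+2(1-|u|^2)^2$ to control $\int\xi^z|u|^2$ is unnecessary: the paper simply includes $\int\xi^z|u_n|^2$ as a component of the Gr\"onwall energy, which avoids the additional $\e$-dependent prefactor you flag.
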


\begin{proof}
We split the proof into four steps. We denote by $\xi^z(x):=e^{-|x-z|}$ the exponential cut-off centered at $z\in\Z^2$, and $\xi(x):=\xi^0(x)=e^{-|x|}$. To simplify notation, we replace equation~\eqref{eq:GL-1} by its rescaled version
\begin{align}\label{eq:GLrescale-2}
(\alpha+i\beta)\partial_tu=\triangle u+{au}(1-|u|^2)+\nabla h\cdot \nabla u+iF^\bot\cdot\nabla u+fu,\qquad u|_{t=0}=u^\circ.
\end{align}

\medskip
\noindent\step1 Global existence in $H^1_\uloc(\R^2;\C)$.

In this step, we assume $h\in W^{1,\infty}(\R^2)$, $F\in \Ld^{\infty}(\R^2)^2$, $f\in \Ld^\infty(\R^2)$, and $u^\circ\in H^1_\uloc(\R^2;\C)$, and we prove that there exists a global solution $u\in\Ld^\infty_\loc(\R^+;H^{1}_\uloc(\R^2;\C))$ of~\eqref{eq:GLrescale-2} in $\R^+\times\R^2$ with initial data $u^\circ$.
We denote by $C\ge1$ any constant that only depends on an upper bound on $\alpha$, $\alpha^{-1}$, $|\beta|$, $\|(h,\nabla h,F,f)\|_{\Ld^\infty}$, and $\|u^\circ\|_{H^1_\uloc}$.

We argue by approximation: for $n\ge1$, we define $\chi_n:=\chi(\cdot/n)$ for some cut-off function~$\chi$ with $\chi|_{B_1}\equiv1$ and $\chi|_{\R^2\setminus B_2}\equiv0$, and we set $h_n:=\chi_n h$, $a_n:=e^{h_n}$, $F_n:=\chi_nF$, and $f_n:=\chi_nf$.
Note that by construction $\|(h_n,\nabla h_n,F_n,f_n)\|_{\Ld^{\infty}}\le C$.
We also need to approximate the initial data $u^\circ\in H^1_{\uloc}(\R^2;\C)$: for $n\ge1$, we define $\rho_n:=n^{2}\rho(n\cdot)$ for some $\rho\in C^\infty_c(\R^2)$ with $\int_{\R^2}\rho=1$, and we set $u_n^\circ:=\chi_n(u^\circ\ast\rho_n)+1-\chi_n$. By definition, we have $u_n^\circ\in E_0$, the sequence $(u_n^\circ)_n$ is bounded in $H^1_\uloc(\R^2;\C)$, and as $n\uparrow\infty$ we obtain $u_n^\circ\to u^\circ$ in $H^1_\loc(\R^2;\C)$ and $a_n\to a$, $\nabla h_n\to\nabla h$, and $F_n\to F$ in $\Ld^\infty_\loc(\R^2)^2$.
By Proposition~\ref{prop:globGLapp}, there exists a unique global solution $u_n\in\Ld^\infty_\loc(\R^+;u_n^\circ+H^1(\R^2;\C))$ of the following truncated equation in $\R^+\times\R^2$,
\begin{align}\label{eq:GLrescale-n}
(\alpha+i\beta)\partial_tu_n=\triangle u_n+{a_nu_n}(1-|u_n|^2)+\nabla h_n\cdot \nabla u_n+iF_n^\bot\cdot\nabla u_n+f_nu_n,
\end{align}
with initial data $u_n|_{t=0}=u_n^\circ$.
In order to pass to the limit $n\uparrow\infty$ in (the weak formulation of) this equation, we prove the boundedness of the sequence $(u_n)_n$ in $\Ld^\infty_\loc(\R^+;H^1_\uloc(\R^2;\C))$, that is, we claim that the following a priori estimate holds for all $t\ge0$,
\begin{align}\label{eq:approxH1ulocest}
\|u_n^t\|_{H^1_\uloc}\le\sup_z\|u_n^t\|_{H^1(B(z))}+\alpha^{1/2}\sup_z\|\partial_tu_n\|_{\Ld^2_t\Ld^2(B(z))}\le Ce^{Ct}.
\end{align}
Before proving this estimate, we show how to conclude. Up to a subsequence, $u_n$ converges weakly-* to some $u$ in $\Ld^\infty_\loc(\R^+;H^1_\uloc(\R^2;\C))$. As $\partial_tu_n$ is bounded in $\Ld^2_\loc(\R^+;\Ld^2(B(z);\C))$, uniformly in $z$, and as $H^1(B(z);\C)$ is compactly embedded into $\Ld^3(B(z);\C)$, we deduce from the Aubin-Simon lemma that $u_n\to u$ strongly in $\Ld^\infty_\loc(\R^+;\Ld^3_\uloc(\R^2;\C))$. This allows to pass to the limit in the weak formulation of equation~\eqref{eq:GLrescale-n}, and deduce that the limit $u$ is a global solution of~\eqref{eq:GLrescale-2} in $\R^+\times\R^2$ with initial data $u^\circ$.

It remains to prove~\eqref{eq:approxH1ulocest}. We set for simplicity $(\alpha+i\beta)^{-1}=\alpha'+i\beta'$, $\alpha'>0$. Using equation~\eqref{eq:GLrescale-n}, integrating by parts, and using $|\nabla\xi^z|\le \xi^z$, we compute the following time derivative, for all $z\in R\Z^2$,
\begingroup\allowdisplaybreaks
\begin{eqnarray*}
\lefteqn{\frac12\partial_t\int_{\R^2}\xi^z|u_n|^2}\\
&=&\int_{\R^2}\xi^z\langle u_n,(\alpha'+i\beta')(\triangle u_n+{a_nu_n}(1-|u_n|^2)+\nabla h_n\cdot \nabla u_n+iF_n^\bot\cdot\nabla u_n+f_nu_n)\rangle\\
&\le&\int_{\R^2}\xi^z\langle u_n,(\alpha'+i\beta')\triangle u_n\rangle+\alpha'\int_{\R^2} a_n\xi^z|u_n|^2(1-|u_n|^2)\\
&&\hspace{5cm}+C\int_{\R^2}\xi^z|u_n||\nabla u_n|+C\int_{\R^2}\xi^z|u_n|^2\\
&\le&-\alpha'\int_{\R^2}\xi^z|\nabla u_n|^2+C\int_{\R^2}\xi^z|u_n||\nabla u_n|+C\int_{\R^2}\xi^z|u_n|^2,
\end{eqnarray*}
\endgroup
and hence
\begin{align*}
\frac12\partial_t\int_{\R^2}\xi^z|u_n|^2&\le-\frac{\alpha'}2\int_{\R^2}\xi^z|\nabla u_n|^2+C\int_{\R^2}\xi^z|u_n|^2.
\end{align*}
On the other hand, integration by parts yields
\begin{align*}
\frac12\partial_t\int_{\R^2}\xi^z|\nabla u_n|^2&=\int_{\R^2}\xi^z\langle\nabla u_n,\nabla\partial_t u_n\rangle=-\int_{\R^2}\xi^z\langle\triangle u_n,\partial_tu_n\rangle-\int_{\R^2}\nabla\xi^z\cdot\langle \nabla u_n,\partial_tu_n\rangle,
\end{align*}
hence, inserting equation~\eqref{eq:GLrescale-n} in the first right-hand side term,
\begin{eqnarray*}
\lefteqn{\frac12\partial_t\int_{\R^2}\xi^z|\nabla u_n|^2}\\
&=&-\int_{\R^2}\xi^z\langle(\alpha+i\beta)\partial_tu_n-a_nu_n(1-|u_n|^2)-\nabla h_n\cdot\nabla u_n-iF_n^\bot\cdot\nabla u_n-f_nu_n,\partial_tu_n\rangle\\
&&\hspace{2cm}-\int_{\R^2}\nabla\xi^z\cdot\langle \nabla u_n,\partial_tu_n\rangle\\
&\le&-\alpha\int_{\R^2}\xi^z|\partial_tu_n|^2-\frac14\partial_t\int_{\R^2} a_n\xi^z(1-|u_n|^2)^2+C\int_{\R^2} \xi^z(|u_n|+|\nabla u_n|)|\partial_tu_n|,
\end{eqnarray*}
and thus
\begin{multline*}
\frac12\partial_t\int_{\R^2}\xi^z|\nabla u_n|^2+\frac14\partial_t\int_{\R^2} a_n\xi^z(1-|u_n|^2)^2\\
\le-\frac\alpha2\int_{\R^2}\xi^z|\partial_tu_n|^2+C\int_{\R^2} \xi^z(|u_n|^2+|\nabla u_n|^2).
\end{multline*}
We may then conclude
\begin{multline*}
\frac12\partial_t\int_{\R^2}\xi^z(|u_n|^2+|\nabla u_n|^2)+\frac14\partial_t\int_{\R^2} a_n\xi^z(1-|u_n|^2)^2+\frac\alpha2\int_{\R^2}\xi^z|\partial_tu_n|^2\\
\le C\int_{\R^2}\xi^z(|u_n|^2+|\nabla u_n|^2).
\end{multline*}
By the Grönwall inequality, this yields for all $t\ge0$ and $z\in R\Z^2$,
\begin{multline*}
\int_{\R^2}\xi^z(|u_n^t|^2+|\nabla u_n^t|^2)+\frac12\int_{\R^2} a_n\xi^z(1-|u_n^t|^2)^2+\alpha\int_0^t\int_{\R^2}\xi^z|\partial_tu_n|^2\\\le e^{Ct}\Big(\int_{\R^2}\xi^z(|u_n^\circ|^2+|\nabla u_n^\circ|^2)+\frac12\int_{\R^2} a_n\xi^z(1-|u_n^\circ|^2)^2\Big),
\end{multline*}
and hence, using the Sobolev embedding for $H^1_\uloc(\R^2)$ into $\Ld^4_\uloc(\R^2)$ (cf.\@ \eqref{eq:Sobolevexp} below),
\begin{multline*}
\int_{\R^2}\xi^z(|u_n^t|^2+|\nabla u_n^t|^2)+\frac12\int_{\R^2} a_n\xi^z(1-|u_n^t|^2)^2+\alpha\int_0^t\int_{\R^2}\xi^z|\partial_tu_n|^2\\
\le Ce^{Ct}\Big(1+\int_{\R^2}\xi^z(|u_n^\circ|^2+|\nabla u_n^\circ|^2)\Big)^2.
\end{multline*}
The claim~\eqref{eq:approxH1ulocest} then follows from the boundedness of $u_n^\circ$ in $H^1_\uloc(\R^2;\C)$, noting that
\begin{align}\label{eq:redefinuloc}
\|\zeta\|_{\Ld^2_\uloc}^2\simeq\sup_{z\in\R^2}\int_{\R^2} \xi^z|\zeta|^2.
\end{align}

\medskip
\noindent\step2 Global existence in $H^{k+1}_\uloc(\R^2;\C)$.

In this step, given $k\ge0$, we assume $h\in W^{k+1,\infty}(\R^2)$, $F\in W^{k,\infty}(\R^2)^2$, $f\in W^{k,\infty}(\R^2)$, and $u^\circ\in H^{k+1}_\uloc(\R^2;\C)$, and we prove that the global solution $u$ constructed in Step~1 then belongs to $\Ld^\infty_\loc(\R^+;H^{k+1}_\uloc(\R^2;\C))$.
We denote by $C_k\ge1$ any constant that only depends on an upper bound on $k$, $\alpha$, $\alpha^{-1}$, $|\beta|$, $\|(h,\nabla h,F,f)\|_{W^{k,\infty}}$, and $\|u^\circ\|_{H^{k+1}_\uloc}$, and we write $C_{k,t}$ if it additionally depends on an upper bound on~$t$.

We argue again by approximation. We consider the truncations $h_n,a_n,F_n,f_n,u_n^\circ$ defined in Step~1, as well as the solution $u_n$ to the corresponding equation~\eqref{eq:GLrescale-n}. We claim that for all $k\ge0$ and $t\ge0$,
\begin{align}\label{eq:inducbounddernodec}
\|u_n^t\|_{H^{k+1}_\uloc}+\|\partial_tu_n\|_{\Ld^2_tH^{k}_\uloc}\le C_{k,t}.
\end{align}
The conclusion then follows by passing to the limit $n\uparrow\infty$. This result is proved by induction on $k$. As for $k=0$ the result already follows from Step~1, we assume that $\|u_n^t\|_{H^{k}_\uloc}\le C_{k,t}$ holds for some $k\ge1$, and we deduce that~\eqref{eq:inducbounddernodec} also holds for this $k$.
Integrating by parts, we find
\begin{multline*}
\frac12\partial_t\int_{\R^2} \xi^z|\nabla^{k+1}u_n|^2=\int_{\R^2}\xi^z\langle\nabla^{k+1}u_n,\nabla^{k+1}\partial_tu_n\rangle\\
\le C\int_{\R^2}\xi^z|\nabla^{k+1} u_n||\nabla^{k}\partial_tu_n|-\int_{\R^2}\xi^z\langle\nabla^{k}\triangle u_n,\nabla^{k}\partial_tu_n\rangle,
\end{multline*}
hence, inserting equation~\eqref{eq:GLrescale-n} in the first right-hand side term and developing the terms,
\begin{eqnarray*} 
\lefteqn{\frac12\partial_t\int_{\R^2} \xi^z|\nabla^{k+1}u_n|^2}\\
&\le& -\alpha\int_{\R^2}\xi^z|\nabla^{k}\partial_tu_n|^2+C\int_{\R^2}\xi^z|\nabla^{k+1} u_n||\nabla^{k}\partial_tu_n|\\
&&\hspace{1cm}+\int_{\R^2}\xi^z\big\langle\nabla^{k}\big(a_nu_n(1-|u_n|^2)+\nabla h_n\cdot\nabla u_n+iF_n^\bot\cdot\nabla u_n+f_nu_n\big),\nabla^{k}\partial_tu_n\big\rangle\\
&\le& -\alpha\int_{\R^2}\xi^z|\nabla^{k}\partial_tu_n|^2+\,C\int_{\R^2}\xi^z|u_n|^2|\nabla^ku_n||\nabla^{k}\partial_tu_n|\\
&&\hspace{1cm}+C_k\sum_{j=0}^{k+1}\int_{\R^2}\xi^z|\nabla^{j}u_n||\nabla^{k}\partial_tu_n|+C_k\sum_{j=0}^{k-1}\int_{\R^2}\xi^z|\nabla^ju_n|^3|\nabla^{k}\partial_tu_n|\\
&\le& -\frac\alpha2\int_{\R^2}\xi^z|\nabla^{k}\partial_tu_n|^2+C\int_{\R^2}\xi^z|u_n|^4|\nabla^ku_n|^2\\
&&\hspace{1cm}+C_k\sum_{j=0}^{k+1}\int_{\R^2}\xi^z|\nabla^{j}u_n|^2+C_k\sum_{j=0}^{k-1}\int_{\R^2}\xi^z|\nabla^ju_n|^6.
\end{eqnarray*}
Note that the Sobolev embedding in the balls $B_2(x)$ yields
\begingroup\allowdisplaybreaks
\begin{eqnarray}\label{eq:Sobolevexp}
\int_{\R^2} \xi^z|\nabla^ju_n|^6&\lesssim&\sum_{x\in\Z^2}\xi^z(x)\int_{B_2(x)}|\nabla^ju_n|^6\nonumber\\
&\lesssim&\sum_{x\in\Z^2}\xi^z(x)\Big(\int_{B_2(x)}(|\nabla^ju_n|^2+|\nabla^{j+1}u_n|^2)\Big)^3\nonumber\\
&\lesssim&\Big(\sum_{x\in\Z^2}\xi^z(x)\int_{B_2(x)}(|\nabla^ju_n|^2+|\nabla^{j+1}u_n|^2)\Big)^3\nonumber\\
&\lesssim&\Big(\int_{\R^2}\xi^z(|\nabla^ju_n|^2+|\nabla^{j+1}u_n|^2)\Big)^3,
\end{eqnarray}
\endgroup
and similarly,
\begin{multline*}
\int_{\R^2}\xi^z|u_n|^4|\nabla^ku_n|^2\le\Big(\int_{\R^2}\xi^z|u_n|^8\Big)^{1/2}\Big(\int_{\R^2}\xi^z|\nabla^ku_n|^4\Big)^{1/2}\\
\lesssim\Big(\int_{\R^2}\xi^z|\nabla u_n|^2\Big)^{2}\Big(\int_{\R^2}\xi^z(|\nabla^ku_n|^2+|\nabla^{k+1}u_n|^2)\Big).
\end{multline*}
Inserting these estimates in the above, and using~\eqref{eq:redefinuloc}, we obtain
\begin{eqnarray*} 
\lefteqn{\partial_t\int_{\R^2} \xi^z|\nabla^{k+1}u_n|^2+\alpha\int_{\R^2}\xi^z|\nabla^{k}\partial_tu_n|^2}\\
&\le& C_k\sum_{j=0}^{k}\Big(1+\int_{\R^2}\xi^z|\nabla^ju_n|^2\Big)^3+C_k\Big(1+\int_{\R^2}\xi^z|\nabla u_n|^2\Big)^{2}\int_{\R^2}\xi^z|\nabla^{k+1}u_n|^2\\
&\le& C_k\big(1+\|u_n\|_{H^k_\uloc}^6)+C_k\big(1+\|u_n\|_{H^1_\uloc}^4)\int_{\R^2} \xi^z|\nabla^{k+1}u_n|^2.
\end{eqnarray*}
By the induction hypothesis, we deduce for all $t\ge0$,
\begin{align*} 
\partial_t\int_{\R^2} \xi^z|\nabla^{k+1}u_n^t|^2+\alpha\int_{\R^2}\xi^z|\nabla^{k}\partial_tu_n^t|^2&\le C_{k,t}+C_{k,t}\int_{\R^2}\xi^z|\nabla^{k+1}u_n^t|^2,
\end{align*}
and the result~\eqref{eq:inducbounddernodec} follows from the Grönwall inequality.

\medskip
\noindent\step3 Uniqueness.\nopagebreak

In this step, we assume $h\in W^{1,\infty}(\R^2)$, $F\in \Ld^{\infty}(\R^2)^2$, and $f\in \Ld^\infty(\R^2)$, and we prove that there exists at most one global solution $u\in\Ld^\infty_\loc(\R^+;H^{1}_\uloc(\R^2;\C))$ of~\eqref{eq:GLrescale-2} in $\R^+\times\R^2$ with given initial data $u^\circ$.
We denote by $C\ge1$ any constant that only depends on an upper bound on $\alpha$, $\alpha^{-1}$, $|\beta|$, and $\|(h,\nabla h,F,f)\|_{\Ld^\infty}$.

Let $u_1,u_2\in\Ld^\infty_\loc(\R^+;H^{1}_\uloc(\R^2;\C))$ denote two solutions as above. We set for simplicity $(\alpha+i\beta)^{-1}=\alpha'+i\beta'$, $\alpha'>0$. Using equation~\eqref{eq:GLrescale-2} and integrating by parts, we find
\begingroup\allowdisplaybreaks
\begin{eqnarray}\label{eq:uniquenodec}
\lefteqn{\frac12\partial_t\int_{\R^2}\xi^z|u_1-u_2|^2}\nonumber\\
&\le&-\alpha'\int_{\R^2}\xi^z|\nabla(u_1-u_2)|^2+C\int_{\R^2}\xi^z|u_1-u_2||\nabla(u_1-u_2)|+C\int_{\R^2}\xi^z|u_1-u_2|^2\nonumber\\
&&\hspace{2cm}+\int_{\R^2} a\xi^z\big\langle u_1-u_2,(\alpha'+i\beta')\big(u_1(1-|u_1|^2)-u_2(1-|u_2|^2)\big)\big\rangle\nonumber\\
&\le&-\frac{\alpha'}2\int_{\R^2}\xi^z|\nabla(u_1-u_2)|^2+C\int_{\R^2} \xi^z|u_1-u_2|^2(1+|u_1|+|u_2|)^2.
\end{eqnarray}
\endgroup
It remains to estimate the last integral. For that purpose, we decompose
\begin{align*}
\int_{\R^2} \xi^z|u_1-u_2|^2(|u_1|+|u_2|)^2&\lesssim \sum_{x\in \Z^2} \xi^z(x)\int_{B_2(x)}|u_1-u_2|^2(|u_1|+|u_2|)^2\\
&\lesssim \sum_{x\in \Z^2} \xi^z(x)\Big(\int_{B_2(x)}|u_1-u_2|^4\Big)^{1/2}\Big(\int_{B_2(x)}(|u_1|+|u_2|)^4\Big)^{1/2},
\end{align*}
hence, using the Sobolev embedding for $H^{3/4}(B_2(x))$ (and $H^{1}(B_2(x))$) into $\Ld^4(B_2(x))$,
\begin{align*}
\int_{\R^2} \xi^z|u_1-u_2|^2(|u_1|+|u_2|)^2&\,\lesssim\, \|(u_1,u_2)\|_{H^1_\uloc}^2\sum_{x\in \Z^2} \xi^z(x)\|u_1-u_2\|_{H^{3/4}(B_2(x))}^2.
\end{align*}
Using interpolation and Young's inequality then yields for all $K\ge1$,
\begin{eqnarray*}
\lefteqn{\int_{\R^2} \xi^z|u_1-u_2|^2(|u_1|+|u_2|)^2}\\
&\lesssim& \|(u_1,u_2)\|_{H^1_\uloc}^2\sum_{x\in \Z^2} \xi^z(x)\|u_1-u_2\|_{H^{1}(B_2(x))}^{3/2}\|u_1-u_2\|_{\Ld^2(B_2(x))}^{1/2}\\
&\lesssim& K^{-1}\int_{\R^2} \xi^z|\nabla(u_1-u_2)|^2+K^{3}(1+\|(u_1,u_2)\|_{H^1_\uloc}^8)\int_{\R^2}\xi^z|u_1-u_2|^2.
\end{eqnarray*}
Inserting this into~\eqref{eq:uniquenodec} with $K\simeq1$ large enough, we find
\begin{align*}
\frac12\partial_t\int_{\R^2}\xi^z|u_1-u_2|^2&\le C\big(1+\|(u_1,u_2)\|_{H^1_\uloc}^8\big)\int_{\R^2}\xi^z|u_1-u_2|^2,
\end{align*}
and the conclusion $u_1=u_2$ follows from the Grönwall inequality.
\end{proof}

\bigskip
\bibliographystyle{plain}
\bibliography{biblio}
\vskip .5cm

\begin{samepage}
\noindent
\textsc{Mitia Duerinckx}\\
Universit\'e Libre de Bruxelles, boulevard du Triomphe, 1050 Brussels, Belgium,\\
\& Sorbonne Universit\'e, CNRS, UMR 7598, Laboratoire Jacques-Louis Lions, 4 place Jussieu, 75005 Paris, France.\\
{\tt mduerinc@ulb.ac.be}
\vspace{.2cm}

\noindent
\textsc{Sylvia Serfaty}\\
Courant Institute, New York University, 251 Mercer street, New York, NY 10012, USA,\\
\& Sorbonne Universit\'e, CNRS, UMR 7598, Laboratoire Jacques-Louis Lions, 4 place Jussieu, 75005 Paris, France,\\
\& Institut Universitaire de France.\\
{\tt serfaty@cims.nyu.edu}
\end{samepage}
\end{document}